\newcommand{\R}{\mathbb{R}}
\newcommand{\myindi}{\mathbf{1}}
\newcommand{\loc}{{\mathrm{loc}}}
\newcommand{\rref}{\mathrm{ref}}
\newcommand{\indi}{\mathbf{1}}
\newcommand{\vphi}{\varphi}
\newcommand{\veps}{\varepsilon}
\newcommand{\bbE}{\mathbb{E}}
\newcommand{\bbP}{\mathbb{P}}
\newcommand{\calD}{\mathcal{D}}
\newcommand{\calE}{\mathcal{E}}
\newcommand{\calF}{\mathcal{F}}
\newcommand{\calU}{\mathcal{U}}
\newcommand{\calV}{\mathcal{V}}
\newcommand{\calW}{\mathcal{W}}
\newcommand{\scrE}{\mathscr{E}}
\newcommand{\frakD}{\mathfrak{D}}
\newcommand{\frakE}{\mathfrak{E}}
\newcommand{\frakF}{\mathfrak{F}}
\newcommand{\frakR}{\mathfrak{R}}
\newcommand{\md}{\mathrm{d}}
\newcommand{\dd}{\partial}
\newcommand{\mybar}[1]{\overline{#1}}
\newcommand{\myset}[1]{\left\{#1\right\}}
\newcommand{\rmnum}[1]{\romannumeral#1}
\newcommand{\Rmnum}[1]{\uppercase\expandafter{\romannumeral#1}}
\newtheorem{mythm}{Theorem}[chapter]
\newtheorem{myprop}[mythm]{Proposition}
\newtheorem{mylem}[mythm]{Lemma}
\newtheorem{mycor}[mythm]{Corollary}
\newtheorem{myrmk}[mythm]{Remark}
\newtheorem{mydef}[mythm]{Definition}
\newcommand{\DATUM}{December 19, 2018}
\begin{document}

\title{Local and Non-Local Dirichlet Forms on the Sierpi\'nski Gasket and the Sierpi\'nski Carpet}
\author{Meng Yang}
\date{\DATUM}

\pagenumbering{Alph}
\setcounter{page}{1}

\maketitle



\thispagestyle{empty}
\vfill
\begin{center}
\vskip 1cm
{\Large\textbf{Local and Non-Local Dirichlet Forms on the Sierpi\'nski Gasket and the Sierpi\'nski Carpet}}
\vskip 2cm
{\Large Meng Yang}
\vskip 9cm
{\large A Dissertation Submitted for the Degree of Doctor\\
\emph{at}\\
the Department of Mathematics Bielefeld University}\\[1cm]

\date{\DATUM}
\end{center}
\newpage
\cleardoublepage

\thispagestyle{empty}
\begin{center}
{\Large\textbf{Local and Non-Local Dirichlet Forms on the Sierpi\'nski Gasket and the Sierpi\'nski Carpet}}
\vskip 5cm
\bigskip
{\large
Dissertation zur Erlangung des Doktorgrades\\
der Fakult\"at f\"ur Mathematik\\
der Universit\"at Bielefeld\\[3.5cm]
\bigskip
vorgelegt von\\
Meng Yang\\[0.6em]
\vskip 1cm
\date{\DATUM}
}
\end{center}

\newpage
\thispagestyle{empty}
\begin{center}
{\ }\\
\vspace{10cm}
\vfill
Gedruckt auf alterungsbest\"andigem Papier nach DIN--ISO 9706
\end{center}

\newpage

\pagenumbering{roman}

\chapter*{Preface}
\addcontentsline{toc}{chapter}{Preface}
This thesis is about local and non-local Dirichlet forms on the Sierpi\'nski gasket and the Sierpi\'nski carpet. We are concerned with the following three problems in analysis on the Sierpi\'nski gasket and the Sierpi\'nski carpet.

\begin{enumerate}[(1)]
\item A unified purely \emph{analytic} construction of local regular Dirichlet forms on the Sierpi\'n-ski gasket and the Sierpi\'nski carpet. We give a purely analytic construction of a self-similar local regular Dirichlet form on the Sierpi\'nski carpet using $\Gamma$-convergence of stable-like non-local closed forms which gives an answer to an open problem in analysis on fractals. We also apply this construction on the Sierpi\'nski gasket.
\item Determination of walk dimension \emph{without} using diffusion. Although the walk dimension is a parameter that determines the behaviour of diffusion, we give two approaches to the determination of the walk dimension \emph{prior} to the construction of diffusion.
\begin{itemize}
\item We construct non-local regular Dirichlet forms on the Sierpi\'nski gasket from regular Dirichlet forms on certain augmented rooted tree whose certain boundary at infinity is the Sierpi\'nski gasket. Then the walk dimension is determined by a critical value of a certain parameter of the random walk on the augmented rooted tree.
\item We determine a critical value of the index of a non-local quadratic form by finding a more convenient equivalent semi-norm.
\end{itemize}
\item Approximation of local Dirichlet forms by non-local Dirichlet forms. We prove that non-local Dirichlet forms can approximate local Dirichlet forms as direct consequences of our construction of local Dirichlet forms. We also prove that on the Sierpi\'nski gasket the local Dirichlet form can be obtained as a Mosco limit of non-local Dirichlet forms. Let us emphasize that we do \emph{not} need subordination technique based on heat kernel estimates.
\end{enumerate}
\medskip

{\textbf{Acknowledgement}}
Firstly, I would like to express my sincere gratitude to my supervisor Prof. Alexander Grigor'yan. During the three years of my PhD program, he gave me a lot of valuable advice and continuous encouragement on my research. The most important thing I learnt from him is to solve difficult problems using simple ideas.

Secondly, I would like to thank Prof. Jiaxin Hu from Tsinghua University. Without his recommendation, I could not have the opportunity to do research in Bielefeld with many excellent colleagues.

Thirdly, I would like to thank Prof. Ka-Sing Lau from the Chinese University of Hong Kong. Due to his invitation, I was able to discuss fractals with many talented researchers in Hong Kong.

Fourthly, I would like to thank Prof. Martin T. Barlow, Prof. Zhen-Qing Chen, Prof. Wolfhard Hansen, Dr. Michael Hinz, Prof. Moritz Ka{\ss}mann, Prof. Jun Kigami, Prof. Takashi Kumagai and Prof. Alexander Teplyaev for their valuable suggestions and helpful discussions.

Fifthly, I would like to thank my friends, Eryan Hu, Yuhua Sun, Qingsong Gu, Shilei Kong and Jun Cao. They gave me a lot of help in my research and life. In particular, I would like to thank Eryan Hu for the discussions of basic theory of Dirichlet forms and Qingsong Gu for the discussions of two classical papers about the Sierpi\'nski carpet.

Finally, I would like to thank my parents, Gaoyong Yang and Yanfang Zhang for their consistent support.
\begin{flushright}
Bielefeld, \date{\DATUM},
\hfill Meng Yang
\end{flushright}

\tableofcontents
\newpage

\pagenumbering{arabic}
\setcounter{page}{1}

\chapter{Introduction and Main Results}\label{ch_intro}

\section{Motivation and History}

This thesis is about local and non-local Dirichlet forms on the Sierpi\'nski gasket and the Sierpi\'nski carpet. Both the Sierpi\'nski gasket and the Sierpi\'nski carpet can be regarded as two-dimensional generalizations of the Cantor set.

The Sierpi\'nski gasket (SG) is a typical example of p.c.f. (post-critically finite) self-similar sets. The SG is the simplest self-similar set in some sense, see Figure \ref{fig_SG}.

\begin{figure}[ht]
\centering
\includegraphics[width=0.5\textwidth]{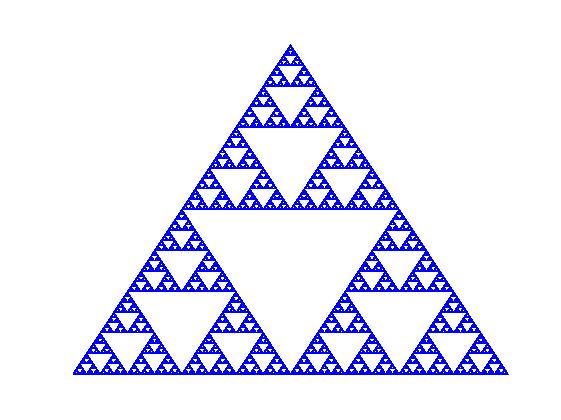}
\caption{The Sierpi\'nski Gasket}\label{fig_SG}
\end{figure}

The SG can be obtained as follows. Given an equilateral triangle with sides of length 1, divide the triangle into four congruent triangles, each with sides of length $1/2$, and remove the central one. Then divide each of the three remaining triangles into four congruent triangles, each with sides of length $1/4$, and remove the central ones, see Figure \ref{fig_SG_construction}. The SG is the compact connected set that remains after repeating the above procedure infinitely many times.

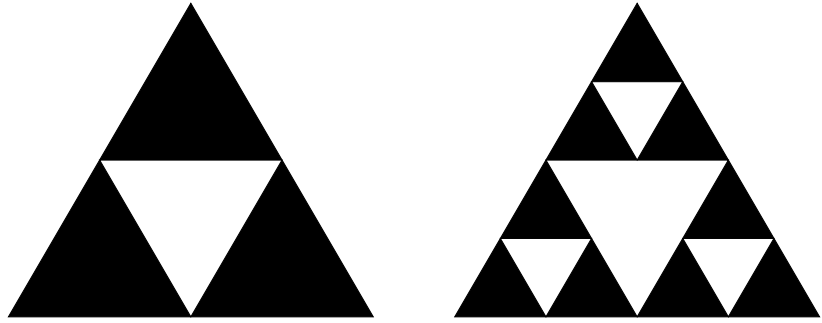
\begin{figure}[ht]
\centering
\subfigure{
\begin{tikzpicture}[scale=1.2]
\draw[fill=black] (0,0)--(4,0)--(2,2*1.7320508076)--cycle;
\draw[fill=white] (2,0)--(1,1*1.7320508076)--(3,1*1.7320508076)--cycle;

\end{tikzpicture}
}
\hspace{0.2in}
\subfigure{
\begin{tikzpicture}[scale=1.2]
\draw[fill=black] (0,0)--(4,0)--(2,2*1.7320508076)--cycle;
\draw[fill=white] (2,0)--(1,1*1.7320508076)--(3,1*1.7320508076)--cycle;
\draw[fill=white] (1,0)--(0.5,0.5*1.7320508076)--(1.5,0.5*1.7320508076)--cycle;
\draw[fill=white] (3,0)--(2.5,0.5*1.7320508076)--(3.5,0.5*1.7320508076)--cycle;
\draw[fill=white] (2,1.7320508076)--(1.5,1.5*1.7320508076)--(2.5,1.5*1.7320508076)--cycle;
\end{tikzpicture}
}
\caption{The Construction of the Sierpi\'nski Gasket}\label{fig_SG_construction}
\end{figure}

The Sierpi\'nski carpet (SC) is a typical example of \emph{non-p.c.f.} self-similar sets. It was first introduced by Wac\l aw Sierpi\'nski in 1916, see Figure \ref{fig_SC}.

\begin{figure}[ht]
\centering
\includegraphics[width=0.5\textwidth]{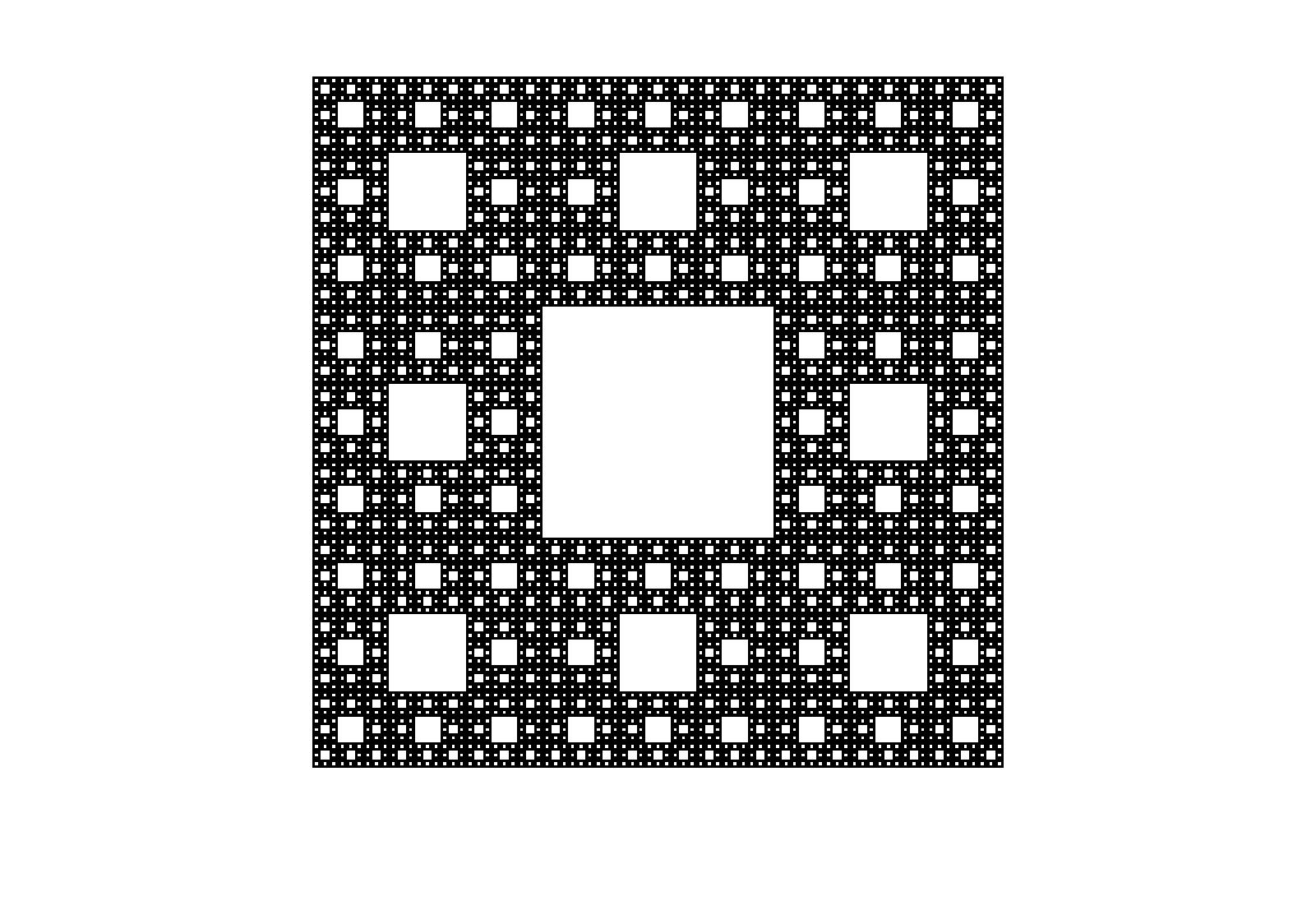}
\caption{The Sierpi\'nski Carpet}\label{fig_SC}
\end{figure}

The SC can be obtained as follows. Divide the unit square into nine congruent squares, each with sides of length $1/3$, and remove the central one. Then divide each of the eight remaining squares into nine congruent squares, each with sides of length $1/9$, and remove the central ones, see Figure \ref{fig_SC_construction}. Repeating the above procedure infinitely many times, we obtain the SC.

\begin{figure}[ht]
\centering
\begin{tikzpicture}[scale=0.4]
\draw[fill=black] (0,0)--(9,0)--(9,9)--(0,9)--cycle;
\draw[fill=white] (3,3)--(6,3)--(6,6)--(3,6)--cycle;

\draw[fill=black] (11,0)--(20,0)--(20,9)--(11,9)--cycle;
\draw[fill=white] (14,3)--(17,3)--(17,6)--(14,6)--cycle;
\draw[fill=white] (12,1)--(13,1)--(13,2)--(12,2)--cycle;
\draw[fill=white] (15,1)--(16,1)--(16,2)--(15,2)--cycle;
\draw[fill=white] (18,1)--(19,1)--(19,2)--(18,2)--cycle;
\draw[fill=white] (12,4)--(13,4)--(13,5)--(12,5)--cycle;
\draw[fill=white] (18,4)--(19,4)--(19,5)--(18,5)--cycle;
\draw[fill=white] (12,7)--(13,7)--(13,8)--(12,8)--cycle;
\draw[fill=white] (15,7)--(16,7)--(16,8)--(15,8)--cycle;
\draw[fill=white] (18,7)--(19,7)--(19,8)--(18,8)--cycle;

\end{tikzpicture}
\caption{The Construction of the Sierpi\'nski Carpet}\label{fig_SC_construction}
\end{figure}
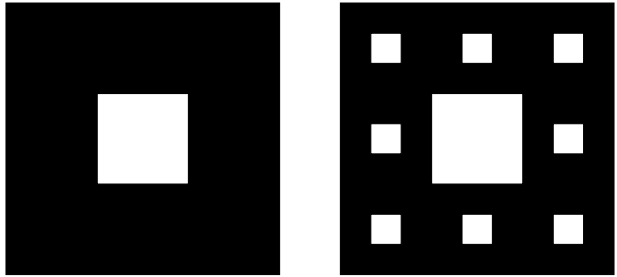

In recent decades, self-similar sets have been regarded as underlying spaces for analysis and probability. Apart from classical Hausdorff measures, this approach requires the introduction of Dirichlet forms (DFs) including local ones and non-local ones.

\subsection*{Construction of Local Regular Dirichlet Forms}

Local regular Dirichlet forms and associated diffusions (also called Brownian motion (BM)) have been constructed in many fractals, see \cite{BP88,BB89,Lin90,KZ92,Kig93,Bar98,Kig01}. In p.c.f. self-similar sets including the SG, the construction is relatively transparent, while similar construction on the SC is much more involved.

The construction of BM on the SG was given by Barlow and Perkins \cite{BP88}. The construction of a local regular Dirichlet form on the SG was given by Kigami \cite{Kig89} using difference quotients method which was generalized to p.c.f. self-similar sets in \cite{Kig93,Kig01}. Subsequently, Strichartz \cite{Str01} gave the characterization of the Dirichlet form and the Laplacian using the averaging method.

For the first time, BM on the SC was constructed by Barlow and Bass \cite{BB89} using \emph{extrinsic} approximation domains in $\R^2$ (see black domains in Figure \ref{fig_SC_construction}) and time-changed reflected BMs in those domains. Technically, \cite{BB89} is based on the following two ingredients in approximation domains:
\begin{enumerate}[(a)]
\item\label{SC_enum_a} Certain resistance estimates.
\item\label{SC_enum_b} Uniform Harnack inequality for harmonic functions with Neumann boundary condition.
\end{enumerate}
For the proof of the uniform Harnack inequality, Barlow and Bass used certain probabilistic techniques based on Knight move argument (this argument was generalized later in \cite{BB99a} to deal also with similar problems in higher dimensions).

Subsequently, Kusuoka and Zhou \cite{KZ92} gave an alternative construction of BM on the SC using \emph{intrinsic} approximation graphs and Markov chains in those graphs. However, in order to prove the convergence of Markov chains to a diffusion, they used the two aforementioned ingredients of \cite{BB89}, reformulated in terms of approximation graphs.

An important fact about the local regular Dirichlet forms on the SG and the SC is that these Dirichlet forms are resistance forms in the sense of Kigami whose existence gives many important corollaries, see \cite{Kig01,Kig03,Kig12}.

\subsection*{Heat Kernel Estimates and Walk Dimension}

Let $K$ be the SG or the SC and $\calE_\loc$ the self-similar local regular Dirichlet form on $K$. The heat semigroup associated with $\calE_\loc$ has a heat kernel $p_t(x,y)$ satisfying the following estimates: for all $x,y\in K$, $t\in(0,1)$
\begin{equation}\label{eqn_hk}
p_t(x,y)\asymp\frac{C}{t^{\alpha/\beta^*}}\exp\left(-c\left(\frac{|x-y|}{t^{1/\beta^*}}\right)^{\frac{\beta^*}{\beta^*-1}}\right),
\end{equation}
where $\alpha$ is the Hausdorff dimension of $K$ and $\beta^*$ is a new parameter called the \emph{walk dimension of the BM}. It is frequently denoted also by $d_w$. The estimates (\ref{eqn_hk}) on the SG were obtained by Barlow and Perkins \cite{BP88}. The estimates (\ref{eqn_hk}) on the SC were obtained by Barlow and Bass \cite{BB92,BB99a} and by Hambly, Kumagai, Kusuoka and Zhou \cite{HKKZ00}. Equivalent conditions of sub-Gaussian heat kernel estimates for local regular Dirichlet forms on metric measure spaces were explored by many authors, see Andres and Barlow \cite{AB15}, Grigor'yan and Hu \cite{GH14a,GH14b}, Grigor'yan, Hu and Lau \cite{GHL10,GHL15}, Grigor'yan and Telcs \cite{GT12}.

It is known that
\begin{equation}\label{eqn_alpha}
\alpha=
\begin{cases}
\log3/\log2,&\text{for the SG},\\
\log8/\log3,&\text{for the SC},
\end{cases}
\end{equation}
and
\begin{equation}\label{eqn_beta_up}
\beta^*=
\begin{cases}
\log5/\log2,&\text{for the SG},\\
\log(8\rho)/\log3,&\text{for the SC},
\end{cases}
\end{equation}
where $\rho>1$ is a parameter from the aforementioned resistance estimates, whose exact value remains still unknown. Barlow, Bass and Sherwood \cite{BB90,BBS90} gave two bounds as follows:
\begin{itemize}
\item $\rho\in[7/6,3/2]$ based on shorting and cutting technique.
\item $\rho\in[1.25147,1.25149]$ based on numerical calculation.
\end{itemize}
McGillivray \cite{McG02} generalized the above estimates to higher dimensions.

Although the walk dimension $\beta^*$ of the BM appears as a parameter in the heat kernel estimates (\ref{eqn_hk}), it was proved in \cite{GHL03} by Grigor'yan, Hu and Lau that $\beta^*$ is in fact an invariant of the underlying metric measure space.

\subsection*{Approximation of Local DFs by Non-Local DFs}

Consider the following stable-like non-local quadratic form
\begin{equation}\label{eqn_nonlocal}
\begin{aligned}
&\calE_\beta(u,u)=\int_K\int_K\frac{(u(x)-u(y))^2}{|x-y|^{\alpha+\beta}}\nu(\md x)\nu(\md y),\\
&\calF_\beta=\myset{u\in L^2(K;\nu):\calE_\beta(u,u)<+\infty},
\end{aligned}
\end{equation}
where $\alpha=\mathrm{dim}_{\mathcal{H}}K$ as above, $\nu$ is the normalized Hausdorff measure on $K$ of dimension $\alpha$, and $\beta>0$ is so far arbitrary.

Using the heat kernel estimates (\ref{eqn_hk}) and subordination technique, it was proved in \cite{Pie08} that
\begin{equation}\label{eqn_approximation}
\varliminf_{\beta\uparrow\beta^*}(\beta^*-\beta)\calE_\beta(u,u)\asymp\calE_\loc(u,u)\asymp\varlimsup_{\beta\uparrow\beta^*}(\beta^*-\beta)\calE_\beta(u,u)
\end{equation}
for all $u\in\calF_\loc$.
This is similar to the following classical result
\begin{equation}\label{eqn_classical}
\lim_{\beta\uparrow2}(2-\beta)\int_{\R^n}\int_{\R^n}\frac{(u(x)-u(y))^2}{|x-y|^{n+\beta}}\md x\md y=C(n)\int_{\R^n}|\nabla u(x)|^2\md x,
\end{equation}
for all $u\in W^{1,2}(\R^n)$, where $C(n)$ is some positive constant (see \cite[Example 1.4.1]{FOT11}).

\section{Goals of the Thesis}

In this thesis, we are concerned with the following three problems in analysis on the SG and the SC.

\begin{enumerate}[(1)]
\item A unified purely \emph{analytic} construction of local regular DFs on the SG and the SC.
\item Determination of walk dimension \emph{without} using diffusion.
\item Approximation of local DFs by non-local DFs.
\end{enumerate}

\subsection*{Analytic Construction of Local Regular Dirichlet Forms}

The problem of a purely analytic construction of a local regular Dirichlet form on the SC (similar to that on p.c.f. self-similar sets) has been open until now and was explicitly raised by Hu \cite{Hu13}.

We give a direct purely \emph{analytic} construction of local regular Dirichlet forms which works on the SC and the SG. Note that Kigami's construction can not be applied on the SC because it relies on certain monotonicity result and harmonic extension result which originate from certain compatible condition.

The most essential ingredient of our construction on the SC is a certain resistance estimate in approximation graphs which is similar to the ingredient (\ref{SC_enum_a}). We obtain the second ingredient---the uniform Harnack inequality on approximation graphs as a consequence of (\ref{SC_enum_a}). A possibility of such an approach was mentioned in \cite{BCK05}. In fact, in order to prove a uniform Harnack inequality on approximation graphs, we extend resistance estimates from finite graphs to the infinite graphical Sierpi\'nski carpet (see Figure \ref{fig_graphSC}) and then deduce from them a uniform Harnack inequality---first on the infinite graph and then also on finite graphs. By this argument, we avoid the most difficult part of the proof in \cite{BB89}.

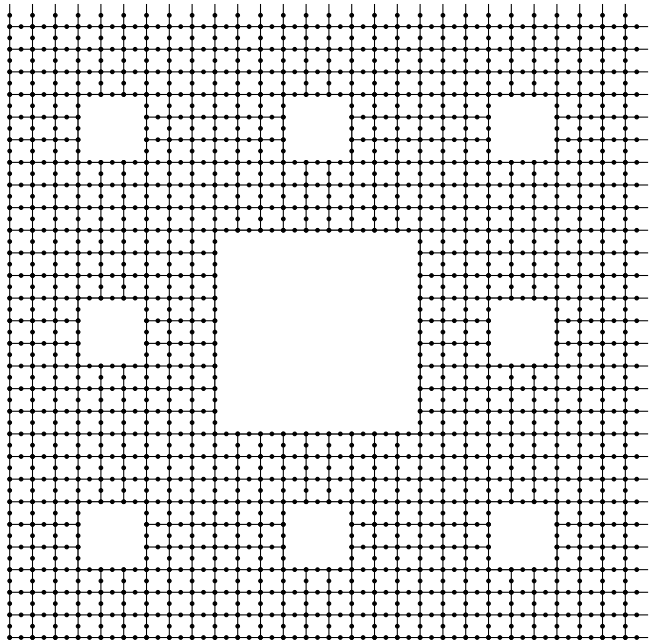
\begin{figure}[ht]
\centering
\begin{tikzpicture}[scale=0.3]

\foreach \x in {0,1,...,27}
\draw (\x,0)--(\x,28);

\foreach \y in {0,1,...,27}
\draw (0,\y)--(28,\y);

\foreach \x in {0,1,2}
\foreach \y in {0,1,2}
\draw[fill=white] (9*\x+3,9*\y+3)--(9*\x+6,9*\y+3)--(9*\x+6,9*\y+6)--(9*\x+3,9*\y+6)--cycle;

\draw[fill=white] (9,9)--(18,9)--(18,18)--(9,18)--cycle;

\foreach \x in {0,1,...,27}
\foreach \y in {0,0.5,1,...,27.5}
\draw[fill=black] (\x,\y) circle (0.08);

\foreach \y in {0,1,...,27}
\foreach \x in {0,0.5,1,...,27.5}
\draw[fill=black] (\x,\y) circle (0.08);

\draw[fill=white,draw=white] (9.25,9.25)--(17.75,9.25)--(17.75,17.75)--(9.25,17.75)--cycle;

\foreach \x in {0,1,2}
\foreach \y in {0,1,2}
\draw[fill=white,draw=white] (9*\x+3.25,9*\y+3.25)--(9*\x+5.75,9*\y+3.25)--(9*\x+5.75,9*\y+5.75)--(9*\x+3.25,9*\y+5.75)--cycle;

\end{tikzpicture}
\caption{The Infinite Graphical Sierpi\'nski Carpet}\label{fig_graphSC}
\end{figure}

\subsection*{Determination of Walk Dimension Without Using Diffusion}

We develop the techniques of determination of walk dimensions of fractal spaces without using diffusions. Using quadratic form $(\calE_\beta,\calF_\beta)$ defined in (\ref{eqn_nonlocal}), we define the walk dimension of the fractal space $K$ by
\begin{equation}\label{eqn_beta_low}
\beta_*:=\sup\myset{\beta>0:(\calE_\beta,\calF_\beta)\text{ is a regular Dirichlet form on }L^2(K;\nu)}.
\end{equation}
In fact, this definition applies to any metric measure space and does not require a priori construction of any Dirichlet form.

However, if the local regular Dirichlet form with heat kernel estimates (\ref{eqn_hk}) is available, then by means of subordination technique, it was proved in \cite{Pie00,GHL03} that $(\calE_\beta,\calF_\beta)$ is a regular Dirichlet form on $L^2(K;\nu)$ if $\beta\in(0,\beta^*)$ and that $\calF_\beta$ consists only of constant functions if $\beta\in(\beta^*,+\infty)$, which implies the identity
$$\beta_*=\beta^*.$$
We provide in this thesis direct approaches of computation of $\beta_*$ based on (\ref{eqn_beta_low}) without using heat kernels.

We prove by means of these approaches that
$$\beta_*=
\begin{cases}
\log5/\log2,&\text{for the SG},\\
\log(8\rho)/\log3,&\text{for the SC},
\end{cases}
$$
thus matching (\ref{eqn_beta_up}).

On the SG, we provide an approach by using reflection technique and trace technique from abstract Dirichlet form theory to construct non-local regular Dirichlet forms on the SG from regular Dirichlet forms on certain augmented rooted tree whose certain boundary at infinity is the SG.

\subsection*{Approximation of Local DFs by Non-Local DFs}

We prove on the SG and the SC the relations (\ref{eqn_approximation}) between $\calE_\loc$ and $\calE_\beta$ without using subordination technique. In fact, (\ref{eqn_approximation}) follows as direct consequences of our construction of $\calE_\loc$. Moreover, on the SG, we prove that $\calE_\loc$ can be obtained as a Mosco limit of non-local Dirichlet forms $E_\beta$ as $\beta\uparrow\beta_*$, where $E_\beta\asymp\calE_\beta$.

\vspace{1.5em}

Hence, in this thesis, we develop an alternative approach to analysis on fractals that is based on systematic use of the quadratic form $\calE_\beta$ and the notion of the walk dimension $\beta_*$ defined in (\ref{eqn_beta_low}). Although this approach has been implemented on the SG and the SC, there are indications that it may work in more general spaces.

An ultimate goal of this approach would be to construct local regular Dirichlet forms on rather general fractal spaces as renormalized limits of $\calE_\beta$ as $\beta\uparrow\beta_*$. However, this will be a subject of another work.

\section{Basic Notions of the SG and the SC}\label{sec_notion}

First, we give basic notions of the SG as follows.

Consider the following points in $\R^2$:
$$p_0=(0,0),p_1=(1,0),p_2=(\frac{1}{2},\frac{\sqrt{3}}{2}).$$
Let $f_i(x)=(x+p_i)/2,x\in\R^2,i=0,1,2$, then the SG is the unique non-empty compact set $K$ in $\R^2$ satisfying $K=\cup_{i=0}^2f_i(K)$. Let $\nu$ be the normalized Hausdorff measure on $K$ of dimension $\alpha=\log3/\log2$. Then $(K,|\cdot|,\nu)$ is a metric measure space, where $|\cdot|$ is the Euclidean metric in $\R^2$.

Let
$$V_0=\myset{p_0,p_1,p_2},V_{n+1}=f_0(V_n)\cup f_1(V_n)\cup f_2(V_n)\text{ for all }n\ge0.$$
Then $\myset{V_n}$ is an increasing sequence of finite sets and $K$ is the closure of $V^*=\cup_{n=0}^\infty V_n$.

Let $W_0=\myset{\emptyset}$ and
$$W_n=\myset{w=w_1\ldots w_n:w_i=0,1,2,i=1,\ldots,n}\text{ for all }n\ge1.$$

For all
\begin{align*}
w^{(1)}&=w^{(1)}_1\ldots w^{(1)}_m\in W_m,\\
w^{(2)}&=w^{(2)}_1\ldots w^{(2)}_n\in W_n,
\end{align*}
denote $w^{(1)}w^{(2)}\in W_{m+n}$ by
$$w^{(1)}w^{(2)}=w^{(1)}_1\ldots w^{(1)}_mw^{(2)}_1\ldots w^{(2)}_n.$$
For all $i=0,1,2$, denote
$$i^n=\underbrace{i\ldots i}_{n\ \text{times}}.$$
For all $w=w_1\ldots w_{n-1}w_n\in W_n$, denote $w^-=w_1\ldots w_{n-1}\in W_{n-1}$.

For all $w=w_1\ldots w_n\in W_n$, let
\begin{align*}
f_w&=f_{w_1}\circ\ldots\circ f_{w_n},\\
V_w&=f_{w_1}\circ\ldots\circ f_{w_n}(V_0),\\
K_w&=f_{w_1}\circ\ldots\circ f_{w_n}(K),\\
P_w&=f_{w_1}\circ\ldots\circ f_{w_{n-1}}(p_{w_n}), 
\end{align*}
where $f_\emptyset=\mathrm{id}$ is the identity map.

For all $n\ge1$, let $X_n$ be the graph with vertex set $W_n$ and edge set $H_n$ given by
$$H_n=\myset{(w^{(1)},w^{(2)}):w^{(1)},w^{(2)}\in W_n,w^{(1)}\ne w^{(2)},K_{w^{(1)}}\cap K_{w^{(2)}}\ne\emptyset}.$$
For example, we have the figure of $X_3$ in Figure \ref{fig_X3}. Denote $w^{(1)}\sim_n w^{(2)}$ if $(w^{(1)},w^{(2)})\in H_n$. If $w^{(1)}\sim_nw^{(2)}$ satisfies $P_{w^{(1)}}\ne P_{w^{(2)}}$, we say that $w^{(1)}\sim_nw^{(2)}$ is of type \Rmnum{1}. If $w^{(1)}\sim_nw^{(2)}$ satisfies $P_{w^{(1)}}=P_{w^{(2)}}$, we say that $w^{(1)}\sim_nw^{(2)}$ is of type \Rmnum{2}. For example, $000\sim_3001$ is of type \Rmnum{1}, $001\sim_3010$ is of type \Rmnum{2}.




\begin{figure}[ht]
  \centering
  \begin{tikzpicture}
  \draw (0,0)--(9,0);
  \draw (0,0)--(9/2,9/2*1.7320508076);
  \draw (9,0)--(9/2,9/2*1.7320508076);
  
  \draw (3,0)--(3/2,3/2*1.7320508076);
  \draw (6,0)--(15/2,3/2*1.7320508076);
  \draw (3,3*1.7320508076)--(6,3*1.7320508076);
  
  \draw (4,4*1.7320508076)--(5,4*1.7320508076);
  \draw (7/2,7/2*1.7320508076)--(4,3*1.7320508076);
  \draw (11/2,7/2*1.7320508076)--(5,3*1.7320508076);
  
  \draw (1,1*1.7320508076)--(2,1.7320508076);
  \draw (1/2,1/2*1.7320508076)--(1,0);
  \draw (2,0)--(5/2,1/2*1.7320508076);
  
  \draw (7,1.7320508076)--(8,1.7320508076);
  \draw (13/2,1.7320508076/2)--(7,0);
  \draw (17/2,1.7320508076/2)--(8,0);
  
  \draw[fill=black] (0,0) circle (0.06);
  \draw[fill=black] (1,0) circle (0.06);
  \draw[fill=black] (2,0) circle (0.06);
  \draw[fill=black] (3,0) circle (0.06);
  \draw[fill=black] (6,0) circle (0.06);
  \draw[fill=black] (7,0) circle (0.06);
  \draw[fill=black] (8,0) circle (0.06);
  \draw[fill=black] (9,0) circle (0.06);
  \draw[fill=black] (1/2,1.7320508076/2) circle (0.06);
  \draw[fill=black] (5/2,1.7320508076/2) circle (0.06);
  \draw[fill=black] (13/2,1.7320508076/2) circle (0.06);
  \draw[fill=black] (17/2,1.7320508076/2) circle (0.06);
  \draw[fill=black] (1,1.7320508076) circle (0.06);
  \draw[fill=black] (2,1.7320508076) circle (0.06);
  \draw[fill=black] (7,1.7320508076) circle (0.06);
  \draw[fill=black] (8,1.7320508076) circle (0.06);
  \draw[fill=black] (3/2,3/2*1.7320508076) circle (0.06);
  \draw[fill=black] (15/2,3/2*1.7320508076) circle (0.06);
  \draw[fill=black] (3,3*1.7320508076) circle (0.06);
  \draw[fill=black] (4,3*1.7320508076) circle (0.06);
  \draw[fill=black] (5,3*1.7320508076) circle (0.06);
  \draw[fill=black] (6,3*1.7320508076) circle (0.06);
  \draw[fill=black] (7/2,7/2*1.7320508076) circle (0.06);
  \draw[fill=black] (11/2,7/2*1.7320508076) circle (0.06);
  \draw[fill=black] (4,4*1.7320508076) circle (0.06);
  \draw[fill=black] (5,4*1.7320508076) circle (0.06);
  \draw[fill=black] (9/2,9/2*1.7320508076) circle (0.06);
  
  \draw (0,-0.3) node {$000$};
  \draw (1,-0.3) node {$001$};
  \draw (2,-0.3) node {$010$};
  \draw (3,-0.3) node {$011$};
  \draw (6,-0.3) node {$100$};
  \draw (7,-0.3) node {$101$};
  \draw (8,-0.3) node {$110$};
  \draw (9,-0.3) node {$111$};
  
  \draw (0.1,1/2*1.7320508076) node {$002$};
  \draw (2.9,1/2*1.7320508076) node {$012$};
  \draw (6.1,1/2*1.7320508076) node {$102$};
  \draw (8.9,1/2*1.7320508076) node {$112$};
  
  \draw (0.6,1*1.7320508076) node {$020$};
  \draw (2.4,1*1.7320508076) node {$021$};
  \draw (6.6,1*1.7320508076) node {$120$};
  \draw (8.4,1*1.7320508076) node {$121$};
  
  \draw (1.1,3/2*1.7320508076) node {$022$};
  \draw (7.9,3/2*1.7320508076) node {$122$};
  
  \draw (2.6,3*1.7320508076) node {$200$};
  \draw (4,3*1.7320508076-0.3) node {$201$};
  \draw (5,3*1.7320508076-0.3) node {$210$};
  \draw (6.4,3*1.7320508076) node {$211$};
  
  \draw (3.1,7/2*1.7320508076) node {$202$};
  \draw (5.9,7/2*1.7320508076) node {$212$};
  
  \draw (3.6,4*1.7320508076) node {$220$};
  \draw (5.4,4*1.7320508076) node {$221$};
  
  \draw (4.5,9/2*1.7320508076+0.3) node {$222$};
  
  \end{tikzpicture}
  \caption{$X_3$}\label{fig_X3}
\end{figure}


For all $n\ge1,u\in L^2(K;\nu)$, let $P_nu:W_n\to\R$ be given by
$$P_nu(w)=\frac{1}{\nu(K_{w})}\int_{K_w}u(x)\nu(\md x)=\int_K(u\circ f_w)(x)\nu(\md x),w\in W_n.$$

Then, we give basic notions of the SC as follows.

Consider the following points in $\R^2$:
$$p_0=(0,0),p_1=(\frac{1}{2},0),p_2=(1,0),p_3=(1,\frac{1}{2}),$$
$$p_4=(1,1),p_5=(\frac{1}{2},1),p_6=(0,1),p_7=(0,\frac{1}{2}).$$
Let $f_i(x)=(x+2p_i)/3$, $x\in\R^2$, $i=0,\ldots,7$. Then the SC is the unique non-empty compact set $K$ in $\R^2$ satisfying $K=\cup_{i=0}^7f_i(K)$. Let $\nu$ be the normalized Hausdorff measure on $K$ of dimension $\alpha=\log8/\log3$. Then $(K,|\cdot|,\nu)$ is a metric measure space, where $|\cdot|$ is the Euclidean metric in $\R^2$.

Let
$$V_0=\myset{p_0,\ldots,p_7},V_{n+1}=\cup_{i=0}^7f_i(V_n)\text{ for all }n\ge0.$$
Then $\myset{V_n}$ is an increasing sequence of finite sets and $K$ is the closure of $V^*=\cup_{n=0}^\infty V_n$.

Let $W_0=\myset{\emptyset}$ and 
$$W_n=\myset{w=w_1\ldots w_n:w_i=0,\ldots,7,i=1,\ldots,n}\text{ for all }n\ge1.$$
Similar to the SG, define $w^{(1)}w^{(2)}\in W_{m+n}$ and $i^n$ for all $w^{(1)}\in W_m,w^{(2)}\in W_n,i=0,\ldots,7$.

For all $w=w_1\ldots w_n\in W_n$, let
\begin{align*}
f_w&=f_{w_1}\circ\ldots\circ f_{w_n},\\
V_w&=f_{w_1}\circ\ldots\circ f_{w_n}(V_0),\\
K_w&=f_{w_1}\circ\ldots\circ f_{w_n}(K),\\
P_w&=f_{w_1}\circ\ldots\circ f_{w_{n-1}}(p_{w_n}),
\end{align*}
where $f_\emptyset=\mathrm{id}$ is the identity map.

\section{Statement of the Main Results}

We list the main results of this thesis. We use the notions introduced in Section \ref{sec_notion}.

\subsection*{Analytic Construction of Local Regular Dirichlet Forms}

We give a unified purely analytic construction of local regular Dirichlet forms on the SG and the SC using $\Gamma$-convergence of non-local closed forms. On the SG, this construction is much more complicated than that of Kigami, but this construction can be applied to more general fractal spaces, in particular, the SC.

\begin{mythm}\label{thm_main_SG_con}
Let $K$ be the SG. There exists a self-similar strongly local regular Dirichlet form $(\calE_\loc,\calF_\loc)$ on $L^2(K;\nu)$ satisfying
\begin{align*}
&\calE_\loc(u,u)\asymp\sup_{n\ge1}\left(\frac{5}{3}\right)^n\sum_{w^{(1)}\sim_nw^{(2)}}\left(P_nu(w^{(1)})-P_nu(w^{(2)})\right)^2,\\
&\calF_\loc=\myset{u\in L^2(K;\nu):\calE_\loc(u,u)<+\infty}.
\end{align*}
\end{mythm}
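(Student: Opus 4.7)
The plan is to realize $(\calE_\loc,\calF_\loc)$ as a $\Gamma$-limit (along a subsequence) on $L^2(K;\nu)$ of the renormalized stable-like non-local forms $(\beta^*-\beta)\calE_\beta$ as $\beta\uparrow\beta^*=\log 5/\log 2$, and to identify its energy with the graph-approximation functionals
$$a_n(u):=\left(\tfrac{5}{3}\right)^n\sum_{w^{(1)}\sim_n w^{(2)}}\left(P_nu(w^{(1)})-P_nu(w^{(2)})\right)^2.$$
The analytic heart of the argument is the two-sided estimate
$$(\beta^*-\beta)\,\calE_\beta(u,u)\asymp\sup_{n\ge 1}a_n(u),$$
with constants independent of $\beta\in(\beta_0,\beta^*)$ and $u\in L^2(K;\nu)$. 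For the upper bound I would partition the double integral defining $\calE_\beta$ over pairs of level-$n$ cells $(K_{w^{(1)}},K_{w^{(2)}})$, use $|x-y|\asymp 2^{-n}$ for neighboring cells, and reduce cell-oscillations of $u$ to differences of finer-level cell averages through a telescoping identity; the resulting geometric series in $2^{n(\beta^*-\beta)}$ cancels exactly against the prefactor $(\beta^*-\beta)$. The reverse bound uses a Poincaré-type inequality on each $K_w$ to dominate a cell-average difference by a local integral of the non-local kernel on nearby cells, together with the resistance estimates on $X_n$ that fix the scaling factor $5/3=2^{\beta^*-\alpha}$.

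Once this comparability is established, compactness of $\Gamma$-convergence of quadratic forms on $L^2(K;\nu)$ produces a $\Gamma$-convergent subsequence of $(\beta^*-\beta)\calE_\beta$; the limit $(\calE_\loc,\calF_\loc)$ is a closed symmetric Markovian form with
$$\calF_\loc=\myset{u\in L^2(K;\nu):\sup_{n\ge 1}a_n(u)<+\infty}$$
and the asymptotic equivalence claimed in the theorem. Regularity follows by exhibiting sufficiently many continuous representatives in $\calF_\loc$---for instance, the piecewise-harmonic extensions of arbitrary functions prescribed on $V_n$, whose $a_m$-energies are uniformly bounded in $m\ge n$---and applying Stone--Weierstrass. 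Strong locality is read off directly from the discrete representation: if $u,v\in\calF_\loc$ have disjoint supports and $u$ is constant on a neighborhood $U$ of $\mathrm{supp}\,v$, then for all sufficiently large $n$ every edge $w^{(1)}\sim_n w^{(2)}$ contributing to $a_n(v)$ has $K_{w^{(1)}}\cup K_{w^{(2)}}\subset U$, so the cross term $(P_nu(w^{(1)})-P_nu(w^{(2)}))(P_nv(w^{(1)})-P_nv(w^{(2)}))$ vanishes; polarization then gives $\calE_\loc(u,v)=0$. Self-similarity is forced by the exact identity $a_{n+1}(u)=\tfrac{5}{3}\sum_{i=0}^{2}a_n(u\circ f_i)$, which is upgraded from $\asymp$ to an equality for a suitable representative within the equivalence class by an averaging-over-$n$ renormalization step.

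The principal obstacle will be the uniform lower bound $(\beta^*-\beta)\calE_\beta(u,u)\gtrsim a_n(u)$: the correct weight $(5/3)^n$ must be extracted on every scale with constants independent of $\beta$, and this is precisely where the resistance estimates on approximation graphs $X_n$ and the uniform Harnack inequality mentioned in the introduction enter. A secondary difficulty is ruling out a trivial $\Gamma$-limit---one must verify that $\calF_\loc$ contains non-constant continuous functions---which I would settle by constructing explicit harmonic extensions of arbitrary data on $V_0$ whose graph energies $a_n$ remain bounded as $n\to\infty$, mirroring Kigami's building blocks but invoked here only to guarantee non-degeneracy rather than as the source of the form.
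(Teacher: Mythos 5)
Your overall strategy---extract a $\Gamma$-limit of the renormalized non-local forms along $\beta_n\uparrow\beta^*$, identify its energy with $\sup_n a_n(u)$, get regularity from harmonic building blocks via Stone--Weierstrass, and recover exact self-similarity by an averaging renormalization---is the same as the paper's. But the two steps you designate as the ``analytic heart'' are stated in forms that are actually false, and the fixes are not cosmetic. First, the claimed two-sided bound $(\beta^*-\beta)\,\calE_\beta(u,u)\asymp\sup_{n}a_n(u)$ with constants independent of $\beta$ cannot hold in the lower-bound direction: writing $(\beta^*-\beta)\calE_\beta(u,u)\asymp(1-\lambda)\sum_n\lambda^n a_n(u)$ with $\lambda=2^{\beta-\beta^*}$, a function supported in a single cell of generation $M$ has $a_n(u)$ negligible for $n<M$ and of order $1$ for $n\ge M$, so for fixed $\lambda<1$ the left side is $O(\lambda^M)$ while the right side is of order $1$. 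What is true, and what the paper proves, is the \emph{weak monotonicity} $a_n(u)\le C\,a_{n+m}(u)$ (via resistance estimates on the graphs $X_n$ and the mean-value operator), which yields only $\varliminf_{\beta\uparrow\beta^*}(\beta^*-\beta)\calE_\beta(u,u)\gtrsim\sup_na_n(u)$; this liminf statement has to be threaded through the $\Gamma$-convergence identification using recovery sequences $u_n\to u$ together with $a_k(u)=\lim_n a_k(u_n)\le C\varliminf_n a_n(u_n)$ and the normalization $0<\beta^*-\beta_n<1/(n+1)$. Your plan as written would stall exactly here. (Incidentally, no Harnack inequality is needed on the SG; that ingredient belongs to the carpet.)

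Second, the identity $a_{n+1}(u)=\tfrac53\sum_{i=0}^2a_n(u\circ f_i)$ is not exact: the level-$(n+1)$ edge set contains, in addition to the edges internal to each level-$1$ cell, the type~II edges joining distinct cells, so one only has $\tfrac53\sum_i a_n(u\circ f_i)\le a_{n+1}(u)$ with a genuinely nonnegative deficit. Consequently the Ces\`aro averaging cannot be applied to $a_n$ itself to produce an exactly self-similar form, and your strong-locality argument collapses as well: since $\calE_\loc$ is only \emph{comparable} to $\sup_na_n$ (which is not even a quadratic form), vanishing of the discrete cross terms for large $n$ cannot be ``polarized'' into $\calE_\loc(u,v)=0$. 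The paper's resolution is to introduce the auxiliary functionals $\mybar{\calE}^{(n)}(u,u)=(5/3)^n\sum_{w\in W_n}\sum_{i}\bigl(u(f_w(p_i))-\int_K u\circ f_w\,\md\nu\bigr)^2$, which satisfy the self-similar recursion \emph{exactly} by construction, prove $\mybar{\calE}^{(n)}(u,u)\asymp\sup_k a_k(u)$ using the H\"older estimate and weak monotonicity, and then take a convergent subsequence of Ces\`aro averages (via separability and a diagonal argument) to obtain an exactly self-similar $\calE_\loc$; strong locality is then deduced from self-similarity by rescaling to a generation where supports are separated, and the Markov property from strong locality via the $u^+,u^-$ decomposition. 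Without some substitute for this exactly self-similar intermediate object, your proposal does not deliver self-similarity, locality, or (since the paper's $\Gamma$-limit is taken for the non-Markovian discretized forms $\frakE_\beta$) necessarily the Markov property.
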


\begin{mythm}\label{thm_main_SC_con}
Let $K$ be the SC. There exists a self-similar strongly local regular Dirichlet form $(\calE_{\loc},\calF_{\loc})$ on $L^2(K;\nu)$ satisfying
\begin{align*}
&\calE_{\loc}(u,u)\asymp\sup_{n\ge1}3^{(\beta^*-\alpha)n}\sum_{w\in W_n}
{\sum_{\mbox{\tiny
$
\begin{subarray}{c}
p,q\in V_w\\
|p-q|=2^{-1}\cdot3^{-n}
\end{subarray}
$
}}}
(u(p)-u(q))^2,\\
&\calF_{\loc}=\myset{u\in C(K):\calE_\loc(u,u)<+\infty}.
\end{align*}
\end{mythm}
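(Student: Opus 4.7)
The plan is to construct $(\calE_{\loc},\calF_{\loc})$ on the SC as a $\Gamma$-limit of suitably rescaled stable-like non-local forms $\calE_\beta$ as $\beta \uparrow \beta^*$, using the intrinsic approximation graphs of the SC rather than extrinsic Euclidean domains. The target form at level $n$ is naturally
\[
a_n(u,u) := 3^{(\beta^*-\alpha)n}\sum_{w \in W_n}\sum_{\substack{p,q\in V_w \\ |p-q|=2^{-1}3^{-n}}}(u(p)-u(q))^2,
\]
and since $3^{\beta^*-\alpha}=\rho$, the prefactor $\rho^n$ is exactly the conjectured scaling of the effective resistance across a level-$n$ cell. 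So the first step is to establish resistance estimates of order $\rho^n$ on the finite approximation graph (the analogue of Figure 1.3) and, crucially, extend them to the infinite graphical Sierpi\'nski carpet in Figure 1.3. This extension is what allows one to prove a \emph{uniform} elliptic Harnack inequality on all finite approximation graphs by a single argument on the infinite graph, bypassing the Knight move machinery of Barlow--Bass.

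Armed with the uniform Harnack inequality and the resistance bounds, I would next prove two key stability properties of the sequence $\{a_n\}$: a subadditivity-type estimate $a_n \lesssim a_{n+k}$ (monotonicity up to a constant, obtained by harmonic extension inside cells) and a reverse a priori bound $a_{n+k} \lesssim a_n$ for a fixed class of test functions (using Harnack to control oscillations between successive graph levels). Combined, these give $a_n(u,u) \asymp \sup_n a_n(u,u)$ on the relevant function class, which both identifies the candidate semi-norm appearing in the theorem and provides enough compactness to extract limits. In parallel, I would show that the non-local forms $\calE_\beta$ satisfy $(\beta^*-\beta)\calE_\beta(u,u) \asymp a_{n(\beta)}(u,u)$ for an appropriate scale $n(\beta)$, so that $\Gamma$-convergence of $\calE_\beta$ is reduced to the graph-level analysis.

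With this in hand, define $\calE_{\loc}(u,u) := \sup_{n\ge 1} a_n(u,u)$ on the domain $\calF_{\loc}$ of continuous functions with $\sup_n a_n(u,u) < \infty$. The remaining task is to verify the four structural properties. \emph{Closedness and Markovianity} are inherited from the corresponding properties of each $a_n$, which is manifestly a quadratic form invariant under unit contractions. \emph{Strong locality} would follow from the fact that $a_n$ only couples points inside a common level-$n$ cell, so the supremum cannot detect two functions whose supports are separated by any positive distance. \emph{Self-similarity} follows from the scaling $a_n(u\circ f_w,u\circ f_w) = \rho^{-|w|} a_{n-|w|}(u,u)\vert_{\text{on }K_w\text{-piece}}$, which is a direct consequence of the definition and the choice of scaling factor $\rho^n$. \emph{Regularity} (density of $\calF_{\loc}$ in $C(K)$ and in $L^2$) requires constructing sufficiently many functions with finite form; the uniform Harnack inequality supplies these by producing equicontinuous discrete harmonic extensions on every graph level that solve prescribed boundary value problems and pass to continuous limits.

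I expect the main obstacle to be the transfer of the resistance estimate from finite approximation graphs to the infinite graphical SC and the subsequent uniform Harnack inequality. On $\R^2$ domains this was the heart of the Barlow--Bass construction and required delicate probabilistic input; replacing it by a purely graph-theoretic/analytic argument based on resistance bounds is where the novelty lies and where the technical difficulties will concentrate. A secondary difficulty will be proving that $\calF_{\loc}$ is large enough to be regular without circular reference to a pre-existing diffusion, which I plan to handle by producing piecewise harmonic test functions directly from the discrete level and then using the monotonicity of $a_n$ to control their forms uniformly in $n$.
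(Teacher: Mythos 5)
Your overall strategy matches the paper's: resistance estimates of order $\rho^n$ on the approximation graphs extended to the infinite graphical SC, a uniform Harnack inequality derived from them (avoiding the Knight-move machinery), the weak monotonicity $a_n\lesssim a_{n+m}$, $\Gamma$-convergence of the rescaled non-local forms, and production of good test functions via Harnack to get regularity. Two remarks on the intermediate steps: the paper never proves, and does not need, your ``reverse a priori bound $a_{n+k}\lesssim a_n$''; the single weak monotonicity $a_n\le Ca_{n+m}$ already yields $\sup_n a_n\le C\varliminf_n a_n$ by an elementary lemma, which is all that is used. Also, the $\Gamma$-limit is taken along the Hino--Kumagai cell-average forms $\frakE_\beta$ (which are closed but \emph{not} Markovian), so Markovianity cannot simply be ``inherited'' through the limit.

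The genuine gap is in your final step. You define $\calE_{\loc}(u,u):=\sup_{n\ge1}a_n(u,u)$ and assert that closedness, Markovianity, strong locality and self-similarity follow directly. But a supremum of quadratic forms is not a quadratic form: it has no associated bilinear form and need not satisfy the parallelogram law, so this object is not a Dirichlet form at all, and the subsequent verifications do not even make sense for it. This is precisely why the paper only obtains $\calE\asymp\sup_n a_n$ (equivalence, not equality) for the $\Gamma$-limit $\calE$, which \emph{is} a genuine closed form, and then must still work to repair the remaining properties: it forms the rescaled pullbacks $\mybar{\calE}^{(n)}(u,u)=\rho^n\sum_{w\in W_n}\calE(u\circ f_w,u\circ f_w)$, takes Ces\`aro averages $\tilde{\calE}^{(n)}$, and extracts a convergent subsequence by a diagonal argument over a countable dense set of $(\calF,\calE_1)$; the limit is then \emph{exactly} self-similar by construction, strong locality is deduced from self-similarity (supports separated by $\delta>0$ are separated at the cell level for $3^{1-n}<\delta$), and Markovianity is deduced from strong locality via the identity $\calE_{\loc}(u^+,u^-)=0$. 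Without this averaging-over-scales step your construction produces neither a bilinear form nor an exactly self-similar one, and the chain self-similarity $\Rightarrow$ strong locality $\Rightarrow$ Markovianity collapses.
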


Theorem \ref{thm_main_SG_con} is Theorem \ref{SG_con_thm_BM}. Theorem \ref{thm_main_SC_con} is Theorem \ref{SC_con_thm_BM}.

\subsection*{Determination of Walk Dimension Without Using Diffusion}

We give the determination of the walk dimensions of the SG and the SC. The point of our approach is that we do \emph{not} need the diffusion. This gives a partial answer to a problem raised by Pietruska-Pa\l uba \cite[PROBLEM 3]{Pie09}.

Denote that
$$\beta^*:=
\begin{cases}
\log5/\log2,&\text{for the SG},\\
\log(8\rho)/\log3,&\text{for the SC},
\end{cases}
$$
where $\rho$ is some parameter in resistance estimates.

Recall that
$$\beta_*:=\sup\myset{\beta>0:(\calE_\beta,\calF_\beta)\text{ is a regular Dirichlet form on }L^2(K;\nu)}.$$

\begin{mythm}\label{thm_main_det}
Let $K$ be the SG or the SC. For all $\beta\in(\alpha,\beta^*)$, the quadratic form $(\calE_\beta,\calF_\beta)$ is a regular Dirichlet form on $L^2(K;\nu)$. For all $\beta\in[\beta^*,+\infty)$, the space $\calF_\beta$ consists only of constant functions. Consequently, $\beta_*=\beta^*$.
\end{mythm}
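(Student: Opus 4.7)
The plan is to reduce both halves of the theorem to the discrete graph characterizations of $\calE_\loc$ established in Theorems \ref{thm_main_SG_con} and \ref{thm_main_SC_con}, bridged by a Besov-type equivalent semi-norm for the non-local form $\calE_\beta$. I would denote by $D_n(u)$ the relevant discrete quadratic functional at scale $n$ (the averaged form $\sum_{w^{(1)}\sim_nw^{(2)}}(P_nu(w^{(1)})-P_nu(w^{(2)}))^2$ for the SG, and the pointwise boundary form appearing in Theorem \ref{thm_main_SC_con} for the SC), set $r=2$ for the SG and $r=3$ for the SC, and aim to prove the equivalence
\begin{equation*}
\calE_\beta(u,u)\asymp\sum_{n=0}^{\infty}r^{n(\beta-\alpha)}D_n(u).
\end{equation*}
Combined with $\calE_\loc(u,u)\asymp\sup_{n}r^{n(\beta^{*}-\alpha)}D_n(u)$, this will permit a direct comparison of $\calF_\beta$ and $\calF_\loc$ scale by scale.

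\textbf{Besov-type equivalence.} To prove this equivalence, I would partition the double integral defining $\calE_\beta(u,u)$ along the cellular structure. For $x\in K_{w^{(1)}}$ and $y\in K_{w^{(2)}}$ with $w^{(1)}\sim_{n}w^{(2)}$ we have $|x-y|\asymp r^{-n}$ and $\nu(K_{w^{(i)}})=r^{-n\alpha}$, so the contribution of such a pair to the kernel is of order $r^{n(\beta-\alpha)}(u(x)-u(y))^{2}$. The upper bound then follows by replacing $(u(x)-u(y))^2$ with the squared difference of cell averages (respectively boundary values) via the triangle inequality and a pigeonhole across scales, while the matching lower bound is obtained from scale-by-scale Poincar\'e-type estimates on cells. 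This is the main technical workhorse of the argument and parallels classical Besov-space characterizations on fractals.

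\textbf{Dichotomy at $\beta^{*}$.} With the equivalence in hand, both regimes are short. For $\beta\in(\alpha,\beta^{*})$, Theorems \ref{thm_main_SG_con}--\ref{thm_main_SC_con} give $\sup_{n}r^{n(\beta^{*}-\alpha)}D_n(u)<+\infty$ for every $u\in\calF_\loc$, hence
\begin{equation*}
\sum_{n}r^{n(\beta-\alpha)}D_n(u)=\sum_{n}r^{n(\beta-\beta^{*})}\cdot r^{n(\beta^{*}-\alpha)}D_n(u)\leq C\calE_\loc(u,u)\sum_{n}r^{n(\beta-\beta^{*})}<+\infty,
\end{equation*}
so $\calF_\loc\subset\calF_\beta$. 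Since $\calF_\loc$ is dense in $C(K)$ and in $L^{2}(K;\nu)$, so is $\calF_\beta$; closedness comes from Fatou's lemma, the Markov property is immediate from the form of $\calE_\beta$, and a Morrey-type embedding $\calF_\beta\hookrightarrow C(K)$---which is where the hypothesis $\beta>\alpha$ enters---delivers regularity. Conversely, for $\beta\geq\beta^{*}$ and $u\in\calF_\beta$, the Besov equivalence forces $r^{n(\beta^{*}-\alpha)}D_n(u)\leq r^{n(\beta-\alpha)}D_n(u)\to 0$, so $u\in\calF_\loc$; if $u$ were non-constant, strong locality and connectedness of $K$ would give $\calE_\loc(u,u)>0$, and the refined convergence $r^{n(\beta^{*}-\alpha)}D_n(u)\to c\,\calE_\loc(u,u)$ (implicit in the $\Gamma$-limit construction underlying Theorems \ref{thm_main_SG_con}--\ref{thm_main_SC_con}) would bound the terms of $\sum_{n}r^{n(\beta-\alpha)}D_n(u)$ below by $c\cdot r^{n(\beta-\beta^{*})}\geq c$ for large $n$, contradicting summability. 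Hence $\calF_\beta=\myset{\text{constants}}$, and $\beta_{*}=\beta^{*}$ follows from the definition (\ref{eqn_beta_low}).

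\textbf{Main obstacle.} The hardest step will be the lower bound in the Besov-type equivalence: on the SC it requires reconciling the pointwise form of $D_n$ in Theorem \ref{thm_main_SC_con} with the $L^{2}$ oscillations of $u$ through uniform Poincar\'e inequalities on cells, and even on the SG one needs scale-uniform control of pointwise oscillations by averaged differences. A secondary but essential refinement, used only at the boundary case $\beta=\beta^{*}$, is strengthening the discrete characterization of $\calE_\loc$ to a lower bound of the form $r^{n(\beta^{*}-\alpha)}D_n(u)\gtrsim\calE_\loc(u,u)$ for large $n$ whenever $u$ is non-constant; this should fall out of the $\Gamma$-convergence procedure that produces $\calE_\loc$.
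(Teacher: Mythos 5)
Your overall architecture---the discrete equivalence $\calE_\beta(u,u)\asymp\sum_n r^{(\beta-\alpha)n}D_n(u)$ followed by a scale-by-scale comparison with $\sup_n r^{(\beta^*-\alpha)n}D_n(u)\asymp\calE_\loc(u,u)$---is the same as the paper's (Theorem \ref{SG_app_thm_main}, Lemma \ref{SC_con_lem_equiv}, and Theorems \ref{SG_app_thm_det}, \ref{SG_con_thm_nonlocal}, \ref{SC_con_thm_walk}), and your treatment of the regime $\beta\in(\alpha,\beta^*)$ is essentially sound. The one cosmetic difference there is that the paper does not deduce density of $\calF_\beta$ from the inclusion $\calF_\loc\subseteq\calF_\beta$; it exhibits explicit separating functions of finite $E_\beta$-energy (the harmonic family $\calU$ on the SG, the function $u$ of Proposition \ref{SC_con_prop_u} on the SC) and applies Stone--Weierstrass, which keeps the determination of $\beta_*$ logically independent of the construction of $\calE_\loc$.

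The genuine gap is in the triviality half. To rule out non-constant $u\in\calF_\beta$ for $\beta\ge\beta^*$ you need the renormalized terms $r^{(\beta^*-\alpha)n}D_n(u)$ to stay bounded away from $0$ for all large $n$, and you propose to extract this from a ``refined convergence'' $r^{(\beta^*-\alpha)n}D_n(u)\to c\,\calE_\loc(u,u)$ that you say is implicit in the $\Gamma$-limit construction. It is not: the $\Gamma$-convergence of $(\beta^*-\beta_k)\calE_{\beta_k}$ concerns a subsequence of forms indexed by $\beta_k\uparrow\beta^*$ and yields only two-sided comparability of the limit with $\sup_n r^{(\beta^*-\alpha)n}D_n(u)$ (Theorems \ref{SG_con_thm_E} and \ref{SC_con_thm_gamma}); it gives no information on the behaviour of the sequence $n\mapsto r^{(\beta^*-\alpha)n}D_n(u)$, whose convergence on the SC is not established. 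What actually closes the argument, and what the paper proves by a separate mechanism, is the weak monotonicity $r^{(\beta^*-\alpha)n}D_n(u)\le C\,r^{(\beta^*-\alpha)(n+m)}D_{n+m}(u)$ for all $n,m\ge1$ --- exact monotonicity via Kigami's compatibility on the SG (Theorem \ref{thm_SG_con}), and Theorems \ref{SG_con_thm_monotone} and \ref{SC_con_thm_monotone1} in general, proved through the resistance estimates of Theorem \ref{SC_con_thm_resist} and a mean-value/harmonic-extension argument. With this in hand, non-constancy gives $D_N(u)>0$ for some $N$, hence $r^{(\beta^*-\alpha)n}D_n(u)\ge C^{-1}r^{(\beta^*-\alpha)N}D_N(u)>0$ for all $n\ge N$, and $\sum_n r^{(\beta-\beta^*)n}\cdot r^{(\beta^*-\alpha)n}D_n(u)=+\infty$ for every $\beta\ge\beta^*$. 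You should replace the appeal to $\Gamma$-convergence by this monotonicity estimate, which is a nontrivial input rather than a formal consequence of your other steps; once it is available, your detour through $u\in\calF_\loc$ and ``strong locality plus connectedness'' is also unnecessary.
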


For the SG, this is Theorem \ref{SG_det_thm_E_K} and Theorem \ref{SG_det_thm_ub} (alternatively, see also Theorem \ref{SG_app_thm_det} and Theorem \ref{SG_con_thm_nonlocal}). For the SC, this is Theorem \ref{SC_con_thm_walk}.

We give bound of the walk dimension of the SC as follows.

\begin{mythm}\label{thm_main_SC_bound}
For the SC, we have
$$\beta_*\in\left[\frac{\log\left(8\cdot\frac{7}{6}\right)}{\log3},\frac{\log\left(8\cdot\frac{3}{2}\right)}{\log3}\right].$$
\end{mythm}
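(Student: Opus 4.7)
The plan is to combine the equality $\beta_*=\beta^*=\log(8\rho)/\log 3$ already recorded in Theorem \ref{thm_main_det} with the classical bounds $\rho\in[7/6,3/2]$ of Barlow--Bass--Sherwood \cite{BBS90}, interpreted inside the intrinsic graph framework of this thesis. Since $\rho\mapsto\log(8\rho)/\log 3$ is strictly increasing, substituting the endpoints yields the stated interval immediately, and the entire work lies in proving $\rho\in[7/6,3/2]$.

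Recall that in the author's approach $\rho$ is the resistance scaling factor for the approximation graphs $X_n$ of the SC: if $R_n$ denotes the effective resistance between the left and right boundary rows of $X_n$, then $R_n\asymp\rho^n$. The subadditive/supermultiplicative estimates of $R_n$ established in the resistance-estimate chapters underlying ingredient~(\ref{SC_enum_a}) show that $\rho$ is sandwiched between the one-step resistance scaling of any comparison network obtained from $X_{n+1}$ by Rayleigh's monotonicity principle, applied to a single substitution step (eight sub-copies at scale $1/3$).

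For the upper bound $\rho\le 3/2$ I would \emph{short} (identify) vertices in one substitution step: collapse all vertices lying on the two horizontal lines at heights $1/3$ and $2/3$ that separate the three rows of sub-copies. Rayleigh's principle guarantees that the shorted network has smaller effective resistance, and the resulting three-rows-in-series/three-cells-in-parallel reduction gives a per-step scaling of $3/2$ via Kirchhoff's rules. For the lower bound $\rho\ge 7/6$ I would dually \emph{cut} (delete) edges inside one substitution step, keeping only a carefully chosen series-parallel subnetwork that preserves symmetry; its per-step scaling is $7/6$, which by Rayleigh bounds $\rho$ from below. Both computations are elementary once the networks are drawn.

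The main obstacle is bookkeeping: the sets of vertices identified and the edges deleted must respect the symmetries of the eight sub-copies in $X_{n+1}$, and one must verify that the per-step scaling factors of the comparison networks really upper- and lower-bound $\rho$ itself, rather than just $R_{n+1}/R_n$ at a single level $n$. This is where the self-similar resistance machinery established for the SC in earlier chapters (the same input that validates the formula $\beta^*=\log(8\rho)/\log 3$) is used crucially. Once these verifications are in place, $7/6\le\rho\le 3/2$ combined with Theorem \ref{thm_main_det} gives the claimed interval for $\beta_*$.
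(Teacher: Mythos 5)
Your overall route --- deducing the interval from the identity $\beta_*=\beta^*=\log(8\rho)/\log 3$ (Theorem \ref{SC_con_thm_walk}) together with the resistance bound $\rho\in[7/6,3/2]$ --- is legitimate and genuinely different from the proof in the text, but the key step as you describe it has the two halves of Rayleigh's monotonicity principle interchanged, so neither bound would come out. Shorting (identifying vertices) can only \emph{decrease} effective resistance, so a shorted comparison network yields a \emph{lower} bound on the resistance and hence on $\rho$, not an upper bound; cutting (deleting edges) can only \emph{increase} resistance, so a cut network yields an \emph{upper} bound. Concretely, collapsing the two separating lines produces three slabs in series whose cells are in parallel, with per-step resistance $\tfrac{1}{3}+\tfrac{1}{2}+\tfrac{1}{3}=\tfrac{7}{6}$ (the middle slab has only two cells because the central square is removed); this is the source of the \emph{lower} bound $\rho\ge 7/6$. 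The upper bound $\rho\le 3/2$ comes instead from cutting away everything except the top and bottom rows of sub-copies, leaving two chains of three cells in series placed in parallel, with per-step resistance $\tfrac{3\cdot 3}{3+3}=\tfrac{3}{2}$. As written, your argument assigns the $3/2$ figure to the shorted network and the $7/6$ figure to the cut network and draws each inequality in the wrong direction, so it would need to be reassembled before it proves anything. You are right that one must also upgrade single-step comparisons to bounds on $\rho$ itself; this is handled by the submultiplicativity behind Theorem \ref{SC_con_thm_resist}, or by shorting and cutting at all scales simultaneously so that $R_n^V$ is bounded below by $c(7/6)^n$ and above by $C(3/2)^n$ directly.

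For comparison, the text's proof of this theorem (Section \ref{SC_con_sec_bound}) does not go through $\rho$ or Theorem \ref{SC_con_thm_walk} at all: for the lower bound it builds an explicit strictly increasing self-similar profile $f$ on $[0,1]$ (essentially the harmonic function of the shorted network, with discrete energy scaling like $(6/7)^n$) and invokes Lemma \ref{SC_con_lem_equiv} and Proposition \ref{SC_con_prop_lower} to conclude that $(\calE_\beta,\calF_\beta)$ is a regular Dirichlet form for every $\beta<\log(8\cdot\tfrac{7}{6})/\log3$; for the upper bound it restricts an arbitrary admissible function to $[0,1]\times\mathcal{C}$ (the cut configuration), where the minimizer is $(x,y)\mapsto x$ with energy scaling like $(2/3)^n$, forcing $\beta<\log(8\cdot\tfrac{3}{2})/\log3$. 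That argument is more elementary and logically prior to the resistance estimates, whereas your route, even once the shorting and cutting directions are fixed, relies on the much heavier machinery needed to prove Theorem \ref{SC_con_thm_walk}.
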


This is Theorem \ref{SC_con_thm_bound}. This gives a partial answer to an open problem raised by Barlow \cite[Open Problem 2]{Bar13}.

\subsection*{Approximation of Local DFs by Non-Local DFs}

We give the approximation of local Dirichlet forms by non-local Dirichlet forms on the SG and the SC which are direct consequences of our construction. Pietruska-Pa\l uba mentioned in \cite{Pie09} after Theorem 8 stating (\ref{eqn_approximation}) that ``how to do this without appealing to stochastic processes is unknown so far". The point of our approach is that we do \emph{not} need the diffusion.

\begin{mythm}\label{thm_main_app}
Both on the SG and the SC, there exists some positive constant $C$ such that for all $u\in\calF_\loc$, we have
$$\frac{1}{C}\calE_\loc(u,u)\le\varliminf_{\beta\uparrow\beta^*}(\beta^*-\beta)\calE_\beta(u,u)\le\varlimsup_{\beta\uparrow\beta^*}(\beta^*-\beta)\calE_{\beta}(u,u)\le C\calE_\loc(u,u).$$
\end{mythm}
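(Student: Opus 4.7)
The starting point is the supremum characterisation of $\calE_\loc$ from Theorems~\ref{thm_main_SG_con} and \ref{thm_main_SC_con}. Let $a_*=2$ on the SG and $a_*=3$ on the SC, and let $S_n(u)$ denote the scale-$n$ discrete energy appearing in those statements (on the SG, $S_n(u)=\sum_{w^{(1)}\sim_n w^{(2)}}(P_n u(w^{(1)})-P_n u(w^{(2)}))^2$; on the SC, the analogous cell-wise vertex sum). Setting $\lambda := a_*^{\beta^*-\alpha}$, both characterisations read $\calE_\loc(u,u)\asymp\sup_{n\ge 1}\lambda^n S_n(u)$. I will reorganise $\calE_\beta(u,u)$ scale by scale, match it with $\sum_n a_*^{n(\beta-\alpha)}S_n(u)$, and then pass to the limit $\beta\uparrow\beta^*$ via an Abel--Tauber type argument.

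First I would slice $K\times K$ according to the coarsest scale at which $x$ and $y$ are separated: let $A_n$ be the set of pairs $(x,y)$ lying in adjacent (or equal) $n$-cells but not in a common $(n+1)$-cell. On $A_n$, $|x-y|^{-(\alpha+\beta)}\asymp a_*^{n(\alpha+\beta)}$, and each $n$-cell has $\nu$-measure $a_*^{-n\alpha}$. A bias--variance expansion of the inner integral on each cell pair $K_{w^{(1)}}\times K_{w^{(2)}}$, followed by telescoping the intra-cell variance contributions into finer $S_{n+1}$-type terms (each $L^2$-oscillation on a cell splits into oscillations on sub-cells plus squared differences of sub-cell averages), should yield the two-sided comparison
\[
\calE_\beta(u,u)\asymp\sum_{n=0}^\infty a_*^{n(\beta-\alpha)}S_n(u),
\]
with constants uniform in $\beta\in(\alpha,\beta^*)$.

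The upper half of Theorem~\ref{thm_main_app} then becomes a one-line calculation: using $\lambda^n S_n(u)\le C\calE_\loc(u,u)$ together with $a_*^{n(\beta-\alpha)}=a_*^{-n(\beta^*-\beta)}\lambda^n$,
\[
(\beta^*-\beta)\calE_\beta(u,u)\le C\calE_\loc(u,u)\cdot\frac{\beta^*-\beta}{1-a_*^{-(\beta^*-\beta)}},
\]
and the right-hand side converges to $C\calE_\loc(u,u)/\log a_*$ as $\beta\uparrow\beta^*$. For the lower half, set $a_n:=\lambda^n S_n(u)$ and $r:=a_*^{-(\beta^*-\beta)}$; the lower side of the comparison gives $(\beta^*-\beta)\calE_\beta(u,u)\ge c(\beta^*-\beta)\sum_{n\ge 0}r^n a_n$. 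The extra ingredient needed is the asymptotic monotonicity $a_n\lesssim a_{n+1}$, which is the scale-compatibility of the renormalised discrete energies built into the construction of $\calE_\loc$ in the previous chapters. A Karamata-type Tauberian argument then forces $(\beta^*-\beta)\sum r^n a_n\to (\log a_*)^{-1}\sup_n a_n\asymp\calE_\loc(u,u)$, which is the lower half of Theorem~\ref{thm_main_app}.

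\textbf{Main obstacle.} Steps 2 and 3 are essentially immediate once the scale comparison is in hand; the real difficulty lies in the \emph{lower} half of the comparison. The bias--variance expansion delivers an upper bound on each scale's contribution to $\calE_\beta$ for free, but matching it from below requires that the intra-cell oscillations be dominated by finer-scale quantities rather than by the scale-$n$ average differences themselves. On the SG this is a consequence of Kigami's compatibility of discrete resistance forms; on the SC it must be extracted from the uniform resistance estimates on the approximation graphs and the Harnack-type inequality derived from them earlier in the thesis, precisely the ingredients underlying Theorem~\ref{thm_main_SC_con}. Establishing the asymptotic monotonicity $a_n\lesssim a_{n+1}$ to legitimise the Tauberian passage is the secondary technical point.
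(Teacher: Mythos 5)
Your skeleton is the same as the paper's: rewrite $\calE_\beta(u,u)$, up to constants uniform in $\beta$ near $\beta^*$, as the Abel-type sum $\sum_{n}a_*^{(\beta-\beta^*)n}\,a_n$ with $a_n=\lambda^nS_n(u)$ (this is Lemma \ref{SG_con_lem_equiv} on the SG and Lemmas \ref{SC_con_lem_equiv}, \ref{SC_con_lem_equivHK} on the SC), then use the Abelian bounds $\varliminf_n a_n\le\varliminf_{r\uparrow1}(1-r)\sum r^na_n\le\varlimsup_{r\uparrow1}(1-r)\sum r^na_n\le\sup_na_n$ (Proposition \ref{prop_ele2}(1)) together with the supremum characterization $\calE_\loc(u,u)\asymp\sup_na_n$ and a weak monotonicity statement that forces $\varliminf_na_n\gtrsim\sup_na_n$. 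The upper half of the claim is then immediate, exactly as you say.

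The gap is in the form of the monotonicity you invoke for the lower half. The one-step inequality $a_n\lesssim a_{n+1}$, i.e.\ $a_n\le Ca_{n+1}$ with a constant $C$ that may exceed $1$, is strictly too weak: iterating gives only $a_n\le C^ma_{n+m}$, so for fixed $N$ the tail estimate reads $(1-r)\sum_{m\ge0}r^{N+m}a_{N+m}\ge(1-r)\,a_Nr^N\sum_{m\ge0}(r/C)^m$, whose right-hand side stays of order $(1-r)\cdot\frac{C}{C-1}a_N\to0$ as $r\uparrow1$; no Karamata-type argument can recover $\sup_na_n$ from this. What is actually needed, and what the thesis proves, is the \emph{uniform} two-scale comparison $a_n\le C\,a_{n+m}$ for all $n,m\ge1$ with a single constant $C$ (Theorem \ref{SG_con_thm_monotone}, with $C=36$, on the SG; Theorems \ref{SC_con_thm_monotone1} and \ref{SC_con_thm_monotone2}, via the resistance estimates, on the SC). This gives $\sup_na_n\le C\varliminf_na_n$ (Proposition \ref{prop_ele2}(2)) and closes the argument. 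Relatedly, your claim that the Abel sum \emph{converges} to $(\log a_*)^{-1}\sup_na_n$ overstates what holds: since $C>1$, one only obtains two-sided bounds between $\varliminf$, $\varlimsup$ and $\sup_na_n$ up to the constant $C$, which is exactly what the theorem asserts and no more. With the monotonicity replaced by its uniform-in-$m$ version, your argument coincides with the paper's proof.
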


For the SG, this is Corollary \ref{SG_con_cor_conv}. For the SC, this is Corollary \ref{SC_con_cor_approx}.

On the SG, we give a new semi-norm $E_\beta$ by
$$E_\beta(u,u):=\sum_{n=1}^\infty2^{(\beta-\alpha)n}\sum_{w\in W_n}\sum_{p,q\in V_w}(u(p)-u(q))^2.$$

\begin{mythm}\label{thm_main_Mosco}
Let $K$ be the SG.
\begin{enumerate}[(a)]
\item For all $\beta\in(\alpha,+\infty)$, for all $u\in C(K)$, we have
$$E_\beta(u,u)\asymp\calE_\beta(u,u).$$
\item For all $u\in L^2(K;\nu)$, we have
$$(1-5^{-1}\cdot 2^{\beta})E_\beta(u,u)\uparrow\frakE_\loc(u,u)$$
as $\beta\uparrow\beta^*=\log5/\log2$.
\item For all sequence $\myset{\beta_n}\subseteq(\alpha,\beta^*)$ with $\beta_n\uparrow\beta^*$, we have $(1-5^{-1}\cdot 2^{\beta_n})E_{\beta_n}\to\frakE_\loc$ in the sense of Mosco.
\end{enumerate}
\end{mythm}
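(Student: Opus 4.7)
The plan is to prove (a), (b), and (c) in order, deducing (c) from (b) via a standard Mosco-convergence criterion for monotone increasing closed forms. Throughout, set $S_n := \sum_{w\in W_n}\sum_{p,q\in V_w}(u(p)-u(q))^2$, so that $E_\beta(u,u)=\sum_{n=1}^\infty 2^{(\beta-\alpha)n}S_n$.

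For part (a), I would decompose the double integral defining $\calE_\beta(u,u)$ by dyadic shells $A_n=\{(x,y)\in K\times K : 2^{-n-1}\le |x-y|<2^{-n}\}$. On $A_n$ one has $|x-y|^{-(\alpha+\beta)}\asymp 2^{(\alpha+\beta)n}$, and $x,y$ lie either in a common $n$-cell $K_w$ or in two neighbouring $n$-cells; since $\nu(K_w)\asymp 2^{-\alpha n}$, a short calculation based on the algebraic identity $(u(x)-u(y))^2\le 3(u(x)-u(p))^2+3(u(p)-u(q))^2+3(u(q)-u(y))^2$ (with $p\in V_{w^{(1)}}$, $q\in V_{w^{(2)}}$ chosen as shared or nearest vertices), together with continuity of $u$ and a telescoping of finer-scale oscillations, yields
\begin{equation*}
\iint_{A_n}\frac{(u(x)-u(y))^2}{|x-y|^{\alpha+\beta}}\,\nu(\md x)\nu(\md y)\ \asymp\ 2^{(\alpha+\beta)n}\cdot 2^{-2\alpha n}\, S_n\ =\ 2^{(\beta-\alpha)n}S_n.
\end{equation*}
The condition $\beta>\alpha$ ensures absolute convergence of the dyadic oscillation-telescoping, and summation over $n\ge 1$ gives $\calE_\beta(u,u)\asymp E_\beta(u,u)$.

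For part (b), set $r:=2^\beta/5$ and $a_n:=(5/3)^n S_n$. Since $2^\alpha=3$, one checks $2^{(\beta-\alpha)n}S_n=(2^\beta/3)^n S_n=r^n a_n$, so $E_\beta(u,u)=\sum_{n=1}^\infty r^n a_n$. The crucial input is the monotonicity $a_n\le a_{n+1}$, which is the classical harmonic-extension/compatibility property of the SG: for every $v:V_{n+1}\to\R$, $(5/3)^{n+1}S_{n+1}(v)\ge (5/3)^n S_n(v|_{V_n})$, with equality iff $v$ is discrete harmonic on $V_{n+1}\setminus V_n$. Moreover $a_n\uparrow \frakE_\loc(u,u)$ by the very construction of the local form in Theorem \ref{thm_main_SG_con} (finite exactly when $u\in\calF_\loc$, otherwise $+\infty$). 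Abel summation with $a_0:=0$ gives
\begin{equation*}
(1-r)\sum_{n=1}^\infty r^n a_n\ =\ \sum_{n=1}^\infty r^n(a_n-a_{n-1}),
\end{equation*}
a sum of nonnegative terms, manifestly nondecreasing in $r\in(0,1)$; by monotone convergence this increases to $\sum_{n=1}^\infty(a_n-a_{n-1})=\lim_n a_n=\frakE_\loc(u,u)$ as $r\uparrow 1$, that is, as $\beta\uparrow\beta^*$.

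For part (c), I would invoke the general principle that a monotone increasing family of closed quadratic forms $F_\beta$ on $L^2(K;\nu)$ whose pointwise supremum $F$ is itself closed converges to $F$ in the sense of Mosco along any $\beta_n\uparrow\beta^*$: the recovery sequence in (M2) is the constant choice $u_n\equiv u$, for which monotonicity and part (b) give $\lim_n F_{\beta_n}(u,u)=\frakE_\loc(u,u)$; the $\liminf$ axiom (M1) uses $L^2$-lower semicontinuity of each $F_{\beta_n}$ (as a closed form) together with $F_{\beta_m}\le F_{\beta_n}$ for $m\le n$, via a standard diagonal argument passing first the liminf through $F_{\beta_m}$ and then letting $m\to\infty$. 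The main obstacle throughout is the clean identification $a_n\uparrow\frakE_\loc(u,u)$ for every $u\in L^2(K;\nu)$ (with $+\infty$ when $u\notin\calF_\loc$): while the inequality $a_n\le a_{n+1}$ is a one-liner from harmonic extension, matching the limit with the $\frakE_\loc$ built in Theorem \ref{thm_main_SG_con} requires coordinating the normalization and characterizing $\calF_\loc$ intrinsically via $\sup_n a_n<+\infty$. Once this identification is in place, parts (a), (b), and (c) follow essentially mechanically.
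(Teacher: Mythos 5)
Your parts (b) and (c) are correct and take essentially the same route as the paper: (b) is the paper's elementary Proposition \ref{prop_ele1} applied to $x_n=\frakE_n(u,u)=(5/3)^nS_n$, whose monotonicity in $n$ is Kigami's Theorem \ref{thm_SG_con} (your Abel-summation rewriting is exactly how that proposition is proved); and (c) is precisely the paper's argument for Theorem \ref{SG_app_thm_conv_main} — constant recovery sequence via (b) for condition (\ref{def_Mosco_2}), and for condition (\ref{def_Mosco_1}) the lower semicontinuity of the fixed closed form $(1-5^{-1}2^{\beta_m})E_{\beta_m}$ along \emph{weakly} convergent sequences (Corollary \ref{cor_Mosco}; note that weak, not strong, lower semicontinuity is what Definition \ref{def_Mosco}(1) demands) combined with the monotonicity in $\beta$ from (b), then $m\to+\infty$.

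Part (a), however, has a genuine gap. The displayed shell-by-shell equivalence
\begin{equation*}
\iint_{A_n}\frac{(u(x)-u(y))^2}{|x-y|^{\alpha+\beta}}\,\nu(\md x)\nu(\md y)\ \asymp\ 2^{(\beta-\alpha)n}S_n
\end{equation*}
is false for fixed $n$: take $u$ continuous, vanishing on the finite set $V_m$ for some $m>n$ but not identically zero. Then $S_n=0$ for every $n\le m$, while the shell integral on the left is strictly positive (pairs with one point in $\mathrm{supp}(u)$ and one outside, at distance in $[2^{-n-1},2^{-n})$, have positive $\nu\times\nu$-measure). The two sides are comparable only after summation over $n$, and each direction of the inequality genuinely mixes scales: the paper's Theorems \ref{SG_app_thm_equiv2_1} and \ref{SG_app_thm_equiv2_2} produce bounds of the form $2^{(\alpha+\beta)n}\int_K\int_{B(x,2^{-n-1})}(\cdots)\lesssim\sum_{i\ge n}c_{n,i}S_i$ and, in the other direction, $2^{(\beta-\alpha)n}S_n\lesssim(\text{remainder}_n)+\sum_{i\ge n}c'_{n,i}E_{n+ki}(u)$, and one must then justify exchanging the resulting double sums, which is exactly where $\beta>\alpha$ (indeed $\beta>1$) is used.

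More seriously, in the direction $E_\beta\lesssim\calE_\beta$ your ``telescoping of finer-scale oscillations'' toward a vertex $p$ does not terminate on its own: after telescoping through $l$ nested cells $K_{w^{(0)}}\supseteq\cdots\supseteq K_{w^{(l)}}$ shrinking to $p$ one is left with the term $(u(p)-u(x^{(l)}))^2$, which mere continuity of $u$ cannot control summably in $n$. The paper closes this loop with the Besov--H\"older embedding of Lemma \ref{SG_app_lem_holder}, $|u(p)-u(x^{(l)})|^2\le c\,\calE_\beta(u,u)\,|p-x^{(l)}|^{\beta-\alpha}$, then takes $l=n$ and the cell-shrinking rate $k$ so large that both $\beta-(\beta-\alpha)(k+1)<0$ and $1-(\beta-\alpha)k<0$, at which point the two resulting geometric series converge and the whole expression is bounded by $C\calE_\beta(u,u)$. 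Without this quantitative input (or a substitute for it) the upper bound $E_\beta(u,u)\lesssim\calE_\beta(u,u)$ does not close, and since parts (b) and (c) as you state them are independent of (a), the gap is confined to, but real in, part (a).
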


Part (a) is Theorem \ref{SG_app_thm_main}. Part (b) is Theorem \ref{SG_app_thm_incre}. Part (c) is Theorem \ref{SG_app_thm_conv_main}.

The intrinsic idea of this thesis is the \emph{discretization} of non-local quadratic form on self-similar set using a quadratic form on an augmented rooted tree or a scaled summation of quadratic forms on infinitely many finite graphs. This enables us to investigate many interesting problems related to Dirichlet forms on self-similar sets.

\section{Structure of the Thesis}
This thesis is organized as follows.

In Chapter \ref{ch_pre}, we collect some preliminaries for later chapters.

In Chapter \ref{ch_SG_det}, we determine the walk dimension of the SG without using diffusion. We construct non-local regular Dirichlet forms on the SG from regular Dirichlet forms on certain augmented rooted tree whose certain boundary at infinity is the SG. This chapter is based on my work \cite{GY16} joint with Prof. Alexander Grigor'yan.

In Chapter \ref{ch_SG_app}, we consider approximation of the local Dirichlet form by non-local Dirichlet forms on the SG. This chapter is based on my work \cite{MY17}.

In Chapter \ref{ch_SG_con} and Chapter \ref{ch_SC_con}, we give a purely analytic construction of self-similar local regular Dirichlet forms on the SG and the SC using approximation of stable-like non-local closed forms. Chapter \ref{ch_SG_con} is based on my work \cite{MY18}. Chapter \ref{ch_SC_con} is based on my work \cite{GY17} joint with Prof. Alexander Grigor'yan.

NOTATION. The letters $c,C$ will always refer to some positive constants and may change at each occurrence. The sign $\asymp$ means that the ratio of the two sides is bounded from above and below by positive constants. The sign $\lesssim$ ($\gtrsim$) means that the LHS is bounded by positive constant times the RHS from above (below).

\chapter{Preliminary}\label{ch_pre}

In this chapter, we collect some preliminaries for later chapters.

\section{Dirichlet Form Theory}

The book \cite{FOT11} by Fukushima, Oshima and Takeda is a standard reference.

Let $H$ be a real Hilbert space with inner product $(\cdot,\cdot)$.

We say that $(\calE,\calD[\calE])$ is a symmetric form on $H$ if
\begin{itemize}
\item $\calD[\calE]$ is a dense subspace of $H$.
\item For all $u,v\in\calD[\calE]$, we have $\calE(u,v)=\calE(v,u)$.
\item For all $u,v,u_1,u_2,v_1,v_2\in\calD[\calE]$, $c\in\R$, we have
$$\calE(u_1+u_2,v)=\calE(u_1,v)+\calE(u_2,v),\calE(u,v_1+v_2)=\calE(u,v_1)+\calE(u,v_2),$$
$$\calE(cu,v)=\calE(u,cv)=c\calE(u,v).$$
\item For all $u\in\calD[\calE]$, we have $\calE(u,u)\ge0$.
\end{itemize}

For all $\alpha\in(0,+\infty)$, we denote $\calE_\alpha(u,v)=\calE(u,v)+\alpha(u,v)$ for all $u,v\in\calD[\calE]$.

We say that a symmetric form $(\calE,\calD[\calE])$ on $H$ is closed or a closed (symmetric) form if $(\calD[\calE],\calE_1)$ is a Hilbert space.

We say that $\myset{T_t:t\in(0,+\infty)}$ is a semi-group on $H$ if
\begin{enumerate}[(1)]
\item For all $t\in(0,+\infty)$, we have $T_t$ is a symmetric operator with domain $\calD(T_t)=H$.
\item For all $t\in(0,+\infty)$, $u\in H$, we have $(T_tu,T_tu)\le(u,u)$.
\item For all $t,s\in(0,+\infty)$, we have $T_t\circ T_s=T_{t+s}$.
\end{enumerate}
We say that a semi-group $\myset{T_t:t\in(0,+\infty)}$ on $H$ is strongly continuous if
\begin{itemize}
\item For all $u\in H$, we have $\lim_{t\downarrow0}(T_tu-u,T_tu-u)=0$.
\end{itemize}

We say that $\myset{G_\alpha:\alpha\in(0,+\infty)}$ is a resolvent on $H$ if
\begin{enumerate}[(1)]
\item For all $\alpha\in(0,+\infty)$, we have $G_\alpha$ is a symmetric operator with domain $\calD(G_\alpha)=H$.
\item For all $\alpha\in(0,+\infty)$, $u\in H$, we have $(\alpha G_\alpha u,\alpha G_\alpha u)\le(u,u)$.
\item For all $\alpha,\beta\in(0,+\infty)$, we have $G_\alpha-G_\beta+(\alpha-\beta)G_\alpha\circ G_\beta=0$.
\end{enumerate}
We say that a resolvent $\myset{G_\alpha:\alpha\in(0,+\infty)}$ on $H$ is strongly continuous if
\begin{itemize}
\item For all $u\in H$, we have $\lim_{\alpha\to+\infty}(\alpha G_\alpha u-u,\alpha G_\alpha u-u)=0$.
\end{itemize}

\begin{myprop}(\cite[Exercise 1.3.1, Lemma 1.3.1, Lemma 1.3.2, Exercise 1.3.2, Theorem 1.3.1]{FOT11})
There exists a one-to-one correspondence among the family of closed forms, the family of non-positive definite self-adjoint operators, the family of strongly continuous semi-groups and the family of strongly continuous resolvents.
\end{myprop}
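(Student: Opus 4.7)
The plan is to use the spectral theorem for self-adjoint operators as the hub connecting all four classes. The strategy is to construct three explicit maps \emph{from} non-positive definite self-adjoint operators to each of the other three classes, verify the defining properties on each target, and then establish the reverse maps to get bijections. Pairwise correspondences between the remaining classes then follow automatically, and compatibility around the cycle is what makes the whole scheme one and the same correspondence.

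Starting from a non-positive definite self-adjoint $A$ on $H$, the spectral theorem gives $-A=\int_0^\infty \lambda\,dE_\lambda$ for a unique projection-valued measure $\myset{E_\lambda}$, and I would use bounded functional calculus to set
$$T_t:=\int_0^\infty e^{-t\lambda}\,dE_\lambda,\quad G_\alpha:=\int_0^\infty\frac{1}{\alpha+\lambda}\,dE_\lambda,\quad \calE(u,v):=\int_0^\infty \lambda\,d(E_\lambda u,v),$$
with $\calD[\calE]:=\myset{u\in H:\int_0^\infty \lambda\,d(E_\lambda u,u)<+\infty}$. Symmetry, contractivity, strong continuity, the semi-group law, the resolvent equation and closedness (via completeness of $\calD[\calE]$ under $\calE_1$) then reduce to routine dominated-convergence arguments on spectral integrals. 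Compatibility among the three assignments is automatic: $G_\alpha$ is the Laplace transform of $T_t$, and $\calE(u,v)=\lim_{t\downarrow0}t^{-1}(u-T_tu,v)$ on $\calD[\calE]$.

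Next I would construct the inverse maps. From a strongly continuous contraction semi-group $\myset{T_t}$ the operator is recovered by Hille--Yosida as $Au:=\lim_{t\downarrow0}(T_tu-u)/t$ on its natural domain; symmetry of each $T_t$ forces $A$ to be symmetric, and the identity $G_\alpha=\int_0^\infty e^{-\alpha t}T_t\,dt$ together with $\mathrm{Ran}(G_\alpha)=\calD(A)$ being dense forces $A$ to be self-adjoint. From a strongly continuous resolvent $\myset{G_\alpha}$ one sets $A:=\alpha I-G_\alpha^{-1}$; the resolvent equation makes the definition independent of $\alpha$, and symmetry of $G_\alpha$ with dense range again yields self-adjointness and non-positivity.

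The technically heaviest step, and the main obstacle I expect, is the passage from a closed form $(\calE,\calD[\calE])$ back to a self-adjoint operator. The plan is to define $\calD(A)$ as those $u\in\calD[\calE]$ for which the linear functional $v\mapsto\calE(u,v)$ is $H$-continuous on $\calD[\calE]$, and set $Au:=f$ where $\calE(u,v)=(-f,v)$ for all $v\in\calD[\calE]$. The crucial input is that $(\calD[\calE],\calE_1)$ is a Hilbert space: for each $g\in H$, the bounded functional $v\mapsto(g,v)$ on $\calD[\calE]$ is represented by a unique $u_g\in\calD[\calE]$ with $\calE_1(u_g,v)=(g,v)$. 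This one observation simultaneously provides (i) a bounded symmetric operator $G_1\colon g\mapsto u_g$, (ii) density of $\calD(A)=\mathrm{Ran}(G_1)$ in $H$, (iii) the identity $(I-A)G_1=I$, so that $I-A$ is surjective onto $H$, and (iv) symmetry of $A$. Self-adjointness then follows from von Neumann's criterion that a densely defined symmetric operator $A$ with $\mathrm{Ran}(I-A)=H$ is self-adjoint. Finally, to close the loop I would verify that going around the cycle returns the same object: the operator--semi-group and operator--resolvent loops are handled by the uniqueness parts of Hille--Yosida and of the resolvent, while the operator--form loop follows from the spectral representation, since both constructions deliver $\calE(u,v)=(\sqrt{-A}u,\sqrt{-A}v)$ with $\calD[\calE]=\calD(\sqrt{-A})$.
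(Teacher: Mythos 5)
Your proposal is correct. Note first that the paper itself gives no proof of this proposition: it is quoted verbatim from Fukushima--Oshima--Takeda \cite[Exercise 1.3.1, Lemma 1.3.1, Lemma 1.3.2, Exercise 1.3.2, Theorem 1.3.1]{FOT11}, so the only meaningful comparison is with the standard argument there. Your route differs mildly in organization: you take the non-positive definite self-adjoint operator as the hub and reach the other three classes by spectral calculus, whereas \cite{FOT11} makes the closed form the starting point, constructs the resolvent directly from it via the Riesz representation $\calE_\alpha(G_\alpha u,v)=(u,v)$ on the Hilbert space $(\calD[\calE],\calE_1)$ (their Lemma 1.3.1), and then obtains the semi-group from the resolvent by the Yosida approximation $T_t=\lim_{\alpha\to+\infty}e^{t\alpha(\alpha G_\alpha-I)}$ rather than by functional calculus. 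The essential content is the same in both: the hard direction is form $\to$ operator/resolvent, and your treatment of it (Riesz representation on $(\calD[\calE],\calE_1)$ giving a bounded symmetric $G_1$ with dense range, then von Neumann's criterion from $\mathrm{Ran}(I-A)=H$) is exactly the mechanism underlying \cite[Theorem 1.3.1]{FOT11}. Two small points you gloss over but which are routine: injectivity of $G_\alpha$ (needed to define $A=\alpha I-G_\alpha^{-1}$) follows from the resolvent equation together with strong continuity, and one should check that the form produced from $A$ by spectral calculus is Markovian-free, i.e.\ that no additional hypothesis beyond closedness is being smuggled in --- it is not, since the correspondence here is stated for closed forms, not Dirichlet forms. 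Your approach buys a cleaner uniform verification of the compatibility relations (Laplace transform, $\calE(u,v)=(\sqrt{-A}u,\sqrt{-A}v)$) at the cost of invoking the full spectral theorem, which \cite{FOT11} avoids in the main line of argument.
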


Let $(X,m)$ be a measure space. Let $L^2(X;m)$ be the space of all $L^2$-integrable extended-real-valued functions on $(X,m)$, then $L^2(X;m)$ is a real Hilbert space.

Let $(\calE,\calD[\calE])$ be a symmetric form on $L^2(X;m)$. We say that $(\calE,\calD[\calE])$ on $L^2(X;m)$ is Markovian or has Markovian property if for all $\veps\in(0,+\infty)$, there exists a function $\phi_\veps:\R\to\R$ satisfying
$$\phi_\veps(t)=t\text{ for all }t\in[0,1],$$
$$\phi_\veps(t)\in[-\veps,1+\veps]\text{ for all }t\in\R,$$
$$0\le\phi_\veps(t)-\phi_\veps(s)\le t-s\text{ for all }t,s\in\R\text{ with }t\ge s,$$
such that for all $u\in\calD[\calE]$, we have $\phi_\veps(u)\in\calD[\calE]$ and $\calE(\phi_\veps(u),\phi_\veps(u))\le\calE(u,u)$.

A Dirichlet form is a Markovian closed symmetric form.

Let $u$ be a function on $X$. We say that $(u\vee0)\wedge1$ is the unit contraction of $u$. We say that $v$ is a normal contraction of $u$ if $|v(x)|\le|u(x)|$ and $|v(x)-v(y)|\le|u(x)-u(y)|$ for all $x,y\in X$.

We say that
\begin{enumerate}[(1)]
\item The unit contraction operates on $(\calE,\calD[\calE])$ on $L^2(X;m)$ if for all $u\in\calD[\calE]$, we have the unit contraction $v=(u\vee0)\wedge1\in\calD[\calE]$ and $\calE(v,v)\le\calE(u,u)$.
\item Every normal contraction operates on $(\calE,\calD[\calE])$ on $L^2(X;m)$ if for all $u\in\calD[\calE]$, for all normal contraction $v$ of $u$, we have $v\in\calD[\calE]$ and $\calE(v,v)\le\calE(u,u)$.
\end{enumerate}

Let $S$ be a linear operator with domain $\calD(S)=L^2(X;m)$. We say that $S$ is Markovian if for all $u\in L^2(X;m)$ with $0\le u\le 1$ $m$-a.e., we have $0\le Su\le 1$ $m$-a.e..

We say that a semi-group $\myset{T_t:t\in(0,+\infty)}$ on $L^2(X;m)$ is Markovian if for all $t\in(0,+\infty)$, we have $T_t$ is Markovian.

We say that a resolvent $\myset{G_\alpha:\alpha\in(0,+\infty)}$ on $L^2(X;m)$ is Markovian if for all $\alpha\in(0,+\infty)$, we have $\alpha G_\alpha$ is Markovian.

Let $(X,d,m)$ be a metric measure space, that is, $(X,d)$ is a locally compact separable metric space and $m$ is a Radon measure on $X$ with full support. 

\begin{myprop}(\cite[Theorem 1.4.1]{FOT11})
Let $(\calE,\calD[\calE])$ be a closed form on $L^2(X;m)$, $\myset{T_t:t\in(0,+\infty)}$ on $L^2(X;m)$ its corresponding strongly continuous semi-group and $\myset{G_\alpha:\alpha\in(0,+\infty)}$ on $L^2(X;m)$ its corresponding strongly continuous resolvent. Then the followings are equivalent.
\begin{enumerate}[(1)]
\item $(\calE,\calD[\calE])$ on $L^2(X;m)$ is Markovian.
\item The unit contraction operates on $(\calE,\calD[\calE])$ on $L^2(X;m)$.
\item Every normal contraction operates on $(\calE,\calD[\calE])$ on $L^2(X;m)$.
\item $\myset{T_t:t\in(0,+\infty)}$ on $L^2(X;m)$ is Markovian.
\item $\myset{G_\alpha:\alpha\in(0,+\infty)}$ on $L^2(X;m)$ is Markovian.
\end{enumerate}
\end{myprop}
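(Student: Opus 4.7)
The plan is to establish the five-way equivalence via the cyclic chain
\[
(3)\Longrightarrow(2)\Longrightarrow(1)\Longrightarrow(5)\Longrightarrow(4)\Longrightarrow(3).
\]
The first two arrows are essentially formal. The unit contraction $(u\vee 0)\wedge 1$ is itself a normal contraction of $u$, so $(3)\Rightarrow(2)$ is immediate; and taking $\phi_\veps(t)=(t\vee 0)\wedge 1$, which already meets the three listed conditions on $\phi_\veps$ with $\veps=0$ and therefore for every $\veps>0$, shows $(2)\Rightarrow(1)$.

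For $(1)\Rightarrow(5)$ I would exploit the variational characterization of the resolvent: for each $f\in L^2(X;m)$ and $\alpha>0$, the element $u=G_\alpha f$ is the unique minimizer in $\calD[\calE]$ of
\[
L_\alpha(v)=\calE(v,v)+\alpha(v,v)-2(f,v).
\]
Assuming $0\le f\le 1$ $m$-almost everywhere, I would test against $v=\alpha^{-1}\phi_\veps(\alpha u)$. The Markovian property controls $\calE(\phi_\veps(\alpha u),\phi_\veps(\alpha u))\le\calE(\alpha u,\alpha u)$, while the pointwise bounds on $\phi_\veps$ together with $0\le\alpha f\le\alpha$ allow the remaining terms of $L_\alpha$ to be estimated up to an error of order $\veps$. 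Combining these with the minimality of $u$ forces $\alpha u\in[-\veps,1+\veps]$ $m$-a.e., and letting $\veps\downarrow 0$ yields $0\le\alpha u\le 1$.

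For $(5)\Rightarrow(4)$ I would invoke the Yosida approximation: set $A_\alpha=\alpha(I-\alpha G_\alpha)$ and
\[
T_t^{(\alpha)}=e^{-tA_\alpha}=e^{-\alpha t}\sum_{k\ge 0}\frac{(\alpha t)^k}{k!}(\alpha G_\alpha)^k.
\]
Each $T_t^{(\alpha)}$ is a convex series in $\alpha G_\alpha$ with non-negative coefficients summing to $1$, hence inherits Markovianity from $\alpha G_\alpha$; the strong limit $T_t=\lim_{\alpha\to\infty}T_t^{(\alpha)}$ then preserves the property. Finally, for $(4)\Rightarrow(3)$ I would use the identity
\[
\calE(u,u)=\sup_{t>0}\frac{1}{t}(u-T_t u,u),
\]
valid for any closed form on $L^2(X;m)$ with associated strongly continuous semi-group. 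If $v$ is a normal contraction of $u$, Markovianity of $T_t$ upgrades (via duality on $L^1\cap L^\infty$) to a sub-Markovian kernel representation whose non-negativity gives the pointwise inequality $(v-T_t v,v)\le(u-T_t u,u)$ for every $t>0$; the supremum identity then delivers $v\in\calD[\calE]$ and $\calE(v,v)\le\calE(u,u)$.

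I expect the main obstacle to be the implication $(1)\Rightarrow(5)$, where the purely abstract approximation by the $\phi_\veps$ must be converted into a concrete $L^\infty$-bound on the resolvent; the bookkeeping of the $\veps$-errors in the quadratic functional $L_\alpha$ requires care, especially because the minimality only yields an inequality in $\calE_\alpha$-norm and must be promoted to an almost-everywhere conclusion. The step $(4)\Rightarrow(3)$ is a close second, since it hinges on passing from an $L^2$-level Markov property of $T_t$ to a pointwise kernel inequality strong enough to compare $(v-T_t v,v)$ with $(u-T_t u,u)$.
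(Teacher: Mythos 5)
This proposition is quoted directly from \cite[Theorem 1.4.1]{FOT11} and the paper supplies no proof of its own, so there is nothing internal to compare against; your cyclic chain $(3)\Rightarrow(2)\Rightarrow(1)\Rightarrow(5)\Rightarrow(4)\Rightarrow(3)$ is precisely the standard argument of that reference and is correct, including the variational treatment of $G_\alpha f$ in $(1)\Rightarrow(5)$ and the Yosida-approximation convexity argument in $(5)\Rightarrow(4)$. The only place where your wording is looser than the actual proof is $(4)\Rightarrow(3)$: on a general measure space the Markovian operator $T_t$ need not be realized by a kernel, and the inequality $(v-T_tv,v)\le(u-T_tu,u)$ is instead obtained by approximating $u$ by simple functions and using symmetry, positivity preservation and $T_t1\le1$ to write $\frac{1}{t}(u-T_tu,u)$ as a sum of nonnegative terms, each of which decreases under a normal contraction, after which the identity $\calE(u,u)=\sup_{t>0}\frac{1}{t}(u-T_tu,u)$ (itself a separate lemma for closed forms) finishes the argument exactly as you indicate.
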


Denote $C_c(X)$ as the space of all continuous functions with compact supports.

Let $(\calE,\calD[\calE])$ be a symmetric form on $L^2(X;m)$. We say that $(\calE,\calD[\calE])$ on $L^2(X;m)$ is regular if $\calD[\calE]\cap C_c(X)$ is $\calE_1$-dense in $\calD[\calE]$ and uniformly dense in $C_c(X)$. We say that $(\calE,\calD[\calE])$ on $L^2(X;m)$ is local if for all $u,v\in\calD[\calE]$ with compact supports, we have $\calE(u,v)=0$. We say that $(\calE,\calD[\calE])$ on $L^2(X;m)$ is strongly local if for all $u,v\in\calD[\calE]$ with compact supports and $v$ is constant in an open neighborhood of $\mathrm{supp}(u)$, we have $\calE(u,v)=0$.

Let $(\calE,\calD[\calE])$ on $L^2(X;m)$ be a Dirichlet form and $\myset{T_t:t\in(0,+\infty)}$ on $L^2(X;m)$ its corresponding strongly continuous Markovian semi-group. Then for all $t\in(0,+\infty)$, $T_t$ can be extended to be a contractive linear operator on $L^\infty(X;m)$. We say that $(\calE,\calD[\calE])$ on $L^2(X;m)$ or $\myset{T_t:t\in(0,+\infty)}$ on $L^2(X;m)$ is conservative if $T_t1=1$ $m$-a.e. for all (or equivalently, for some) $t\in(0,+\infty)$.

We say that $(\calE,\calD[\calE])$ is a closed form on $L^2(X;m)$ in the wide sense if $\calD[\calE]$ is complete under the inner product $\calE_1$ but $\calD[\calE]$ is not necessary to be dense in $L^2(X;m)$. If $(\calE,\calD[\calE])$ is a closed form on $L^2(X;m)$ in the wide sense, we extend $\calE$ to be $+\infty$ outside $\calD[\calE]$, hence the information of $\calD[\calE]$ is encoded in $\calE$.

Note that a closed form is a closed form in the wide sense.

We collect the definitions and some results about $\Gamma$-convergence and Mosco convergence as follows.

\begin{mydef}\label{def_gamma}
Let $\calE^n,\calE$ be closed forms on $L^2(X;m)$ in the wide sense. We say that $\calE^n$ is $\Gamma$-convergent to $\calE$ if the following conditions are satisfied.
\begin{enumerate}[(1)]
\item For all $\myset{u_n}\subseteq L^2(X;m)$ that converges \emph{strongly} to $u\in L^2(X;m)$, we have
$$\varliminf_{n\to+\infty}\calE^n(u_n,u_n)\ge\calE(u,u).$$
\item For all $u\in L^2(X;m)$, there exists a sequence $\myset{u_n}\subseteq L^2(X;m)$ converging \emph{strongly} to $u$ in $L^2(X;m)$ such that
$$\varlimsup_{n\to+\infty}\calE^n(u_n,u_n)\le\calE(u,u).$$
\end{enumerate}
\end{mydef}

We can see that $\Gamma$-convergence is very weak from the following result.

\begin{myprop}\label{prop_gamma}(\cite[Proposition 6.8, Theorem 8.5, Theorem 11.10, Proposition\\
\noindent 12.16]{Dal93})
Let $\myset{\calE^n}$ be a sequence of closed forms on $L^2(X;m)$ in the wide sense, then there exist some subsequence $\myset{\calE^{n_k}}$ and some closed form $(\calE,\calD[\calE])$ on $L^2(X;m)$ in the wide sense such that $\calE^{n_k}$ is $\Gamma$-convergent to $\calE$.
\end{myprop}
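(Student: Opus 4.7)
The plan is to obtain the result as a special case of the general compactness of $\Gamma$-convergence for sequences of functionals on a separable metric space, and then to verify that the quadratic/closed structure is inherited by the $\Gamma$-limit. Since $(X,d)$ is a locally compact separable metric space and $m$ is Radon, the Hilbert space $L^2(X;m)$ is separable; fix a countable dense subset $D=\{v_k\}$ and a countable base $\{U_j\}$ of open sets for its norm topology.

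The first step is a Cantor diagonal extraction. For each pair $(k,j)$, the sequences $\{\calE^n(v_k,v_k)\}_n$ and $\{\inf_{w\in U_j}\calE^n(w,w)\}_n$ take values in the compact space $[0,+\infty]$, so a standard diagonal argument produces a subsequence $\{n_\ell\}$ along which all of these numerical sequences converge. Using this data I would define the candidate $\Gamma$-limit as $\calE(u,u):=\Gamma\text{-}\varliminf_\ell\calE^{n_\ell}(u,u)$, which is automatically lower semicontinuous on $L^2(X;m)$ and therefore satisfies condition (1) of Definition \ref{def_gamma} by construction.

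The main obstacle is the recovery-sequence inequality (condition (2) of Definition \ref{def_gamma}): for every $u\in L^2(X;m)$ one must produce $u_\ell\to u$ in $L^2$ with $\varlimsup_\ell\calE^{n_\ell}(u_\ell,u_\ell)\le\calE(u,u)$. This is carried out by a second diagonal extraction in which, for shrinking neighbourhoods $U_j\ni u$, one picks near-minimisers $w_\ell^j\in U_j$ of $\calE^{n_\ell}$ and uses the convergence of $\inf_{w\in U_j}\calE^{n_\ell}(w,w)$ to identify $\lim_j\lim_\ell\inf_{w\in U_j}\calE^{n_\ell}(w,w)$ with $\calE(u,u)$; one then diagonalises over $j$ and $\ell$ to obtain the required sequence. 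This is the technical heart of Dal Maso's compactness argument \cite[Chapter 8]{Dal93}, and the separability hypothesis is what makes the double diagonal extraction feasible.

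Finally I would verify that the limit $\calE$ inherits the structure of a closed quadratic form in the wide sense. Quadraticity passes through $\Gamma$-convergence via the parallelogram identity $\calE^{n_\ell}(u+v,u+v)+\calE^{n_\ell}(u-v,u-v)=2\calE^{n_\ell}(u,u)+2\calE^{n_\ell}(v,v)$: combining a recovery sequence for $u$ and $v$ with the $\varliminf$ inequality applied at $u\pm v$, and vice versa, yields $\calE(u+v,u+v)+\calE(u-v,u-v)=2\calE(u,u)+2\calE(v,v)$, which together with non-negativity forces $\calE$ to be a quadratic form. Closedness in the wide sense then follows from lower semicontinuity: a Cauchy sequence $\{u_n\}\subseteq\calD[\calE]$ in the $\calE_1$-norm is Cauchy in $L^2$, converges to some $u\in L^2(X;m)$, and lower semicontinuity forces $u\in\calD[\calE]$ together with $u_n\to u$ in the $\calE_1$-inner product.
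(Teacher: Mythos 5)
The paper does not prove this proposition at all: it is quoted verbatim from Dal Maso's book, with the compactness of $\Gamma$-convergence, the preservation of the quadratic structure, and lower semicontinuity supplied by the four cited results. Your sketch is a correct reconstruction of exactly those arguments (diagonal extraction over a countable base of the separable space $L^2(X;m)$, recovery sequences, the parallelogram-identity argument for quadraticity, and closedness from lower semicontinuity), so it follows essentially the same route as the source the paper relies on.
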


\begin{mydef}\label{def_Mosco}
Let $\calE^n$, $\calE$ be closed forms on $L^2(X;m)$. We say that $\calE^n$ converges to $\calE$ in the sense of Mosco if the following conditions are satisfied.
\begin{enumerate}[(1)]
\item\label{def_Mosco_1} For all $\myset{u_n}\subseteq L^2(X;m)$ that converges \emph{weakly} to $u\in L^2(X;m)$, we have
$$\varliminf_{n\to+\infty}\calE^n(u_n,u_n)\ge\calE(u,u).$$
\item\label{def_Mosco_2} For all $u\in L^2(X;m)$, there exists a sequence $\myset{u_n}\subseteq L^2(X;m)$ converging \emph{strongly} to $u$ in $L^2(X;m)$ such that
$$\varlimsup_{n\to+\infty}\calE^n(u_n,u_n)\le\calE(u,u).$$
\end{enumerate}
\end{mydef}

Let $\myset{T_t:t\in(0,+\infty)}$, $\myset{T^n_t:t\in(0,+\infty)}$ be the strongly continuous semi-groups on $L^2(X;m)$ and $\myset{G_\alpha:\alpha\in(0,+\infty)}$, $\myset{G^n_\alpha:\alpha\in(0,+\infty)}$ the strongly continuous resolvents on $L^2(X;m)$ corresponding to closed forms $(\calE,\calD[\calE])$, $(\calE^n,\calD[\calE^n])$ on $L^2(X;m)$. We have the following equivalence.

\begin{myprop}(\cite[Theorem 2.4.1, Corollary 2.6.1]{Mos94})\label{prop_Mosco}
The followings are equivalent.
\begin{enumerate}[(1)]
\item $\calE^n$ converges to $\calE$ in the sense of Mosco.
\item $T^n_tu\to T_tu$ in $L^2(X;m)$ for all $t\in(0,+\infty)$, $u\in L^2(X;m)$.
\item $G^n_\alpha u\to G_\alpha u$ in $L^2(X;m)$ for all $\alpha\in(0,+\infty)$, $u\in L^2(X;m)$.
\end{enumerate}
\end{myprop}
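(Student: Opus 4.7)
The plan is to establish the cycle (1) $\Rightarrow$ (3) $\Leftrightarrow$ (2) $\Rightarrow$ (1), with the semigroup--resolvent equivalence being entirely standard Hille--Yosida theory. The core content lies in the two implications that directly involve the Mosco conditions.

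For (1) $\Rightarrow$ (3), the key tool is the variational characterization of the resolvent: $G_\alpha u$ is the unique minimizer in $\calD[\calE]$ of the functional $\Phi_\alpha(v) = \calE_\alpha(v,v) - 2(u,v)$, with minimum value $-(G_\alpha u, u)$, and analogously for $G^n_\alpha$. First I would fix $u\in L^2(X;m)$ and $\alpha>0$ and set $v_n := G^n_\alpha u$. From $\alpha\|v_n\|\le\|u\|$ and the identity $\calE^n(v_n,v_n) + \alpha\|v_n\|^2 = (u,v_n)\le\|u\|\|v_n\|$ one gets uniform boundedness of $\|v_n\|$ and $\calE^n(v_n,v_n)$. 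Pass to a weakly convergent subsequence $v_{n_k}\rightharpoonup v_\infty$. The liminf condition (Mosco (1)) applied along $\{v_{n_k}\}$ shows $\calE_\alpha(v_\infty,v_\infty)\le\liminf_k\calE^n_\alpha(v_{n_k},v_{n_k})$, while the recovery sequence condition (Mosco (2)) furnishes, for every candidate $w\in\calD[\calE]$, an approximating sequence $w_n\to w$ strongly with $\limsup\calE^n_\alpha(w_n,w_n)\le\calE_\alpha(w,w)$. Combining these with $\Phi^n_\alpha(v_n)\le\Phi^n_\alpha(w_n)$ yields $\Phi_\alpha(v_\infty)\le\Phi_\alpha(w)$ for every $w$, so $v_\infty = G_\alpha u$ by uniqueness of the minimizer, independently of the subsequence. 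To upgrade weak convergence to strong convergence in $L^2$, I would compare $\calE^n_\alpha(v_n,v_n)=(u,v_n)\to(u,G_\alpha u)=\calE_\alpha(G_\alpha u, G_\alpha u)$ with the liminf inequality, then deduce $\alpha\|v_n\|^2\to\alpha\|G_\alpha u\|^2$, which with weak convergence gives $v_n\to G_\alpha u$ in norm.

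For (3) $\Rightarrow$ (1), the idea is to reconstruct $\calE^n$ from its Yosida approximation $\calE^n_{(\alpha)}(v,v):=\alpha(v-\alpha G^n_\alpha v,v)$, which is an everywhere-defined continuous quadratic form on $L^2(X;m)$, increasing in $\alpha$, and satisfying $\calE^n_{(\alpha)}(v,v)\uparrow\calE^n(v,v)$ as $\alpha\to\infty$ (with the limit equal to $+\infty$ off $\calD[\calE^n]$); the same holds for $\calE$. For the liminf condition, suppose $u_n\rightharpoonup u$ in $L^2(X;m)$. Since $\alpha G^n_\alpha$ is a contraction and $\alpha G^n_\alpha w\to\alpha G_\alpha w$ strongly for every fixed $w$ by (3), a standard weak--strong pairing argument gives $\calE^n_{(\alpha)}(u_n,u_n)\to\calE_{(\alpha)}(u,u)$ for each fixed $\alpha$; since $\calE^n(u_n,u_n)\ge\calE^n_{(\alpha)}(u_n,u_n)$, letting first $n\to\infty$ and then $\alpha\to\infty$ yields $\liminf_n\calE^n(u_n,u_n)\ge\calE(u,u)$. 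For the recovery sequence, given $u\in\calD[\calE]$, set $u^{(\alpha)} := \alpha G_\alpha u$; then $u^{(\alpha)}\to u$ in $L^2$ with $\calE(u^{(\alpha)},u^{(\alpha)})\le\calE(u,u)$ and in fact $\calE(u^{(\alpha)},u^{(\alpha)})\to\calE(u,u)$ as $\alpha\to\infty$. Approximate each $u^{(\alpha)}$ by $u^{(\alpha)}_n := \alpha G^n_\alpha u$, which by (3) tends strongly to $u^{(\alpha)}$ and for which one computes $\calE^n(u^{(\alpha)}_n,u^{(\alpha)}_n)\le\calE^n_{(\alpha)}(u,u)\le\alpha\|u\|^2$, with $\limsup_n\calE^n(u^{(\alpha)}_n,u^{(\alpha)}_n)\le\calE(u^{(\alpha)},u^{(\alpha)})$. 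A diagonal selection $\alpha=\alpha(n)\to\infty$ slowly enough then produces a single sequence $\tilde u_n$ with $\tilde u_n\to u$ strongly and $\limsup_n\calE^n(\tilde u_n,\tilde u_n)\le\calE(u,u)$. For $u\notin\calD[\calE]$ the recovery requirement is vacuous since $\calE(u,u)=+\infty$.

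The main obstacle is the diagonal extraction in the recovery-sequence half of (3) $\Rightarrow$ (1): one must simultaneously control strong $L^2$-convergence and the upper bound on $\calE^n$, and the Yosida approximants only converge to $u$ as $\alpha\to\infty$, so the rate of $\alpha(n)$ must be calibrated to the modulus of strong convergence $\|\alpha G^n_\alpha u - \alpha G_\alpha u\|$ guaranteed by (3). Once this selection is done, the remaining steps reduce to bookkeeping with monotone quadratic forms and the contractivity of $\alpha G^n_\alpha$.
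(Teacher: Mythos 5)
The paper offers no proof of this proposition --- it is quoted directly from Mosco \cite{Mos94} --- so there is nothing internal to compare against. Your architecture (variational characterization of $G_\alpha$ as the minimizer of $v\mapsto\calE_\alpha(v,v)-2(u,v)$ for (1)$\Rightarrow$(3), Yosida-type approximating forms for (3)$\Rightarrow$(1), Laplace transform and Trotter--Kato for (2)$\Leftrightarrow$(3)) is the standard one. The (1)$\Rightarrow$(3) half is correct as written, including the upgrade from weak to strong convergence via convergence of the energies, and the recovery-sequence half of (3)$\Rightarrow$(1) together with the diagonal selection is also sound.

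There is, however, one step that fails as stated: in the liminf half of (3)$\Rightarrow$(1) you claim that for $u_n\rightharpoonup u$ weakly, ``a standard weak--strong pairing argument gives $\calE^n_{(\alpha)}(u_n,u_n)\to\calE_{(\alpha)}(u,u)$.'' This is false. Writing $\calE^n_{(\alpha)}(u_n,u_n)=\alpha\|u_n\|^2-\alpha^2(G^n_\alpha u_n,u_n)$, the first term is only weakly lower semicontinuous ($\|u_n\|^2$ need not converge to $\|u\|^2$), and in the second term both slots vary, so strong resolvent convergence on \emph{fixed} vectors does not apply. Concretely, if $\calE^n=\calE$ is the bounded form $\|\cdot\|^2$ with full domain, then $\calE_{(\alpha)}=\tfrac{\alpha}{1+\alpha}\|\cdot\|^2$, which is not weakly continuous. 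What is true, and all you need, is the one-sided bound $\varliminf_{n}\calE^n_{(\alpha)}(u_n,u_n)\ge\calE_{(\alpha)}(u,u)$. To obtain it, expand $\calE^n_{(\alpha)}(u_n-w,u_n-w)\ge0$ for a fixed $w\in L^2(X;m)$ to get
$$\calE^n_{(\alpha)}(u_n,u_n)\ge2\calE^n_{(\alpha)}(u_n,w)-\calE^n_{(\alpha)}(w,w),$$
and observe that the cross term $\calE^n_{(\alpha)}(u_n,w)=\alpha(u_n,w)-\alpha^2(u_n,G^n_\alpha w)$ involves $u_n$ only linearly, paired against the fixed vector $w$ and the strongly convergent vectors $G^n_\alpha w\to G_\alpha w$; hence it converges to $\calE_{(\alpha)}(u,w)$, while $\calE^n_{(\alpha)}(w,w)\to\calE_{(\alpha)}(w,w)$. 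Taking $w=u$ gives the required inequality, and the remainder of your argument (monotonicity of $\calE^n_{(\alpha)}$ below $\calE^n$, then $\alpha\to\infty$) goes through unchanged.
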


We have the following corollary.

\begin{mycor}\label{cor_Mosco}
Let $(\calE,\calD[\calE])$ be a closed form on $L^2(X;m)$, then for all $\myset{u_n}\subseteq L^2(X;m)$ that converges \emph{weakly} to $u\in L^2(X;m)$, we have
\begin{equation}\label{eqn_Mosco}
\calE(u,u)\le\varliminf_{n\to+\infty}\calE(u_n,u_n).
\end{equation}
\end{mycor}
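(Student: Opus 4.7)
The plan is to apply Proposition \ref{prop_Mosco} to the trivial constant sequence of closed forms. Define $\calE^n := \calE$ for every $n\ge 1$, so that the associated strongly continuous semi-group $\{T^n_t:t\in(0,+\infty)\}$ is identically equal to $\{T_t:t\in(0,+\infty)\}$. Condition (2) of Proposition \ref{prop_Mosco} is then vacuous: for every $t\in(0,+\infty)$ and every $u\in L^2(X;m)$, $T^n_tu=T_tu\to T_tu$ strongly in $L^2(X;m)$ as $n\to+\infty$.

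By the implication $(2)\Rightarrow(1)$ of Proposition \ref{prop_Mosco}, the constant sequence $\calE^n=\calE$ converges to $\calE$ in the sense of Mosco. Specializing condition (\ref{def_Mosco_1}) of Definition \ref{def_Mosco} to an arbitrary sequence $\{u_n\}\subseteq L^2(X;m)$ converging weakly to $u\in L^2(X;m)$ immediately yields
$$\calE(u,u)\le\varliminf_{n\to+\infty}\calE^n(u_n,u_n)=\varliminf_{n\to+\infty}\calE(u_n,u_n),$$
which is (\ref{eqn_Mosco}).

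There is no real obstacle here; the corollary is essentially a reading of Proposition \ref{prop_Mosco} against the trivial sequence. If a proof avoiding the cited Mosco--semigroup equivalence were desired, an alternative route would be to use the resolvent representation $\calE(u,u)=\sup_{\alpha>0}\alpha\bigl((u,u)-\alpha(G_\alpha u,u)\bigr)$: for each fixed $\alpha$, the functional $u\mapsto\alpha\bigl\|(I-\alpha G_\alpha)^{1/2}u\bigr\|^2$ is the square of a continuous Hilbertian semi-norm on $L^2(X;m)$ (since $I-\alpha G_\alpha$ is bounded and non-negative self-adjoint), hence weakly lower-semicontinuous; one would then pass the inequality through the supremum in $\alpha$ via $\sup_\alpha\varliminf_n\le\varliminf_n\sup_\alpha$. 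Either way the main point is that the desired inequality is precisely the weak lower-semicontinuity built into the Mosco framework, so invoking Proposition \ref{prop_Mosco} is the most economical argument.
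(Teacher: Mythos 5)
Your proof is correct and is exactly the argument the paper gives: take the constant sequence $\calE^n=\calE$, note it trivially converges in the sense of Mosco by Proposition \ref{prop_Mosco}, and read off condition (\ref{def_Mosco_1}) of Definition \ref{def_Mosco}. The alternative resolvent-based route you sketch is a nice aside but not needed.
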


\begin{proof}
Let $\calE^n=\calE$ for all $n\ge1$, then by Proposition \ref{prop_Mosco}, $\calE^n$ is trivially convergent to $\calE$ in the sense of Mosco. By Definition \ref{def_Mosco}, Equation (\ref{eqn_Mosco}) is obvious.
\end{proof}

Note that it is tedious to prove Corollary \ref{cor_Mosco} directly.

\section{Some Results on the SG}\label{sec_SG}

We use the notions of the SG introduced in Section \ref{sec_notion}.

Let us recall the classical Kigami's construction of the self-similar local regular Dirichlet form on the SG.

\begin{mythm}\label{thm_SG_con}(\cite{Kig89,Kig93,Kig01})
Let
$$\frakE_n(u,u)=\left(\frac{5}{3}\right)^n\sum_{w\in W_n}\sum_{p,q\in V_w}(u(p)-u(q))^2,n\ge0,u\in l(K),$$
where $l(S)$ is the set of all real-valued functions on the set $S$. Then $\frakE_n(u,u)$ is monotone increasing in $n$ for all $u\in l(K)$. Let
$$
\begin{aligned}
&\frakE_\loc(u,u)=\lim_{n\to+\infty}\frakE_n(u,u)=\lim_{n\to+\infty}
\left(\frac{5}{3}\right)^n\sum_{w\in W_n}\sum_{p,q\in V_w}(u(p)-u(q))^2,\\
&\frakF_\loc=\myset{u\in C(K):\frakE_\loc(u,u)<+\infty},
\end{aligned}
$$
then $(\frakE_\loc,\frakF_\loc)$ is a self-similar local regular Dirichlet form on $L^2(K;\nu)$.
\end{mythm}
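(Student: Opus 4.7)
My plan follows Kigami's classical approach, pivoting on the monotonicity of $\frakE_n(u,u)$ in $n$.

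The heart of the argument will be the \emph{monotonicity} $\frakE_n(u,u) \le \frakE_{n+1}(u,u)$, which I will reduce via self-similarity to a single cell-level computation. Given data on $V_0 = \myset{p_0, p_1, p_2}$, consider all extensions of $u$ to $V_1$ (the three corners together with the three midpoints) and minimize the level-one discrete energy $\sum_{p,q \in V_1}(v(p)-v(q))^2$ over the three free midpoint values. Solving the resulting $3 \times 3$ linear system produces the unique \emph{harmonic extension}, and a direct computation shows the minimum value equals $(3/5)\sum_{p,q \in V_0}(u(p)-u(q))^2$. The normalization factor $5/3$ in the definition of $\frakE_n$ is chosen precisely so that
$$\sum_{p,q \in V_0}(u(p)-u(q))^2 \le \frac{5}{3}\sum_{w \in W_1}\sum_{p,q \in V_w}(u(p)-u(q))^2.$$
Applying this cell-wise on every $K_w$ with $w \in W_n$ and summing, I will obtain $\frakE_n \le \frakE_{n+1}$, so $\frakE_\loc(u,u) = \sup_n \frakE_n(u,u) = \lim_{n \to \infty} \frakE_n(u,u)$ is well-defined in $[0,+\infty]$.

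Next I would verify the Dirichlet-form axioms. The \emph{Markovian property} is inherited at every level: each $\frakE_n$ is a sum of squared differences, so any normal contraction decreases $\frakE_n$ termwise, and this passes to the supremum. The key to the remaining properties is the \emph{effective-resistance estimate}
$$|u(x) - u(y)|^2 \le R(x,y)\,\frakE_\loc(u,u) \quad \text{for all } u \in \frakF_\loc,\ x,y \in V^*,$$
where $R$ is the resistance metric derived from the discrete energies $\frakE_n$. For \emph{closedness}: an $\frakE_{\loc,1}$-Cauchy sequence $\myset{u_k}$ is also sup-norm Cauchy on $V^*$ by the resistance bound, hence converges uniformly on $K$ by density of $V^*$, which places the limit $u$ in $C(K)$; then $\frakE_n(u - u_l, u - u_l) = \lim_k \frakE_n(u_k - u_l, u_k - u_l)$ and taking the supremum in $n$ gives $\frakE_\loc(u - u_l, u - u_l) \to 0$. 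For \emph{regularity}, iterating the single-cell harmonic extension produces piecewise-harmonic functions adapted to every scale; these are continuous, lie in $\frakF_\loc$, and approximate any $u \in \frakF_\loc$ in both $\frakE_{\loc,1}$ and sup norms, yielding density of $\frakF_\loc \cap C(K)$ in both $\frakF_\loc$ and $C(K)$.

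Finally, \emph{self-similarity} $\frakE_\loc(u,u) = (5/3)\sum_{i=0}^2 \frakE_\loc(u \circ f_i, u \circ f_i)$ reads off the definition by decomposing the level-$(n+1)$ sum into three subcell sums via $W_{n+1} = \myset{0,1,2} \times W_n$, and \emph{locality} follows because if $u,v \in \frakF_\loc$ have disjoint compact supports, then their positive separation forces every cell $K_w$ of sufficiently small diameter to meet at most one support, so $\frakE_n(u,v) = 0$ for large $n$ and hence $\frakE_\loc(u,v) = 0$ by polarization. The main obstacle is the miraculous-looking renormalization calculation giving the exact factor $3/5$ independently of the boundary data: this is the \emph{harmonic structure} compatibility specific to p.c.f. self-similar sets, which relies on the three-corner symmetry of the SG and the finite overlap of distinct cells. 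As the preface emphasizes, no such elementary compatibility is available on the SC, which motivates the alternative analytic construction via $\Gamma$-convergence of non-local forms developed in later chapters of this thesis.
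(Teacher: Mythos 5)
The paper does not actually prove this theorem: it is stated in Section \ref{sec_SG} purely as a quotation of Kigami's classical results \cite{Kig89,Kig93,Kig01}, so there is no in-paper proof to compare against. Your sketch is a correct reconstruction of the standard argument, and its two computational pillars --- the harmonic-extension formulas $x=(2a+2b+c)/5$, $y=(a+2b+2c)/5$, $z=(2a+b+2c)/5$ and the resulting renormalization factor $3/5$ --- are exactly the ones this thesis uses elsewhere (in the definition of $U^{(x_0,x_1,x_2)}$ and Theorem \ref{thm_SG_fun}, and in the minimization step inside the proof of Theorem \ref{SG_det_thm_main}, where the same computation yields the factor $\frac{1}{5\lambda}$). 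The one place you are slightly glib is closedness: the resistance estimate controls only the \emph{oscillation} of an $\frakE_{\loc,1}$-Cauchy sequence, i.e. $|w(x)-w(y)|^2\le R(x,y)\,\frakE_\loc(w,w)$ for $w=u_k-u_m$, so to conclude sup-norm convergence you must use the $L^2$ part of the norm to pin down the additive constants (for instance by integrating $|w(x)|^2\le 2|w(y)|^2+2R(x,y)\frakE_\loc(w,w)$ in $y$ over $K$ and using that $R$ is bounded). This is routine and does not affect the validity of the argument.
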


Given $x_0,x_1,x_2\in\R$, we define $U=U^{(x_0,x_1,x_2)}:K\to\R$ as follows. We define $U$ on $V^*$ by induction. Let $U(p_i)=x_i,i=0,1,2$. Assume that we have defined $U$ on $P_{w}$ for all $w\in W_{n+1}$. Then for all $w\in W_n$, note that $P_{wii}=P_{wi}$ and $P_{wij}=P_{wji}$ for all $i,j=0,1,2$, define 
$$
\begin{aligned}
U(P_{w01})&=U(P_{w10})=\frac{2U(P_{w0})+2U(P_{w1})+U(P_{w2})}{5},\\
U(P_{w12})&=U(P_{w21})=\frac{U(P_{w0})+2U(P_{w1})+2U(P_{w2})}{5},\\
U(P_{w02})&=U(P_{w20})=\frac{2U(P_{w0})+U(P_{w1})+2U(P_{w2})}{5}.\\
\end{aligned}
$$
Hence we have the definition of $U$ on $P_w$ for all $w\in W_{n+2}$. Then $U$ is well-defined and uniformly continuous on $V^*$. We extend $U$ on $V^*$ to a continuous function $U$ on $K$.

Let
$$\calU=\myset{U^{(x_0,x_1,x_2)}:x_0,x_1,x_2\in\R}.$$

We have energy property and separation property as follows.

\begin{mythm}(\cite{Kig89,Kig93,Kig01})\label{thm_SG_fun}
\begin{enumerate}[(1)]
\item For all $U=U^{(x_0,x_1,x_2)}\in\calU$, $n\ge0$, we have
$$\sum_{w\in W_n}\sum_{p,q\in V_w}(U(p)-U(q))^2=\left(\frac{3}{5}\right)^n\left((x_0-x_1)^2+(x_1-x_2)^2+(x_0-x_2)^2\right).$$
\item $\calU$ separates points, that is, for all $x,y\in K$ with $x\ne y$, there exists $U\in\calU$ such that $U(x)\ne U(y)$.
\end{enumerate}
\end{mythm}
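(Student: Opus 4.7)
The plan is to prove (1) by induction on $n$ via a single-cell renormalization identity, and to prove (2) by showing that the harmonic coordinate map is injective on $K$.

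For (1), the case $n=0$ is immediate from the definition $U(p_i)=x_i$ on $V_0$. For the inductive step I would establish the one-cell identity: for every $w\in W_n$, writing $a=U(P_{w0})$, $b=U(P_{w1})$, $c=U(P_{w2})$,
\begin{equation*}
\sum_{i=0}^{2}\sum_{p,q\in V_{wi}}(U(p)-U(q))^2=\frac{3}{5}\bigl((a-b)^2+(b-c)^2+(a-c)^2\bigr).
\end{equation*}
The extension rule gives the six vertex values as $a,b,c$ together with $(2a+2b+c)/5$, $(a+2b+2c)/5$, $(2a+b+2c)/5$; substituting $u=a-b$ and $v=b-c$, the nine squared differences on the left collect into $\tfrac{6}{5}(u^2+uv+v^2)$, which is exactly $\tfrac{3}{5}(u^2+v^2+(u+v)^2)$. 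Summing this identity over $w\in W_n$ and invoking the induction hypothesis on the triples $(U(P_{w0}),U(P_{w1}),U(P_{w2}))$ produces the factor $(3/5)^{n+1}$.

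For (2), I would first use linearity of the extension in $(x_0,x_1,x_2)$ to reduce separation to the injectivity of $\Phi\colon K\to\R^3$, $\Phi(x)=(U^{(e_0)}(x),U^{(e_1)}(x),U^{(e_2)}(x))$, where $e_0,e_1,e_2$ are the standard basis vectors of $\R^3$. Uniqueness of the extension forces $U^{(1,1,1)}\equiv 1$, so $\Phi$ maps $K$ into the simplex $\{a+b+c=1,\ a,b,c\ge0\}$. The injectivity of $\Phi$ is then extracted from the self-similar structure: a direct computation of $U^{(e_j)}\circ f_i$ on $V_0$ furnishes an invertible $3\times3$ matrix $L_i$ (one checks for instance that $\det L_0=3/25$) satisfying $\Phi\circ f_i=L_i\Phi$. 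Assuming $\Phi(x)=\Phi(y)$ with $x\ne y$, one either pulls back through a common first-level cell $K_i$ and recurses, or finds that $x,y$ belong to distinct cells meeting only at a shared corner $p_j$, at which $\Phi(p_j)=e_j$ is an extreme point of the simplex and hence agrees with at most one of $\Phi(x),\Phi(y)$.

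The main obstacle is this injectivity argument in (2): the one-cell identity in (1) is a short algebraic verification, but in (2) one must keep careful track of the combinatorics of shared vertices between adjacent first-level cells and verify that the descent on scales either terminates with a direct contradiction or produces nested cells collapsing to a single point.
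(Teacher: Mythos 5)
The paper offers no proof of this theorem --- it is quoted from Kigami --- so there is nothing internal to compare against; I assess your argument on its own. Part (1) is correct and essentially complete: your one-cell identity
$$\sum_{i=0}^{2}\sum_{p,q\in V_{wi}}(U(p)-U(q))^2=\frac{3}{5}\sum_{p,q\in V_w}(U(p)-U(q))^2$$
is exactly the algebra the paper itself performs elsewhere (the computation $A_2=\frac{1}{5\lambda}A_1$ in the proof of Theorem \ref{SG_det_thm_main}, and again in Theorem \ref{SG_con_thm_AnU}), your substitution $u=a-b$, $v=b-c$ does give $\frac{6}{5}(u^2+uv+v^2)=\frac{3}{5}\left(u^2+v^2+(u+v)^2\right)$, and summing over $w\in W_n$ closes the induction.

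Part (2) has a genuine gap at exactly the step you flag as the main obstacle, and your sketch of that step would not work as written. The reduction to injectivity of $\Phi$, the relation $\Phi\circ f_i=L_i\Phi$ with $\det L_i=3/25$, and the descent when $x,y$ share a $1$-cell (nested cells of diameter $2^{-n}$ force $x=y$) are all fine. But in the complementary case your description is incorrect: two distinct $1$-cells $K_i,K_j$ meet at the edge midpoint $f_i(p_j)=f_j(p_i)$, not at a corner $p_j$, and $\Phi$ at that junction equals $\frac{1}{5}(2,2,1)$ up to permutation, which is not an extreme point of the simplex; moreover, extremality of the image of the junction point has no bearing on whether $\Phi(x)=\Phi(y)$ can occur for $x\in K_0\setminus K_1$, $y\in K_1\setminus K_0$. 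What is actually needed is a level-set separation argument, for instance: since $U^{(e_0)}\circ f_0=U^{(1,2/5,2/5)}=\frac{2}{5}+\frac{3}{5}U^{(e_0)}$ while $U^{(e_0)}\circ f_1=U^{(2/5,0,1/5)}$ and $U^{(e_0)}\circ f_2=U^{(2/5,1/5,0)}$, and harmonic values are convex combinations of the corner values, one gets $U^{(e_0)}\ge 2/5$ on $K_0$ and $U^{(e_0)}\le 2/5$ on $K_1\cup K_2$, with $\myset{U^{(e_0)}=2/5}\cap K_0=\myset{f_0(p_1),f_0(p_2)}$. That last identification reduces to showing the zero set of $U^{(e_0)}$ is exactly $\myset{p_1,p_2}$, which needs its own (elementary but nonvacuous) induction: a newly created vertex value $(2a+2b+c)/5$ with $a,b,c\ge0$ vanishes only if $a=b=c=0$, so no cell avoiding $p_1,p_2$ has a vanishing corner, and on such a cell $U^{(e_0)}$ is bounded below by the minimum of its corner values. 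Only with this in hand does the non-shared-cell case yield a contradiction (a point of $K_0$ lying in exactly one $1$-cell cannot map to the value $2/5$). None of this machinery appears in your sketch, and it is the mathematical core of part (2).
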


\begin{myrmk}
In Kigami's construction, $U^{(x_0,x_1,x_2)}$ is the standard harmonic function with boundary values $x_0,x_1,x_2$ on $p_0,p_1,p_2$, respectively.
\end{myrmk}

\begin{mylem}\label{lem_SG_holder}(\cite[Theorem 4.11 (\rmnum{3})]{GHL03})
For all $u\in L^2(K;\nu)$, let
$$
\begin{aligned}
E(u)&=\sum_{n=1}^\infty2^{(\beta-\alpha)n}\sum_{w^{(1)}\sim_nw^{(2)}}\left(P_nu(w^{(1)})-P_nu(w^{(2)})\right)^2,\\
F(u)&=\sup_{n\ge1}2^{(\beta-\alpha)n}\sum_{w^{(1)}\sim_nw^{(2)}}\left(P_nu(w^{(1)})-P_nu(w^{(2)})\right)^2.
\end{aligned}
$$
Then for all $\beta\in(\alpha,+\infty)$, there exists some positive constant $c$ such that
\begin{equation}\label{eqn_holder_E}
|u(x)-u(y)|^2\le cE(u)|x-y|^{\beta-\alpha},
\end{equation}
\begin{equation}\label{eqn_holder_F}
|u(x)-u(y)|^2\le cF(u)|x-y|^{\beta-\alpha},
\end{equation}
for $\nu$-almost every $x,y\in K$, for all $u\in L^2(K;\nu)$.
\end{mylem}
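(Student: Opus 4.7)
The plan is to prove the estimates via a telescoping/chaining argument along the nested cell structure on $K$, using Lebesgue differentiation to identify $u(x)$ with the limit of its cell averages $P_n u$. I will treat both $E(u)$ and $F(u)$ in parallel, since the critical inequality
$$\sum_{w^{(1)}\sim_n w^{(2)}}\bigl(P_n u(w^{(1)})-P_n u(w^{(2)})\bigr)^2\le 2^{-(\beta-\alpha)n}\min\{E(u),F(u)\}$$
holds in either case (for $E(u)$ because one term of the defining sum dominates; for $F(u)$ by definition of the supremum).

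The concrete steps are as follows. First, for $\nu$-a.e.\ $x\in K$ there is a unique descending chain of words $w_n(x)\in W_n$ with $x\in K_{w_n(x)}$, and Lebesgue differentiation gives $P_n u(w_n(x))\to u(x)$. By self-similarity of $\nu$,
$$P_k u(w_k(x))=\tfrac13\sum_{i=0}^{2} P_{k+1}u(w_k(x)\,i),$$
so $P_{k+1}u(w_{k+1}(x))-P_k u(w_k(x))$ equals the average over $i$ of differences $P_{k+1}u(w_{k+1}(x))-P_{k+1}u(w_k(x)\,i)$, and each of these is a difference along an edge in $H_{k+1}$ (the three children of the same parent are pairwise neighbours at level $k+1$). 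Hence by Cauchy--Schwarz and the displayed inequality above,
$$\bigl|P_{k+1}u(w_{k+1}(x))-P_k u(w_k(x))\bigr|^2\le C\cdot 2^{-(\beta-\alpha)(k+1)}\min\{E(u),F(u)\}.$$
Telescoping from level $n_0$ to infinity and using $\beta>\alpha$ to sum the resulting geometric series in $2^{-(\beta-\alpha)/2}$, I obtain
$$\bigl|u(x)-P_{n_0}u(w_{n_0}(x))\bigr|^2\le C\cdot 2^{-(\beta-\alpha)n_0}\min\{E(u),F(u)\}$$
for $\nu$-a.e.\ $x$, and likewise for $y$.

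To finish, given $x\neq y$ I choose $n_0$ with $2^{-(n_0+1)}<|x-y|\le 2^{-n_0}$. On the SG the diameter of each level-$n_0$ cell is of order $2^{-n_0}$, so $w_{n_0}(x)$ and $w_{n_0}(y)$ coincide or can be joined by a chain of at most a bounded number of edges in $H_{n_0}$; one more application of Cauchy--Schwarz then bounds
$$\bigl|P_{n_0}u(w_{n_0}(x))-P_{n_0}u(w_{n_0}(y))\bigr|^2\le C\cdot 2^{-(\beta-\alpha)n_0}\min\{E(u),F(u)\}.$$
Combining the three pieces via the triangle inequality and using $2^{-n_0}\asymp|x-y|$ yields both \eqref{eqn_holder_E} and \eqref{eqn_holder_F}.

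The only genuinely delicate point is verifying that the level-$n_0$ cells containing $x$ and $y$ are always within a bounded graph distance in $X_{n_0}$, which is a purely geometric feature of the SG (ultimately coming from the fact that the intrinsic metric on the SG is bi-Lipschitz equivalent to the Euclidean metric on each cell). Everything else—the Lebesgue differentiation, the self-similarity identity for $P_k u$, Cauchy--Schwarz, and the geometric summation forced by the condition $\beta>\alpha$—is routine. The hypothesis $\beta>\alpha$ enters only through the convergence of the series $\sum_k 2^{-(\beta-\alpha)k/2}$; for $\beta\le\alpha$ the argument breaks down, as it must, since Hölder continuity of $L^2$ functions with finite $E$- or $F$-energy is then not to be expected.
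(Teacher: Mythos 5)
The paper does not actually prove this lemma: it is quoted verbatim from \cite[Theorem 4.11 (iii)]{GHL03} (combined implicitly with the equivalence of the discrete semi-norms with the Besov semi-norms, cf.\ Lemma \ref{SG_con_lem_equiv}), so there is no in-text proof to compare against. Judged on its own, your argument is correct and is the standard route: identify $u(x)$ with $\lim_n P_nu(w_n(x))$ ($\nu$-a.e., e.g.\ by martingale convergence along the cell filtration), control the increments $P_{k+1}u(w_{k+1}(x))-P_ku(w_k(x))$ by single edge-differences at level $k+1$ (using that the three children of a cell are pairwise adjacent and that any single term of $\sum_{w^{(1)}\sim_n w^{(2)}}(\cdots)^2$ is dominated by $2^{-(\beta-\alpha)n}F(u)\le 2^{-(\beta-\alpha)n}E(u)$), sum the geometric series using $\beta>\alpha$, and bridge the two chains at the scale $2^{-n_0}\asymp|x-y|$. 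Two small remarks. First, since $F(u)\le E(u)$, inequality (\ref{eqn_holder_F}) implies (\ref{eqn_holder_E}), so you only need to run the argument for $F$; your ``$\min\{E,F\}$'' bookkeeping is harmless but redundant. Second, you are right that the only point requiring genuine verification is that $w_{n_0}(x)$ and $w_{n_0}(y)$ are within uniformly bounded graph distance in $X_{n_0}$ when $|x-y|\le 2^{-n_0}$; for the SG this follows from the chain condition (which the paper invokes elsewhere, in the proof of Corollary \ref{SG_app_cor_arbi}) together with the fact that only boundedly many level-$n_0$ cells meet $B(x,2\cdot 2^{-n_0})$, so a geodesic from $x$ to $y$ passes through a bounded chain of pairwise adjacent cells. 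With that fact granted, your proof is complete and self-contained, which is arguably more than the paper offers for this statement.
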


\begin{myrmk}
If $u\in L^2(K;\nu)$ satisfies $E(u)<+\infty$ or $F(u)<+\infty$, then $u$ has a continuous version in $C^{\frac{\beta-\alpha}{2}}(K)$. The proof of the above lemma does not rely on heat kernel.
\end{myrmk}

Let us introduce the Besov spaces on the SG as follows. Define
$$
\begin{aligned}
\left[u\right]_{B^{2,2}_{\alpha,\beta}(K)}&=\sum_{n=1}^\infty2^{(\alpha+\beta)n}\int\limits_K\int\limits_{B(x,2^{-n})}(u(x)-u(y))^2\nu(\md y)\nu(\md x),\\
\left[u\right]_{B^{2,\infty}_{\alpha,\beta}(K)}&=\sup_{n\ge1}2^{(\alpha+\beta)n}\int\limits_K\int\limits_{B(x,2^{-n})}(u(x)-u(y))^2\nu(\md y)\nu(\md x),
\end{aligned}
$$
and 
$$
\begin{aligned}
B_{\alpha,\beta}^{2,2}(K)&=\myset{u\in L^2(K;\nu):\left[u\right]_{B^{2,2}_{\alpha,\beta}(K)}<+\infty},\\
B_{\alpha,\beta}^{2,\infty}(K)&=\myset{u\in L^2(K;\nu):\left[u\right]_{B^{2,\infty}_{\alpha,\beta}(K)}<+\infty}.
\end{aligned}
$$

\section{Some Results on the SC}\label{sec_SC}

We use the notions of the SC introduced in Section \ref{sec_notion}.

\begin{mylem}\label{lem_SC_holder}(\cite[Theorem 4.11 (\rmnum{3})]{GHL03})
For all $u\in L^2(K;\nu)$, let
$$E(u)=\int_K\int_K\frac{(u(x)-u(y))^2}{|x-y|^{\alpha+\beta}}\nu(\md x)\nu(\md y),$$
$$F(u)=\sup_{n\ge1}3^{(\alpha+\beta)n}\int_K\int_{B(x,3^{-n})}(u(x)-u(y))^2\nu(\md y)\nu(\md x).$$
Then for all $\beta\in(\alpha,+\infty)$, there exists some positive constant $c$ such that
$$
\begin{aligned}
|u(x)-u(y)|^2&\le cE(u)|x-y|^{\beta-\alpha},\\
|u(x)-u(y)|^2&\le cF(u)|x-y|^{\beta-\alpha},
\end{aligned}
$$
for $\nu$-almost every $x,y\in K$, for all $u\in L^2(K;\nu)$.
\end{mylem}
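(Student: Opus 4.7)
The plan is to establish the $F$-bound first and then derive the $E$-bound from it for free. Splitting the domain of integration defining $E(u)$ into triadic annuli $\{3^{-n-1}\le|x-y|<3^{-n}\}$, on each such annulus $|x-y|^{-(\alpha+\beta)}\asymp 3^{n(\alpha+\beta)}$, so a single term of the resulting sum already dominates $3^{n(\alpha+\beta)}\int_K\int_{B(x,3^{-n})}(u(x)-u(y))^2\,\nu(\md y)\nu(\md x)$. Hence $F(u)\le C\, E(u)$, and the $E$-version is an immediate corollary of the $F$-version.

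To prove $|u(x)-u(y)|^2\le c F(u)|x-y|^{\beta-\alpha}$ I would use averaged versions of $u$. Set $\bar u_n(x):=\nu(B(x,3^{-n}))^{-1}\int_{B(x,3^{-n})}u\,\md\nu$. The SC is $\alpha$-Ahlfors regular, so $\nu(B(x,r))\asymp r^{\alpha}$, and Lebesgue differentiation gives $\bar u_n(x)\to u(x)$ for $\nu$-a.e.\ $x$. The core computation is the following single-scale estimate, obtained by writing the difference of two averages as a double integral and applying the Cauchy--Schwarz inequality combined with Ahlfors regularity: for $|x-y|\le 3^{-n}$,
\[
|\bar u_n(x)-\bar u_n(y)|^2\le C\,3^{2n\alpha}\int_{B(x,3^{-n})}\int_{B(y,3^{-n})}(u(w)-u(z))^2\,\nu(\md z)\nu(\md w),
\]
and since for $w\in B(x,3^{-n})$ one has $B(y,3^{-n})\subset B(w,3^{1-n})$, the inner double integral is bounded by $3^{-(n-1)(\alpha+\beta)}F(u)$, yielding $|\bar u_n(x)-\bar u_n(y)|^2\le C\,3^{-n(\beta-\alpha)}F(u)$. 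The same calculation, applied to $\bar u_n(x)$ versus $\bar u_{n+1}(x)$ (so $y=x$, but with radii $3^{-n}$ and $3^{-n-1}$), gives $|\bar u_n(x)-\bar u_{n+1}(x)|^2\le C\,3^{-n(\beta-\alpha)}F(u)$.

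Given these two ingredients I would finish by a telescoping/chaining argument. Fix $x,y$ with $3^{-n_0-1}\le|x-y|<3^{-n_0}$. Since $\beta>\alpha$, the geometric series $\sum_{n\ge n_0}3^{-n(\beta-\alpha)/2}$ converges, so summing the incremental bound over $n\ge n_0$ gives $|\bar u_{n_0}(x)-u(x)|\le C\,3^{-n_0(\beta-\alpha)/2}F(u)^{1/2}$ for $\nu$-a.e.\ $x$, and similarly at $y$. Combining with the single-scale bound at scale $n_0$ via the triangle inequality
\[
|u(x)-u(y)|\le|u(x)-\bar u_{n_0}(x)|+|\bar u_{n_0}(x)-\bar u_{n_0}(y)|+|\bar u_{n_0}(y)-u(y)|
\]
and squaring gives $|u(x)-u(y)|^2\le c\,3^{-n_0(\beta-\alpha)}F(u)\asymp c\,F(u)|x-y|^{\beta-\alpha}$ for $\nu$-a.e.\ $x,y$, which is the desired $F$-estimate.

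The only real subtlety is the hypothesis $\beta>\alpha$: it is exactly what makes the telescoping series converge and turns the incremental $L^2$ oscillation estimate into a pointwise Hölder bound. Everything else is a routine manipulation of ball averages, Cauchy--Schwarz, and Ahlfors regularity of $\nu$ on the SC; no heat kernel or Dirichlet-form machinery is used, which is why this lemma is available a priori in the construction that follows.
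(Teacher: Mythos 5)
Your proof is correct, and it is essentially the standard ball-averaging argument behind the Besov-to-H\"older embedding; the paper itself offers no proof of this lemma, citing \cite{GHL03} instead, and your scheme (Cauchy--Schwarz on differences of averages, Ahlfors regularity, Lebesgue differentiation, telescoping over scales using $\beta>\alpha$) is the same one used there. Two small points deserve tightening. First, in the reduction $F(u)\le CE(u)$ it is not a \emph{single} annulus term that dominates $3^{(\alpha+\beta)n}\int_K\int_{B(x,3^{-n})}(u(x)-u(y))^2\,\nu(\md y)\nu(\md x)$: the ball $B(x,3^{-n})$ is the union of all annuli at scales $k\ge n$, so one needs the weighted tail sum $\sum_{k\ge n}3^{(n-k)(\alpha+\beta)}\,3^{(\alpha+\beta)k}\iint_{\{3^{-k-1}\le|x-y|<3^{-k}\}}(u(x)-u(y))^2$, which converges geometrically and gives the claim (this is exactly Lemma \ref{SC_con_lem_equiv1} together with $\sup_n\le\sum_n$). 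Second, your chaining handles $|x-y|<3^{-1}$, where $n_0\ge1$ and the index in the definition of $F$ is admissible; for the remaining range $|x-y|\ge 3^{-1}$ one needs either a short chain of intermediate balls (the SC satisfies the chain condition, which the paper invokes in Corollary \ref{SC_con_cor_arbi}) or simply to note that finitely many applications of the scale-$1$ estimate suffice, since $|x-y|^{\beta-\alpha}$ is then bounded below. Neither point affects the substance of the argument.
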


\begin{myrmk}
If $u\in L^2(K;\nu)$ satisfies $E(u)<+\infty$ or $F(u)<+\infty$, then $u$ has a continuous version in $C^{\frac{\beta-\alpha}{2}}(K)$. The proof of the above lemma does not rely on heat kernel.
\end{myrmk}

Let us introduce the Besov spaces on the SC as follows. Define
$$
\begin{aligned}
&\left[u\right]_{B^{2,2}_{\alpha,\beta}(K)}&=\sum_{n=1}^\infty3^{(\alpha+\beta)n}\int\limits_K\int\limits_{B(x,3^{-n})}(u(x)-u(y))^2\nu(\md y)\nu(\md x),\\
&\left[u\right]_{B^{2,\infty}_{\alpha,\beta}(K)}&=\sup_{n\ge1}3^{(\alpha+\beta)n}\int\limits_K\int\limits_{B(x,3^{-n})}(u(x)-u(y))^2\nu(\md y)\nu(\md x),\\
\end{aligned}
$$
and
$$
\begin{aligned}
B_{\alpha,\beta}^{2,2}(K)&=\myset{u\in L^2(K;\nu):[u]_{B^{2,2}_{\alpha,\beta}(K)}<+\infty},\\
B_{\alpha,\beta}^{2,\infty}(K)&=\myset{u\in L^2(K;\nu):[u]_{B^{2,\infty}_{\alpha,\beta}(K)}<+\infty}.\\
\end{aligned}
$$

\section{Some Auxiliary Results}

We give two techniques from electrical networks.

The first is $\Delta$-Y transform (see \cite[Lemma 2.1.15]{Kig01}).

\begin{mylem}\label{lem_DeltaY}
The electrical networks in Figure \ref{fig_DeltaY} are equivalent, where
$$
\begin{aligned}
R_1&=\frac{R_{12}R_{31}}{R_{12}+R_{23}+R_{31}},\\
R_2&=\frac{R_{12}R_{23}}{R_{12}+R_{23}+R_{31}},\\
R_3&=\frac{R_{23}R_{31}}{R_{12}+R_{23}+R_{31}},
\end{aligned}
$$
and
$$
\begin{aligned}
R_{12}&=\frac{R_1R_2+R_2R_3+R_3R_1}{R_3},\\
R_{23}&=\frac{R_1R_2+R_2R_3+R_3R_1}{R_1},\\
R_{31}&=\frac{R_1R_2+R_2R_3+R_3R_1}{R_2}.
\end{aligned}
$$




\begin{figure}[ht]
\centering
\subfigure[$\Delta$-circuit]{
\begin{tikzpicture}
\draw (0,0)--(3,0)--(1.5,1.5*1.7320508076)--cycle;

\draw[fill=black] (0,0) circle (0.06);
\draw[fill=black] (3,0) circle (0.06);
\draw[fill=black] (1.5,1.5*1.7320508076) circle (0.06);

\draw (0,-0.3) node {$p_2$};
\draw (3,-0.3) node {$p_3$};
\draw (1.5,1.5*1.7320508076+0.3) node {$p_1$};

\draw (1.5,-0.3) node {$R_{23}$};
\draw (0.5,0.75*1.7320508076+0.3) node {$R_{12}$};
\draw (2.5,0.75*1.7320508076+0.3) node {$R_{31}$};

\end{tikzpicture}
}
\hspace{1in}
\subfigure[Y-circuit]{
\begin{tikzpicture}
\draw (0,0)--(1.5,1.5/1.7320508076);
\draw (3,0)--(1.5,1.5/1.7320508076);
\draw (1.5,1.5*1.7320508076)--(1.5,1.5/1.7320508076);

\draw[fill=black] (0,0) circle (0.06);
\draw[fill=black] (3,0) circle (0.06);
\draw[fill=black] (1.5,1.5*1.7320508076) circle (0.06);
\draw[fill=black] (1.5,1.5/1.7320508076) circle (0.06);

\draw (0,-0.3) node {$p_2$};
\draw (3,-0.3) node {$p_3$};
\draw (1.5,1.5*1.7320508076+0.3) node {$p_1$};
\draw (1.5,1.5/1.7320508076-0.3) node {$p_0$};

\draw (1.5+0.3,0.75*1.7320508076+0.3) node {$R_{1}$};
\draw (0.7,0.7) node {$R_2$};
\draw (2.3,0.7) node {$R_3$};

\end{tikzpicture}
}
\caption{$\Delta$-Y Transform}\label{fig_DeltaY}
\end{figure}
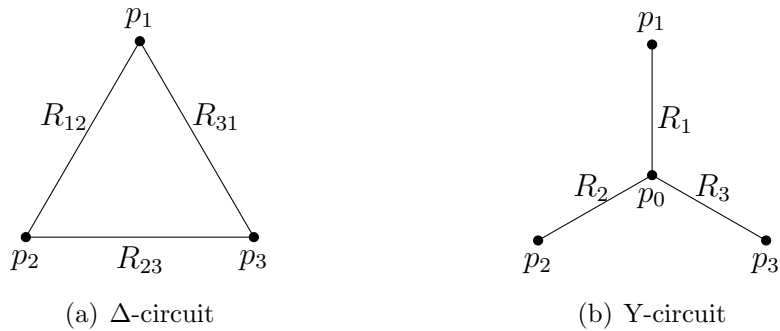


\end{mylem}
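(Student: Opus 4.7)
The plan is to prove equivalence of the two networks in the standard way: two electrical networks on the same terminal set $\{p_1,p_2,p_3\}$ are equivalent if and only if the effective resistance between every pair of terminals coincides. Assuming the formulas for $R_1,R_2,R_3$ in terms of the $R_{ij}$'s, I will verify the three resistance identities directly; the reverse set of formulas then follows by algebraic inversion of the same system.

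First I would compute the effective resistance between $p_1$ and $p_2$ in the $\Delta$-circuit. Since $R_{12}$ sits in parallel with the series combination $R_{31}+R_{23}$ through $p_3$, the effective resistance equals
\[
\frac{R_{12}(R_{23}+R_{31})}{R_{12}+R_{23}+R_{31}}.
\]
In the Y-circuit, the only path from $p_1$ to $p_2$ passes through the interior node $p_0$, so the effective resistance equals $R_1+R_2$. Using the proposed formulas,
\[
R_1+R_2=\frac{R_{12}R_{31}+R_{12}R_{23}}{R_{12}+R_{23}+R_{31}}=\frac{R_{12}(R_{23}+R_{31})}{R_{12}+R_{23}+R_{31}},
\]
so the two match. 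The analogous computations for the pairs $(p_2,p_3)$ and $(p_1,p_3)$, obtained by cyclically permuting the indices, establish the three resistance identities, hence the equivalence.

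For the reverse formulas, I would note that they are the algebraic inversion of the three equations just verified. A clean route is to observe that the product identity
\[
R_1R_2+R_2R_3+R_3R_1=\frac{R_{12}R_{23}R_{31}}{R_{12}+R_{23}+R_{31}}
\]
follows by direct expansion from the defining formulas, and then dividing this by $R_3=R_{23}R_{31}/(R_{12}+R_{23}+R_{31})$ yields $R_{12}=(R_1R_2+R_2R_3+R_3R_1)/R_3$; the expressions for $R_{23}$ and $R_{31}$ are obtained similarly. Since the map between the two parameter triples is a bijection (three equations in three unknowns with nonzero Jacobian away from degenerate networks), this completes the proof.

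The main obstacle is not conceptual but merely bookkeeping: one must carefully track the labelling convention (each $R_i$ in the Y-circuit is attached to terminal $p_i$, while $R_{ij}$ connects $p_i$ and $p_j$ in the $\Delta$-circuit) so that the series/parallel reduction uses the correct pair of resistors. Once this convention is fixed, the verification reduces to symmetric rational-function manipulations.
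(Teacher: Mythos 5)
The paper gives no proof of this lemma at all --- it is quoted directly from Kigami's book --- so there is no internal argument to compare against; judging your proposal on its own terms, the algebra is correct but there is one genuine gap, and it sits at the very first step. You reduce equivalence of the two networks to equality of the three pairwise effective resistances, i.e.\ you assert that a three-terminal network is determined by its effective resistance metric. That is true, but it is a theorem rather than a tautology: ``equivalent'' means that the traces of the two energy forms onto $\myset{p_1,p_2,p_3}$ coincide, and such a trace is a form $\sum_{i<j}c_{ij}(u(p_i)-u(p_j))^2$ depending on three conductances, so you need the map $(c_{12},c_{23},c_{31})\mapsto(R^{\mathrm{eff}}_{12},R^{\mathrm{eff}}_{23},R^{\mathrm{eff}}_{31})$ to be \emph{globally} injective. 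Your only justification is a nonzero Jacobian, which yields local injectivity only, so as written the step does not close. Global injectivity is provable (for instance, writing $\sigma$ for the sum of the three effective resistances one finds $\sigma-2R^{\mathrm{eff}}_{12}=2r_{23}r_{31}/(r_{12}+r_{23}+r_{31})$, from which the $r_{ij}$ can be recovered), and alternatively one may invoke Kigami's theorem that a resistance metric on a finite set determines the resistance form; but either way this step requires an argument you have not supplied.

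The standard proof, and the easiest repair, bypasses effective resistances entirely: compute the trace of the Y-network's energy onto $\myset{p_1,p_2,p_3}$ by eliminating the interior node $p_0$. With $c_i=1/R_i$ the minimizing value is $u(p_0)=\sum_ic_iu(p_i)/\sum_ic_i$, and expanding the minimal energy shows the trace is the triangle form with conductances $c_ic_j/(c_1+c_2+c_3)$; taking reciprocals gives precisely $R_{12}=(R_1R_2+R_2R_3+R_3R_1)/R_3$ and its cyclic permutations, and the inverse formulas follow by the same product identity you wrote down. This is no more algebra than your route and is airtight. For the record, your series/parallel computations of the effective resistances in both circuits and the identity $R_1R_2+R_2R_3+R_3R_1=R_{12}R_{23}R_{31}/(R_{12}+R_{23}+R_{31})$ are all correct.
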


The second is shorting and cutting technique (see \cite{DS84}). Shorting certain sets of vertices will decrease the resistance between arbitrary two vertices. Cutting certain sets of vertices will increase the resistance between arbitrary two vertices.

We give two elementary results as follows.

\begin{myprop}\label{prop_ele1}
Let $\myset{x_n}$ be a \emph{monotone increasing} sequence in $[0,+\infty]$. Then $(1-\lambda)\sum_{n=1}^\infty\lambda^nx_n$ is monotone increasing in $\lambda\in(0,1)$ and
$$\lim_{\lambda\uparrow1}(1-\lambda)\sum_{n=1}^\infty\lambda^nx_n=\lim_{n\to+\infty}x_n=\sup_{n\ge1}x_n.$$
\end{myprop}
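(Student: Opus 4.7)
The plan is to use Abel summation (summation by parts) to rewrite $(1-\lambda)\sum_{n=1}^\infty \lambda^n x_n$ as a single series with non-negative coefficients in which the dependence on $\lambda$ is isolated into the factors $\lambda^k$. Once that is done, monotonicity in $\lambda$ and the evaluation of the limit will both reduce to the monotone convergence theorem.

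Concretely, I would set $x_0 := 0$ and $d_k := x_k - x_{k-1}$ for $k \geq 1$. Monotonicity of $\{x_n\}$ together with $x_n \in [0,+\infty]$ gives $d_k \in [0,+\infty]$, and $x_n = \sum_{k=1}^n d_k$. Substituting and swapping the two non-negative sums by Tonelli yields
\[
\sum_{n=1}^\infty \lambda^n x_n \;=\; \sum_{n=1}^\infty \lambda^n \sum_{k=1}^n d_k \;=\; \sum_{k=1}^\infty d_k \sum_{n=k}^\infty \lambda^n \;=\; \frac{1}{1-\lambda}\sum_{k=1}^\infty \lambda^k d_k,
\]
so that
\[
(1-\lambda)\sum_{n=1}^\infty \lambda^n x_n \;=\; \sum_{k=1}^\infty \lambda^k d_k.
\]
From this identity monotonicity is immediate: since $d_k \geq 0$ and $\lambda \mapsto \lambda^k$ is monotone increasing on $(0,1)$ for each fixed $k$, the sum $\sum_k \lambda^k d_k$ is monotone increasing in $\lambda$.

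For the limit, I would apply monotone convergence to $\sum_{k=1}^\infty \lambda^k d_k$ as $\lambda \uparrow 1$, giving
\[
\lim_{\lambda \uparrow 1}\sum_{k=1}^\infty \lambda^k d_k \;=\; \sum_{k=1}^\infty d_k \;=\; \lim_{n \to +\infty} x_n,
\]
and the last quantity equals $\sup_{n \geq 1} x_n$ because $\{x_n\}$ is monotone increasing.

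There is essentially no obstacle here; the only point worth a remark is the case where $x_n = +\infty$ for some (hence, by monotonicity, for all sufficiently large) $n$, or where $x_n$ is finite for every $n$ but $\lim x_n = +\infty$. In the first case some $d_k = +\infty$, so $\sum_k \lambda^k d_k = +\infty$ for every $\lambda \in (0,1)$ and both sides of the claimed limit are $+\infty$; in the second case every partial sum is finite but the monotone convergence step still delivers $+\infty$ on both sides. Thus the statement holds uniformly in $[0,+\infty]$.
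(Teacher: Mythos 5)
Your proof is correct. The Abel--summation identity $(1-\lambda)\sum_{n=1}^\infty\lambda^nx_n=\sum_{k=1}^\infty\lambda^k(x_k-x_{k-1})$ (with $x_0=0$) is exactly what the paper derives for the monotonicity claim, so that half of your argument coincides with the paper's. Where you diverge is the limit: you reuse the same identity and finish with the monotone convergence theorem for series as $\lambda\uparrow1$, whereas the paper abandons the decomposition at that point and proves the limit by direct two-sided bounds --- the upper bound $(1-\lambda)\sum_n\lambda^nx_n\le\lambda x_\infty$, and, for any $A<x_\infty$, the lower bound $(1-\lambda)\sum_{n>N}\lambda^nA=\lambda^{N+1}A$. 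Your route is more unified (one identity serves both conclusions) at the cost of invoking MCT along a continuous parameter; the paper's is more elementary and self-contained. One small point of care in your version: when some $x_n=+\infty$ the increments $d_k=x_k-x_{k-1}$ are not all well defined (you get $\infty-\infty$ beyond the first infinite term), so the Tonelli manipulation does not literally apply there; your closing remark correctly isolates this case, but the cleanest phrasing is that if any $x_N=+\infty$ then $(1-\lambda)\sum_n\lambda^nx_n\ge(1-\lambda)\lambda^Nx_N=+\infty$ for every $\lambda\in(0,1)$ and the whole statement is trivial, so the decomposition is only ever needed when all $x_n$ are finite. This is a presentational issue, not a gap.
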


\begin{proof}
For all $\lambda_1,\lambda_2\in(0,1)$ with $\lambda_1<\lambda_2$. We show that
$$(1-\lambda_1)\sum_{n=1}^\infty\lambda_1^nx_n\le(1-\lambda_2)\sum_{n=1}^\infty\lambda_2^nx_n.$$

If $\sum_{n=1}^\infty\lambda_2^nx_n=+\infty$, then this result is obvious.

If $\sum_{n=1}^\infty\lambda_2^nx_n<+\infty$, then $\myset{x_n}\subseteq[0,+\infty)$. For all $\lambda\in(0,\lambda_2]$, we have
$$\sum_{n=1}^\infty\lambda^nx_n\le\sum_{n=1}^\infty\lambda_2^nx_n<+\infty,$$
hence
$$
\begin{aligned}
&(1-\lambda)\sum_{n=1}^\infty\lambda^nx_n=\sum_{n=1}^\infty\lambda^nx_n-\sum_{n=1}^\infty\lambda^{n+1}x_n=\sum_{n=1}^\infty\lambda^nx_n-\sum_{n=2}^\infty\lambda^{n}x_{n-1}\\
=&\lambda^1x_1+\sum_{n=2}^\infty\lambda^nx_n-\sum_{n=2}^\infty\lambda^{n}x_{n-1}=\lambda x_1+\sum_{n=2}^\infty\lambda^n(x_n-x_{n-1}).
\end{aligned}
$$
Since $\myset{x_n}$ is a monotone increasing sequence in $[0,+\infty)$, we have $x_n-x_{n-1}\ge0$ for all $n\ge2$. Hence $(1-\lambda)\sum_{n=1}^\infty\lambda^nx_n$ is monotone increasing in $\lambda\in(0,\lambda_2]$. In particular
$$(1-\lambda_1)\sum_{n=1}^\infty\lambda_1^nx_n\le(1-\lambda_2)\sum_{n=1}^\infty\lambda_2^nx_n.$$
Hence $(1-\lambda)\sum_{n=1}^\infty\lambda^nx_n$ is monotone increasing in $\lambda\in(0,1)$.

Since $\myset{x_n}$ is a monotone increasing sequence in $[0,+\infty]$, we denote
$$x_\infty=\lim_{n\to+\infty}x_n=\sup_{n\ge1}x_n\in[0,+\infty].$$
It is obvious that for all $\lambda\in(0,1)$, we have
$$(1-\lambda)\sum_{n=1}^\infty\lambda^nx_n\le(1-\lambda)\sum_{n=1}^\infty\lambda^nx_\infty=(1-\lambda)\frac{\lambda}{1-\lambda}x_\infty=\lambda x_\infty,$$
hence
$$\varlimsup_{\lambda\uparrow1}(1-\lambda)\sum_{n=1}^\infty\lambda^nx_n\le x_\infty.$$

On the other hand, for all $A<x_\infty$, there exists some positive integer $N\ge1$ such that for all $n>N$, we have $x_n>A$, hence
$$(1-\lambda)\sum_{n=1}^\infty\lambda^nx_n\ge(1-\lambda)\sum_{n=N+1}^\infty\lambda^nx_n\ge(1-\lambda)\sum_{n=N+1}^\infty\lambda^nA=(1-\lambda)\frac{\lambda^{N+1}}{1-\lambda}A=\lambda^{N+1}A,$$
hence
$$\varliminf_{\lambda\uparrow1}(1-\lambda)\sum_{n=1}^\infty\lambda^nx_n\ge A.$$
Since $A<x_\infty$ is arbitrary, we have
$$\varliminf_{\lambda\uparrow1}(1-\lambda)\sum_{n=1}^\infty\lambda^nx_n\ge x_\infty.$$

Therefore
$$\lim_{\lambda\uparrow1}(1-\lambda)\sum_{n=1}^\infty\lambda^nx_n=x_\infty=\lim_{n\to+\infty}x_n=\sup_{n\ge1}x_n.$$
\end{proof}

\begin{myprop}\label{prop_ele2}
Let $\myset{x_n}$ be a sequence in $[0,+\infty]$. Then
\begin{enumerate}[(1)]
\item $$\varliminf_{n\to+\infty}x_n\le\varliminf_{\lambda\uparrow1}(1-\lambda)\sum_{n=1}^\infty\lambda^nx_n\le\varlimsup_{\lambda\uparrow1}(1-\lambda)\sum_{n=1}^\infty\lambda^nx_n\le\varlimsup_{n\to+\infty}x_n\le\sup_{n\ge1}x_n.$$
\item If there exists some positive constant $C$ such that
$$x_n\le Cx_{n+m}\text{ for all }n,m\ge1,$$
then
$$\sup_{n\ge1}x_n\le C\varliminf_{n\to+\infty}x_n.$$
\end{enumerate}
\end{myprop}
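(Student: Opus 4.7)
My plan is to handle the two parts separately, since they use different ideas: part (1) is an Abelian-type comparison between a partial-average summation and the Cesàro/elementary $\liminf,\limsup$ of the sequence, while part (2) is a direct monotonicity trick using the almost-subadditivity hypothesis.

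For part (1), the two middle inequalities $\varliminf_\lambda\le\varlimsup_\lambda$ and $\varlimsup_n x_n\le\sup_n x_n$ are immediate. It remains to establish the extreme ones. For the lower bound, I would fix any $A<\varliminf_{n\to+\infty}x_n$, pick $N$ such that $x_n\ge A$ for all $n\ge N$, and estimate
\[
(1-\lambda)\sum_{n=1}^\infty\lambda^n x_n\ge(1-\lambda)\sum_{n=N}^\infty\lambda^n A=\lambda^{N}A,
\]
so that $\varliminf_{\lambda\uparrow1}(1-\lambda)\sum\lambda^n x_n\ge A$; letting $A\uparrow\varliminf_n x_n$ gives the claim. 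For the upper bound I would fix $B>\varlimsup_{n\to+\infty}x_n$, pick $N$ such that $x_n\le B$ for all $n\ge N$, and split
\[
(1-\lambda)\sum_{n=1}^\infty\lambda^n x_n=(1-\lambda)\sum_{n=1}^{N-1}\lambda^n x_n+(1-\lambda)\sum_{n=N}^\infty\lambda^n x_n.
\]
The first (finite) sum tends to $0$ as $\lambda\uparrow1$, while the tail is bounded by $(1-\lambda)\sum_{n=N}^\infty\lambda^n B=\lambda^N B$, yielding $\varlimsup_{\lambda\uparrow1}(1-\lambda)\sum\lambda^n x_n\le B$ and hence the desired inequality after letting $B\downarrow\varlimsup_n x_n$.

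For part (2), the argument is just one line: for each fixed $n_0\ge 1$, the hypothesis $x_{n_0}\le C x_{n_0+m}$ is valid for every $m\ge 1$, so taking $\varliminf$ in $m$ gives
\[
x_{n_0}\le C\varliminf_{m\to+\infty}x_{n_0+m}=C\varliminf_{n\to+\infty}x_n.
\]
Taking the supremum over $n_0$ yields $\sup_{n\ge1}x_n\le C\varliminf_{n\to+\infty}x_n$.

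The only subtle point, and the one I expect to require a brief remark rather than real work, is the upper-bound estimate in part (1) when some initial term $x_n$ equals $+\infty$: in that case the finite sum $(1-\lambda)\sum_{n=1}^{N-1}\lambda^n x_n$ does not vanish, and the inequality must be interpreted with the usual convention that both sides are then $+\infty$; in any situation in which the bound is actually used the relevant $x_n$ will be finite. Apart from this detail, all remaining steps are purely elementary manipulations with geometric series.
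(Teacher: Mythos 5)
Your proof is correct and follows essentially the same route as the paper: the same geometric-series tail estimates $\lambda^N A$ for the two extreme inequalities in (1), and in (2) the same use of the hypothesis (the paper phrases it via an auxiliary constant $A<\sup_{n\ge1}x_n$ rather than taking $\varliminf$ in $m$ directly, but the content is identical). The $+\infty$ edge case you flag in the upper bound of (1) is genuine--for $x_1=+\infty$, $x_n=0$ ($n\ge2$) the left side is identically $+\infty$ while $\varlimsup_{n\to+\infty}x_n=0$, so your suggested convention that ``both sides are then $+\infty$'' does not actually repair it--but the paper's own proof silently makes the same finiteness assumption on the initial terms, and the sequences to which the proposition is applied avoid this case.
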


\begin{proof}
(1) It is obvious that
$$\varliminf_{\lambda\uparrow1}(1-\lambda)\sum_{n=1}^\infty\lambda^nx_n\le\varlimsup_{\lambda\uparrow1}(1-\lambda)\sum_{n=1}^\infty\lambda^nx_n,$$
$$\varlimsup_{n\to+\infty}x_n\le\sup_{n\ge1}x_n.$$

We show that
$$\varlimsup_{\lambda\uparrow1}(1-\lambda)\sum_{n=1}^\infty\lambda^nx_n\le\varlimsup_{n\to+\infty}x_n.$$
If $\varlimsup_{n\to+\infty}x_n=+\infty$, then the result is obvious. Assume that $\varlimsup_{n\to+\infty}x_n<+\infty$.

For all $A>\varlimsup_{n\to+\infty}x_n$, there exists some positive integer $N\ge1$ such that for all $n>N$, we have $x_n<A$, hence
$$
\begin{aligned}
&(1-\lambda)\sum_{n=1}^\infty\lambda^nx_n\le(1-\lambda)\sum_{n=1}^N\lambda^nx_n+(1-\lambda)\sum_{n=N+1}^\infty\lambda^nA\\
=&(1-\lambda)\sum_{n=1}^N\lambda^nx_n+(1-\lambda)\frac{\lambda^{N+1}}{1-\lambda}A=(1-\lambda)\sum_{n=1}^N\lambda^nx_n+\lambda^{N+1}A,
\end{aligned}
$$
hence
$$\varlimsup_{\lambda\uparrow1}(1-\lambda)\sum_{n=1}^\infty\lambda^nx_n\le A.$$
Since $A>\varlimsup_{n\to+\infty}x_n$ is arbitrary, we have
$$\varlimsup_{\lambda\uparrow1}(1-\lambda)\sum_{n=1}^\infty\lambda^nx_n\le\varlimsup_{n\to+\infty}x_n.$$
Similarly, we have
$$\varliminf_{\lambda\uparrow1}(1-\lambda)\sum_{n=1}^\infty\lambda^nx_n\ge\varliminf_{n\to+\infty}x_n.$$

(2) For all $A<\sup_{n\ge1}x_n$, there exists some positive integer $N\ge1$ such that $x_N>A$. By assumption, for all $n>N$, we have
$$x_n\ge\frac{1}{C}x_N\ge\frac{1}{C}A,$$
hence
$$\varliminf_{n\to+\infty}x_n\ge\frac{1}{C}A.$$
Since $A<\sup_{n\ge1}x_n$ is arbitrary, we have
$$\varliminf_{n\to+\infty}x_n\ge\frac{1}{C}\sup_{n\ge1}x_n,$$
that is,
$$\sup_{n\ge1}x_n\le C\varliminf_{n\to+\infty}x_n.$$
\end{proof}

\chapter{Determination of the Walk Dimension of the SG}\label{ch_SG_det}

This chapter is based on my work \cite{GY16} joint with Prof. Alexander Grigor'yan.

\section{Background and Statement}

Our approach of determination is based on a recent paper \cite{KLW17} of S.-L. Kong, K.-S. Lau and T.-K. Wong. They introduced conductances with parameter $\lambda\in(0,1)$ on the Sierpi\'nski graph $X$ to obtain a random walk (and a corresponding quadratic form) on $X$ and showed that the Martin boundary of that random walk is homeomorphic to the SG $K$. Let $\bar{X}$ be the Martin compactification of $X$. It was also proved in \cite{KLW17} that the quadratic form on $X$ induces an quadratic form on $K\cong\bar{X}\backslash X$ of the form (\ref{eqn_nonlocal}) with $\beta=-\log\lambda/\log2$. However, no restriction on $\beta$ was established, so that the above quadratic form on $K$ does not have to be a regular Dirichlet form.

In this chapter, we establish the exact restriction on $\lambda$ (hence on $\beta$) under which $(\calE_\beta,\calF_\beta)$ is a regular Dirichlet form on $L^2(K;\nu)$. Our method is as follows.

Firstly, we introduce a measure $m$ on $X$ to obtain a regular Dirichlet form $(\calE_X,\calF_X)$ on $L^2(X;m)$ associated with the above random walk on $X$. Then we extend this Dirichlet form to an \emph{active reflected} Dirichlet form $(\calE^\rref,\calF^\rref_a)$ on $L^2(X;m)$ which is not regular, though.

Secondly, we \emph{regularize} $(\calE^\rref,\calF^\rref_a)$ on $L^2(X;m)$ using the theory of \cite{Fuk71}. The result of regularization is a regular Dirichlet form $(\calE_{\bar{X}},\calF_{\bar{X}})$ on $L^2(\bar{X};m)$ that is an extension of $(\calE_{{X}},\calF_{{X}})$ on $L^2({X};m)$. By \cite{Fuk71}, regularization is always possible, but we show that the regularized form ``sits" on $\bar{X}$ provided $\lambda>1/5$ which is equivalent $\beta<\beta^*:=\log5/\log2$.

Thirdly, we take trace of $\calE_{\bar{X}}$ to $K$ and obtain a regular Dirichlet form $(\calE_K,\calF_K)$ on $L^2(K;\nu)$ of the form (\ref{eqn_nonlocal}).

If $\beta>\beta^*$, then we show directly that $\calF_K$ consists only of constant functions. Hence we conclude that $\beta_*=\beta^*=\log5/\log2$. This approach allows to detect the critical value $\beta_*$ of the index $\beta$ of the jump process without the construction of the diffusion.

This chapter is organized as follows. In section \ref{SG_det_sec_SG}, we review basic constructions of the SG $K$ and the Sierpi\'nski graph $X$. In section \ref{SG_det_sec_rw}, we give a transient reversible random walk $Z$ on $X$. In section \ref{SG_det_sec_df_X}, we construct a regular Dirichlet form $\calE_X$ on $X$ and its corresponding symmetric Hunt process $\myset{X_t}$. We prove that the Martin boundaries of $\myset{X_t}$ and $Z$ coincide. We show that $\calE_X$ is stochastically incomplete and $\myset{X_t}$ goes to infinity in finite time almost surely. In section \ref{SG_det_sec_ref}, we construct active reflected Dirichlet form $(\calE^{\rref},\calF^{\rref}_a)$ and show that $\calF_X\subsetneqq\calF^{\rref}_a$, hence $\calE^{\rref}$ is not regular. In section \ref{SG_det_sec_repre}, we construct a regular Dirichlet form $(\calE_{\bar{X}},\calF_{\bar{X}})$ on $L^2(\bar{X};m)$ which is a regular representation of Dirichlet form $(\calE^\rref,\calF^\rref_a)$ on $L^2(X;m)$, where $\bar{X}$ is the Martin compactification of $X$. In section \ref{SG_det_sec_trace}, we take trace of the regular Dirichlet form $(\calE_{\bar{X}},\calF_{\bar{X}})$ on $L^2(\bar{X};m)$ to $K$ to have a regular Dirichlet form $(\calE_K,\calF_K)$ on $L^2(K;\nu)$ with the form (\ref{eqn_nonlocal}). In section \ref{SG_det_sec_trivial}, we show that $\calF_K$ consists only of constant functions if $\lambda\in(0,1/5)$ or $\beta\in(\beta^*,+\infty)$. Hence $\beta_*=\beta^*=\log5/\log2$.

\section{The SG and the Sierpi\'nski Graph}\label{SG_det_sec_SG}

In this section, we review some basic constructions of the SG and the Sierpi\'nski graph.

Let
$$p_0=(0,0),p_1=(1,0),p_2=(\frac{1}{2},\frac{\sqrt{3}}{2}),$$
$$f_i(x)=\frac{1}{2}(x+p_i),x\in\R^2,i=0,1,2.$$
Then the SG is the unique nonempty compact set $K$ satisfying
$$K=f_0(K)\cup f_1(K)\cup f_2(K).$$
Let
$$V_1=\myset{p_0,p_1,p_2},V_{n+1}=f_0(V_n)\cup f_1(V_n)\cup f_2(V_n)\text{ for all }n\ge1,$$
then $\myset{V_n}$ is an increasing sequence of finite sets such that $K$ is the closure of $\cup_{n=1}^\infty V_n$.

Let $W_0=\myset{\emptyset}$ and
$$W_n=\myset{w=w_1\ldots w_n:w_i=0,1,2,i=1,\ldots,n}\text{ for all }n\ge1,$$
and $W=\cup_{n=0}^\infty W_n$. An element $w=w_1\ldots w_n\in W_n$ is called a finite word with length $n$ and we denote $|w|=n$ for all $n\ge1$. $\emptyset\in W_0$ is called empty word and we denote its length $|\emptyset|=0$, we use the convention that zero length word is empty word. An element in $W$ is called a finite word.

Let
$$W_\infty=\myset{w=w_1w_2\ldots:w_i=0,1,2,i=1,2,\ldots}$$
be the set of all infinite sequences with elements in $\myset{0,1,2}$, then an element $w\in W_\infty$ is called an infinite word. For all $w=w_1\ldots w_n\in W$ with $n\ge1$, we write
$$f_w=f_{w_1}\circ\ldots\circ f_{w_n}$$
and $f_{\emptyset}=\mathrm{id}$. It is obvious that $K_w=f_w(K)$ is a compact set for all $w\in W$. For all $w=w_1w_2\ldots\in W_\infty$, we write
$$K_w=\bigcap_{n=0}^\infty K_{w_1\ldots w_n}.$$
Since $K_{w_1\ldots w_{n+1}}\subseteq K_{w_1\ldots w_n}$ for all $n\ge0$ and $\mathrm{diam}(K_{w_1\ldots w_n})\to0$ as $n\to+\infty$, we have $K_w\subseteq K$ is a one-point set. On the other hand, for all $x\in K$, there exists $w\in W_\infty$ such that $\myset{x}=K_w$. But this $w$ in not unique. For example, for the midpoint $x$ of the segment connecting $p_0$ and $p_1$, we have $\myset{x}=K_{100\ldots}=K_{011\ldots}$, where $100\ldots$ is the element $w=w_1w_2\ldots\in W_\infty$ with $w_1=1,w_n=0$ for all $n\ge2$ and $011\ldots$ has similar meaning.

By representation of infinite words, we construct the Sierpi\'nski graph as follows. First, we construct a triple tree. Take the root $o$ as the empty word $\emptyset$. It has three child nodes, that is, the words in $W_1$, $0,1,2$. Then the nodes $0,1,2$ have child nodes, that is, the words in $W_2$, $0$ has child nodes $00,01,02$, $1$ has child nodes $10,11,12$, $2$ has child nodes $20,21,22$. In general, each node $w_1\ldots w_n$ has three child nodes in $W_{n+1}$, that is, $w_1\ldots w_n0,w_1\ldots w_n1,w_1\ldots w_n2$ for all $n\ge1$. We use node and finite word interchangeable hereafter. For all $n\ge1$ and node $w=w_1\ldots w_n$, the node $w_1\ldots w_{n-1}$ is called the father node of $w$ and denoted by $w^-$. We obtain vertex set $V$ consisting of all nodes. Next, we construct edge set $E$, a subset of $V\times V$. Let
\begin{align*}
E_v&=\myset{(w,w^-),(w^-,w):w\in W_n,n\ge1},\\
E_h&=\myset{(w_1,w_2):w_1,w_2\in W_n,w_1\ne w_2,K_{w_1}\cap K_{w_2}\ne\emptyset,n\ge1},
\end{align*}
and $E=E_v\cup E_h$. $E_v$ is the set of all vertical edges and $E_h$ is the set of all horizontal edges. Then $X=(V,E)$ is the Sierpi\'nski graph, see Figure \ref{SG_det_fig_Sierpinski_graph}. We write $X$ for simplicity.



\begin{figure}[ht]
\centering
\begin{tikzpicture}
\draw (0,0)--(4,0);
\draw (0,0)--(2,3.4641016151);
\draw (4,0)--(2,3.4641016151);

\draw (2,3.4641016151)--(2,-1.3333333333);
\draw (4,0)--(2,-1.3333333333);
\draw (0,0)--(2,-1.3333333333);

\draw (1,1.7320508076)--(3,1.7320508076);
\draw (2,1.7320508076-0.6666666667)--(3,1.7320508076);
\draw (1,1.7320508076)--(2,1.7320508076-0.6666666667);

\draw (1,1.7320508076)--(1.3333333333,0);
\draw (1,1.7320508076)--(0.6666666666,-0.4444444444);
\draw (0.6666666666,-0.4444444444)--(1.3333333333,0);

\draw (2,1.7320508076-0.6666666667)--(1.3333333333,-0.8888888888);
\draw (2,1.7320508076-0.6666666667)--(2.6666666666,-0.8888888888);
\draw (2.6666666666,-0.8888888888)--(1.3333333333,-0.8888888888);

\draw (3,1.7320508076)--(3.3333333333,-0.4444444444);
\draw (3,1.7320508076)--(2.6666666666,0);
\draw (2.6666666666,0)--(3.3333333333,-0.4444444444);

\draw[fill=black] (0,0) circle (0.06);
\draw[fill=black] (4,0) circle (0.06);
\draw[fill=black] (2,3.4641016151) circle (0.06);
\draw[fill=black] (2,-1.3333333333) circle (0.06);
\draw[fill=black] (1,1.7320508076) circle (0.06);
\draw[fill=black] (2,1.7320508076-0.6666666667) circle (0.06);
\draw[fill=black] (3,1.7320508076) circle (0.06);
\draw[fill=black] (1.3333333333,0) circle (0.06);
\draw[fill=black] (0.6666666666,-0.4444444444) circle (0.06);
\draw[fill=black] (1.3333333333,-0.8888888888) circle (0.06);
\draw[fill=black] (2.6666666666,-0.8888888888) circle (0.06);
\draw[fill=black] (3.3333333333,-0.4444444444) circle (0.06);
\draw[fill=black] (2.6666666666,0) circle (0.06);

\draw (2,3.8) node {$\emptyset$};
\draw (0.9,2) node {$0$};
\draw (2.1,1.4) node {$1$};
\draw (3.1,2) node {$2$};
\draw (-0.1,-0.3) node {$00$};
\draw (4.1,-0.3) node {$22$};
\draw (1.3,-0.3) node {$02$};
\draw (2.7,-0.3) node {$20$};
\draw (0.6,-0.75) node {$01$};
\draw (3.4,-0.75) node {$21$};
\draw (1.2,-1.2) node {$10$};
\draw (2.8,-1.2) node {$12$};
\draw (2,-1.6) node {$11$};
\end{tikzpicture}
\caption{The Sierpi\'nski graph}\label{SG_det_fig_Sierpinski_graph}
\end{figure}
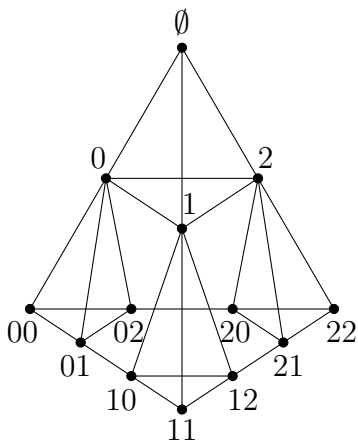


For all $x,y\in V$, if $(x,y)\in E$, then we write $x\sim y$ and say that $y$ is a neighbor of $x$. It is obvious that $\sim$ is an equivalence relation. A path in $X$ is a finite sequence $\pi=[x_0,\ldots,x_n]$ with distinct nodes and $x_0\sim x_1,\ldots,x_{n-1}\sim x_n$, $n$ is called the length of the path. For all $x,y\in V$, let $d(x,y)$ be the graph metric, that is, the minimum length of all paths connecting $x$ and $y$, if a path connecting $x$ and $y$ has length $d(x,y)$, then this path is called geodesic. Hereafter, we write $x\in X$ to mean that $x\in V$. It is obvious that $X$ is a connected and locally finite graph, that is, for all $x,y\in X$ with $x\ne y$, there exists a path connecting $x$ and $y$, for all $x\in X$, the set of its neighbors $\myset{y\in X:x\sim y}$ is a finite set. We write $S_n=\myset{x\in X:|x|=n}$, $B_n=\cup_{i=0}^nS_i$ as sphere and closed ball with radius $n$.

Roughly speaking, for all $n\ge1$, $S_n$ looks like some disconnected triangles, see Figure \ref{SG_det_fig_SG_S3} for $S_3$, and $V_n$ looks like some connected triangles, see Figure \ref{SG_det_fig_SG_V3} for $V_3$. We define a mapping $\Phi_n:S_n\to V_n$ as follows. For all $n\ge2$, $w=w_1\ldots w_n\in W_n$, write $p_{w}=p_{w_1\ldots w_n}=f_{w_1\ldots w_{n-1}}(p_{w_n})$. Write $p_1,p_2,p_3$ for $n=1$ and $w=0,1,2$, respectively. By induction, we have $V_n=\cup_{w\in W_n}p_w$ for all $n\ge1$. Define $\Phi_n(w)=p_w$. Then $\Phi_n$ is onto and many pairs of points are mapped into same points, such as $\Phi_3(001)=\Phi_3(010)$. This property can divide the edges in $S_n$ into two types. For an arbitrary edge in $S_n$ with end nodes $x,y$, it is called of type \Rmnum{1} if $\Phi_n(x)\ne\Phi_n(y)$ such as the edge in $S_3$ with end nodes $000$ and $001$, it is called of type \Rmnum{2} if $\Phi_n(x)=\Phi_n(y)$ such as the edge in $S_3$ with end nodes $001$ and $010$. By induction, it is obvious there exist only these two types of edges on each sphere $S_n$.




\begin{figure}[ht]
  \centering
  \begin{tikzpicture}[scale=0.7]
  \tikzstyle{every node}=[font=\small,scale=0.7]
  \draw (0,0)--(9,0);
  \draw (0,0)--(9/2,9/2*1.7320508076);
  \draw (9,0)--(9/2,9/2*1.7320508076);
  
  \draw (3,0)--(3/2,3/2*1.7320508076);
  \draw (6,0)--(15/2,3/2*1.7320508076);
  \draw (3,3*1.7320508076)--(6,3*1.7320508076);
  
  \draw (4,4*1.7320508076)--(5,4*1.7320508076);
  \draw (7/2,7/2*1.7320508076)--(4,3*1.7320508076);
  \draw (11/2,7/2*1.7320508076)--(5,3*1.7320508076);
  
  \draw (1,1*1.7320508076)--(2,1.7320508076);
  \draw (1/2,1/2*1.7320508076)--(1,0);
  \draw (2,0)--(5/2,1/2*1.7320508076);
  
  \draw (7,1.7320508076)--(8,1.7320508076);
  \draw (13/2,1.7320508076/2)--(7,0);
  \draw (17/2,1.7320508076/2)--(8,0);
  
  \draw[fill=black] (0,0) circle (0.06);
  \draw[fill=black] (1,0) circle (0.06);
  \draw[fill=black] (2,0) circle (0.06);
  \draw[fill=black] (3,0) circle (0.06);
  \draw[fill=black] (6,0) circle (0.06);
  \draw[fill=black] (7,0) circle (0.06);
  \draw[fill=black] (8,0) circle (0.06);
  \draw[fill=black] (9,0) circle (0.06);
  \draw[fill=black] (1/2,1.7320508076/2) circle (0.06);
  \draw[fill=black] (5/2,1.7320508076/2) circle (0.06);
  \draw[fill=black] (13/2,1.7320508076/2) circle (0.06);
  \draw[fill=black] (17/2,1.7320508076/2) circle (0.06);
  \draw[fill=black] (1,1.7320508076) circle (0.06);
  \draw[fill=black] (2,1.7320508076) circle (0.06);
  \draw[fill=black] (7,1.7320508076) circle (0.06);
  \draw[fill=black] (8,1.7320508076) circle (0.06);
  \draw[fill=black] (3/2,3/2*1.7320508076) circle (0.06);
  \draw[fill=black] (15/2,3/2*1.7320508076) circle (0.06);
  \draw[fill=black] (3,3*1.7320508076) circle (0.06);
  \draw[fill=black] (4,3*1.7320508076) circle (0.06);
  \draw[fill=black] (5,3*1.7320508076) circle (0.06);
  \draw[fill=black] (6,3*1.7320508076) circle (0.06);
  \draw[fill=black] (7/2,7/2*1.7320508076) circle (0.06);
  \draw[fill=black] (11/2,7/2*1.7320508076) circle (0.06);
  \draw[fill=black] (4,4*1.7320508076) circle (0.06);
  \draw[fill=black] (5,4*1.7320508076) circle (0.06);
  \draw[fill=black] (9/2,9/2*1.7320508076) circle (0.06);
  
  \draw (0,-0.3) node {$000$};
  \draw (1,-0.3) node {$001$};
  \draw (2,-0.3) node {$010$};
  \draw (3,-0.3) node {$011$};
  \draw (6,-0.3) node {$100$};
  \draw (7,-0.3) node {$101$};
  \draw (8,-0.3) node {$110$};
  \draw (9,-0.3) node {$111$};
  
  \draw (0.1,1/2*1.7320508076) node {$002$};
  \draw (2.9,1/2*1.7320508076) node {$012$};
  \draw (6.1,1/2*1.7320508076) node {$102$};
  \draw (8.9,1/2*1.7320508076) node {$112$};
  
  \draw (0.6,1*1.7320508076) node {$020$};
  \draw (2.4,1*1.7320508076) node {$021$};
  \draw (6.6,1*1.7320508076) node {$120$};
  \draw (8.4,1*1.7320508076) node {$121$};
  
  \draw (1.1,3/2*1.7320508076) node {$022$};
  \draw (7.9,3/2*1.7320508076) node {$122$};
  
  \draw (2.6,3*1.7320508076) node {$200$};
  \draw (4,3*1.7320508076-0.3) node {$201$};
  \draw (5,3*1.7320508076-0.3) node {$210$};
  \draw (6.4,3*1.7320508076) node {$211$};
  
  \draw (3.1,7/2*1.7320508076) node {$202$};
  \draw (5.9,7/2*1.7320508076) node {$212$};
  
  \draw (3.6,4*1.7320508076) node {$220$};
  \draw (5.4,4*1.7320508076) node {$221$};
  
  \draw (4.5,9/2*1.7320508076+0.3) node {$222$};
  
  \end{tikzpicture}
  \caption{$S_3$}\label{SG_det_fig_SG_S3}
\end{figure}






\begin{figure}[ht]
  \centering
  \begin{tikzpicture}[scale=1.125*0.7]
  \tikzstyle{every node}=[font=\small,scale=0.7]
  \draw (0,0)--(8,0);
  \draw (0,0)--(4,4*1.7320508076);
  \draw (8,0)--(4,4*1.7320508076);
  
  \draw (4,0)--(2,2*1.7320508076);
  \draw (2,2*1.7320508076)--(6,2*1.7320508076);
  \draw (6,2*1.7320508076)--(4,0);
  
  \draw (2,0)--(3,1*1.7320508076);
  \draw (3,1*1.7320508076)--(1,1*1.7320508076);
  \draw (1,1*1.7320508076)--(2,0);
  
  \draw (6,0)--(5,1*1.7320508076);
  \draw (5,1*1.7320508076)--(7,1*1.7320508076);
  \draw (7,1*1.7320508076)--(6,0);
  
  \draw (4,2*1.7320508076)--(3,3*1.7320508076);
  \draw (3,3*1.7320508076)--(5,3*1.7320508076);
  \draw (5,3*1.7320508076)--(4,2*1.7320508076);
  
  \draw[fill=black] (0,0) circle (0.06);
  \draw[fill=black] (2,0) circle (0.06);
  \draw[fill=black] (4,0) circle (0.06);
  \draw[fill=black] (6,0) circle (0.06);
  \draw[fill=black] (8,0) circle (0.06);
  \draw[fill=black] (1,1.7320508076) circle (0.06);
  \draw[fill=black] (3,1.7320508076) circle (0.06);
  \draw[fill=black] (5,1.7320508076) circle (0.06);
  \draw[fill=black] (7,1.7320508076) circle (0.06);
  \draw[fill=black] (2,2*1.7320508076) circle (0.06);
  \draw[fill=black] (4,2*1.7320508076) circle (0.06);
  \draw[fill=black] (6,2*1.7320508076) circle (0.06);
  \draw[fill=black] (3,3*1.7320508076) circle (0.06);
  \draw[fill=black] (5,3*1.7320508076) circle (0.06);
  \draw[fill=black] (4,4*1.7320508076) circle (0.06);
  
  \draw (0,-0.3) node {$p_{000}$};
  \draw (2,-0.3) node {$p_{001}=p_{010}$};
  \draw (4,-0.3) node {$p_{011}=p_{100}$};
  \draw (6,-0.3) node {$p_{101}=p_{110}$};
  \draw (8,-0.3) node {$p_{111}$};
  \draw (4,4*1.7320508076+0.3) node {$p_{222}$};
  
  \draw (0,1*1.7320508076) node {$p_{002}=p_{020}$};
  \draw (8,1*1.7320508076) node {$p_{112}=p_{121}$};
  \draw (2.95,1*1.7320508076+0.3) node {$p_{012}=p_{021}$};
  \draw (5.05,1*1.7320508076+0.3) node {$p_{102}=p_{120}$};
  
  \draw (1,2*1.7320508076) node {$p_{022}=p_{200}$};
  \draw (7,2*1.7320508076) node {$p_{122}=p_{211}$};
  
  \draw (4,2*1.7320508076-0.3) node {$p_{201}=p_{210}$};
  
  \draw (2,3*1.7320508076) node {$p_{202}=p_{220}$};
  \draw (6,3*1.7320508076) node {$p_{212}=p_{221}$};
  \end{tikzpicture}
  \caption{$V_3$}\label{SG_det_fig_SG_V3}
\end{figure}



The Sierpi\'nski graph is a hyperbolic graph, see \cite[Theorem 3.2]{LW09}. For arbitrary graph $X$, choose a node $o$ as root, define graph metric $d$ as above, write $|x|=d(o,x)$. For all $x,y\in X$, define Gromov product
$$|x\wedge y|=\frac{1}{2}(|x|+|y|-d(x,y)).$$
$X$ is called a hyperbolic graph if there exists $\delta>0$ such that for all $x,y,z\in X$, we have
$$|x\wedge y|\ge\mathrm{min}{\myset{|x\wedge z|,|z\wedge y|}}-\delta.$$
It is known that the definition is independent of the choice of root $o$. For a hyperbolic graph, we can introduce a metric as follows. Choose $a>0$ such that $a'=e^{3\delta a}-1<\sqrt{2}-1$. For all $x,y\in X$, define
$$
\rho_a(x,y)=
\begin{cases}
\exp{(-a|x\wedge y|)},&\text{if }x\ne y,\\
0,&\text{if }x=y,
\end{cases}
$$
then $\rho_a$ satisfies
$$\rho_a(x,y)\le(1+a')\max{\myset{\rho_a(x,z),\rho_a(z,y)}}\text{ for all }x,y,z\in X.$$
This means $\rho_a$ is an ultra-metric not a metric. But we can define
$$\theta_a(x,y)=\inf{\myset{\sum_{i=1}^n\rho_a(x_{i-1},x_i):x=x_0,\ldots,x_n=y,x_i\in X,i=0,\ldots,n,n\ge1}},$$
for all $x,y\in X$. $\theta_a$ is a metric and equivalent to $\rho_a$. So we use $\rho_a$ rather than $\theta_a$ for simplicity. It is known that a sequence $\myset{x_n}\subseteq X$ with $|x_n|\to+\infty$ is a Cauchy sequence in $\rho_a$ if and only if $|x_m\wedge x_n|\to+\infty$ as $m,n\to+\infty$. Let $\hat{X}$ be the completion of $X$ with respect to $\rho_a$, then $\dd_hX=\hat{X}\backslash X$ is called the hyperbolic boundary of $X$. By \cite[Corollary 22.13]{Wo00}, $\hat{X}$ is compact. It is obvious that hyperbolicity is only related to the graph structure of $X$. We introduce a description of hyperbolic boundary in terms of geodesic rays. A geodesic ray is a sequence $[x_0,x_1,\ldots]$ with distinct nodes, $x_n\sim x_{n+1}$ and path $[x_0,\ldots,x_n]$ is geodesic for all $n\ge0$. Two geodesic rays $\pi=[x_0,x_1,\ldots]$ and $\pi'=[y_0,y_1,\ldots]$ are called equivalent if $\varliminf_{n\to+\infty}d(y_n,\pi)<+\infty$, where $d(x,\pi)=\inf_{n\ge0}d(x,x_n)$. There exists a one-to-one correspondence between the family of all equivalent geodesic rays and hyperbolic boundary as follows.

By \cite[Proposition 22.12(b)]{Wo00}, equivalence geodesic rays is an equivalence relation. By \cite[Lemma 22.11]{Wo00}, for all geodesic ray $\pi=[x_0,x_1,\ldots]$, for all $u\in X$, there exist $k,l\ge0$, $u=u_0,\ldots,u_k=x_l$, such that
$$[u,u_1,\ldots,u_k,x_{l+1},x_{l+2},\ldots]$$
is a geodesic ray. It is obvious that this new geodesic ray is equivalent to $\pi$, hence we can take a geodesic ray in each equivalence class of the form $\pi=[x_0,x_1,\ldots]$, $|x_n|=n$, $x_n\sim x_{n+1}$ for all $n\ge0$. By \cite[Proposition 22.12(c)]{Wo00}, we can define a one-to-one mapping $\tau$ from the family of all equivalent geodesic rays to hyperbolic boundary,
$$\tau:[x_0,x_1,\ldots]\mapsto\text{the limit }\xi\text{ of }\myset{x_n}\text{ in }\rho_a.$$
By above, we can choose $[x_0,x_1,\ldots]$ of the form $|x_n|=n$, $x_n\sim x_{n+1}$ for all $n\ge0$, we say that $[x_0,x_1,\ldots]$ is a geodesic ray from $o$ to $\xi$.

For $y\in\hat{X}$, $x\in X$, we say that $y$ is in the subtree with root $x$ if $x$ lies on the geodesic path or some geodesic ray from $o$ to $y$. And if $y$ is in the subtree with root $x$, then it is obvious that $|x\wedge y|=|x|$, $\rho_a(x,y)=e^{-a|x|}$ if $x\ne y$. For more detailed discussion of hyperbolic graph, see \cite[Chapter \Rmnum{4}, \Rmnum{4}.22]{Wo00}.

\cite[Theorem 3.2, Theorem 4.3, Proposition 4.4]{LW09} showed that for a general class of fractals satisfying open set condition (OSC), we can construct an augmented rooted tree which is a hyperbolic graph and the hyperbolic boundary is H\"older equivalent to the fractal through canonical mapping. In particular, the SG satisfies OSC, the Sierpi\'nski graph is an augmented rooted tree hence hyperbolic. The canonical mapping $\Phi$ can be described as follows.

For all $\xi\in\dd_hX$, there corresponds a geodesic ray in the equivalence class corresponding to $\xi$ through the mapping $\tau$ of the form $[x_0,x_1,\ldots]$ with $|x_n|=n$ and $x_n\sim x_{n+1}$ for all $n\ge0$, then there exists an element $w\in W_\infty$ such that $w_1\ldots w_n=x_n$ for all $n\ge1$. Then $\myset{\Phi(\xi)}=K_w$ and
\begin{equation}\label{SG_det_eqn_Holder}
\lvert\Phi(\xi)-\Phi(\eta)\rvert\asymp\rho_a(\xi,\eta)^{\log2/a}\text{ for all }\xi,\eta\in\dd_hX.
\end{equation}

\section{Random Walk on \texorpdfstring{$X$}{X}}\label{SG_det_sec_rw}

In this section, we give a transient reversible random walk on $X$ from \cite{KLW17}. Let $c:X\times X\to[0,+\infty)$ be conductance satisfying
\begin{align*}
c(x,y)&=c(y,x),\\
\pi(x)&=\sum_{y\in X}c(x,y)\in(0,+\infty),\\
c(x,y)&>0\text{ if and only if }x\sim y,
\end{align*}
for all $x,y\in X$. Let $P(x,y)=c(x,y)/\pi(x)$, $x,y\in X$, then $P$ is a transition probability satisfying
$$\pi(x)P(x,y)=\pi(y)P(y,x)\text{ for all }x,y\in X.$$
We construct a reversible random walk $Z=\myset{Z_n}$ on $X$ with transition probability $P$. We introduce some related quantities. For all $x,y\in X$, let $P^{(0)}(x,y)=\delta_{xy}$ and
$$P^{(n+1)}(x,y)=\sum_{z\in X}P(x,z)P^{(n)}(z,y)\text{ for all }n\ge0.$$
Define
$$G(x,y)=\sum_{n=0}^\infty P^{(n)}(x,y),x,y\in X,$$
then $G$ is the Green function of $Z$ and $Z$ is called transient if $G(x,y)<+\infty$ for all or equivalently for some $x,y\in X$. Define
$$F(x,y)=\bbP_x\left[Z_n=y\text{ for some }n\ge0\right],$$
that is, the probability of ever reaching $y$ starting from $x$. By Markovian property, we have
$$G(x,y)=F(x,y)G(y,y).$$
For more detailed discussion of general theory of random walk, see \cite[Chapter \Rmnum{1}, \Rmnum{1}.1]{Wo00}.

Here, we take some specific random walk called $\lambda$-return ratio random walk introduced in \cite{KLW17}, that is,
$$\frac{c(x,x^-)}{\sum_{y:y^-=x}c(x,y)}=\frac{P(x,x^-)}{\sum_{y:y^-=x}P(x,y)}=\lambda\in(0,+\infty)\text{ for all }x\in X\text{ with }|x|\ge1.$$
For all $n\ge0$, $x\in S_n,y\in S_{n+1}$, we take $c(x,y)$ the same value denoted by $c(n,n+1)=c(n+1,n)$. Then
$$\lambda=\frac{c(n-1,n)}{3c(n,n+1)},$$
that is,
$$c(n,n+1)=\frac{c(n-1,n)}{3\lambda}=\ldots=\frac{1}{(3\lambda)^n}{c(0,1)}.$$
Take $c(0,1)=1$, then $c(n,n+1)=1/(3\lambda)^n$. Moreover, \cite[Definition 4.4]{KLW17} gave restrictions to conductance of horizontal edges. For all $n\ge1$, $x,y\in S_n$, $x\sim y$, let
$$
c(x,y)=\\
\begin{cases}
\frac{C_1}{(3\lambda)^n},&\text{ if the edge with end nodes }x,y\text{ is of type \Rmnum{1}},\\
\frac{C_2}{(3\lambda)^n},&\text{ if the edge with end nodes }x,y\text{ is of type \Rmnum{2}},
\end{cases}
$$
where $C_1,C_2$ are some positive constants.

\cite[Proposition 4.1, Lemma 4.2]{KLW17} showed that if $\lambda\in(0,1)$, then $Z$ is transient and
\begin{equation}\label{SG_det_eqn_G}
G(o,o)=\frac{1}{1-\lambda},
\end{equation}
\begin{equation}\label{SG_det_eqn_F}
F(x,0)=\lambda^{|x|}\text{ for all }x\in X.
\end{equation}

For a transient random walk, we can introduce Martin kernel given by
$$K(x,y)=\frac{G(x,y)}{G(o,y)},$$
and Martin compactification $\bar{X}$, that is, the smallest compactification such that $K(x,\cdot)$ can be extended continuously for all $x\in X$. Martin boundary is given by $\dd_MX=\bar{X}\backslash X$. Then Martin kernel $K$ can be defined on $X\times\bar{X}$.

\cite[Theorem 5.1]{KLW17} showed that the Martin boundary $\dd_MX$, the hyperbolic boundary $\dd_hX$ and the SG $K$ are homeomorphic. Hence the completion $\hat{X}$ of $X$ with respect to $\rho_a$ and Martin compactification $\bar{X}$ are homeomorphic. It is always convenient to consider $\hat{X}$ rather than $\bar{X}$. We use $\dd X$ to denote all these boundaries. We list some general results of Martin boundary for later use.

\begin{mythm}\label{SG_det_thm_conv}(\cite[Theorem 24.10]{Wo00})
Let $Z$ be transient, then $\myset{Z_n}$ converges to a $\dd_MX$-valued random variable $Z_\infty$, $\bbP_x$-a.s. for all $x\in X$. The hitting distribution of $\myset{Z_n}$ or the distribution of $Z_\infty$ under $\bbP_x$, denoted by $\nu_x$, satisfies
$$\nu_x(B)=\int_BK(x,\cdot)\md\nu_o\text{ for all Borel measurable set }B\subseteq\dd_M X,$$
that is, $\nu_x$ is absolutely continuous with respect to $\nu_o$ with Radon-Nikodym derivative $K(x,\cdot)$. 
\end{mythm}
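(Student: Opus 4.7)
The plan is to prove the two assertions in sequence: first the $\mathbb{P}_x$-a.s.\ convergence $Z_n\to Z_\infty$ in $\bar X$ with $Z_\infty\in\partial_M X$, then the identification of the hitting distribution $\nu_x$. The main obstacle is the convergence statement; the integral formula for $\nu_x$ will then follow from standard properties of the Martin compactification and the uniqueness of the representing measure in the Poisson--Martin integral formula for bounded positive harmonic functions.

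For the convergence, the strategy combines transience with compactness of $\bar X$ and a martingale argument for the Martin kernel. Since $Z$ is transient, $G(x,y)<+\infty$ for all $x,y$ and each state is visited only finitely often $\mathbb{P}_x$-a.s., so $Z_n$ eventually leaves every finite subset of $X$; together with the compactness of $\bar X$ this forces every subsequential limit of $\{Z_n\}$ to lie in $\partial_M X$. To upgrade this to a true limit, one needs the a.s.\ convergence of $K(x,Z_n)$ for every $x\in X$: because the countable family $\{K(x,\cdot)\}_{x\in X}$ separates the points of the compact metrizable space $\bar X$ by the very definition of the Martin compactification, joint a.s.\ convergence of $K(x,Z_n)$ for all $x$ is equivalent to a.s.\ convergence of $Z_n$ in $\bar X$. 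The a.s.\ convergence of $K(x,Z_n)$ in turn is obtained by a supermartingale argument: the one-step Green identity
\[
\sum_{z\in X} P(y,z) G(z,x) = G(y,x)-\delta_{y,x}
\]
gives $\mathbb{E}[K(Z_{n+1},x)\mid\mathcal{F}_n]=K(Z_n,x)-\mathbf{1}_{\{Z_n=x\}}/G(o,x)$, so $K(Z_n,x)=G(Z_n,x)/G(o,x)$ is a non-negative supermartingale with integrable compensator (a rescaled local time at $x$) and therefore converges a.s.\ by the martingale convergence theorem. Reversibility $\pi(x)G(x,y)=\pi(y)G(y,x)$ then rewrites $K(x,Z_n)=(\pi(o)/\pi(x))\,G(Z_n,x)/G(Z_n,o)$, and combining the supermartingale convergence at $x$ and at $o$ with the a priori existence of subsequential $\bar X$-limits of $Z_n$ (supplied by compactness) one deduces the a.s.\ convergence of $K(x,Z_n)$ for every $x$.

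For the hitting distribution, I would compare, for each Borel $B\subseteq\partial_M X$, the two bounded non-negative harmonic functions
\[
h_B(x):=\nu_x(B)=\mathbb{P}_x[Z_\infty\in B],\qquad g_B(x):=\int_B K(x,\eta)\,d\nu_o(\eta).
\]
The function $h_B$ is harmonic by the Markov property applied at time one and is bounded by $1$. The function $g_B$ is harmonic because $K(\cdot,\eta)$ is harmonic for every $\eta\in\partial_M X$, and is bounded by $g_{\partial_M X}$. At $x=o$ both equal $\nu_o(B)$, since $K(o,\cdot)\equiv 1$. The key identification is that $\nu_o$ is precisely the representing measure of the constant harmonic function $1$ in the Poisson--Martin representation: this follows from the previous step together with the martingale identity $\mathbb{E}_o[\varphi(Z_\infty)]=\int\varphi\,d\nu_o$ for continuous $\varphi\in C(\bar X)$, applied to $\varphi=K(x,\cdot)$. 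Uniqueness of the representing measure then forces $h_B\equiv g_B$, giving the formula $\nu_x(B)=\int_B K(x,\eta)\,d\nu_o(\eta)$ as claimed.
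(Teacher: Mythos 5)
The paper does not actually prove this statement---it is quoted verbatim from \cite[Theorem 24.10]{Wo00}---so your proposal must stand on its own, and it does not: the convergence part contains the one genuinely hard step of the Martin boundary convergence theorem, and your argument breaks down exactly there. You correctly reduce, via reversibility, to showing that $G(Z_n,x)/G(Z_n,o)$ converges $\bbP_o$-a.s., and you correctly observe that $G(Z_n,x)$ (chain in the \emph{first} argument) is a non-negative supermartingale, hence converges a.s. But its limit is $0$ a.s.: $\bbE_o[G(Z_n,x)]=\sum_{k\ge n}P^{(k)}(o,x)\to0$ since $G(o,x)<+\infty$, and Fatou's lemma forces the non-negative a.s.\ limit to vanish. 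The same holds for the denominator $G(Z_n,o)$, so the ratio is a genuine $0/0$ indeterminate form, and ``combining the supermartingale convergence at $x$ and at $o$ with subsequential compactness'' yields nothing: compactness of $\bar X$ supplies convergent subsequences of $Z_n$, but no mechanism for showing that different subsequential limits give the same value of $K(x,\cdot)$. This is precisely the obstruction that the classical proof (Hunt's argument, as presented in Woess) overcomes by first establishing the Poisson--Martin representation $1=\int K(\cdot,\xi)\,\md\nu(\xi)$, decomposing $\bbP_o=\int\bbP_o^{K(\cdot,\xi)}\,\md\nu(\xi)$ into Doob $h$-processes conditioned on minimal harmonic functions, and proving convergence to $\xi$ separately under each conditioned measure, where ratios $u(Z_n)/K(Z_n,\xi)$ \emph{are} supermartingales. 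None of this machinery appears in your outline.

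The identification of $\nu_x$ has a secondary gap of the same nature. Two bounded harmonic functions $h_B$ and $g_B$ agreeing at the single point $o$ need not coincide, so you must genuinely invoke uniqueness of representing measures, and for that you need to know that $\nu_o$ \emph{is} the representing measure of the constant function $1$. Your justification---the ``martingale identity'' $\bbE_o[\varphi(Z_\infty)]=\int\varphi\,\md\nu_o$ applied to $\varphi=K(x,\cdot)$---does not deliver this: that identity is just the definition of $\nu_o$ as the law of $Z_\infty$, not the statement $\int K(x,\cdot)\,\md\nu_o=1$, and $K(x,Z_n)$ is not a martingale under $\bbP_o$ (its value at time $0$ is $K(x,o)=F(x,o)$, which is not $1$). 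In the standard proof the convergence and the identity $\nu_x=K(x,\cdot)\nu_o$ fall out simultaneously from the $h$-process decomposition of $\bbP_x$, which is the ingredient your proposal is missing.
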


For all $\nu_o$-integrable function $\vphi$ on $\dd_MX$, we have
$$h(x)=\int_{\dd_MX}\vphi\md\nu_x=\int_{\dd_MX}K(x,\cdot)\vphi\md\nu_o,x\in X,$$
is a harmonic function on $X$. It is called the Poisson integral of $\vphi$, denoted by $H\vphi$.

\cite[Theorem 5.6]{KLW17} showed that the hitting distribution $\nu_o$ is the normalized Hausdorff measure on $K$. We write $\nu$ for $\nu_o$ for simplicity.

Using conductance $c$, we construct an energy on $X$ given by
$$\calE_X(u,u)=\frac{1}{2}\sum_{x,y\in X}c(x,y)(u(x)-u(y))^2.$$

In \cite{Sil74}, Silverstein constructed Na\"im kernel $\Theta$ on $\bar{X}\times\bar{X}$ using Martin kernel to induce an energy on $\dd X$ given by
$$\calE_{\dd X}(u,u)=\calE_X(Hu,Hu)=\frac{1}{2}\pi(o)\int_{\dd X}\int_{\dd X}(u(x)-u(y))^2\Theta(x,y)\nu(\md x)\nu(\md y),$$
for all $u\in L^2(\dd_MX;\nu)$ with $\calE_{\dd X}(u,u)<+\infty$.

\cite[Theorem 6.3]{KLW17} calculated Na\"im kernel forcefully
\begin{equation}\label{SG_det_eqn_Theta}
\Theta(x,y)\asymp\frac{1}{|x-y|^{\alpha+\beta}},
\end{equation}
where $\alpha=\log3/\log2$ is the Hausdorff dimension of the SG, $\beta=-\log\lambda/\log2\in(0,+\infty)$, $\lambda\in(0,1)$. No message of upper bound for $\beta$ of walk dimension appeared in their calculation.

\section{Regular Dirichlet Form on \texorpdfstring{$X$}{X}}\label{SG_det_sec_df_X}

In this section, we construct a regular Dirichlet form $\calE_X$ on $X$ and its corresponding symmetric Hunt process $\myset{X_t}$. We prove that the Martin boundaries of $\myset{X_t}$ and $Z$ coincide. We show that $\calE_X$ is stochastically incomplete and $\myset{X_t}$ goes to infinity in finite time almost surely.

Let $m:X\to(0,+\infty)$ be a positive function given by
$$m(x)=\left(\frac{c}{3\lambda}\right)^{|x|},x\in X,$$
where $c\in(0,\lambda)\subseteq(0,1)$. Then $m$ can be regarded as a measure on $X$. Note that
$$m(X)=\sum_{x\in X}m(x)=\sum_{n=0}^\infty3^n\cdot\left(\frac{c}{3\lambda}\right)^n=\sum_{n=0}^{\infty}\left(\frac{c}{\lambda}\right)^n<+\infty,$$
we have $m$ is a finite measure on $X$. We construct a symmetric form on $L^2(X;m)$ given by
$$
\begin{cases}
&\calE_X(u,u)=\frac{1}{2}\sum_{x,y\in X}c(x,y)(u(x)-u(y))^2,\\
&\calF_X=\text{the }(\calE_X)_1\text{-closure of }C_c(X),
\end{cases}
$$
where $C_c(X)$ is the set of all functions with finite support. It is obvious that $(\calE_X,\calF_X)$ is a regular Dirichlet form on $L^2(X;m)$. By \cite[Theorem 7.2.1]{FOT11}, it corresponds to a symmetric Hunt process on $X$. Roughly speaking, this process is a variable speed continuous time random walk characterized by holding at one node with time distributed to exponential distribution and jumping according to random walk. For some discussion of continuous time random walk, see \cite[Chapter 2]{Nor98}. We give detailed construction as follows.

Let $(\Omega,\calF,\bbP)$ be a probability space on which given a random walk $\myset{Y_n}$ with transition probability $P$ and initial distribution $\sigma$ and a sequence of independent exponential distributed random variables $\myset{S_n}$ with parameter $1$, that is, $\bbP[S_n\in\md t]=e^{-t}\md t$. Assume that $\myset{S_n}$ is independent of $\myset{Y_n}$. Let $\alpha(x)=\pi(x)/m(x)$, $x\in X$. For all $n\ge1$, let
$$T_n=\frac{S_n}{\alpha(Y_{n-1})},$$
$$J_0=0,J_n=T_1+\ldots+T_n.$$
Then $T_n$ is called the $n$-th holding time and $J_n$ is called the $n$-th jumping time. Let
$$
X_t=
\begin{cases}
Y_n,&\text{if }J_n\le t<J_{n+1}\text{ for some }n\ge0,\\
\dd,&\text{otherwise},
\end{cases}
$$
where $\dd$ is a death point. This construction is similar to that of Poisson process and it is called variable speed continuous time random walk in some literature. $\myset{X_t}$ is a symmetric Hunt process with initial distribution $\sigma$. We claim that $\myset{X_t}$ is the symmetric Hunt process corresponding to $\calE_X$.

Indeed, we only need to show that their generators coincide. By \cite[Corollary 1.3.1]{FOT11}, the generator of $\calE_X$ is characterized by
$$\calE_X(u,v)=(-Au,v)\text{ for all }u,v\in C_c(X).$$
Noting that
\begin{align*}
\calE_X(u,v)&=\frac{1}{2}\sum_{x,y\in X}c(x,y)(u(x)-u(y))(v(x)-v(y))\\
&=\sum_{x\in X}\left(\frac{1}{m(x)}\sum_{y\in X}c(x,y)(u(x)-u(y))\right)v(x)m(x),
\end{align*}
we have
$$Au(x)=\frac{1}{m(x)}\sum_{y\in X}c(x,y)(u(y)-u(x))\text{ for all }u\in C_c(X).$$
On the other hand, the generator of $\myset{X_t}$ is characterized by $\lim_{t\downarrow0}\frac{1}{t}\left(\bbE_xu(X_t)-u(x)\right)$ for all $u\in C_c(X)$.

Since $X$ is locally finite, we have
\begin{align*}
&\lim_{t\downarrow0}\frac{1}{t}\left(\bbE_xu(X_t)-u(x)\right)=\lim_{t\downarrow0}\frac{1}{t}\sum_{y\in X}(u(y)-u(x))\bbP_x[X_t=y]\\
=&\lim_{t\downarrow0}\frac{1}{t}\sum_{y:y\ne x}(u(y)-u(x))\bbP_x[X_t=y]=\sum_{y:y\ne x}(u(y)-u(x))\lim_{t\downarrow0}\frac{1}{t}\bbP_x[X_t=y].
\end{align*}
Let $p_{xy}(t)=\bbP_x[X_t=y]$, then for all $x\ne y$, we have $p_{xy}(0)=0$ and
$$\lim_{t\downarrow0}\frac{1}{t}\bbP_x[X_t=y]=\lim_{t\downarrow0}\frac{1}{t}p_{xy}(t)=p_{xy}'(0)$$
if the derivative exists. We calculate some equation that $p_{xy}$ satisfies. The idea of the following calculation is from \cite[Theorem 2.8.4]{Nor98}.

Note that
$$p_{xy}(t)=\bbP_x[X_t=y]=\bbP_x[X_t=y,t<J_1]+\bbP_x[X_t=y,t\ge J_1],$$
where
\begin{align*}
\bbP_x[X_t=y,t<J_1]&=\bbP_x[Y_0=y,t<T_1]=\bbP_x[Y_0=y]\bbP_x[t<\frac{S_1}{\alpha(x)}]=\delta_{xy}e^{-\alpha(x)t},\\
\bbP_x[X_t=y,t\ge J_1]&=\sum_{z\in X}\bbP_x[X_t=y,t\ge J_1,Y_1=z]\\
&=\sum_{z\in X}\int_0^t\bbP_x[Y_1=z]\bbP_x[J_1\in\md s]\bbP_z[X_{t-s}=y]\\
&=\sum_{z\in X}\int_0^tP(x,z)\alpha(x)e^{-\alpha(x)s}p_{zy}(t-s)\md s.
\end{align*}
Hence
\begin{equation}\label{SG_det_eqn_pxy}
p_{xy}(t)=\delta_{xy}e^{-\alpha(x)t}+\sum_{z\in X}\int_0^tP(x,z)\alpha(x)e^{-\alpha(x)s}p_{zy}(t-s)\md s.
\end{equation}
Since $X$ is locally finite and $p_{xy}\in[0,1]$, we have $p_{xy}$ is continuous, $p_{xy}$ is continuous differentiable,\ldots, $p_{xy}$ is infinitely differentiable. Note that Equation (\ref{SG_det_eqn_pxy}) is equivalent to
$$e^{\alpha(x)t}p_{xy}(t)=\delta_{xy}+\sum_{z\in X}\alpha(x)P(x,z)\int_0^te^{\alpha(x)s}p_{zy}(s)\md s.$$
Differentiating both sides with respect to $t$, we have
$$e^{\alpha(x)t}\left(\alpha(x)p_{xy}(t)+p_{xy}'(t)\right)=\sum_{z\in X}\alpha(x)P(x,z)e^{\alpha(x)t}p_{zy}(t),$$
that is,
$$p_{xy}'(t)=\sum_{z\in X}\alpha(x)P(x,z)p_{zy}(t)-\alpha(x)p_{xy}(t).$$
Letting $x\ne y$ and $t=0$, we have
$$p_{xy}'(0)=\alpha(x)P(x,y)=\frac{\pi(x)}{m(x)}\frac{c(x,y)}{\pi(x)}=\frac{c(x,y)}{m(x)},$$
hence
\begin{align*}
&\lim_{t\downarrow0}\frac{1}{t}\left(\bbE_xu(X_t)-u(x)\right)=\sum_{y:y\ne x}\left(u(y)-u(x)\right)p_{xy}'(0)\\
=&\frac{1}{m(x)}\sum_{y:y\ne x}c(x,y)\left(u(y)-u(x)\right)=\frac{1}{m(x)}\sum_{y\in X}c(x,y)\left(u(y)-u(x)\right),
\end{align*}
which coincides with $Au(x)$.

By the construction of $\myset{X_t}$ in terms of $\myset{Y_n}$, the Martin boundary of $\myset{X_t}$ is the same as the Martin boundary of $Z$.

Indeed, we calculate the Green function of $\myset{X_t}$ explicitly. By the correspondence between $\calE_X$ and $\myset{X_t}$, we only need to calculate the Green function of $\calE_X$. By \cite[Theorem 1.5.4]{FOT11}, the Green operator $G$ is characterized by $\calE_X(Gu,v)=(u,v)$ for all $u,v\in C_c(X)$. Note that
$$Gu(x)=\int_XG(x,\md y)u(y)=\sum_{y\in X}g(x,y)u(y)m(y)$$
where $g$ is the Green function. Taking $u=\indi_{x_0}$, $v=\indi_{y_0}$, $x_0,y_0\in X$, then we have
$$(u,v)=\sum_{x\in X}u(x)v(x)m(x)=\delta_{x_0y_0}m(x_0),$$
$$Gu(x)=g(x,x_0)m(x_0),$$
and
\begin{align*}
\calE_X(Gu,v)&=\frac{1}{2}\sum_{x,y\in X}c(x,y)(Gu(x)-Gu(y))(v(x)-v(y))\\
&=\frac{1}{2}\sum_{x,y\in X}c(x,y)(g(x,x_0)-g(y,x_0))m(x_0)(v(x)-v(y))\\
&=\sum_{x,y\in X}c(x,y)(g(x,x_0)-g(y,x_0))m(x_0)v(x)\\
&=\sum_{y\in X}c(y_0,y)(g(y_0,x_0)-g(y,x_0))m(x_0).
\end{align*}
Letting $g(y,x_0)=G(y,x_0)/C(x_0)$, where $C$ is some function to be determined, we have
\begin{align*}
\sum_{y\in X}c(y_0,y)(g(y_0,x_0)-g(y,x_0))m(x_0)&=\sum_{y\in X}\frac{\pi(y_0)}{C(x_0)}P(y_0,y)(G(y_0,x_0)-G(y,x_0))m(x_0)\\
&=\frac{\pi(y_0)}{C(x_0)}(G(y_0,x_0)-G(y_0,x_0)+\delta_{x_0y_0})m(x_0)\\
&=\frac{\pi(y_0)}{C(x_0)}\delta_{x_0y_0}m(x_0)\\
&=\delta_{x_0y_0}m(x_0),
\end{align*}
hence $C(x_0)=\pi(x_0)$ and
$$g(x,y)=\frac{G(x,y)}{\pi(y)}.$$
Hence the Martin kernel of $\myset{X_t}$ is given by
$$k(x,y)=\frac{g(x,y)}{g(o,y)}=\frac{G(x,y)/\pi(y)}{G(o,y)/\pi(y)}=\frac{G(x,y)}{G(o,y)}=K(x,y)\text{ for all }x,y\in X.$$
Hence the Martin boundaries of $\myset{X_t}$ and $Z$ coincide. Moreover, $\calE_X$ is transient.

\begin{mythm}\label{SG_det_thm_sto}
$(\calE_X,\calF_X)$ on $L^2(X;m)$ is stochastically incomplete.
\end{mythm}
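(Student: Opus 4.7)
The plan is to show that the Hunt process $\myset{X_t}$ constructed in the previous section has a finite lifetime $\zeta := \lim_{n\to\infty} J_n$ when started from $o$. Since $T_t 1(x) = \bbP_x[\zeta > t]$, a finite lifetime at $o$ immediately forces $T_t 1 \neq 1$ and hence stochastic incompleteness. I will achieve this through a single computation: bounding $\bbE_o \zeta$ by a convergent geometric series.

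First, since $T_k = S_k/\alpha(Y_{k-1})$ with $S_k \sim \mathrm{Exp}(1)$ independent of the embedded chain $\myset{Y_n}$, and since $\alpha(y) = \pi(y)/m(y)$, Fubini's theorem gives
$$\bbE_o \zeta = \sum_{k=1}^\infty \bbE_o\bigl[\alpha(Y_{k-1})^{-1}\bigr] = \sum_{y\in X} \frac{m(y)}{\pi(y)} \sum_{n=0}^\infty P^{(n)}(o,y) = \sum_{y\in X} \frac{m(y)}{\pi(y)} G(o,y).$$
Next, I simplify $G(o,y)$ by combining the reversibility identity $\pi(o) G(o,y) = \pi(y) G(y,o)$ with $G(y,o) = F(y,o) G(o,o)$ and the explicit values (\ref{SG_det_eqn_G}) and (\ref{SG_det_eqn_F}), which yield $G(o,y) = \pi(y) \lambda^{|y|}/[(1-\lambda)\pi(o)]$. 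The $\pi(y)$ factors then cancel in the sum above, and using $m(y) = (c/(3\lambda))^{|y|}$ together with the fact that the number of vertices at level $n$ in the tree equals $3^n$,
$$\bbE_o \zeta = \frac{1}{(1-\lambda)\pi(o)} \sum_{y\in X} \lambda^{|y|} m(y) = \frac{1}{(1-\lambda)\pi(o)} \sum_{n=0}^\infty 3^n \lambda^n \Bigl(\frac{c}{3\lambda}\Bigr)^n = \frac{1}{(1-\lambda)\pi(o)} \sum_{n=0}^\infty c^n.$$
The hypothesis $c \in (0,\lambda) \subseteq (0,1)$ ensures convergence, so $\bbE_o \zeta < \infty$, whence $\bbP_o[\zeta < \infty] = 1$ and stochastic incompleteness follows.

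No substantial obstacle is anticipated: every ingredient (the Green function identity (\ref{SG_det_eqn_G}), the hitting probability (\ref{SG_det_eqn_F}), reversibility, and the geometric decay of $m$) is already in place. The point worth emphasizing is that mere finiteness of $m(X)$ (which only requires $c < 3\lambda$) would not suffice, since reversible forms on finite-measure spaces can still be conservative; it is the strict inequality $c < \lambda$ that makes the holding times decay fast enough along transient trajectories for $\zeta$ to be finite. This same computation also yields the companion assertion from the introduction that $\myset{X_t}$ reaches infinity in finite time $\bbP_o$-a.s., since transience of $\myset{Y_n}$ (Section \ref{SG_det_sec_rw}) forces $|Y_n| \to \infty$ and hence $|X_t| \to \infty$ as $t \uparrow \zeta$.
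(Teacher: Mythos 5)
Your proposal is correct and follows essentially the same route as the paper: the identical chain of identities (Fubini with the independence of $S_n$ from $Y_{n-1}$, reversibility, $G(x,o)=F(x,o)G(o,o)$, and the explicit values (\ref{SG_det_eqn_G}) and (\ref{SG_det_eqn_F})) reduces $\bbE_o\zeta$ to $\frac{G(o,o)}{\pi(o)}\sum_{n}c^n$, which converges precisely because $c<\lambda<1$. The only difference is that the paper takes one further step, propagating $\bbP_x[\zeta<+\infty]=1$ from $o$ to every starting point $x$ via the strong Markov property at the stopping times $J_n$ (Proposition \ref{SG_det_prop_jump}); that stronger statement is what is actually needed later (e.g.\ in Theorems \ref{SG_det_thm_infty} and \ref{SG_det_thm_hit}), but your single-point version already yields non-conservativeness, since $m(\{o\})>0$ forces $T_t1(o)=\bbP_o[\zeta>t]<1$ for some $t$ on a set of positive $m$-measure.
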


We prove stochastic incompleteness by considering the lifetime
$$\zeta=\sum_{n=1}^\infty T_n=\lim_{n\to+\infty}J_n.$$
This quantity is called the (first) explosion time in \cite[Chapter 2, 2.2]{Nor98}. We need a proposition for preparation.

\begin{myprop}\label{SG_det_prop_jump}
The jumping times $J_n$ are stopping times of $\myset{X_t}$ for all $n\ge0$.
\end{myprop}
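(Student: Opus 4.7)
The plan is to prove the statement by induction on $n$. The case $n=0$ is immediate since $J_0=0$ is a deterministic constant, hence a stopping time with respect to any filtration. For the inductive step, the key is to express $J_{n+1}$ as a first hitting time of $\{X_t\}$ after $J_n$, so that standard results on hitting times apply.

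The crucial observation from the construction of $\{X_t\}$ is that the process is piecewise constant on each interval $[J_n, J_{n+1})$, where it takes the value $Y_n$; moreover, since the discrete-time walk $\{Y_n\}$ has transition probability $P(x,y) = c(x,y)/\pi(x)$ which vanishes on the diagonal (because $c(x,x) = 0$: no vertex is its own neighbor in $X$), we have $Y_{n+1} \ne Y_n$ almost surely, so each $J_{n+1}$ is a genuine jump. Combined with the right-continuity of paths, this yields the pathwise identity
\[
J_{n+1} \;=\; \inf\bigl\{t > J_n : X_t \ne X_{J_n}\bigr\},
\]
i.e.\ $J_{n+1}$ is the first entry time after $J_n$ into the open subset $X \setminus \{Y_n\}$ of the discrete space $X$ (where singletons are both open and closed).

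To conclude, I would invoke standard filtration arguments for Hunt processes. Since $\{X_t\}$ is a Hunt process, the augmented natural filtration $\{\calF_t^X\}$ satisfies the usual conditions. For any fixed $x \in X$, the hitting time $\tau_x := \inf\{t \ge 0 : X_t \ne x\}$ is a stopping time, as one sees directly by writing $\{\tau_x < t\} = \bigcup_{q \in \bbQ \cap [0,t)} \{X_q \ne x\} \in \calF_t^X$ using right-continuity of the paths. By the strong Markov property applied at the stopping time $J_n$ (which is a stopping time by the inductive hypothesis) and conditioning on $X_{J_n} = Y_n$, one obtains that $J_{n+1} - J_n$ is the hitting time $\tau_{Y_n}$ under $\bbP_{Y_n}$ shifted to the post-$J_n$ process, and hence $J_{n+1}$ itself is a stopping time with respect to $\{\calF_t^X\}$.

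The main (mild) technical obstacle is verifying the pathwise identity $J_{n+1} = \inf\{t > J_n : X_t \ne X_{J_n}\}$ cleanly; it relies on the fact that $Y_{n+1} \ne Y_n$ a.s.\ (so that a genuine jump does occur at $J_{n+1}$) together with $X_t \equiv Y_n$ for $t \in [J_n, J_{n+1})$ directly from the definition of $\{X_t\}$. Once this identity is in hand, the stopping-time property reduces to the well-known debut theorem, and no further fractal-specific input is needed.
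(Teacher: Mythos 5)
Your proof is correct and follows essentially the same route as the paper: induction on $n$, the pathwise identity expressing $J_{n+1}$ as the first time after $J_n$ that the path changes value (valid since $P(x,x)=0$, so each $Y_{n+1}\ne Y_n$ a.s.), and measurability via a countable union over rational times. The appeals to the strong Markov property and the debut theorem are harmless but unnecessary; the paper simply writes $\myset{J_{n+1}\le t}$ directly as such a countable union and concludes.
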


\begin{proof}
Let $\myset{\calF_t}$ be the minimum completed admissible filtration with respect to $\myset{X_t}$. It is obvious that $J_0=0$ is a stopping time of $\myset{X_t}$. Assume that $J_n$ is a stopping time of $\myset{X_t}$, then for all $t\ge0$, we have
$$\myset{J_{n+1}\le t}=\cup_{s\in\mathbb{Q},s\le t}\left(\myset{J_n\le t}\cap\myset{X_s\ne X_{J_n}}\right)\in\calF_t,$$
hence $J_{n+1}$ is a stopping time of $\myset{X_t}$. By induction, it follows that $J_n$ are stopping times of $\myset{X_t}$ for all $n\ge0$.
\end{proof}

\begin{proof}[Proof of Theorem \ref{SG_det_thm_sto}]
By Equation (\ref{SG_det_eqn_F}), we have
\begin{align*}
\bbE_o\zeta&=\bbE_o\sum_{n=1}^\infty T_n=\sum_{n=1}^\infty\bbE_o\left[\frac{S_n}{\alpha(Y_{n-1})}\right]=\sum_{n=1}^\infty\bbE_o[{S_n}]\bbE_o\left[\frac{1}{\alpha(Y_{n-1})}\right]\\
&=\sum_{n=1}^\infty\bbE_o\frac{m}{\pi}(Y_{n-1})=\sum_{n=0}^\infty\bbE_o\frac{m}{\pi}(Y_{n})=\sum_{n=0}^\infty\sum_{x\in X}\frac{m(x)}{\pi(x)}P^{(n)}(o,x)\\
&=\sum_{x\in X}\frac{m(x)}{\pi(x)}G(o,x)=\sum_{x\in X}\frac{m(x)}{\pi(x)}\frac{\pi(x)G(x,o)}{\pi(o)}=\sum_{x\in X}\frac{m(x)G(x,o)}{\pi(o)}\\
&=\sum_{x\in X}\frac{m(x)F(x,o)G(o,o)}{\pi(o)}=\frac{G(o,o)}{\pi(o)}\sum_{n=0}^\infty3^n\cdot\left(\frac{c}{3\lambda}\right)^n\cdot\lambda^n\\
&=\frac{G(o,o)}{\pi(o)}\sum_{n=0}^\infty c^n.
\end{align*}
Since $c\in(0,\lambda)\subseteq(0,1)$, we have $\bbE_o\zeta<+\infty$, hence
$$\bbP_o[\zeta<+\infty]=1.$$

For all $x\in X$, let $n=|x|$, note that $P^{(n)}(o,x)>0$, by Proposition \ref{SG_det_prop_jump} and strong Markov property, we have
\begin{align*}
\bbE_o\zeta&\ge\bbE_o\left[\zeta\myindi{\myset{X_{J_n}=x}}\right]=\bbE_o\left[\bbE_o\left[\zeta\myindi{\myset{X_{J_n}=x}}|X_{J_n}\right]\right]\\
&=\bbE_o\left[\myindi{\myset{X_{J_n}=x}}\bbE_o\left[\zeta|X_{J_n}\right]\right]=\bbE_o\left[\myindi{\myset{X_{J_n}=x}}\bbE_{X_{J_n}}\left[\zeta\right]\right]\\
&=P^{(n)}(o,x)\bbE_x\left[\zeta\right].
\end{align*}
Hence $\bbE_x\zeta<+\infty$, hence
$$\bbP_x\left[\zeta<+\infty\right]=1.$$
Therefore, $\calE_X$ is stochastically incomplete.
\end{proof}

By \cite[Proposition 1.17(b)]{Wo00}, for a transient random walk $Z$ on $X$, for all finite set $A\subseteq X$, we have $\bbP_x\left[Z_n\in A\text{ for infinitely many }n\right]=0$ for all $x\in X$. Roughly speaking, a transient random walk will go to infinity almost surely. For variable speed continuous time random walk $\myset{X_t}$ on $X$, we have following theorem.

\begin{mythm}\label{SG_det_thm_infty}
$\myset{X_t}$ goes to infinity in finite time almost surely, that is,
$$\bbP_x\left[\lim_{t\uparrow\zeta}\lvert X_t\rvert=+\infty,\zeta<+\infty\right]=1\text{ for all }x\in X.$$
\end{mythm}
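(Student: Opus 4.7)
The plan is to combine two ingredients already available: the stochastic incompleteness $\bbP_x[\zeta<+\infty]=1$ from Theorem \ref{SG_det_thm_sto}, and the transience of the discrete-time skeleton $Z=\{Y_n\}$ from \cite[Proposition 4.1]{KLW17}. The key observation is that $X_t$ is constructed precisely so that $X_{J_n}=Y_n$ on $\{J_n<\zeta\}$, so the spatial behaviour of $X_t$ as $t\uparrow\zeta$ is dictated by that of $Y_n$ as $n\to+\infty$.

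First, I would argue that on the event $\{\zeta<+\infty\}$ one automatically has $J_n\uparrow\zeta$ with $n\to+\infty$. Indeed, $\zeta=\sum_{n=1}^\infty T_n$ with $T_n=S_n/\alpha(Y_{n-1})$, and each $T_n$ is a.s.\ strictly positive and finite because $S_n$ is exponentially distributed with parameter $1$ and $\alpha(Y_{n-1})\in(0,+\infty)$. Thus the partial sums $J_n$ strictly increase and, by monotone convergence, $J_n\uparrow\zeta$; in particular the set of $n$ with $J_n\le t$ is a nonempty finite set for every $t<\zeta$, and it exhausts $\bbN$ as $t\uparrow\zeta$.

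Second, since the conductance parameter $\lambda\in(0,1)$, the random walk $\{Y_n\}$ is transient by \cite[Proposition 4.1]{KLW17}. Because $X$ is locally finite, every closed ball $B_N$ is a finite set, so \cite[Proposition 1.17(b)]{Wo00} gives
$$\bbP_x\bigl[Y_n\in B_N\text{ for infinitely many }n\bigr]=0\text{ for every }N\ge0.$$
Taking a countable intersection over $N$ yields $|Y_n|\to+\infty$ almost surely, under $\bbP_x$ for every $x\in X$.

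Finally, I would patch the two statements together. Fix $x\in X$ and work on the almost sure event $\Omega_0:=\{\zeta<+\infty\}\cap\{|Y_n|\to+\infty\}$, which has full $\bbP_x$-measure by Theorem \ref{SG_det_thm_sto} and the previous paragraph. On $\Omega_0$, for each $t<\zeta$ there is a unique $n(t)\ge0$ with $J_{n(t)}\le t<J_{n(t)+1}$, so $X_t=Y_{n(t)}$ by the definition of $X_t$. As $t\uparrow\zeta$, the first paragraph gives $n(t)\to+\infty$, and hence $|X_t|=|Y_{n(t)}|\to+\infty$. Therefore
$$\bbP_x\Bigl[\lim_{t\uparrow\zeta}|X_t|=+\infty,\,\zeta<+\infty\Bigr]\ge\bbP_x[\Omega_0]=1,$$
which is the claimed identity. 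No serious obstacle is anticipated; the only subtle point is the verification that $J_n\uparrow\zeta$ on $\{\zeta<+\infty\}$, i.e.\ that the process cannot reach its lifetime by making only finitely many jumps, and this is immediate from the strict positivity and finiteness of the exponential holding times.
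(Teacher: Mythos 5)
Your proposal is correct and follows essentially the same route as the paper: both combine $\bbP_x[\zeta<+\infty]=1$ from Theorem \ref{SG_det_thm_sto} with the transience of $\myset{Y_n}$ via \cite[Proposition 1.17(b)]{Wo00}, and then read off $|X_t|=|Y_{n(t)}|\to+\infty$ from the construction of $X_t$ in terms of the jumping times $J_n$. The only cosmetic difference is that you take the countable intersection over balls up front to get $|Y_n|\to+\infty$ a.s.\ and explicitly verify $J_n\uparrow\zeta$, whereas the paper argues ball by ball with the events $\Omega_m$; these are the same argument.
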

\begin{proof}
There exists $\Omega_0$ with $\bbP_x(\Omega_0)=1$ such that $\zeta(\omega)<+\infty$ for all $\omega\in\Omega_0$. For all $m\ge1$, we have
$$\bbP_x\left[Y_n\in B_m\text{ for infinitely many }n\right]=0,$$
hence there exists $\Omega_m$ with $\bbP_x(\Omega_m)=1$ such that for all $\omega\in\Omega_m$, there exist $N=N(\omega)\ge1$ such that for all $n\ge N$, we have $Y_n(\omega)\notin B_m$. Moreover
$$\bbP_x\left(\Omega_0\cap\bigcap_{m=1}^\infty\Omega_m\right)=1.$$
For all $\omega\in\Omega_0\cap\cap_{m=1}^\infty\Omega_m$, we have
$$J_n(\omega)\le J_{n+1}(\omega)<\zeta(\omega)<+\infty.$$
Since $\omega\in\Omega_m$, there exists $N=N(\omega)\ge1$ such that for all $n>N$, we have $Y_n(\omega)\notin B_m$. By definition, we have
$$X_t(\omega)=Y_n(\omega)\text{ if }J_n(\omega)\le t<J_{n+1}(\omega).$$
Let $T=J_{N(\omega)}(\omega)$, then for all $t>T$, there exists $n\ge N$ such that $J_n(\omega)\le t<J_{n+1}(\omega)$, hence $X_t(\omega)=Y_n(\omega)\not\in B_m$, that is,
$$\lim_{t\uparrow\zeta(\omega)}\lvert X_t(\omega)\rvert=+\infty.$$
\end{proof}

\section{Active Reflected Dirichlet Space \texorpdfstring{$(\calE^{\rref},\calF^{\rref}_a)$}{(Eref,Frefa)}}\label{SG_det_sec_ref}

In this section, we construct active reflected Dirichlet form $(\calE^{\rref},\calF^{\rref}_a)$ and show that $\calF_X\subsetneqq\calF^{\rref}_a$, hence $\calE^{\rref}$ is not regular.

Reflected Dirichlet space was introduced by Chen \cite{Chen92}. This is a generalization of reflected Brownian motion in Euclidean space. He considered abstract Dirichlet form instead of constructing reflection path-wisely from probabilistic viewpoint. More detailed discussion is incorporated into his book with Fukushima \cite[Chapter 6]{CF12}.

Given a regular transient Dirichlet form $(\calE,\calF)$ on $L^2(X;m)$, we can do reflection in the following two ways:
\begin{itemize}
\item The linear span of $\calF$ and all harmonic functions of finite ``$\calE$-energy".
\item All functions that are ``locally" in $\calF$ and have finite ``$\calE$-energy".
\end{itemize}
We use the second way which is more convenient. Recall the Beurling-Deny decomposition. Since $(\calE,\calF)$ is regular, we have
$$\calE(u,u)=\frac{1}{2}\mu^c_{<u>}(X)+\int_{X\times X\backslash d}(u(x)-u(y))^2J(\md x\md y)+\int_Xu(x)^2k(\md x)$$
for all $u\in\calF_e$, here we use the convention that all functions in $\calF_e$ are quasi-continuous. By this formula, we can define
$$\hat{\calE}(u,u)=\frac{1}{2}\mu^c_{<u>}(X)+\int_{X\times X\backslash d}(u(x)-u(y))^2J(\md x\md y)+\int_Xu(x)^2k(\md x)$$
for all $u\in\calF_{\mathrm{loc}}$, where
$$\calF_{\mathrm{loc}}=\myset{u:\forall G\subseteq X\text{ relatively compact open,}\exists v\in\calF,\text{s.t. }u=v,m\text{-a.e. on }G}.$$
We give the definition of reflected Dirichlet space as follows. \cite[Theorem 6.2.5]{CF12} gave
$$
\begin{cases}
&\calF^{\rref}=\myset{u:\text{ finite }m\text{-a.e.},\exists\myset{u_n}\subseteq\calF_{\mathrm{loc}}\text{ that is }\hat{\calE}\text{-Cauchy},u_n\to u,m\text{-a.e. on }X},\\
&\hat{\calE}(u,u)=\lim_{n\to+\infty}\hat{\calE}(u_n,u_n).
\end{cases}
$$
Let $\tau_ku=\left((-k)\vee u\right)\wedge k$, $k\ge1$, then \cite[Theorem 6.2.13]{CF12} gave
$$
\begin{cases}
&\calF^{\rref}=\myset{u:|u|<+\infty,m\text{-a.e.},\tau_ku\in\calF_{\mathrm{loc}}\forall k\ge1,\sup_{k\ge1}\hat{\calE}(\tau_ku,\tau_ku)<+\infty},\\
&\calE^\rref(u,u)=\lim_{k\to+\infty}\hat{\calE}(\tau_ku,\tau_ku).
\end{cases}
$$
Let $\calF^\rref_a=\calF^\rref\cap L^2(X;m)$, then $(\calF^\rref_a,\calE^\rref)$ is called active reflected Dirichlet space. \cite[Theorem 6.2.14]{CF12} showed that $(\calE^\rref,\calF^\rref_a)$ is a Dirichlet form on $L^2(X;m)$.

Returning to our case, since
$$\calE_X(u,u)=\frac{1}{2}\sum_{x,y\in X}c(x,y)(u(x)-u(y))^2\text{ for all }u\in\calF_X,$$
$\calE_X$ has only jumping part, we have
$$\hat{\calE}_X(u,u)=\frac{1}{2}\sum_{x,y\in X}c(x,y)(u(x)-u(y))^2\text{ for all }u\in(\calF_X)_{\mathrm{loc}}.$$
By the definition of local Dirichlet space
$$(\calF_X)_{\mathrm{loc}}=\myset{u:\forall G\subseteq X\text{ relatively compact open,}\exists v\in\calF_X,\text{s.t. }u=v,m\text{-a.e. on }G}.$$
For all $G\subseteq X$ relatively compact open, we have $G$ is a finite set, for all function $u$ on $X$, let
$$v(x)=
\begin{cases}
u(x),&\text{if }x\in G,\\
0,&\text{if }x\in X\backslash G,
\end{cases}
$$
then $v\in C_c(X)\subseteq\calF_X$ and $u=v$ on $G$, hence $(\calF_X)_{\mathrm{loc}}=\myset{u:u\text{ is a finite function on }X}$.
$$\calF^\rref=\myset{u:|u(x)|<+\infty,\forall x\in X,\sup_{k\ge1}\frac{1}{2}\sum_{x,y\in X}c(x,y)\left(\tau_ku(x)-\tau_ku(y)\right)^2<+\infty}.$$
By monotone convergence theorem, we have
$$\frac{1}{2}\sum_{x,y\in X}c(x,y)\left(\tau_ku(x)-\tau_ku(y)\right)^2\uparrow\frac{1}{2}\sum_{x,y\in X}c(x,y)\left(u(x)-u(y)\right)^2,$$
hence
$$\sup_{k\ge1}\frac{1}{2}\sum_{x,y\in X}c(x,y)\left(\tau_ku(x)-\tau_ku(y)\right)^2=\frac{1}{2}\sum_{x,y\in X}c(x,y)\left(u(x)-u(y)\right)^2,$$
and
$$
\begin{cases}
&\calF^\rref=\myset{u:\text{ finite function},\frac{1}{2}\sum_{x,y\in X}c(x,y)\left(u(x)-u(y)\right)^2<+\infty},\\
&\calE^\rref(u,u)=\frac{1}{2}\sum_{x,y\in X}c(x,y)\left(u(x)-u(y)\right)^2.
\end{cases}
$$
$$\calF^\rref_a=\myset{u\in L^2(X;m):\frac{1}{2}\sum_{x,y\in X}c(x,y)\left(u(x)-u(y)\right)^2<+\infty}.$$
Indeed, we can show that $(\calE^\rref,\calF^\rref_a)$ is a Dirichlet form on $L^2(X;m)$ directly. In general, $(\calE^\rref,\calF^\rref_a)$ on $L^2(X;m)$ is not regular, $\calF_X\subsetneqq\calF^\rref_a$. This is like $H^1_0(D)\subsetneqq H^1(D)$, where $D$ is the open unit disk in $\R^d$. We need to show $\calF_X\ne\calF^\rref_a$, otherwise reflection is meaningless. Then we do regular representation of $(\calE^\rref,\calF^\rref_a)$ on $L^2(X;m)$ to enlarge the space $X$ to Martin compactification $\bar{X}$ and Martin boundary $\dd X$ will appear.

\begin{mythm}
$\calF_X\subsetneqq\calF^{\rref}_a$, hence $\calE^{\rref}$ is not regular.
\end{mythm}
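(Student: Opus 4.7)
The natural candidate for separating the two spaces is the constant function $u \equiv 1$. Because $c \in (0,\lambda) \subseteq (0,1)$, the measure $m$ is finite:
\[
m(X) = \sum_{n=0}^\infty 3^n\left(\frac{c}{3\lambda}\right)^n = \sum_{n=0}^\infty\left(\frac{c}{\lambda}\right)^n < +\infty,
\]
so $1 \in L^2(X;m)$, and trivially
\[
\calE^{\rref}(1,1) = \frac{1}{2}\sum_{x,y\in X} c(x,y)(1-1)^2 = 0,
\]
giving $1 \in \calF^{\rref}_a$. Thus it suffices to show $1 \notin \calF_X$.

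For this I would invoke the transience of $(\calE_X,\calF_X)$, which was already established in Section \ref{SG_det_sec_df_X} via the explicit Green function $g(x,y) = G(x,y)/\pi(y)$ and the finiteness $G(o,o) = 1/(1-\lambda)$ from (\ref{SG_det_eqn_G}). For a transient regular Dirichlet form, the extended Dirichlet space $\calF_{X,e}$ is a Hilbert space under the inner product $\calE_X$, so $\calE_X$ is a genuine norm on $\calF_{X,e}$ rather than merely a seminorm (cf.\ \cite[Lemma 1.5.5, Theorem 1.6.2]{FOT11}). Since $\calF_X \subseteq \calF_{X,e}$, the assumption $1 \in \calF_X$ would combine with $\calE_X(1,1)=0$ to force $1 = 0$ $m$-a.e., which is absurd. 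Therefore $1 \notin \calF_X$ and the inclusion $\calF_X \subsetneqq \calF^{\rref}_a$ is strict.

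The non-regularity then follows formally. On $C_c(X)$ the two quadratic forms coincide by construction, so the norms $(\calE^{\rref})_1$ and $(\calE_X)_1$ agree on $C_c(X)$. If $(\calE^{\rref}, \calF^{\rref}_a)$ were regular on $L^2(X;m)$, then $C_c(X) = \calF^{\rref}_a \cap C_c(X)$ would be $(\calE^{\rref})_1$-dense in $\calF^{\rref}_a$; but the $(\calE_X)_1$-closure of $C_c(X)$ equals $\calF_X$ by definition, and we have just shown this is strictly smaller than $\calF^{\rref}_a$, a contradiction.

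The only real obstacle is verifying $1 \notin \calF_X$. One can attempt a direct cutoff argument with $u_N := \indi_{B_N}$, which yields $\calE_X(u_N, u_N) = 3/\lambda^N \to +\infty$, but this only rules out one specific approximation; the clean way to rule out \emph{every} approximating sequence is via transience and the extended Dirichlet space, as above. This strict inclusion is exactly the motivation for performing a regular representation of $(\calE^{\rref}, \calF^{\rref}_a)$ on the larger space $\bar{X}$ in the next section, where the Martin boundary $\dd X \cong K$ materialises.
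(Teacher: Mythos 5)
Your proof is correct and rests on the same two pillars as the paper's own argument: the constant function $1$ lies in $\calF^{\rref}_a$ with zero energy because $m(X)<+\infty$, and the transience of $(\calE_X,\calF_X)$ excludes it from $\calF_X$. The paper packages the second step as a clash of global properties (recurrence and conservativeness of $\calE^{\rref}$ versus transience and stochastic incompleteness of $\calE_X$), whereas you exhibit $1$ directly as the separating element via the fact that $\calE_X$ is a genuine norm on the extended Dirichlet space of a transient form; the underlying mechanism is identical.
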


\begin{proof}
Since $m(X)<+\infty$, we have $1\in\calF^{\rref}_a$ and $\calE^{\rref}(1,1)=0$, by \cite[Theorem 1.6.3]{FOT11}, $\calE^{\rref}$ is recurrent, by \cite[Lemma 1.6.5]{FOT11}, $\calE^{\rref}$ is conservative or stochastically complete. Since $\calE_X$ is transient and stochastically incomplete, we have $\calF_X\ne\calF^{\rref}_a$. Note that $\calE^{\rref}$ is not regular, there is no corresponding Hunt process, but recurrent and conservative properties are still well-defined, see \cite[Chapter 1, 1.6]{FOT11}.
\end{proof}

\section{Regular Representation of \texorpdfstring{$(\calE^\rref,\calF^\rref_a)$}{(Eref,Frefa)}}\label{SG_det_sec_repre}

In this section, we construct a regular Dirichlet form $(\calE_{\bar{X}},\calF_{\bar{X}})$ on $L^2(\bar{X};m)$ which is a regular representation of Dirichlet form $(\calE^\rref,\calF^\rref_a)$ on $L^2(X;m)$, where $\bar{X}$ is the Martin compactification of $X$ and $m$ is given as above.

Recall that $(\frac{1}{2}\mathbf{D},H^1(D))$ on $L^2(D)$ is not regular and $(\frac{1}{2}\mathbf{D},H^1(D))$ on $L^2(\bar{D},\indi_D(\md x))$ is a regular representation. Our construction is very simple and similar to this case. Let
$$
\begin{cases}
&\calE_{\bar{X}}(u,u)=\frac{1}{2}\sum_{x,y\in X}c(x,y)(u(x)-u(y))^2,\\
&\calF_{\bar{X}}=\myset{u\in C(\bar{X}):\sum_{x,y\in X}c(x,y)(u(x)-u(y))^2<+\infty}.
\end{cases}
$$
We show that $(\calE_{\bar{X}},\calF_{\bar{X}})$ is a regular Dirichlet form on $L^2({\bar{X}};m)$.

\begin{mythm}\label{SG_det_thm_main}
If $\lambda\in(1/5,1/3)$, then $(\calE_{\bar{X}},\calF_{\bar{X}})$ is a regular Dirichlet form on $L^2({\bar{X}};m)$.
\end{mythm}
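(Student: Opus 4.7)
The strategy is to verify in order: (i) symmetry, Markovianity, and density of $\calF_{\bar X}$ in $L^2(\bar X;m)$; (ii) closedness of $(\calE_{\bar X},\calF_{\bar X})$; (iii) regularity (uniform density in $C(\bar X)$ and $(\calE_{\bar X})_1$-density of $\calF_{\bar X}\cap C_c(\bar X)=\calF_{\bar X}$ in itself). Step (ii) is where the hypothesis $\lambda>1/5$ is essential.

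The easy items first. Symmetry is immediate from the definition, and the Markovian property is a pointwise check: any normal contraction $v$ of $u$ satisfies $(v(x)-v(y))^2\le(u(x)-u(y))^2$ for all $x,y$, and $v$ is continuous on $\bar X$ whenever $u$ is. Density in $L^2(\bar X;m)$ follows because $m$ charges only $X$ (so $L^2(\bar X;m)=L^2(X;m)$) and because every finitely supported function on $X$, being zero outside a finite set and zero at every boundary point, is continuous on $\bar X$; hence $C_c(X)\subseteq \calF_{\bar X}$ and $C_c(X)$ is already dense in $L^2(X;m)$ used for $\calF_X$ in Section \ref{SG_det_sec_df_X}.

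For closedness, suppose $\{u_n\}\subseteq \calF_{\bar X}$ is $(\calE_{\bar X})_1$-Cauchy. Then it is $L^2$-Cauchy so converges to some $u\in L^2(\bar X;m)$, and along a subsequence $u_n(x)\to u(x)$ for every $x\in X$ (since each $x\in X$ has $m(\{x\})>0$). The crux is to show that this pointwise-defined $u$ on $X$ extends continuously to $\bar X$, and that $u_n\to u$ uniformly on $\bar X$. This needs a quantitative estimate of the form
\[
|u(x)-u(y)|^2\;\le\;C\,\bigl(\calE_{\bar X}(u,u)+\|u\|_{L^2(m)}^2\bigr)\,\rho_a(x,y)^{2\gamma}\qquad(x,y\in X),
\]
for some $\gamma>0$, from which uniform continuity in the Martin metric, hence continuous extension to $\bar X$, follows at once. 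The exponent $\gamma$ is controlled by comparing the conductance growth $c(n,n+1)=(3\lambda)^{-n}$ with the resistance growth across scales on the Sierpi\'nski graph (which is governed by $5/3$ per scale, corresponding to $\beta^*=\log5/\log2$). The relevant chain of inequalities summed over scales converges precisely when $5\lambda>1$, i.e.\ when $\lambda>1/5$, yielding a positive H\"older exponent $\gamma$. This is the main obstacle: establishing a uniform Morrey/H\"older bound for functions of finite $\calE_{\bar X}$-energy in terms of the Martin metric $\rho_a$, using only the electric-network resistance estimates at the level of the augmented rooted tree and avoiding any appeal to an already-constructed diffusion.

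For regularity, since $\bar X$ is compact we have $C_c(\bar X)=C(\bar X)$ and $\calF_{\bar X}\cap C_c(\bar X)=\calF_{\bar X}$, so the $(\calE_{\bar X})_1$-density is automatic. It remains to show uniform density of $\calF_{\bar X}$ in $C(\bar X)$, which I would do by Stone--Weierstrass. The constant function lies in $\calF_{\bar X}$ because $m(\bar X)<\infty$; closure under products of bounded elements is the standard Dirichlet-form computation based on $(uv)(x)-(uv)(y)=u(x)(v(x)-v(y))+v(y)(u(x)-u(y))$. For point separation, two points of $X$ are separated by an indicator of a single vertex (in $C_c(X)\subseteq\calF_{\bar X}$); a point of $X$ and a boundary point likewise; and two distinct boundary points $\xi\ne\eta\in\partial X$ are separated by a Poisson integral $H\varphi$ of a continuous $\varphi\in C(\partial X)$ with $\varphi(\xi)\ne\varphi(\eta)$, which lies in $\calF_{\bar X}$ because its energy $\calE_{\bar X}(H\varphi,H\varphi)=\calE_{\partial X}(\varphi,\varphi)$ is finite (this energy is comparable to the Besov seminorm associated with $\beta<\beta^*$ via the Na\"im-kernel computation \eqref{SG_det_eqn_Theta}). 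Stone--Weierstrass then delivers uniform density and completes the proof.
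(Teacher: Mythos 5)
There is a genuine gap, and it concerns precisely the role of the hypothesis $\lambda>1/5$. You place the lower bound on $\lambda$ in the closedness step, claiming a H\"older bound $|u(x)-u(y)|^2\le C(\calE_{\bar X}(u,u)+\|u\|^2)\rho_a(x,y)^{2\gamma}$ whose ``chain of inequalities summed over scales converges precisely when $5\lambda>1$.'' That threshold is wrong. The oscillation of a finite-energy function along a geodesic ray is controlled by the vertical conductances alone: $|u(x_n)-u(x_{n+1})|\le\sqrt{2C/c(x_n,x_{n+1})}=\sqrt{2C}\,(\sqrt{3\lambda})^{\,n}$, and this (together with the hyperbolicity bound $d(x_n,y_n)\le 2\delta$ for equivalent rays) is summable if and only if $3\lambda<1$. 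So the continuous extension to $\bar X$, and hence closedness, needs only $\lambda<1/3$ (this is the content of Lemma \ref{SG_det_lem_ext}); no factor of $5$ appears there. The constant $5$ enters the problem elsewhere.

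Where $\lambda>1/5$ is actually indispensable is in the point-separation step of the regularity argument, and that is exactly the step your proposal leaves unproved. To separate two distinct boundary points $\xi\neq\eta$ you must exhibit a continuous function on $\bar X$ of \emph{finite} energy taking different values at $\xi$ and $\eta$. Your appeal to a Poisson integral $H\varphi$ with $\calE_{\bar X}(H\varphi,H\varphi)=\calE_{\partial X}(\varphi,\varphi)$ ``comparable to the Besov seminorm for $\beta<\beta^*$'' begs the question: one still has to produce a continuous $\varphi$ on $K$ with $\varphi(\xi)\neq\varphi(\eta)$ and finite Besov $(\alpha,\beta)$-seminorm, and the non-triviality of that space for $\beta<\beta^*$ is equivalent to what is being established. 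The proof of Theorem \ref{SG_det_thm_main} instead constructs the separating function explicitly on the spheres $S_n$: starting from a level $m$ at which the cells containing $\xi$ and $\eta$ are disjoint, one interpolates by the harmonic weights $x=(2a+2b+c)/5$, etc., so that the horizontal energy on $S_{n+1}$ equals $\tfrac{1}{5\lambda}$ times that on $S_n$ (and the vertical contribution is comparable). The resulting geometric series $\sum_n(5\lambda)^{-n}$ converges precisely when $\lambda>1/5$; this is the only place the lower bound is used, and without this construction (or an equivalent one) the proof is incomplete. The remaining items in your outline --- symmetry, Markovianity, $L^2$-density via $C_c(X)\subseteq\calF_{\bar X}$, closedness via Fatou plus continuous extension, and the reduction of regularity to Stone--Weierstrass on the compact space $\bar X$ --- do match the paper's argument.
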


First, we need a lemma.

\begin{mylem}\label{SG_det_lem_ext}
If $\lambda<1/3$, then for all $u$ on $X$ with
$$C=\frac{1}{2}\sum_{x,y\in X}c(x,y)(u(x)-u(y))^2<+\infty,$$
$u$ can be extended continuously to $\bar{X}$.
\end{mylem}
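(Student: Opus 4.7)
The plan is to show that $u$ is uniformly continuous on $X$ with respect to the metric $\rho_a$ from Section \ref{SG_det_sec_SG}. Since the completion of $(X,\rho_a)$ is the hyperbolic compactification $\hat{X}$, which is homeomorphic to the Martin compactification $\bar{X}$, uniform continuity on $X$ yields a unique continuous extension to $\bar{X}$ via the standard extension theorem for uniformly continuous functions into $\R$. The main tool is the Cauchy--Schwarz estimate along paths in the conductance network $X$.

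First I would record the elementary resistance bound: for any path $\pi=[z_0,z_1,\ldots,z_L]$ from $x$ to $y$ in $X$,
$$(u(x)-u(y))^2\le\left(\sum_{i=0}^{L-1}\frac{1}{c(z_i,z_{i+1})}\right)\cdot\sum_{i=0}^{L-1}c(z_i,z_{i+1})(u(z_i)-u(z_{i+1}))^2\le 2C\cdot R(\pi),$$
where $R(\pi)$ is the total resistance of $\pi$. It thus suffices to exhibit, for each pair $x,y$ whose Gromov product $|x\wedge y|$ is large, a connecting path whose resistance is small.

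Now I would exploit the specific form of the conductances. A vertical edge between levels $i$ and $i+1$ has resistance $(3\lambda)^i$, and a horizontal edge at level $i$ has resistance at most $(3\lambda)^i/\min\{C_1,C_2\}$. Under the hypothesis $\lambda<1/3$ we have $3\lambda<1$, so the geometric tail $\sum_{i\ge k}(3\lambda)^i=(3\lambda)^k/(1-3\lambda)$ tends to $0$ as $k\to+\infty$. For $x,y$ whose finite words share a common prefix $z$ of length $j$, the pure tree path going from $x$ up to $z$ and back down to $y$ has resistance bounded by $2(3\lambda)^j/(1-3\lambda)$. More generally, if $|x\wedge y|\ge k$ but $x,y$ lie in distinct branches of the tree above level $k$, hyperbolicity of the Sierpi\'nski graph and the bounded valence of horizontal edges allow one to connect $x$ and $y$ by a path that ascends vertically from $x$ to some node at level $\ge k-C'$, traverses a bounded number of horizontal edges at that level, and descends vertically to $y$. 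The total resistance of such a path is dominated by $C_0(3\lambda)^k$ for some constant $C_0$ independent of $x,y$. Combined with the Cauchy--Schwarz estimate, this yields $(u(x)-u(y))^2\le 2CC_0(3\lambda)^k\to 0$ as $|x\wedge y|\to+\infty$, which is exactly the uniform continuity needed.

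The main obstacle I anticipate is the combinatorial step bounding the number of horizontal hops needed to connect two nodes with large Gromov product; this requires a uniform-in-level control that ultimately comes from the hyperbolicity constant $\delta$ of the Sierpi\'nski graph together with the fact that any SG-point is shared by at most a bounded number of cells at a given level. Once this resistance estimate is established, everything else is routine: $\{u(x_n)\}$ is Cauchy along any $\rho_a$-Cauchy sequence $\{x_n\}\subseteq X$, so setting $\tilde{u}(\xi)=\lim u(x_n)$ for $\xi\in\dd X$ gives a well-defined extension, and the same estimate verifies that $\tilde{u}$ is continuous on $\bar{X}$.
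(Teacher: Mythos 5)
Your proposal is correct, and it reaches the conclusion by a recognizably different organization of essentially the same underlying estimate. Both arguments rest on the fact that for $\lambda<1/3$ the edge resistances decay geometrically in the level, so that oscillation of $u$ is controlled along paths that stay deep in the graph. The paper bounds increments edge by edge, $|u(x)-u(y)|\le\sqrt{2C/c(x,y)}$, and sums absolute values along vertical rays and short horizontal bridges; it then \emph{defines} $u(\xi)$ for $\xi\in\dd X$ as the limit along a geodesic ray, checks well-definedness by comparing equivalent rays (using $d(x_n,y_n)\le2\delta$), and finally verifies continuity of the extended function at boundary points by a separate geometric argument with neighborhoods built from adjacent cells $K_{\tilde w}$ of $K_w$. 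You instead use the Cauchy--Schwarz (effective-resistance) bound $(u(x)-u(y))^2\le 2C\,R(\pi)$ along a whole path, prove a single uniform-continuity (in fact H\"older-in-$\rho_a$) estimate $(u(x)-u(y))^2\lesssim C(3\lambda)^{|x\wedge y|}$ on $X$, and invoke the abstract extension theorem to the completion $\hat X\cong\bar X$. Your route buys a cleaner endgame: well-definedness and boundary continuity come for free from uniform continuity, and the paper's most fiddly step (the neighborhood argument on $K$) disappears. The price is the combinatorial lemma you correctly flag: that two nodes with $|x\wedge y|\ge k$ can be joined by a path ascending to a level $\ge k-C'$, crossing a uniformly bounded number of horizontal edges there, and descending; this is exactly the canonical geodesic structure of the augmented rooted tree from \cite{LW09} (up--horizontal--down with boundedly many horizontal edges at a level within a bounded additive constant of $|x\wedge y|$), so the gap is fillable by citation rather than by new work. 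With that lemma supplied, your proof is complete and, if anything, gives the slightly sharper modulus $\sqrt{\sum_i r_i}$ in place of the paper's $\sum_i\sqrt{r_i}$; both decay like $(3\lambda)^{k/2}$ and both need exactly the hypothesis $\lambda<1/3$.
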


\begin{proof}
Since $\hat{X}$ is homeomorphic to $\bar{X}$, we consider $\hat{X}$ instead. For all $\xi\in\dd X$, take geodesic ray $[x_0,x_1,\ldots]$ with $|x_n|=n$, $x_n\sim x_{n+1}$ for all $n\ge0$ such that $x_n\to\xi$ in $\rho_a$. Then
$$\lvert u(x_n)-u(x_{n+1})\rvert\le\sqrt{\frac{2C}{c(x_n,x_{n+1})}}=\sqrt{2C}(\sqrt{3\lambda})^n,$$
since $\lambda<1/3$, we have $\myset{u(x_n)}$ is a Cauchy sequence, define $u(\xi)=\lim_{n\to+\infty}u(x_n)$.

First, we show that this is well-defined. Indeed, for all equivalent geodesic rays $[x_0,x_1,\ldots]$ and $[y_0,y_1,\ldots]$ with $|x_0|=|y_0|=0$, by \cite[Proposition 22.12(a)]{Wo00}, for all $n\ge0$, $d(x_n,y_n)\le2\delta$. Take an integer $M\ge 2\delta$, then for arbitrary fixed $n\ge0$, there exist $z_0=x_n,\ldots,z_M=y_n$ with $|z_i|=n$ for all $i=0,\ldots,M$, $z_i=z_{i+1}$ or $z_{i}\sim z_{i+1}$ for all $i=0,\ldots, M-1$, we have
$$\lvert u(x_n)-u(y_n)\rvert\le\sum_{i=0}^{M-1}|u(z_{i})-u(z_{i+1})|\le\sum_{i=0}^{M-1}\sqrt{\frac{2C}{c(z_i,z_{i+1})}}\le M\sqrt{\frac{2C}{\min{\myset{C_1,C_2}}}}(\sqrt{3\lambda})^n.$$
Since $\lambda<1/3$, letting $n\to+\infty$, we have $\lvert u(x_n)-u(y_n)\rvert\to0$, $u(\xi)$ is well-defined and
$$|u(\xi)-u(x_n)|\le\sum_{i=n}^\infty|u(x_i)-u(x_{i+1})|\le\sum_{i=n}^\infty\sqrt{2C}(\sqrt{3\lambda})^n=\frac{\sqrt{2C}}{1-\sqrt{3\lambda}}(\sqrt{3\lambda})^n.$$

Next, we show that the extended function $u$ is continuous on $\hat{X}$. We only need to show that for all sequence $\myset{\xi_n}\subseteq\dd X$ with $\xi_n\to\xi\in\dd X$ in $\rho_a$, we have $u(\xi_n)\to u(\xi)$. Since $\dd X$ with $\rho_a$ is H\"older equivalent to $K$ with Euclidean metric by Equation (\ref{SG_det_eqn_Holder}), we use them interchangeably, $\myset{\xi_n}\subseteq K$ and $\xi_n\to\xi\in K$ in Euclidean metric.

For all $\veps>0$, there exists $M\ge1$ such that $(\sqrt{3\lambda})^M<\veps$. Take $w\in W_M$ such that $\xi\in K_w$, there are at most 12 numbers of $\tilde{w}\in W_M$, $\tilde{w}\ne w$ such that $\tilde{w}\sim w$, see Figure \ref{SG_det_figure_nbhd}. Indeed, if we analyze geometric property of the SG carefully, we will see there are at most 3.




\begin{figure}[ht]
  \centering
  \begin{tikzpicture}[scale=0.7]
  \tikzstyle{every node}=[font=\small,scale=0.7]
  \draw (0,2*1.7320508076)--(2,2*1.7320508076);
  \draw (-1,1.7320508076)--(3,1.7320508076);
  \draw (-2,0)--(4,0);
  \draw (-1,-1.7320508076)--(3,-1.7320508076);
  
  \draw (-2,0)--(0,2*1.7320508076);
  \draw (-1,-1.7320508076)--(2,2*1.7320508076);
  \draw (1,-1.7320508076)--(3,1.7320508076);
  \draw (3,-1.7320508076)--(4,0);
  
  \draw (-2,0)--(-1,-1.7320508076);
  \draw (-1,1.7320508076)--(1,-1.7320508076);
  \draw (0,2*1.7320508076)--(3,-1.7320508076);
  \draw (2,2*1.7320508076)--(4,0);
  
  \draw[line width=2pt] (0,0)--(1,1.7320508076)--(2,0)--cycle;
  \draw (1,0.7) node {$K_w$};
  
  \end{tikzpicture}
  \caption{A Neighborhood of $K_w$}\label{SG_det_figure_nbhd}
\end{figure}



Let
$$U=\left(\bigcup_{\tilde{w}:\tilde{w}\in W_M,\tilde{w}\sim w}K_{\tilde{w}}\right)\cup K_w,$$
there exists $N\ge1$ for all $n>N$, $\xi_n\in U$. For all $n>N$. If $\xi_n\in K_w$, then
$$\lvert u(\xi_n)-u(\xi)\rvert\le\lvert u(\xi_n)-u(w)\rvert+\lvert u(\xi)-u(w)\rvert\le\frac{2\sqrt{2C}}{1-\sqrt{3\lambda}}(\sqrt{3\lambda})^M<\frac{2\sqrt{2C}}{1-\sqrt{3\lambda}}\veps.$$
If $\xi_n\in K_{\tilde{w}}$, $\tilde{w}\in K_M$, $\tilde{w}\sim w$, then
\begin{align*}
\lvert u(\xi_n)-u(\xi)\rvert&\le\lvert u(\xi_n)-u(\tilde{w})\rvert+\lvert u(\tilde{w})-u(w)\rvert+\lvert u(w)-u(\xi)\rvert\\
&\le\frac{2\sqrt{2C}}{1-\sqrt{3\lambda}}(\sqrt{3\lambda})^M+\sqrt{\frac{2C}{\min{\myset{C_1,C_2}}}}(\sqrt{3\lambda})^M\\
&<\left(\frac{2\sqrt{2C}}{1-\sqrt{3\lambda}}+\sqrt{\frac{2C}{\min{\myset{C_1,C_2}}}}\right)\veps.
\end{align*}
Hence
$$|u(\xi_n)-u(\xi)|<\left(\frac{2\sqrt{2C}}{1-\sqrt{3\lambda}}+\sqrt{\frac{2C}{\min{\myset{C_1,C_2}}}}\right)\veps,$$
for all $n>N$. $\lim_{n\to+\infty}u(\xi_n)=u(\xi)$. The extended function $u$ is continuous on $\hat{X}$.
\end{proof}

\begin{proof}[Proof of Theorem \ref{SG_det_thm_main}]
Since $C_c(X)\subseteq\calF_{\bar{X}}$ is dense in $L^2({\bar{X}};m)$, we have $\calE_{\bar{X}}$ is a symmetric form on $L^2({\bar{X}};m)$.

We show closed property of $\calE_{\bar{X}}$. Let $\myset{u_k}\subseteq\calF_{\bar{X}}$ be an $(\calE_{\bar{X}})_1$-Cauchy sequence. Then there exists $u\in L^2(\bar{X};m)$ such that $u_k\to u$ in $L^2(\bar{X};m)$, hence $u_k(x)\to u(x)$ for all $x\in X$. By Fatou's lemma, we have
\begin{align*}
&\frac{1}{2}\sum_{x,y\in X}c(x,y)\left((u_k-u)(x)-(u_k-u)(y)\right)^2\\
=&\frac{1}{2}\sum_{x,y\in X}c(x,y)\lim_{l\to+\infty}\left((u_k-u_l)(x)-(u_k-u_l)(y)\right)^2\\
\le&\varliminf_{l\to+\infty}\frac{1}{2}\sum_{x,y\in X}c(x,y)\left((u_k-u_l)(x)-(u_k-u_l)(y)\right)^2\\
=&\varliminf_{l\to+\infty}\calE_{\bar{X}}(u_k-u_l,u_k-u_l).
\end{align*}
Letting $k\to+\infty$, we have
$$\frac{1}{2}\sum_{x,y\in X}c(x,y)\left((u_k-u)(x)-(u_k-u)(y)\right)^2\to0,$$
and
$$\frac{1}{2}\sum_{x,y\in X}c(x,y)\left(u(x)-u(y)\right)^2<+\infty.$$
By Lemma \ref{SG_det_lem_ext}, $u$ can be extended continuously to $\bar{X}$, hence $u\in C(\bar{X})$, $u\in\calF_{\bar{X}}$. $\calE_{\bar{X}}$ is closed.

It is obvious that $\calE_{\bar{X}}$ is Markovian. Hence $\calE_{\bar{X}}$ is a Dirichlet form on $L^2(\bar{X};m)$.

Since ${\bar{X}}$ is compact, we have $C_c({\bar{X}})=C({\bar{X}})$. To show $\calE_{\bar{X}}$ is regular, we need to show $C_c({\bar{X}})\cap\calF_{\bar{X}}=C({\bar{X}})\cap\calF_{\bar{X}}=\calF_{\bar{X}}$ is $(\calE_{\bar{X}})_1$-dense in $\calF_{\bar{X}}$ and uniformly dense in $C_c({\bar{X}})=C({\bar{X}})$. $\calF_{\bar{X}}$ is trivially $(\calE_{\bar{X}})_1$-dense in $\calF_{\bar{X}}$. We need to show that $\calF_{\bar{X}}$ is uniformly dense in $C({\bar{X}})$. Since $\bar{X}$ is compact, we have $\calF_{\bar{X}}$ is a sub algebra of $C(\bar{X})$. By Stone-Weierstrass theorem, we only need to show that $\calF_{\bar{X}}$ separates points. The idea of our proof is from classical construction of local regular Dirichlet form on the SG.

For all $p,q\in\bar{X}$ with $p\ne q$, we only need to show that there exists $v\in\calF_{\bar{X}}$ such that $v(p)\ne v(q)$.

If $p\in X$, then letting $v(p)=1$ and $v(x)=0$ for all $x\in X\backslash\myset{p}$, we have
$$\sum_{x,y\in X}c(x,y)(v(x)-v(y))^2<+\infty.$$
By Lemma \ref{SG_det_lem_ext}, $v$ can be extended to a function in $C(\bar{X})$ still denoted by $v$, hence $v\in\calF_{\bar{X}}$. Moreover, $v(q)=0\ne1=v(p)$.

If $q\in X$, then we have the similar proof.

If $p,q\in\bar{X}\backslash X=\dd X=K$, then there exists sufficiently large $m\ge1$ and $w^{(1)},w^{(2)}\in S_m$ with $p\in K_{w^{(1)}}$, $q\in K_{w^{(2)}}$ and $K_{w^{(1)}}\cap K_{w^{(2)}}=\emptyset$, hence $w^{(1)}\not\sim w^{(2)}$. Let $v=0$ in $B_m$ and
$$v(w^{(1)}0)=v(w^{(1)}1)=v(w^{(1)}2)=1.$$
For all $w\in S_{m+1}\backslash\myset{w^{(1)}0,w^{(1)}1,w^{(1)}2}$, let
$$
v(w)=
\begin{cases}
1,&\text{if }w\sim w^{(1)}0\text{ or }w\sim w^{(1)}1\text{ or }w\sim w^{(1)}2,\\
0,&\text{otherwise},
\end{cases}
$$
then
$$v(w^{(2)}0)=v(w^{(2)}1)=v(w^{(2)}2)=0.$$
In the summation $\sum_{x,y\in S_{m+1}}c(x,y)(v(x)-v(y))^2$, horizontal edges of type \Rmnum{2} make no contribution since $v$ takes the same values at end nodes of each such edge. Assume that we have constructed $v$ on $B_n$ such that in the summation $\sum_{x,y\in S_i}c(x,y)(v(x)-v(y))^2$, $i=m+1,\ldots,n$, horizontal edges of type \Rmnum{2} make no contribution, that is, $v$ takes the same values at end nodes of each edge. We construct $v$ on $S_{n+1}$ as follows.

Consider $\sum_{x,y\in S_n}c(x,y)(v(x)-v(y))^2$, nonzero terms all come from edges of smallest triangles in $S_n$. Pick up one such triangle in $S_n$, it generates three triangles in $S_{n+1}$, nine triangles in $S_{n+2}$, \ldots. See Figure \ref{SG_det_figure_gene}.




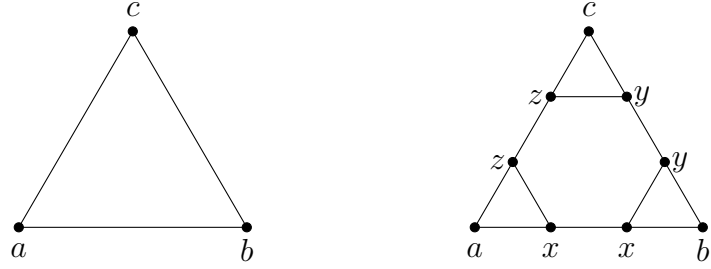
\begin{figure}[ht]
  \centering
  \begin{tikzpicture}
  \draw (0,0)--(3,0);
  \draw (3,0)--(3/2,3/2*1.7320508076);
  \draw (0,0)--(3/2,3/2*1.7320508076);
  
  \draw (6,0)--(9,0);
  \draw (9,0)--(7.5,3/2*1.7320508076);
  \draw (6,0)--(7.5,3/2*1.7320508076);
  
  \draw (6.5,1/2*1.7320508076)--(7,0);
  \draw (8.5,1/2*1.7320508076)--(8,0);
  \draw (7,1.7320508076)--(8,1.7320508076);
  
  \draw[fill=black] (0,0) circle (0.06);
  \draw[fill=black] (3,0) circle (0.06);
  \draw[fill=black] (3/2,3/2*1.7320508076) circle (0.06);
  \draw[fill=black] (6,0) circle (0.06);
  \draw[fill=black] (7,0) circle (0.06);
  \draw[fill=black] (8,0) circle (0.06);
  \draw[fill=black] (9,0) circle (0.06);
  \draw[fill=black] (6.5,1.7320508076/2) circle (0.06);
  \draw[fill=black] (8.5,1.7320508076/2) circle (0.06);
  \draw[fill=black] (7,1.7320508076) circle (0.06);
  \draw[fill=black] (8,1.7320508076) circle (0.06);
  \draw[fill=black] (7.5,3/2*1.7320508076) circle (0.06);
  
  \draw (0,-0.3) node {$a$};
  \draw (3,-0.3) node {$b$};
  \draw (1.5,3/2*1.7320508076+0.3) node {$c$};
  
  \draw (6,-0.3) node {$a$};
  \draw (9,-0.3) node {$b$};
  \draw (7.5,3/2*1.7320508076+0.3) node {$c$};
  
  \draw (7,-0.3) node {$x$};
  \draw (8,-0.3) node {$x$};
  
  \draw (6.3,1/2*1.7320508076) node {$z$};
  \draw (6.8,1.7320508076) node {$z$};
  
  \draw (8.7,1/2*1.7320508076) node {$y$};
  \draw (8.2,1.7320508076) node {$y$};
  
  \end{tikzpicture}
  \caption{Generation of triangles}\label{SG_det_figure_gene}
\end{figure}



We only need to assign values of $v$ on the three triangles in $S_{n+1}$ from the values of $v$ on the triangle in $S_n$. As in Figure \ref{SG_det_figure_gene}, $x,y,z$ are the values of $v$ at corresponding nodes to be determined from $a,b,c$. The contribution of this one triangle in $S_n$ to $\sum_{x,y\in S_n}c(x,y)(v(x)-v(y))^2$ is
$$A_1=\frac{C_1}{(3\lambda)^n}\left[(a-b)^2+(b-c)^2+(a-c)^2\right].$$
The contribution of these three triangles in $S_{n+1}$ to $\sum_{x,y\in S_{n+1}}c(x,y)(v(x)-v(y))^2$ is
\begin{align*}
A_2=\frac{C_1}{(3\lambda)^{n+1}}&\left[(a-x)^2+(a-z)^2+(x-z)^2\right.\\
&+(b-x)^2+(b-y)^2+(x-y)^2\\
&\left.+(c-y)^2+(c-z)^2+(y-z)^2\right].\\
\end{align*}
Regarding $A_2$ as a function of $x,y,z$, by elementary calculation, we have $A_2$ takes the minimum value when
$$
\begin{cases}
x=\frac{2a+2b+c}{5},\\
y=\frac{a+2b+2c}{5},\\
z=\frac{2a+b+2c}{5},
\end{cases}
$$
and
\begin{align*}
A_2&=\frac{C_1}{(3\lambda)^{n+1}}\cdot\frac{3}{5}\left[(a-b)^2+(b-c)^2+(a-c)^2\right]\\
&=\frac{1}{5\lambda}\left(\frac{C_1}{(3\lambda)^n}\left[(a-b)^2+(b-c)^2+(a-c)^2\right]\right)=\frac{1}{5\lambda}A_1.
\end{align*}
By the above construction, horizontal edges of type \Rmnum{2} in $S_{n+1}$ also make no contribution to
$$\sum_{x,y\in S_{n+1}}c(x,y)(v(x)-v(y))^2$$
and
$$\sum_{x,y\in S_{n+1}}c(x,y)(v(x)-v(y))^2=\frac{1}{5\lambda}\sum_{x,y\in S_{n}}c(x,y)(v(x)-v(y))^2.$$
Since $\lambda>1/5$, we have
$$\sum_{n=0}^\infty\sum_{x,y\in S_{n}}c(x,y)(v(x)-v(y))^2<+\infty,$$
this is the contribution of all horizontal edges to $\sum_{x,y\in X}c(x,y)(v(x)-v(y))^2$.

We consider the contribution of all vertical edges as follows. For all $n\ge m$, by construction $v|_{S_{n+1}}$ is uniquely determined by $v|_{S_n}$, hence the contribution of vertical edges between $S_n$ and $S_{n+1}$ is uniquely determined by $v|_{S_n}$. As above, we pick up one smallest triangle in $S_n$ and consider the contribution of the vertical edges connecting it to $S_{n+1}$. There are nine vertical edges between $S_n$ and $S_{n+1}$ connecting this triangle. These nine vertical edges make contribution
\begin{align*}
A_3&=\frac{1}{(3\lambda)^n}\left[(a-x)^2+(a-z)^2+(a-a)^2\right.\\
&+(b-x)^2+(b-y)^2+(b-b)^2\\
&+(c-y)^2+(c-z)^2+(c-c)^2\left.\right]\\
&=\frac{14}{25C_1}\left(\frac{C_1}{(3\lambda)^n}\left[(a-b)^2+(b-c)^2+(a-c)^2\right]\right)=\frac{14}{25C_1}A_1.
\end{align*}
Hence
$$\sum_{x\in S_n,y\in S_{n+1}}c(x,y)(v(x)-v(y))^2=\frac{14}{25C_1}\sum_{x,y\in S_n}c(x,y)(v(x)-v(y))^2,$$
and
$$\sum_{n=0}^\infty\sum_{x\in S_n,y\in S_{n+1}}c(x,y)(v(x)-v(y))^2<+\infty\Leftrightarrow\sum_{n=0}^\infty\sum_{x,y\in S_n}c(x,y)(v(x)-v(y))^2<+\infty.$$
Since $\lambda>1/5$, we have both summations converge and
$$\sum_{x,y\in X}c(x,y)(v(x)-v(y))^2<+\infty.$$
By Lemma \ref{SG_det_lem_ext}, $v$ can be extended to a function in $C(\bar{X})$ still denoted by $v$, hence $v\in\calF_{\bar{X}}$. Since $v$ is constructed by convex interpolation on $X\backslash B_{m+1}$, we have $v(p)=1\ne0=v(q)$.

Therefore, $(\calE_{\bar{X}},\calF_{\bar{X}})$ is a regular Dirichlet form on $L^2({\bar{X}};m)$.
\end{proof}

\begin{mythm}
$\calE_{\bar{X}}$ on $L^2({\bar{X}};m)$ is a regular representation of $\calE^\rref$ on $L^2(X;m)$.
\end{mythm}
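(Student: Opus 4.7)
The plan is to exhibit the natural correspondence between functions on $\bar{X}$ and their restrictions to $X$ as the algebraic isomorphism implementing an equivalence of the two Dirichlet forms in the sense of Fukushima's theory of regular representations \cite{Fuk71} (see also \cite[Appendix A.4]{FOT11}).

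First, I would extend $m$ from $X$ to $\bar{X}$ by declaring $m(\dd X) = 0$, consistently with the construction of $(\calE_{\bar{X}}, \calF_{\bar{X}})$. Because $m$ is concentrated on the countable set $X$ with strictly positive mass on each point, this gives a canonical isometric identification $L^2(\bar{X}; m) = L^2(X; m)$, and moreover each equivalence class in this common space has a \emph{unique} representative as an everywhere-defined function on $X$. In particular, equality $m$-a.e.\ on $X$ is the same as pointwise equality on $X$.

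The technical heart of the argument is to show $\calF_{\bar{X}} = \calF^\rref_a$ under this identification. The inclusion $\calF_{\bar{X}} \subseteq \calF^\rref_a$ is immediate: restricting a continuous function on $\bar{X}$ yields an everywhere-finite function on $X$, which lies in $L^2(X; m)$ (since $\bar{X}$ is compact and $m(\bar{X}) < +\infty$) and has the same (finite) energy sum. For the reverse inclusion I would take $u \in \calF^\rref_a$ and apply Lemma \ref{SG_det_lem_ext}, which---under the standing hypothesis $\lambda \in (1/5, 1/3)$ inherited from Theorem \ref{SG_det_thm_main}---extends $u$ continuously from $X$ to $\bar{X}$; this extension then belongs to $\calF_{\bar{X}}$.

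It remains to check that this identification preserves Dirichlet form structure. The $L^2$-norms match because $m(\dd X) = 0$, and the energies match because both $\calE^\rref$ and $\calE_{\bar{X}}$ are defined by the \emph{same} explicit edge sum $\tfrac{1}{2}\sum_{x,y \in X} c(x,y)(u(x)-u(y))^2$. Markovian and lattice operations transfer pointwise on $X$ and extend uniquely by continuity. Since $(\calE_{\bar{X}}, \calF_{\bar{X}})$ on $L^2(\bar{X}; m)$ is regular by Theorem \ref{SG_det_thm_main}, we have realized it as a regular representation of $(\calE^\rref, \calF^\rref_a)$ on $L^2(X; m)$. The only non-formal point---and hence the main potential obstacle---is the continuity extension $\calF^\rref_a \hookrightarrow \calF_{\bar{X}}$, which is precisely where the hypothesis $\lambda < 1/3$ is indispensable: without it, functions in $\calF^\rref_a$ need not extend to $\bar{X}$, and the two domains would genuinely differ.
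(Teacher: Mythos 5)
Your proposal is correct and follows essentially the same route as the paper: the isomorphism is realized by continuous extension of functions from $X$ to $\bar{X}$ via Lemma \ref{SG_det_lem_ext} (using $\lambda<1/3$), and the three defining identities of a regular representation hold because $m(\dd X)=0$ and both forms are given by the identical edge sum $\frac{1}{2}\sum_{x,y\in X}c(x,y)(u(x)-u(y))^2$. The only cosmetic difference is that the paper phrases this as an algebraic isomorphism $\Phi:(\calF^\rref_a)_b\to(\calF_{\bar{X}})_b$ between the bounded parts, preserving also the $L^\infty$-norm, rather than as an identification of the full domains.
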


Regular representation theory was developed by Fukushima \cite{Fuk71} and incorporated into his book \cite[Appendix, A4]{FOT11}.
\begin{proof}
We only need to construct an algebraic isomorphism $\Phi:(\calF^\rref_a)_b\to(\calF_{\bar{X}})_b$ such that for all $u\in(\calF^\rref_a)_b$, we have
\begin{equation}\label{SG_det_eqn_iso}
\lVert u\rVert_{L^\infty(X;m)}=\lVert\Phi(u)\rVert_{L^\infty(\bar{X};m)},(u,u)_X=(\Phi(u),\Phi(u))_{\bar{X}}, \calE^\rref(u,u)=\calE_{\bar{X}}(\Phi(u),\Phi(u)).
\end{equation}
Indeed, for all $u\in(\calF^\rref_a)_b$, we have $\sum_{x,y\in X}c(x,y)(u(x)-u(y))^2<+\infty$, by Lemma \ref{SG_det_lem_ext}, we define $\Phi(u)$ as the continuous extension of $u$ to ${\bar{X}}$. Since $\calE^\rref$, $\calE_{\bar{X}}$ have the same expression for energy and $m(\dd X)=0$, Equation (\ref{SG_det_eqn_iso}) is obvious.
\end{proof}

Moreover we have

\begin{mythm}\label{SG_det_thm_part}
$(\calE_X,\calF_X)$ on $L^2(X;m)$ is the part form on $X$ of $(\calE_{\bar{X}},\calF_{\bar{X}})$ on $L^2({\bar{X}};m)$.
\end{mythm}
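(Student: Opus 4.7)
The plan is to identify $(\calE_X,\calF_X)$ on $L^2(X;m)$ with the part Dirichlet form of the regular Dirichlet form $(\calE_{\bar X},\calF_{\bar X})$ on $L^2(\bar X;m)$ restricted to the open set $X\subseteq\bar X$, using the standard description of part forms as closures of compactly supported functions.

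First I would verify that $X$ is an open subset of $\bar X$. Indeed, every vertex $x\in X$ is isolated in the Martin topology: a sequence in $X$ converging to $x$ in $\rho_a$ is eventually constant equal to $x$, because $\rho_a(y,x)\ge e^{-a|x|}$ for any $y\ne x$ with $|y|\le|x|$, and only finitely many such $y$ exist. Hence $X$ is open in $\bar X$ and $\partial X:=\bar X\setminus X$ is closed. Moreover $m(\partial X)=0$ since $m$ is concentrated on $X$, so the natural map $L^2(\bar X;m)\to L^2(X;m)$ is an isometric isomorphism, and the $L^2$ inner products match under this identification.

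Next I would recall from \cite[Theorem 4.4.3]{FOT11} that, since $(\calE_{\bar X},\calF_{\bar X})$ is regular on $L^2(\bar X;m)$ and $X$ is open in $\bar X$, the part Dirichlet form on $X$ is
\[
\calF^{X}_{\bar X}=\overline{\calF_{\bar X}\cap C_c(X)}^{\,(\calE_{\bar X})_1},
\qquad
\calE^X_{\bar X}(u,u)=\calE_{\bar X}(u,u)\text{ for }u\in\calF^X_{\bar X},
\]
where $C_c(X)$ denotes continuous functions on $\bar X$ whose support (in $\bar X$) is a compact subset of $X$. Since $X$ is discrete and $\bar X$ is compact, such a support must be finite; conversely every finitely supported function on $X$, extended by $0$ on $\partial X$, belongs to $C(\bar X)$ (at a boundary point $\xi$, any sequence $x_n\to\xi$ eventually leaves every finite set, so the extension is continuous at $\xi$) and trivially has finite energy. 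Hence
\[
\calF_{\bar X}\cap C_c(X)=C_c(X),
\]
the space of finitely supported functions on $X$ used to define $\calF_X$.

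It remains to check that the $(\calE_{\bar X})_1$-closure of $C_c(X)$ inside $L^2(\bar X;m)$ agrees with the $(\calE_X)_1$-closure of $C_c(X)$ inside $L^2(X;m)$. Under the isomorphism $L^2(\bar X;m)\cong L^2(X;m)$ from the first step, the two norms coincide on $C_c(X)$ because $\calE_{\bar X}$ and $\calE_X$ share the identical expression $\frac12\sum_{x,y\in X}c(x,y)(u(x)-u(y))^2$. Thus the closures coincide as subspaces, giving $\calF^X_{\bar X}=\calF_X$ with $\calE^X_{\bar X}=\calE_X$, which is the claim. The only step that requires some care is the appeal to the part-form characterization, which needs regularity of $(\calE_{\bar X},\calF_{\bar X})$; this is precisely what Theorem \ref{SG_det_thm_main} supplies in the range $\lambda\in(1/5,1/3)$.
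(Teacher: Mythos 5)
Your proof is correct and follows essentially the same route as the paper: identify the part form on the open set $X$ as the $(\calE_{\bar{X}})_1$-closure of the compactly (hence finitely) supported functions in the domain, observe that this set is exactly $C_c(X)$, and conclude since $\calF_X$ is by definition the $(\calE_X)_1$-closure of $C_c(X)$ and the two energies have the identical expression. The only differences are cosmetic: you invoke the part-form characterization from \cite{FOT11} where the paper cites the special-standard-core version in \cite{CF12}, and you spell out the openness of $X$ in $\bar{X}$ and the continuity of extensions by zero, which the paper takes for granted.
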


\begin{proof}
By \cite[Theorem 3.3.9]{CF12}, since $X\subseteq{\bar{X}}$ is an open subset and $\calF_{\bar{X}}$ is a special standard core of $(\calE_{\bar{X}},\calF_{\bar{X}})$ on $L^2(\bar{X};m)$, we have
$$(\calF_{\bar{X}})_X=\myset{u\in\calF_{\bar{X}}:\mathrm{supp}(u)\subseteq X}=\myset{u\in\calF_{\bar{X}}:u\in C_c(X)}=C_c(X).$$
Since $\calF_X$ is the $(\calE_X)_1$-closure of $C_c(X)$, we have the part form of $(\calE_{\bar{X}},\calF_{\bar{X}})$ on $L^2({\bar{X}};m)$ on $X$ is exactly $(\calE_X,\calF_X)$ on $L^2(X;m)$.
\end{proof}

From probabilistic viewpoint, $(\calE_X,\calF_X)$ on $L^2(X;m)$ corresponds to absorbed process $\myset{X_t}$ and $(\calE_{\bar{X}},\calF_{\bar{X}})$ on $L^2({\bar{X}};m)$ corresponds to reflected process $\myset{\bar{X}_t}$. By \cite[Theorem 3.3.8]{CF12}, $\myset{X_t}$ is the part process of $\myset{\bar{X}_t}$ on $X$ which can be described as follows.

Let
$$\tau_X=\inf{\myset{t>0:\bar{X}_t\notin X}}=\inf{\myset{t>0:\bar{X}_t\in\dd X}}=\sigma_{\dd X},$$
then
$$X_t=
\begin{cases}
\bar{X}_t,&0\le t<\tau_X,\\
\dd,&t\ge\tau_X,
\end{cases}
$$
and
$$\zeta=\tau_X=\sigma_{\dd X}.$$

\section{Trace Form on \texorpdfstring{$\dd X$}{partial X}}\label{SG_det_sec_trace}

In this section, we take trace of the regular Dirichlet form $(\calE_{\bar{X}},\calF_{\bar{X}})$ on $L^2(\bar{X};m)$ to $K$ to have a regular Dirichlet form $(\calE_K,\calF_K)$ on $L^2(K;\nu)$ with the form (\ref{eqn_nonlocal}).

Firstly, we show that $\nu$ is of finite energy with respect to $\calE_{\bar{X}}$, that is, 
\begin{equation*}
\int_{\bar{X}}\lvert u(x)\rvert\nu(\md x)\le C\sqrt{(\calE_{\bar{X}})_1(u,u)}\text{ for all }u\in\calF_{\bar{X}}\cap C_c({\bar{X}})=\calF_{\bar{X}},
\end{equation*}
where $C$ is some positive constant. Since $\nu(\dd X)=1$, we only need to show that
\begin{mythm}\label{SG_det_thm_trace}
\begin{equation}\label{SG_det_eqn_trace}
\left(\int_{\bar{X}}\lvert u(x)\rvert^2\nu(\md x)\right)^{1/2}\le C\sqrt{(\calE_{\bar{X}})_1(u,u)}\text{ for all }u\in\calF_{\bar{X}}.
\end{equation}
\end{mythm}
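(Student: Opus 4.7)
The plan is to exploit the key estimate hidden inside the proof of Lemma \ref{SG_det_lem_ext}: along any geodesic ray from the root $o$ to a boundary point $\xi \in \partial X = K$, the oscillation of $u$ is controlled by $\sqrt{\calE_{\bar{X}}(u,u)}$ times a geometric series in $\sqrt{3\lambda}$, which converges precisely because $\lambda < 1/3$. Combined with the fact that $m(o) = (c/(3\lambda))^0 = 1$ and $\nu(K) = 1$, this will immediately produce the required trace inequality.

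More concretely, fix $u \in \calF_{\bar{X}}$ and write $C = \calE_{\bar{X}}(u,u)$. For each $\xi \in \partial X$, pick a geodesic ray $[x_0, x_1, \ldots]$ with $x_0 = o$, $|x_n|=n$, $x_n \sim x_{n+1}$ that converges to $\xi$ in $\rho_a$. Since $c(x_n, x_{n+1}) = (3\lambda)^{-n}$, the same argument as in the proof of Lemma \ref{SG_det_lem_ext} gives
\[
|u(x_n) - u(x_{n+1})| \le \sqrt{\tfrac{2C}{c(x_n,x_{n+1})}} = \sqrt{2C}\,(\sqrt{3\lambda})^{n},
\]
and therefore, applying the continuous extension of $u$ to $\bar{X}$ provided by Lemma \ref{SG_det_lem_ext},
\[
|u(\xi) - u(o)| \le \sum_{n=0}^\infty |u(x_n) - u(x_{n+1})| \le \frac{\sqrt{2\calE_{\bar{X}}(u,u)}}{1 - \sqrt{3\lambda}}.
\]
The series converges by the standing assumption $\lambda \in (1/5,1/3)$, which is exactly where the $1/3$ upper bound for $\lambda$ is used.

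Squaring and using $(a+b)^2 \le 2a^2 + 2b^2$ gives, for every $\xi \in \partial X$,
\[
|u(\xi)|^2 \le 2|u(o)|^2 + \frac{4 \calE_{\bar{X}}(u,u)}{(1-\sqrt{3\lambda})^2}.
\]
Since $m(o) = 1$, we have the trivial pointwise bound $|u(o)|^2 \le \|u\|_{L^2(X;m)}^2$. Integrating the previous display against $\nu$ over $\bar{X}$ and using $\nu(\bar{X}) = \nu(K) = 1$ (the measure $\nu$ lives on $\partial X$), we conclude
\[
\int_{\bar{X}} |u|^2\, d\nu \le 2\|u\|_{L^2(X;m)}^2 + \frac{4 \calE_{\bar{X}}(u,u)}{(1-\sqrt{3\lambda})^2} \le C \bigl( \calE_{\bar{X}}(u,u) + \|u\|_{L^2(X;m)}^2 \bigr) = C\,(\calE_{\bar{X}})_1(u,u),
\]
which is exactly \eqref{SG_det_eqn_trace}.

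There is no real technical obstacle: everything reduces to the telescoping estimate already established in the extension lemma, together with the happy accident that $m(o) = 1$. The only subtlety to keep in mind is that some $\xi \in \partial X$ are reached by several geodesic rays from $o$ (the ``doubly addressed'' boundary points of the SG), but the value $u(\xi)$ produced by the extension in Lemma \ref{SG_det_lem_ext} is independent of the choice of ray, so picking any one of them suffices.
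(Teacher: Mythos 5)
Your proof is correct, and it takes a genuinely different route from the one in the paper. The paper's argument is probabilistic: it writes $u$ restricted to $X$ via the Royden decomposition $f=f_{HD}+f_0$ (Theorem \ref{SG_det_thm_Soa}), estimates $\bbE_o\left[f(Z_n)^2\right]$ separately for the harmonic part (a martingale computation) and for $f_0\in\mathbf{D}_0$ (via the superharmonic majorant of Lemma \ref{SG_det_lem_ALP}), and then passes to the limit using the identification of the hitting distribution of $\myset{Z_n}$ with $\nu$. You instead exploit the geometric decay of the vertical conductances $c(n,n+1)=(3\lambda)^{-n}$ with $3\lambda<1$: telescoping along a geodesic ray from $o$ to $\xi$ gives the \emph{uniform} bound $\sup_{\xi\in\dd X}\lvert u(\xi)-u(o)\rvert\le\sqrt{2\calE_{\bar{X}}(u,u)}/(1-\sqrt{3\lambda})$, which is strictly stronger than the $L^2(\nu)$ bound and reduces the theorem to the observation that $m(o)=1$ and $\nu(\dd X)=1$. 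All the ingredients you use are already established in the paper (the single-edge estimate and the summability are exactly the content of the proof of Lemma \ref{SG_det_lem_ext}, and every boundary point is reached by a geodesic ray $[x_0,x_1,\ldots]$ with $|x_n|=n$ whose consecutive edges are necessarily vertical), and your handling of multiply-addressed boundary points is fine since $u$ is already continuous on $\bar{X}$ so no well-definedness issue arises. What each approach buys: yours is shorter, purely deterministic, and yields an $L^\infty(\dd X)$ estimate, but it is tied to the uniform summability of oscillations along a single ray (i.e., to $\lambda<1/3$) and ignores the measure $\nu$ entirely; the paper's martingale argument is the one that generalizes to functions of finite energy that need not extend continuously and to settings where only an $L^2$ trace, not a sup bound, can hold, and it meshes with the Martin-boundary and time-change machinery used in the rest of the chapter.
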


We need some preparation.
\begin{mythm}(\cite[Theorem 1.1]{ALP99})\label{thm_ALP}
Suppose that a reversible random walk $\myset{Z_n}$ is transient, then for all $f$ with
$$D(f)=\frac{1}{2}\sum_{x,y\in X}c(x,y)(f(x)-f(y))^2<+\infty,$$
we have $\myset{f(Z_n)}$ converges almost surely and in $L^2$ under $\bbP_x$ for all $x\in X$.
\end{mythm}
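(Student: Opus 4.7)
The plan is to exploit reversibility to obtain an energy estimate on the increments of $f$ along the walk, apply a Doob decomposition to extract an $L^{2}$-bounded martingale, and then reduce the remaining drift via a Royden-type orthogonal decomposition of the Dirichlet space into harmonic and potential parts.

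The key a priori estimate is as follows. By conditioning on $Z_k$,
\[
\bbE_x\bigl[(f(Z_{k+1})-f(Z_k))^2\bigr] = \sum_y p^{(k)}(x,y)\sum_z P(y,z)(f(z)-f(y))^2,
\]
and summing over $k\ge 0$ together with $P(y,z)=c(y,z)/\pi(y)$ gives
\[
\sum_{k\ge 0}\bbE_x\bigl[(f(Z_{k+1})-f(Z_k))^2\bigr]=\sum_y\frac{G(x,y)}{\pi(y)}\sum_z c(y,z)(f(z)-f(y))^2.
\]
Reversibility yields $G(x,y)/\pi(y)=G(y,x)/\pi(x)\le G(x,x)/\pi(x)$, and by transience $G(x,x)<+\infty$, so the left-hand side is bounded by $2 D(f)\,G(x,x)/\pi(x)<+\infty$. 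Via conditional Jensen this controls the second moment of the Doob martingale $M_n:=\sum_{k=0}^{n-1}(f(Z_{k+1})-Pf(Z_k))$ uniformly in $n$, so $(M_n)$ converges almost surely and in $L^2$ by the martingale convergence theorem.

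Writing $f(Z_n)-f(Z_0)=M_n+A_n$ with $A_n=\sum_{k=0}^{n-1}(Pf-f)(Z_k)$, it remains to show that $A_n$ converges almost surely and in $L^2$. For this I would invoke the Royden-type orthogonal decomposition $\mathcal{D}=\mathcal{HD}\oplus\mathcal{D}_0$ of the Dirichlet space, splitting $f=h+g$ with $h$ harmonic of finite Dirichlet energy and $g$ in the $D$-closure of finitely supported functions. For the harmonic piece $Ph=h$, so its drift $A_n^{(h)}\equiv 0$ and $h(Z_n)$ is itself the convergent martingale. For $g\in\mathcal{D}_0$, finitely supported approximants $g_m\to g$ in Dirichlet norm satisfy $g_m(Z_n)\to 0$ almost surely by transience (the walk leaves any finite set eventually) and in $L^2$ by dominated convergence; the key estimate applied to $g-g_m$ then furnishes a maximal-type control $\bbE_x\bigl[\sup_n(g(Z_n)-g_m(Z_n))^2\bigr]\lesssim D(g-g_m)$ that propagates convergence to all of $\mathcal{D}_0$, forcing $g(Z_n)\to 0$ both a.s.\ and in $L^2$.

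The main obstacle will be precisely this last maximal inequality: turning Dirichlet-norm smallness $D(g-g_m)\to 0$ into uniform path-wise smallness of $\sup_n|g(Z_n)-g_m(Z_n)|$. This does not follow directly from Doob's inequality for the martingale part alone, because the residual drift $A_n$ is not automatically absolutely summable; one must combine Doob's $L^2$-maximal inequality for the martingale component with a transience-based absolute bound on the drift component, both quantified in terms of $D(\cdot)$ and $G(x,x)/\pi(x)$. Once this ingredient is secured, almost sure and $L^2$ convergence of $f(Z_n)=h(Z_n)+g(Z_n)$ follow by combining the harmonic and potential pieces.
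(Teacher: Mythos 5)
First, note that the paper does not prove this statement: it is quoted from \cite[Theorem 1.1]{ALP99} and used as a black box. The closest the paper comes to a proof is the computation inside the proof of Theorem \ref{SG_det_thm_trace}, where exactly your a priori estimate
$$\sum_{k=0}^\infty\bbE_o\left[\left(f(Z_{k+1})-f(Z_k)\right)^2\right]\le\frac{2G(o,o)}{\pi(o)}D(f)$$
is derived, the Royden decomposition $f=f_{HD}+f_0$ of Theorem \ref{SG_det_thm_Soa} is invoked, and the two pieces are treated separately. So your skeleton --- energy estimate on increments, martingale convergence for the harmonic Dirichlet part, reduction of the $\mathbf{D}_0$ part to finitely supported approximants killed by transience --- is the standard and correct one, and your computation of the increment bound (including the use of reversibility to get $G(x,y)/\pi(y)\le G(x,x)/\pi(x)$) is right.

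The genuine gap is the step you yourself flag: passing from $D(g-g_m)\to 0$ to $\sup_n\lvert g(Z_n)-g_m(Z_n)\rvert\to 0$. You propose to combine Doob's $L^2$ maximal inequality for the martingale part with ``a transience-based absolute bound on the drift component,'' but no such direct bound on $\sup_n\lvert A_n\rvert$ in terms of $D(\cdot)$ exists for a general element of $\mathbf{D}_0$; the drift increments $(Pu-u)(Z_k)$ are not absolutely summable with a Dirichlet-norm bound, and this is precisely where a naive argument fails. The missing idea is the superharmonic majorant of \cite[Lemma 2.1]{ALP99} (quoted in the paper as Lemma \ref{SG_det_lem_ALP}): for every $u\in\mathbf{D}_0$ there is a superharmonic $h\in\mathbf{D}_0$ with $h\ge\lvert u\rvert$ pointwise, $D(h)\le D(u)$, and $\pi(x)h(x)^2\le D(h)G(x,x)$. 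Because $h(Z_n)$ is a nonnegative supermartingale, its Doob drift is nonpositive, so $0\le h(Z_n)\le h(Z_0)+M_n^h$ and the $L^2$ maximal bound for $M_n^h$ (controlled by your increment estimate applied to $h$) yields $\bbE_x\bigl[\sup_n h(Z_n)^2\bigr]\lesssim D(u)G(x,x)/\pi(x)$, hence the maximal control you need for $\lvert u\rvert\le h$. Without constructing (or at least citing) this majorant, your argument for the $\mathbf{D}_0$ component does not close. A secondary, cosmetic point: once you decompose $f=h+g$ via Royden, the Doob decomposition of $f(Z_n)$ itself becomes redundant --- the harmonic part is already a martingale and the potential part is shown to tend to $0$ --- so the paragraph about $A_n$ for the full $f$ can be dropped.
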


For all $f$ with $D(f)<+\infty$, under $\bbP_o$, $\myset{f(Z_n)}$ converges almost surely and in $L^2$ to a random variable $W$, that is
$$f(Z_n)\to W,\bbP_o\text{-a.s.},\bbE_o\left[\left(f(Z_n)-W\right)^2\right]\to0,$$
then $W$ is a terminal random variable. By Theorem \ref{SG_det_thm_conv}, we have $Z_n\to Z_\infty$, $\bbP_o$-a.s.. By \cite[Corollary 7.65]{Wo09}, we have $W$ is of the form $W=\vphi(Z_\infty)$, $\bbP_o$-a.s., where $\vphi$ is a measurable function on $\dd X$. Hence we define a map $f\mapsto\vphi$, this is the operation of taking boundary value in some sense.

Let $\mathbf{D}=\myset{f:D(f)<+\infty}$. The Dirichlet norm of $f\in\mathbf{D}$ is given by $\lVert f\rVert^2=D(f)+\pi(o)f(o)^2$. Let $\mathbf{D}_0$ be the family of all functions that are limits in the Dirichlet norm of functions with finite support. We have the following Royden decomposition.

\begin{mythm}(\cite[Theorem 3.69]{Soa94})\label{SG_det_thm_Soa}
For all $f\in\mathbf{D}$, there exist unique harmonic Dirichlet function $f_{HD}$ and $f_0\in\mathbf{D}_0$ such that $f=f_{HD}+f_0$. Moreover, $D(f)=D(f_{HD})+D(f_0)$.
\end{mythm}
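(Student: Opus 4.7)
The plan is to establish the decomposition via a Hilbert space projection argument. First I would check that $(\mathbf{D},\lVert\cdot\rVert)$ is a Hilbert space with inner product $\langle f,g\rangle=D(f,g)+\pi(o)f(o)g(o)$. Positive definiteness is clear, since $\langle f,f\rangle=0$ forces $D(f)=0$ (so $f$ is constant on the connected graph $X$) and $f(o)=0$. Completeness follows by propagating Cauchyness from the root: iterating the elementary estimate $|f(x)-f(y)|\le c(x,y)^{-1/2}\sqrt{2D(f)}$ along a fixed geodesic from $o$ to $x$ gives $|f(x)-f(o)|\le C_x\sqrt{D(f)}$, yielding pointwise convergence of any $\lVert\cdot\rVert$-Cauchy sequence, after which a Fatou argument places the pointwise limit in $\mathbf{D}$ and upgrades convergence to $\lVert\cdot\rVert$-convergence. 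By its very definition, $\mathbf{D}_0$ is a closed subspace of this Hilbert space.

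Next I would produce the decomposition by minimizing $g\mapsto D(f-g)$ over $g\in\mathbf{D}_0$. The key preliminary observation is that transience of the $\lambda$-return ratio random walk (recorded in (\ref{SG_det_eqn_G}) for $\lambda\in(0,1)$) forces $\mathbf{D}_0\cap\R\cdot\mathbf{1}=\myset{0}$: if a nonzero constant $c$ were a $\lVert\cdot\rVert$-limit of finitely supported $\vphi_n$, then $\vphi_n/\vphi_n(o)$ would equal $1$ at $o$ with $D$-energy tending to $0$, contradicting positivity of the capacity of $\myset{o}$. Consequently $D$ restricts to a genuine norm on $\mathbf{D}_0$, and the parallelogram law applied to a minimizing sequence, combined with anchoring the value at $o$ via the full $\lVert\cdot\rVert$-structure, produces an honest minimizer $f_0\in\mathbf{D}_0$. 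Setting $f_{HD}:=f-f_0$, the Euler--Lagrange condition $D(f_{HD},\vphi)=0$ for all $\vphi\in\mathbf{D}_0$, specialized to $\vphi=\indi_{\myset{x}}\in C_c(X)$, gives $\sum_{y}c(x,y)(f_{HD}(y)-f_{HD}(x))=0$ at every vertex $x$, so $f_{HD}$ is a harmonic Dirichlet function. The energy identity $D(f)=D(f_0)+D(f_{HD})$ then follows from the orthogonality $D(f_0,f_{HD})=0$ (apply Euler--Lagrange with $\vphi=f_0$).

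For uniqueness it suffices to show that the space $HD$ of harmonic Dirichlet functions and $\mathbf{D}_0$ intersect only at $0$, which I regard as the main obstacle. For $h\in HD\cap\mathbf{D}_0$, summation by parts on a finitely supported $\vphi$ yields $D(h,\vphi)=-\sum_x\vphi(x)\sum_y c(x,y)(h(y)-h(x))=0$ by harmonicity of $h$. Approximating $h$ in $\lVert\cdot\rVert$ by such $\vphi\in C_c(X)$ and passing to the limit, which is justified by Cauchy--Schwarz $|D(h,h-\vphi)|\le\sqrt{D(h)}\lVert h-\vphi\rVert$, gives $D(h,h)=0$, so $h$ is constant; the transience-based rigidity from the previous paragraph then forces $h=0$. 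The two delicate points are thus the transience-capacity argument (indispensable: the statement genuinely fails on recurrent graphs where constants belong to $\mathbf{D}_0$) and the bookkeeping in the minimization step needed to produce a function rather than merely an equivalence class modulo constants; both issues are resolved by the transience established in Section \ref{SG_det_sec_rw}.
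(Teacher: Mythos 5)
The paper does not prove this statement at all: it is quoted directly from Soardi \cite[Theorem 3.69]{Soa94}, so there is no in-paper argument to compare yours against. On its own merits, your proof is the standard derivation of the Royden decomposition and is correct: completeness of $(\mathbf{D},\lVert\cdot\rVert)$ via pointwise propagation of Cauchyness along geodesics plus Fatou, existence of the minimizer of $g\mapsto D(f-g)$ over the closed subspace $\mathbf{D}_0$, the Euler--Lagrange identity $D(f-f_0,\vphi)=0$ tested on $\indi_{\myset{x}}$ to get harmonicity and on $\vphi=f_0$ to get the Pythagorean identity, and uniqueness from $HD\cap\mathbf{D}_0=\myset{0}$ via summation by parts and the transience rigidity. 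The one step I would make fully explicit is the convergence of the minimizing sequence: the parallelogram law only gives $D(g_n-g_m)\to0$, and the qualitative fact that $D$ is a norm on $\mathbf{D}_0$ does not by itself anchor $g_n(o)$. What you need is the quantitative capacity bound $\pi(o)\,g(o)^2\le G(o,o)\,D(g)$ for all $g\in\mathbf{D}_0$ (equivalently $D(g)\ge\mathrm{cap}(o)\,g(o)^2$, obtained from your capacity argument by homogeneity on $C_c(X)$ and density); this is exactly Lemma \ref{SG_det_lem_ALP}, which the thesis also imports from \cite{ALP99}. With that inequality stated, your "anchoring at $o$" step closes and the whole argument is complete; you are also right that transience is indispensable, since on a recurrent network constants lie in $\mathbf{D}_0$ and both existence-of-minimizer and uniqueness fail in the form stated.
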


\begin{mylem}(\cite[Lemma 2.1]{ALP99})\label{SG_det_lem_ALP}
For all $f\in\mathbf{D}_0$, $x\in X$, we have
$$\pi(x)f(x)^2\le D(f)G(x,x).$$
Furthermore, there exists a superharmonic function $h\in\mathbf{D}_0$ such that $h\ge|f|$ pointwise and $D(h)\le D(f)$.
\end{mylem}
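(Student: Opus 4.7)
The plan is to establish the two assertions via discrete potential theory on the network $(X,c,\pi)$: the first through a Cauchy--Schwarz pairing against the normalized Green function (equivalently, the Dirichlet principle for effective resistance), and the second through a convex minimization (obstacle problem) in the Hilbert space $\mathbf{D}_0$.

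For the pointwise bound, I would test $f$ against
\[
h_*(y)=\frac{G(y,x)}{G(x,x)}=F(y,x),
\]
which satisfies $h_*(x)=1$, is harmonic on $X\setminus\{x\}$, and decays to $0$ at infinity by transience; from this decay together with the fact that $F(\cdot,x)$ is the hitting probability of $\{x\}$, one checks $h_*\in\mathbf{D}_0$ by exhibiting approximations with finite support. A discrete Gauss--Green identity, legitimate because $h_*\in\mathbf{D}_0$ suppresses the contributions from infinity, gives
\[
D(f,h_*)=\sum_y f(y)\pi(y)\bigl(h_*(y)-Ph_*(y)\bigr)=\frac{f(x)\pi(x)}{G(x,x)},
\]
and in particular $D(h_*)=\pi(x)/G(x,x)$. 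Cauchy--Schwarz $D(f,h_*)^2\le D(f)D(h_*)$ then rearranges to $\pi(x)f(x)^2\le D(f)G(x,x)$.

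For the superharmonic majorant, since $|\cdot|$ is a normal contraction we have $|f|\in\mathbf{D}_0$ with $D(|f|)\le D(f)$. Equip $\mathbf{D}_0$ with the inner product $D(\cdot,\cdot)$; it becomes a Hilbert space (positive definiteness and completeness follow from the pointwise bound of the first part applied to Cauchy sequences, together with the definition of $\mathbf{D}_0$). The set
\[
\calK=\{g\in\mathbf{D}_0:g\ge|f|\text{ pointwise}\}
\]
is nonempty (contains $|f|$), convex, and closed, the last property because $D$-convergence forces pointwise convergence via the first part applied to differences. The projection $h$ of $0$ onto $\calK$ therefore exists uniquely; by construction $h\ge|f|$ and $D(h)\le D(|f|)\le D(f)$. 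Superharmonicity is read off from the variational inequality: for any nonnegative finitely supported $\phi$, $h+t\phi\in\calK$ for $t\ge 0$, so $D(h,\phi)\ge 0$; the same discrete Gauss--Green identity used above gives $\sum_y c(x,y)(h(x)-h(y))\ge 0$ at every $x$, i.e.\ $Ph\le h$.

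The main obstacle is the interlocking of the two parts: closedness of $\calK$ in the $D$-metric on $\mathbf{D}_0$ uses precisely the pointwise bound of the first part, so the proof must be organized in the stated order. A subtler technical step is justifying the Gauss--Green identity when the test functions are in $\mathbf{D}_0$ but not finitely supported; this requires approximating in the Dirichlet norm and invoking transience to ensure that the remainder terms at infinity vanish in the limit. Once these two technical points are dispatched, both assertions follow by clean Hilbert-space arguments.
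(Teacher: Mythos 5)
The paper offers no proof of this lemma; it is quoted verbatim from \cite[Lemma 2.1]{ALP99}. Your reconstruction is correct and is essentially the standard argument from that source: the first inequality is the reproducing--kernel/Cauchy--Schwarz computation against the normalized Green function $F(\cdot,x)=G(\cdot,x)/G(x,x)$, using $(I-P)G(\cdot,x)=\delta_x$, and the second is the obstacle problem (equivalently, the r\'eduite of $|f|$) in the Hilbert space $(\mathbf{D}_0,D)$, whose variational inequality $D(h,\phi)\ge0$ for nonnegative finitely supported $\phi$ is exactly superharmonicity. One point deserves more care than your sketch gives it: the membership $G(\cdot,x)\in\mathbf{D}_0$ with $D(G(\cdot,x))=\pi(x)G(x,x)$ does not follow from decay at infinity together with the hitting-probability interpretation; the standard route is to exhaust $X$ by finite sets, use the killed Green functions $G_n(\cdot,x)$ (finitely supported, with $D(G_n(\cdot,x))=\pi(x)G_n(x,x)\le\pi(x)G(x,x)$ and $G_n\uparrow G$ pointwise), and pass to a weak limit in $(\mathbf{D}_0,\|\cdot\|)$. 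With that supplied, both halves close up; your observations that the first inequality must precede the second (to make $D$ a complete, positive definite norm on $\mathbf{D}_0$ and to get closedness of the constraint set under pointwise limits) and that the summation by parts should be justified by approximating in the Dirichlet norm are exactly the right bookkeeping.
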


\begin{proof}[Proof of Theorem \ref{SG_det_thm_trace}]
Since $\calF_{\bar{X}}\subseteq C(\bar{X})$, for all $u\in\calF_{\bar{X}}$, it is trivial to take boundary value just as $u|_{\dd X}$. We still use notions $f,\vphi$. Then
$$f(Z_n)\to\vphi(Z_\infty),\bbP_o\text{-a.s.},$$
$$\bbE_o\left[\left(f(Z_n)-\vphi(Z_\infty)\right)^2\right]\to0.$$
Under $\bbP_o$, the hitting distribution of $\myset{Z_n}$ or the distribution of $Z_\infty$ is $\nu$, the normalized Hausdorff measure on $K$, we have
$$\int_{\dd X}|\vphi|^2\md\nu=\bbE_o\left[\vphi(Z_\infty)^2\right]=\lim_{n\to+\infty}\bbE_o\left[f(Z_n)^2\right].$$
We only need to estimate $\bbE_o\left[f(Z_n)^2\right]$ in terms of
$$D(f)+(f,f)=\frac{1}{2}\sum_{x,y\in X}c(x,y)(f(x)-f(y))^2+\sum_{x\in X}f(x)^2m(x).$$
By Theorem \ref{SG_det_thm_Soa}, we only need to consider harmonic Dirichlet functions and functions in $\mathbf{D}_0$.

For all $f\in\mathbf{D}$, we have
\begin{align*}
\sum_{k=0}^\infty\bbE_o\left[\left(f(Z_{k+1})-f(Z_k)\right)^2\right]=&\sum_{k=0}^\infty\bbE_o\left[\bbE_o\left[\left(f(Z_{k+1})-f(Z_k)\right)^2|Z_k\right]\right]\\
=&\sum_{k=0}^\infty\bbE_o\left[\bbE_{Z_k}\left[\left(f(Z_{1})-f(Z_0)\right)^2\right]\right]\\
=&\sum_{k=0}^\infty\sum_{x\in X}P^{(k)}(o,x)\bbE_{x}\left[\left(f(Z_{1})-f(Z_0)\right)^2\right]\\
=&\sum_{x,y\in X}\left(\sum_{k=0}^\infty P^{(k)}(o,x)\right)P(x,y)\left(f(x)-f(y)\right)^2\\
=&\sum_{x,y\in X}G(o,x)\frac{c(x,y)}{\pi(x)}(f(x)-f(y))^2\\
=&\sum_{x,y\in X}\frac{\pi(x)G(x,o)}{\pi(o)}\frac{c(x,y)}{\pi(x)}(f(x)-f(y))^2\\
=&\sum_{x,y\in X}\frac{F(x,o)G(o,o)}{\pi(o)}c(x,y)(f(x)-f(y))^2\\
\le&\frac{G(o,o)}{\pi(o)}\sum_{x,y\in X}c(x,y)(f(x)-f(y))^2\\
=&\frac{2G(o,o)}{\pi(o)}D(f).
\end{align*}

Let $f$ be a harmonic Dirichlet function, then $\myset{f(Z_n)}$ is a martingale. For all $n\ge1$
\begin{align*}
\bbE_o\left[f(Z_n)^2\right]&=\bbE_o\left[\left(\sum_{k=0}^{n-1}(f(Z_{k+1})-f(Z_k))+f(Z_0)\right)^2\right]\\
&=\sum_{k=0}^{n-1}\bbE_o\left[(f(Z_{k+1})-f(Z_k))^2\right]+f(o)^2\\
&\le\sum_{k=0}^{\infty}\bbE_o\left[(f(Z_{k+1})-f(Z_k))^2\right]+f(o)^2\\
&\le\frac{2G(o,o)}{\pi(o)}D(f)+f(o)^2,
\end{align*}
hence
\begin{equation}\label{SG_det_eqn_hd}
\bbE_o\left[f_{HD}(Z_n)^2\right]\le\frac{2G(o,o)}{\pi(o)}D(f_{HD})+f_{HD}(o)^2.
\end{equation}

Let $f\in\mathbf{D}_0$. Let $h$ be as in Lemma \ref{SG_det_lem_ALP}. Then $h\ge0$. Since $h$ is superharmonic, we have
$$\bbE_o\left[h(Z_{k+1})-h(Z_k)|Z_0,\ldots,Z_k\right]\le0,$$
hence
\begin{align*}
&\bbE_o\left[h(Z_{k+1})^2-h(Z_k)^2\right]\\
=&\bbE_o\left[(h(Z_{k+1})-h(Z_{k}))^2\right]+2\bbE_o\left[h(Z_k)(h(Z_{k+1})-h(Z_k))\right]\\
=&\bbE_o\left[(h(Z_{k+1})-h(Z_{k}))^2\right]+2\bbE_o\left[\bbE_o\left[h(Z_k)(h(Z_{k+1})-h(Z_k))|Z_0,\ldots,Z_k\right]\right]\\
=&\bbE_o\left[(h(Z_{k+1})-h(Z_{k}))^2\right]+2\bbE_o\left[h(Z_k)\bbE_o\left[h(Z_{k+1})-h(Z_k)|Z_0,\ldots,Z_k\right]\right]\\
\le&\bbE_o\left[(h(Z_{k+1})-h(Z_{k}))^2\right].
\end{align*}
We have
\begin{align*}
\bbE_o\left[h(Z_n)^2\right]&=\sum_{k=0}^{n-1}\bbE_o\left[h(Z_{k+1})^2-h(Z_k)^2\right]+h(o)^2\\
&\le\sum_{k=0}^{n-1}\bbE_o\left[(h(Z_{k+1})-h(Z_{k}))^2\right]+\frac{G(o,o)}{\pi(o)}D(h)\\
&\le\sum_{k=0}^{\infty}\bbE_o\left[(h(Z_{k+1})-h(Z_{k}))^2\right]+\frac{G(o,o)}{\pi(o)}D(h)\\
&\le\frac{2G(o,o)}{\pi(o)}D(h)+\frac{G(o,o)}{\pi(o)}D(h)\\
&=\frac{3G(o,o)}{\pi(o)}D(h),
\end{align*}
hence
$$\bbE_o\left[f(Z_n)^2\right]\le\bbE_o\left[h(Z_n)^2\right]\le\frac{3G(o,o)}{\pi(o)}D(h)\le\frac{3G(o,o)}{\pi(o)}D(f).$$
We have
\begin{equation}\label{SG_det_eqn_d0}
\bbE_o\left[f_0(Z_n)^2\right]\le\frac{3G(o,o)}{\pi(o)}D(f_0).
\end{equation}
Combining Equation (\ref{SG_det_eqn_hd}) and Equation (\ref{SG_det_eqn_d0}), we have
\begin{align*}
\bbE_o\left[f(Z_n)^2\right]&=\bbE_o\left[(f_{HD}(Z_n)+f_0(Z_n))^2\right]\le2\bbE_o\left[f_{HD}(Z_n)^2+f_0(Z_n)^2\right]\\
&\le2\left(\frac{2G(o,o)}{\pi(o)}D(f_{HD})+f_{HD}(o)^2+\frac{3G(o,o)}{\pi(o)}D(f_0)\right)\\
&\le2\left(\frac{5G(o,o)}{\pi(o)}D(f)+(f(o)-f_0(o))^2\right)\\
&\le2\left(\frac{5G(o,o)}{\pi(o)}D(f)+2f(o)^2+2f_0(o)^2\right)\\
&\le2\left(\frac{5G(o,o)}{\pi(o)}D(f)+2\frac{1}{m(o)}f(o)^2m(o)+2\frac{G(o,o)}{\pi(o)}D(f_0)\right)\\
&\le2\left(\frac{7G(o,o)}{\pi(o)}D(f)+2\frac{1}{m(o)}\sum_{x\in X}f(x)^2m(x)\right)\\
&\le\max{\myset{\frac{14G(o,o)}{\pi(o)},\frac{4}{m(o)}}}\left(D(f)+\sum_{x\in X}f(x)^2m(x)\right).
\end{align*}
Letting $C^2=\max{\myset{\frac{14G(o,o)}{\pi(o)},\frac{4}{m(o)}}}$ be a constant only depending on the conductance $c$ and the measure $m$, we have
$$\int_{\dd X}|\vphi|^2\md\nu=\lim_{n\to+\infty}\bbE_o\left[f(Z_n)^2\right]\le C^2(D(f)+\sum_{x\in X}f(x)^2m(x)).$$
In the notion of $u$, we obtain Equation (\ref{SG_det_eqn_trace}).
\end{proof}

Secondly, we obtain a regular Dirichlet form on $L^2(\dd X;\nu)$ by abstract theory of trace form. For more detailed discussion of trace form, see \cite[Chapter 5, 5.2]{CF12} and \cite[Chapter 6, 6.2]{FOT11}. We introduce some results used here.

Taking trace with respect to a regular Dirichlet form corresponds to taking time-change with respect to the corresponding Hunt process. Taking trace is realized by smooth measure. The family of all smooth measures is denoted by $S$. Taking time-change is realized by positive continuous additive functional, abbreviated as PCAF. The family of all PCAFs is denoted by $A_c^+$. The family of all equivalent classes of $A_c^+$ and the family $S$ are in one-to-one correspondence, see \cite[Theorem 5.1.4]{FOT11}.

We fix a regular Dirichlet form $(\calE,\calF)$ on $L^2(E;m)$ and its corresponding Hunt process $X=\myset{X_t}$.
\begin{itemize}
\item Firstly, we introduce the basic setup of time-change. Given a PCAF $A\in A_c^+$, define its support $F$, then $F$ is quasi closed and nearly Borel measurable. Define the right-continuous inverse $\tau$ of $A$, let $\check{X}_t=X_{\tau_t}$, then $\check{X}$ is a right process with state space $F$ and called the time-changed process of $X$ by $A$.
\item Secondly, we introduce the basic setup of trace form. For arbitrary non-polar, quasi closed, nearly Borel measurable, finely closed set $F$, define hitting distribution $H_F$ of ${X}$ for $F$ as follows:
$$H_Fg(x)=\bbE_x\left[g(X_{\sigma_F})\indi_{\sigma_F<+\infty}\right],x\in E,g\text{ is nonnegative Borel function.}$$
By \cite[Theorem 3.4.8]{CF12}, for all $u\in\calF_e$, we have $H_F|u|(x)<+\infty$, q.e. and $H_Fu\in\calF_e$. Define
$$\check{\calF}_e=\calF_e|_F,\check{\calE}(u|_F,v|_F)={\calE}(H_Fu,H_Fv),u,v\in\calF_e.$$
Two elements in $\check{\calF}_e$ can be identified if they coincide q.e. on $F$. We still need a measure on $F$. Let
$$S_F=\myset{\mu\in S:\text{the quasi support of }\mu=F},$$
where the quasi support of a Borel measure is the smallest (up to q.e. equivalence) quasi closed set outside which the measure vanishes. Let $\mu\in S_F$, by \cite[Theorem 3.3.5]{CF12}, two elements of $\check{\calF}_e$ coincide q.e. on $F$ if and only if they coincide $\mu$-a.e.. Define $\check{\calF}=\check{\calF}_e\cap L^2(F;\mu)$. Then $(\check{\calE},\check{\calF})$ is a symmetric form on $L^2(F;\mu)$.
\item Thirdly, the relation of trace form and time-change process is as follows. Given $A\in A_c^+$ or equivalently $\mu\in S$, let $F$ be the support of $A$, then $F$ satisfies the conditions in the second setup and by \cite[Theorem 5.2.1(\rmnum{1})]{CF12}, $\mu\in S_F$. We obtain $(\check{\calE},\check{\calF})$ on $L^2(F;\mu)$. By \cite[Theorem 5.2.2]{CF12}, the regular Dirichlet form corresponding to $\check{X}$ is exactly $(\check{\calE},\check{\calF})$ on $L^2(F;\mu)$.
\end{itemize}

We have $F\subseteq\mathrm{supp}(\mu)$ q.e.. But the point is that $F$ can be strictly contained in $\mathrm{supp}(\mu)$ q.e., usually we indeed need a trace form on $\mathrm{supp}(\mu)$. \cite{CF12} provided a solution not for all smooth measures, but some subset
$$\mathring{S}=\myset{\mu:\text{positive Radon measure charging no }\calE\text{-polar set}}.$$
For non-$\calE$-polar, quasi closed subset $F$ of $E$, let
$$\mathring{S}_F=\myset{\mu\in\mathring{S}:\text{the quasi support of }\mu\text{ is }F}.$$
Note that if $\mu\in\mathring{S}_F$, it may happen that $\mathrm{supp}(\mu)\supsetneqq F$ q.e.. We want some $\mu\in\mathring{S}_F$ such that $\mathrm{supp}(\mu)=F$ q.e.. We have a criterion as follows.

\begin{mylem}(\cite[Lemma 5.2.9(\rmnum{2})]{CF12})\label{SG_det_lem_ac}
Let $F$ be a non-$\calE$-polar, nearly Borel, finely closed set. Let $\nu\in\mathring{S}$ satisfy $\nu(E\backslash F)=0$. Assume the 1-order hitting distribution $H_F^1(x,\cdot)$ of $X$ for $F$ is absolutely continuous with respect to $\nu$ for $m$-a.e. $x\in E$. Then $\nu\in\mathring{S}_F$.
\end{mylem}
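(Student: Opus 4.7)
The plan is to let $F_0$ denote the quasi support of $\nu$, to show $F_0\subseteq F$ q.e.\ using the quasi closedness of $F$, and then to prove that the difference $N:=F\setminus F_0$ is $\calE$-polar; this forces $F_0=F$ q.e., which is precisely the conclusion $\nu\in\mathring{S}_F$.

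First I would check that $F_0\subseteq F$ q.e. Since $F$ is nearly Borel and finely closed, the fine topology / quasi topology correspondence for regular Dirichlet forms makes $F$ quasi closed. Because $\nu(E\setminus F)=0$ and $\nu\in\mathring{S}$, the quasi support $F_0$ is the smallest quasi closed set supporting $\nu$ up to q.e.\ equivalence, so $F_0\subseteq F$ q.e. Next, by the very definition of the quasi support, $\nu(E\setminus F_0)=0$, hence $\nu(N)=0$. Combining this with the absolute continuity hypothesis $H_F^1(x,\cdot)\ll\nu$ for $m$-a.e.\ $x$, I obtain
\begin{equation*}
H_F^1(x,N)=\bbE_x\left[e^{-\sigma_F}\indi_N(X_{\sigma_F})\right]=0\quad\text{for }m\text{-a.e. }x\in E.
\end{equation*}

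The next step is to upgrade this $m$-a.e.\ identity to a quasi-everywhere identity. The function $x\mapsto H_F^1\indi_N(x)$ is $1$-excessive for $X$, hence has a quasi-continuous modification that is in fact finely continuous; an $\calE_1$-quasi-continuous function vanishing $m$-a.e.\ must vanish q.e. Therefore $\bbP_x(X_{\sigma_F}\in N,\,\sigma_F<+\infty)=0$ for q.e.\ $x\in E$, i.e., the first entry of the process into $F$ avoids $N$. To conclude that $N$ itself is $\calE$-polar, I would apply the strong Markov property at the successive entry times of $X$ into $F$: since $N\subseteq F$, any visit to $N$ is necessarily such an entry time, and at each entry the starting point of the next excursion lies in $F_0$ up to a polar exceptional set, to which the same argument applies. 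Iterating over countably many entries and taking a union of the polar exceptional sets, we obtain $\bbP_x(\sigma_N<+\infty)=0$ for q.e.\ $x$, which is precisely the polarity of $N$. Hence $F=F_0$ q.e., so the quasi support of $\nu$ is $F$, and $\nu\in\mathring{S}_F$.

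\textbf{Main obstacle.} The delicate point is the last step: passing from ``the first entry to $F$ avoids $N$'' to ``$N$ is never hit''. One has to manage the quasi-everywhere exceptional sets through countably many iterations of the strong Markov property at entry times of $F$, and argue that after each excursion away from $F$ the re-entry distribution is still absolutely continuous with respect to $\nu$ (so still concentrated on $F_0$). This bookkeeping, combined with the fine-continuity upgrade from $m$-a.e.\ to q.e., is the technical heart of the argument; everything else reduces to applying the definitions of quasi support and smooth measure in $\mathring{S}$.
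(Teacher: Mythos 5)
First, a remark on the comparison itself: the paper does not prove this lemma at all --- it is quoted as a black box from \cite[Lemma 5.2.9(\rmnum{2})]{CF12} --- so there is no in-paper argument to measure yours against, and I can only assess your sketch on its own terms. Your opening steps are correct and standard: a nearly Borel finely closed set is quasi closed, so the quasi support $F_0$ of $\nu$ satisfies $F_0\subseteq F$ q.e., $\nu(N)=0$ for $N=F\setminus F_0$, and hence $H_F^1(x,N)=0$ for $m$-a.e.\ $x$. (One small correction there: $H_F^1\indi_N$ is \emph{not} $1$-excessive for a general Borel indicator --- $e^{-t}P_t(H_F^1g)(x)=\bbE_x\bigl[e^{-(t+\sigma_F\circ\theta_t)}g(X_{t+\sigma_F\circ\theta_t})\bigr]$ involves the first hitting of $F$ \emph{after} time $t$, and this is not dominated by $H_F^1g$ unless $g$ is itself the restriction of a $1$-excessive function --- so the $m$-a.e.-to-q.e.\ upgrade needs a different justification; this is fixable but not by the route you state.)

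The genuine gap is the final step, and the mechanism you propose for it fails. In continuous time the visiting set $\myset{t:X_t\in F}$ does not decompose into countably many entry times separated by excursions: after entering $F$ through $F_0$, the process can reach $N$ while staying in $F$ throughout a left neighbourhood of that visit, and such a visit to $N$ is \emph{not} an entry time of $F$, hence is invisible to any iterate of $H_F^1$. So the claim that ``any visit to $N$ is necessarily such an entry time'' is false, and the hypothesis --- which only controls the law of $X$ at the instant it enters $F$ --- says nothing about where the process travels once inside $F$. Two further obstructions to your iteration: the absolute continuity $H_F^1(x,\cdot)\ll\nu$ is assumed only for $m$-a.e.\ $x\in E$, whereas the re-entry points after excursions lie in $F$, which may be (and in this paper's application is, since $m(\dd X)=0$) an $m$-null set, so the hypothesis is unavailable at exactly the starting points your iteration requires; and the left endpoints of excursion intervals away from $F$ are not stopping times, so the strong Markov property cannot be applied at them directly. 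Closing these gaps is the real content of the cited lemma; in \cite{CF12} it is handled through the identification of the quasi support of $\nu$ with the support of the associated positive continuous additive functional rather than by an excursion-by-excursion iteration.
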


\begin{mycor}(\cite[Corollary 5.2.10]{CF12})\label{SG_det_cor_cf}
Let $F$ be a closed set. If there exists $\nu\in\mathring{S}_F$ such that the topological support $\mathrm{supp}(\nu)=F$, then for all $\mu\in\mathring{S}_F$, we have $(\check{\calE},\check{\calF})$ is a regular Dirichlet form on $L^2(F;\mu)$.
\end{mycor}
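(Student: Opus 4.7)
The plan is to exhibit an explicit core for $(\check\calE,\check\calF)$ on $L^2(F;\mu)$ and then verify the two regularity requirements: uniform density in $C_c(F)$ and $\check\calE_1$-density in $\check\calF$. The natural candidate for a core is
$$\calC:=\myset{u|_F:u\in\calF\cap C_c(E)},$$
with quasi-continuous representatives understood. First I would invoke the general trace construction to conclude that $(\check\calE,\check\calF)$ is itself a Dirichlet form on $L^2(F;\mu)$ for every $\mu\in\mathring S_F$; this uses that the quasi support of $\mu$ is $F$, so q.e.-equivalence on $F$ and $\mu$-a.e.-equivalence coincide, making the passage from $\calF_e|_F$ to $\check\calF_e\cap L^2(F;\mu)$ unambiguous. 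Markovianity transfers through $H_F$, which is a sub-Markov contraction.

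Next, for uniform density of $\calC$ in $C_c(F)$, the closedness of $F$ in the locally compact space $E$ permits a Tietze-type extension: any $f\in C_c(F)$ extends to some $\tilde f\in C_c(E)$. Because $(\calE,\calF)$ is regular, $\calF\cap C_c(E)$ is uniformly dense in $C_c(E)$, so there exist $u_n\in\calF\cap C_c(E)$ with $u_n\to\tilde f$ uniformly; restriction then yields $u_n|_F\to f$ uniformly on $F$. The role of the topological-support hypothesis $\mathrm{supp}(\nu)=F$ enters here to rule out that $C_c(F)$ is deceptively larger than the $\mu$-essential space on $F$: since $\nu\in\mathring S_F$ and $\mathrm{supp}(\nu)=F$, every relatively open subset of $F$ is charged, and by the chain $\mu\in\mathring S_F$ the same will be extracted for $\mu$ via the quasi-support identification, so uniform approximation on $F$ is genuine approximation in $L^2(F;\mu)$ up to compact exhaustion.

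The main obstacle is the $\check\calE_1$-density of $\calC$ in $\check\calF$. Given $\varphi\in\check\calF$, by definition there exists $u\in\calF_e$ with $u|_F=\varphi$ q.e.\ and $\varphi\in L^2(F;\mu)$. Using regularity of $(\calE,\calF)$, one can approximate $u$ in the extended Dirichlet norm by $u_n\in\calF\cap C_c(E)$. Because the projection $H_F$ is a contraction in $\calE$, we get
$$\calE(H_F(u_n-u),H_F(u_n-u))\le\calE(u_n-u,u_n-u)\longrightarrow 0,$$
and hence $\check\calE((u_n-u)|_F,(u_n-u)|_F)\to 0$. The delicate step is upgrading this to convergence in the $L^2(F;\mu)$ norm simultaneously. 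Here the assumption $\mathrm{supp}(\nu)=F$ is decisive: it guarantees that the quasi-support condition is rigid enough that any $\mu\in\mathring S_F$ also satisfies the absolute-continuity criterion of Lemma \ref{SG_det_lem_ac} with respect to the $1$-order hitting distribution $H_F^1$, so the Dirichlet-form machinery of time-change produces the same support $F$ and the same core $\calC$ for every $\mu\in\mathring S_F$. A standard truncation/cutoff applied to $u$, combined with uniform density in $C_c(F)$ from the previous paragraph, then synchronizes $L^2(F;\mu)$ convergence with $\check\calE$-convergence.

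The hardest part, as indicated, is this synchronization: the trace form was built intrinsically on $\check\calF_e$ via $H_F$, but $L^2(F;\mu)$ depends on the specific $\mu$, and the only a priori link between these two structures is the quasi-support relation. The hypothesis that some $\nu\in\mathring S_F$ has $\mathrm{supp}(\nu)=F$ is exactly what bridges the quasi-topological identification (needed for the trace construction) with the topological identification (needed for regularity, i.e.\ for $\calC\subseteq C_c(F)$ to be rich enough). Once this bridge is in place, regularity is essentially a transfer of regularity from $(\calE,\calF)$ through the contraction $H_F$.
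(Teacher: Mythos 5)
The paper does not actually prove this statement: it is quoted verbatim from \cite[Corollary 5.2.10]{CF12}, so there is no internal proof to compare against, and your proposal has to stand on its own. Its skeleton is the standard one (core $\calC=\myset{u|_F:u\in\calF\cap C_c(E)}$, uniform density in $C_c(F)$ via Tietze extension plus regularity of $(\calE,\calF)$, and the contraction $\calE(H_Fv,H_Fv)\le\calE(v,v)$ coming from the $\calE$-orthogonal decomposition of $\calF_e$ into functions vanishing q.e.\ on $F$ and $H_F$-harmonic functions), but it has two genuine gaps. The first is that you never use the hypothesis $\mathrm{supp}(\nu)=F$ concretely. Its actual role is to force $\mathrm{supp}(\mu)=F$ for every $\mu\in\mathring{S}_F$: one has $\mu(E\backslash F)=0$, hence $\mathrm{supp}(\mu)\subseteq F$; if the inclusion were strict, then $G=F\backslash\mathrm{supp}(\mu)$ would be a nonempty relatively open subset of $F$ that is $\calE$-polar (since $\mathrm{supp}(\mu)$ is closed, hence quasi-closed, carries $\mu$, and therefore contains the quasi-support $F$ up to a polar set), yet $\nu(G)>0$ because $\mathrm{supp}(\nu)=F$, contradicting that $\nu$ charges no polar set. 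This reduction is essentially the whole content of the corollary, the remaining work being the theorem (\cite[Theorem 5.2.8]{CF12}) that regularity on the topological support is equivalent to the coincidence of quasi-support and topological support; your ``bridge'' paragraph gestures at this but does not carry it out.

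The second gap is the one you yourself flag and then wave away: the $\check{\calE}_1$-density of $\calC$ in $\check{\calF}$. Approximating $u\in\calF_e$ by $u_n\in\calF\cap C_c(E)$ in the extended Dirichlet norm gives $\calE(H_F(u_n-u),H_F(u_n-u))\to0$ and, along a subsequence, convergence q.e., hence $\mu$-a.e.\ (as $\mu$ charges no polar set); but none of this yields $u_n|_F\to u|_F$ in $L^2(F;\mu)$, because $\mu$ may be singular with respect to $m$ --- which is exactly the situation in this thesis, where the relevant measure is the hitting distribution on $\dd X$ and $m(\dd X)=0$, so $m$-a.e.\ or $L^2(X;m)$ control says nothing about $L^2(\dd X;\nu)$. ``Standard truncation/cutoff'' does not repair this; one needs the quantitative bound of $\int_F|\tilde{u}|\,\md\mu$ by the $\calE_1$-norm for measures of finite energy integral together with an exhaustion of $\mu$ by such measures, which is the substance of the proof in \cite{CF12}. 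As written, the proposal asserts the conclusion of the hard step rather than proving it.
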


Roughly speaking, given a positive Radon measure $\mu$ charging no $\calE$-polar set, let $F=\mathrm{supp}(\mu)$. Firstly, we check Lemma \ref{SG_det_lem_ac} to have $\mu\in\mathring{S}_F$, then the quasi support of $\mu$ is $F$ and the support of corresponding PCAF $A$ can be taken as $F$. Secondly, by Corollary \ref{SG_det_cor_cf}, the time-changed process $\check{X}$ of $X$ by $A$ corresponds to the regular Dirichlet form $(\check{\calE},\check{\calF})$ on $L^2(F;\mu)$.

Return to our case, $\nu$ is a probability measure of finite energy integral, hence $\nu$ is a positive Radon measure charging no $\calE_{\bar{X}}$-polar set. We need to check absolutely continuous condition in Lemma \ref{SG_det_lem_ac}. We give a theorem as follows.
\begin{mythm}\label{SG_det_thm_hit}
The hitting distributions of $\myset{\bar{X}_t}$ and $\myset{Z_n}$ for $\dd X$ coincide.
\end{mythm}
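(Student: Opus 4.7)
The plan is to identify both hitting distributions with the Martin-kernel harmonic measure $\nu_x$ produced by Theorem \ref{SG_det_thm_conv}. First, I would invoke Theorem \ref{SG_det_thm_part}, which asserts that $\myset{X_t}$ is the part process of $\myset{\bar{X}_t}$ on the open set $X\subseteq\bar{X}$. For $x\in X$, this means that, under $\bbP_x$, the two processes agree pathwise on $[0,\sigma_{\dd X})$, and the lifetime of $\myset{X_t}$ satisfies $\zeta=\sigma_{\dd X}$. Quasi-left-continuity of the Hunt process $\myset{\bar{X}_t}$ along a suitable announcing sequence of stopping times will then give $\bar{X}_{\sigma_{\dd X}}=\lim_{t\uparrow\zeta}X_t$ in $\bar{X}$, $\bbP_x$-a.s.

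Next, I recall from Section \ref{SG_det_sec_df_X} that $\myset{X_t}$ is a variable speed continuous-time random walk with embedded chain $\myset{Y_n}$ whose law under $\bbP_x$ coincides with that of $\myset{Z_n}$, and that $X_t=Y_n$ on $[J_n,J_{n+1})$. Theorem \ref{SG_det_thm_infty} ensures that $\zeta<+\infty$ and $|X_t|\to+\infty$ as $t\uparrow\zeta$, so the jump index $n$ must tend to infinity along any sequence $t_k\uparrow\zeta$, yielding $\lim_{t\uparrow\zeta}X_t=\lim_{n\to+\infty}Y_n$ in $\bar{X}$, $\bbP_x$-a.s. Finally, Theorem \ref{SG_det_thm_conv} applied to $\myset{Z_n}$ says $Z_n\to Z_\infty$ in the Martin topology with distribution $\nu_x$; transferring to $\myset{Y_n}$ yields $Y_n\to Y_\infty$ with the same distribution $\nu_x$. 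Chaining the three identifications gives $\bar{X}_{\sigma_{\dd X}}=Y_\infty$ $\bbP_x$-a.s., so the law of $\bar{X}_{\sigma_{\dd X}}$ under $\bbP_x$ equals $\nu_x$, which is exactly the hitting distribution of $\myset{Z_n}$ for $\dd X$ from $x$.

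The main obstacle I anticipate is the rigorous identification $\bar{X}_{\sigma_{\dd X}}=\lim_{t\uparrow\zeta}X_t$. Since $\myset{\bar{X}_t}$ is only right-continuous, the equality of the left-limit with $\bar{X}_{\sigma_{\dd X}}$ requires invoking quasi-left-continuity of Hunt processes along the predictable sequence $J_n\wedge\sigma_{\dd X}\uparrow\sigma_{\dd X}$; the fact that $J_n\uparrow\zeta=\sigma_{\dd X}$ almost surely is precisely the content of Theorem \ref{SG_det_thm_infty} combined with the stochastic incompleteness of Theorem \ref{SG_det_thm_sto}. Once this limit identification is in place, everything else is bookkeeping. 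Since the resulting equality of measures holds for every $x\in X$ and $m(\dd X)=0$, it holds in particular for $m$-a.e.\ starting point, which is the form needed for the subsequent verification of the hypothesis of Lemma \ref{SG_det_lem_ac}.
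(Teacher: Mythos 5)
Your proposal is correct and follows essentially the same route as the paper: quasi-left-continuity of the Hunt process $\myset{\bar{X}_t}$ along the sequence $J_n\uparrow\zeta=\sigma_{\dd X}$, the identification $\bar{X}_{J_n}=X_{J_n}=Y_n$, and Theorem \ref{SG_det_thm_conv} to identify the limit law with $\nu_x$. The only step you gloss over is verifying that the $J_n$ are stopping times with respect to the filtration of $\myset{\bar{X}_t}$ (not merely of $\myset{X_t}$), which the paper settles by observing $\calF_t\subseteq\bar{\calF}_t$ before applying quasi-left-continuity.
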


\begin{proof}
Recall that $\myset{X_t}$ is characterized by random walk $\myset{Y_n}$ and jumping times $\myset{J_n}$, $\myset{X_t}$ is part process of $\myset{\bar{X}_t}$ on $X$ and $\zeta=\tau_X=\sigma_{\dd X}<+\infty$, $\bbP_x$-a.s. for all $x\in X$.

First, we show that jumping times $J_n$ are stopping times of $\myset{\bar{X}_t}$ for all $n\ge0$. Let $\myset{\calF_t}$ and $\myset{\bar{\calF}_t}$ be the minimum completed admissible filtration with respect to $\myset{X_t}$ and $\myset{\bar{X}_t}$, respectively. By Proposition \ref{SG_det_prop_jump}, $J_n$ are stopping times of $\myset{X_t}$. Since for all Borel measurable set $B\subseteq\bar{X}$, we have
$$\myset{X_t\in B}=\myset{\bar{X}_t\in B\cap X}\cap\myset{t<\zeta}\in\bar{\calF}_t,$$
$\calF_t\subseteq\bar{\calF}_t$ for all $t\ge0$. $J_n$ are stopping times of $\myset{\bar{X}_t}$ for all $n\ge0$.

Then, since $J_n\uparrow\zeta=\sigma_{\dd X}$, by quasi left continuity of $\myset{\bar{X}_t}$, we have for all $x\in X$
$$\bbP_x\left[\lim_{n\to+\infty}\bar{X}_{J_n}=\bar{X}_{\sigma_{\dd X}},\sigma_{\dd X}<+\infty\right]=\bbP_x\left[\sigma_{\dd X}<+\infty\right],$$
that is,
$$\bbP_x[\lim_{n\to+\infty}\bar{X}_{J_n}=\bar{X}_{\sigma_{\dd X}}]=1.$$
Noting that $J_n<\zeta=\sigma_{\dd X}$, we have $\bar{X}_{J_n}=X_{J_n}=Y_n$, hence
$$\bbP_x\left[\lim_{n\to+\infty}Y_n=\bar{X}_{\sigma_{\dd X}}\right]=1.$$
Hence the hitting distributions of $\myset{\bar{X}_t}$ and $\myset{Z_n}$ for $\dd X$ coincide under $\bbP_x$ for all $x\in X$.
\end{proof}
By Theorem \ref{SG_det_thm_hit}, the hitting distribution of $\myset{\bar{X}_t}$ for $\dd X$ under $\bbP_x$ is exactly $\nu_x$, hence
\begin{equation}\label{SG_det_eqn_hit}
\begin{aligned}
H_{\dd X}g(x)&=\bbE_x\left[g(\bar{X}_{\sigma_{\dd X}})\indi_{\myset{\sigma_{\dd X}<+\infty}}\right]=\bbE_x\left[g(\bar{X}_{\sigma_{\dd X}})\right]\\
&=\int_{\dd X}g\md\nu_x=\int_{\dd X}K(x,\cdot)g\md\nu=Hg(x),
\end{aligned}
\end{equation}
for all $x\in X$ and nonnegative Borel function $g$.

By Theorem \ref{SG_det_thm_hit} and Theorem \ref{SG_det_thm_conv}, $\nu$ satisfies the condition of Lemma \ref{SG_det_lem_ac} with $F=\mathrm{supp}(\nu)=\dd X$. By the above remark, we obtain a regular Dirichlet form $\check{\calE}$ on $L^2(\dd X;\nu)$.

We have explicit representation of $\check{\calE}$ as follows.

\begin{mythm}\label{SG_det_thm_check_E}
We have
$$
\begin{cases}
\check{\calE}(u,u)\asymp\int_K\int_K\frac{(u(x)-u(y))^2}{|x-y|^{\alpha+\beta}}\nu(\md x)\nu(\md y)<+\infty,\\
\check{\calF}=\myset{u\in C(K):\int_K\int_K\frac{(u(x)-u(y))^2}{|x-y|^{\alpha+\beta}}\nu(\md x)\nu(\md y)<+\infty},
\end{cases}
$$
where $\beta\in(\alpha,\beta^*)$.
\end{mythm}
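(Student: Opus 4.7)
The plan is to identify $\check{\calE}$ with Silverstein's jump-type representation of the harmonic Dirichlet energy on the Martin boundary, and then invoke the explicit Na\"im kernel estimate already established in \cite{KLW17}.

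First, I would apply the abstract trace recipe from \cite[Chapter 5]{CF12}. By Theorem \ref{SG_det_thm_hit} together with equation (\ref{SG_det_eqn_hit}), the hitting distribution $H_{\dd X}$ of the reflected process $\myset{\bar{X}_t}$ coincides with the Poisson integral operator $H$ built from the Martin kernel $K(x,\cdot)$. Consequently the trace form satisfies
\[
\check{\calE}(u,u) = \calE_{\bar X}(Hu, Hu) = \tfrac{1}{2}\sum_{x,y\in X}c(x,y)(Hu(x)-Hu(y))^2,
\]
since $\calE_{\bar X}$ and $\calE_X$ share the same pointwise formula on $X$ and $m(\dd X)=0$.

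Second, I would plug in Silverstein's representation recalled in Section \ref{SG_det_sec_rw}, which rewrites the right-hand side through the Na\"im kernel $\Theta$ as
\[
\tfrac{1}{2}\pi(o)\int_K\int_K (u(x)-u(y))^2 \Theta(x,y)\,\nu(\md x)\nu(\md y).
\]
The kernel estimate (\ref{SG_det_eqn_Theta}) then furnishes the two-sided comparison with $\int_K\int_K(u(x)-u(y))^2/|x-y|^{\alpha+\beta}\nu(\md x)\nu(\md y)$, where $\beta = -\log\lambda/\log 2 \in (\alpha,\beta^*)$ precisely when $\lambda\in(1/5,1/3)$.

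Third, I would split the domain identification into two inclusions. For $\check{\calF}\subseteq\{u\in C(K): \text{integral finite}\}$: any element of $\check{\calF}_e$ is a restriction to $K$ of some $v \in (\calF_{\bar X})_e$; from the construction of $\calF_{\bar X}$ in Theorem \ref{SG_det_thm_main} together with Lemma \ref{SG_det_lem_ext}, $v \in C(\bar X)$, hence $u=v|_K \in C(K)$, and the energy formula forces the integral to be finite. For the reverse inclusion, given $u \in C(K)$ with finite double integral, I form $Hu$ on $X$; Silverstein's formula yields finiteness of the discrete energy, so Lemma \ref{SG_det_lem_ext} extends $Hu$ continuously to $\bar X$. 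Membership in $L^2(K;\nu)$ is automatic since $\nu$ is a probability measure and $u$ is bounded.

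The main obstacle I anticipate is verifying that the continuous extension of $Hu$ to $\bar X$ furnished by Lemma \ref{SG_det_lem_ext} really agrees with the prescribed boundary datum $u$ on $\dd X$. This is a Fatou-type claim that the Poisson integral recovers $u$ along Martin convergent sequences; it reduces to the almost-sure convergence $Z_n\to Z_\infty$ of Theorem \ref{SG_det_thm_conv} combined with the fact that $Hu(x_n)\to u(\xi)$ whenever $x_n\to\xi\in K$ in the Martin topology, and exploits that $\nu_x$ is absolutely continuous with respect to $\nu$ with density $K(x,\cdot)$. The bilateral restriction $\lambda\in(1/5,1/3)$ is essential: $\lambda>1/5$ ensures via Theorem \ref{SG_det_thm_main} that $\calF_{\bar X}$ is a genuine regular Dirichlet form separating points (so that the trace is non-trivial), while $\lambda<1/3$ is the precise threshold that allows Lemma \ref{SG_det_lem_ext} to upgrade finite quadratic energy to continuity on $\bar X$.
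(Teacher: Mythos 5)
Your proposal follows essentially the same route as the paper: identify $\check{\calE}(u,u)=\calE_{\bar X}(Hu,Hu)$ via the hitting-distribution identity (\ref{SG_det_eqn_hit}), compare with the stable-like integral through Silverstein's Na\"im-kernel representation and the estimate (\ref{SG_det_eqn_Theta}), and establish the two domain inclusions using $(\calF_{\bar X})_e=\calF_{\bar X}$ (Theorem \ref{SG_det_thm_f_e}), the continuous-extension Lemma \ref{SG_det_lem_ext}, and the boundary-value recovery $v|_{\dd X}=u$, which is exactly the paper's Lemma \ref{SG_det_lem_bdy_har} and which you correctly single out as the main point to verify. The proof is correct and matches the paper's argument.
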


To prove this theorem, we need some preparation.

\begin{mylem}\label{SG_det_lem_bdy_har}
If $\lambda<1/3$, then for all $u\in C(\dd X)=C(K)$ with
$$\int_K\int_K\frac{(u(x)-u(y))^2}{|x-y|^{\alpha+\beta}}\nu(\md x)\nu(\md y)<+\infty,$$
let $v\in C(\bar{X})$ be the extended function of $Hu$ in Lemma \ref{SG_det_lem_ext}, we have $v|_{\dd X}=u$.
\end{mylem}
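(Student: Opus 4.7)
The plan is to first establish that $Hu$ has finite Dirichlet energy on $X$, invoke Lemma \ref{SG_det_lem_ext} to obtain the continuous extension $v$, and then identify $v|_{\dd X}$ with $u$ by means of the random walk $\myset{Z_n}$ and its terminal variable $Z_\infty$.

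For the energy bound I would invoke Silverstein's identity recalled in Section \ref{SG_det_sec_rw}, namely
$$\calE_X(Hu,Hu)=\frac{1}{2}\pi(o)\int_{\dd X}\int_{\dd X}(u(x)-u(y))^2\Theta(x,y)\nu(\md x)\nu(\md y),$$
together with the two-sided bound $\Theta(x,y)\asymp|x-y|^{-(\alpha+\beta)}$ from \eqref{SG_det_eqn_Theta}. The hypothesis on $u$ then immediately yields $\calE_X(Hu,Hu)<+\infty$. Because $\lambda<1/3$, Lemma \ref{SG_det_lem_ext} supplies a continuous extension $v\in C(\bar{X})$ of $Hu$.

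For the identification of boundary values I would combine Theorem \ref{SG_det_thm_conv} with martingale convergence. Under $\bbP_o$ the random walk $\myset{Z_n}$ converges almost surely to $Z_\infty$, whose distribution is $\nu$, and Theorem \ref{SG_det_thm_conv} gives
$$Hu(x)=\int_{\dd X}K(x,\cdot)u\md\nu=\bbE_x\left[u(Z_\infty)\right],\quad x\in X.$$
Since $u\in C(K)$ is bounded, the strong Markov property shows that $\myset{Hu(Z_n)}$ is the Doob martingale $\bbE_o[u(Z_\infty)\mid\sigma(Z_0,\ldots,Z_n)]$ of the bounded, terminal random variable $u(Z_\infty)$; this martingale converges $\bbP_o$-a.s.\ to $u(Z_\infty)$. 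On the other hand, $v$ is continuous on the compact space $\bar{X}$ and $Z_n\to Z_\infty$ a.s.\ in $\bar{X}$, so $v(Z_n)=Hu(Z_n)\to v(Z_\infty)$ a.s. Comparing the two limits gives $v(Z_\infty)=u(Z_\infty)$ $\bbP_o$-a.s. Since $\nu$ has full topological support on $K=\dd X$ and both $v|_{\dd X}$ and $u$ are continuous, an equality holding on a $\nu$-full measure subset propagates to all of $\dd X$.

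The main obstacle I expect is the almost-sure identification $Hu(Z_n)\to u(Z_\infty)$: it rests on the fact that $u(Z_\infty)$ is $\sigma(Z_0,Z_1,\ldots)$-measurable, so that the Doob martingale converges back to $u(Z_\infty)$ itself and not merely to a conditional expectation. This is ensured because $Z_\infty=\lim_{n\to\infty}Z_n$ pointwise on a full-measure set and $u$ is Borel on $\dd X$. Everything else is a clean combination of Silverstein's formula, the Na\"im kernel estimate \eqref{SG_det_eqn_Theta}, Lemma \ref{SG_det_lem_ext}, and bounded martingale convergence.
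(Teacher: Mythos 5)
Your argument is correct, but it identifies the boundary values by a genuinely different mechanism than the paper. After the same finite-energy step via the Na\"im kernel estimate (\ref{SG_det_eqn_Theta}), the paper proves the stronger pointwise statement that $Hu(x_n)\to u(\xi)$ for \emph{every} sequence $\myset{x_n}\subseteq X$ with $x_n\to\xi\in\dd X$: it writes $Hu(x_n)-u(\xi)=\int_{\dd X}K(x_n,\eta)(u(\eta)-u(\xi))\nu(\md\eta)$ using $H1=1$, splits the integral at $\theta_a(\eta,\xi)\ge\delta$, and kills the far part with the quantitative Martin kernel bound $K(x,\eta)\asymp\lambda^{|x|}(3/\lambda)^{|x\wedge\eta|}$ from (\ref{SG_det_eqn_Martin}); the continuity of $v$ is then barely used to identify the trace. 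You instead lean on the continuity of $v$ already granted by Lemma \ref{SG_det_lem_ext}: the Doob martingale $Hu(Z_n)=\bbE_o\left[u(Z_\infty)\mid\sigma(Z_0,\ldots,Z_n)\right]$ converges a.s.\ to $u(Z_\infty)$ itself (as you note, because $Z_\infty$ is measurable with respect to $\sigma(Z_0,Z_1,\ldots)$), while $v(Z_n)\to v(Z_\infty)$ by continuity of $v$ and Theorem \ref{SG_det_thm_conv}, so $v|_{\dd X}=u$ holds $\nu$-a.e.\ and hence everywhere since both functions are continuous and $\nu$ has full support on $K$. Your route avoids the explicit kernel asymptotics (\ref{SG_det_eqn_Martin}) entirely and would transfer to any transient network whose walk converges to the Martin boundary with a hitting measure of full support; the paper's route is purely analytic and quantitative, giving convergence of $Hu$ along arbitrary sequences approaching $\xi$ rather than only along random walk trajectories. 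Both are complete proofs of the lemma.
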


We need a calculation result from \cite[Theorem 5.3]{KLW17} as follows.
\begin{equation}\label{SG_det_eqn_Martin}
K(x,\xi)\asymp\lambda^{|x|-|x\wedge\xi|}(\frac{1}{2})^{-\frac{\log3}{\log2}|x\wedge\xi|}=\lambda^{|x|}\left(\frac{3}{\lambda}\right)^{|x\wedge\xi|},
\end{equation}
where $x\in X$ and $\xi\in\dd X$.

\begin{proof}
By the estimate of Na\"im kernel, we have
$$\sum_{x,y\in X}c(x,y)(Hu(x)-Hu(y))^2<+\infty,$$
hence Lemma \ref{SG_det_lem_ext} can be applied here and $v$ is well-defined. We only need to show that for all $\myset{x_n}\subseteq X$ and $\xi\in\dd X$ with $x_n\to\xi$, then $Hu(x_n)\to u(\xi)$ as $n\to+\infty$. Indeed, since
$$Hu(x)=\int_{\dd X}K(x,\eta)u(\eta)\nu(\md\eta)=\int_{\dd X}u(\eta)\nu_x(\md\eta)=\bbE_x\left[u(Z_\infty)\right],$$
we have
$$H1(x)=\int_{\dd X}K(x,\eta)\nu(\md\eta)=1$$
for all $x\in X$. Then
\begin{align*}
\lvert Hu(x_n)-u(\xi)\rvert&=\lvert\int_{\dd X}K(x_n,\eta)u(\eta)\nu(\md\eta)-u(\xi)\rvert=\lvert\int_{\dd X}K(x,\eta)(u(\eta)-u(\xi))\nu(\md\eta)\rvert\\
&\le\int_{\dd X}K(x_n,\eta)|u(\eta)-u(\xi)|\nu(\md\eta).
\end{align*}
Since $u\in C(\dd X)$, for all $\veps>0$, there exists $\delta>0$ such that for all $\eta,\xi\in\dd X$ with $\theta_a(\eta,\xi)<\delta$, we have $|u(\eta)-u(\xi)|<\veps$. Assume that $|u(x)|\le M<+\infty$ for all $x\in\dd X$, then
\begin{align*}
&\lvert Hu(x_n)-u(\xi)\rvert\\
&\le\int_{\theta_a(\eta,\xi)<\delta}K(x_n,\eta)|u(\eta)-u(\xi)|\nu(\md\eta)+\int_{\theta_a(\eta,\xi)\ge\delta}K(x_n,\eta)|u(\eta)-u(\xi)|\nu(\md\eta)\\
&<\veps\int_{\theta_a(\eta,\xi)<\delta}K(x_n,\eta)\nu(\md\eta)+2M\int_{\theta_a(\eta,\xi)\ge\delta}K(x_n,\eta)\nu(\md\eta)\\
&\le\veps+2M\int_{\theta_a(\eta,\xi)\ge\delta}K(x_n,\eta)\nu(\md\eta).
\end{align*}
There exists $N\ge1$ such that for all $n>N$, we have $\theta_a(x_n,\xi)<\delta/2$, then for all $\theta_a(\eta,\xi)\ge\delta$
$$\theta_a(x_n,\eta)\ge\theta_a(\eta,\xi)-\theta_a(x_n,\xi)\ge\delta-\frac{\delta}{2}=\frac{\delta}{2}.$$
By Equation (\ref{SG_det_eqn_Martin}), we have
\begin{align*}
K(x_n,\eta)&\asymp \lambda^{|x_n|}\left(\frac{3}{\lambda}\right)^{|x_n\wedge\eta|}=\lambda^{|x_n|}e^{|x_n\wedge\eta|\log(\frac{3}{\lambda})}=\lambda^{|x_n|}e^{-a|x_n\wedge\eta|\frac{1}{a}\log(\frac{\lambda}{3})}\\
&=\lambda^{|x_n|}\rho_a(x_n,\eta)^{\frac{1}{a}\log(\frac{\lambda}{3})}=\frac{\lambda^{|x_n|}}{\rho_a(x_n,\eta)^{\frac{1}{a}\log(\frac{3}{\lambda})}}.
\end{align*}
Since $\rho_a$ and $\theta_a$ are equivalent, there exists some positive constant $C$ independent of $x_n$ and $\eta$ such that
$$K(x_n,\eta)\le C\frac{\lambda^{|x_n|}}{\delta^{\frac{1}{a}\log(\frac{3}{\lambda})}}.$$
Hence
$$\lvert Hu(x_n)-u(\xi)\rvert\le\veps+2M\int_{\theta_a(\eta,\xi)\ge\delta}C\frac{\lambda^{|x_n|}}{\delta^{\frac{1}{a}\log(\frac{3}{\lambda})}}\nu(\md\eta)\le\veps+2MC\frac{\lambda^{|x_n|}}{\delta^{\frac{1}{a}\log(\frac{3}{\lambda})}},$$
letting $n\to+\infty$, we have $|x_n|\to+\infty$, hence
$$\varlimsup_{n\to+\infty}\lvert Hu(x_n)-u(\xi)\rvert\le\veps.$$
Since $\veps>0$ is arbitrary, we have $\lim_{n\to+\infty}Hu(x_n)=u(\xi)$.
\end{proof}

\begin{mythm}\label{SG_det_thm_f_e}
$(\calF_{\bar{X}})_e=\calF_{\bar{X}}$, here we use the convention that all functions in extended Dirichlet spaces are quasi continuous.
\end{mythm}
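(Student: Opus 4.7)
The plan is to prove $(\calF_{\bar{X}})_e = \calF_{\bar{X}}$ by showing that the extended Dirichlet space cannot introduce any genuinely new functions: any $\calE_{\bar X}$-Cauchy limit automatically has a continuous representative on $\bar X$ that already lies in $\calF_{\bar X}$. The inclusion $\calF_{\bar X} \subseteq (\calF_{\bar X})_e$ is immediate from the constant sequence $u_n \equiv u$, so the substance is the reverse inclusion.

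First I would take $u \in (\calF_{\bar X})_e$ together with an approximating sequence $\{u_n\} \subseteq \calF_{\bar X}$ that is $\calE_{\bar X}$-Cauchy and satisfies $u_n \to u$ $m$-a.e. Because $m(\{x\}) > 0$ for every $x \in X$, any $m$-null set is contained in $\dd X$, so the convergence is pointwise on all of $X$ (after modifying $u$ on $\dd X$ if necessary, which is invisible to $m$). Writing $C := \sup_n \calE_{\bar X}(u_n, u_n) < \infty$ and applying Fatou's lemma termwise to the discrete sum defining $\calE_{\bar X}$, I obtain
$$\tfrac{1}{2}\sum_{x,y \in X} c(x,y)\bigl(u(x)-u(y)\bigr)^2 \;\le\; \varliminf_{n \to \infty} \calE_{\bar X}(u_n, u_n) \;\le\; C \;<\; \infty.$$
Thus $u|_X$ has finite jumping energy on the Sierpi\'nski graph.

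Next I would invoke Lemma \ref{SG_det_lem_ext}, which is available because in the regular regime of Theorem \ref{SG_det_thm_main} we have $\lambda < 1/3$. That lemma produces a unique continuous extension $\tilde u \in C(\bar X)$ of $u|_X$, via the geodesic-ray estimate $|u(x_n) - u(x_{n+1})| \le \sqrt{2C}(\sqrt{3\lambda})^n$. Since $\bar X$ is compact, $\tilde u$ is bounded; since $m(\bar X) = m(X) < \infty$, this gives $\tilde u \in L^2(\bar X; m)$. Combined with the Fatou bound on energy, $\tilde u \in \calF_{\bar X}$.

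Finally I would identify $\tilde u$ with $u$ as elements of $(\calF_{\bar X})_e$. Because $\tilde u = u$ on $X$ and $m$ is supported on $X$, we have $\tilde u = u$ $m$-a.e., so they represent the same class in the extended space. Under the stated convention that all elements of $(\calF_{\bar X})_e$ are taken quasi continuous, uniqueness of quasi continuous versions up to q.e. equivalence (since $\tilde u \in C(\bar X)$ is automatically quasi continuous) forces $u = \tilde u$ q.e., whence $u \in \calF_{\bar X}$. The main obstacle I anticipate is the last identification step: one must ensure the identification occurs q.e.\ and not merely $m$-a.e., since $m$ does not charge $\dd X$ and so is insensitive to boundary behavior. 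This is handled either by a direct appeal to the uniqueness of quasi continuous representatives in \cite{FOT11}, or, more concretely, by extracting a subsequence of $\{u_n\}$ that converges q.e.\ (which it does by the standard capacity argument for $\calE_{\bar X}$-Cauchy sequences) and comparing its q.e.\ limit with the continuous function $\tilde u$.
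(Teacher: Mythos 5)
Your proposal is correct and follows essentially the same route as the paper's proof: Fatou's lemma applied to the discrete energy sum (using that every point of $X$ has positive $m$-mass so the $m$-a.e.\ convergence is pointwise on $X$), then Lemma \ref{SG_det_lem_ext} to produce a continuous extension to $\bar{X}$, and finally the identification of the two quasi continuous versions q.e.\ because they agree $m$-a.e. The only cosmetic difference is that you spell out the $L^2$ membership of the extension and mention a subsequence/capacity alternative for the last step, neither of which changes the argument.
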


\begin{proof}
It is obvious that $(\calF_{\bar{X}})_e\supseteq\calF_{\bar{X}}$. For all $u\in(\calF_{\bar{X}})_e$, by definition, there exists an $\calE_{\bar{X}}$-Cauchy sequence $\myset{u_n}\subseteq\calF_{\bar{X}}$ that converges to $u$ $m$-a.e.. Hence $u_n(x)\to u(x)$ for all $x\in X$. By Fatou's lemma, we have
\begin{align*}
\frac{1}{2}\sum_{x,y\in X}c(x,y)(u(x)-u(y))^2&=\frac{1}{2}\sum_{x,y\in X}\lim_{n\to+\infty}c(x,y)(u(x)-u(y))^2\\
&\le\varliminf_{n\to+\infty}\frac{1}{2}\sum_{x,y\in X}c(x,y)(u_n(x)-u_n(y))^2\\
&=\varliminf_{n\to+\infty}\calE_{\bar{X}}(u_n,u_n)\\
&<+\infty.
\end{align*}
By Lemma \ref{SG_det_lem_ext}, $u|_X$ can be extended to a continuous function $v$ on $\bar{X}$. Since $u,v$ are quasi continuous on $\bar{X}$ and $u=v,m$-a.e., we have $u=v$ q.e., we can take $u$ as $v$. Hence $u$ can be taken continuous, $u\in\calF_{\bar{X}}$, $(\calF_{\bar{X}})_e\subseteq\calF_{\bar{X}}$.
\end{proof}

\begin{myrmk}
It was proved in \cite[Proposition 2.9]{HK06} that a result of the above type holds in much more general frameworks.
\end{myrmk}

\begin{proof}[Proof of Theorem \ref{SG_det_thm_check_E}]
By Equation (\ref{SG_det_eqn_hit}) and Equation (\ref{SG_det_eqn_Theta}), we have
\begin{align*}
\check{\calE}(u|_{\dd X},u|_{\dd X})&=\calE_{\bar{X}}(H_{\dd X}u,H_{\dd X}u)=\calE_{\bar{X}}(Hu,Hu)\\
&=\frac{1}{2}\sum_{x,y\in X}c(x,y)(Hu(x)-Hu(y))^2\\
&\asymp\int_K\int_K\frac{(u(x)-u(y))^2}{|x-y|^{\alpha+\beta}}\nu(\md x)\nu(\md y),
\end{align*}
and
\begin{align*}
\check{\calF}&=(\calF_{\bar{X}})_e|_{\dd X}\cap L^2(\dd X;\nu)=\calF_{\bar{X}}|_{\dd X}\cap L^2(\dd X;\nu)\\
&=\myset{u|_{\dd X}\in L^2(\dd X;\nu):u\in C(\bar{X}),\sum_{x,y\in X}c(x,y)(u(x)-u(y))^2<+\infty}\\
&=\myset{u|_{\dd X}:u\in C(\bar{X}),\sum_{x,y\in X}c(x,y)(u(x)-u(y))^2<+\infty}.
\end{align*}
For all $u|_{\dd X}\in\check{\calF}$, we have $H_{\dd X}u=Hu\in(\calF_{\bar{X}})_e=\calF_{\bar{X}}$, $u|_{\dd	X}\in C(\dd X)=C(K)$ and
\begin{align*}
\int_K\int_K\frac{(u(x)-u(y))^2}{|x-y|^{\alpha+\beta}}\nu(\md x)\nu(\md y)&\asymp\frac{1}{2}\sum_{x,y\in X}c(x,y)(Hu(x)-Hu(y))^2\\
&=\calE_{\bar{X}}(Hu,Hu)<+\infty,
\end{align*}
that is, $\check{\calF}\subseteq\text{RHS}$.

On the other hand, for all $u\in\text{RHS}$, we have $Hu$ satisfies
$$\frac{1}{2}\sum_{x,y\in X}c(x,y)(Hu(x)-Hu(y))^2\asymp\int_K\int_K\frac{(u(x)-u(y))^2}{|x-y|^{\alpha+\beta}}\nu(\md x)\nu(\md y)<+\infty.$$
By Lemma \ref{SG_det_lem_ext}, $Hu$ can be extended to a continuous function $v$ on $\bar{X}$, then $v\in C(\bar{X})$. By Lemma \ref{SG_det_lem_bdy_har}, we have $v|_{\dd X}=u$.
$$\frac{1}{2}\sum_{x,y\in X}c(x,y)(v(x)-v(y))^2=\frac{1}{2}\sum_{x,y\in X}c(x,y)(Hu(x)-Hu(y))^2<+\infty,$$
hence $v\in\calF_{\bar{X}}$, $u\in\check{\calF}$, $\text{RHS}\subseteq\check{\calF}$.
\end{proof}

Then we have the following result.

\begin{mythm}\label{SG_det_thm_E_K}
Let
$$
\begin{cases}
&\calE_K(u,u)=\int_K\int_K\frac{(u(x)-u(y))^2}{|x-y|^{\alpha+\beta}}\nu(\md x)\nu(\md y),\\
&\calF_K=\myset{u\in L^2(K;\nu):\int_K\int_K\frac{(u(x)-u(y))^2}{|x-y|^{\alpha+\beta}}\nu(\md x)\nu(\md y)<+\infty}.
\end{cases}
$$
If $\beta\in(\alpha,\beta^*)$, then $(\calE_K,\calF_K)$ is a regular Dirichlet form on $L^2(K;\nu)$.
\end{mythm}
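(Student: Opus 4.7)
The plan is to bootstrap from Theorem \ref{SG_det_thm_check_E}, which has just established that $(\check\calE, \check\calF)$ is a regular Dirichlet form on $L^2(K;\nu)$ with $\check\calE(u,u) \asymp \calE_K(u,u)$ on $\check\calF = \{u \in C(K) : \calE_K(u,u) < +\infty\}$. The only gap between this and the theorem is that $\calF_K$ is a priori larger than $\check\calF$: it is defined as all $L^2$ functions of finite $\calE_K$-energy, not merely the continuous ones. Once the identification $\calF_K = \check\calF$ (modulo $\nu$-null sets) is secured, the Markovian property of $\calE_K$ is immediate (any normal contraction decreases $(u(x)-u(y))^2$ pointwise), the closedness is a direct Fatou argument applied to any $(\calE_K)_1$-Cauchy sequence converging in $L^2$, and regularity transfers from $\check\calE$ via the bilipschitz comparison $\calE_K \asymp \check\calE$ on the common domain.

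So the whole proof reduces to showing that every $u \in \calF_K$ admits a continuous version. For this I would apply Lemma \ref{lem_SG_holder}, which guarantees a H\"older continuous representative provided $\beta > \alpha$ and the discrete seminorm $E(u) = \sum_{n\ge1} 2^{(\beta-\alpha)n} \sum_{w^{(1)} \sim_n w^{(2)}} (P_n u(w^{(1)}) - P_n u(w^{(2)}))^2$ is finite. The remaining task is thus the comparability $E(u) \lesssim \calE_K(u,u)$. I would obtain this by Jensen's inequality applied to the cell averages $P_n u$, combined with the pointwise bound $|x-y|^{-(\alpha+\beta)} \gtrsim 2^{(\alpha+\beta)n}$ valid on $K_{w^{(1)}} \times K_{w^{(2)}}$ for $w^{(1)} \sim_n w^{(2)}$. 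Using $\nu(K_w) = 3^{-n} = 2^{-\alpha n}$, one then finds
\[
2^{(\beta-\alpha)n}\bigl(P_nu(w^{(1)})-P_nu(w^{(2)})\bigr)^2 \lesssim \int_{K_{w^{(1)}}}\int_{K_{w^{(2)}}} \frac{(u(x)-u(y))^2}{|x-y|^{\alpha+\beta}}\,\nu(\md x)\nu(\md y),
\]
so that the weighted double sum defining $E(u)$ telescopes into (a constant multiple of) the continuous double integral defining $\calE_K(u,u)$.

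The main obstacle is to ensure that summing these cell-wise bounds over $n$ and over all adjacent pairs does not accumulate unboundedly. This reduces to a bounded-overlap property: for fixed $(x,y) \in K \times K$ with $x \ne y$, the $n$-th level cells containing $x$ and $y$ are distinct and $n$-adjacent only when $2^{-n} \asymp |x-y|$ (for larger $n$ the cells are non-adjacent, for smaller $n$ they coincide). Hence each such pair $(x,y)$ contributes to at most a uniformly bounded number of scales, and one obtains $E(u) \lesssim \calE_K(u,u)$ with a constant depending only on the geometry of the SG. Combined with Lemma \ref{lem_SG_holder}, this supplies the continuous representative and closes the identification $\calF_K = \check\calF$. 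Note that $\beta > \alpha$ is needed both for Lemma \ref{lem_SG_holder} and intrinsically for the continuous Besov space to embed into $C(K)$, whereas $\beta < \beta^*$ enters the argument only through the availability of $\check\calE$ in Theorem \ref{SG_det_thm_check_E}.
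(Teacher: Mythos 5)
Your overall architecture matches the paper's: reduce the theorem to the identification of $\calF_K$ with the set of \emph{continuous} functions of finite energy, for which Theorem \ref{SG_det_thm_check_E} already supplies a regular Dirichlet form, and obtain that identification from a H\"older embedding of the Besov space. The paper does this in one line by invoking \cite[Theorem 4.11 (iii)]{GHL03} directly for the continuous seminorm $\calE_K$, whereas you route through the discrete seminorm $E(u)$ of Lemma \ref{lem_SG_holder} and therefore must prove $E(u)\lesssim\calE_K(u,u)$. That comparison is true --- it is Lemma \ref{SG_con_lem_equiv}, quoted from \cite[Lemma 3.1]{HK06} --- but your justification of it contains a genuine error.

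The flawed step is the bounded-overlap claim: you assert that for fixed $x\ne y$ the level-$n$ cells containing $x$ and $y$ are distinct and $n$-adjacent only when $2^{-n}\asymp|x-y|$, the cells coinciding at coarser scales. This fails at junction points. Take $q=f_0(p_1)=f_1(p_0)$ and let $x\in K_{01^{k}}$, $y\in K_{10^{k}}$ lie on opposite sides of $q$; then for \emph{every} $n=1,\dots,k+1$ the points lie in the distinct adjacent cells $K_{01^{n-1}}$ and $K_{10^{n-1}}$ (both contain $q$), so the pair $(x,y)$ contributes at roughly $\log_2(1/|x-y|)$ scales. Summing your cell-wise bound over $n$ and over adjacent pairs therefore only yields $E(u)\lesssim\int_K\int_K(u(x)-u(y))^2|x-y|^{-\alpha-\beta}\log(1/|x-y|)\,\nu(\md x)\nu(\md y)$, which is not controlled by $\calE_K(u,u)$. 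The standard repair is to postpone the conversion of $|x-y|^{-\alpha-\beta}$ into $2^{(\alpha+\beta)n}$: first bound $\sum_{w^{(1)}\sim_nw^{(2)}}(P_nu(w^{(1)})-P_nu(w^{(2)}))^2\lesssim 2^{2\alpha n}\int_K\int_{B(x,2^{1-n})}(u(x)-u(y))^2\,\nu(\md y)\nu(\md x)$ (at each \emph{fixed} level the overlap is indeed bounded), then multiply by $2^{(\beta-\alpha)n}$ and sum over $n$, where the unbounded multiplicity across scales is absorbed by the geometric series $\sum_{n\le N}2^{(\alpha+\beta)n}\asymp 2^{(\alpha+\beta)N}\asymp|x-y|^{-(\alpha+\beta)}$ built into Lemma \ref{SG_app_lem_equiv1} and Corollary \ref{SG_app_cor_arbi}. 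With that replacement (or by simply citing Lemma \ref{SG_con_lem_equiv}, or by using the H\"older embedding for the continuous seminorm as the paper does), the rest of your argument --- Markovianity by normal contraction, closedness by Fatou, regularity transferred through the two-sided comparison with $\check{\calE}$ --- is correct.
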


\begin{proof}
Let
$$
\begin{cases}
&\calE_K(u,u)=\int_K\int_K\frac{(u(x)-u(y))^2}{|x-y|^{\alpha+\beta}}\nu(\md x)\nu(\md y),\\
&\calF_K=\myset{u\in C(K):\int_K\int_K\frac{(u(x)-u(y))^2}{|x-y|^{\alpha+\beta}}\nu(\md x)\nu(\md y)<+\infty}.
\end{cases}
$$
By Theorem \ref{SG_det_thm_check_E}, if $\beta\in(\alpha,\beta^*)$, then $(\calE_K,\calF_K)$ is a regular Dirichlet form on $L^2(K;\nu)$. We only need to show that
$$\calF_K=\myset{u\in L^2(K;\nu):\int_K\int_K\frac{(u(x)-u(y))^2}{|x-y|^{\alpha+\beta}}\nu(\md x)\nu(\md y)<+\infty}.$$
Indeed, it is obvious that $\calF_K\subseteq\text{RHS}$. On the other hand, since $\beta\in(\alpha,\beta^*)$, by \cite[Theorem 4.11 (\rmnum{3})]{GHL03}, the $\text{RHS}$ can be embedded into a H\"older space with parameter $(\beta-\alpha)/2$, hence the functions in the $\text{RHS}$ can be modified to be continuous, $\text{RHS}\subseteq\calF_K$.
\end{proof}

\begin{myrmk}
A more general result was proved in \cite{Kum03}.
\end{myrmk}

\section{Triviality of \texorpdfstring{$\calF_K$}{FK} when \texorpdfstring{$\beta\in(\beta^*,+\infty)$}{beta in (beta*,+infty)}}\label{SG_det_sec_trivial}

In this section, we show that $\calF_K$ consists only of constant functions if $\lambda\in(0,1/5)$ or $\beta\in(\beta^*,+\infty)$. Hence $\beta_*=\beta^*=\log5/\log2$.

\begin{mythm}\label{SG_det_thm_constant}
If $\lambda<{1}/{5}$, then for all continuous function $u$ on $\bar{X}$ with
\begin{equation}\label{SG_det_eqn_finite}
C=\frac{1}{2}\sum_{x,y\in X}c(x,y)(u(x)-u(y))^2<+\infty,
\end{equation}
we have $u|_{\dd X}$ is constant.
\end{mythm}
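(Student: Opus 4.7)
The plan is to combine Royden's decomposition with the Na\"im kernel estimate~(\ref{SG_det_eqn_Theta}) to reduce the claim to a Besov-type triviality on $K$, which is then dispatched by Kigami's monotonicity. Set $\phi := u|_{\dd X}$ and $\beta := -\log\lambda/\log 2 > \log 5/\log 2 = \beta^*$; note $\lambda < 1/3$, so Lemma~\ref{SG_det_lem_ext} applies. Apply Royden's decomposition (Theorem~\ref{SG_det_thm_Soa}) to $u|_X = u_H + u_0$ with $u_H$ harmonic Dirichlet and $u_0 \in \mathbf{D}_0$; both summands have finite Dirichlet energy and extend continuously to $\bar X$ by Lemma~\ref{SG_det_lem_ext}. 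The random-walk argument from the proof of Theorem~\ref{SG_det_thm_trace} gives $u_0(Z_n)\to 0$ almost surely; since the hitting measure $\nu$ has full support on $\dd X$, continuity forces $u_0\equiv 0$ on $\dd X$. Hence $\phi = u_H|_{\dd X}$, and by uniqueness of bounded harmonic extensions $u_H$ coincides with the Poisson integral $H\phi$.

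The Na\"im kernel identity $\calE_X(H\phi,H\phi) = \tfrac{\pi(o)}{2}\int_K\int_K(\phi(x)-\phi(y))^2\Theta(x,y)\,\nu(\md x)\,\nu(\md y)$ combined with~(\ref{SG_det_eqn_Theta}) yields
$$
C \;\geq\; D(u_H) \;\gtrsim\; I(\phi) \;:=\; \int_K\int_K\frac{(\phi(x)-\phi(y))^2}{|x-y|^{\alpha+\beta}}\,\nu(\md x)\,\nu(\md y).
$$
Define $\tilde e_n(\phi) := \sum_{w\in W_n}\sum_{p,q\in V_w,\,p\neq q}(\phi(p)-\phi(q))^2$. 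A Jensen-type lower bound, applied to disjoint balls $B(p,r_n)\subset K$ of radius $r_n\asymp 2^{-n}$ around each $p\in V_n$ (on which $\nu(B(p,r_n))\asymp 2^{-n\alpha}$ and $|x-y|\asymp 2^{-n}$ for $x\in B(p,r_n),\,y\in B(q,r_n)$), together with the uniform continuity of $\phi$ on the compact set $K$, yields
$$
I(\phi) \;\geq\; c\cdot 2^{n(\beta-\alpha)}\,\tilde e_n(\phi)
$$
for some $c>0$ and all $n$ sufficiently large. Kigami's monotonicity (Theorem~\ref{thm_SG_con}) states that $(5/3)^n\tilde e_n(\phi)$ is non-decreasing in $n$, so $\tilde e_n(\phi)\leq(5/3)^k\tilde e_{n+k}(\phi)$ for every $k\geq 0$. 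Chaining these bounds and using $5/3=2^{\beta^*-\alpha}$,
$$
\tilde e_n(\phi) \;\leq\; c^{-1}I(\phi)\,2^{-n(\beta-\alpha)}\bigl[2^{\beta^*-\beta}\bigr]^k.
$$
Since $\beta>\beta^*$, $2^{\beta^*-\beta}<1$; letting $k\to\infty$ forces $\tilde e_n(\phi)=0$ for every $n$ large (and then for every $n$, again by monotonicity). Therefore $\phi$ is constant on every $V_w$, hence on the dense set $V^*=\bigcup_n V_n$, and finally on $\dd X=K$ by continuity.

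The principal technical step is the Jensen lower bound $I(\phi)\geq c\cdot 2^{n(\beta-\alpha)}\tilde e_n(\phi)$. Jensen applied to $(B(p,r_n),B(q,r_n))$ produces a bound involving ball-average differences $(\bar\phi_{B(p,r_n)}-\bar\phi_{B(q,r_n)})^2$, and one must transfer to the point-value differences $(\phi(p)-\phi(q))^2$, absorbing modulus-of-continuity errors. Taking $r_n$ strictly smaller than $|p-q|/2=2^{-n-1}$ and invoking the uniform continuity of $\phi$ on the compact set $K$ delivers a uniform constant $c>0$ for all $n$ above a $\phi$-dependent threshold.
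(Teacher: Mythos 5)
Your route is genuinely different from the paper's: the paper works entirely on the graph, showing that the horizontal energy of the level-$n$ projection $u\circ\Phi_n$ grows like $(5\lambda)^{-n}$ while the vertical edges keep the true sphere energies within a bounded distance of the projected ones, forcing $C=+\infty$. You instead pass to the boundary via Royden's decomposition and the Na\"im kernel to get $I(\phi)<+\infty$ with $\beta>\beta^*$, and then try to prove a Besov triviality statement. The reduction to $I(\phi)<+\infty$ is essentially sound (modulo the unstated but standard fact that $f(Z_n)\to 0$ a.s.\ for $f\in\mathbf{D}_0$, which Lemma \ref{SG_det_lem_ALP} alone does not give). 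The problem is the step you yourself flag as the principal technical one.

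The claimed inequality $I(\phi)\ge c\,2^{n(\beta-\alpha)}\tilde e_n(\phi)$ is true, but your justification of it fails. Jensen on the balls $B(p,r_n)$, $B(q,r_n)$ controls $2^{(\beta-\alpha)n}\sum(\bar\phi_{B(p,r_n)}-\bar\phi_{B(q,r_n)})^2$ by $I(\phi)$; to replace averages by point values you incur an error of order $\omega(r_n)^2$ \emph{per pair}, and there are $\asymp 3^n=2^{\alpha n}$ pairs at level $n$. The total error in $2^{(\beta-\alpha)n}\tilde e_n(\phi)$ is therefore of order $2^{\beta n}\omega(2^{-n})^2$, which uniform continuity does not control at all. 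Even invoking the H\"older bound of Lemma \ref{lem_SG_holder} (which is the best available, $\omega(r)^2\lesssim I(\phi)r^{\beta-\alpha}$) leaves an error of order $2^{\alpha n}I(\phi)\to+\infty$. So no choice of $r_n$ and no modulus-of-continuity argument at a single scale can deliver a uniform constant $c$; the assertion that uniform continuity ``delivers a uniform constant $c>0$ for all $n$ above a $\phi$-dependent threshold'' is false. The correct statement is exactly Theorem \ref{SG_app_thm_equiv2_1} (the bound $E_\beta(\phi,\phi)\lesssim\calE_\beta(\phi,\phi)$), and its proof is necessarily multiscale: one telescopes the comparison between $\phi(p)$ and cell averages over a geometric chain of shrinking cells $K_{w^{(i)}}$ with weights $2^i$, so that the errors form a convergent series. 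If you replace your Jensen step by a citation of Theorem \ref{SG_app_thm_main} (or directly of Theorem \ref{SG_app_thm_det} applied to $\phi$), the rest of your chaining with Kigami's monotonicity goes through and yields a valid, if longer, alternative proof; as written, the argument has a genuine gap.
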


\begin{proof}
By Lemma \ref{SG_det_lem_ext}, Equation (\ref{SG_det_eqn_finite}) implies that $u|_X$ can be extended continuously to $\bar{X}$ which is exactly $u$ on $\bar{X}$. Assume that $u|_{\dd X}$ is not constant. First, we consider
$$\sum_{x,y\in S_n}c(x,y)(u(\Phi_n(x))-u(\Phi_n(y)))^2.$$
By the proof of Theorem \ref{SG_det_thm_main}, we have
$$\sum_{x,y\in S_{n+1}}c(x,y)(u(\Phi_{n+1}(x))-u(\Phi_{n+1}(y)))^2\ge\frac{1}{5\lambda}\sum_{x,y\in S_n}c(x,y)(u(\Phi_n(x))-u(\Phi_n(y)))^2.$$
Since $u|_{\dd X}$ is continuous on $\dd X$ and $u|_{\dd X}$ is not constant, there exists $N\ge1$ such that
$$\sum_{x,y\in S_N}c(x,y)(u(\Phi_N(x))-u(\Phi_N(y)))^2>0.$$
Since $\lambda<1/5$, for all $n\ge N$, we have
\begin{align*}
&\sum_{x,y\in S_n}c(x,y)(u(\Phi_n(x))-u(\Phi_n(y)))^2\\
&\ge\frac{1}{(5\lambda)^{n-N}}\sum_{x,y\in S_N}c(x,y)(u(\Phi_N(x))-u(\Phi_N(y)))^2\to+\infty,
\end{align*}
as $n\to+\infty$. Next, we consider the relation between
$$\sum_{x,y\in S_n}c(x,y)(u(x)-u(y))^2\text{ and }\sum_{x,y\in S_n}c(x,y)(u(\Phi_n(x))-u(\Phi_n(y)))^2.$$
Indeed
\begin{align*}
&\sum_{x,y\in S_n}c(x,y)(u(x)-u(y))^2\\
\le&\sum_{x,y\in S_n}c(x,y)\left(\lvert u(x)-u(\Phi_n(x))\rvert+\lvert u(\Phi_n(x))-u(\Phi_n(y))\rvert+\lvert u(\Phi_n(y))-u(y)\rvert\right)^2\\
\le&3\sum_{x,y\in S_n}c(x,y)\left((u(x)-u(\Phi_n(x)))^2+(u(\Phi_n(x))-u(\Phi_n(y)))^2+(u(\Phi_n(y))-u(y))^2\right)\\
=&3\sum_{x,y\in S_n}c(x,y)(u(\Phi_n(x))-u(\Phi_n(y)))^2\\
&+3\sum_{x,y\in S_n}c(x,y)\left((u(x)-u(\Phi_n(x)))^2+(u(\Phi_n(y))-u(y))^2\right).
\end{align*}
For all $x\in S_n$, there are at most 3 elements $y\in S_n$ such that $c(x,y)>0$ and for all $x,y\in S_n$, $c(x,y)\le\max{\myset{C_1,C_2}}/(3\lambda)^n$. By symmetry, we have
\begin{align*}
&\sum_{x,y\in S_n}c(x,y)\left((u(x)-u(\Phi_n(x)))^2+(u(\Phi_n(y))-u(y))^2\right)\\
=&2\sum_{x,y\in S_n}c(x,y)(u(x)-u(\Phi_n(x)))^2\le6\sum_{x\in S_n}\frac{\max{\myset{C_1,C_2}}}{(3\lambda)^n}(u(x)-u(\Phi_n(x)))^2.
\end{align*}
For all $x\in S_n$, there exists a geodesic ray $[x_0,x_1,\ldots]$ with $|x_k|=k$, $x_k\sim x_{k+1}$ for all $k\ge0$ and $x_n=x$, $x_k\to\Phi_n(x)$ as $k\to+\infty$. For distinct $x,y\in S_n$, the corresponding geodesic rays $[x_0,x_1,\ldots]$, $[y_0,y_1,\ldots]$ satisfy $x_k\ne y_k$ for all $k\ge n$. Then
\begin{align*}
\frac{1}{(3\lambda)^n}(u(x)-u(\Phi_n(x)))^2&\le\frac{1}{(3\lambda)^n}\left(\sum_{k=n}^\infty|u(x_k)-u(x_{k+1})|\right)^2\\
&=\left(\sum_{k=n}^\infty\frac{1}{(3\lambda)^{n/2}}|u(x_k)-u(x_{k+1})|\right)^2\\
&=\left(\sum_{k=n}^\infty{(3\lambda)^{(k-n)/2}}\frac{1}{(3\lambda)^{k/2}}|u(x_k)-u(x_{k+1})|\right)^2\\
&\le\sum_{k=n}^\infty(3\lambda)^{k-n}\sum_{k=n}^\infty\frac{1}{(3\lambda)^k}(u(x_k)-u(x_{k+1}))^2\\
&=\frac{1}{1-3\lambda}\sum_{k=n}^\infty c(x_k,x_{k+1})(u(x_k)-u(x_{k+1}))^2,
\end{align*}
hence
\begin{align*}
\sum_{x\in S_n}\frac{1}{(3\lambda)^n}(u(x)-u(\Phi_n(x)))^2&\le\frac{1}{1-3\lambda}\sum_{x\in S_n}\sum_{k=n}^\infty c(x_k,x_{k+1})(u(x_k)-u(x_{k+1}))^2\\
&\le\frac{1}{1-3\lambda}\left(\frac{1}{2}\sum_{x,y\in X}c(x,y)(u(x)-u(y))^2\right)\\
&=\frac{1}{1-3\lambda}C.
\end{align*}
We have
$$
\sum_{x,y\in S_n}c(x,y)\left((u(x)-u(\Phi_n(x)))^2+(u(\Phi_n(y))-u(y))^2\right)\le\frac{6\max{\myset{C_1,C_2}}}{1-3\lambda}C,
$$
and
$$
\sum_{x,y\in S_n}c(x,y)(u(x)-u(y))^2\le 3\sum_{x,y\in S_n}c(x,y)(u(\Phi_n(x))-u(\Phi_n(y)))^2+\frac{18\max{\myset{C_1,C_2}}}{1-3\lambda}C.
$$
Similarly, we have
$$
\sum_{x,y\in S_n}c(x,y)(u(\Phi_n(x))-u(\Phi_n(y)))^2\le 3\sum_{x,y\in S_n}c(x,y)(u(x)-u(y))^2+\frac{18\max{\myset{C_1,C_2}}}{1-3\lambda}C.
$$
Since
$$\lim_{n\to+\infty}\sum_{x,y\in S_n}c(x,y)(u(\Phi_n(x))-u(\Phi_n(y)))^2=+\infty,$$
we have
$$\lim_{n\to+\infty}\sum_{x,y\in S_n}c(x,y)(u(x)-u(y))^2=+\infty.$$
Therefore
$$C=\frac{1}{2}\sum_{x,y\in X}c(x,y)(u(x)-u(y))^2=+\infty,$$
contradiction! Hence $u|_{\dd X}$ is constant.
\end{proof}

\begin{mythm}\label{SG_det_thm_ub}
If $\lambda\in(0,1/5)$ or $\beta\in(\beta^*,+\infty)$, then $(\calE_K,\calF_K)$ on $L^2(K;\nu)$ is trivial, that is, $\calF_K$ consists only of constant functions. Hence the walk dimension of the SG $\beta_*=\beta^*=\log5/\log2$.
\end{mythm}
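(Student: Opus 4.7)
The plan is to reduce the case $\beta \in (\beta^*, +\infty)$ to Theorem \ref{SG_det_thm_constant} via the Poisson integral. Recall the correspondence $\beta = -\log\lambda/\log 2$, so $\beta > \beta^* = \log 5/\log 2$ exactly when $\lambda < 1/5$. Fix any $u \in \calF_K$. Since $\beta^* > \alpha = \log 3/\log 2$, we have in particular $\beta > \alpha$, so by the H\"older embedding of Lemma \ref{lem_SG_holder} the function $u$ admits a continuous representative on $K$, and I may assume $u \in C(K)$ from the outset.

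Next I would form the Poisson integral $Hu(x) = \int_{\dd X} K(x,\cdot)\, u\, \md\nu$ on $X$, which is harmonic for the random walk $Z$. By the Na\"im kernel identity used in the proof of Theorem \ref{SG_det_thm_check_E} together with (\ref{SG_det_eqn_Theta}),
\[
\tfrac{1}{2}\sum_{x,y \in X} c(x,y)(Hu(x) - Hu(y))^2 \asymp \int_K \int_K \frac{(u(x)-u(y))^2}{|x-y|^{\alpha+\beta}}\, \nu(\md x)\, \nu(\md y) = \calE_K(u,u) < +\infty.
\]
Since $\lambda < 1/5 < 1/3$, Lemma \ref{SG_det_lem_ext} yields a continuous extension of $Hu$ to $\bar{X}$, and Lemma \ref{SG_det_lem_bdy_har} (which only requires $\lambda < 1/3$) identifies its boundary values with $u$ itself on $\dd X \cong K$. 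Thus the extension is a continuous function on $\bar{X}$ of finite energy whose restriction to $\dd X$ is $u$.

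Now Theorem \ref{SG_det_thm_constant} applies---this is the step where the full strength of $\lambda < 1/5$ is invoked---to conclude that the boundary restriction of any such function is constant. Hence $u$ is constant on $K$, proving that $\calF_K$ consists only of constants for every $\beta > \beta^*$. To close the identification $\beta_* = \beta^*$, combine this with Theorem \ref{SG_det_thm_E_K}, which gives a regular Dirichlet form for every $\beta \in (\alpha, \beta^*)$: for $\beta > \beta^*$ the space $\calF_\beta$ is one-dimensional and therefore not dense in $L^2(K;\nu)$, so $(\calE_\beta, \calF_\beta)$ fails to be a Dirichlet form. By the definition (\ref{eqn_beta_low}), $\beta_* = \beta^*$.

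The substantive analytic work has already been absorbed into Theorem \ref{SG_det_thm_constant}, whose proof rests on the inflation inequality
\[
\sum_{x,y \in S_{n+1}} c(x,y)\bigl(u(\Phi_{n+1}(x))-u(\Phi_{n+1}(y))\bigr)^2 \ge \frac{1}{5\lambda}\sum_{x,y \in S_n} c(x,y)\bigl(u(\Phi_n(x))-u(\Phi_n(y))\bigr)^2,
\]
together with the control of the defect between $u(x)$ and $u(\Phi_n(x))$ along geodesic rays. Given this, the only remaining conceptual point is the passage from an arbitrary $L^2$-element of $\calF_K$ to a continuous function on $\bar{X}$, which is handled by the H\"older embedding plus Lemmas \ref{SG_det_lem_ext} and \ref{SG_det_lem_bdy_har}; these are exactly the ingredients already developed in Sections \ref{SG_det_sec_repre} and \ref{SG_det_sec_trace}, so no new estimates are needed.
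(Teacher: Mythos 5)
Your proposal is correct and follows essentially the same route as the paper: form the Poisson integral $Hu$, use the Na\"im kernel identity to see it has finite energy, extend continuously to $\bar{X}$ by Lemma \ref{SG_det_lem_ext}, identify the boundary trace with $u$ by Lemma \ref{SG_det_lem_bdy_har}, and invoke Theorem \ref{SG_det_thm_constant}. The only (harmless) addition is your explicit appeal to the H\"older embedding to pick a continuous representative of $u$, which the paper leaves implicit.
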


\begin{proof}
For all $u\in\calF_K$, let $v=Hu$ on $X$, then we have
$$\frac{1}{2}\sum_{x,y\in X}c(x,y)(v(x)-v(y))^2<+\infty.$$
Since $\lambda<1/5<1/3$, by Lemma \ref{SG_det_lem_ext}, $v$ on $X$ can be extended continuously to $\bar{X}$ still denoted by $v$. By Lemma \ref{SG_det_lem_bdy_har}, we have $v|_{\dd X}=u$. By Theorem \ref{SG_det_thm_constant}, we have $v|_{\dd X}$ is constant, hence $u$ is constant, $\calF_K$ consists only of constant functions.
\end{proof}

\begin{myrmk}
We can not have the triviality of $\calF_K$ when $\beta=\beta^*$ from the above proof. We will consider the case $\beta=\beta^*$ in Theorem \ref{SG_app_thm_det} and Theorem \ref{SG_con_thm_nonlocal}.
\end{myrmk}

\chapter{Approximation of Dirichlet Forms on the SG}\label{ch_SG_app}

This chapter is based on my work \cite{MY17}.

\section{Background and Statement}

We use the notions of the SG introduced in Section \ref{sec_notion}.

Let $\nu$ be the normalized Hausdorff measure on the SG $K$.

Let $(\calE_\beta,\calF_\beta)$ be given by
\begin{align*}
&\calE_\beta(u,u)=\int_K\int_K\frac{(u(x)-u(y))^2}{|x-y|^{\alpha+\beta}}\nu(\md x)\nu(\md y),\\
&\calF_\beta=\myset{u\in C(K):\calE_\beta(u,u)<+\infty}.
\end{align*}
where $\alpha=\log3/\log2$ is the Hausdorff dimension of the SG. By Chapter \ref{ch_SG_det}, $(\calE_\beta,\calF_\beta)$ is a non-local regular Dirichlet form on $L^2(K;\nu)$ for all $\beta\in(\alpha,\beta^*)$, where $\beta^*=\log5/\log2$ is the walk dimension of the SG (see also \cite{Kum03} using heat kernel estimates and subordination technique and \cite{KL16} using effective resistance on graph). We give another simple proof in this chapter.

Let $(\frakE_{\loc},\frakF_{\loc})$ be given by
\begin{align*}
&\frakE_\loc(u,u)=\lim_{n\to+\infty}\left(\frac{5}{3}\right)^n\sum_{w\in W_n}\sum_{p,q\in V_w}(u(p)-u(q))^2,\\
&\frakF_\loc=\myset{u\in C(K):\frakE_\loc(u,u)<+\infty}.
\end{align*}
Then $(\frakE_\loc,\frakF_\loc)$ is a self-similar local regular Dirichlet form on $L^2(K;\nu)$ which corresponds to the diffusion on the SG, see Theorem \ref{thm_SG_con}.

Analog to (\ref{eqn_classical}), one may expect that $(\beta^*-\beta)\calE_\beta(u,u)$ converges to $\frakE_\loc(u,u)$. However, this is not known. Using the sub-Gaussian estimates for the heat kernel of $\frakE_\loc$, it was shown in \cite[Theorem 3.1]{Pie08}, \cite[2.1]{Kum03} that the Dirichlet form $\tilde{\calE}_\beta$ that is obtained from $\frakE_\loc$ by subordination of order $\beta/\beta^*$ has the following properties
\begin{equation}\label{SG_app_eqn_sub}
\begin{aligned}
&\tilde{\calE}_\beta(u,u)\asymp(\beta^*-\beta)\calE_\beta(u,u),\\
&\tilde{\calE}_\beta(u,u)\to\frakE_\loc(u,u)\text{ as }\beta\uparrow\beta^*.
\end{aligned}
\end{equation}
Moreover, the jump kernel of $\tilde{\calE}_\beta$ is of the order $|x-y|^{-(\alpha+\beta)}$ for all $\beta\in(0,\beta^*)$.

In this chapter, we construct explicitly a different semi-norm $E_\beta$ of jump type that has properties similar to (\ref{SG_app_eqn_sub}). Our construction has the following two advantages. Firstly, our construction is independent of any knowledge about the local Dirichlet form $\frakE_\loc$ except for its definition. Secondly, we obtain a monotone convergence result for all functions in $L^2(K;\nu)$ which implies a Mosco convergence. While \cite[Theorem 3.1]{Pie08} only gave a convergence result for the functions in $\frakF_\loc$.

The new semi-norm $E_\beta$ is defined as follows.
$$E_\beta(u,u):=\sum_{n=1}^\infty2^{(\beta-\alpha)n}\sum_{w\in W_n}\sum_{p,q\in V_w}(u(p)-u(q))^2.$$
We state the main results in the next two theorems. Our first main result is as follows.
\begin{mythm}\label{SG_app_thm_main}
For all $\beta\in(\alpha,+\infty)$, for all $u\in C(K)$, we have
$$E_\beta(u,u)\asymp\calE_\beta(u,u).$$
\end{mythm}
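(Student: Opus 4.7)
The strategy is to introduce an intermediate cell-average semi-norm
$$\tilde E_\beta(u,u) := \sum_{n=1}^\infty 2^{(\beta-\alpha)n} \sum_{w^{(1)}\sim_n w^{(2)}} \bigl(P_n u(w^{(1)}) - P_n u(w^{(2)})\bigr)^2,$$
which is exactly the semi-norm appearing in Lemma \ref{lem_SG_holder}, and to prove the chain of equivalences
$\calE_\beta(u,u) \asymp \tilde E_\beta(u,u) \asymp E_\beta(u,u)$ for every $u \in C(K)$ under the assumption $\beta > \alpha$.

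For the first comparison $\calE_\beta \asymp \tilde E_\beta$, I would start with the standard dyadic decomposition of the kernel $|x-y|^{-(\alpha+\beta)}$ over the annuli $2^{-(n+1)} < |x-y| \le 2^{-n}$, followed by Abel summation, to obtain $\calE_\beta(u,u) \asymp [u]_{B^{2,2}_{\alpha,\beta}(K)}$, the Besov semi-norm recalled in Section \ref{sec_SG}. To translate balls into unions of $n$-cells, I would use that if $x \in K_{w^{(1)}}$ with $w^{(1)} \in W_n$ and $|x-y| \lesssim 2^{-n}$, then $y \in K_{w^{(2)}}$ for some $w^{(2)}$ with $w^{(1)} = w^{(2)}$ or $w^{(1)} \sim_n w^{(2)}$. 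Splitting $(u(x)-u(y))^2$ via Cauchy--Schwarz into a cell-average difference $(P_n u(w^{(1)}) - P_n u(w^{(2)}))^2$ plus two intra-cell fluctuations $(u(x) - P_n u(w^{(1)}))^2$ and $(u(y) - P_n u(w^{(2)}))^2$, and observing that the fluctuation integrals $\int_{K_w} (u(x) - P_n u(w))^2\, \nu(dx)$ can be recursively bounded by $(n+k)$-scale neighbour-differences for $k \ge 1$, gives both inequalities after a routine telescoping.

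The harder step is $\tilde E_\beta \asymp E_\beta$, where the continuity of $u$ is essential. The key observation is that for any $w = w_1 \cdots w_n \in W_n$ and any vertex $p \in V_w$, one may write $p = f_{w_1 \cdots w_{n-1}}(p_i)$ for some $i \in \{0,1,2\}$, and then $p \in K_{w_1 \cdots w_{n-1} i^k}$ for every $k \ge 0$, so by continuity
$$u(p) = \lim_{k \to \infty} P_{n-1+k} u(w_1 \cdots w_{n-1} i^k).$$
Writing this limit as a telescoping series and using the elementary identity
$$P_m u(\tilde w) - P_{m+1} u(\tilde w i) = \frac{1}{3} \sum_{j \ne i} \bigl(P_{m+1} u(\tilde w j) - P_{m+1} u(\tilde w i)\bigr),$$
whose right-hand side involves only pairs with $\tilde w i \sim_{m+1} \tilde w j$, I can express both $(u(p)-u(q))^2$ and $(P_n u(w^{(1)}) - P_n u(w^{(2)}))^2$ as sums of squared neighbour-differences at scales $m \ge n$. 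A Cauchy--Schwarz application with weights $2^{(\beta-\alpha)m/2}$ then yields both directions, the convergence of the resulting geometric series requiring precisely the hypothesis $\beta > \alpha$.

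The main obstacle is the combinatorial bookkeeping in this second step: when expanding $E_\beta$ or $\tilde E_\beta$ via the telescoping and summing over cells, one needs the multiplicities with which each neighbour-pair $w^{(1)} \sim_m w^{(2)}$ is counted to be bounded uniformly in $m$. This follows from the uniform local finiteness of the graphs $X_n$ (each $p \in V_n$ lies in at most a bounded number of $n$-cells, and each $w \in W_n$ has a bounded number of $\sim_n$-neighbours), combined with the exponential decay $2^{-(\beta-\alpha)k}$ which absorbs all combinatorial constants into a single multiplicative factor depending only on $\beta-\alpha$.
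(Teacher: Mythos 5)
Your first step, the equivalence $\calE_\beta(u,u)\asymp\tilde E_\beta(u,u)$ between the integral form and the cell-average semi-norm, is sound; it is exactly \cite[Lemma 3.1]{HK06}, quoted in this thesis as Lemma \ref{SG_con_lem_equiv}, and your sketch (annulus decomposition plus the fluctuation/average splitting) is the standard proof. Your telescoping identity $P_mu(\tilde w)-P_{m+1}u(\tilde wi)=\frac{1}{3}\sum_{j\ne i}\bigl(P_{m+1}u(\tilde wj)-P_{m+1}u(\tilde wi)\bigr)$ is correct, and together with $u(p)=\lim_k P_{n+k}u(wi^k)$ and Cauchy--Schwarz with geometric weights it does give the inequality $E_\beta(u,u)\lesssim\tilde E_\beta(u,u)$; this is a clean alternative to the paper's Theorem \ref{SG_app_thm_equiv2_1}, which instead telescopes through shrinking cells anchored at $p$ and invokes the H\"older embedding of Lemma \ref{SG_app_lem_holder}.

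The gap is in the reverse inequality $\tilde E_\beta(u,u)\lesssim E_\beta(u,u)$ (equivalently, after your step one, $\calE_\beta(u,u)\lesssim E_\beta(u,u)$, which is the paper's Theorem \ref{SG_app_thm_equiv2_2} and the genuinely hard half). Your telescoping machinery converts everything into differences of \emph{cell averages} $P_mu$ of neighbouring words at deeper scales: applied to $(u(p)-u(q))^2$ it bounds $E_\beta$ by $\tilde E_\beta$, but applied to $(P_nu(w^{(1)})-P_nu(w^{(2)}))^2$ it merely bounds $\tilde E_\beta$ by itself. Having expressed both quantities as sums of the same kind of deeper-scale terms does not produce a two-sided comparison between them, so the claim that Cauchy--Schwarz ``yields both directions'' is unsubstantiated. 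To dominate a cell-average difference by \emph{vertex} differences you must control the oscillation $|P_nu(w)-u(p)|$ by sums of $(u(p')-u(q'))^2$ over vertex pairs $p',q'\in V_{w'}$, $w'\in W_m$, $m\ge n$; this is where the paper works hardest. Its Theorem \ref{SG_app_thm_equiv2_2} replaces $\nu$ by the discrete vertex measures $\nu_m$ on $V_m$, routes each $x\in V_m\cap K_w$ to a distinguished vertex of $V_w$ through an explicit chain $p_n,\ldots,p_{m+1}$ descending one scale at a time with $|p_i-p_{i+1}|\in\{0,2^{-i}\}$, telescopes along that chain, and counts that each scale-$i$ vertex pair is used with multiplicity of order $3^{m-i}$ before letting $m\to+\infty$. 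Some construction of this type (or an equivalent quantitative statement that discrete vertex averages over $V_m\cap K_w$ converge to $P_nu(w)$ with an error controlled by $E_\beta$) is missing from your proposal, and the multiplicity issue you flag as the ``main obstacle'' is only the bookkeeping layer of this missing step, not the step itself.
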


Recall that a similar result for the unit interval was proved in \cite{Kam97} as follows. Let $I=[0,1]$. Then for all $\beta\in(1,+\infty)$, for all $u\in C(I)$, we have
\begin{equation}\label{SG_app_eqn_interval}
\sum_{n=1}^\infty2^{(\beta-1)n}\sum_{i=0}^{2^n-1}(u(\frac{i}{2^n})-u(\frac{i+1}{2^n}))^2\asymp\int_{I}\int_I\frac{(u(x)-u(y))^2}{|x-y|^{1+\beta}}\md x\md y.
\end{equation}

Consider the convergence problem. Assume that $(\calE,\calF)$ is a quadratic form on $L^2(K;\nu)$ where the energy $\calE$ has an explicit expression and the domain $\calF\subseteq C(K)$. We use the convention to extent $\calE$ to $L^2(K;\nu)$ as follows. For all $u\in L^2(K;\nu)$, $u$ has at most one continuous version. If $u$ has a continuous version $\tilde{u}$, then we define $\calE(u,u)$ as the energy of $\tilde{u}$ using its explicit expression which might be $+\infty$, if $u$ has no continuous version, then we define $\calE(u,u)$ as $+\infty$.

It is obvious that $\calF_{\beta_1}\supseteq\calF_{\beta_2}\supseteq\frakF_\loc$ for all $\alpha<\beta_1<\beta_2<\beta^*$. We use Theorem \ref{SG_app_thm_main} to answer the question about convergence as follows.

\begin{mythm}\label{SG_app_thm_incre}
For all $u\in L^2(K;\nu)$, we have
$$(1-5^{-1}\cdot 2^{\beta})E_\beta(u,u)\uparrow\frakE_\loc(u,u)$$
as $\beta\uparrow\beta^*=\log5/\log2$.
\end{mythm}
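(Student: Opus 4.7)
The strategy is to recognize $E_\beta(u,u)$ as a generating series in $\lambda = 2^\beta/5$ whose coefficients are Kigami's approximating energies $\frakE_n(u,u)$, and then to invoke the Abelian limit Proposition \ref{prop_ele1}. First I would use $\alpha = \log3/\log2$, hence $2^{(\beta-\alpha)n} = (2^\beta/3)^n = \lambda^n(5/3)^n$, together with the definition $\frakE_n(u,u) = (5/3)^n\sum_{w\in W_n}\sum_{p,q\in V_w}(u(p)-u(q))^2$ from Theorem \ref{thm_SG_con}, to write
$$E_\beta(u,u) = \sum_{n=1}^\infty \lambda^n \frakE_n(u,u).$$
Multiplying by $1-5^{-1}\cdot 2^\beta = 1-\lambda$ and noting that $\lambda\uparrow1$ exactly when $\beta\uparrow\beta^* = \log5/\log2$ reduces the claim to analyzing the Abel average of the sequence $\{\frakE_n(u,u)\}$.

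Next, the monotonicity part of Theorem \ref{thm_SG_con} asserts that $\{\frakE_n(u,u)\}_{n\ge1}$ is monotone increasing in $n$ for every $u\in l(K)$, with $\frakE_\loc(u,u)$ equal to its limit (equivalently its supremum in $[0,+\infty]$). I would then apply Proposition \ref{prop_ele1} directly with $x_n = \frakE_n(u,u)$, obtaining simultaneously (i) the monotonicity of $(1-\lambda)\sum_{n=1}^\infty\lambda^n\frakE_n(u,u)$ in $\lambda\in(0,1)$, which translates into monotonicity in $\beta$ and yields the $\uparrow$ symbol in the statement, and (ii) the identity
$$\lim_{\lambda\uparrow1}(1-\lambda)\sum_{n=1}^\infty\lambda^n\frakE_n(u,u) = \sup_{n\ge1}\frakE_n(u,u) = \frakE_\loc(u,u).$$
This settles the assertion for every $u$ that is pointwise defined on $V^\ast$, in particular for every continuous representative.

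Finally, to pass from $C(K)$ to the ambient space $L^2(K;\nu)$, I would invoke the extension convention recalled at the start of the section: for $u\in L^2(K;\nu)$ that admits a continuous version $\tilde u$, both $E_\beta(u,u)$ and $\frakE_\loc(u,u)$ are by definition the values at $\tilde u$, so the previous step applies verbatim; if no continuous version exists, both sides are declared to be $+\infty$ and the convergence is trivial. There is no substantial obstacle—the proof is purely an algebraic reindexing followed by a single Abelian-type lemma—so the only point that deserves care is bookkeeping the identity $E_\beta(u,u)=\sum_n\lambda^n\frakE_n(u,u)$ and checking that the monotonicity of $\frakE_n$ from Theorem \ref{thm_SG_con} genuinely holds for an arbitrary pointwise representative, which is clear from its definition as a finite sum of squared differences across a nested family of vertex sets.
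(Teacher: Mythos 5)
Your proposal is correct and follows essentially the same route as the paper's proof: rewrite $E_\beta(u,u)=\sum_{n\ge1}\lambda^n\frakE_n(u,u)$ with $\lambda=5^{-1}\cdot2^\beta$, invoke the monotonicity of $\frakE_n$ from Theorem \ref{thm_SG_con}, and conclude with Proposition \ref{prop_ele1}, treating functions without a continuous version by the stated convention. No gaps.
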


Moreover, we also have a Mosco convergence.

\begin{mythm}\label{SG_app_thm_conv_main}
For all sequence $\myset{\beta_n}\subseteq(\alpha,\beta^*)$ with $\beta_n\uparrow\beta^*$, we have $(1-5^{-1}\cdot 2^{\beta_n})E_{\beta_n}\to\frakE_\loc$ in the sense of Mosco.
\end{mythm}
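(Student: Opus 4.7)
The plan is to verify the two conditions of Definition \ref{def_Mosco} separately, using Theorem \ref{SG_app_thm_incre} as the main driver in both directions. Throughout, I write $\mathcal{E}^n := (1-5^{-1}\cdot 2^{\beta_n})E_{\beta_n}$ for brevity, and I treat these forms on $L^2(K;\nu)$ under the convention that $E_\beta(u,u) = +\infty$ whenever $u$ has no continuous version.

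For condition (\ref{def_Mosco_2}), I would take the constant recovery sequence $u_n \equiv u$. Then Theorem \ref{SG_app_thm_incre} immediately gives $\mathcal{E}^n(u,u) \uparrow \frakE_\loc(u,u)$ as $n\to+\infty$, so $\varlimsup_{n\to+\infty}\mathcal{E}^n(u_n,u_n) = \frakE_\loc(u,u)$. This half is essentially free.

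For condition (\ref{def_Mosco_1}), let $\{u_n\}\subseteq L^2(K;\nu)$ converge weakly to $u$. Fix an arbitrary $\beta_0\in(\alpha,\beta^*)$. For all sufficiently large $n$ one has $\beta_n > \beta_0$, and the monotonicity asserted in Theorem \ref{SG_app_thm_incre} (that $\beta \mapsto (1-5^{-1}\cdot 2^\beta)E_\beta(v,v)$ is increasing on $(\alpha,\beta^*)$ for every fixed $v\in L^2(K;\nu)$) yields
$$\mathcal{E}^n(u_n,u_n) \ge (1-5^{-1}\cdot 2^{\beta_0})E_{\beta_0}(u_n,u_n).$$
I would then apply the weak lower semi-continuity of closed forms (Corollary \ref{cor_Mosco}) to the form $(1-5^{-1}\cdot 2^{\beta_0})E_{\beta_0}$, obtaining
$$\varliminf_{n\to+\infty}\mathcal{E}^n(u_n,u_n) \ge (1-5^{-1}\cdot 2^{\beta_0})E_{\beta_0}(u,u).$$
Finally, letting $\beta_0\uparrow\beta^*$ and invoking Theorem \ref{SG_app_thm_incre} once more on the right-hand side gives the desired lower bound $\varliminf_{n\to+\infty}\mathcal{E}^n(u_n,u_n) \ge \frakE_\loc(u,u)$.

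The one step that demands care, and is the main obstacle, is justifying that $E_{\beta_0}$ is genuinely a closed form on $L^2(K;\nu)$ so that Corollary \ref{cor_Mosco} applies. My plan here is to use Theorem \ref{SG_app_thm_main}, which gives $E_{\beta_0}\asymp \calE_{\beta_0}$ on $C(K)$, together with the H\"older embedding of Lemma \ref{lem_SG_holder}: any $u\in L^2(K;\nu)$ with $\calE_{\beta_0}(u,u)<+\infty$ (equivalently, with $E_{\beta_0}(u,u)<+\infty$) admits a continuous version because $\beta_0 > \alpha$. Consequently the domain $\{u\in L^2(K;\nu): E_{\beta_0}(u,u)<+\infty\}$ coincides with $\calF_{\beta_0}$, and the $1$-inner products induced by $E_{\beta_0}$ and $\calE_{\beta_0}$ are equivalent norms on this common domain. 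Since $(\calE_{\beta_0},\calF_{\beta_0})$ is a regular (hence closed) Dirichlet form on $L^2(K;\nu)$ by Theorem \ref{thm_main_det}, closedness transfers to $E_{\beta_0}$, completing the argument.
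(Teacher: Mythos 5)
Your proposal is correct and follows essentially the same route as the paper: a constant recovery sequence for condition (\ref{def_Mosco_2}), and for condition (\ref{def_Mosco_1}) the combination of weak lower semi-continuity (Corollary \ref{cor_Mosco}) applied to a fixed form $E_{\beta_0}$, the monotonicity in $\beta$ from Theorem \ref{SG_app_thm_incre}, and a final passage $\beta_0\uparrow\beta^*$; the paper merely takes $\beta_0=\beta_m$ from the given sequence rather than an arbitrary value. Your explicit justification that $E_{\beta_0}$ is a closed form (via Theorem \ref{SG_app_thm_main}, Lemma \ref{lem_SG_holder} and the closedness of $\calE_{\beta_0}$) fills in a step the paper leaves implicit, and is correct.
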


As a byproduct of Theorem \ref{SG_app_thm_main}, we obtain the following result about a trace problem. Let us introduce the notion of Besov spaces. Let $(M,d,\mu)$ be a metric measure space and $\alpha,\beta>0$ two parameters. Let
$$B^{2,2}_{\alpha,\beta}(M)=\myset{u\in L^2(M;\mu):\sum_{n=0}^\infty2^{(\alpha+\beta)n}\int\limits_M\int\limits_{d(x,y)<2^{-n}}(u(x)-u(y))^2\mu(\md y)\mu(\md x)<+\infty}.$$
If $\beta>\alpha$, then $B^{2,2}_{\alpha,\beta}(M)$ can be embedded in $C^{\frac{\beta-\alpha}{2}}(M)$. We regard the SG $K$ and the unit interval $I$ as metric measure spaces with Euclidean metrics and normalized Hausdorff measures. Let $\alpha_1=\log3/\log2$, $\alpha_2=1$ be the Hausdorff dimensions, $\beta_1^*=\log5/\log2$, $\beta_2^*=2$ the walk dimensions of $K$ and $I$, respectively.

Let us identify $I$ as the segment $[p_0,p_1]\subseteq K$. Choose some $\beta_1\in(\alpha_1,\beta_1^*)$. Any function $u\in B^{2,2}_{\alpha_1,\beta_1}(K)$ is continuous on $K$ and, hence, has the trace $u|_I$ on $I$. The trace problem is the problem of identifying the space of all traces $u|_I$ of all functions $u\in B_{\alpha_1,\beta_1}^{2,2}(K)$. This problem was considered by A. Jonsson using general Besov spaces in $\R^n$, see remarks after \cite[Theorem 3.1]{Jon05}. The following result follows from \cite{Jon05}.

\begin{mythm}\label{SG_app_thm_trace_main}
Let $\beta_1,\beta_2$ satisfy $\beta_1\in(\alpha_1,\beta_1^*)$ and $\beta_1-\alpha_1=\beta_2-\alpha_2$. Then the trace space of $B^{2,2}_{\alpha_1,\beta_1}(K)$ to $I$ is $B^{2,2}_{\alpha_2,\beta_2}(I)$.
\end{mythm}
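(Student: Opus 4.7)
The plan is to reduce the theorem to a discrete comparison via Theorem \ref{SG_app_thm_main} and its interval analogue (\ref{SG_app_eqn_interval}). A standard dyadic decomposition shows that $[\cdot]_{B^{2,2}_{\alpha_1,\beta_1}(K)}$ is comparable to $\calE_{\beta_1}$ on $K$, and hence by Theorem \ref{SG_app_thm_main},
$$[u]^2_{B^{2,2}_{\alpha_1,\beta_1}(K)} \asymp E_{\beta_1}(u,u) = \sum_{n=1}^\infty 2^{(\beta_1-\alpha_1)n} \sum_{w \in W_n} \sum_{p,q \in V_w}(u(p)-u(q))^2.$$
Using the hypothesis $\beta_1 - \alpha_1 = \beta_2 - 1$, (\ref{SG_app_eqn_interval}) gives
$$[v]^2_{B^{2,2}_{\alpha_2,\beta_2}(I)} \asymp \sum_{n=1}^\infty 2^{(\beta_1-\alpha_1)n} \sum_{i=0}^{2^n-1}\left(v\left(\tfrac{i}{2^n}\right) - v\left(\tfrac{i+1}{2^n}\right)\right)^2.$$
It therefore suffices to prove that restriction and extension are bounded operators between these two discrete semi-norms.

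The restriction direction is immediate. Since $V_n \cap I = \myset{k/2^n : 0 \le k \le 2^n}$ and the triangles $K_w$ with $w \in \myset{0,1}^n$ have their bottom edge on $I$ with endpoints $f_w(p_0) = k/2^n$ and $f_w(p_1) = (k+1)/2^n$ (where $k$ is obtained by reading $w$ as a binary integer), restricting the double sum in $E_{\beta_1}(u,u)$ to such $w$ and to the single pair $(f_w(p_0), f_w(p_1))$ recovers exactly the right-hand side of the preceding display applied to $u|_I$. Hence $[u|_I]_{B^{2,2}_{\alpha_2,\beta_2}(I)} \lesssim [u]_{B^{2,2}_{\alpha_1,\beta_1}(K)}$.

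For the extension direction, the main obstacle, I would construct an extension operator by atomic decomposition of $v$. Write $v = \sum_{n \ge 0} v_n$ where $v_0$ is the affine interpolation at $\myset{0,1}$ and each $v_n$ for $n \ge 1$ is a ``bump'' supported on $[(k-1)/2^n, (k+1)/2^n]$ (with $k$ odd), vanishing at its endpoints and equal to $v - \sum_{\ell < n} v_\ell$ at $k/2^n$. Extend each $v_n$ to $K$ by applying Kigami's harmonic rule from Section \ref{sec_SG} on the two triangles of level $n$ sitting above the bump, and set the extension to zero elsewhere, producing $\tilde v_n$; take $u = \sum_n \tilde v_n$.

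The crucial estimate is the contribution of each atom to $E_{\beta_1}(u,u)$. By Theorem \ref{thm_SG_fun}, Kigami's extension of a triangle of level $n$ with boundary data of typical second difference $\Delta_n v$ has, at refinement level $n+m$, total discrete energy of order $(3/5)^m |\Delta_n v|^2$. Multiplying by the prefactor $2^{(\beta_1-\alpha_1)(n+m)}$ in $E_{\beta_1}$ and summing over $m \ge 0$ yields a geometric series with ratio $2^{\beta_1-\alpha_1} \cdot 3/5$, which converges precisely because the hypothesis $\beta_1 < \beta_1^* = \log 5/\log 2$ is equivalent to $2^{\beta_1-\alpha_1} < 5/3$. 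After this inner sum each atom contributes at most $C \cdot 2^{(\beta_1-\alpha_1)n}|\Delta_n v|^2$, and summing over $n$ and over all bumps at level $n$ gives $E_{\beta_1}(u,u) \lesssim [v]^2_{B^{2,2}_{\alpha_2,\beta_2}(I)}$. Continuity of the extension on $K$ with $u|_I = v$ then follows from Lemma \ref{lem_SG_holder}. As a more abstract alternative, the theorem can be deduced from the general Besov trace/extension theorem of Jonsson \cite{Jon05} for $d$-sets, which treats $K$ and $I$ uniformly.
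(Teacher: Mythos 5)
Your first half coincides with the paper's own proof: the paper likewise passes to the discrete semi-norms via Theorem \ref{SG_app_thm_main} and (\ref{SG_app_eqn_interval}), notes that $\beta_1-\alpha_1=\beta_2-\alpha_2$ makes the prefactors match, and observes that the level-$n$ sum over the dyadic pairs $(i/2^n,(i+1)/2^n)$ is a sub-sum of $\sum_{w\in W_n}\sum_{p,q\in V_w}$, giving $[u|_I]_{B^{2,2}_{\alpha_2,\beta_2}(I)}\lesssim[u]_{B^{2,2}_{\alpha_1,\beta_1}(K)}$. That is in fact \emph{all} the paper proves: its ``new short proof'' establishes only the inclusion of the trace space into $B^{2,2}_{\alpha_2,\beta_2}(I)$, and the surjectivity of the trace map is inherited from Jonsson's extension theorem in \cite{Jon05}. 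So your second half is genuinely additional content relative to the paper.

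Your extension construction is the right idea and the key numerology is correct: a level-$n$ harmonic atom with boundary data of size $\Delta$ has level-$(n+m)$ discrete energy $\asymp(3/5)^m\Delta^2$ by Theorem \ref{thm_SG_fun}, and the weight $2^{(\beta_1-\alpha_1)(n+m)}$ produces the geometric ratio $2^{\beta_1-\alpha_1}\cdot\frac{3}{5}<1$ exactly when $\beta_1<\beta_1^*$. Two points need care before this is complete. First, $E_{\beta_1}$ is quadratic, so $E_{\beta_1}(\sum_n\tilde v_n,\sum_n\tilde v_n)$ is not $\sum_nE_{\beta_1}(\tilde v_n,\tilde v_n)$ and the cross terms must be controlled; the standard fix works here because on any fixed cell $K_w$, $w\in W_m$, at most one atom per level $n\le m$ is non-zero, so a Cauchy--Schwarz with a geometric weight in $m-n$ converts the square of the sum into a sum of squares at the cost of a constant, precisely because the per-atom level-$m$ energies decay like $(2^{\beta_1-\alpha_1}\cdot 3/5)^{m-n}$. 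Second, you should verify that each $\tilde v_n$ is continuous on $K$ (set the apex values of the two supporting triangles to zero; since a point $k/2^n$ with $k$ odd lies in exactly two level-$n$ cells, the zero extension off those two cells is consistent) and that $\sum_n\tilde v_n$ converges uniformly with trace equal to $v$, which follows from the Faber--Schauder expansion of the continuous function $v$ together with Lemma \ref{lem_SG_holder}. With these points filled in, your argument yields a self-contained proof of the surjectivity that the paper delegates to \cite{Jon05}.
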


We give here a new short proof of Theorem \ref{SG_app_thm_trace_main} using Theorem \ref{SG_app_thm_main}.

Finally, we construct explicitly a sequence of non-local Dirichlet forms with jumping kernels equivalent to $|x-y|^{-\alpha-\beta}$ that converges \emph{exactly} to the local Dirichlet form. We need some notions as follows. For all $n\ge1$, $w=w_1\ldots w_n\in W_n$ and $p\in V_w$, we have $p=P_{w_1\ldots w_nw_{n+1}}$ for some $w_{n+1}\in\myset{0,1,2}$. Let $\gamma\ge1$ be an integer, define
$$K^{(i)}_{p,n}=K_{w_1\ldots w_nw_{n+1}\ldots w_{n+1}},i\ge1,$$
with $\gamma ni$ terms of $w_{n+1}$.

\begin{mythm}\label{SG_app_thm_jumping_kernel}
For all sequence $\myset{\beta_i}\subseteq(\alpha,\beta^*)$ with $\beta_i\uparrow\beta^*$, there exist positive functions $a_i$ bounded from above and below by positive constants given by
$$a_i=\delta_iC_i+(1-\delta_i),$$
where $\myset{\delta_i}\subseteq(0,1)$ is an arbitrary sequence with $\delta_i\uparrow1$ and
$$C_i(x,y)=\sum_{n=1}^{\Phi(i)}2^{-2\alpha n}\sum_{w\in W_n}\sum_{p,q\in V_w}\frac{1}{\nu(K^{(i)}_{p,n})\nu(K^{(i)}_{q,n})}1_{K^{(i)}_{p,n}}(x)1_{K^{(i)}_{q,n}}(y),$$
where $\Phi:\mathbb{N}\to\mathbb{N}$ is increasing and $(1-5^{-1}\cdot2^{\beta_i})\Phi(i)\ge i$ for all $i\ge1$. Then for all $u\in \mathcal{F}_\loc$, we have
$$\lim_{i\to+\infty}(1-5^{-1}\cdot2^{\beta_i})\iint_{K\times K\backslash\mathrm{diag}}\frac{a_i(x,y)(u(x)-u(y))^2}{|x-y|^{\alpha+\beta_i}}\nu(\md x)\nu(\md y)=\calE_\loc(u,u).$$
\end{mythm}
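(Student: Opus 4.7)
The plan is to split the integrand using $a_i = \delta_i C_i + (1-\delta_i)$ so that the left-hand side decomposes as $I_i = I_i^{(1)} + I_i^{(2)}$, where $I_i^{(2)} = (1-\delta_i)(1-5^{-1}\cdot 2^{\beta_i})\calE_{\beta_i}(u,u)$ and $I_i^{(1)}$ involves $C_i$. For the easier term $I_i^{(2)}$, Theorem~\ref{SG_app_thm_main} gives $\calE_{\beta_i}(u,u)\asymp E_{\beta_i}(u,u)$, and Theorem~\ref{SG_app_thm_incre} implies $(1-5^{-1}\cdot 2^{\beta_i})E_{\beta_i}(u,u)$ is bounded by (and in fact converges to) $\frakE_\loc(u,u)$, so $I_i^{(2)}\lesssim(1-\delta_i)\frakE_\loc(u,u)\to 0$ because $\delta_i\uparrow 1$.

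The main work is to show $I_i^{(1)}\to\frakE_\loc(u,u)$. I would interchange the finite sum defining $C_i$ with the integral to get
\[
\iint\frac{C_i(u(x)-u(y))^2}{|x-y|^{\alpha+\beta_i}}\nu(\md x)\nu(\md y) = \sum_{n=1}^{\Phi(i)}\frac{1}{3^{2n}}\sum_{w\in W_n}\sum_{p,q\in V_w}\frac{J_{n,w,p,q}}{\nu(K^{(i)}_{p,n})\nu(K^{(i)}_{q,n})},
\]
where $J_{n,w,p,q}$ is the double integral of $(u(x)-u(y))^2/|x-y|^{\alpha+\beta_i}$ over $K^{(i)}_{p,n}\times K^{(i)}_{q,n}$. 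For $p\neq q$ in $V_w$ we have $|p-q|=2^{-n}$ while $K^{(i)}_{p,n},K^{(i)}_{q,n}$ have diameter $2^{-n(1+\gamma i)}$, so $|x-y|=2^{-n}(1+o(1))$ uniformly as $i\to\infty$; combined with the uniform continuity of $u\in\mathcal{F}_\loc\subseteq C(K)$, this yields
\[
\frac{J_{n,w,p,q}}{\nu(K^{(i)}_{p,n})\nu(K^{(i)}_{q,n})} = 2^{(\alpha+\beta_i)n}\bigl[(u(p)-u(q))^2+\varepsilon_{n,i}\bigr],
\]
with $\varepsilon_{n,i}\to 0$ uniformly in $n\le\Phi(i)$ for fixed moduli of continuity. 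Summing, the off-diagonal contribution equals $\sum_{n=1}^{\Phi(i)}2^{(\beta_i-\alpha)n}\sum_w\sum_{p,q\in V_w}(u(p)-u(q))^2$ plus a controlled error. After multiplying by $(1-5^{-1}\cdot 2^{\beta_i})$ this is the truncation at level $\Phi(i)$ of $(1-5^{-1}\cdot 2^{\beta_i})E_{\beta_i}(u,u)$, which tends to $\frakE_\loc(u,u)$ by Theorem~\ref{SG_app_thm_incre}.

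The tail $\sum_{n>\Phi(i)}2^{(\beta_i-\alpha)n}\sum_w\sum_{p,q}(u(p)-u(q))^2$ is estimated using the monotonicity built into Kigami's construction (Theorem~\ref{thm_SG_con}): $\sum_w\sum_{p,q\in V_w}(u(p)-u(q))^2\le (3/5)^n\frakE_\loc(u,u)$. Since $2^\alpha=3$, each tail term carries the factor $(2^{\beta_i}/5)^n$, giving the geometric bound
\[
(1-5^{-1}\cdot 2^{\beta_i})\sum_{n>\Phi(i)}2^{(\beta_i-\alpha)n}\sum_w\sum_{p,q}(u(p)-u(q))^2\le\frakE_\loc(u,u)(2^{\beta_i}/5)^{\Phi(i)+1}.
\]
Using $(2^{\beta_i}/5)^{\Phi(i)}\le\exp(-(1-5^{-1}\cdot 2^{\beta_i})\Phi(i))\le e^{-i}$, this vanishes; this is precisely the role of the condition $(1-5^{-1}\cdot 2^{\beta_i})\Phi(i)\ge i$.

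The hard part will be controlling the two error sources in the cell-wise approximation, namely the replacement $(u(x)-u(y))^2\leadsto(u(p)-u(q))^2$ and the diagonal terms $p=q$. For the continuity error, Lemma~\ref{lem_SG_holder} gives $u\in C^{(\beta^*-\alpha)/2}(K)$, which quantifies $\varepsilon_{n,i}\lesssim 2^{-n(1+\gamma i)(\beta^*-\alpha)}$; choosing $\gamma$ sufficiently large forces the accumulated error (summed with the weights $2^{(\beta_i-\alpha)n}$ and $3^n$ words) to vanish uniformly in $i$. For the diagonal integrals $J_{n,w,p,p}$, the factor $1/\nu(K^{(i)}_{p,n})^2$ is enormous, but Hölder continuity of $u$ controls $J_{n,w,p,p}$ by a power of $2^{-n(1+\gamma i)}$ strictly better than this blow-up, again provided $\gamma$ is large enough; this is also what keeps $a_i$ controlled and lets the construction work. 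Assembling the three pieces yields $I_i\to\frakE_\loc(u,u)$.
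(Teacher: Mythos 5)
Your overall strategy coincides with the paper's (Propositions \ref{SG_app_prop_kernel1} and \ref{SG_app_prop_kernel2}): split off the $(1-\delta_i)$ part and kill it using the boundedness of $(1-5^{-1}\cdot 2^{\beta_i})\calE_{\beta_i}(u,u)$, replace cell averages by point evaluations with an error controlled by H\"older continuity and a large choice of $\gamma$, and identify the remainder as the truncation of $(1-5^{-1}\cdot2^{\beta_i})E_{\beta_i}(u,u)$ at level $\Phi(i)$. Your treatment of the truncation tail --- the direct geometric bound by $\frakE_\loc(u,u)\,\lambda_i^{\Phi(i)+1}$ via the monotonicity $\frakE_n(u,u)\le\frakE_\loc(u,u)$, followed by $\lambda_i^{\Phi(i)}\le e^{-(1-\lambda_i)\Phi(i)}\le e^{-i}$ --- is a clean variant of the two-sided squeeze the paper uses in Proposition \ref{SG_app_prop_kernel2}; both are fine, and the omission of the harmless $\delta_i$ prefactor on your $I_i^{(1)}$ is cosmetic.

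The one genuine problem is your handling of the terms with $p=q$. The bound you assert is false: H\"older continuity gives $(u(x)-u(y))^2\le c\,\frakE_\loc(u,u)\,|x-y|^{\beta^*-\alpha}$, hence an integrand $\lesssim|x-y|^{\beta^*-\beta_i-2\alpha}$ on $K^{(i)}_{p,n}\times K^{(i)}_{p,n}$; since $2\alpha-(\beta^*-\beta_i)>\alpha$, this is not $\nu\times\nu$-integrable near the diagonal, so no power of $2^{-n(1+\gamma i)}$ can be extracted from $J_{n,w,p,p}$, no matter how large $\gamma$ is. Worse, a scaling computation shows these terms, if present, would make the left-hand side diverge: for a cell $K_v$ with $|v|=m=n(1+\gamma i)$ one has
$$\iint_{K_v\times K_v}\frac{(u(x)-u(y))^2}{|x-y|^{\alpha+\beta_i}}\nu(\md x)\nu(\md y)=2^{(\beta_i-\alpha)m}\calE_{\beta_i}(u\circ f_v,u\circ f_v),$$
which for generic $u\in\calF_\loc$ is of order $(5^{-1}2^{\beta_i})^{m}/(1-5^{-1}2^{\beta_i})$, while the weight is $2^{-2\alpha n}\nu(K^{(i)}_{p,n})^{-2}=2^{2\alpha\gamma ni}$; the product behaves like $(2^{2\alpha}\cdot5^{-1}2^{\beta_i})^{\gamma ni}=(9\cdot5^{-1}2^{\beta_i})^{\gamma ni}\to+\infty$. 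The resolution is notational rather than analytic: in $C_i$ the sum $\sum_{p,q\in V_w}$ must be read over \emph{distinct} pairs, because the kernel is derived from $\sum\delta_p(\md x)\delta_q(\md y)$ restricted to $K\times K\backslash\mathrm{diag}$, where the $p=q$ terms vanish identically (this reading is also forced by the division by $|p-q|^{\alpha+\beta}$ in the derivation of $J(\md x\md y)$). Once those terms are removed, your argument for the off-diagonal pairs goes through and is the paper's proof.
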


\begin{myrmk}
The shape of the function $C_i$ reflects the inhomogeneity of the fractal structure with respect to the Euclidean structure. Of course, subordination technique in \cite{Pie08} ensures the existence of the functions $a_i$, but Theorem \ref{SG_app_thm_jumping_kernel} provides them explicitly.
\end{myrmk}

\section{Proof of Theorem \ref{SG_app_thm_main}}

First, we give other equivalent semi-norms which are more convenient for later use.
\begin{mylem}\label{SG_app_lem_equiv1}
For all $u\in L^2(K;\nu)$, we have
$$\int_K\int_K\frac{(u(x)-u(y))^2}{|x-y|^{\alpha+\beta}}\nu(\md x)\nu(\md y)\asymp\sum_{n=0}^\infty2^{(\alpha+\beta)n}\int_K\int_{B(x,2^{-n})}(u(x)-u(y))^2\nu(\md y)\nu(\md x).$$
\end{mylem}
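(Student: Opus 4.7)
The plan is to prove both inequalities via a dyadic decomposition of $K\times K$ according to the distance $|x-y|$, since $K$ has diameter at most $1$, so every pair $(x,y)\in K\times K$ with $x\neq y$ falls into some annulus of the form
$$A_n=\myset{(x,y)\in K\times K:2^{-(n+1)}\le|x-y|<2^{-n}},\qquad n\ge 0.$$
On $A_n$ the kernel satisfies $|x-y|^{-(\alpha+\beta)}\asymp 2^{(\alpha+\beta)n}$ with constants depending only on $\alpha+\beta$, so letting $I_n:=\iint_{A_n}(u(x)-u(y))^2\,\nu(\md x)\nu(\md y)$, I obtain
$$\int_K\int_K\frac{(u(x)-u(y))^2}{|x-y|^{\alpha+\beta}}\nu(\md x)\nu(\md y)\asymp\sum_{n=0}^\infty 2^{(\alpha+\beta)n}I_n.$$

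Next I rewrite the right-hand side of the lemma in terms of the same quantities $I_n$. Since $B(x,2^{-n})=\myset{y:|x-y|<2^{-n}}$ is (up to the diagonal, which is $\nu\otimes\nu$-null if we view $u$ as a fixed representative) the disjoint union $\bigsqcup_{k\ge n}\myset{y:2^{-(k+1)}\le|x-y|<2^{-k}}$, Fubini on $K\times K$ gives
$$J_n:=\int_K\int_{B(x,2^{-n})}(u(x)-u(y))^2\,\nu(\md y)\nu(\md x)=\sum_{k=n}^\infty I_k.$$

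The final step is an Abel/Fubini exchange of the two summation indices:
$$\sum_{n=0}^\infty 2^{(\alpha+\beta)n}J_n=\sum_{n=0}^\infty 2^{(\alpha+\beta)n}\sum_{k=n}^\infty I_k=\sum_{k=0}^\infty I_k\sum_{n=0}^{k}2^{(\alpha+\beta)n}=\sum_{k=0}^\infty I_k\cdot\frac{2^{(\alpha+\beta)(k+1)}-1}{2^{\alpha+\beta}-1},$$
which is comparable to $\sum_{k=0}^\infty 2^{(\alpha+\beta)k}I_k$ since $\alpha+\beta>0$. Combining this with the first equivalence yields the claim. All terms are nonnegative (possibly $+\infty$), so every manipulation is justified by Tonelli, and no integrability assumption beyond $u\in L^2(K;\nu)$ is needed. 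There is no genuine obstacle here; the only mild point is to check that the diagonal $\myset{x=y}$ contributes nothing to the double integrals, which is immediate since $\nu$ has no atoms.
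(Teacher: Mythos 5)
Your proposal is correct and follows essentially the same route as the paper: both decompose the double integral over dyadic annuli $2^{-(k+1)}\le|x-y|<2^{-k}$, write $\int_{B(x,2^{-n})}$ as the tail sum over these annuli, exchange the order of summation, and sum the resulting geometric series $\sum_{n=0}^{k}2^{(\alpha+\beta)n}\asymp 2^{(\alpha+\beta)k}$. The only cosmetic difference is that you establish two-sided comparabilities in one pass while the paper proves the two inequalities separately; the null-set remarks about the diagonal (and the sphere $|x-y|=1$) are harmless in both versions.
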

\begin{proof}
On the one hand, we have
\begin{align*}
&\int_K\int_K\frac{(u(x)-u(y))^2}{|x-y|^{\alpha+\beta}}\nu(\md x)\nu(\md y)\\
=&\int_K\int_{B(x,1)}\frac{(u(x)-u(y))^2}{|x-y|^{\alpha+\beta}}\nu(\md y)\nu(\md x)\\
=&\sum_{n=0}^\infty\int_K\int_{B(x,2^{-n})\backslash{B(x,2^{-(n+1)})}}\frac{(u(x)-u(y))^2}{|x-y|^{\alpha+\beta}}\nu(\md y)\nu(\md x)\\
\le&\sum_{n=0}^\infty2^{(\alpha+\beta)(n+1)}\int_K\int_{B(x,2^{-n})\backslash{B(x,2^{-(n+1)})}}{(u(x)-u(y))^2}\nu(\md y)\nu(\md x)\\
\le&2^{\alpha+\beta}\sum_{n=0}^\infty2^{(\alpha+\beta)n}\int_K\int_{B(x,2^{-n})}{(u(x)-u(y))^2}\nu(\md y)\nu(\md x).
\end{align*}
On the other hand, we have
\begin{align*}
&\sum_{n=0}^\infty2^{(\alpha+\beta)n}\int_K\int_{B(x,2^{-n})}{(u(x)-u(y))^2}\nu(\md y)\nu(\md x)\\
=&\sum_{n=0}^\infty\sum_{k=n}^\infty2^{(\alpha+\beta)n}\int_K\int_{B(x,2^{-k})\backslash{B(x,2^{-(k+1)})}}{(u(x)-u(y))^2}\nu(\md y)\nu(\md x)\\
=&\sum_{k=0}^\infty\sum_{n=0}^k 2^{(\alpha+\beta)n}\int_K\int_{B(x,2^{-k})\backslash{B(x,2^{-(k+1)})}}{(u(x)-u(y))^2}\nu(\md y)\nu(\md x)\\
\le&\sum_{k=0}^\infty\frac{2^{(\alpha+\beta)(k+1)}}{2^{\alpha+\beta}-1}\int_K\int_{B(x,2^{-k})\backslash{B(x,2^{-(k+1)})}}{(u(x)-u(y))^2}\nu(\md y)\nu(\md x)\\
\le&\frac{2^{\alpha+\beta}}{2^{\alpha+\beta}-1}\sum_{k=0}^\infty\int_K\int_{B(x,2^{-k})\backslash{B(x,2^{-(k+1)})}}\frac{(u(x)-u(y))^2}{|x-y|^{\alpha+\beta}}\nu(\md y)\nu(\md x)\\
=&\frac{2^{\alpha+\beta}}{2^{\alpha+\beta}-1}\int_K\int_{K}\frac{(u(x)-u(y))^2}{|x-y|^{\alpha+\beta}}\nu(\md x)\nu(\md y).
\end{align*}
\end{proof}

Moreover, we have

\begin{mycor}\label{SG_app_cor_arbi}
Fix arbitrary integer $N\ge0$ and real number $c>0$. For all $u\in L^2(K;\nu)$, we have
$$\int_K\int_K\frac{(u(x)-u(y))^2}{|x-y|^{\alpha+\beta}}\nu(\md x)\nu(\md y)\asymp\sum_{n=N}^\infty2^{(\alpha+\beta)n}\int_K\int_{B(x,c2^{-n})}(u(x)-u(y))^2\nu(\md y)\nu(\md x).$$
\end{mycor}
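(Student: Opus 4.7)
The plan is to adapt the annular-decomposition argument from the proof of Lemma \ref{SG_app_lem_equiv1} to the shifted starting index $N$ and scaling factor $c$, keeping track of how the implicit constants depend on these parameters.

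First I would decompose each ball $B(x, c 2^{-n}) = \bigsqcup_{k\ge n}\bigl(B(x, c 2^{-k})\setminus B(x, c 2^{-(k+1)})\bigr)$, substitute into the right-hand sum, and swap the order of summation. The inner geometric sum $\sum_{n=N}^{k}2^{(\alpha+\beta)n}$ is comparable to $2^{(\alpha+\beta)k}$, and on the $k$-th annulus $|x-y|^{-(\alpha+\beta)}$ is two-sidedly comparable to $c^{-(\alpha+\beta)}2^{(\alpha+\beta)k}$. Collecting these bounds, with constants depending only on $c$ and $\alpha+\beta$,
\begin{equation*}
\sum_{n=N}^\infty 2^{(\alpha+\beta)n}\int_K\int_{B(x, c 2^{-n})}(u(x)-u(y))^2\nu(\md y)\nu(\md x) \asymp \int_K\int_{B(x, c 2^{-N})}\frac{(u(x)-u(y))^2}{|x-y|^{\alpha+\beta}}\nu(\md y)\nu(\md x).
\end{equation*}

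The upper bound of the corollary follows at once from this display, since the right-hand side is dominated by $L$. If $c 2^{-N}\ge 1\ge\mathrm{diam}(K)$, then $B(x, c 2^{-N})\supseteq K$ for every $x\in K$, so the right-hand side coincides with $L$ and the matching lower bound is also immediate. In the complementary regime $c 2^{-N}<1$, I would establish the lower bound by splitting $L$ into the ``close'' part with $|x-y|<c2^{-N}$ and the ``far'' part with $|x-y|\ge c2^{-N}$; the close part is already comparable to $R_{N,c}$ by the display above, so only the far part needs to be absorbed.

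The main obstacle will be controlling this far part by $R_{N,c}$. Since $|x-y|^{-(\alpha+\beta)}\le (c2^{-N})^{-(\alpha+\beta)}$ on the far region, the issue reduces to bounding the global quantity $\int_K\int_K(u(x)-u(y))^2\nu(\md x)\nu(\md y)$ by the close part up to a constant depending on $N,c,\alpha,\beta$. I would achieve this by a chaining argument using the compactness and connectedness of $K$: any pair $(x,y)\in K\times K$ with $|x-y|\ge c2^{-N}$ can be joined by a chain of at most $M=M(N,c)$ intermediate points with consecutive distances below $c2^{-N}$, so that $(u(x)-u(y))^2\le M\sum_i(u(x_i)-u(x_{i+1}))^2$. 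Averaging such chains against the measure $\nu$ and integrating yields the required comparison of the far part to the close part, with a multiplicative constant depending only on $N,c,\alpha,\beta$. Combined with the display and Lemma \ref{SG_app_lem_equiv1}, this completes the equivalence $L\asymp R_{N,c}$.
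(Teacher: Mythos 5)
Your proposal is correct and follows essentially the same route as the paper: after the routine annular/geometric-series manipulations, everything hinges on showing that $\int_K\int_K(u(x)-u(y))^2\,\nu(\md x)\nu(\md y)$ is dominated by the ``close'' integral over $|x-y|<c2^{-N}$, which you, like the paper, establish by chaining intermediate points and averaging them over small balls using $\nu(B(z,r))\asymp r^{\alpha}$. The only cosmetic difference is that you source the uniformly bounded $\varepsilon$-chains from compactness and connectedness of $K$, whereas the paper invokes the chain condition of the SG; both suffice here.
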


\begin{proof}
We only need to show that for all $n\ge1$, there exists some positive constant $C=C(n)$ such that
$$\int_K\int_K(u(x)-u(y))^2\nu(\md x)\nu(\md y)\le C\int_K\int_{B(x,{2^{-n}})}(u(x)-u(y))^2\nu(\md y)\nu(\md x).$$
Indeed, since the SG satisfies the chain condition, see \cite[Definition 3.4]{GHL03}, that is, there exists a positive constant $C_1$ such that for all $x,y\in K$, for all integer $N\ge1$ there exist $z_0,\ldots,z_N\in K$ with $z_0=x,z_N=y$ and
$$|z_i-z_{i+1}|\le C_1\frac{|x-y|}{N}\text{ for all }i=0,\ldots,N-1.$$
Take integer $N\ge2^{n+2}C_1+1$. Fix $x,y\in K$, there exist $z_0,\ldots,z_N$ with $z_0=x,z_N=y$ and
$$|z_i-z_{i+1}|\le C_1\frac{|x-y|}{N}\le 2^{-(n+2)}\text{ for all }i=0,\ldots,N-1.$$
For all $i=0,\ldots,N-1$, for all $x_i\in B(z_i,2^{-(n+2)})$, $x_{i+1}\in B(z_{i+1},2^{-(n+2)})$, we have
$$|x_i-x_{i+1}|\le|x_i-z_i|+|z_i-z_{i+1}|+|z_{i+1}-x_{i+1}|\le{3}\cdot{2^{-(n+2)}}<2^{-n}.$$
Fix $x_0=z_0=x$, $x_N=z_N=y$, note that
$$(u(x)-u(y))^2=(u(x_0)-u(x_N))^2\le N\sum_{i=0}^{N-1}(u(x_i)-u(x_{i+1}))^2.$$
Integrating with respect to $x_1\in B(z_1,2^{-(n+2)}),\ldots,x_{N-1}\in B(z_{N-1},2^{-(n+2)})$ and dividing by $\nu(B(z_1,2^{-(n+2)})),\ldots,\nu(B(z_{N-1},2^{-(n+2)}))$, we have
\begin{align*}
(u(x)-u(y))^2&\le N\left(\frac{1}{\nu(B(z_1,2^{-(n+2)}))}\int_{B(z_1,2^{-(n+2)})}(u(x_0)-u(x_1))^2\nu(\md x_1)\right.\\
&+\frac{1}{\nu(B(z_{N-1},2^{-(n+2)}))}\int_{B(z_{N-1},2^{-(n+2)})}(u(x_{N-1})-u(x_N))^2\nu(\md x_{N-1})\\
&+\sum_{i=1}^{N-2}\frac{1}{\nu(B(z_i,2^{-(n+2)}))\nu(B(z_{i+1},2^{-(n+2)}))}\\
&\left.\int_{B(z_i,2^{-(n+2)})}\int_{B(z_{i+1},2^{-(n+2)})}(u(x_i)-u(x_{i+1}))^2\nu(\md x_i)\nu(\md x_{i+1})\right).
\end{align*}
Noting that $\nu(B(z_i,2^{-(n+2)}))\asymp 2^{-\alpha n}$ for all $i=1,\ldots,N-1$, we have
\begin{align*}
(u(x)-u(y))^2&\le C_2\left(\int_{B(x,2^{-n})}(u(x)-u(x_1))^2\nu(\md x_1)\right.\\
&+\int_{B(y,2^{-n})}(u(y)-u(x_{N-1}))^2\nu(\md x_{N-1})\\
&\left.+\int_K\int_{B(x,2^{-n})}(u(x)-u(y))^2\nu(\md y)\nu(\md x)\right),
\end{align*}
where $C_2=C_2(n)$ is some positive constant. Since $\nu(K)=1$, integrating with respect to $x,y\in K$, we have
$$\int_K\int_K(u(x)-u(y))^2\nu(\md x)\nu(\md y)\le 4C_2\int_K\int_{B(x,{2^{-n}})}(u(x)-u(y))^2\nu(\md y)\nu(\md x).$$
Letting $C=4C_2$, then we have the desired result.
\end{proof}

The following result states that a Besov space can be embedded in some H\"older space.
\begin{mylem}\label{SG_app_lem_holder}(\cite[Theorem 4.11 (iii)]{GHL03})
For all $u\in C(K)$, let
$$E(u)=\int_K\int_K\frac{(u(x)-u(y))^2}{|x-y|^{\alpha+\beta}}\nu(\md x)\nu(\md y).$$
Then there exists some positive constant $c$ such that
$$|u(x)-u(y)|^2\le cE(u)|x-y|^{\beta-\alpha},$$
for all $x,y\in K$, for all $u\in C(K)$.
\end{mylem}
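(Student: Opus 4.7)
The plan is to prove the pointwise Hölder bound by a telescoping‐average argument based on metric balls in $K$; the inequality is trivial when $E(u) = +\infty$, so assume $E(u)<+\infty$. First I would invoke Corollary \ref{SG_app_cor_arbi} (with radius parameter $c=3$) to pass from $E(u)$ to its equivalent discrete form
\begin{equation*}
E(u) \asymp \sum_{n=0}^{\infty} 2^{(\alpha+\beta)n}\, I_n(u),\qquad I_n(u) := \int_K\!\int_{B(x,\,3\cdot 2^{-n})} (u(x)-u(y))^2\,\nu(\md y)\,\nu(\md x).
\end{equation*}
Since every summand is nonnegative, this yields the scale‐by‐scale bound $I_n(u) \le C\cdot 2^{-(\alpha+\beta)n}E(u)$. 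For $x\in K$ and $n\ge 0$, set $u_n(x) := \nu(B(x,2^{-n}))^{-1}\int_{B(x,2^{-n})} u\,\md\nu$; because $u$ is continuous on the compact set $K$, it is uniformly continuous, and so $u_n(x)\to u(x)$ as $n\to\infty$ uniformly in $x$.

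Given $x,y\in K$ with $r=|x-y|$, pick $N$ with $2^{-(N+1)}\le r<2^{-N}$ and split
\begin{equation*}
u(x)-u(y) = \bigl(u(x)-u_N(x)\bigr) + \bigl(u_N(x)-u_N(y)\bigr) + \bigl(u_N(y)-u(y)\bigr).
\end{equation*}
The middle term is handled by expressing $u_N(x)-u_N(y)$ as a double average, applying Cauchy--Schwarz, and using the Ahlfors regularity $\nu(B(z,2^{-n}))\asymp 2^{-\alpha n}$ of $(K,|\cdot|,\nu)$ together with the inclusion $B(x,2^{-N})\cup B(y,2^{-N})\subseteq B(x,3\cdot 2^{-N})$. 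This gives
\begin{equation*}
|u_N(x)-u_N(y)|^2 \le C\cdot 2^{2\alpha N}\,I_N(u) \le C\cdot 2^{(\alpha-\beta)N}\,E(u) \le C\,r^{\beta-\alpha}\,E(u).
\end{equation*}
For the two endpoint terms I would telescope $u(x)-u_N(x)=\sum_{k=N}^{\infty}\bigl(u_{k+1}(x)-u_k(x)\bigr)$ and estimate each increment by the same two-step Cauchy--Schwarz, obtaining $|u_{k+1}(x)-u_k(x)|^2 \le C\cdot 2^{(\alpha-\beta)k}\,E(u)$. Because $\beta>\alpha$, the geometric series $\sum_{k\ge N} 2^{(\alpha-\beta)k/2}$ converges and is comparable to $2^{(\alpha-\beta)N/2}$, hence $|u(x)-u_N(x)|^2 \le C\,r^{\beta-\alpha}\,E(u)$, and symmetrically at $y$. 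Combining the three pieces and readjusting constants delivers the asserted inequality.

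The key technical point is the scale-$N$ bound for the middle term, which leans on two ingredients: the summand inequality $I_N(u)\le C\,2^{-(\alpha+\beta)N}E(u)$ (which is available only because the terms in the discrete form of $E(u)$ are nonnegative), and the Ahlfors regularity of $(K,|\cdot|,\nu)$, which converts the averaged inequality into a pointwise one. The hypothesis $\beta>\alpha$ enters as the convergence threshold for the telescoping series; at the critical value $\beta=\alpha$ the geometric ratio equals $1$ and the argument breaks down, reflecting the sharpness of this Besov--Hölder embedding at the walk dimension.
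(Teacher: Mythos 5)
Your argument is correct: the paper gives no proof of this lemma, citing it to \cite[Theorem 4.11 (iii)]{GHL03}, and your telescoping-average argument --- discretizing $E(u)$ via Corollary \ref{SG_app_cor_arbi} with radius $3\cdot 2^{-n}$, bounding the middle term $u_N(x)-u_N(y)$ by Cauchy--Schwarz together with the Ahlfors regularity $\nu(B(z,2^{-n}))\asymp 2^{-\alpha n}$, and summing the geometric tail of the increments $u_{k+1}(x)-u_k(x)$ using $\beta>\alpha$ --- is essentially the standard proof of that cited result. The only detail worth making explicit is the boundary case $|x-y|\in[1/2,1]$, where $N=0$ still suffices because $2^{(\alpha-\beta)\cdot 0}=1\le 2^{\beta-\alpha}|x-y|^{\beta-\alpha}$ on that range, so the constant absorbs it.
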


Note that the proof of the above lemma does not rely on heat kernel.

We divide Theorem \ref{SG_app_thm_main} into the following Theorem \ref{SG_app_thm_equiv2_1} and Theorem \ref{SG_app_thm_equiv2_2}. The idea of the proofs of these theorems comes from \cite{Jon96} where the case of local Dirichlet form was considered.

\begin{mythm}\label{SG_app_thm_equiv2_1}
For all $u\in C(K)$, we have
$$\sum_{n=1}^\infty2^{(\beta-\alpha)n}\sum_{w\in W_n}\sum_{p,q\in V_w}(u(p)-u(q))^2\lesssim\int_K\int_K\frac{(u(x)-u(y))^2}{|x-y|^{\alpha+\beta}}\nu(\md x)\nu(\md y).$$
\end{mythm}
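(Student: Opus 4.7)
The plan is to bound each squared difference $(u(p)-u(q))^2$ by an average of $(u(x)-u(y))^2$ over cells of diameter $\asymp 2^{-n}$ near $p$ and $q$, plus oscillation corrections, and then sum everything with suitable bounded-overlap control. This is the discrete analogue of the strategy used on intervals for (\ref{SG_app_eqn_interval}), adapted to the cellular structure of the SG.

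First I would fix geometric data. For $w \in W_n$ and $p \in V_w$, write $p = P_{wi} = P_{wi^m}$ for every $m\ge 1$ (since $f_i(p_i)=p_i$), and set the \emph{approach cell} $B_p := K_{wii}$, which has diameter $2^{-(n+2)}$, measure $3^{-(n+2)} \asymp 2^{-\alpha n}$, and contains $p$ as a corner. For distinct $p,q \in V_w$ the cells $B_p, B_q$ are disjoint and satisfy $|x-y| \asymp 2^{-n}$ for all $x \in B_p$, $y \in B_q$. Applying the elementary inequality $(a+b+c)^2 \le 3(a^2+b^2+c^2)$ with $u(p)-u(q) = (u(p)-u(x)) + (u(x)-u(y)) + (u(y)-u(q))$ and averaging over $x \in B_p$, $y \in B_q$ gives the splitting
\begin{equation*}
(u(p)-u(q))^2 \le 3\,\mathrm{I}(p,w) + 3\,\mathrm{II}(p,q,w) + 3\,\mathrm{I}(q,w),
\end{equation*}
where $\mathrm{I}(p,w) = \nu(B_p)^{-1}\int_{B_p}(u(p)-u(x))^2\,\nu(\md x)$ and $\mathrm{II}(p,q,w) = \nu(B_p)^{-1}\nu(B_q)^{-1}\int_{B_p}\int_{B_q}(u(x)-u(y))^2\,\nu(\md x)\nu(\md y)$.

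For the bulk term $\mathrm{II}(p,q,w)$, using $\nu(B_p)\nu(B_q)\asymp 2^{-2\alpha n}$ and $|x-y|^{-(\alpha+\beta)} \gtrsim 2^{(\alpha+\beta)n}$ on $B_p\times B_q$, I obtain $2^{(\beta-\alpha)n}\mathrm{II}(p,q,w) \lesssim \int_{B_p}\int_{B_q} |x-y|^{-(\alpha+\beta)}(u(x)-u(y))^2\,\nu(\md x)\nu(\md y)$. Summing over $w\in W_n$, pairs $p,q\in V_w$, and $n \ge 1$, each ordered pair of disjoint level-$(n+2)$ cells appears at most a bounded number of times, so the total is $\lesssim \int_K\int_K (u(x)-u(y))^2/|x-y|^{\alpha+\beta}\,\nu(\md x)\nu(\md y)$ as desired.

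For the oscillation term $\mathrm{I}(p,w)$ I would run a telescoping argument along the shrinking chain $B_p^{(j)} := K_{wi^{j}}$, $j\ge 2$, with $\bar u_j := \nu(B_p^{(j)})^{-1}\int_{B_p^{(j)}} u\,\md\nu$, noting $\bar u_j \to u(p)$ by continuity. Writing $u(p)-\bar u_2 = \sum_{j\ge 2}(\bar u_{j+1}-\bar u_j)$, the weighted Cauchy-Schwarz inequality with geometric weight $a^j$ (for some $a\in(1,2^{\beta-\alpha})$, which uses $\beta>\alpha$) combined with $(\bar u_{j+1}-\bar u_j)^2 \le \nu(B_p^{(j+1)})^{-1}\nu(B_p^{(j)})^{-1}\int\int_{B_p^{(j)}\times B_p^{(j)}}(u(x)-u(y))^2\,\nu(\md x)\nu(\md y)$ and the same $|x-y| \lesssim 2^{-(n+j)}$ trick yields
\begin{equation*}
2^{(\beta-\alpha)n}\,\mathrm{I}(p,w) \lesssim \sum_{j\ge 2}\theta^{\,j}\int_{B_p^{(j)}}\int_{B_p^{(j)}}\frac{(u(x)-u(y))^2}{|x-y|^{\alpha+\beta}}\,\nu(\md x)\nu(\md y),
\end{equation*}
with $\theta := a\cdot 2^{\alpha-\beta} \in (0,1)$, after also handling the variance term $\nu(B_p)^{-1}\int_{B_p}(u-\bar u_2)^2\,\md\nu$ by the same bound.

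The main obstacle, and the final step, is a clean bounded-overlap estimate: I need to show that summing the right-hand side of the displayed bound above over all $n\ge 1$, $w\in W_n$ and $p\in V_w$ is controlled by $E(u)$. The key observation is that for fixed $(x,y)\in K\times K$ with $|x-y|\asymp 2^{-m}$, the triples $(n,j,p)$ for which $x,y\in B_p^{(j)} = K_{wi^j}$ force $n+j \le m + O(1)$ and $p$ to be within distance $O(2^{-(n+j)})$ of both $x,y$; a direct geometric count shows that at each fixed level $n\le m+O(1)$ only $O(1)$ tuples contribute, and the weights $\theta^{j}$ give a convergent geometric series in $m-n$. This yields $\sum_{n,w,p}2^{(\beta-\alpha)n}\mathrm{I}(p,w) \lesssim \int_K\int_K (u(x)-u(y))^2/|x-y|^{\alpha+\beta}\,\nu(\md x)\nu(\md y)$, which combined with the bulk estimate completes the proof.
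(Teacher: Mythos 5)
Your overall architecture---split $(u(p)-u(q))^2$ into a bulk term $\mathrm{II}$ over two well-separated small cells plus oscillation terms $\mathrm{I}$, and telescope the oscillation along the shrinking chain $K_{wi^j}$---is essentially the paper's proof. The paper telescopes finitely (levels $n,n+k,\ldots,n+kn$) and kills the remainder with the H\"older embedding of Lemma \ref{SG_app_lem_holder}, whereas you telescope infinitely using continuity of $u$ and a weighted Cauchy--Schwarz with weight $a^j$, $a\in(1,2^{\beta-\alpha})$; both of these are fine, and your bulk term $\mathrm{II}$ is handled correctly, since there the constraint $|x-y|\asymp2^{-n}$ pins $n$ to within $O(1)$ of the scale of $(x,y)$ and the overlap really is bounded.

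The gap is in your final overlap count for the oscillation term, and it is exactly the step you flag as the main obstacle. After you reinsert the kernel $|x-y|^{-(\alpha+\beta)}$ into each local integral, a fixed pair $(x,y)$ with $|x-y|\asymp2^{-m}$ receives total weight $\sum\theta^j$ over all tuples $(n,j,w,p)$ with $x,y\in K_{wi^j}$, and the only constraint is $n+j\le m+O(1)$: there is \emph{no lower bound on $j$}. Take $x,y\in K_{0^m}$; then for every $n\le m-2$ the tuple $(n,j=2,w=0^n,p=p_0)$ is admissible and contributes weight $\theta^2$, so the total weight is of order $m\asymp\log(1/|x-y|)$, not $O(1)$. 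Your claim that ``the weights $\theta^j$ give a convergent geometric series in $m-n$'' fails because the sum over $j$ at fixed $n$ is dominated by its smallest term $j=2$, a constant independent of $m-n$; as written you only obtain a bound by $\int_K\int_K(u(x)-u(y))^2|x-y|^{-(\alpha+\beta)}\log(2/|x-y|)\,\nu(\md x)\nu(\md y)$, which is strictly weaker than $E(u)$. The repair is to \emph{not} reinsert the kernel cell by cell: for fixed $(n,j)$ the cells $B_p^{(j)}$ are pairwise disjoint as $(w,p)$ varies, so summing over $(w,p)$ first gives
$$\sum_{w,p}\int_{B_p^{(j)}}\int_{B_p^{(j)}}(u(x)-u(y))^2\,\nu(\md x)\nu(\md y)\le\int_K\int_{|x-y|\le2^{-(n+j)}}(u(x)-u(y))^2\,\nu(\md x)\nu(\md y)=2^{-(\alpha+\beta)(n+j)}E_{n+j}(u),$$
whence $\sum_{n,w,p}2^{(\beta-\alpha)n}\mathrm{I}(p,w)\lesssim\sum_{j\ge2}\theta^j\sum_{n\ge1}E_{n+j}(u)\le\bigl(\sum_{j}\theta^j\bigr)\sum_{m}E_m(u)\lesssim E(u)$ by Lemma \ref{SG_app_lem_equiv1}. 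This is precisely how the paper closes the argument: its quantities $E_{n+ki}(u)$, summed over $n$ for fixed $i$ and controlled by $\sum_mE_m(u)\lesssim E(u)$, play the role of your $E_{n+j}(u)$. With that single modification your proof goes through.
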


\begin{proof}
First, fix $n\ge1$ and $w=w_1\ldots w_n\in W_n$, consider $\sum_{p,q\in V_w}(u(p)-u(q))^2$. For all $x\in K_w$, we have
$$(u(p)-u(q))^2\le 2(u(p)-u(x))^2+2(u(x)-u(q))^2.$$
Integrating with respect to $x\in K_w$ and dividing by $\nu(K_w)$, we have
$$(u(p)-u(q))^2\le\frac{2}{\nu(K_w)}\int_{K_w}(u(p)-u(x))^2\nu(\md x)+\frac{2}{\nu(K_w)}\int_{K_w}(u(q)-u(x))^2\nu(\md x),$$
hence
\begin{align*}
&\sum_{p,q\in V_w}(u(p)-u(q))^2\\
\le&\sum_{p,q\in V_w,p\ne q}\left[\frac{2}{\nu(K_w)}\int_{K_w}(u(p)-u(x))^2\nu(\md x)+\frac{2}{\nu(K_w)}\int_{K_w}(u(q)-u(x))^2\nu(\md x)\right]\\
\le&2\cdot2\cdot2\sum_{p\in V_w}\frac{1}{\nu(K_w)}\int_{K_w}(u(p)-u(x))^2\nu(\md x).
\end{align*}
Consider $(u(p)-u(x))^2$, $p\in V_w$, $x\in K_w$. There exists $w_{n+1}\in\myset{0,1,2}$ such that $p=f_{w_1}\circ\ldots\circ f_{w_n}(p_{w_{n+1}})$. Let $k,l\ge1$ be integers to be determined later, let
$$w^{(i)}=w_1\ldots w_nw_{n+1}\ldots w_{n+1}$$
with $ki$ terms of $w_{n+1}$, $i=0,\ldots,l$. For all $x^{(i)}\in K_{w^{(i)}}$, $i=0,\ldots,l$, we have
\begin{align*}
(u(p)-u(x^{(0)}))^2&\le2(u(p)-u(x^{(l)}))^2+2(u(x^{(0)})-u(x^{(l)}))^2\\
&\le2(u(p)-u(x^{(l)}))^2+2\left[2(u(x^{(0)})-u(x^{(1)}))^2+2(u(x^{(1)})-u(x^{(l)}))^2\right]\\
&=2(u(p)-u(x^{(l)}))^2+2^2(u(x^{(0)})-u(x^{(1)}))^2+2^2(u(x^{(1)})-u(x^{(l)}))^2\\
&\le\ldots\le2(u(p)-u(x^{(l)}))^2+2^2\sum_{i=0}^{l-1}2^i(u(x^{(i)})-u(x^{(i+1)}))^2.
\end{align*}
Integrating with respect to $x^{(0)}\in K_{w^{(0)}}$, \ldots, $x^{(l)}\in K_{w^{(l)}}$ and dividing by $\nu(K_{w^{(0)}})$, \ldots, $\nu(K_{w^{(l)}})$, we have
\begin{align*}
&\frac{1}{\nu(K_{w^{(0)}})}\int_{K_{w^{(0)}}}(u(p)-u(x^{(0)}))^2\nu(\md x^{(0)})\\
\le&\frac{2}{\nu(K_{w^{(l)}})}\int_{K_{w^{(l)}}}(u(p)-u(x^{(l)}))^2\nu(\md x^{(l)})\\
&+2^2\sum_{i=0}^{l-1}\frac{2^i}{\nu(K_{w^{(i)}})\nu(K_{w^{(i+1)}})}\int_{K_{w^{(i)}}}\int_{K_{w^{(i+1)}}}(u(x^{(i)})-u(x^{(i+1)}))^2\nu(\md x^{(i)})\nu(\md x^{(i+1)}).
\end{align*}
Now let us use $\nu(K_{w^{(i)}})=(1/3)^{n+ki}=2^{-\alpha(n+ki)}$. For the first term, by Lemma \ref{SG_app_lem_holder}, we have
\begin{align*}
\frac{1}{\nu(K_{w^{(l)}})}\int_{K_{w^{(l)}}}(u(p)-u(x^{(l)}))^2\nu(\md x^{(l)})&\le \frac{cE(u)}{\nu(K_{w^{(l)}})}\int_{K_{w^{(l)}}}|p-x^{(l)}|^{\beta-\alpha}\nu(\md x^{(l)})\\
&\le cE(u){2}^{-(\beta-\alpha)(n+kl)}.
\end{align*}
For the second term, for all $x^{(i)}\in K_{w^{(i)}},x^{(i+1)}\in K_{w^{(i+1)}}$, we have $|x^{(i)}-x^{(i+1)}|\le2^{-(n+ki)}$, hence
\begin{align*}
&\sum_{i=0}^{l-1}\frac{2^i}{\nu(K_{w^{(i)}})\nu(K_{w^{(i+1)}})}\int_{K_{w^{(i)}}}\int_{K_{w^{(i+1)}}}(u(x^{(i)})-u(x^{(i+1)}))^2\nu(\md x^{(i)})\nu(\md x^{(i+1)})\\
\le&\sum_{i=0}^{l-1}{2^{i+\alpha(n+ki+n+ki+k)}}\int_{K_{w^{(i)}}}\int_{|x^{(i+1)}-x^{(i)}|\le2^{-n-ki}}(u(x^{(i)})-u(x^{(i+1)}))^2\nu(\md x^{(i)})\nu(\md x^{(i+1)})\\
=&\sum_{i=0}^{l-1}{2^{i+\alpha k+2\alpha(n+ki)}}\int_{K_{w^{(i)}}}\int_{|x^{(i+1)}-x^{(i)}|\le2^{-(n+ki)}}(u(x^{(i)})-u(x^{(i+1)}))^2\nu(\md x^{(i)})\nu(\md x^{(i+1)}),
\end{align*}
and
\begin{align*}
&\frac{1}{\nu(K_w)}\int_{K_w}(u(p)-u(x))^2\nu(\md x)=\frac{1}{\nu(K_{w^{(0)}})}\int_{K_{w^{(0)}}}(u(p)-u(x^{(0)}))^2\nu(\md x^{(0)})\\
\le& 2cE(u)2^{-(\beta-\alpha)(n+kl)}\\
&+4\sum_{i=0}^{l-1}{2^{i+\alpha k+2\alpha(n+ki)}}\int_{K_{w^{(i)}}}\int_{|x^{(i+1)}-x^{(i)}|\le2^{-n-ki}}(u(x^{(i)})-u(x^{(i+1)}))^2\nu(\md x^{(i)})\nu(\md x^{(i+1)}).
\end{align*}
Hence
\begin{align*}
&\sum_{w\in W_n}\sum_{p,q\in V_w}(u(p)-u(q))^2\\
\le&8\sum_{w\in W_n}\sum_{p\in V_w}\frac{1}{\nu(K_w)}\int_{K_w}(u(p)-u(x))^2\nu(\md x)\\
\le&8\sum_{w\in W_n}\sum_{p\in V_w}\left(2cE(u)2^{-(\beta-\alpha)(n+kl)}\right.\\
&\left.+4\sum_{i=0}^{l-1}{2^{i+\alpha k+2\alpha(n+ki)}}\int_{K_{w^{(i)}}}\int_{|x^{(i+1)}-x^{(i)}|\le2^{-n-ki}}(u(x^{(i)})-u(x^{(i+1)}))^2\nu(\md x^{(i)})\nu(\md x^{(i+1)})\right).
\end{align*}
For the first term, we have
$$\sum_{w\in W_n}\sum_{p\in V_w}2^{-(\beta-\alpha)(n+kl)}=3\cdot3^n\cdot2^{-(\beta-\alpha)(n+kl)}=3\cdot2^{\alpha n-(\beta-\alpha)(n+kl)}.$$
For the second term, fix $i=0,\ldots,l-1$, different $p\in V_w$, $w\in W_n$ correspond to different $K_{w^{(i)}}$, hence
\begin{align*}
&\sum_{i=0}^{l-1}\sum_{w\in W_n}\sum_{p\in V_w}2^{i+\alpha k+2\alpha(n+ki)}\\
&\cdot\int_{K_{w^{(i)}}}\int_{|x^{(i+1)}-x^{(i)}|\le2^{-n-ki}}(u(x^{(i)})-u(x^{(i+1)}))^2\nu(\md x^{(i)})\nu(\md x^{(i+1)})\\
\le&\sum_{i=0}^{l-1}2^{i+\alpha k+2\alpha(n+ki)}\int_{K}\int_{|x^{(i+1)}-x^{(i)}|\le2^{-(n+ki)}}(u(x^{(i)})-u(x^{(i+1)}))^2\nu(\md x^{(i)})\nu(\md x^{(i+1)})\\
=&2^{\alpha k}\sum_{i=0}^{l-1}2^{i-(\beta-\alpha)ki-(\beta-\alpha)n}\\
&\cdot\left(2^{(\alpha+\beta)(n+ki)}\int_{K}\int_{|x^{(i+1)}-x^{(i)}|\le2^{-(n+ki)}}(u(x^{(i)})-u(x^{(i+1)}))^2\nu(\md x^{(i)})\nu(\md x^{(i+1)})\right).
\end{align*}
For simplicity, denote
$$E_{n}(u)=2^{(\alpha+\beta)n}\int_{K}\int_{|x-y|\le2^{-n}}(u(x)-u(y))^2\nu(\md x)\nu(\md y).$$
We have
\begin{align*}
&\sum_{w\in W_n}\sum_{p,q\in V_w}(u(p)-u(q))^2\\
\le&48cE(u)\cdot2^{\alpha n-(\beta-\alpha)(n+kl)}+32\cdot2^{\alpha k}\sum_{i=0}^{l-1}2^{i-(\beta-\alpha)ki-(\beta-\alpha)n}E_{n+ki}(u).
\end{align*}
Hence
\begin{align*}
&\sum_{n=1}^\infty2^{(\beta-\alpha)n}\sum_{w\in W_n}\sum_{p,q\in V_w}(u(p)-u(q))^2\\
\le&48cE(u)\sum_{n=1}^\infty2^{\beta n-(\beta-\alpha)(n+kl)}+32\cdot2^{\alpha k}\sum_{n=1}^\infty\sum_{i=0}^{l-1}2^{i-(\beta-\alpha)ki}E_{n+ki}(u).
\end{align*}
Take $l=n$, then
\begin{align*}
&\sum_{n=1}^\infty2^{(\beta-\alpha)n}\sum_{w\in W_n}\sum_{p,q\in V_w}(u(p)-u(q))^2\\
\le&48cE(u)\sum_{n=1}^\infty2^{[\beta-(\beta-\alpha)(k+1)]n}+32\cdot2^{\alpha k}\sum_{n=1}^\infty\sum_{i=0}^{n-1}2^{i-(\beta-\alpha)ki}E_{n+ki}(u)\\
=&48cE(u)\sum_{n=1}^\infty2^{[\beta-(\beta-\alpha)(k+1)]n}+32\cdot2^{\alpha k}\sum_{i=0}^\infty2^{i-(\beta-\alpha)ki}\sum_{n=i+1}^\infty E_{n+ki}(u)\\
\le&48cE(u)\sum_{n=1}^\infty2^{[\beta-(\beta-\alpha)(k+1)]n}+32\cdot2^{\alpha k}C_1E(u)\sum_{i=0}^\infty 2^{[1-(\beta-\alpha)k]i},
\end{align*}
where $C_1$ is some positive constant from Lemma \ref{SG_app_lem_equiv1}. Take $k\ge1$ such that
$$\beta-(\beta-\alpha)(k+1)<0$$
and
$$1-(\beta-\alpha)k<0,$$
then the above two series converge, hence
$$\sum_{n=1}^\infty2^{(\beta-\alpha)n}\sum_{w\in W_n}\sum_{p,q\in V_w}(u(p)-u(q))^2\lesssim\int_K\int_K\frac{(u(x)-u(y))^2}{|x-y|^{\alpha+\beta}}\nu(\md x)\nu(\md y).$$
\end{proof}

\begin{mythm}\label{SG_app_thm_equiv2_2}
For all $u\in C(K)$, we have
\begin{equation}\label{SG_app_eqn_equiv2_1}
\int_K\int_K\frac{(u(x)-u(y))^2}{|x-y|^{\alpha+\beta}}\nu(\md x)\nu(\md y)\lesssim\sum_{n=1}^\infty2^{(\beta-\alpha)n}\sum_{w\in W_n}\sum_{p,q\in V_w}(u(p)-u(q))^2,
\end{equation}
or equivalently
\begin{equation}\label{SG_app_eqn_equiv2_2}
\sum_{n=1}^\infty2^{(\alpha+\beta)n}\int\limits_K\int\limits_{B(x,2^{-n-1})}(u(x)-u(y))^2\nu(\md y)\nu(\md x)\lesssim\sum_{n=1}^\infty2^{(\beta-\alpha)n}\sum_{w\in W_n}\sum_{p,q\in V_w}(u(p)-u(q))^2.
\end{equation}
\end{mythm}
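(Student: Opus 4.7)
The plan is to prove the equivalent statement (\ref{SG_app_eqn_equiv2_2}), from which (\ref{SG_app_eqn_equiv2_1}) follows directly via Lemma \ref{SG_app_lem_equiv1} combined with Corollary \ref{SG_app_cor_arbi} (applied with $c=1/2$). Following the spirit of \cite{Jon96} adapted to the fractal setting, I would fix, for each $n\ge1$ and each $x\in K$, a canonical word $w_n(x)\in W_n$ with $x\in K_{w_n(x)}$ (say the lexicographically smallest such word) together with a chosen vertex $p_n(x)\in V_{w_n(x)}$. Since $\mathrm{diam}(K_{w_n(x)})=2^{-n}$ we have $|p_n(x)-x|\le2^{-n}$, and the continuity of $u$ yields $u(p_n(x))\to u(x)$ as $n\to\infty$. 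The core pointwise inequality is
$$(u(x)-u(y))^2\le3(u(x)-u(p_n(x)))^2+3(u(p_n(x))-u(p_n(y)))^2+3(u(p_n(y))-u(y))^2,$$
which splits the Besov integrand at scale $2^{-n-1}$ into a ``discrete'' cross term and two symmetric ``tail'' terms.

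For the cross term, when $|x-y|\le2^{-n-1}$ the self-similar geometry of the SG ensures that $K_{w_n(x)}$ and $K_{w_n(y)}$ either coincide or share a vertex, and that the number of level-$n$ cells meeting $\bar{B}(x,2^{-n})$ is uniformly bounded; hence
$$(u(p_n(x))-u(p_n(y)))^2\lesssim\sum_{\substack{w\in W_n\\ K_w\cap\bar{B}(x,2^{-n})\ne\emptyset}}\sum_{p,q\in V_w}(u(p)-u(q))^2.$$
Integrating in $y\in B(x,2^{-n-1})$ gives a factor $\nu(B(x,2^{-n-1}))\asymp2^{-\alpha n}$; integrating in $x\in K$ gives another factor $\asymp2^{-\alpha n}$ from the measure of the $2^{-n}$-neighborhood of each fixed cell. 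Multiplying by $2^{(\alpha+\beta)n}$ and summing in $n$ collapses the cross-term contribution to $\sum_{n=1}^\infty2^{(\beta-\alpha)n}D_n(u)$, where $D_k(u):=\sum_{w\in W_k}\sum_{p,q\in V_w}(u(p)-u(q))^2$, which is exactly the RHS of (\ref{SG_app_eqn_equiv2_2}) up to a constant.

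For the tail term $(u(x)-u(p_n(x)))^2$ I would telescope:
$$u(x)-u(p_n(x))=\sum_{m=n}^\infty[u(p_{m+1}(x))-u(p_m(x))],$$
and apply Cauchy--Schwarz with geometric weights $2^{\epsilon(m-n)/2}$ for a fixed small $\epsilon\in(0,\beta)$ to obtain
$$(u(x)-u(p_n(x)))^2\lesssim\sum_{m=n}^\infty2^{\epsilon(m-n)}(u(p_{m+1}(x))-u(p_m(x)))^2.$$
Since $p_m(x),p_{m+1}(x)$ both lie within $2^{-m}$ of $x$, on each level-$m$ cell the integrand $(u(p_{m+1}(x))-u(p_m(x)))^2$ takes values controlled by vertex differences in the three level-$(m+1)$ subcells; integrating over $K$ yields $\int_K(u(p_{m+1}(x))-u(p_m(x)))^2\nu(\md x)\lesssim2^{-\alpha m}D_{m+1}(u)$. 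Multiplying by $2^{(\alpha+\beta)n}\nu(B(x,2^{-n-1}))\asymp2^{\beta n}$, swapping the order of summation in $(n,m)$, and using $\sum_{n=1}^m2^{(\beta-\epsilon)n}\lesssim2^{(\beta-\epsilon)m}$ (valid because $\epsilon<\beta$), the tail contribution reduces to $\sum_m2^{(\beta-\alpha)m}D_{m+1}(u)\lesssim\text{RHS of }(\ref{SG_app_eqn_equiv2_2})$. The symmetric tail term in $y$ is treated identically.

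The principal obstacle is the geometric claim used in the cross-term estimate: one must verify, uniformly in $n$, that disjoint level-$n$ cells are separated by at least $c\cdot2^{-n}$ for some universal $c>0$, so that any two level-$n$ cells meeting a ball of radius $2^{-n-1}$ either coincide or correspond to an edge in $H_n$, and that the total number of such cells is uniformly bounded. These facts follow from the strict self-similarity of the SG and the explicit description of $H_n$ in Section \ref{sec_notion}; once they are in place, the remainder of the argument is a careful but routine manipulation of geometric sums, with the threshold $\epsilon<\beta$ (which is available because $\beta>\alpha>0$) being the only restriction on $\beta$.
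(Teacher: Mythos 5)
Your proposal is correct and follows essentially the same route as the paper's proof: reduce the Besov integrand to vertex differences on neighbouring level-$n$ cells sharing a common vertex, then control the within-cell oscillation by a geometrically weighted telescoping (Cauchy--Schwarz) sum over finer scales and swap the order of summation in $(n,m)$. The only cosmetic difference is that you integrate against $\nu$ directly via a measurable selection of base points $p_n(x)$, whereas the paper discretizes with the counting measures $\nu_m$ on $V_m$, counts multiplicities of edge terms, and passes to the limit $m\to+\infty$; both devices implement the same chaining estimate.
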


\begin{proof}
Note $V_n=\cup_{w\in W_n}V_w$, it is obvious that its cardinal $\#V_n\asymp3^n=2^{\alpha n}$. Let $\nu_n$ be the measure on $V_n$ which assigns $1/\#V_n$ on each point of $V_n$, then $\nu_n$ converges weakly to $\nu$.

First, fix $n\ge1$ and $m\ge n$, we estimate
$$2^{(\alpha+\beta)n}\int_K\int_{B(x,2^{-n-1})}(u(x)-u(y))^2\nu_m(\md y)\nu_m(\md x).$$
Note that
\begin{align*}
&\int_K\int_{B(x,2^{-n-1})}(u(x)-u(y))^2\nu_m(\md y)\nu_m(\md x)\\
=&\sum_{w\in W_n}\int_{K_w}\int_{B(x,2^{-n-1})}(u(x)-u(y))^2\nu_m(\md y)\nu_m(\md x).
\end{align*}
Fix $w\in W_n$, there exist at most four $\tilde{w}\in W_n$ such that $K_{\tilde{w}}\cap K_w\ne\emptyset$, let
$$K_w^*=\cup_{\tilde{w}\in W_n,K_{\tilde{w}}\cap K_w\ne\emptyset}K_{\tilde{w}}.$$
For all $x\in K_w$, $y\in B(x,2^{-n-1})$, we have $y\in K_w^*$, hence
$$\int_{K_w}\int_{B(x,2^{-n-1})}(u(x)-u(y))^2\nu_m(\md y)\nu_m(\md x)\le\int_{K_w}\int_{K_w^*}(u(x)-u(y))^2\nu_m(\md y)\nu_m(\md x).$$
For all $x\in K_w$, $y\in K_w^*$, there exists $\tilde{w}\in W_n$ such that $y\in K_{\tilde{w}}$ and $K_{\tilde{w}}\cap K_w\ne\emptyset$. Take $z\in V_w\cap V_{\tilde{w}}$, then
$$(u(x)-u(y))^2\le2(u(x)-u(z))^2+2(u(z)-u(y))^2,$$
and
\begin{align*}
&\int_{K_w}\int_{K_w^*}(u(x)-u(y))^2\nu_m(\md y)\nu_m(\md x)\\
\le&\sum_{\tilde{w}\in W_n,K_{\tilde{w}}\cap K_w\ne\emptyset}\int_{K_w}\int_{K_{\tilde{w}}}(u(x)-u(y))^2\nu_m(\md y)\nu_m(\md x)\\
\le&\sum_{\tilde{w}\in W_n,K_{\tilde{w}}\cap K_w\ne\emptyset}2\int_{K_w}\int_{K_{\tilde{w}}}\left((u(x)-u(z))^2+(u(z)-u(y))^2\right)\nu_m(\md y)\nu_m(\md x).
\end{align*}
Hence
\begin{align}
&\sum_{w\in W_n}\int_{K_w}\int_{K_w^*}(u(x)-u(y))^2\nu_m(\md y)\nu_m(\md x)\nonumber\\
\le&2\cdot2\cdot4\cdot2\sum_{w\in W_n}\sum_{z\in V_w}\int_{K_w}(u(x)-u(z))^2\nu_m(\md x)\left(\int_{K_w}\nu_m(\md y)\right)\nonumber\\
=&32\sum_{w\in W_n}\sum_{z\in V_w}\int_{K_w}(u(x)-u(z))^2\nu_m(\md x)\frac{\#(V_m\cap K_w)}{\#V_m}\nonumber\\
=&32\sum_{w\in W_n}\sum_{z\in V_w}\sum_{x\in V_m\cap K_w}(u(x)-u(z))^2\frac{1}{\#V_m}\frac{\#(V_m\cap K_w)}{\#V_m}\nonumber\\
=&32\frac{\#V_{m-n}}{(\#V_m)^2}\sum_{w\in W_n}\sum_{z\in V_w}\sum_{x\in V_m\cap K_w}(u(x)-u(z))^2.\label{SG_app_eqn_tmp1}
\end{align}
Let us estimate $(u(x)-u(z))^2$ for $z\in V_w$, $x\in V_m\cap K_w$, $w\in W_n$. We construct a finite sequence $p_n,\ldots,p_{m+1}$ as follows. If $w=w_1\ldots w_n\in W_n$, then
\begin{align*}
z&=P_{w_1\ldots w_nw_{n+1}},\\
x&=P_{w_1\ldots w_n\tilde{w}_{n+1}\ldots\tilde{w}_m\tilde{w}_{m+1}}.
\end{align*}
Let
\begin{align*}
p_n&=P_{w_1\ldots w_nw_{n+1}}=z,\\
p_{n+1}&=P_{w_1\ldots w_n\tilde{w}_{n+1}},\\
p_{n+2}&=P_{w_1\ldots w_n\tilde{w}_{n+1}\tilde{w}_{n+2}},\\
&\ldots\\
p_{m+1}&=P_{w_1\ldots w_n\tilde{w}_{n+1}\ldots\tilde{w}_{m}\tilde{w}_{m+1}}=x,
\end{align*}
then $|p_i-p_{i+1}|=0$ or $2^{-i}$, $i=n,\ldots,m$ and
\begin{align*}
&(u(x)-u(z))^2=(u(p_n)-u(p_{m+1}))^2\\
\le&2(u(p_n)-u(p_{n+1}))^2+2(u(p_{n+1})-u(p_{m+1}))^2\\
\le&2(u(p_n)-u(p_{n+1}))^2+2\left[2(u(p_{n+1})-u(p_{n+2}))^2+2(u(p_{n+2})-u(p_{m+1}))^2\right]\\
=&2(u(p_n)-u(p_{n+1}))^2+2^2(u(p_{n+1})-u(p_{n+2}))^2+2^2(u(p_{n+2})-u(p_{m+1}))^2\\
\le&\ldots\le\sum_{i=n}^{m}2^{i-n+1}(u(p_i)-u(p_{i+1}))^2.
\end{align*}
Let us sum up the resulting inequality for all $z\in V_w$, $x\in V_m\cap K_w$, $w\in W_n$. For all $i=n,\ldots,m$, $p,q\in V_i\cap K_w$ with $|p-q|=2^{-i}$, the term $(u(p)-u(q))^2$ occurs in the sum with times of the order $3^{m-i}$, hence
$$\sum_{w\in W_n}\sum_{z\in V_w}\sum_{x\in V_m\cap K_w}(u(x)-u(z))^2\le c\sum_{i=n}^m\sum_{w\in W_i}\sum_{p,q\in V_w}(u(p)-u(q))^2\cdot3^{m-i}\cdot2^{i-n}.$$
It follows from Equation (\ref{SG_app_eqn_tmp1}) that
\begin{align*}
&\sum_{w\in W_n}\int_{K_w}\int_{K_{{w}}^*}(u(x)-u(y))^2\nu_m(\md y)\nu_m(\md x)\\
\le& c\frac{3^{m-n}}{3^{2m}}\sum_{i=n}^m\sum_{w\in W_i}\sum_{p,q\in V_w}(u(p)-u(q))^2\cdot3^{m-i}\cdot2^{i-n}\\
=&c\sum_{i=n}^m\sum_{w\in W_i}\sum_{p,q\in V_w}(u(p)-u(q))^2\cdot 3^{-n-i}\cdot2^{i-n}.
\end{align*}
Letting $m\to+\infty$, we obtain
$$\int_K\int_{B(x,2^{-n-1})}(u(x)-u(y))^2\nu(\md y)\nu(\md x)\le c\sum_{i=n}^\infty\sum_{w\in W_i}\sum_{p,q\in V_w}(u(p)-u(q))^2\cdot 3^{-n-i}\cdot2^{i-n},$$
and
\begin{align*}
&2^{(\alpha+\beta)n}\int_K\int_{B(x,2^{-n-1})}(u(x)-u(y))^2\nu(\md y)\nu(\md x)\\
\le& c\sum_{i=n}^\infty\sum_{w\in W_i}\sum_{p,q\in V_w}(u(p)-u(q))^2\cdot 2^{-(\alpha-1)i}\cdot2^{(\beta-1)n},
\end{align*}
and hence
\begin{align*}
&\sum_{n=1}^\infty2^{(\alpha+\beta)n}\int_K\int_{B(x,2^{-n-1})}(u(x)-u(y))^2\nu(\md y)\nu(\md x)\\
\le& c\sum_{n=1}^\infty\sum_{i=n}^\infty\sum_{w\in W_i}\sum_{p,q\in V_w}(u(p)-u(q))^2\cdot 2^{-(\alpha-1)i}\cdot2^{(\beta-1)n}\\
=&c\sum_{i=1}^\infty\sum_{n=1}^i\sum_{w\in W_i}\sum_{p,q\in V_w}(u(p)-u(q))^2\cdot 2^{-(\alpha-1)i}\cdot2^{(\beta-1)n}\\
\le&\frac{2^{\beta-1}c}{2^{\beta-1}-1}\sum_{i=1}^\infty2^{(\beta-\alpha)i}\sum_{w\in W_i}\sum_{p,q\in V_w}(u(p)-u(q))^2,
\end{align*}
which proves Equation (\ref{SG_app_eqn_equiv2_2}). Applying Corollary \ref{SG_app_cor_arbi}, we obtain Equation (\ref{SG_app_eqn_equiv2_1}).
\end{proof}

\section{Another Approach of Determination of the Walk Dimension of the SG}

In this section, we give another determination of the walk dimension of the SG which is much simpler than that given in Chapter \ref{ch_SG_det}.

We prove the following result.

\begin{mythm}\label{SG_app_thm_det}
For all $\beta\in(\alpha,\beta^*)$, we have $(\calE_\beta,\calF_\beta)$ is a regular Dirichlet form on $L^2(K;\nu)$. For all $\beta\in[\beta^*,+\infty)$, we have $\calF_\beta$ consists only of constant functions.
\end{mythm}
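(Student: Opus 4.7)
The plan is to exploit the discrete expression for $E_\beta$ in Theorem \ref{SG_app_thm_main} to translate the problem into a statement about Kigami's monotone sequence. The crucial observation is that, since $2^{\beta-\alpha}=2^\beta/3$,
$$E_\beta(u,u)=\sum_{n=1}^\infty\left(\frac{2^\beta}{5}\right)^n\left(\frac{5}{3}\right)^n\sum_{w\in W_n}\sum_{p,q\in V_w}(u(p)-u(q))^2=\sum_{n=1}^\infty\lambda^n\frakE_n(u,u),$$
where $\lambda:=2^\beta/5$ and $\frakE_n$ is the sequence of Theorem \ref{thm_SG_con}, monotone increasing in $n$ with limit $\frakE_\loc$. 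By Theorem \ref{SG_app_thm_main} we have $\calE_\beta(u,u)\asymp\sum_{n\ge1}\lambda^n\frakE_n(u,u)$ for every $u\in C(K)$, and $\lambda<1$ precisely when $\beta<\beta^*$.

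Assume first $\beta\in(\alpha,\beta^*)$, so $\lambda<1$. For every $u\in\frakF_\loc$, monotonicity gives $E_\beta(u,u)\le\frakE_\loc(u,u)\cdot\lambda/(1-\lambda)<+\infty$, hence $\frakF_\loc\subseteq\calF_\beta$. Since $\frakF_\loc$ is uniformly dense in $C(K)$ by Kigami's theorem, so is $\calF_\beta$, which then is dense in $L^2(K;\nu)$. The Markovian property for $\calE_\beta$ is immediate from the pointwise inequality $(v(x)-v(y))^2\le(u(x)-u(y))^2$ for normal contractions $v$ of $u$. For closedness, given an $(\calE_\beta)_1$-Cauchy sequence $\{u_k\}\subseteq\calF_\beta$ with $L^2$-limit $u$, Lemma \ref{lem_SG_holder} shows that $\calE_\beta(u_k-u_\ell,u_k-u_\ell)$ controls a H\"older seminorm of exponent $(\beta-\alpha)/2$, so $\{u_k\}$ is uniformly Cauchy; hence $u$ has a continuous version and Fatou's lemma yields $\calE_\beta(u-u_k,u-u_k)\to0$, placing $u\in\calF_\beta$. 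Regularity is automatic since $K$ is compact: $\calF_\beta\cap C_c(K)=\calF_\beta$ is trivially $(\calE_\beta)_1$-dense in itself and uniformly dense in $C(K)=C_c(K)$.

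Now suppose $\beta\in[\beta^*,+\infty)$, so $\lambda\ge1$. For any $u\in\calF_\beta$, $\sum_{n\ge1}\lambda^n\frakE_n(u,u)\asymp\calE_\beta(u,u)<+\infty$. Since $\frakE_n(u,u)$ is nonnegative and monotone increasing in $n$, while $\lambda^n$ does not decay, the convergence of the series forces $\frakE_n(u,u)=0$ for every $n\ge1$. Thus $u$ is constant on each $V_w$ for $w\in W_n$; since consecutive cells share a vertex and the level-$n$ graph is connected, $u$ is constant on $V_n$ for every $n\ge1$. Because $V^*=\bigcup_{n\ge1}V_n$ is dense in $K$ and $u\in C(K)$, $u$ is constant on $K$.

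The main subtlety lies in closedness for $\beta<\beta^*$: without the H\"older embedding of Lemma \ref{lem_SG_holder} one cannot upgrade $L^2$-convergence to pointwise convergence and apply Fatou. Everything else is a mechanical unpacking of the identity $E_\beta=\sum\lambda^n\frakE_n$ combined with Kigami's monotonicity and the norm equivalence of Theorem \ref{SG_app_thm_main}; no use of diffusion, heat kernel estimates, or subordination is required.
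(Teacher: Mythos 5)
Your proposal is correct and follows essentially the same route as the paper: both rest on rewriting $E_\beta(u,u)=\sum_{n\ge1}(2^\beta/5)^n\frakE_n(u,u)$, invoking the equivalence $E_\beta\asymp\calE_\beta$ of Theorem \ref{SG_app_thm_main} together with Kigami's monotonicity of $\frakE_n$, and the divergence argument for $\beta\ge\beta^*$ is identical. The only cosmetic difference is in the density step, where the paper verifies $\calU\subseteq\calF_\beta$ via the explicit energy computation of Theorem \ref{thm_SG_fun} and then applies Stone--Weierstrass, whereas you deduce $\frakF_\loc\subseteq\calF_\beta$ from monotonicity and quote the regularity of $(\frakE_\loc,\frakF_\loc)$; both are legitimate uses of the stated preliminaries.
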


\begin{proof}
By Fatou's lemma, it is obvious that $(\calE_\beta,\calF_\beta)$ is a closed form on $L^2(K;\nu)$ in the wide sense.

For all $\beta\in(\alpha,\beta^*)$. By \cite[Theorem 4.11 (\rmnum{3})]{GHL03}, we have $\calF_\beta\subseteq C(K)$. We only need to show that $\calF_\beta$ is uniformly dense in $C(K)$, then $\calF_\beta$ is dense in $L^2(K;\nu)$, hence $(\calE_\beta,\calF_\beta)$ is a regular closed form on $L^2(K;\nu)$.

Indeed, for all $U=U^{(x_0,x_1,x_2)}\in\calU$, by Theorem \ref{thm_SG_fun}, we have
\begin{align*}
E_\beta(U,U)&=\sum_{n=1}^\infty2^{(\beta-\alpha)n}\sum_{w\in W_n}\sum_{p,q\in V_w}(U(p)-U(q))^2\\
&=\sum_{n=1}^\infty2^{(\beta-\alpha)n}\left(\frac{3}{5}\right)^{n}\left((x_0-x_1)^2+(x_1-x_2)^2+(x_0-x_2)^2\right)<+\infty,
\end{align*}
hence $U\in\calF_\beta$, $\calU\subseteq\calF_\beta$. By Theorem \ref{thm_SG_fun} again, we have $\calF_\beta$ separates points. It is obvious that $\calF_\beta$ is a sub-algebra of $C(K)$, that is, for all $u,v\in\calF_\beta,c\in\R$, we have $u+v,cu,uv\in\calF_\beta$. By Stone-Weierstrass theorem, we have $\calF_\beta$ is uniformly dense in $C(K)$.

It is obvious that $\calE_\beta$ does have Markovian property, hence $(\calE_\beta,\calF_\beta)$ is a regular Dirichlet form on $L^2(K;\nu)$.

For all $\beta\in[\beta^*,+\infty)$. Assume that $u\in\calF_\beta$ is not constant, then there exists some integer $N\ge1$ such that $\frakE_N(u,u)>0$. By Theorem \ref{thm_SG_con}, we have
\begin{align*}
E_\beta(u,u)&=\sum_{n=1}^\infty2^{(\beta-\alpha)n}\left(\frac{3}{5}\right)^n\frakE_n(u,u)\ge\sum_{n=N+1}^\infty2^{(\beta-\alpha)n}\left(\frac{3}{5}\right)^n\frakE_n(u,u)\\
&\ge\sum_{n=N+1}^\infty2^{(\beta-\alpha)n}\left(\frac{3}{5}\right)^n\frakE_N(u,u)=+\infty,
\end{align*}
contradiction! Hence $\calF_\beta$ consists only of constant functions.
\end{proof}

\section{Proof of Theorem \ref{SG_app_thm_incre} and \ref{SG_app_thm_conv_main}}

First, we prove Theorem \ref{SG_app_thm_incre}.
\begin{proof}[Proof of Theorem \ref{SG_app_thm_incre}]
For simplicity, let $\lambda=5^{-1}\cdot2^{\beta}$ or $\beta=\log(5\lambda)/\log2$, where $\beta\in(\alpha,\beta^*)$ or $\lambda\in(3/5,1)$. Then
\begin{align*}
(1-5^{-1}\cdot 2^{\beta})E_\beta(u,u)&=\left(1-\frac{2^{\beta}}{5}\right)\sum_{n=1}^\infty\left(\frac{2^\beta}{5}\right)^n\left[\left(\frac{5}{3}\right)^n\sum_{w\in W_n}\sum_{p,q\in V_w}(u(p)-u(q))^2\right]\\
&=(1-\lambda)\sum_{n=1}^\infty\lambda^n\frakE_n(u,u).
\end{align*}

If $u$ has no continuous version, then this result is obvious. Hence, we may assume that $u$ is continuous. Let
$$x_n=\frakE_n(u,u)=\left(\frac{5}{3}\right)^n\sum_{w\in W_n}\sum_{p,q\in V_w}(u(p)-u(q))^2.$$
By Theorem \ref{thm_SG_con}, we have $\myset{x_n}$ is a monotone increasing sequence in $[0,+\infty]$ and
$$\lim_{n\to+\infty}x_n=\lim_{n\to+\infty}\frakE_n(u,u)=\frakE_\loc(u,u)=\lim_{n\to+\infty}\left(\frac{5}{3}\right)^n\sum_{w\in W_n}\sum_{p,q\in V_w}(u(p)-u(q))^2.$$
By Proposition \ref{prop_ele1}, we have $(1-\lambda)\sum_{n=1}^\infty\lambda^nx_n$ is monotone increasing in $\lambda\in(3/5,1)$, that is, $(1-5^{-1}\cdot2^\beta)E_\beta(u,u)$ is monotone increasing in $\beta\in(\alpha,\beta^*)$. Moreover
$$\lim_{\beta\uparrow\beta^*}(1-5^{-1}\cdot2^\beta)E_\beta(u,u)=\lim_{\lambda\uparrow1}(1-\lambda)\sum_{n=1}^\infty\lambda^nx_n=\lim_{n\to+\infty}x_n=\lim_{n\to+\infty}\frakE_n(u,u)=\frakE_\loc(u,u).$$
\end{proof}

Next, we prove Theorem \ref{SG_app_thm_conv_main}.

\begin{proof}[Proof of Theorem \ref{SG_app_thm_conv_main}]
First, we check (\ref{def_Mosco_2}) in Definition \ref{def_Mosco}. For all $u\in L^2(K;\nu)$, let $u_n=u$ for all $n\ge1$, then $u_n$ is trivially convergent to $u$ in $L^2(K;\nu)$ and by Theorem \ref{SG_app_thm_incre}, we have
$$\frakE_\loc(u,u)=\lim_{n\to+\infty}(1-5^{-1}\cdot2^{\beta_n})E_{\beta_n}(u,u)=\lim_{n\to+\infty}(1-5^{-1}\cdot2^{\beta_n})E_{\beta_n}(u_n,u_n).$$

Then, we check (\ref{def_Mosco_1}) in Definition \ref{def_Mosco}. For all $\myset{u_n}\subseteq L^2(K;\nu)$ that converges weakly to $u\in L^2(K;\nu)$. For all $m\ge1$, by Corollary \ref{cor_Mosco}, we have
$$(1-5^{-1}\cdot2^{\beta_m})E_{\beta_m}(u,u)\le\varliminf_{n\to+\infty}(1-5^{-1}\cdot2^{\beta_m})E_{\beta_m}(u_n,u_n).$$
By Theorem \ref{SG_app_thm_incre}, for all $n\ge m$, we have
$$(1-5^{-1}\cdot2^{\beta_m})E_{\beta_m}(u_n,u_n)\le(1-5^{-1}\cdot2^{\beta_n})E_{\beta_n}(u_n,u_n),$$
hence
$$(1-5^{-1}\cdot2^{\beta_m})E_{\beta_m}(u,u)\le\varliminf_{n\to+\infty}(1-5^{-1}\cdot2^{\beta_m})E_{\beta_m}(u_n,u_n)\le\varliminf_{n\to+\infty}(1-5^{-1}\cdot2^{\beta_n})E_{\beta_n}(u_n,u_n).$$
By Theorem \ref{SG_app_thm_incre} again, we have
$$\frakE_\loc(u,u)=\lim_{m\to+\infty}(1-5^{-1}\cdot2^{\beta_m})E_{\beta_m}(u,u)\le\varliminf_{n\to+\infty}(1-5^{-1}\cdot2^{\beta_n})E_{\beta_n}(u_n,u_n).$$
Hence $(1-5^{-1}\cdot2^{\beta_n})E_{\beta_n}$ converges to $\frakE_\loc$ in the sense of Mosco.
\end{proof}
Mosco convergence in Theorem \ref{SG_app_thm_conv_main} implies that appropriate time-changed jump processes can approximate the diffusion at least in the sense of finite-dimensional distribution.

\section{Proof of Theorem \ref{SG_app_thm_trace_main}}

Similar to Lemma \ref{SG_app_lem_equiv1}, we have the following result for the unit interval. For all $u\in L^2(I)$, we have
$$\int_{I}\int_I\frac{(u(x)-u(y))^2}{|x-y|^{1+\beta}}\md x\md y\asymp\sum_{n=0}^\infty2^{n}2^{\beta n}\int_I\int_{B(x,2^{-n})}(u(x)-u(y))^2\md y\md x.$$
Combining this result with Equation (\ref{SG_app_eqn_interval}), we obtain that for all $u\in C(I)$
$$\sum_{n=1}^\infty2^{-n}2^{\beta n}\sum_{i=0}^{2^n-1}(u(\frac{i}{2^n})-u(\frac{i+1}{2^n}))^2\asymp\sum_{n=0}^\infty2^{n}2^{\beta n}\int_I\int_{B(x,2^{-n})}(u(x)-u(y))^2\md y\md x.$$
Since $\beta_1\in(\alpha_1,\beta_1^*)$, we have
$$\beta_2=\beta_1-(\alpha_1-\alpha_2)\in(\alpha_2,\beta_1^*-\alpha_1+\alpha_2)\subseteq(\alpha_2,\beta_2^*).$$
For all $u\in B^{2,2}_{\alpha_1,\beta_1}(K)$, we have $u\in C(K)$, hence $u|_I\in C(I)$. Note that
$$\sum_{n=1}^\infty2^{-\alpha_2n}2^{\beta_2 n}\sum_{i=0}^{2^n-1}(u(\frac{i}{2^n})-u(\frac{i+1}{2^n}))^2\le\sum_{n=1}^\infty2^{-\alpha_1 n}2^{\beta_1 n}\sum_{w\in W_n}\sum_{p,q\in V_w}(u(p)-u(q))^2<+\infty,$$
hence $u|_I\in B^{2,2}_{\alpha_2,\beta_2}(I)$.

\section{Proof of Theorem \ref{SG_app_thm_jumping_kernel}}

Firstly, we construct equivalent semi-norms with jumping kernels that converge exactly to the local Dirichlet form.

For all $\beta\in(\alpha,\beta^*)$, $((1-5^{-1}\cdot2^{\beta})E_\beta,\calF_{\beta})$ is a non-local regular Dirichlet form on $L^2(K;\nu)$, by Beurling-Deny formula, there exists a unique jumping measure $J_\beta$ on $K\times K\backslash\mathrm{diag}$ such that for all $u\in\calF_{\beta}$, we have
$$(1-5^{-1}\cdot2^\beta)E_{\beta}(u,u)=\iint_{K\times K\backslash\mathrm{diag}}(u(x)-u(y))^2J_\beta(\md x\md y).$$
It is obvious that
$$J_{\beta}(\md x\md y)=(1-5^{-1}\cdot2^{\beta})\sum_{n=1}^\infty2^{(\beta-\alpha)n}\sum_{w\in W_n}\sum_{p,q\in V_w}\delta_p(\md x)\delta_q(\md y),$$
where $\delta_p,\delta_q$ are Dirac measures at $p,q$, respectively. Hence $J_\beta$ is singular with respect to $\nu\times\nu$ and no jumping kernel exists. Note that
\begin{align*}
&\sum_{n=1}^\infty2^{(\beta-\alpha)n}\sum_{w\in W_n}\sum_{p,q\in V_w}(u(p)-u(q))^2\\
=&\iint_{K\times K\backslash\mathrm{diag}}(u(x)-u(y))^2\left(\sum_{n=1}^\infty2^{(\beta-\alpha)n}\sum_{w\in W_n}\sum_{p,q\in V_w}\delta_p(\md x)\delta_q(\md y)\right),
\end{align*}
where
\begin{align*}
&\sum_{n=1}^\infty2^{(\beta-\alpha)n}\sum_{w\in W_n}\sum_{p,q\in V_w}\delta_p(\md x)\delta_q(\md y)=\sum_{n=1}^\infty\sum_{w\in W_n}\sum_{p,q\in V_w}\frac{1}{|p-q|^{\alpha+\beta}}{|p-q|^{2\alpha}}\delta_p(\md x)\delta_q(\md y)\\
=&\frac{1}{|x-y|^{\alpha+\beta}}\sum_{n=1}^\infty2^{-2\alpha n}\sum_{w\in W_n}\sum_{p,q\in V_w}\delta_p(\md x)\delta_q(\md y)=\frac{1}{|x-y|^{\alpha+\beta}}J(\md x\md y),
\end{align*}
and
$$J(\md x\md y)=\sum_{n=1}^\infty2^{-2\alpha n}\sum_{w\in W_n}\sum_{p,q\in V_w}\delta_p(\md x)\delta_q(\md y).$$

\begin{myprop}\label{SG_app_prop_kernel1}
Let
$$c_i(x,y)=\sum_{n=1}^\infty2^{-2\alpha n}\sum_{w\in W_n}\sum_{p,q\in V_w}\frac{1}{\nu(K^{(i)}_{p,n})\nu(K^{(i)}_{q,n})}1_{K^{(i)}_{p,n}}(x)1_{K^{(i)}_{q,n}}(y),$$
then for all $u\in C(K)$, we have
\begin{equation}\label{SG_app_eqn_kernel1}
\begin{aligned}
&\left(1-C(\frac{2^{\alpha-\gamma i}}{1-2^{\alpha-\gamma i}}+\frac{2^{\alpha-\frac{\beta-\alpha}{2}\gamma i}}{1-2^{\alpha-\frac{\beta-\alpha}{2}\gamma i}})\right)\sum_{n=1}^\infty 2^{(\beta-\alpha)n}\sum_{w\in W_n}\sum_{p,q\in V_w}(u(p)-u(q))^2\\
\le&\iint_{K\times K\backslash\mathrm{diag}}\frac{c_i(x,y)(u(x)-u(y))^2}{|x-y|^{\alpha+\beta}}\nu(\md x)\nu(\md y)\\
\le&\left(1+C(\frac{2^{\alpha-\gamma i}}{1-2^{\alpha-\gamma i}}+\frac{2^{\alpha-\frac{\beta-\alpha}{2}\gamma i}}{1-2^{\alpha-\frac{\beta-\alpha}{2}\gamma i}})\right)\sum_{n=1}^\infty 2^{(\beta-\alpha)n}\sum_{w\in W_n}\sum_{p,q\in V_w}(u(p)-u(q))^2.
\end{aligned}
\end{equation}
\end{myprop}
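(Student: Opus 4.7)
The plan is to expand the left-hand side of \eqref{SG_app_eqn_kernel1} using the definition of $c_i$ and compare it cell by cell to the right-hand side. For each quadruple $(n,w,p,q)$ with $p,q\in V_w$ distinct (the diagonal $p=q$ contributes zero on the right-hand side, and I read the double sum in $c_i$ with the diagonal excluded so that $|x-y|^{-(\alpha+\beta)}$ stays integrable), set
$$
T_{n,w,p,q}:=\frac{2^{-2\alpha n}}{\nu(K^{(i)}_{p,n})\nu(K^{(i)}_{q,n})}\int_{K^{(i)}_{p,n}}\int_{K^{(i)}_{q,n}}\frac{(u(x)-u(y))^2}{|x-y|^{\alpha+\beta}}\nu(\md x)\nu(\md y),\qquad S_{n,w,p,q}:=2^{(\beta-\alpha)n}(u(p)-u(q))^2.
$$
Then the left-hand side of \eqref{SG_app_eqn_kernel1} is $\sum T_{n,w,p,q}$ and the right-hand side is $\sum S_{n,w,p,q}=E_\beta(u,u)$; the goal is a two-sided estimate of the form $T_{n,w,p,q}=(1\pm\eta)S_{n,w,p,q}+\text{tail}$ whose summed contribution reproduces the stated multiplicative factors.

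Two local approximations drive the argument. First, since distinct $p,q\in V_w$ satisfy $|p-q|=2^{-n}$ and $\operatorname{diam}K^{(i)}_{p,n}\leq 2^{-(n+\gamma ni)}$, one has $|x-y|^{-(\alpha+\beta)}=2^{(\alpha+\beta)n}(1+O(2^{-\gamma ni}))$ uniformly on $K^{(i)}_{p,n}\times K^{(i)}_{q,n}$ for $i$ large enough. Second, writing $u(x)-u(y)=(u(p)-u(q))+B(x,y)$ with $B(x,y):=(u(x)-u(p))-(u(y)-u(q))$, Young's inequality yields, for every $\delta\in(0,1)$,
$$
(1-\delta)(u(p)-u(q))^2-\delta^{-1}B^2\leq (u(x)-u(y))^2\leq (1+\delta)(u(p)-u(q))^2+(1+\delta^{-1})B^2,
$$
and Lemma~\ref{SG_app_lem_holder} applied at the cell scale gives $B^2\leq 4c\,E(u)\cdot 2^{-(\beta-\alpha)(n+\gamma ni)}$, while Theorem~\ref{SG_app_thm_main} supplies the self-improvement $E(u)\asymp E_\beta(u,u)$.

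Summing the per-quadruple bounds over the $9\cdot 2^{\alpha n}$ quadruples at scale $n$ and then over $n\geq 1$ produces two geometric series. The kernel-approximation error, combined with the Hölder bound $(u(p)-u(q))^2\leq cE(u)2^{-(\beta-\alpha)n}$ used to control the $(u(p)-u(q))^2$-weighted sum by $9cE(u)\cdot 2^{\alpha n}$, assembles into a geometric tail of common ratio $2^{\alpha-\gamma i}$ and produces the first factor $\frac{2^{\alpha-\gamma i}}{1-2^{\alpha-\gamma i}}$ after the identification $E(u)\asymp E_\beta(u,u)$. The Young remainder assembles into a geometric tail of ratio $2^{\alpha-(\beta-\alpha)\gamma i}$; balancing the $(1+\delta)$ inflation of the main term against the $(1+\delta^{-1})$ amplification of the remainder via $\delta=2^{\alpha-(\beta-\alpha)\gamma i/2}$ yields the second factor $\frac{2^{\alpha-(\beta-\alpha)\gamma i/2}}{1-2^{\alpha-(\beta-\alpha)\gamma i/2}}$. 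The lower bound in \eqref{SG_app_eqn_kernel1} follows from the reverse Young inequality. The main obstacle is turning additive Hölder-type remainders into a genuine multiplicative envelope $(1\pm\varepsilon(i))E_\beta(u,u)$: the optimization of $\delta$ is forced, and the self-improvement $E(u)\asymp E_\beta(u,u)$ of Theorem~\ref{SG_app_thm_main} is essential, since otherwise the remainder would be proportional to $E(u)$ and could not be reabsorbed into $E_\beta(u,u)$ on the right.
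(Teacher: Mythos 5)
Your proposal is correct and takes essentially the same route as the paper's proof: the same cell-by-cell comparison of $\iint c_i(x,y)(u(x)-u(y))^2|x-y|^{-\alpha-\beta}$ with $\sum_n 2^{(\beta-\alpha)n}\sum_w\sum_{p,q}(u(p)-u(q))^2$, the same two error sources (the denominator approximation $|x-y|\approx|p-q|$ producing the geometric tail of ratio $2^{\alpha-\gamma i}$, and the H\"older control of the numerator discrepancy via Lemma \ref{SG_app_lem_holder} producing the tail of ratio $2^{\alpha-\frac{\beta-\alpha}{2}\gamma i}$), and the same final absorption of the additive $E(u)$-error into a multiplicative envelope using $E(u)\asymp E_\beta(u,u)$ from Theorem \ref{SG_app_thm_main}. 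The only real deviation is that you handle $(u(x)-u(y))^2-(u(p)-u(q))^2$ by Young's inequality with a parameter $\delta$ and then optimize, whereas the paper factors it as $(\text{sum})\times(\text{difference})$ and applies the H\"older bound to each factor, landing on the exponent $\frac{\beta-\alpha}{2}(2n+\gamma ni)$ directly with no optimization; your explicit exclusion of the diagonal $p=q$ in $c_i$ is the correct reading of the definition, since those terms would otherwise contribute a non-integrable $|x-y|^{-2\alpha}$ singularity.
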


\begin{proof}
Note that
\begin{align*}
&\iint_{K\times K\backslash\mathrm{diag}}\frac{c_i(x,y)(u(x)-u(y))^2}{|x-y|^{\alpha+\beta}}\nu(\md x)\nu(\md y)\\
=&\sum_{n=1}^\infty2^{-2\alpha n}\sum_{w\in W_n}\sum_{p,q\in V_w}\frac{1}{\nu(K^{(i)}_{p,n})\nu(K^{(i)}_{q,n})}\int_{K^{(i)}_{p,n}}\int_{K^{(i)}_{q,n}}\frac{(u(x)-u(y))^2}{|x-y|^{\alpha+\beta}}\nu(\md x)\nu(\md y).
\end{align*}
Since
$$\frac{1}{\nu(K^{(i)}_{p,n})\nu(K^{(i)}_{q,n})}1_{K^{(i)}_{p,n}}(x)1_{K^{(i)}_{q,n}}(y)\nu(\md x)\nu(\md y)\text{ converges weakly to }\delta_p(\md x)\delta_q(\md y),$$
for all $u\in C(K)$, we have
\begin{align*}
&\lim_{i\to+\infty}\frac{1}{\nu(K^{(i)}_{p,n})\nu(K^{(i)}_{q,n})}\int_{K^{(i)}_{p,n}}\int_{K^{(i)}_{q,n}}\frac{(u(x)-u(y))^2}{|x-y|^{\alpha+\beta}}\nu(\md x)\nu(\md y)\\
=&\frac{(u(p)-u(q))^2}{|p-q|^{\alpha+\beta}}=2^{(\alpha+\beta)n}(u(p)-u(q))^2.
\end{align*}
By Fatou's lemma, we have
\begin{align*}
&\sum_{n=1}^\infty 2^{(\beta-\alpha)n}\sum_{w\in W_n}\sum_{p,q\in V_w}(u(p)-u(q))^2\\
\le&\varliminf_{i\to+\infty}\iint_{K\times K\backslash\mathrm{diag}}\frac{c_i(x,y)(u(x)-u(y))^2}{|x-y|^{\alpha+\beta}}\nu(\md x)\nu(\md y).
\end{align*}
If $\text{LHS}=+\infty$, then $E(u)=+\infty$, the limit in RHS exists and equals to $+\infty$. Hence, we may assume that $E(u)<+\infty$, by Lemma \ref{SG_app_lem_holder}, we have
$$|u(x)-u(y)|^2\le cE(u)|x-y|^{\beta-\alpha}\text{ for all }x,y\in K.$$
Consider
\begin{align*}
&\lvert\frac{1}{\nu(K^{(i)}_{p,n})\nu(K^{(i)}_{q,n})}\int_{K^{(i)}_{p,n}}\int_{K^{(i)}_{q,n}}\frac{(u(x)-u(y))^2}{|x-y|^{\alpha+\beta}}\nu(\md x)\nu(\md y)-\frac{(u(p)-u(q))^2}{|p-q|^{\alpha+\beta}}\rvert\\
\le&\frac{1}{\nu(K^{(i)}_{p,n})\nu(K^{(i)}_{q,n})}\int_{K^{(i)}_{p,n}}\int_{K^{(i)}_{q,n}}\lvert\frac{(u(x)-u(y))^2}{|x-y|^{\alpha+\beta}}-\frac{(u(p)-u(q))^2}{|p-q|^{\alpha+\beta}}\rvert\nu(\md x)\nu(\md y).
\end{align*}
For all $x\in K^{(i)}_{p,n}, y\in K^{(i)}_{q,n}$, we have
\begin{align*}
&\lvert\frac{(u(x)-u(y))^2}{|x-y|^{\alpha+\beta}}-\frac{(u(p)-u(q))^2}{|p-q|^{\alpha+\beta}}\rvert\\
\le&\frac{1}{|x-y|^{\alpha+\beta}|p-q|^{\alpha+\beta}}\left((u(x)-u(y))^2\lvert |p-q|^{\alpha+\beta}-|x-y|^{\alpha+\beta}\rvert\right.\\
&\left.+\lvert(u(x)-u(y))^2-(u(p)-u(q))^2\rvert\cdot |x-y|^{\alpha+\beta}\right).
\end{align*}
Since
\begin{align*}
&|p-q|\ge|x-y|\ge|p-q|-|x-p|-|y-q|\\
\ge&|p-q|-\frac{2}{2^{\gamma ni}}|p-q|=\left(1-\frac{2}{2^{\gamma ni}}\right)|p-q|,
\end{align*}
for all $i\ge2$, we have $|x-y|\ge|p-q|/2$, for all $i\ge1$, we have
$$|p-q|^{\alpha+\beta}\ge|x-y|^{\alpha+\beta}\ge\left(1-\frac{2}{2^{\gamma ni}}\right)^{\alpha+\beta}|p-q|^{\alpha+\beta}.$$
Therefore, we have
$$\frac{1}{|x-y|^{\alpha+\beta}|p-q|^{\alpha+\beta}}\le\frac{2^{\alpha+\beta}}{|p-q|^{2(\alpha+\beta)}}=2^{\alpha+\beta}2^{2(\alpha+\beta)n},$$
\begin{align*}
&(u(x)-u(y))^2\le cE(u)|x-y|^{\beta-\alpha}\\
\le&cE(u)|p-q|^{\beta-\alpha}=cE(u)2^{-(\beta-\alpha)n},
\end{align*}
\begin{align*}
\lvert |p-q|^{\alpha+\beta}-|x-y|^{\alpha+\beta}\rvert&\le|p-q|^{\alpha+\beta}\left[1-(1-\frac{2}{2^{\gamma ni}})^{\alpha+\beta}\right]\\
&\le 2(\alpha+\beta)2^{-(\alpha+\beta)n-\gamma ni},
\end{align*}
\begin{align*}
&\lvert(u(x)-u(y))^2-(u(p)-u(q))^2\rvert\\
=&\lvert(u(x)-u(y))+(u(p)-u(q))\rvert\cdot\lvert(u(x)-u(y))-(u(p)-u(q))\rvert\\
\le&\left(|u(x)-u(y)|+|u(p)-u(q)|\right)\left(|u(x)-u(p)|+|u(y)-u(q)|\right)\\
\le& cE(u)\left(|x-y|^{\frac{\beta-\alpha}{2}}+|p-q|^{\frac{\beta-\alpha}{2}}\right)\left(|x-p|^{\frac{\beta-\alpha}{2}}+|y-q|^{\frac{\beta-\alpha}{2}}\right)\\
\le& 4cE(u)2^{-\frac{\beta-\alpha}{2}(2n+\gamma ni)},
\end{align*}
$$|x-y|^{\alpha+\beta}\le|p-q|^{\alpha+\beta}=2^{-(\alpha+\beta)n}.$$
It follows that
$$\lvert\frac{(u(x)-u(y))^2}{|x-y|^{\alpha+\beta}}-\frac{(u(p)-u(q))^2}{|p-q|^{\alpha+\beta}}\rvert\le2^{\alpha+\beta}cE(u)\left(2(\alpha+\beta)2^{2\alpha n-\gamma ni}+4\cdot 2^{2\alpha n-\frac{\beta-\alpha}{2}\gamma ni}\right),$$
and, hence,
\begin{align*}
&|\sum_{n=1}^\infty 2^{(\beta-\alpha)n}\sum_{w\in W_n}\sum_{p,q\in V_w}(u(p)-u(q))^2\\
&-\sum_{n=1}^\infty2^{-2\alpha n}\sum_{w\in W_n}\sum_{p,q\in V_w}\frac{1}{\nu(K^{(i)}_{p,n})\nu(K^{(i)}_{q,n})}\int_{K^{(i)}_{p,n}}\int_{K^{(i)}_{q,n}}\frac{(u(x)-u(y))^2}{|x-y|^{\alpha+\beta}}\nu(\md x)\nu(\md y)|\\
\le&\sum_{n=1}^\infty 2^{-2\alpha n}2^{\alpha n}2^{\alpha+\beta}cE(u)\left(2(\alpha+\beta)2^{2\alpha n-\gamma ni}+4\cdot 2^{2\alpha n-\frac{\beta-\alpha}{2}\gamma ni}\right)\\
\le& CE(u)\sum_{n=1}^\infty\left(2^{\alpha n-\gamma ni}+2^{\alpha n-\frac{\beta-\alpha}{2}\gamma ni}\right)=CE(u)\sum_{n=1}^\infty\left(2^{(\alpha-\gamma i)n}+2^{(\alpha-\frac{\beta-\alpha}{2}\gamma i)n}\right).
\end{align*}
Choose $\gamma\ge1$ such that $\alpha-\gamma<0$ and $\alpha-\frac{\beta-\alpha}{2}\gamma<0$, then
$$\sum_{n=1}^\infty\left(2^{(\alpha-\gamma i)n}+2^{(\alpha-\frac{\beta-\alpha}{2}\gamma i)n}\right)=\frac{2^{\alpha-\gamma i}}{1-2^{\alpha-\gamma i}}+\frac{2^{\alpha-\frac{\beta-\alpha}{2}\gamma i}}{1-2^{\alpha-\frac{\beta-\alpha}{2}\gamma i}}\to0,$$
as $i\to+\infty$. Hence
\begin{align*}
&|\iint_{K\times K\backslash\mathrm{diag}}\frac{c_i(x,y)(u(x)-u(y))^2}{|x-y|^{\alpha+\beta}}\nu(\md x)\nu(\md y)-\sum_{n=1}^\infty 2^{(\beta-\alpha)n}\sum_{w\in W_n}\sum_{p,q\in V_w}(u(p)-u(q))^2|\\
\le& CE(u)\left(\frac{2^{\alpha-\gamma i}}{1-2^{\alpha-\gamma i}}+\frac{2^{\alpha-\frac{\beta-\alpha}{2}\gamma i}}{1-2^{\alpha-\frac{\beta-\alpha}{2}\gamma i}}\right),
\end{align*}
hence we have Equation (\ref{SG_app_eqn_kernel1}).
\end{proof}

Secondly, we do appropriate cutoff to have bounded jumping kernels.

\begin{myprop}\label{SG_app_prop_kernel2}
For all sequence $\myset{\beta_i}\subseteq(\alpha,\beta^*)$ with $\beta_i\uparrow\beta^*$. Let
$$C_i(x,y)=\sum_{n=1}^{\Phi(i)}2^{-2\alpha n}\sum_{w\in W_n}\sum_{p,q\in V_w}\frac{1}{\nu(K^{(i)}_{p,n})\nu(K^{(i)}_{q,n})}1_{K^{(i)}_{p,n}}(x)1_{K^{(i)}_{q,n}}(y),$$
where $\Phi:\mathbb{N}\to\mathbb{N}$ is increasing and $(1-5^{-1}\cdot2^{\beta_i})\Phi(i)\ge i$ for all $i\ge1$. Then for all $u\in\frakF_\loc$, we have
$$\lim_{i\to+\infty}(1-5^{-1}\cdot 2^{\beta_i})\iint_{K\times K\backslash\mathrm{diag}}\frac{C_i(x,y)(u(x)-u(y))^2}{|x-y|^{\alpha+\beta_i}}\nu(\md x)\nu(\md y)=\frakE_\loc(u,u).$$
\end{myprop}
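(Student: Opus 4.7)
The plan is to transfer the approximation in Proposition~\ref{SG_app_prop_kernel1} to the truncated kernel $C_i$ and then extract convergence to $\frakE_\loc(u,u)$ via a geometric tail bound. Throughout fix $u\in\frakF_\loc$ and set $\lambda_i=5^{-1}\cdot 2^{\beta_i}$, so that $\lambda_i\uparrow 1$ as $\beta_i\uparrow\beta^*$. Theorem~\ref{SG_app_thm_incre} gives $(1-\lambda_i)E_{\beta_i}(u,u)\uparrow\frakE_\loc(u,u)<+\infty$, so $E_{\beta_i}(u,u)$ is finite for every $i$ and $(1-\lambda_i)E_{\beta_i}(u,u)$ is bounded uniformly by $\frakE_\loc(u,u)$; combined with Theorem~\ref{SG_app_thm_main}, the same uniformity holds for $(1-\lambda_i)\calE_{\beta_i}(u,u)$.

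First I would reuse the cell-wise estimate from the proof of Proposition~\ref{SG_app_prop_kernel1}. The pointwise comparison between the normalized double integral over $K^{(i)}_{p,n}\times K^{(i)}_{q,n}$ and the discrete term $(u(p)-u(q))^2/|p-q|^{\alpha+\beta_i}$ is performed term by term in $(n,w,p,q)$ and uses only Lemma~\ref{SG_app_lem_holder} together with $|x-y|\asymp|p-q|$ on the corresponding cell pair. Restricting the outer sum to $n=1,\dots,\Phi(i)$ merely discards nonnegative terms from the error, so the same argument yields
\begin{equation*}
\Bigl|\iint\frac{C_i(x,y)(u(x)-u(y))^2}{|x-y|^{\alpha+\beta_i}}\,\nu(\md x)\nu(\md y)-\sum_{n=1}^{\Phi(i)}2^{(\beta_i-\alpha)n}\sum_{w\in W_n}\sum_{p,q\in V_w}(u(p)-u(q))^2\Bigr|\le C\,E_{\beta_i}(u,u)\,\varepsilon_i,
\end{equation*}
where $\varepsilon_i$ is the quantity on the right of (\ref{SG_app_eqn_kernel1}) with $\beta$ replaced by $\beta_i$. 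For any fixed integer $\gamma\ge 1$ the exponents $\alpha-\gamma i$ and $\alpha-\tfrac{\beta_i-\alpha}{2}\gamma i$ are negative for all sufficiently large $i$ (since $\beta_i\ge\beta_1>\alpha$), whence $\varepsilon_i\to 0$; multiplying by $(1-\lambda_i)$ and using $(1-\lambda_i)E_{\beta_i}(u,u)\le\frakE_\loc(u,u)$ makes this error contribution vanish.

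Next I would identify the truncated discrete sum. Since $2^\alpha=3$ and $\frakE_n(u,u)=(5/3)^n\sum_{w\in W_n}\sum_{p,q\in V_w}(u(p)-u(q))^2$, one has $2^{(\beta_i-\alpha)n}(3/5)^n=\lambda_i^n$, and hence
\begin{equation*}
(1-\lambda_i)\sum_{n=1}^{\Phi(i)}2^{(\beta_i-\alpha)n}\sum_{w\in W_n}\sum_{p,q\in V_w}(u(p)-u(q))^2=(1-\lambda_i)\sum_{n=1}^{\Phi(i)}\lambda_i^n\frakE_n(u,u).
\end{equation*}
By Theorem~\ref{SG_app_thm_incre}, $(1-\lambda_i)\sum_{n=1}^\infty\lambda_i^n\frakE_n(u,u)\to\frakE_\loc(u,u)$, so it suffices to show the tail beyond $\Phi(i)$ is negligible. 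Using $\frakE_n(u,u)\uparrow\frakE_\loc(u,u)$ together with $\log\lambda_i\le-(1-\lambda_i)$, the hypothesis $(1-\lambda_i)\Phi(i)\ge i$ gives $\lambda_i^{\Phi(i)}\le e^{-i}$, so
\begin{equation*}
(1-\lambda_i)\sum_{n=\Phi(i)+1}^\infty\lambda_i^n\frakE_n(u,u)\le\frakE_\loc(u,u)\,\lambda_i^{\Phi(i)+1}\le\frakE_\loc(u,u)\,e^{-i}\longrightarrow 0,
\end{equation*}
which closes the argument.

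The only delicate point I anticipate is verifying that the constant $C$ and the error $\varepsilon_i$ in the cell-wise estimate do not secretly depend on $\beta_i$ in a way that destroys the uniformity needed after multiplying by $(1-\lambda_i)$. Inspection of the proof of Proposition~\ref{SG_app_prop_kernel1} shows that $C$ is absolute while the $\beta_i$-dependence in $\varepsilon_i$ is through exponents that remain uniformly negative once $\beta_i\ge\beta_1>\alpha$, so the bound is stable as $\beta_i\uparrow\beta^*$; everything else reduces to Theorem~\ref{SG_app_thm_incre} and the geometric tail bound above.
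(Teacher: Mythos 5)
Your proposal is correct and follows essentially the same route as the paper: transfer the cell-wise approximation error from Proposition \ref{SG_app_prop_kernel1} (noting that truncating to $n\le\Phi(i)$ only drops nonnegative error terms), kill it after multiplying by $1-\lambda_i$ using the uniform bound $(1-\lambda_i)E_{\beta_i}(u,u)\le C\frakE_\loc(u,u)$, and then show $(1-\lambda_i)\sum_{n=1}^{\Phi(i)}\lambda_i^n\frakE_n(u,u)\to\frakE_\loc(u,u)$ via $\lambda_i^{\Phi(i)}\to0$ from the hypothesis $(1-\lambda_i)\Phi(i)\ge i$. The only (harmless) variation is in the last step: you bound the tail $\sum_{n>\Phi(i)}$ directly by $\frakE_\loc(u,u)\lambda_i^{\Phi(i)+1}\le\frakE_\loc(u,u)e^{-i}$ using $\log\lambda_i\le-(1-\lambda_i)$ and the finiteness of $\frakE_\loc(u,u)$, whereas the paper runs an $\varepsilon$-argument with an arbitrary $A<\lim_n\frakE_n(u,u)$; both hinge on the same estimate $\lambda_i^{\Phi(i)}\to0$.
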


\begin{proof}
By the proof of Proposition \ref{SG_app_prop_kernel1}, for all $u\in\frakF_\loc$, we have
\begin{align*}
&\lvert\iint_{K\times K\backslash\mathrm{diag}}\frac{C_i(x,y)(u(x)-u(y))^2}{|x-y|^{\alpha+\beta_i}}\nu(\md x)\nu(\md y)-\sum_{n=1}^{\Phi(i)}2^{(\beta_i-\alpha)n}\sum_{w\in W_n}\sum_{p,q\in V_w}(u(p)-u(q))^2\rvert\\
\le& CE_{\beta_i}(u,u)\sum_{n=1}^{\Phi(i)}\left(2^{(\alpha-\gamma i)n+2^{(\alpha-\frac{\beta_i-\alpha}{2}\gamma i)n}}\right)\le CE_{\beta_i}(u,u)\left(\frac{2^{\alpha-\gamma i}}{1-2^{\alpha-\gamma i}}+\frac{2^{\alpha-\frac{\beta_i-\alpha}{2}\gamma i}}{1-2^{\alpha-\frac{\beta_i-\alpha}{2}\gamma i}}\right),
\end{align*}
hence
\begin{align*}
&\lvert(1-5^{-1}\cdot2^{\beta_i})\iint_{K\times K\backslash\mathrm{diag}}\frac{C_i(x,y)(u(x)-u(y))^2}{|x-y|^{\alpha+\beta_i}}\nu(\md x)\nu(\md y)\\
&-(1-5^{-1}\cdot2^{\beta_i})\sum_{n=1}^{\Phi(i)}2^{(\beta_i-\alpha)n}\sum_{w\in W_n}\sum_{p,q\in V_w}(u(p)-u(q))^2\rvert\\
\le& C(1-5^{-1}\cdot2^{\beta_i})E_{\beta_i}(u,u)\left(\frac{2^{\alpha-\gamma i}}{1-2^{\alpha-\gamma i}}+\frac{2^{\alpha-\frac{\beta_i-\alpha}{2}\gamma i}}{1-2^{\alpha-\frac{\beta_i-\alpha}{2}\gamma i}}\right)\to0,
\end{align*}
as $i\to+\infty$. Hence we only need to show that for all $u\in\frakF_\loc$
$$\lim_{i\to+\infty}(1-5^{-1}\cdot2^{\beta_i})\sum_{n=1}^{\Phi(i)}2^{(\beta_i-\alpha)n}\sum_{w\in W_n}\sum_{p,q\in V_w}(u(p)-u(q))^2=\frakE_\loc(u,u).$$
Let $\lambda_i=5^{-1}\cdot 2^{\beta_i}$, then $\myset{\lambda_i}\subseteq(3/5,1)$ and $\lambda_i\uparrow1$. We use the notions in the proof of Theorem \ref{SG_app_thm_incre}. We only need to show that for all $u\in\frakF_\loc$
$$\lim_{i\to+\infty}(1-\lambda_i)\sum_{n=1}^{\Phi(i)}\lambda_i^nx_n=\lim_{n\to+\infty}x_n.$$
It is obvious that
$$\varlimsup_{i\to+\infty}(1-\lambda_i)\sum_{n=1}^{\Phi(i)}\lambda_i^nx_n\le\varlimsup_{i\to+\infty}(1-\lambda_i)\sum_{n=1}^{\infty}\lambda_i^nx_n=\lim_{n\to+\infty}x_n.$$

On the other hand, for all $A<\lim_{n\to+\infty}x_n$, there exists some positive integer $N_0\ge1$ such that for all $n>N_0$, we have $a_n>A$, hence
\begin{align*}
&(1-\lambda_i)\sum_{n=1}^{\Phi(i)}\lambda_i^nx_n\ge(1-\lambda_i)\sum_{n=N_0+1}^{\Phi(i)}\lambda_i^nA\\
=&(1-\lambda_i)\frac{\lambda_i^{N_0+1}\left(1-\lambda_i^{\Phi(i)-N_0}\right)}{1-\lambda_i}A=\lambda_i^{N_0+1}\left(1-\lambda_i^{\Phi(i)-N_0}\right)A.\\
\end{align*}
It is obvious that $\lambda_i^{N_0}\to1$ as $i\to+\infty$. Since $(1-\lambda_i)\Phi(i)\ge i$, we have
\begin{align*}
\lambda_i^{-\Phi(i)}&=(1+\lambda_i^{-1}-1)^{\Phi(i)}=\left[(1+\lambda_i^{-1}-1)^{\frac{1}{1-\lambda_i}}\right]^{(1-\lambda_i)\Phi(i)}\\
&\ge\left[(1+\lambda_i^{-1}-1)^{\frac{\lambda_i^{-1}}{\lambda_i^{-1}-1}}\right]^{i}\to+\infty.
\end{align*}
Hence
$$\varliminf_{i\to+\infty}(1-\lambda_i)\sum_{n=1}^{\Phi(i)}\lambda_i^nx_n\ge A.$$
Since $A<\lim_{n\to+\infty}x_n$ is arbitrary, we have
$$\lim_{i\to+\infty}(1-\lambda_i)\sum_{n=1}^{\Phi(i)}\lambda_i^nx_n=\lim_{n\to+\infty}x_n.$$
\end{proof}

Now we give the proof of Theorem \ref{SG_app_thm_jumping_kernel}.

\begin{proof}[Proof of Theorem \ref{SG_app_thm_jumping_kernel}]
For all $u\in\frakF_\loc$, by Proposition \ref{SG_app_prop_kernel2}, we have
$$\lim_{i\to+\infty}(1-5^{-1}\cdot 2^{\beta_i})\iint_{K\times K\backslash\mathrm{diag}}\frac{C_i(x,y)(u(x)-u(y))^2}{|x-y|^{\alpha+\beta_i}}\nu(\md x)\nu(\md y)=\frakE_\loc(u,u).$$
By Theorem \ref{SG_app_thm_main} and Theorem \ref{SG_app_thm_incre}, we have
\begin{align*}
\frac{1}{C}\frakE_\loc(u,u)&\le\varliminf_{i\to+\infty}(1-5^{-1}\cdot 2^{\beta_i})\iint_{K\times K\backslash\mathrm{diag}}\frac{(u(x)-u(y))^2}{|x-y|^{\alpha+\beta_i}}\nu(\md x)\nu(\md y)\\
&\le\varlimsup_{i\to+\infty}(1-5^{-1}\cdot 2^{\beta_i})\iint_{K\times K\backslash\mathrm{diag}}\frac{(u(x)-u(y))^2}{|x-y|^{\alpha+\beta_i}}\nu(\md x)\nu(\md y)\le C\frakE_\loc(u,u),
\end{align*}
hence
$$\lim_{i\to+\infty}(1-\delta_i)(1-5^{-1}\cdot 2^{\beta_i})\iint_{K\times K\backslash\mathrm{diag}}\frac{(u(x)-u(y))^2}{|x-y|^{\alpha+\beta_i}}\nu(\md x)\nu(\md y)=0,$$
hence
$$\lim_{i\to+\infty}(1-5^{-1}\cdot 2^{\beta_i})\iint_{K\times K\backslash\mathrm{diag}}\frac{a_i(x,y)(u(x)-u(y))^2}{|x-y|^{\alpha+\beta_i}}\nu(\md x)\nu(\md y)=\frakE_\loc(u,u).$$
It is obvious that $a_i=\delta_iC_i+(1-\delta_i)$ is bounded from above and below by positive constants.
\end{proof}

\chapter{Construction of Local Regular Dirichlet Form on the SG}\label{ch_SG_con}

This chapter is based on my work \cite{MY18}.

\section{Background and Statement}

We have used the local regular Dirichlet form $\frakE_\loc$ given by Kigami in Chapter \ref{ch_SG_app}. Here we give another construction of local regular Dirichlet form. This construction is much more complicated than that of Kigami, but this construction can be applied to more general spaces. We will apply this construction to the SC in Chapter \ref{ch_SC_con}.

We use the notions of the SG introduced in Section \ref{sec_notion}.

Our main result is as follows.

\begin{mythm}\label{SG_con_thm_BM}
There exists a self-similar strongly local regular Dirichlet form $(\calE_\loc,\calF_\loc)$ on $L^2(K;\nu)$ satisfying
\begin{align*}
&\calE_\loc(u,u)\asymp\sup_{n\ge1}\left(\frac{5}{3}\right)^n\sum_{w^{(1)}\sim_nw^{(2)}}\left(P_nu(w^{(1)})-P_nu(w^{(2)})\right)^2,\\
&\calF_\loc=\myset{u\in L^2(K;\nu):\calE_\loc(u,u)<+\infty}.
\end{align*}
\end{mythm}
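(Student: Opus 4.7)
The plan is to realize $\calE_\loc$ as a subsequential $\Gamma$-limit of the renormalized non-local forms $(1 - 5^{-1}\cdot 2^\beta)\calE_\beta$ as $\beta \uparrow \beta^* = \log 5/\log 2$, and then identify its domain and energy with the supremum of the averaged discrete functionals
$$a_n(u) := \left(\frac{5}{3}\right)^n \sum_{w^{(1)}\sim_n w^{(2)}} \bigl(P_nu(w^{(1)}) - P_nu(w^{(2)})\bigr)^2.$$
The decisive feature of $a_n$ is that it uses only cell averages and therefore extends to all of $L^2(K;\nu)$ with no continuity requirement; this is exactly what will make the method transfer to the SC in Chapter \ref{ch_SC_con}, where vertex values are not available.

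First I would prove an averaged analogue of Theorem \ref{SG_app_thm_main}: for every $\beta \in (\alpha,\beta^*)$ and every $u \in L^2(K;\nu)$,
$$\calE_\beta(u,u) \asymp \sum_{n=1}^\infty \bigl(5^{-1}\cdot 2^\beta\bigr)^n a_n(u),$$
with constants independent of $\beta$. The proof would follow the chaining strategy of Theorems \ref{SG_app_thm_equiv2_1} and \ref{SG_app_thm_equiv2_2} together with the H\"older embedding of Lemma \ref{lem_SG_holder}, applied to cell averages in place of vertex values. Setting $\lambda := 5^{-1}\cdot 2^\beta \in (3/5,1)$, one then studies the Abel-type sum $(1-\lambda)\sum_n \lambda^n a_n(u)$ via Propositions \ref{prop_ele1} and \ref{prop_ele2}. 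Proposition \ref{prop_ele2}(1) already bounds it above by $\sup_n a_n(u)$; to obtain a matching lower bound one needs a uniform self-improvement estimate
$$a_n(u) \le C\, a_{n+m}(u) \qquad (n,m \ge 1, \text{ with $C$ independent of } u,n,m),$$
from which Proposition \ref{prop_ele2}(2) delivers $\sup_n a_n \le C \liminf_n a_n \le C (1-\lambda)\sum_n \lambda^n a_n$. Establishing this submultiplicativity is the heart of the argument and would be carried out by a cell-by-cell harmonic-replacement computation for the averaged functional, using $\Delta$-Y transforms (Lemma \ref{lem_DeltaY}) and the explicit coefficients $\tfrac{2a+2b+c}{5}$ that already appeared in the proof of Theorem \ref{SG_det_thm_main}.

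Given this comparison, Proposition \ref{prop_gamma} extracts a $\Gamma$-convergent subsequence of $(1 - 5^{-1}\cdot 2^{\beta_k})\calE_{\beta_k}$ as $\beta_k \uparrow \beta^*$, whose limit will be the desired $\calE_\loc$, with $\calF_\loc = \{u \in L^2(K;\nu) : \calE_\loc(u,u) < \infty\} = \{u : \sup_n a_n(u) < \infty\}$ and the asserted two-sided comparison. The remaining Dirichlet form properties are then verified in the standard order: Markovianity passes through $\Gamma$-convergence; non-triviality and regularity follow from $\calU \subseteq \calF_\loc$ (since Theorem \ref{thm_SG_fun}(1) forces $a_n(U)$ to be uniformly bounded in $n$) combined with the separation property Theorem \ref{thm_SG_fun}(2) and Stone--Weierstrass; strong locality holds because for $u,v$ with disjoint supports and $v$ constant near $\mathrm{supp}(u)$, for large $n$ every adjacent pair $(w^{(1)},w^{(2)})$ intersects at most one of the supports, so the polarized quantity $a_n(u,v) \to 0$; and self-similarity follows from $P_n(u\circ f_i)(w) = P_{n-1}u(iw)$ together with the resulting decomposition of $a_n$ into subcell contributions and boundary type-II cross terms that are absorbed by the same resistance estimates. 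The main obstacle is the self-improvement $a_n \le C a_{n+m}$: every other step is either soft abstract Dirichlet form theory or a direct consequence of $\calU$ and Stone--Weierstrass, but without this submultiplicativity the supremum characterization cannot be extracted from the Abel sum and nothing guarantees that the $\Gamma$-limit is non-degenerate on non-constant functions.
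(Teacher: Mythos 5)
Your overall architecture matches the paper's: a $\Gamma$-convergent subsequence of the renormalized non-local forms, the weak monotonicity $a_n(u)\le C\,a_{n+m}(u)$ as the crux (this is exactly Theorem \ref{SG_con_thm_monotone}, which the paper proves by writing $P_nu=M_{n,m}(P_{n+m}u)$, applying Cauchy--Schwarz to the cell average, and bounding $\left(u(w^{(1)}v)-u(w^{(2)}v)\right)^2$ by the resistance $R_m(v,i^m)\lesssim(5/3)^m$ of Proposition \ref{SG_con_prop_resist}, rather than by harmonic replacement), the elementary Abel-sum comparisons of Proposition \ref{prop_ele2}, and Stone--Weierstrass via the good functions $\calU$ (for which you need the averaged computation of Theorem \ref{SG_con_thm_AnU}, not just Theorem \ref{thm_SG_fun}(1), since $a_n$ sees cell averages rather than vertex values). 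Up to the point where you obtain a regular closed form $\calE$ with $\calE(u,u)\asymp\sup_{n\ge1}a_n(u)$, your plan is sound and is essentially Theorem \ref{SG_con_thm_E}.

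The genuine gap is the passage from this $\Gamma$-limit to a form that is \emph{exactly} self-similar and strongly local. The $\Gamma$-limit is only known to be comparable to $\sup_na_n(u)$ with two-sided constants, and your argument for self-similarity (decomposing $a_n(u)$ into subcell contributions plus cross terms absorbed by resistance estimates) can only yield $\calE(u,u)\asymp\frac{5}{3}\sum_{i=0}^{2}\calE(u\circ f_i,u\circ f_i)$ up to constants, which is not the identity required of a self-similar Dirichlet form. Likewise, your locality argument fails because $\calE(u,v)$ is not controlled by the polarized quantity $a_n(u,v)$: comparability of quadratic forms on the diagonal says nothing about the off-diagonal bilinear values, so ``$a_n(u,v)\to0$'' does not give $\calE(u,v)=0$. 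The paper closes this gap with the Kusuoka--Zhou step in the proof of Theorem \ref{SG_con_thm_BM}: one introduces $\mybar{\calE}^{(n)}(u,u)=\left(\frac{5}{3}\right)^n\sum_{w\in W_n}\mybar{\calE}(u\circ f_w,u\circ f_w)$ with $\mybar{\calE}(u,u)=\sum_{i=0}^{2}\left(u(p_i)-\int_Ku\,\md\nu\right)^2$, which satisfies the exact scaling $\mybar{\calE}^{(n+1)}(u,u)=\frac{5}{3}\sum_{i=0}^{2}\mybar{\calE}^{(n)}(u\circ f_i,u\circ f_i)$ and is uniformly comparable to $\sup_kD_k(u)$ by (\ref{SG_con_eqn_Ebar_upper}) and (\ref{SG_con_eqn_Ebar_lower}); a Ces\`aro average and a diagonal subsequence then produce $\calE_\loc$ satisfying the exact self-similar identity, from which strong locality follows by localizing at a fine scale, and Markovianity follows from strong locality as in \cite{BBKT10}. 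Without some such additional construction your proposal does not deliver the ``self-similar strongly local'' part of the statement.
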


\begin{myrmk}
This theorem was also proved by Kusuoka and Zhou \cite[Theorem 7.19, Example 8.4]{KZ92} using approximation of Markov chains. Here we use $\Gamma$-convergence of stable-like non-local closed forms.
\end{myrmk}

The Besov spaces $B^{2,2}_{\alpha,\beta}(K)$ and $B^{2,\infty}_{\alpha,\beta}(K)$ have the following equivalent semi-norms.
\begin{mylem}(\cite[Lemma 3.1]{HK06}, Lemma \ref{SG_app_lem_equiv1})\label{SG_con_lem_equiv}
For all $\beta\in(0,+\infty)$, for all $u\in L^2(K;\nu)$, we have
$$\calE_\beta(u,u)\asymp\frakE_\beta(u,u)\asymp[u]_{B^{2,2}_{\alpha,\beta}(K)},$$
$$\sup_{n\ge1}2^{(\beta-\alpha)n}\sum_{w^{(1)}\sim_nw^{(2)}}\left(P_nu(w^{(1)})-P_nu(w^{(2)})\right)^2\asymp[u]_{B^{2,\infty}_{\alpha,\beta}(K)},$$
where
\begin{align*}
\calE_\beta(u,u)&=\int_K\int_K\frac{(u(x)-u(y))^2}{|x-y|^{\alpha+\beta}}\nu(\md x)\nu(\md y),\\
\frakE_\beta(u,u)&=\sum_{n=1}^\infty2^{(\beta-\alpha)n}\sum_{w^{(1)}\sim_nw^{(2)}}\left(P_nu(w^{(1)})-P_nu(w^{(2)})\right)^2.
\end{align*}
\end{mylem}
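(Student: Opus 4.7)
The equivalence $\calE_\beta(u,u)\asymp[u]_{B^{2,2}_{\alpha,\beta}(K)}$ is already covered by Lemma \ref{SG_app_lem_equiv1} (the dyadic-shell decomposition of $\int\int|x-y|^{-\alpha-\beta}$ against the ring $B(x,2^{-n})\setminus B(x,2^{-n-1})$). Hence the task reduces to establishing $\frakE_\beta(u,u)\asymp[u]_{B^{2,2}_{\alpha,\beta}(K)}$ and the analogous sup-version. Throughout I will repeatedly use the self-similar scaling $\nu(K_w)=2^{-\alpha n}$ for $w\in W_n$, together with the geometric fact that if $w^{(1)}\sim_n w^{(2)}$ and $x\in K_{w^{(1)}}$, $y\in K_{w^{(2)}}$, then $|x-y|\le C\cdot2^{-n}$, while conversely any pair $x,y$ with $|x-y|<2^{-n-1}$ lies in a pair of cells $K_{w^{(1)}},K_{w^{(2)}}$ at level $n$ with $w^{(1)}\sim_n w^{(2)}$ (possibly $w^{(1)}=w^{(2)}$).

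For the upper bound $\frakE_\beta\lesssim[u]_{B^{2,2}_{\alpha,\beta}(K)}$, I would apply Cauchy--Schwarz directly to the definition of $P_nu$:
\[
\bigl(P_nu(w^{(1)})-P_nu(w^{(2)})\bigr)^2\le\frac{1}{\nu(K_{w^{(1)}})\nu(K_{w^{(2)}})}\int_{K_{w^{(1)}}}\!\!\int_{K_{w^{(2)}}}(u(x)-u(y))^2\,\nu(\md x)\nu(\md y).
\]
Summing over $w^{(1)}\sim_n w^{(2)}$, using $\nu(K_w)=2^{-\alpha n}$ and the fact that each $x\in K$ lies in only boundedly many such cells, yields
\[
\sum_{w^{(1)}\sim_n w^{(2)}}\bigl(P_nu(w^{(1)})-P_nu(w^{(2)})\bigr)^2\lesssim 2^{2\alpha n}\int_K\!\int_{B(x,C\cdot2^{-n})}(u(x)-u(y))^2\,\nu(\md y)\nu(\md x),
\]
and multiplying by $2^{(\beta-\alpha)n}$ and summing in $n$ then comparing to $[u]_{B^{2,2}_{\alpha,\beta}(K)}$ via a Corollary \ref{SG_app_cor_arbi}-type rescaling gives the desired bound.

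The reverse inequality $[u]_{B^{2,2}_{\alpha,\beta}(K)}\lesssim\frakE_\beta(u,u)$ is the essential step. For fixed $n$ and $x\in K_{w^{(1)}},y\in K_{w^{(2)}}$ with $w^{(1)}\sim_n w^{(2)}$, I would split
\[
(u(x)-u(y))^2\le 3\bigl[(u(x)-P_nu(w^{(1)}))^2+(P_nu(w^{(1)})-P_nu(w^{(2)}))^2+(P_nu(w^{(2)})-u(y))^2\bigr].
\]
The middle term, integrated against $\nu\otimes\nu$ on $K_{w^{(1)}}\times K_{w^{(2)}}$ and summed, already produces the desired quantity $2^{-2\alpha n}\sum(P_nu-P_nu)^2$ which, times $2^{(\alpha+\beta)n}$, gives the $n$-th term of $\frakE_\beta$. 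The remaining ``oscillation within a cell'' terms of the form $\int_{K_w}(u(x)-P_nu(w))^2\nu(\md x)$ must then be absorbed; to this end I would telescope through finer levels, writing
\[
u(x)-P_nu(w)=\sum_{k=n}^{\infty}\bigl(P_{k+1}u(\tau_{k+1}(x))-P_ku(\tau_k(x))\bigr)
\]
where $\tau_k(x)$ denotes the unique level-$k$ word containing $x$ (this is the martingale convergence $P_ku\to u$ in $L^2$ since cells shrink to points and $u$ is bounded in $L^2$). Applying Cauchy--Schwarz in the $k$-index with weights $2^{-\eps k}$ (and $2^{\eps k}$) for small $\eps>0$ and noting that consecutive cell-averages satisfy $(P_{k+1}u(\tau_{k+1}(x))-P_ku(\tau_k(x)))^2\lesssim(P_{k+1}u(w')-P_{k+1}u(w''))^2$ for some adjacent pair $w'\sim_{k+1}w''$ reduces the cell-oscillation to telescoping sums of $\frakE_\beta$-terms at levels $\ge n$. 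Summing in $n$ with the geometric factor $2^{(\alpha+\beta)n}$ and interchanging the order of summation absorbs everything into $\frakE_\beta(u,u)$ up to a constant.

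The sup version $\sup_n 2^{(\beta-\alpha)n}\sum(P_nu-P_nu)^2\asymp[u]_{B^{2,\infty}_{\alpha,\beta}(K)}$ follows by exactly the same pointwise inequalities, but with $\sup_n$ replacing $\sum_n$ at each step. The only subtlety is that the cell-oscillation telescoping, which in the $B^{2,2}$ case was handled by a geometric series in $k\ge n$, must now be controlled by a maximal inequality; however, the weights $2^{(\beta-\alpha)k}$ decrease as one moves to coarser levels only if one tracks them carefully, so I would instead bound $(u(x)-P_nu(w))^2$ by $\sup_{k\ge n}$ of the corresponding adjacent-pair difference via the same telescoping and Cauchy--Schwarz with summable weights.

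The main obstacle is the cell-oscillation bound, because it requires a genuine use of the dyadic martingale structure of $\{P_nu\}$ rather than just Jensen's inequality. Once this is in place, everything else is bookkeeping with the geometric sums in $n$. As a sanity check, the proposed argument mirrors the vertex-value version carried out in Theorems \ref{SG_app_thm_equiv2_1}--\ref{SG_app_thm_equiv2_2}, where the role of the H\"older-type bound (Lemma \ref{SG_app_lem_holder}) is here played by $L^2$-martingale convergence of cell averages.
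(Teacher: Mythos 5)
The paper does not actually prove this lemma: it is quoted with a citation to \cite[Lemma 3.1]{HK06} together with Lemma \ref{SG_app_lem_equiv1}, so there is no in-text argument to compare against. Your sketch is, in substance, a correct reconstruction of the Hino--Kumagai argument. The reduction of $\calE_\beta\asymp[u]_{B^{2,2}_{\alpha,\beta}(K)}$ to Lemma \ref{SG_app_lem_equiv1} is right; the Cauchy--Schwarz upper bound for $\frakE_\beta$ is routine; and the key reverse step is exactly the cell-average telescoping you describe. Two points you treat somewhat gingerly do go through cleanly. First, the bound on the martingale increment is justified because $P_ku(\tau_k(x))$ is the average of $P_{k+1}u$ over the three children of $\tau_k(x)$, and on the SG the three children of a cell pairwise intersect, so $P_{k+1}u(\tau_{k+1}(x))-P_ku(\tau_k(x))$ is an average of differences across level-$(k+1)$ adjacent pairs inside $\tau_k(x)$; summing $\int_K d_k^2\,\md\nu\lesssim 2^{-\alpha k}\sum_{w'\sim_{k+1}w''}(P_{k+1}u(w')-P_{k+1}u(w''))^2$ and exchanging the $n$- and $k$-sums with weights $2^{\pm\eps(k-n)}$, $0<\eps<\beta$, absorbs the oscillation term into $\frakE_\beta(u,u)$ for every $\beta>0$, which is needed since the lemma is claimed on all of $(0,+\infty)$. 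Second, no maximal inequality is needed for the $B^{2,\infty}$ version: inside the telescoped $k$-sum one simply uses $\sum_{w'\sim_{k}w''}(P_{k}u(w')-P_{k}u(w''))^2\le 2^{-(\beta-\alpha)k}\sup_m 2^{(\beta-\alpha)m}\sum_{w'\sim_m w''}(P_mu(w')-P_mu(w''))^2$, after which the factor $\sum_{k\ge n}2^{(\eps-\beta)(k-n)}$ is a convergent geometric series; this is the same elementary device, not a genuinely new difficulty. The only remaining housekeeping is the $\nu$-a.e.\ convergence $P_ku(\tau_k(x))\to u(x)$ (martingale/Lebesgue differentiation along the cell filtration, the overlap set $V^*$ being $\nu$-null) and the passage between balls $B(x,c2^{-n})$ for different $c$, which is the chain-condition argument of Corollary \ref{SG_app_cor_arbi}; you invoke both correctly.
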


We have the following two corollaries whose proofs are obvious by Lemma \ref{SG_con_lem_equiv} and the proof of Theorem \ref{SG_con_thm_BM}.

Firstly, we have the characterization of the domain of the local Dirichlet form.

\begin{mycor}\label{SG_con_cor_chara}
$\calF_\loc=B^{2,\infty}_{\alpha,\beta^*}(K)$ and $\calE_\loc(u,u)\asymp[u]_{B^{2,\infty}_{\alpha,\beta^*}(K)}$ for all $u\in\calF_\loc$, where $\alpha=\log3/\log2$ is the Hausdorff dimension of the SG and $\beta^*=\log5/\log2$ is the walk dimension of the BM.
\end{mycor}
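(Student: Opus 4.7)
The plan is to derive the corollary as an immediate consequence of Theorem \ref{SG_con_thm_BM} combined with Lemma \ref{SG_con_lem_equiv}, since those two results already package almost all the content. The only genuine step is an algebraic identification of the prefactor appearing in Theorem \ref{SG_con_thm_BM} with the exponent appearing in the Besov seminorm $[u]_{B^{2,\infty}_{\alpha,\beta^*}(K)}$.

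First I would observe that by the definitions $\alpha=\log 3/\log 2$ and $\beta^*=\log 5/\log 2$, one has $2^{\beta^*-\alpha}=5/3$, so that
\begin{equation*}
\left(\frac{5}{3}\right)^n = 2^{(\beta^*-\alpha)n} \quad \text{for all } n\ge 1.
\end{equation*}
Substituting this into the expression for $\calE_\loc$ provided by Theorem \ref{SG_con_thm_BM} gives
\begin{equation*}
\calE_\loc(u,u)\asymp\sup_{n\ge 1} 2^{(\beta^*-\alpha)n}\sum_{w^{(1)}\sim_n w^{(2)}}\left(P_nu(w^{(1)})-P_nu(w^{(2)})\right)^2.
\end{equation*}

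Next I would invoke the second equivalence of Lemma \ref{SG_con_lem_equiv} with $\beta=\beta^*$, which states that the right-hand side above is comparable (up to multiplicative constants independent of $u$) to $[u]_{B^{2,\infty}_{\alpha,\beta^*}(K)}$. Composing the two equivalences yields $\calE_\loc(u,u)\asymp [u]_{B^{2,\infty}_{\alpha,\beta^*}(K)}$ for every $u\in L^2(K;\nu)$, which is the seminorm statement of the corollary.

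Finally, for the identification of domains, I would use the definition of $\calF_\loc$ from Theorem \ref{SG_con_thm_BM}, namely $\calF_\loc=\{u\in L^2(K;\nu):\calE_\loc(u,u)<+\infty\}$, together with the seminorm equivalence just established, to conclude
\begin{equation*}
\calF_\loc=\{u\in L^2(K;\nu):[u]_{B^{2,\infty}_{\alpha,\beta^*}(K)}<+\infty\}=B^{2,\infty}_{\alpha,\beta^*}(K),
\end{equation*}
where the last identity is merely the definition of the Besov space given in Section \ref{sec_SG}. There is no substantive obstacle here: the entire content has been pre-packaged in Theorem \ref{SG_con_thm_BM} and Lemma \ref{SG_con_lem_equiv}, and the only mildly non-trivial point is the numerical identification $(5/3)^n = 2^{(\beta^*-\alpha)n}$, which relies on the precise value $\beta^*=\log 5/\log 2$.
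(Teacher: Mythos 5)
Your proposal is correct and is exactly the argument the paper intends: the paper states that the corollary is "obvious by Lemma \ref{SG_con_lem_equiv} and the proof of Theorem \ref{SG_con_thm_BM}", and your chain of reasoning --- the identification $(5/3)^n=2^{(\beta^*-\alpha)n}$, the second equivalence of Lemma \ref{SG_con_lem_equiv} at $\beta=\beta^*$, and the domain characterization $\calF_\loc=\myset{u\in L^2(K;\nu):\calE_\loc(u,u)<+\infty}$ --- is precisely how those two ingredients combine. No issues.
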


Secondly, we have the approximation of non-local Dirichlet forms to the local Dirichlet form.

\begin{mycor}\label{SG_con_cor_conv}
There exists some positive constant $C$ such that for all $u\in\calF_\loc$
$$\frac{1}{C}\calE_\loc(u,u)\le\varliminf_{\beta\uparrow\beta^*}(\beta^*-\beta)\calE_\beta(u,u)\le\varlimsup_{\beta\uparrow\beta^*}(\beta^*-\beta)\calE_\beta(u,u)\le C\calE_\loc(u,u),$$
$$\frac{1}{C}\calE_\loc(u,u)\le\varliminf_{\beta\uparrow\beta^*}(\beta^*-\beta)\frakE_\beta(u,u)\le\varlimsup_{\beta\uparrow\beta^*}(\beta^*-\beta)\frakE_\beta(u,u)\le C\calE_\loc(u,u),$$
$$\frac{1}{C}\calE_\loc(u,u)\le\varliminf_{\beta\uparrow\beta^*}(\beta^*-\beta)[u]_{B^{2,2}_{\alpha,\beta}(K)}\le\varlimsup_{\beta\uparrow\beta^*}(\beta^*-\beta)[u]_{B^{2,2}_{\alpha,\beta}(K)}\le C\calE_\loc(u,u).$$
\end{mycor}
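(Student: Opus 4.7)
By Lemma \ref{SG_con_lem_equiv}, the three quantities $\calE_\beta(u,u)$, $\frakE_\beta(u,u)$, and $[u]_{B^{2,2}_{\alpha,\beta}(K)}$ are comparable to each other up to multiplicative constants independent of $\beta$ (at least on any compact subinterval of $(\alpha,\beta^*]$). Hence it suffices to prove the two-sided estimate for just one of them, and the natural choice is $\frakE_\beta$, because of its discrete form that ties in directly with the supremum appearing in Theorem \ref{SG_con_thm_BM}. So my target is
$$
\frac{1}{C}\calE_\loc(u,u)\le\varliminf_{\beta\uparrow\beta^*}(\beta^*-\beta)\frakE_\beta(u,u)\le\varlimsup_{\beta\uparrow\beta^*}(\beta^*-\beta)\frakE_\beta(u,u)\le C\calE_\loc(u,u).
$$

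\textbf{Abel summation setup.} Introduce
$$
b_n:=\left(\tfrac{5}{3}\right)^n\sum_{w^{(1)}\sim_n w^{(2)}}\bigl(P_nu(w^{(1)})-P_nu(w^{(2)})\bigr)^2,\qquad \lambda_\beta:=\tfrac{2^\beta}{5}.
$$
Then $2^{(\beta-\alpha)n}=\lambda_\beta^n\cdot(5/3)^n$, so $\frakE_\beta(u,u)=\sum_{n\ge1}\lambda_\beta^n\,b_n$. Since $2^{\beta^*}=5$, we have $\lambda_{\beta^*}=1$, and a Taylor expansion gives $1-\lambda_\beta=(5-2^\beta)/5\asymp\beta^*-\beta$ as $\beta\uparrow\beta^*$. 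Therefore
$$
(\beta^*-\beta)\frakE_\beta(u,u)\asymp(1-\lambda_\beta)\sum_{n=1}^\infty\lambda_\beta^n\,b_n,
$$
and by Theorem \ref{SG_con_thm_BM} one has $\calE_\loc(u,u)\asymp\sup_{n\ge1}b_n$. So everything reduces to comparing $(1-\lambda)\sum_{n\ge1}\lambda^n b_n$ for $\lambda\uparrow1$ with $\sup_n b_n$.

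\textbf{Applying the auxiliary propositions.} Proposition \ref{prop_ele2}(1) immediately yields, without any extra input,
$$
\varliminf_{n\to\infty}b_n\;\le\;\varliminf_{\lambda\uparrow1}(1-\lambda)\sum_{n=1}^\infty\lambda^n b_n\;\le\;\varlimsup_{\lambda\uparrow1}(1-\lambda)\sum_{n=1}^\infty\lambda^n b_n\;\le\;\sup_{n\ge1}b_n.
$$
The upper bound already gives $\varlimsup_{\beta\uparrow\beta^*}(\beta^*-\beta)\frakE_\beta(u,u)\lesssim\calE_\loc(u,u)$. For the lower bound, I need to promote $\varliminf_n b_n$ to $\sup_n b_n$, and Proposition \ref{prop_ele2}(2) does this provided I can establish a quasi-monotonicity of the form
\begin{equation*}
b_n\le C\,b_{n+m}\qquad\text{for all }n,m\ge1,
\end{equation*}
with $C$ independent of $n,m$ and of $u$. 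Granted this, Proposition \ref{prop_ele2}(2) gives $\sup_n b_n\le C\varliminf_n b_n$, and chaining the inequalities produces the desired two-sided comparison.

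\textbf{The main obstacle.} The crux is the quasi-monotonicity $b_n\le C b_{n+m}$. Heuristically this says the discrete energy measured on scale $2^{-n}$ is controlled by the discrete energy on any finer scale, which is precisely the sort of compatibility estimate underlying the $\Gamma$-convergence construction of $(\calE_\loc,\calF_\loc)$ carried out for Theorem \ref{SG_con_thm_BM}. My plan is to extract this inequality from the same resistance/replacement argument that produces Theorem \ref{SG_con_thm_BM}: one shows that replacing $P_nu$ by its harmonic-type extension to scale $n+m$ cannot decrease the discrete energy by more than a constant factor, so the coarse energy $b_n$ is dominated by $b_{n+m}$. Once this estimate is in hand, the rest of the corollary is the elementary Abel-summation computation sketched above, and the statements for $\calE_\beta$ and $[u]_{B^{2,2}_{\alpha,\beta}(K)}$ follow by Lemma \ref{SG_con_lem_equiv} with no further work.
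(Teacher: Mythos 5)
Your proof is correct and is exactly the argument the paper intends (the paper only remarks that this corollary is ``obvious by Lemma \ref{SG_con_lem_equiv} and the proof of Theorem \ref{SG_con_thm_BM}''): your $b_n$ is the paper's $D_n(u)$, and the reduction via the Abel-summation facts of Proposition \ref{prop_ele2} together with $\calE_\loc(u,u)\asymp\sup_{n\ge1}D_n(u)$ from Theorem \ref{SG_con_thm_BM} is precisely the intended route. The quasi-monotonicity $b_n\le C\,b_{n+m}$ that you flag as the main obstacle does not need to be re-derived: it is already stated and proved as Theorem \ref{SG_con_thm_monotone} (with $C=36$), so the argument closes with no further work.
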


This chapter is organized as follows. In Section \ref{SG_con_sec_resist}, we give resistance estimates and introduce good functions. In Section \ref{SG_con_sec_monotone}, we give weak monotonicity result. In Section \ref{SG_con_sec_BM}, we prove Theorem \ref{SG_con_thm_BM}.

\section{Resistance Estimates and Good Functions}\label{SG_con_sec_resist}

Firstly, we give resistance estimates.
 
For all $n\ge1$, let us introduce an energy on $X_n$ given by
$$E_n(u,u)=\sum_{w^{(1)}\sim_nw^{(2)}}\left(u(w^{(1)})-u(w^{(2)})\right)^2,u\in l(W_n).$$
For all $w^{(1)},w^{(2)}\in W_n$ with $w^{(1)}\ne w^{(2)}$, we define the resistance
\begin{align*}
R_n(w^{(1)},w^{(2)})&=\inf\myset{E_n(u,u):u(w^{(1)})=1,u(w^{(2)})=0,u\in l(W_n)}^{-1}\\
&=\sup\myset{\frac{\left(u(w^{(1)})-u(w^{(2)})\right)^2}{E_n(u,u)}:E_n(u,u)\ne0,u\in l(W_n)}.
\end{align*}
It is obvious that
$$\left(u(w^{(1)})-u(w^{(2)})\right)^2\le R_n(w^{(1)},w^{(2)})E_n(u,u)\text{ for all }w^{(1)},w^{(2)}\in W_n,u\in l(W_n),$$
and $R_n$ is a metric on $W_n$, hence
$$R_n(w^{(1)},w^{(2)})\le R_n(w^{(1)},w^{(3)})+R_n(w^{(3)},w^{(2)})\text{ for all }w^{(1)},w^{(2)},w^{(3)}\in W_n.$$

\begin{mythm}\label{SG_con_thm_resist}
Considering effective resistances between any two of $0^n,1^n,2^n$, we have the electrical network $X_n$ is equivalent to the electrical network in Figure \ref{SG_con_fig_resist}, where
$$r_n=\frac{1}{2}\left(\frac{5}{3}\right)^n-\frac{1}{2}.$$




\begin{figure}[ht]
\centering
\begin{tikzpicture}
\draw (0,0)--(1.5,1.5/1.7320508076);
\draw (3,0)--(1.5,1.5/1.7320508076);
\draw (1.5,1.5*1.7320508076)--(1.5,1.5/1.7320508076);

\draw[fill=black] (0,0) circle (0.06);
\draw[fill=black] (3,0) circle (0.06);
\draw[fill=black] (1.5,1.5*1.7320508076) circle (0.06);
\draw[fill=black] (1.5,1.5/1.7320508076) circle (0.06);

\draw (0,-0.3) node {$0^n$};
\draw (3,-0.3) node {$1^n$};
\draw (1.5,1.5*1.7320508076+0.3) node {$2^n$};

\draw (1.5+0.3,0.75*1.7320508076+0.3) node {$r_{n}$};
\draw (0.7,0.7) node {$r_n$};
\draw (2.3,0.7) node {$r_n$};

\end{tikzpicture}
\caption{An Equivalent Electrical Network}\label{SG_con_fig_resist}
\end{figure}
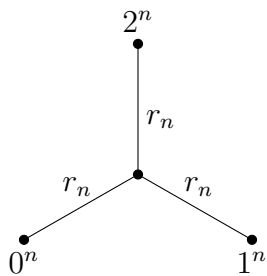


\end{mythm}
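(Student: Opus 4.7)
My plan is to proceed by induction on $n$, using $S_3$-symmetry together with the $\Delta$-Y transform (Lemma \ref{lem_DeltaY}) and the self-similar structure of $X_{n+1}$. Set $R_n$ to be the effective resistance between $0^n$ and $1^n$ in $X_n$. Since the construction of $X_n$ is invariant under the obvious action of $S_3$ permuting the symbols $\{0,1,2\}$, the effective resistance between any two of $\{0^n,1^n,2^n\}$ coincides, so $R_n$ is independent of the chosen pair. Any three-terminal resistive network is equivalent (as a three-port) to some Y-circuit, and the symmetry forces this Y to have three equal arms; writing the arm resistance as $r_n$, the triangle-Y correspondence gives $R_n=2r_n$. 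The task therefore reduces to proving the recursion $R_{n+1}=\tfrac{5}{3}R_n+\tfrac{2}{3}$ with base case $R_1=\tfrac{2}{3}$, after which solving the recursion yields $R_n=(5/3)^n-1$ and hence $r_n=\tfrac{1}{2}((5/3)^n-1)$.

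The base case is immediate: $X_1$ is a triangle on $\{0,1,2\}$ with three unit-conductance edges, so $R_1 = 1\parallel 2 = 2/3$, matching $r_1 = 1/3$. For the inductive step I would decompose $X_{n+1}$ into its three natural subcopies $A,B,C$ consisting of words beginning with $0,1,2$ respectively. Each subcopy is isomorphic to $X_n$, and the three subcopies are joined to one another only by the three type II edges $01^n\sim_{n+1}10^n$, $02^n\sim_{n+1}20^n$, $12^n\sim_{n+1}21^n$ (each of conductance $1$). By the induction hypothesis, applied within each subcopy viewed as a three-terminal network through its corners, I may replace each of $A,B,C$ by its equivalent Y, with a virtual centre $a,b,c$ joined to its three corners by edges of resistance $r_n$.

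After this replacement the outer corners $0^{n+1}=00^n$, $1^{n+1}=11^n$, $2^{n+1}=22^n$ each sit at the end of a single arm of resistance $r_n$ to $a$, $b$, $c$ respectively, with no other edge incident. To compute $R_{n+1}$ between $0^{n+1}$ and $1^{n+1}$, these dangling arms contribute an additive $2r_n$, reducing the problem to the effective resistance $R_{ab}$ in the network obtained by removing the three outer corners. That reduced network has only two simple paths from $a$ to $b$: the direct one $a\to 01^n\to 10^n\to b$ of resistance $2r_n+1$, and the detour through $c$, $a\to 02^n\to 20^n\to c\to 21^n\to 12^n\to b$ of resistance $4r_n+2$. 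These paths are internally disjoint, so
\[
R_{ab}=\frac{(2r_n+1)(4r_n+2)}{(2r_n+1)+(4r_n+2)}=\frac{2(2r_n+1)}{3},
\]
which gives $R_{n+1}=2r_n+R_{ab}=\frac{10r_n+2}{3}=\frac{5}{3}R_n+\frac{2}{3}$, as required.

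The main obstacle is justifying the replacement of each subcopy of $X_n$ by its Y-equivalent inside $X_{n+1}$: one needs that two three-terminal networks with identical port-resistance matrices are interchangeable within any ambient electrical network, and that the subcopy's port-resistance matrix really is the symmetric one controlled by $R_n$. The latter rests on the $S_3$-symmetry of $X_n$ combined with the induction hypothesis; the former is the content of Lemma \ref{lem_DeltaY} together with the standard fact that a three-terminal passive resistive network is determined, up to external equivalence, by its three pairwise effective resistances. Once these are in place, the calculation above closes the induction and identifies $r_n=\tfrac{1}{2}((5/3)^n-1)$.
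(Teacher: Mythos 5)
Your proposal is correct and follows essentially the same route as the paper: both proceed by induction, replace each of the three subcopies of $X_n$ inside $X_{n+1}$ by its symmetric Y-equivalent with arms $r_n$ (justified by the $S_3$-symmetry and the fact that the subcopies attach to the rest of $X_{n+1}$ only through their corners via the three type \Rmnum{2} unit edges), and then reduce the resulting inner triangle of side $2r_n+1$. The only cosmetic difference is that the paper performs a second $\Delta$-Y transform on that inner triangle to read off $r_{n+1}=\tfrac{5}{3}r_n+\tfrac{1}{3}$ directly, whereas you compute the two-terminal resistance $R_{ab}$ by a series–parallel reduction; the two computations agree and yield the same recursion.
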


\begin{proof}
Using $\Delta$-Y transform directly, we have
$$r_1=\frac{1\cdot1}{1+1+1}=\frac{1}{3}=\frac{1}{2}\left(\frac{5}{3}\right)^1-\frac{1}{2}.$$
For $n+1$, using $\Delta$-Y transform again, we have the electrical network $X_{n+1}$ is equivalent to the electrical networks in Figure \ref{SG_con_fig_resist1}.




\begin{figure}[ht]
\centering
\subfigure{
\begin{tikzpicture}[scale=0.5]
\draw (0,0)--(1.5,1.5/1.7320508076);
\draw (3,0)--(1.5,1.5/1.7320508076);
\draw (1.5,1.5*1.7320508076)--(1.5,1.5/1.7320508076);

\draw (0+6,0)--(1.5+6,1.5/1.7320508076);
\draw (3+6,0)--(1.5+6,1.5/1.7320508076);
\draw (1.5+6,1.5*1.7320508076)--(1.5+6,1.5/1.7320508076);

\draw (0+3,0+3*1.7320508076)--(1.5+3,1.5/1.7320508076+3*1.7320508076);
\draw (3+3,0+3*1.7320508076)--(1.5+3,1.5/1.7320508076+3*1.7320508076);
\draw (1.5+3,1.5*1.7320508076+3*1.7320508076)--(1.5+3,1.5/1.7320508076+3*1.7320508076);

\draw (1.5,1.5*1.7320508076)--(3,3*1.7320508076);
\draw (6,3*1.7320508076)--(7.5,1.5*1.7320508076);
\draw (3,0)--(6,0);
  
\draw[fill=black] (0,0) circle (0.06);
\draw[fill=black] (3,0) circle (0.06);
\draw[fill=black] (3/2,3/2*1.7320508076) circle (0.06);
\draw[fill=black] (1.5,1.5/1.7320508076) circle (0.06);

\draw[fill=black] (0+6,0) circle (0.06);
\draw[fill=black] (3+6,0) circle (0.06);
\draw[fill=black] (3/2+6,3/2*1.7320508076) circle (0.06);
\draw[fill=black] (1.5+6,1.5/1.7320508076) circle (0.06);

\draw[fill=black] (0+3,0+3*1.7320508076) circle (0.06);
\draw[fill=black] (3+3,0+3*1.7320508076) circle (0.06);
\draw[fill=black] (3/2+3,3/2*1.7320508076+3*1.7320508076) circle (0.06);
\draw[fill=black] (1.5+3,1.5/1.7320508076+3*1.7320508076) circle (0.06);

\draw (0,-0.3) node {\tiny{$0^{n+1}$}};
\draw (9,-0.3) node {\tiny{$1^{n+1}$}};
\draw (4.5,9/2*1.7320508076+0.3) node {\tiny{$2^{n+1}$}};

\draw (1.5+0.3,0.75*1.7320508076+0.4) node {\tiny{$r_{n}$}};
\draw (0.7,0.7) node {\tiny{$r_n$}};
\draw (2.3,0.7) node {\tiny{$r_n$}};

\draw (1.5+0.3+6,0.75*1.7320508076+0.4) node {\tiny{$r_{n}$}};
\draw (0.7+6,0.7) node {\tiny{$r_n$}};
\draw (2.3+6,0.7) node {\tiny{$r_n$}};

\draw (1.5+0.3+3,0.75*1.7320508076+0.4+3*1.7320508076) node {\tiny{$r_{n}$}};
\draw (0.7+3,0.7+3*1.7320508076) node {\tiny{$r_n$}};
\draw (2.3+3,0.7+3*1.7320508076) node {\tiny{$r_n$}};

\draw (4.5,0.3) node {\tiny{$1$}};
\draw (2.2,2.25*1.7320508076+0.3) node {\tiny{$1$}};
\draw (6.8,2.25*1.7320508076+0.3) node {\tiny{$1$}};

\end{tikzpicture}
}
\hspace{0.2in}
\subfigure{
\begin{tikzpicture}[scale=0.5]
\draw (0,0)--(4.5,4.5/1.7320508076);
\draw (9,0)--(4.5,4.5/1.7320508076);
\draw (4.5,4.5*1.7320508076)--(4.5,4.5/1.7320508076);

\draw[fill=black] (4.5,4.5/1.7320508076) circle (0.06);
  
\draw[fill=black] (0,0) circle (0.06);
\draw[fill=black] (1.5,1.5/1.7320508076) circle (0.06);

\draw[fill=black] (3+6,0) circle (0.06);
\draw[fill=black] (1.5+6,1.5/1.7320508076) circle (0.06);

\draw[fill=black] (3/2+3,3/2*1.7320508076+3*1.7320508076) circle (0.06);
\draw[fill=black] (1.5+3,1.5/1.7320508076+3*1.7320508076) circle (0.06);

\draw (0,-0.3) node {\tiny{$0^{n+1}$}};
\draw (9,-0.3) node {\tiny{$1^{n+1}$}};
\draw (4.5,9/2*1.7320508076+0.3) node {\tiny{$2^{n+1}$}};

\draw (0.7,0.7) node {\tiny{$r_n$}};

\draw (2.3+6,0.7) node {\tiny{$r_n$}};

\draw (1.5+0.3+3,0.75*1.7320508076+0.4+3*1.7320508076) node {\tiny{$r_{n}$}};

\draw (5.5,4.33012701892219323) node {\tiny{$\frac{2}{3}r_n+\frac{1}{3}$}};
\draw (2,2.3) node {\tiny{$\frac{2}{3}r_n+\frac{1}{3}$}};
\draw (7,2.3) node {\tiny{$\frac{2}{3}r_n+\frac{1}{3}$}};

\end{tikzpicture}
}
\caption{Equivalent Electrical Networks}\label{SG_con_fig_resist1}
\end{figure}
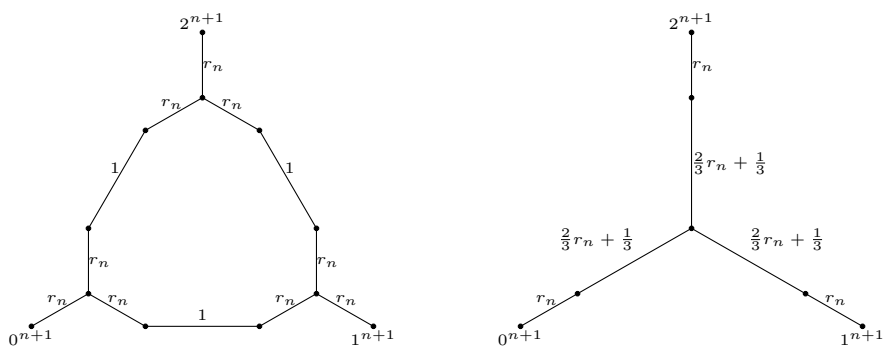


Hence
$$r_{n+1}=\frac{5}{3}r_n+\frac{1}{3}.$$
By elementary calculation, we have
$$r_n=\frac{1}{2}\left(\frac{5}{3}\right)^n-\frac{1}{2}\text{ for all }n\ge1.$$
\end{proof}
\begin{myrmk}\label{SG_con_rmk_resist012}
For all $n\ge1$, we have
$$R_n(0^n,1^n)=R_n(1^n,2^n)=R_n(0^n,2^n)=2r_n=\left(\frac{5}{3}\right)^n-1.$$
\end{myrmk}

\begin{myprop}\label{SG_con_prop_resist}
For all $n\ge1,w\in W_n$, we have
$$R_n(w,0^n),R_n(w,1^n,),R_n(w,2^n)\le\frac{5}{2}\left(\frac{5}{3}\right)^n.$$
\end{myprop}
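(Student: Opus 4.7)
The plan is to prove this by induction on $n$, exploiting the self-similar cell structure of $X_n$ together with the triangle inequality for effective resistance (which, as noted just before the statement, is a metric on $W_n$).

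For the base case $n=1$, we have $W_1=\{0,1,2\}$ and by Remark \ref{SG_con_rmk_resist012} every pairwise resistance equals $2/3$, which is certainly bounded by $\frac{5}{2}\cdot\frac{5}{3}=\frac{25}{6}$. For the inductive step, assume the bound at level $n$, fix $w\in W_{n+1}$, and decompose $w=iw'$ with $i\in\{0,1,2\}$ and $w'\in W_n$. The subgraph of $X_{n+1}$ induced on $\{iv:v\in W_n\}$ is graph-isomorphic to $X_n$ (adjacency of $iv_1,iv_2$ in $X_{n+1}$ is equivalent to adjacency of $v_1,v_2$ in $X_n$, since $K_{iv_k}=f_i(K_{v_k})$), and all edge weights are unit. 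Since passing from the full network $X_{n+1}$ to this subnetwork can only destroy current paths, Rayleigh's monotonicity principle gives
\begin{equation*}
R_{n+1}(iw',i^{n+1})\le R_n(w',i^n)\le\frac{5}{2}\left(\frac{5}{3}\right)^n,
\end{equation*}
by the inductive hypothesis applied to $w'$ and $i^n$ in $X_n$.

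Combining this with the triangle inequality for $R_{n+1}$ and Remark \ref{SG_con_rmk_resist012} (which gives $R_{n+1}(i^{n+1},j^{n+1})=(5/3)^{n+1}-1$ for $i\ne j$), we obtain for every $j\in\{0,1,2\}$
\begin{equation*}
R_{n+1}(w,j^{n+1})\le R_{n+1}(w,i^{n+1})+R_{n+1}(i^{n+1},j^{n+1})\le\frac{5}{2}\left(\frac{5}{3}\right)^n+\left(\frac{5}{3}\right)^{n+1}-1.
\end{equation*}
A short arithmetic check shows this is dominated by $\frac{5}{2}(5/3)^{n+1}$: indeed
\begin{equation*}
\frac{5}{2}\left(\frac{5}{3}\right)^{n+1}-\frac{5}{2}\left(\frac{5}{3}\right)^n=\frac{5}{2}\left(\frac{5}{3}\right)^n\cdot\frac{2}{3}=\left(\frac{5}{3}\right)^{n+1}\ge\left(\frac{5}{3}\right)^{n+1}-1,
\end{equation*}
which closes the induction.

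The only genuine content here is the comparison in the first display: one needs Rayleigh monotonicity to pass from the resistance in the subnetwork (which, by self-similarity of the cell structure, is exactly the level-$n$ resistance in $X_n$) to the resistance in $X_{n+1}$. This is the one step where the structural observation about the three subcopies of $X_n$ inside $X_{n+1}$ is used, and I do not expect any obstacle—the bound has plenty of slack, as witnessed by the $-1$ in the last inequality.
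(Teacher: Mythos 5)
Your proof is correct, but it is organized differently from the paper's. The paper argues directly, without induction: it builds an explicit chain $w=w^{(1)},w^{(2)},\ldots,w^{(n)}=w_1^n,w^{(n+1)}=0^n$ in $W_n$ by successively homogenizing the \emph{tail} of the word, bounds each increment $R_n(w^{(i)},w^{(i+1)})$ by cutting down to the level-$(n-i)$ cell containing both words (a copy of $X_i$, giving the bound $R_i(0^i,1^i)=(5/3)^i-1$), and then sums the geometric series $\sum_{i=1}^n(5/3)^i\le\frac{5}{2}(5/3)^n$. You instead peel off the \emph{first} letter and induct on $n$: one application of cutting to the level-one cell $iW_n\cong X_n$ reduces $R_{n+1}(iw',i^{n+1})$ to $R_n(w',i^n)$, and the triangle inequality together with Remark \ref{SG_con_rmk_resist012} pays for the hop to the other two corners. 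Both arguments rest on the same two ingredients — the cutting technique and the corner-to-corner resistances from Theorem \ref{SG_con_thm_resist} — and unrolling your induction essentially reproduces the paper's geometric series, so the difference is in the bookkeeping rather than the substance. Your version is a bit shorter and makes transparent why the constant $\frac{5}{2}$ is the right one (the increment $\frac{5}{2}(5/3)^{n+1}-\frac{5}{2}(5/3)^n=(5/3)^{n+1}$ exactly absorbs the corner-to-corner hop, with the $-1$ as slack); the paper's chain has the advantage of being a template that is reused verbatim for the Sierpi\'nski carpet in Corollary \ref{SC_con_cor_resist_upper}, where no exact corner resistances are available and only the order $\rho^i$ is known.
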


\begin{proof}
By symmetry, we only need to consider $R_n(w,0^n)$. Letting
$$w=w_1\ldots w_{n-2}w_{n-1}w_n,$$
we construct a finite sequence in $W_n$ as follows.
\begin{align*}
w^{(1)}&=w_1\ldots w_{n-2}w_{n-1}w_{n}=w,\\
w^{(2)}&=w_1\ldots w_{n-2}w_{n-1}w_{n-1},\\
w^{(3)}&=w_1\ldots w_{n-2}w_{n-2}w_{n-2},\\
&\ldots\\
w^{(n)}&=w_1\ldots w_1w_1w_1,\\
w^{(n+1)}&=0\ldots000.
\end{align*}
For all $i=1,\ldots,n-1$, by cutting technique, we have
\begin{align*}
&R_n(w^{(i)},w^{(i+1)})\\
=&R_n(w_1\ldots w_{n-i-1}w_{n-i}w_{n-i+1}\ldots w_{n-i+1},w_1\ldots w_{n-i-1}w_{n-i}w_{n-i}\ldots w_{n-i})\\
\le& R_{i}(w_{n-i+1}\ldots w_{n-i+1},w_{n-i}\ldots w_{n-i})\le R_i(0^i,1^i)=\left(\frac{5}{3}\right)^i-1\le\left(\frac{5}{3}\right)^i.
\end{align*}
Since
$$R_n(w^{(n)},w^{(n+1)})=R_n(w_1^n,0^n)\le R_n(0^n,1^n)=\left(\frac{5}{3}\right)^n-1\le\left(\frac{5}{3}\right)^n,$$
we have
$$
R_n(w,0^n)=R_n(w^{(1)},w^{(n+1)})\le\sum_{i=1}^nR_n(w^{(i)},w^{(i+1)})\le\sum_{i=1}^n\left(\frac{5}{3}\right)^i\le\frac{5}{2}\left(\frac{5}{3}\right)^n.
$$
\end{proof}

Secondly, we introduce \emph{good} functions with energy property and separation property.

Let $U^{(x_0,x_1,x_2)}$ and $\calU$ be defined as in Section \ref{sec_SG}.

For all $n\ge1$, let 
$$A_n(u)=E_n(P_nu,P_nu)=\sum_{w^{(1)}\sim_nw^{(2)}}\left(P_nu(w^{(1)})-P_nu(w^{(2)})\right)^2,u\in L^2(K;\nu).$$

For all $n\ge0$, let
$$B_n(u)=\sum_{w\in W_n}\sum_{p,q\in V_w}(u(p)-u(q))^2,u\in l(K).$$

By Theorem \ref{thm_SG_fun}, for all $U=U^{(x_0,x_1,x_2)}\in\calU,n\ge0$, we have
$$B_n(U)=\left(\frac{3}{5}\right)^n\left((x_0-x_1)^2+(x_1-x_2)^2+(x_0-x_2)^2\right).$$

We calculate $A_n(U)$ as follows.

\begin{mythm}\label{SG_con_thm_AnU}
For all $n\ge1$, we have
$$A_n(U)=\frac{2}{3}\left[\left(\frac{3}{5}\right)^n-\left(\frac{3}{5}\right)^{2n}\right]\left((x_0-x_1)^2+(x_1-x_2)^2+(x_0-x_2)^2\right).$$
\end{mythm}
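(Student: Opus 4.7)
The plan is to prove the identity by induction on $n$, exploiting the self-similar decomposition of $X_n$ together with Kigami's harmonic extension rule. First I would observe the corner-average formula
$$P_n U(w)=\int_K U\circ f_w\,d\nu=\frac{1}{3}\sum_{i=0}^{2}U(f_w(p_i)),$$
which follows from the fact that $U\circ f_w\in\calU$ by Kigami's self-similar definition and from the $S_3$-symmetry of $\nu$ forcing $\int_K U^{(y_0,y_1,y_2)}\,d\nu=(y_0+y_1+y_2)/3$. Consequently $A_n(U)$ is a translation-invariant, $S_3$-invariant quadratic form in $(x_0,x_1,x_2)$, so it is a scalar multiple $c_n Q$ of $Q:=(x_0-x_1)^2+(x_1-x_2)^2+(x_0-x_2)^2$, and the problem reduces to identifying $c_n=\tfrac{2}{3}((3/5)^n-(3/5)^{2n})$.

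For the base case $n=1$, the corner-average formula combined with the $1/5$-harmonic extension rule gives $P_1 U(i)=(3x_i+\sum_{k\ne i}x_k)/5$, whence $P_1 U(i)-P_1 U(j)=\tfrac{2}{5}(x_i-x_j)$, and summing the squares over the three edges of $X_1$ yields $c_1=\tfrac{4}{25}$. For the inductive step ($n\ge2$), I would split the edges of $X_n$ into (a) interior edges $iv_1\sim_n iv_2$ with both endpoints in the same sub-copy $iW_{n-1}$, and (b) the three bridge edges $ij^{n-1}\sim_n ji^{n-1}$ connecting distinct sub-copies. Using the identity $P_n U(iv)=P_{n-1}(U_i)(v)$ with $U_i:=U\circ f_i\in\calU$, the interior piece contributes $\sum_{i=0}^{2}A_{n-1}(U_i)=c_{n-1}\sum_{i=0}^{2}Q(U_i)$ by the inductive hypothesis, and a direct computation with the boundary values of $U_0,U_1,U_2$ produces $\sum_{i=0}^{2}Q(U_i)=\tfrac{3}{5}Q$, giving an interior contribution of $\tfrac{3}{5}c_{n-1}Q$.

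The crux is the bridge term. To evaluate $P_n U(ij^{n-1})-P_n U(ji^{n-1})$, I would analyse the recursion for $y_l^{(k)}:=U(f_j^{k}(p_l))$: since $f_j$ fixes $p_j$, one coordinate is constant, and the harmonic extension rule gives a linear map on the other two shifted coordinates with matrix $\tfrac{1}{5}\bigl(\begin{smallmatrix}2&1\\1&2\end{smallmatrix}\bigr)$, whose eigenvalues are $3/5$ and $1/5$ with eigenvectors $(1,1)$ and $(1,-1)$. Since the $(1,-1)$-component is killed upon averaging $y_{i_1}^{(m)}+y_{i_2}^{(m)}$, diagonalising yields the exact closed form
$$P_m U(j^m)=x_j+\tfrac{1}{3}(3/5)^m(x_{i_1}+x_{i_2}-2x_j),\quad\{j,i_1,i_2\}=\{0,1,2\}.$$
Applying this formula to $U_i$ in place of $U$ and using the key cancellation $U_i(p_j)=U(m_{ij})=U_j(p_i)$ (where $m_{ij}$ is the midpoint of $p_ip_j$), the constant pieces drop out and one finds
$$P_n U(ij^{n-1})-P_n U(ji^{n-1})=(3/5)^{n-1}\bigl(P_1 U(i)-P_1 U(j)\bigr)=\tfrac{2}{5}(3/5)^{n-1}(x_i-x_j),$$
so that squaring and summing over the three unordered bridge pairs contributes $\tfrac{4}{25}(3/5)^{2n-2}Q$.

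Combining the interior and bridge contributions produces the linear recursion $c_n=\tfrac{3}{5}c_{n-1}+\tfrac{4}{25}(3/5)^{2(n-1)}$ with $c_1=\tfrac{4}{25}$; a direct verification shows this is solved in closed form by $c_n=\tfrac{2}{3}\bigl((3/5)^n-(3/5)^{2n}\bigr)$, closing the induction. The only real obstacle is the bridge step: the collapse of the difference to a clean single power $(3/5)^{n-1}$ hinges on both the spectral decomposition of the extension map (which kills the would-be $(1/5)^{n-1}$ term upon averaging) and on the symmetry $U_i(p_j)=U_j(p_i)$ (which cancels the constant terms), without either of which no such closed formula for $c_n$ would emerge.
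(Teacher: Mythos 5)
Your proof is correct: the decomposition of the edge set of $X_n$ into the three interior copies plus the three bridge edges $ij^{n-1}\sim_n ji^{n-1}$ is exhaustive and disjoint, the identity $P_nU(iv)=P_{n-1}(U\circ f_i)(v)$ together with $U\circ f_i\in\calU$ and $\sum_{i}Q(U\circ f_i)=\tfrac{3}{5}Q$ (Theorem \ref{thm_SG_fun}) gives the interior term, your closed form for $P_mU(j^m)$ and the cancellation $U_i(p_j)=U_j(p_i)$ give the bridge term, and the resulting recursion $c_n=\tfrac{3}{5}c_{n-1}+\tfrac{4}{25}(3/5)^{2(n-1)}$ with $c_1=\tfrac{4}{25}$ is indeed solved by the stated formula.

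The organization, however, differs from the paper's. The paper does not induct on $n$: it classifies each edge of $X_n$ by the level at which it was ``born'' (type \Rmnum{1} edges $wi\sim_n wj$ created at level $n$, versus type \Rmnum{2} edges descended from a type \Rmnum{1} edge at some level $k<n$), proves the one-step propagation identity $P_{n+1}U(w^{(1)}w^{(2)}_n)-P_{n+1}U(w^{(2)}w^{(1)}_n)=\tfrac{3}{5}\left(P_nU(w^{(1)})-P_nU(w^{(2)})\right)$ by a direct computation with the six cell values $a_1,a_2,b_1,b_2,c$, and then writes $A_n(U)=\sum_{k=1}^{n}(3/5)^{2(n-k)}\tfrac{4}{25}B_{k-1}(U)$ and sums the series using $B_k(U)=(3/5)^kQ$. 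Your decomposition is spatial (which first-level cell an edge lives in) rather than generational (when an edge was created), so your bridge computation has to iterate the propagation $n-1$ times at once, which you achieve by diagonalising the extension map $\tfrac{1}{5}\bigl(\begin{smallmatrix}2&1\\1&2\end{smallmatrix}\bigr)$ instead of the paper's one-step argument. The paper's route avoids induction and the explicit spectral computation; yours buys a cleaner renormalization structure (a first-order linear recursion for $c_n$) and reuses the self-similarity of $\calU$ more systematically, at the cost of the slightly more delicate bridge analysis. Both are complete proofs.
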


\begin{myrmk}
The above result was also obtained in \cite[Theorem 3.1]{Str01}.
\end{myrmk}

\begin{proof}
We observe the following facts.
\begin{itemize}
\item For all $w^{(1)}\sim_nw^{(2)}$ is of type \Rmnum{1}, $w^{(1)},w^{(2)}$ are of the form $wi,wj$ for some $w\in W_{n-1}$ and $i,j=0,1,2$ with $i\ne j$. On the other hand, for all $w\in W_{n-1}$ and $i,j=0,1,2$ with $i\ne j$, $wi\sim_nwj$ is of type \Rmnum{1}.
\item For all $w^{(1)},w^{(2)}\in W_n$ such that
$$w^{(1)}=w^{(1)}_1\ldots w^{(1)}_n\sim_nw^{(2)}=w^{(2)}_1\ldots w^{(2)}_n$$
is of type \Rmnum{2}, there exists $k=1,\ldots,n-1$ such that $w^{(1)}_1\ldots w^{(1)}_k\sim_kw^{(2)}_1\ldots w^{(2)}_k$ is of type \Rmnum{1} and
\begin{align*}
w^{(2)}_{k}&=w^{(1)}_{k+1}=\ldots=w^{(1)}_{n},\\
w^{(1)}_{k}&=w^{(2)}_{k+1}=\ldots=w^{(2)}_{n}.
\end{align*}
On the other hand, for all $w^{(1)},w^{(2)}\in W_k$ such that
$$w^{(1)}_1\ldots w^{(1)}_k\sim_kw^{(2)}_1\ldots w^{(2)}_k$$
is of type \Rmnum{1}, we have
$$w^{(1)}_1\ldots w^{(1)}_kw^{(2)}_k\ldots w^{(2)}_k\sim_nw^{(2)}_1\ldots w^{(2)}_kw^{(1)}_k\ldots w^{(1)}_{k}$$
is of type \Rmnum{2} for all $n=k+1,k+2,\ldots$.
\end{itemize}

It is obvious that for all $n\ge1,w\in W_n$, we have $V_{w}=\myset{P_{w0},P_{w1},P_{w2}}$ and 
$$P_nU(w)=\frac{1}{\nu(K_w)}\int_{K_w}U(x)\nu(\md x)=\frac{1}{3}\left(U(P_{w0})+U(P_{w1})+U(P_{w2})\right).$$




\begin{figure}[ht]
\centering
\begin{tikzpicture}
\draw (0,0)--(4,0)--(2,2*1.7320508076)--cycle;
\draw (2,0)--(1,1.7320508076)--(3,1.7320508076)--cycle;

\draw[fill=black] (0,0) circle (0.06);
\draw[fill=black] (4,0) circle (0.06);
\draw[fill=black] (2,2*1.7320508076) circle (0.06);

\draw[fill=black] (1,1*1.7320508076) circle (0.06);
\draw[fill=black] (3,1*1.7320508076) circle (0.06);
\draw[fill=black] (2,0) circle (0.06);

\draw (0,-0.5) node {$P_{w0}$};
\draw (4,-0.5) node {$P_{w1}$};
\draw (2,2*1.7320508076+0.5) node {$P_{w2}$};

\draw (-0.1,1.7320508076) node {$P_{w02}=P_{w20}$};
\draw (2,-0.5) node {$P_{w01}=P_{w10}$};
\draw (4.1,1.7320508076) node {$P_{w12}=P_{w21}$};

\draw (2,1.7320508076+0.5) node {$K_{w2}$};
\draw (1,0.5) node {$K_{w0}$};
\draw (3,0.5) node {$K_{w1}$};

\end{tikzpicture}
\caption{Cells and Nodes}\label{SG_con_fig_AnU1}
\end{figure}
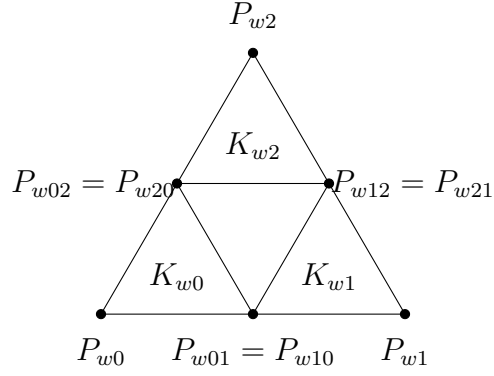


For all $n\ge1,w\in W_{n-1}$, we have
\begin{align*}
&P_{n}U(w0)=\frac{1}{3}\left(U(P_{w00})+U(P_{w01})+U(P_{w02})\right)\\
=&\frac{1}{3}\left(U(P_{w0})+\frac{2U(P_{w0})+2U(P_{w1})+U(P_{w2})}{5}+\frac{2U(P_{w0})+U(P_{w1})+2U(P_{w2})}{5}\right)\\
=&\frac{1}{3}\frac{9U(P_{w0})+3U(P_{w1})+3U(P_{w2})}{5}=\frac{3U(P_{w0})+U(P_{w1})+U(P_{w2})}{5}.
\end{align*}
Similarly
\begin{align*}
P_{n}U(w1)&=\frac{U(P_{w0})+3U(P_{w1})+U(P_{w2})}{5},\\
P_{n}U(w2)&=\frac{U(P_{w0})+U(P_{w1})+3U(P_{w2})}{5}.
\end{align*}
Hence
\begin{align*}
&\left(P_{n}U(w0)-P_{n}U(w1)\right)^2+\left(P_{n}U(w1)-P_{n}U(w2)\right)^2+\left(P_{n}U(w0)-P_{n}U(w2)\right)^2\\
=&\frac{4}{25}\left[\left(U(P_{w0})-U(P_{w1})\right)^2+\left(U(P_{w1})-U(P_{w2})\right)^2+\left(U(P_{w0})-U(P_{w2})\right)^2\right].
\end{align*}




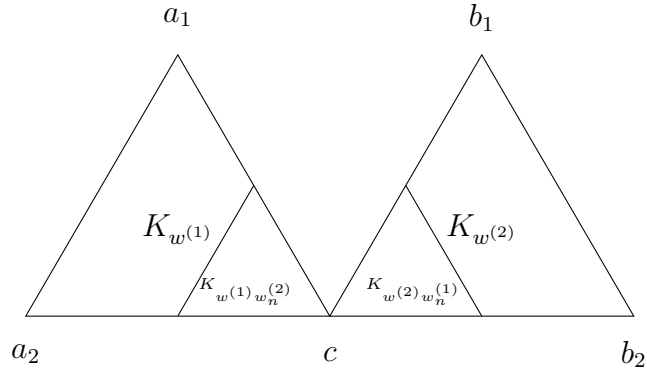
\begin{figure}[ht]
\centering
\begin{tikzpicture}
\draw (0,0)--(4,0)--(2,2*1.7320508076)--cycle;
\draw (4,0)--(8,0)--(6,2*1.7320508076)--cycle;

\draw (2,0)--(3,1.7320508076);

\draw (2,2/1.7320508076) node {$K_{w^{(1)}}$};
\draw (2.9,1/1.7320508076-0.25) node {\tiny{$K_{w^{(1)}w^{(2)}_n}$}};

\draw (6,0)--(5,1.7320508076);

\draw (6,2/1.7320508076) node {$K_{w^{(2)}}$};
\draw (5.1,1/1.7320508076-0.25) node {\tiny{$K_{w^{(2)}w^{(1)}_n}$}};

\draw (0,-0.5) node {$a_2$};
\draw (2,2*1.7320508076+0.5) node {$a_1$};
\draw (8,-0.5) node {$b_2$};
\draw (6,2*1.7320508076+0.5) node {$b_1$};

\draw (4,-0.5) node {$c$};

\end{tikzpicture}
\caption{Adjacent Cells}\label{SG_con_fig_AnU2}
\end{figure}


For all $n\ge1,w^{(1)}=w^{(1)}_1\ldots w^{(1)}_n\sim_nw^{(2)}=w^{(2)}_1\ldots w^{(2)}_n$. Assume that $U$ takes values $a_1,a_2,c$ and $b_1,b_2,c$ on $V_{w^{(1)}}$ and $V_{w^{(2)}}$, respectively, see Figure \ref{SG_con_fig_AnU2}. By above, we have
$$P_nU(w^{(1)})=\frac{a_1+a_2+c}{3},P_nU(w^{(2)})=\frac{b_1+b_2+c}{3},$$
$$P_{n+1}U(w^{(1)}w^{(2)}_n)=\frac{a_1+a_2+3c}{5},P_{n+1}U(w^{(2)}w^{(1)}_n)=\frac{b_1+b_2+3c}{5},$$
hence
$$P_nU(w^{(1)})-P_nU(w^{(2)})=\frac{1}{3}\left((a_1+a_2)-(b_1+b_2)\right),$$
$$P_{n+1}U(w^{(1)}w^{(2)}_n)-P_{n+1}U(w^{(2)}w^{(1)}_n)=\frac{1}{5}\left((a_1+a_2)-(b_1+b_2)\right).$$
Hence
$$P_{n+1}U(w^{(1)}w^{(2)}_n)-P_{n+1}U(w^{(2)}w^{(1)}_n)=\frac{3}{5}\left(P_nU(w^{(1)})-P_nU(w^{(2)})\right).$$
Therefore
\begin{align*}
A_n(U)&=\frac{4}{25}B_{n-1}(U)+\left(\frac{3}{5}\right)^2\left[\frac{4}{25}B_{n-2}(U)\right]+\ldots+\left(\frac{3}{5}\right)^{2(n-1)}\left[\frac{4}{25}B_{0}(U)\right]\\
&=\frac{4}{25}\left((x_0-x_1)^2+(x_1-x_2)^2+(x_0-x_2)^2\right)\\
&\cdot\left[\left(\frac{9}{25}\right)^0\left(\frac{3}{5}\right)^{n-1}+\left(\frac{9}{25}\right)^1\left(\frac{3}{5}\right)^{n-2}+\ldots+\left(\frac{9}{25}\right)^{n-1}\left(\frac{3}{5}\right)^{0}\right]\\
&=\frac{2}{3}\left[\left(\frac{3}{5}\right)^{n}-\left(\frac{3}{5}\right)^{2n}\right]\left((x_0-x_1)^2+(x_1-x_2)^2+(x_0-x_2)^2\right).
\end{align*}
\end{proof}

\section{Weak Monotonicity Result}\label{SG_con_sec_monotone}
In this section, we give weak monotonicity result using resistance estimates.

For all $n\ge1$, let
$$D_n(u)=\left(\frac{5}{3}\right)^nA_n(u)=\left(\frac{5}{3}\right)^n\sum_{w^{(1)}\sim_nw^{(2)}}\left(P_nu(w^{(1)})-P_nu(w^{(2)})\right)^2,u\in L^2(K;\nu).$$

The weak monotonicity result is as follows.

\begin{mythm}\label{SG_con_thm_monotone}
There exists some positive constant $C$ such that
$$D_n(u)\le CD_{n+m}(u)\text{ for all }u\in L^2(K;\nu),n,m\ge1.$$
Indeed, we can take $C=36$.
\end{mythm}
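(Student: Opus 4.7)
The plan is to compare the discrete energy at scale $n$ with that at scale $n+m$ by averaging. The identity $P_n u(w) = 3^{-m}\sum_{v \in W_m} P_{n+m} u(wv)$ follows from $K_w = \bigcup_{v \in W_m} K_{wv}$ (disjoint up to $\nu$-null sets) together with $\nu(K_{wv}) = 3^{-m}\nu(K_w)$. Writing each $P_n u(w^{(i)})$ as the common average $3^{-2m}\sum_{v^{(1)}, v^{(2)}\in W_m} P_{n+m}u(w^{(i)} v^{(i)})$ and applying the Cauchy--Schwarz inequality therefore gives
\begin{equation*}
\bigl(P_n u(w^{(1)}) - P_n u(w^{(2)})\bigr)^2 \le \frac{1}{3^{2m}} \sum_{v^{(1)}, v^{(2)} \in W_m} \bigl(P_{n+m} u(w^{(1)} v^{(1)}) - P_{n+m} u(w^{(2)} v^{(2)})\bigr)^2,
\end{equation*}
reducing everything to bounding differences of $P_{n+m}u$ in terms of the graph energy $A_{n+m}(u)$.

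For each ordered pair $w^{(1)} \sim_n w^{(2)}$, I would work inside the subgraph $G = G(w^{(1)}, w^{(2)})$ of $X_{n+m}$ induced on $\{w^{(i)} v : v \in W_m,\, i = 1, 2\}$. Two $\sim_n$-adjacent cells $K_{w^{(1)}}$, $K_{w^{(2)}}$ in the SG meet at a single point $c = P_{w^{(1)} i} = P_{w^{(2)} j}$ for some $i,j\in\{0,1,2\}$, so $G$ is two disjoint copies of $X_m$ (one on each $\{w^{(i)}v : v \in W_m\}$, isomorphic to $X_m$ via $v \mapsto w^{(i)}v$) joined by the single contact edge $(w^{(1)} i^m, w^{(2)} j^m)$. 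By Proposition \ref{SG_con_prop_resist}, the effective resistance in either $X_m$-copy between an arbitrary vertex and $i^m$ (resp.\ $j^m$) is at most $\tfrac{5}{2}(5/3)^m$; combining via the series law with the unit contact edge and using $1 \le (3/5)(5/3)^m$ for $m \ge 1$, I obtain
\begin{equation*}
R_G\bigl(w^{(1)} v^{(1)}, w^{(2)} v^{(2)}\bigr) \le 5 (5/3)^m + 1 \le 6 (5/3)^m
\end{equation*}
uniformly in $v^{(1)}, v^{(2)} \in W_m$. The standard resistance--energy inequality then yields $(P_{n+m} u(w^{(1)} v^{(1)}) - P_{n+m} u(w^{(2)} v^{(2)}))^2 \le 6(5/3)^m \, E_G(P_{n+m} u)$, where $E_G$ denotes the restriction of $A_{n+m}(u)$ to the edges of $G$.

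Combining the two displays, the factor $3^{2m}$ cancels the averaging prefactor and yields $(P_n u(w^{(1)}) - P_n u(w^{(2)}))^2 \le 6(5/3)^m E_G$. Summing over ordered pairs $w^{(1)} \sim_n w^{(2)}$ then reduces the argument to controlling the multiplicity with which each edge of $X_{n+m}$ is charged among the energies $E_{G(w^{(1)},w^{(2)})}$: an edge contained in a single level-$n$ cell $w'$ appears whenever $w' \in \{w^{(1)}, w^{(2)}\}$, and since the SG geometry forces $\deg_{X_n}(w') \le 3$, this produces at most $2 \cdot 3 = 6$ occurrences; an edge straddling two distinct level-$n$ cells appears only for the two orderings of that pair, i.e.\ exactly twice. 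Hence $\sum_{w^{(1)} \sim_n w^{(2)}} E_G \le 6\, A_{n+m}(u)$, giving $A_n(u) \le 36(5/3)^m A_{n+m}(u)$ and, after multiplying by $(5/3)^n$, the desired $D_n(u) \le 36 D_{n+m}(u)$. The main delicacy is tuning the resistance bound and the edge-multiplicity count simultaneously so that the two factors of $6$ combine to yield precisely the constant $C = 36$.
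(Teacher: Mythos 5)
Your proof is correct and follows essentially the same route as the paper's: both reduce the estimate via the identity $P_nu(w)=3^{-m}\sum_{v\in W_m}P_{n+m}u(wv)$, bound the resistance between points of the two adjacent cell-copies of $X_m$ through the single contact edge by $5(5/3)^m+1\le 6(5/3)^m$ using Proposition \ref{SG_con_prop_resist}, and close with the same multiplicity count $2\deg_{X_n}(w')\le 6$, yielding $C=36$. The only cosmetic difference is that you average over all pairs $(v^{(1)},v^{(2)})\in W_m\times W_m$ where the paper pairs the same subword $v$ in both cells; this changes nothing in the estimates.
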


\begin{myrmk}
In Kigami's construction, the energies are monotone, that is, the constant $C=1$. Hence, the above result is called weak monotonicity.
\end{myrmk}

Theorem \ref{SG_con_thm_monotone} can be reduced as follows.

For all $n\ge1$, let
$$G_n(u)=\left(\frac{5}{3}\right)^nE_n(u,u)=\left(\frac{5}{3}\right)^n\sum_{w^{(1)}\sim_nw^{(2)}}\left(u(w^{(1)})-u(w^{(2)})\right)^2,u\in l(W_n).$$

For all $n,m\ge1$, let $M_{n,m}:l(W_{n+m})\to l(W_n)$ be a mean value operator given by
$$(M_{n,m}u)(w)=\frac{1}{3^m}\sum_{v\in W_m}u(wv),w\in W_n,u\in l(W_{n+m}).$$

\begin{mythm}\label{SG_con_thm_monotone_graph}
There exists some positive constant $C$ such that
$$G_n(M_{n,m}u)\le CG_{n+m}(u)\text{ for all }u\in l(W_{n+m}),n,m\ge1.$$
\end{mythm}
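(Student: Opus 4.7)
The strategy is to reduce the claim to a local estimate on each adjacency pair $w^{(1)} \sim_n w^{(2)}$ of $X_n$, and then to combine the resistance bound of Proposition \ref{SG_con_prop_resist} with a bounded-degree counting argument. For each such pair, $K_{w^{(1)}} \cap K_{w^{(2)}}$ consists of a single point $p$. I will select ``corner indices'' $i_1, i_2 \in \{0,1,2\}$ so that $P_{w^{(1)} i_1} = P_{w^{(2)} i_2} = p$: if the pair is of type \Rmnum{1}, written $w^{(1)} = w i$, $w^{(2)} = w j$, set $i_1 = j$ and $i_2 = i$; if it is of type \Rmnum{2}, set $i_1 = w^{(1)}_n$ and $i_2 = w^{(2)}_n$. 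Defining $z_1 := w^{(1)} i_1^m$ and $z_2 := w^{(2)} i_2^m$ in $W_{n+m}$, both cells $K_{z_1}, K_{z_2}$ contain $p$, so $z_1 \sim_{n+m} z_2$.

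For a fixed pair $w^{(1)} \sim_n w^{(2)}$, the triangle inequality gives
\[
\bigl(M_{n,m}u(w^{(1)}) - M_{n,m}u(w^{(2)})\bigr)^2 \le 3 \bigl[T_1 + T_2 + T_3\bigr],
\]
where $T_1 = (M_{n,m}u(w^{(1)}) - u(z_1))^2$, $T_2 = (u(z_1) - u(z_2))^2$, and $T_3$ is the analogue of $T_1$ on the $w^{(2)}$-side. The middle term $T_2$ is a single summand of $E_{n+m}(u,u)$. For $T_1$, Jensen's inequality combined with Proposition \ref{SG_con_prop_resist} gives
\[
T_1 \le \frac{1}{3^m} \sum_{v \in W_m} \bigl(u(w^{(1)} v) - u(w^{(1)} i_1^m)\bigr)^2 \le \frac{5}{2} \left(\frac{5}{3}\right)^m E^{(w^{(1)})}_m(u),
\]
where $E^{(w)}_m(u) := \sum_{v \sim_m v'} (u(wv) - u(wv'))^2$ is the internal energy of the subcell indexed by $w$; the resistance estimate $R_m(v, i_1^m) \le \tfrac{5}{2}(5/3)^m$ is applied within the isomorphic copy of $X_m$ sitting inside $X_{n+m}$ as the descendants of $w^{(1)}$. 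An identical bound holds for $T_3$.

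Summing over all pairs $w^{(1)} \sim_n w^{(2)}$ in $X_n$ and using that (i) every $w \in W_n$ appears in at most boundedly many adjacencies, and (ii) the edge sets $\{(wv, wv') : v \sim_m v'\}$ for distinct $w \in W_n$ are pairwise disjoint subsets of the edges of $X_{n+m}$, the $T_1, T_3$ contributions sum to at most $C_1 (5/3)^m E_{n+m}(u,u)$. The $T_2$ contributions sum to at most $E_{n+m}(u,u)$, because the map $(w^{(1)}, w^{(2)}) \mapsto (z_1, z_2)$ is injective (the suffix $i_1^m$ of $z_1$ recovers both $w^{(1)}$ and $i_1$). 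Consequently $A_n(M_{n,m}u) \le C (5/3)^m E_{n+m}(u,u)$; multiplying by $(5/3)^n$ yields $G_n(M_{n,m}u) \le C \, G_{n+m}(u)$, which is the desired inequality.

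The main point requiring care is the uniform handling of type \Rmnum{1} and type \Rmnum{2} adjacencies, namely the verification that the corner words $z_1 = w^{(1)} i_1^m$ and $z_2 = w^{(2)} i_2^m$ are always adjacent in $X_{n+m}$; this rests on the observation that both cells $K_{z_1}, K_{z_2}$ shrink to the unique intersection point $p = K_{w^{(1)}} \cap K_{w^{(2)}}$. The remaining inputs --- Jensen's inequality, Proposition \ref{SG_con_prop_resist}, and the finite-degree combinatorics of $X_n$ --- are routine.
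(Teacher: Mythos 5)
Your proof is correct and follows essentially the same route as the paper's: Jensen's inequality, the resistance bound of Proposition \ref{SG_con_prop_resist} applied inside the copies of $X_m$ sitting over $w^{(1)}$ and $w^{(2)}$, the bridging adjacency $w^{(1)}i_1^m\sim_{n+m}w^{(2)}i_2^m$, and a bounded-multiplicity count over the cells. The only difference is cosmetic — you apply the triangle inequality through the corner words before invoking Jensen and the resistance estimate, whereas the paper bounds $(u(w^{(1)}v)-u(w^{(2)}v))^2$ for each $v$ by the resistance of the joint network $w^{(1)}W_m\cup w^{(2)}W_m$ — and both yield the same $O((5/3)^m E_{n+m}(u,u))$ bound.
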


\begin{proof}[Proof of Theorem \ref{SG_con_thm_monotone} using Theorem \ref{SG_con_thm_monotone_graph}]
Note $P_nu=M_{n,m}(P_{n+m}u)$, hence
\begin{align*}
D_n(u)&=\left(\frac{5}{3}\right)^n\sum_{w^{(1)}\sim_nw^{(2)}}\left(P_nu(w^{(1)})-P_nu(w^{(2)})\right)^2=G_n(P_nu)\\
&=G_n(M_{n,m}(P_{n+m}u))\le CG_{n+m}(P_{n+m}u)\\
&=C\left(\frac{5}{3}\right)^{n+m}\sum_{w^{(1)}\sim_{n+m}w^{(2)}}\left(P_{n+m}u(w^{(1)})-P_{n+m}u(w^{(2)})\right)^2=CD_{n+m}(u).
\end{align*}
\end{proof}

\begin{myrmk}
The constant in Theorem \ref{SG_con_thm_monotone} can be taken as the one in Theorem \ref{SG_con_thm_monotone_graph}.
\end{myrmk}

\begin{proof}[Proof of Theorem \ref{SG_con_thm_monotone_graph}]
For all $n\ge1$. Assume that $W\subseteq W_n$ is connected, that is, for all $w^{(1)},w^{(2)}\in W$, there exists a finite sequence $\myset{v^{(1)},\ldots,v^{(k)}}\subseteq W$ with $v^{(1)}=w^{(1)},v^{(k)}=w^{(2)}$ and $v^{(i)}\sim_nv^{(i+1)}$ for all $i=1,\ldots,k-1$. Let 
$$E_W(u,u)=
\sum_{\mbox{\tiny
$
\begin{subarray}{c}
w^{(1)},w^{(2)}\in W\\
w^{(1)}\sim_nw^{(2)}
\end{subarray}
$}}
(u(w^{(1)})-u(w^{(2)}))^2,u\in l(W).$$
For all $w^{(1)},w^{(2)}\in W$, let 
\begin{align*}
R_W(w^{(1)},w^{(2)})&=\inf\myset{E_W(u,u):u(w^{(1)})=1,u(w^{(2)})=0,u\in l(W)}^{-1}\\
&=\sup\myset{\frac{(u(w^{(1)})-u(w^{(2)}))^2}{E_W(u,u)}:E_W(u,u)\ne0,u\in l(W)}.
\end{align*}
It is obvious that
$$\left(u(w^{(1)})-u(w^{(2)})\right)^2\le R_W(w^{(1)},w^{(2)})E_W(u,u)\text{ for all }w^{(1)},w^{(2)}\in W,u\in l(W),$$
and $R_W$ is a metric on $W$, hence
$$R_W(w^{(1)},w^{(2)})\le R_W(w^{(1)},w^{(3)})+R_W(w^{(3)},w^{(2)})\text{ for all }w^{(1)},w^{(2)},w^{(3)}\in W.$$
By definition, we have
\begin{align*}
G_n(M_{n,m}u)&=\left(\frac{5}{3}\right)^n\sum_{w^{(1)}\sim_nw^{(2)}}\left(M_{n,m}u(w^{(1)})-M_{n,m}u(w^{(2)})\right)^2\\
&=\left(\frac{5}{3}\right)^n\sum_{w^{(1)}\sim_nw^{(2)}}\left(\frac{1}{3^m}\sum_{v\in W_m}\left(u(w^{(1)}v)-u(w^{(2)}v)\right)\right)^2\\
&\le\left(\frac{5}{3}\right)^n\sum_{w^{(1)}\sim_nw^{(2)}}\frac{1}{3^m}\sum_{v\in W_m}\left(u(w^{(1)}v)-u(w^{(2)}v)\right)^2.
\end{align*}

Fix $w^{(1)}\sim_nw^{(2)}$, there exist $i,j=0,1,2$ with $i\ne j$ such that $w^{(1)}i^m\sim_{n+m}w^{(2)}j^m$, see Figure \ref{SG_con_fig_monotone}.




\begin{figure}[ht]
\centering
\begin{tikzpicture}
\draw (0,0)--(4,0)--(2,2*1.7320508076)--cycle;
\draw (5,0)--(9,0)--(7,2*1.7320508076)--cycle;

\draw (4,0)--(5,0);

\draw[fill=black] (4,0) circle (0.06);
\draw[fill=black] (5,0) circle (0.06);

\draw (2,2/1.7320508076) node {$w^{(1)}W_m$};
\draw (7,2/1.7320508076) node {$w^{(2)}W_m$};

\draw (3.7,-0.5) node {$w^{(1)}i^m$};
\draw (5.3,-0.5) node {$w^{(2)}j^m$};

\draw[fill=black] (1.3,1.7) circle (0.06);
\draw (2,2) node {$w^{(1)}v$};

\draw[fill=black] (6.3,1.7) circle (0.06);
\draw (7,2) node {$w^{(2)}v$};

\end{tikzpicture}
\caption{$w^{(1)}W_m$ and $w^{(2)}W_m$}\label{SG_con_fig_monotone}
\end{figure}
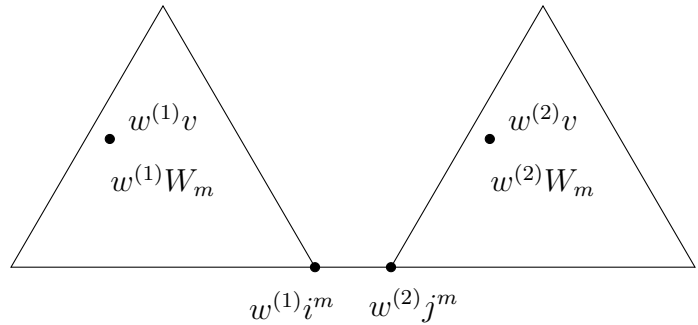


Fix $v\in W_m$, we have
$$\left(u(w^{(1)}v)-u(w^{(2)}v)\right)^2\le R_{w^{(1)}W_m\cup w^{(2)}W_m}(w^{(1)}v,w^{(2)}v)E_{w^{(1)}W_m\cup w^{(2)}W_m}(u,u).$$
By cutting technique and Proposition \ref{SG_con_prop_resist}, we have
\begin{align*}
&R_{w^{(1)}W_m\cup w^{(2)}W_m}(w^{(1)}v,w^{(2)}v)\\
\le& R_{w^{(1)}W_m\cup w^{(2)}W_m}(w^{(1)}v,w^{(1)}i^m)+R_{w^{(1)}W_m\cup w^{(2)}W_m}(w^{(1)}i^m,w^{(2)}j^m)\\
&+R_{w^{(1)}W_m\cup w^{(2)}W_m}(w^{(2)}j^m,w^{(2)}v)\\
\le& R_m(v,i^m)+1+R_m(v,j^m)\le5\left(\frac{5}{3}\right)^m+1\le6\left(\frac{5}{3}\right)^m,
\end{align*}
hence
\begin{align*}
&(u(w^{(1)}v)-u(w^{(2)}v))^2\le6\left(\frac{5}{3}\right)^mE_{w^{(1)}W_m\cup w^{(2)}W_m}(u,u)\\
=&6\left(\frac{5}{3}\right)^m\left(E_{w^{(1)}W_m}(u,u)+E_{w^{(2)}W_m}(u,u)+\left(u(w^{(1)}i^m)-u(w^{(2)}j^m)\right)^2\right).
\end{align*}
Hence
\begin{align*}
&\frac{1}{3^m}\sum_{v\in W_m}\left(u(w^{(1)}v)-u(w^{(2)}v)\right)^2\\
\le&6\left(\frac{5}{3}\right)^m\left(E_{w^{(1)}W_m}(u,u)+E_{w^{(2)}W_m}(u,u)+\left(u(w^{(1)}i^m)-u(w^{(2)}j^m)\right)^2\right).
\end{align*}
In the summation with respect to $w^{(1)}\sim_nw^{(2)}$, the terms $E_{w^{(1)}W_m}(u,u),E_{w^{(2)}W_m}(u,u)$ are summed at most 6 times, hence
\begin{align*}
&G_n(M_{n,m}u)\le\left(\frac{5}{3}\right)^n\sum_{w^{(1)}\sim_nw^{(2)}}\frac{1}{3^m}\sum_{v\in W_m}\left(u(w^{(1)}v)-u(w^{(2)}v)\right)^2\\
\le&6\left(\frac{5}{3}\right)^n6\left(\frac{5}{3}\right)^mE_{n+m}(u,u)=36\left(\frac{5}{3}\right)^{n+m}E_{n+m}(u,u)=36G_{n+m}(u).
\end{align*}
\end{proof}

\section{Proof of Theorem \ref{SG_con_thm_BM}}\label{SG_con_sec_BM}

For all $\beta>0$, let 
$$\frakE_\beta(u,u)=\sum_{n=1}^\infty2^{(\beta-\alpha)n}\sum_{w^{(1)}\sim_nw^{(2)}}\left(P_nu(w^{(1)})-P_nu(w^{(2)})\right)^2,$$
denote $\scrE_\beta(u,u)=[u]_{B^{2,2}_{\alpha,\beta}(K)}$ for simplicity.

We obtain non-local regular closed forms and Dirichlet forms as follows. 

\begin{mythm}\label{SG_con_thm_nonlocal}
For all $\beta\in(\alpha,\beta^*)$, $(\frakE_\beta,\calF_\beta)$ is a non-local regular closed form on $L^2(K;\nu)$, $(\calE_\beta,\calF_\beta)$ and $(\scrE_\beta,\calF_\beta)$ are non-local regular Dirichlet forms on $L^2(K;\nu)$. For all $\beta\in[\beta^*,+\infty)$, $\calF_\beta$ consists only of constant functions.
\end{mythm}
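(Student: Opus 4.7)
The plan has two parts, matched to the two ranges of $\beta$.

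For $\beta\in(\alpha,\beta^*)$, I would first establish the closed form property for all three functionals with the common domain $\calF_\beta$. For $\calE_\beta$ and $\scrE_\beta$, Fatou's lemma applied to $L^2$-convergent sequences yields lower semicontinuity; for $\frakE_\beta$ the same follows because each $P_n$ is a bounded linear operator on $L^2(K;\nu)$ and the sum of squared differences is lower semicontinuous. The three domains coincide by the equivalence in Lemma \ref{SG_con_lem_equiv}, and the H\"older embedding (Lemma \ref{lem_SG_holder}) guarantees $\calF_\beta\subseteq C(K)$, which justifies defining $\calF_\beta$ in terms of continuous representatives.

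The regularity step is the heart of the argument, and I view it as the main obstacle. The goal is to show $\calF_\beta$ is uniformly dense in $C(K)$, from which $L^2$-density and thus regularity follow. The key computation is that, for any $U=U^{(x_0,x_1,x_2)}\in\calU$, Theorem \ref{SG_con_thm_AnU} gives
$$\frakE_\beta(U,U)\asymp\sum_{n=1}^\infty 2^{(\beta-\alpha)n}A_n(U)\lesssim\sum_{n=1}^\infty\left(\frac{2^\beta}{5}\right)^n,$$
using $2^\alpha=3$. This is finite precisely because $\beta<\beta^*=\log 5/\log 2$, so by Lemma \ref{SG_con_lem_equiv} we have $\calU\subseteq\calF_\beta$. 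Since $\calF_\beta$ contains the constants, is closed under bounded products (from the standard inequality $(uv(x)-uv(y))^2\le 2\|v\|_\infty^2(u(x)-u(y))^2+2\|u\|_\infty^2(v(x)-v(y))^2$ applied to truncations), and contains the point-separating family $\calU$ (Theorem \ref{thm_SG_fun}(2)), Stone--Weierstrass yields uniform density in $C(K)$. To upgrade $\calE_\beta$ and $\scrE_\beta$ to Dirichlet forms it then suffices to verify Markov property directly: a normal contraction $v$ of $u$ satisfies $|v(x)-v(y)|\le|u(x)-u(y)|$ pointwise, so both the double integral defining $\calE_\beta$ and the Besov seminorm $\scrE_\beta$ decrease. (I would not claim Markov property for $\frakE_\beta$, since averaging via $P_n$ does not interact well with contractions; this is why the theorem only asserts the closed form property for $\frakE_\beta$.)

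For $\beta\in[\beta^*,+\infty)$, I would argue by contradiction via the weak monotonicity of Theorem \ref{SG_con_thm_monotone}. If $u\in\calF_\beta$ is not constant, then some $P_N u$ fails to be constant on $W_N$, so $D_N(u)>0$. Weak monotonicity gives $D_n(u)\ge D_N(u)/C$ for every $n\ge N$, equivalently $A_n(u)\ge (3/5)^n D_N(u)/C$. Therefore
$$\frakE_\beta(u,u)\ge\frac{D_N(u)}{C}\sum_{n\ge N}\left(\frac{2^\beta}{5}\right)^n,$$
and since $2^\beta\ge 5$ for $\beta\ge\beta^*$, the geometric series diverges. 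By Lemma \ref{SG_con_lem_equiv} this forces $\calE_\beta(u,u)=+\infty$, contradicting $u\in\calF_\beta$. The delicate point here is that the divergence works even at the critical value $\beta=\beta^*$, where each term equals $D_N(u)/C>0$; this is exactly where weak monotonicity (rather than monotone convergence to $\calE_\loc$) is essential, since a priori only a positive lower bound on $D_n(u)$ is available.
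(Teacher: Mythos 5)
Your proposal is correct and follows essentially the same route as the paper: Fatou's lemma for closedness, the explicit computation $\frakE_\beta(U,U)\lesssim\sum_n(2^\beta/5)^n<+\infty$ for $U\in\calU$ combined with Stone--Weierstrass for regularity, the normal-contraction argument for the Markov property of $\calE_\beta$ and $\scrE_\beta$ only, and weak monotonicity ($D_n(u)\ge D_N(u)/C$) to force divergence of the geometric series when $\beta\ge\beta^*$. Your closing remark about why weak monotonicity is exactly what is needed at the critical exponent $\beta=\beta^*$ accurately reflects the role it plays in the paper's argument.
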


\begin{myrmk}
$\frakE_\beta$ does not have Markovian property but $\calE_\beta$ and $\scrE_\beta$ do have Markovian property.
\end{myrmk}

\begin{proof}
By Fatou's lemma, it is obvious that $(\frakE_\beta,\calF_\beta)$ is a closed form on $L^2(K;\nu)$ in the wide sense.

For all $\beta\in(\alpha,\beta^*)$. By Lemma \ref{lem_SG_holder}, we have $\calF_\beta\subseteq C(K)$. We only need to show that $\calF_\beta$ is uniformly dense in $C(K)$, then $\calF_\beta$ is dense in $L^2(K;\nu)$, hence $(\frakE_\beta,\calF_\beta)$ is a regular closed form on $L^2(K;\nu)$.

Indeed, by Theorem \ref{SG_con_thm_AnU}, for all $U=U^{(x_0,x_1,x_2)}\in\calU$, we have
\begin{align*}
&\frakE_\beta(U,U)=\sum_{n=1}^\infty2^{(\beta-\alpha)n}A_n(U)\\
=&\sum_{n=1}^\infty2^{(\beta-\alpha)n}\frac{2}{3}\left[\left(\frac{3}{5}\right)^{n}-\left(\frac{3}{5}\right)^{2n}\right]\left((x_0-x_1)^2+(x_1-x_2)^2+(x_0-x_2)^2\right)\\
\le&\frac{2}{3}\left((x_0-x_1)^2+(x_1-x_2)^2+(x_0-x_2)^2\right)\sum_{n=1}^\infty2^{(\beta-\alpha)n}\left(\frac{3}{5}\right)^n<+\infty,
\end{align*}
hence $U\in\calF_\beta$, $\calU\subseteq\calF_\beta$. By Theorem \ref{thm_SG_fun}, we have $\calF_\beta$ separates points. It is obvious that $\calF_\beta$ is a sub-algebra of $C(K)$, that is, for all $u,v\in\calF_\beta,c\in\R$, we have $u+v,cu,uv\in\calF_\beta$. By Stone-Weierstrass theorem, $\calF_\beta$ is uniformly dense in $C(K)$.

Since $\calE_\beta,\scrE_\beta$ do have Markovian property, by above, $(\calE_\beta,\calF_\beta),(\scrE_\beta,\calF_\beta)$ are non-local regular Dirichlet forms on $L^2(K;\nu)$.

For all $\beta\in[\beta^*,+\infty)$. Assume that $u\in\calF_\beta$ is not constant, then there exists some integer $N\ge1$ such that $D_N(u)>0$. By Theorem \ref{SG_con_thm_monotone}, we have
\begin{align*}
\frakE_\beta(u,u)&=\sum_{n=1}^\infty2^{(\beta-\alpha)n}\left(\frac{3}{5}\right)^nD_n(u)\ge\sum_{n=N+1}^\infty2^{(\beta-\alpha)n}\left(\frac{3}{5}\right)^nD_n(u)\\
&\ge\frac{1}{C}\sum_{n=N+1}^\infty2^{(\beta-\alpha)n}\left(\frac{3}{5}\right)^nD_N(u)=+\infty,
\end{align*}
contradiction! Hence $\calF_\beta$ consists only of constant functions.
\end{proof}

Take $\myset{\beta_n}\subseteq(\alpha,\beta^*)$ with $\beta_n\uparrow\beta^*$. By Proposition \ref{prop_gamma}, there exist some subsequence still denoted by $\myset{\beta_n}$ and some closed form $(\calE,\calF)$ on $L^2(K;\nu)$ in the wide sense such that $(\beta^*-\beta_n)\frakE_{\beta_n}$ is $\Gamma$-convergent to $\calE$. Without lose of generality, we may assume that
$$0<\beta^*-\beta_n<\frac{1}{n+1}\text{ for all }n\ge1.$$

We have the characterization of $(\calE,\calF)$ on $L^2(K;\nu)$ as follows.

\begin{mythm}\label{SG_con_thm_E}
\begin{align*}
&\calE(u,u)\asymp\sup_{n\ge1}D_n(u)=\sup_{n\ge1}\left(\frac{5}{3}\right)^n\sum_{w^{(1)}\sim_nw^{(2)}}\left(P_nu(w^{(1)})-P_nu(w^{(2)})\right)^2,\\
&\calF=\myset{u\in L^2(K;\nu):\sup_{n\ge1}\left(\frac{5}{3}\right)^n\sum_{w^{(1)}\sim_nw^{(2)}}\left(P_nu(w^{(1)})-P_nu(w^{(2)})\right)^2<+\infty}.
\end{align*}
Moreover, $(\calE,\calF)$ is a regular closed form on $L^2(K;\nu)$ and
$$\frac{1}{2(\log2)C^2}\sup_{n\ge1}D_n(u)\le\calE(u,u)\le\frac{1}{\log2}\sup_{n\ge1}D_n(u).$$
\end{mythm}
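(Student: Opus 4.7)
The plan is to recognize $\frakE_\beta(u,u)$ as a geometric series in $D_n(u)$ and extract its Abel-type limit using Propositions \ref{prop_ele1}--\ref{prop_ele2} together with the weak monotonicity Theorem \ref{SG_con_thm_monotone}. Since $\alpha=\log 3/\log 2$ and $\beta^*=\log 5/\log 2$ give $2^{\beta-\alpha}\cdot(3/5)=2^{\beta-\beta^*}$, one has
$$\frakE_\beta(u,u)=\sum_{n=1}^\infty 2^{(\beta-\alpha)n}A_n(u)=\sum_{n=1}^\infty \lambda^n D_n(u),\quad \lambda:=2^{\beta-\beta^*}.$$
Since $(\beta^*-\beta)/(1-\lambda)\to 1/\log 2$ as $\beta\uparrow\beta^*$, one rewrites
$$(\beta^*-\beta)\frakE_\beta(u,u)=\frac{\beta^*-\beta}{1-\lambda}\cdot(1-\lambda)\sum_{n=1}^\infty\lambda^n D_n(u),$$
so everything reduces to identifying the $\Gamma$-limit of the Abel average on the right.

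For the upper bound on $\calE(u,u)$, I would take the constant test sequence $u_n\equiv u$ in Definition \ref{def_gamma}(1); combined with Proposition \ref{prop_ele2}(1), which dominates $\varlimsup_{\lambda\uparrow 1}(1-\lambda)\sum_k\lambda^k D_k(u)$ by $\sup_k D_k(u)$, this gives
$$\calE(u,u)\le\varliminf_{n\to+\infty}(\beta^*-\beta_n)\frakE_{\beta_n}(u,u)\le \frac{1}{\log 2}\sup_n D_n(u),$$
which already proves $\calF\supseteq\myset{u\in L^2(K;\nu):\sup_n D_n(u)<+\infty}$.

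For the lower bound, I would use the weak monotonicity. For any sequence $\myset{u_n}$ converging to $u$ strongly in $L^2(K;\nu)$ and any fixed $N\ge 1$, the estimate $D_N(v)\le C D_k(v)$ valid for all $k\ge N$ (Theorem \ref{SG_con_thm_monotone}) implies
$$\sum_{k=1}^\infty\lambda^k D_k(u_n)\ge\sum_{k=N}^\infty\lambda^k\frac{D_N(u_n)}{C}=\frac{\lambda^N}{C(1-\lambda)}D_N(u_n).$$
Because $u\mapsto P_N u(w)$ is a bounded linear functional on $L^2(K;\nu)$ for each of the finitely many $w\in W_N$, one has $D_N(u_n)\to D_N(u)$; taking $\varliminf_n$ and then $\sup_N$ yields
$$\varliminf_{n\to+\infty}(\beta^*-\beta_n)\frakE_{\beta_n}(u_n,u_n)\ge\frac{1}{C\log 2}\sup_N D_N(u).$$
Applying this to the recovery sequence furnished by Definition \ref{def_gamma}(2) forces $\calE(u,u)\ge (C\log 2)^{-1}\sup_N D_N(u)$; the slightly weaker constant $1/(2(\log 2) C^2)$ in the theorem follows after an additional application of Proposition \ref{prop_ele2}(2) to pass between $\varliminf_N D_N$ and $\sup_N D_N$.

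Finally, closedness of $(\calE,\calF)$ in the wide sense is automatic from Proposition \ref{prop_gamma}, and the two-sided bound above identifies $\calF$ as claimed. For regularity, Lemma \ref{lem_SG_holder} combined with Lemma \ref{SG_con_lem_equiv} places $\calF\subseteq C(K)$, and Theorem \ref{SG_con_thm_AnU} shows $\calU\subseteq\calF$ with $\sup_n D_n(U)<+\infty$ for every $U\in\calU$; then piecewise-good functions subordinate to cell partitions $W_n$ also lie in $\calF$ by self-similarity of $A_n$, and together form a subalgebra of $C(K)$ that contains the constants and separates points, so Stone--Weierstrass gives uniform (hence $L^2$) density. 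The most delicate step will be the lower bound: its correctness hinges on the continuity of $D_N$ under strong $L^2$-convergence together with the weak monotonicity, the latter being what forces the $\Gamma$-liminf to see the \emph{supremum} rather than merely the liminf in $N$.
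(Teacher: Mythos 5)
Your proposal is correct and follows essentially the same route as the paper: write $\frakE_\beta(u,u)=\sum_n 2^{(\beta-\beta^*)n}D_n(u)$, get the upper bound from the $\Gamma$-liminf inequality with the constant sequence plus Proposition \ref{prop_ele2}, get the lower bound from the recovery sequence plus the weak monotonicity of Theorem \ref{SG_con_thm_monotone}, and prove regularity via Lemma \ref{lem_SG_holder}, Theorem \ref{SG_con_thm_AnU} and Stone--Weierstrass. The one (harmless, in fact slightly cleaner) deviation is in the lower bound: you fix $N$ and truncate the series at $k\ge N$ before letting $n\to+\infty$, which exploits the $L^2$-continuity of $D_N$ directly and yields the constant $1/(C\log 2)$ without the paper's diagonal truncation at $k\ge n+1$ and its normalization $0<\beta^*-\beta_n<1/(n+1)$, so the stated constant $1/(2(\log 2)C^2)$ follows a fortiori.
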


\begin{proof}
Recall that
$$\frakE_{\beta}(u,u)=\sum_{n=1}^\infty2^{(\beta-\alpha)n}A_n(u)=\sum_{n=1}^\infty2^{(\beta-\beta^*)n}D_n(u).$$

We use weak monotonicity result Theorem \ref{SG_con_thm_monotone} and elementary result Proposition \ref{prop_ele2}.

On the one hand, for all $u\in L^2(K;\nu)$
\begin{align*}
\calE(u,u)&\le\varliminf_{n\to+\infty}(\beta^*-\beta_n)\calE_{\beta_n}(u,u)=\varliminf_{n\to+\infty}(\beta^*-\beta_n)\sum_{k=1}^\infty2^{(\beta_n-\beta^*)k}D_k(u)\\
&=\frac{1}{\log2}\varliminf_{n\to+\infty}(1-2^{\beta_n-\beta^*})\sum_{k=1}^\infty2^{(\beta_n-\beta^*)k}D_k(u)\le\frac{1}{\log2}\sup_{k\ge1}D_k(u).
\end{align*}
On the other hand, for all $u\in L^2(K;\nu)$, there exists $\myset{u_n}\subseteq L^2(K;\nu)$ converging strongly to $u$ in $L^2(K;\nu)$ such that
\begin{align*}
&\calE(u,u)\ge\varlimsup_{n\to+\infty}(\beta^*-\beta_n)\calE_{\beta_n}(u_n,u_n)=\varlimsup_{n\to+\infty}(\beta^*-\beta_n)\sum_{k=1}^\infty2^{(\beta_n-\beta^*)k}D_k(u_n)\\
\ge&\varlimsup_{n\to+\infty}(\beta^*-\beta_n)\sum_{k=n+1}^\infty2^{(\beta_n-\beta^*)k}D_k(u_n)\ge\frac{1}{C}\varlimsup_{n\to+\infty}(\beta^*-\beta_n)\sum_{k=n+1}^\infty2^{(\beta_n-\beta^*)k}D_n(u_n)\\
=&\frac{1}{C}\varlimsup_{n\to+\infty}\left[(\beta^*-\beta_n)\frac{2^{(\beta_n-\beta^*)(n+1)}}{1-2^{\beta_n-\beta^*}}D_n(u_n)\right].
\end{align*}
Since $0<\beta^*-\beta_n<1/(n+1)$, we have $2^{(\beta_n-\beta^*)(n+1)}>1/2$. Since
$$\lim_{n\to+\infty}\frac{\beta^*-\beta_n}{1-2^{\beta_n-\beta^*}}=\frac{1}{\log2},$$
we have
$$\calE(u,u)\ge\frac{1}{2C}\varlimsup_{n\to+\infty}\frac{\beta^*-\beta_n}{1-2^{\beta_n-\beta^*}}D_n(u_n)\ge\frac{1}{2(\log2)C}\varlimsup_{n\to+\infty}D_n(u_n).$$
Since $u_n\to u$ in $L^2(K;\nu)$, for all $k\ge1$, we have
$$D_k(u)=\lim_{n\to+\infty} D_k(u_n)=\lim_{k\le n\to+\infty} D_k(u_n)\le C\varliminf_{n\to+\infty} D_n(u_n).$$
Taking supremum with respect to $k\ge1$, we have
$$\sup_{k\ge1}D_k(u)\le C\varliminf_{n\to+\infty}D_n(u_n)\le C\varlimsup_{n\to+\infty}D_n(u_n)\le 2(\log2)C^2\calE(u,u).$$

By Lemma \ref{lem_SG_holder}, we have $\calF\subseteq C(K)$. We only need to show that $\calF$ is uniformly dense in $C(K)$, then $\calF$ is dense in $L^2(K;\nu)$, hence $(\calE,\calF)$ is a regular closed form on $L^2(K;\nu)$.

Indeed, by Theorem \ref{SG_con_thm_AnU}, for all $U=U^{(x_0,x_1,x_2)}\in\calU$, we have
\begin{align*}
&\sup_{n\ge1}D_n(U)=\sup_{n\ge1}\left(\frac{5}{3}\right)^nA_n(U)\\
=&\sup_{n\ge1}\left(\frac{5}{3}\right)^n\frac{2}{3}\left[\left(\frac{3}{5}\right)^n-\left(\frac{3}{5}\right)^{2n}\right]\left((x_0-x_1)^2+(x_1-x_2)^2+(x_0-x_2)^2\right)\\
\le&\frac{2}{3}\left((x_0-x_1)^2+(x_1-x_2)^2+(x_0-x_2)^2\right)<+\infty,
\end{align*}
hence $U\in\calF$, $\calU\subseteq\calF$. By Theorem \ref{thm_SG_fun}, we have $\calF$ separates points. It is obvious that $\calF$ is a sub-algebra of $C(K)$. By Stone-Weierstrass theorem, $\calF$ is uniformly dense in $C(K)$.
\end{proof}

Now we prove Theorem \ref{SG_con_thm_BM} using a standard approach as follows.

\begin{proof}[Proof of Theorem \ref{SG_con_thm_BM}]
For all $u\in L^2(K;\nu),n,k\ge1$, we have
\begin{align*}
&\sum_{w^{(1)}\sim_{n+k}w^{(2)}}\left(P_{n+k}u(w^{(1)})-P_{n+k}u(w^{(2)})\right)^2\\
=&\sum_{w\in W_n}\sum_{w^{(1)}\sim_kw^{(2)}}\left(P_{n+k}u(ww^{(1)})-P_{n+k}u(ww^{(2)})\right)^2\\
&+\sum_{w^{(1)}=w^{(1)}_1\ldots w^{(1)}_n\sim_nw^{(2)}=w^{(2)}_1\ldots w^{(2)}_n}\left(P_{n+k}u(w^{(1)}w^{(2)}_n\ldots w^{(2)}_n)-P_{n+k}u(w^{(2)}w^{(1)}_n\ldots w^{(1)}_n)\right)^2,
\end{align*}
where for all $i=1,2$
\begin{align*}
&P_{n+k}u(ww^{(i)})=\int_K(u\circ f_{ww^{(i)}})(x)\nu(\md x)\\
=&\int_K(u\circ f_{w}\circ f_{w^{(i)}})(x)\nu(\md x)=P_k(u\circ f_w)(w^{(i)}),
\end{align*}
hence
\begin{align*}
\sum_{w\in W_n}A_k(u\circ f_w)=&\sum_{w\in W_n}\sum_{w^{(1)}\sim_kw^{(2)}}\left(P_{k}(u\circ f_w)(w^{(1)})-P_{k}(u\circ f_w)(w^{(2)})\right)^2\\
\le&\sum_{w^{(1)}\sim_{n+k}w^{(2)}}\left(P_{n+k}u(w^{(1)})-P_{n+k}u(w^{(2)})\right)^2=A_{n+k}(u),
\end{align*}
and
\begin{align*}
\left(\frac{5}{3}\right)^n\sum_{w\in W_n}D_k(u\circ f_w)&=\left(\frac{5}{3}\right)^{n+k}\sum_{w\in W_n}A_k(u\circ f_w)\\
&\le\left(\frac{5}{3}\right)^{n+k}A_{n+k}(u)=D_{n+k}(u).
\end{align*}
For all $u\in\calF,n\ge1,w\in W_n$, we have
\begin{align*}
&\sup_{k\ge1}D_k(u\circ f_w)\le\sup_{k\ge1}\sum_{w\in W_n}D_k(u\circ f_w)\\
\le&\left(\frac{3}{5}\right)^n\sup_{k\ge1}D_{n+k}(u)\le\left(\frac{3}{5}\right)^n\sup_{k\ge1}D_k(u)<+\infty,
\end{align*}
hence $u\circ f_w\in\calF$.

For all $u\in L^2(K;\nu),n\ge1$, let
\begin{align*}
\mybar{\calE}(u,u)&=\sum_{i=0}^2\left(u(p_i)-\int_Ku(x)\nu(\md x)\right)^2,\\
\mybar{\calE}^{(n)}(u,u)&=\left(\frac{5}{3}\right)^n\sum_{w\in W_n}\mybar{\calE}(u\circ f_w,u\circ f_w).
\end{align*}
By Lemma \ref{lem_SG_holder}, we have
\begin{align*}
\mybar{\calE}(u,u)&=\sum_{i=0}^2\left(\int_K\left(u(p_i)-u(x)\right)\nu(\md x)\right)^2\le\sum_{i=0}^2\int_K\left(u(p_i)-u(x)\right)^2\nu(\md x)\\
&\le\sum_{i=0}^2\int_Kc^2|p_i-x|^{\beta^*-\alpha}\left(\sup_{k\ge1}D_k(u)\right)\nu(\md x)\le3c^2\sup_{k\ge1}D_k(u),
\end{align*}
hence
\begin{equation}\label{SG_con_eqn_Ebar_upper}
\begin{aligned}
\mybar{\calE}^{(n)}(u,u)&\le\left(\frac{5}{3}\right)^n\sum_{w\in W_n}3c^2\sup_{k\ge1}D_k(u\circ f_w)\\
&\le3c^2C\left(\frac{5}{3}\right)^n\sum_{w\in W_n}\varliminf_{k\to+\infty}D_k(u\circ f_w)\\
\\
&\le3c^2C\left(\frac{5}{3}\right)^n\varliminf_{k\to+\infty}\sum_{w\in W_n}D_k(u\circ f_w)\\
&\le 3c^2C\varliminf_{k\to+\infty}D_{n+k}(u)\le3c^2C\sup_{k\ge1}D_k(u).
\end{aligned}
\end{equation}

On the other hand, for all $u\in L^2(K;\nu),n\ge1$, we have
$$D_n(u)=\left(\frac{5}{3}\right)^n\sum_{w^{(1)}\sim_nw^{(2)}}\left(\int_K(u\circ f_{w^{(1)}})(x)\nu(\md x)-\int_K(u\circ f_{w^{(2)}})(x)\nu(\md x)\right)^2.$$
For all $w^{(1)}\sim_n w^{(2)}$, there exist $i,j=0,1,2$ such that
$$K_{w^{(1)}}\cap K_{w^{(2)}}=\myset{f_{w^{(1)}}(p_i)}=\myset{f_{w^{(2)}}(p_j)}.$$
Hence
\begin{align}
D_n(u)=&\left(\frac{5}{3}\right)^n\sum_{w^{(1)}\sim_nw^{(2)}}\left[\left((u\circ f_{w^{(1)}})(p_i)-\int_K(u\circ f_{w^{(1)}})(x)\nu(\md x)\right)\right.\nonumber\\
&-\left.\left((u\circ f_{w^{(2)}})(p_j)-\int_K(u\circ f_{w^{(2)}})(x)\nu(\md x)\right)\right]^2\nonumber\\
\le&2\left(\frac{5}{3}\right)^n\sum_{w^{(1)}\sim_nw^{(2)}}\left[\left((u\circ f_{w^{(1)}})(p_i)-\int_K(u\circ f_{w^{(1)}})(x)\nu(\md x)\right)^2\right.\nonumber\\
&+\left.\left((u\circ f_{w^{(2)}})(p_j)-\int_K(u\circ f_{w^{(2)}})(x)\nu(\md x)\right)^2\right]\nonumber\\
\le&6\left(\frac{5}{3}\right)^n\sum_{w\in W_n}\sum_{i=0}^2\left((u\circ f_{w})(p_i)-\int_K(u\circ f_{w})(x)\nu(\md x)\right)^2\nonumber\\
=&6\left(\frac{5}{3}\right)^n\sum_{w\in W_n}\mybar{\calE}(u\circ f_w,u\circ f_w)=6\mybar{\calE}^{(n)}(u,u).\label{SG_con_eqn_Ebar_lower}
\end{align}

For all $u\in L^2(K;\nu),n\ge1$, we have
\begin{align}
\mybar{\calE}^{(n+1)}(u,u)&=\left(\frac{5}{3}\right)^{n+1}\sum_{w\in W_{n+1}}\mybar{\calE}(u\circ f_w,u\circ f_w)\nonumber\\
&=\left(\frac{5}{3}\right)^{n+1}\sum_{i=0}^2\sum_{w\in W_{n}}\mybar{\calE}(u\circ f_i\circ f_w,u\circ f_i\circ f_w)\nonumber\\
&=\frac{5}{3}\sum_{i=0}^2\mybar{\calE}^{(n)}(u\circ f_i,u\circ f_i).\label{SG_con_eqn_Ebar_ss}
\end{align}
Let
$$\tilde{\calE}^{(n)}(u,u)=\frac{1}{n}\sum_{l=1}^n\mybar{\calE}^{(l)}(u,u),u\in L^2(K;\nu),n\ge1.$$
By Equation (\ref{SG_con_eqn_Ebar_upper}), we have
$$\tilde{\calE}^{(n)}(u,u)\le3c^2C\sup_{k\ge1}D_k(u)\asymp\calE(u,u)\text{ for all }u\in\calF,n\ge1.$$
Since $(\calE,\calF)$ is a regular closed form on $L^2(K;\nu)$, by \cite[Definition 1.3.8, Remark 1.3.9, Definition 1.3.10, Remark 1.3.11]{CF12}, we have $(\calF,\calE_1)$ is a separable Hilbert space. Let $\{u_i\}_{i\ge1}$ be a dense subset of $(\calF,\calE_1)$. For all $i\ge1$, $\{\tilde{\calE}^{(n)}(u_i,u_i)\}_{n\ge1}$ is a bounded sequence. By diagonal argument, there exists a subsequence $\{n_k\}_{k\ge1}$ such that $\{\tilde{\calE}^{(n_k)}(u_i,u_i)\}_{k\ge1}$ converges for all $i\ge1$. Hence $\{\tilde{\calE}^{(n_k)}(u,u)\}_{k\ge1}$ converges for all $u\in\calF$. Let
$$\calE_{\loc}(u,u)=\lim_{k\to+\infty}\tilde{\calE}^{(n_k)}(u,u)\text{ for all }u\in\calF_{\loc}:=\calF.$$
Then
$$\calE_\loc(u,u)\le3c^2C\sup_{k\ge1}D_k(u)\text{ for all }u\in\calF_\loc=\calF.$$
By Equation (\ref{SG_con_eqn_Ebar_lower}), for all $u\in\calF_\loc=\calF$, we have
$$\calE_\loc(u,u)=\lim_{k\to+\infty}\tilde{\calE}^{(n_k)}(u,u)\ge\varliminf_{n\to+\infty}\mybar{\calE}^{(n)}(u,u)\ge\frac{1}{6}\varliminf_{k\to+\infty}D_k(u)\ge\frac{1}{6C}\sup_{k\ge1}D_k(u).$$
Hence
$$\calE_\loc(u,u)\asymp\sup_{k\ge1}D_k(u)\text{ for all }u\in\calF_\loc=\calF.$$
Hence $(\calE_\loc,\calF_\loc)$ is a regular closed form on $L^2(K;\nu)$. Since $1\in\calF_\loc$ and $\calE_\loc(1,1)=0$, by \cite[Lemma 1.6.5, Theorem 1.6.3]{FOT11}, $(\calE_\loc,\calF_\loc)$ on $L^2(K;\nu)$ is conservative.

For all $u\in\calF_\loc=\calF$, we have $u\circ f_i\in\calF=\calF_\loc$ for all $i=0,1,2$. Moreover, by Equation (\ref{SG_con_eqn_Ebar_ss}), we have
\begin{align*}
\frac{5}{3}\sum_{i=0}^2\calE_\loc(u\circ f_i,u\circ f_i)&=\frac{5}{3}\sum_{i=0}^2\lim_{k\to+\infty}\tilde{\calE}^{(n_k)}(u\circ f_i,u\circ f_i)\\
&=\frac{5}{3}\sum_{i=0}^2\lim_{k\to+\infty}\frac{1}{n_k}\sum_{l=1}^{n_k}\mybar{\calE}^{(l)}(u\circ f_i,u\circ f_i)\\
&=\lim_{k\to+\infty}\frac{1}{n_k}\sum_{l=1}^{n_k}\left[\frac{5}{3}\sum_{i=0}^2\mybar{\calE}^{(l)}(u\circ f_i,u\circ f_i)\right]\\
&=\lim_{k\to+\infty}\frac{1}{n_k}\sum_{l=1}^{n_k}\mybar{\calE}^{(l+1)}(u,u)=\lim_{k\to+\infty}\frac{1}{n_k}\sum_{l=2}^{n_k+1}\mybar{\calE}^{(l)}(u,u)\\
&=\lim_{k\to+\infty}\left[\frac{1}{n_k}\sum_{l=1}^{n_k}\mybar{\calE}^{(l)}(u,u)+\frac{1}{n_k}\mybar{\calE}^{(n_k+1)}(u,u)-\frac{1}{n_k}\mybar{\calE}^{(1)}(u,u)\right]\\
&=\lim_{k\to+\infty}\tilde{\calE}^{(n_k)}(u,u)=\calE_\loc(u,u).
\end{align*}
Hence $(\calE_\loc,\calF_\loc)$ on $L^2(K;\nu)$ is self-similar.

For all $u,v\in\calF_\loc$ satisfying $\mathrm{supp}(u),\mathrm{supp}(v)$ are compact and $v$ is constant in an open neighborhood $U$ of $\mathrm{supp}(u)$, we have $K\backslash U$ is compact and $\mathrm{supp}(u)\cap(K\backslash U)=\emptyset$, hence
$$\delta=\mathrm{dist}(\mathrm{supp}(u),K\backslash U)>0.$$
Taking sufficiently large $n\ge1$ such that $2^{1-n}<\delta$, by self-similarity, we have
$$\calE_\loc(u,v)=\left(\frac{5}{3}\right)^n\sum_{w\in W_n}\calE_\loc(u\circ f_w,v\circ f_w).$$
For all $w\in W_n$, we have $u\circ f_w=0$ or $v\circ f_w$ is constant, hence $\calE_\loc(u\circ f_w,v\circ f_w)=0$, hence $\calE_\loc(u,v)=0$, that is, $(\calE_\loc,\calF_\loc)$ on $L^2(K;\nu)$ is strongly local.

For all $u\in\calF_\loc$, it is obvious that $u^+,u^-,1-u,\mybar{u}=(0\vee u)\wedge 1\in\calF_\loc$ and
$$\calE_\loc(u,u)=\calE_\loc(1-u,1-u).$$
Since $u^+u^-=0$ and $(\calE_\loc,\calF_\loc)$ on $L^2(K;\nu)$ is strongly local, we have $\calE_\loc(u^+,u^-)=0$. Hence
\begin{align*}
\calE_\loc(u,u)&=\calE_\loc(u^+-u^-,u^+-u^-)=\calE_\loc(u^+,u^+)+\calE_\loc(u^-,u^-)-2\calE_\loc(u^+,u^-)\\
&=\calE_\loc(u^+,u^+)+\calE_\loc(u^-,u^-)\ge\calE_\loc(u^+,u^+)=\calE_\loc(1-u^+,1-u^+)\\
&\ge\calE_\loc((1-u^+)^+,(1-u^+)^+)=\calE_\loc(1-(1-u^+)^+,1-(1-u^+)^+)\\
&=\calE_\loc(\mybar{u},\mybar{u}),
\end{align*}
that is, $(\calE_\loc,\calF_\loc)$ on $L^2(K;\nu)$ is Markovian. Hence $(\calE_\loc,\calF_\loc)$ is a self-similar strongly local regular Dirichlet form on $L^2(K;\nu)$.
\end{proof}

\begin{myrmk}
The idea of the standard approach is from \cite[Section 6]{KZ92}. The proof of Markovian property is from the proof of \cite[Theorem 2.1]{BBKT10}.
\end{myrmk}

\chapter{Construction of Local Regular Dirichlet Form on the SC}\label{ch_SC_con}

This chapter is based on my work \cite{GY17} joint  with Prof. Alexander Grigor'yan.

\section{Background and Statement}

We apply the method introduced in Chapter \ref{ch_SG_con} to the SC. We use the notions of the SC introduced in Section \ref{sec_notion}.

Let $\nu$ be the normalized Hausdorff measure on the SC $K$.

Let $(\calE_\beta,\calF_\beta)$ be given by
$$
\begin{aligned}
&\calE_\beta(u,u)=\int_K\int_K\frac{(u(x)-u(y))^2}{|x-y|^{\alpha+\beta}}\nu(\md x)\nu(\md y),\\
&\calF_\beta=\myset{u\in L^2(K;\nu):\calE_\beta(u,u)<+\infty},
\end{aligned}
$$
where $\alpha=\log8/\log3$ is Hausdorff dimension of the SC, $\beta>0$ is so far arbitrary. Then $(\calE_\beta,\calF_\beta)$ is a quadratic form on $L^2(K;\nu)$ for all $\beta\in(0,+\infty)$. Note that $(\calE_\beta,\calF_\beta)$ is not necessary to be a regular Dirichlet form on $L^2(K;\nu)$ related to a stable-like jump process. The \emph{walk dimension of the SC} is defined as
$$\beta_*:=\sup\myset{\beta>0:(\calE_\beta,\calF_\beta)\text{ is a regular Dirichlet form on }L^2(K;\nu)}.$$

We give a new semi-norm $E_\beta$ as follows.
$$E_\beta(u,u):=\sum_{n=1}^\infty3^{(\beta-\alpha)n}\sum_{w\in W_n}
{\sum_{\mbox{\tiny
$
\begin{subarray}{c}
p,q\in V_w\\
|p-q|=2^{-1}\cdot3^{-n}
\end{subarray}
$
}}}
(u(p)-u(q))^2.$$

Our first result is as follows.

\begin{mylem}\label{SC_con_lem_equiv}
For all $\beta\in(\alpha,+\infty)$, for all $u\in C(K)$, we have
$$E_\beta(u,u)\asymp\calE_\beta(u,u).$$
\end{mylem}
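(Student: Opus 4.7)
\noindent\textbf{Proof proposal for Lemma \ref{SC_con_lem_equiv}.}

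The plan is to adapt the two-sided argument used for the analogous Sierpiński gasket lemma (Theorems \ref{SG_app_thm_equiv2_1} and \ref{SG_app_thm_equiv2_2} of Chapter \ref{ch_SG_app}) to the carpet, with $2 \to 3$ in all scales and with careful attention to the fact that on the SC the set $V_w$ of boundary points of a cell has $8$ points and two adjacent cells may share an edge. Throughout I use the Besov-type description of $\calE_\beta$ on $K$ given in Section \ref{sec_SC}, namely $\calE_\beta(u,u) \asymp [u]_{B^{2,2}_{\alpha,\beta}(K)}$, which in turn is equivalent (by the analog of Lemma \ref{SG_app_lem_equiv1} and Corollary \ref{SG_app_cor_arbi}) to $\sum_{n\ge N} 3^{(\alpha+\beta)n}\int_K\int_{B(x,c\,3^{-n})}(u(x)-u(y))^2\,\nu(\md y)\nu(\md x)$ for any fixed constant $c>0$ and integer $N\ge 0$. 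I also use Lemma \ref{lem_SC_holder} to bound $(u(p)-u(x))^2 \le c\calE_\beta(u,u)|p-x|^{\beta-\alpha}$ when $\beta>\alpha$.

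\emph{Step 1 (the bound $E_\beta \lesssim \calE_\beta$).} Fix $n\ge 1$, $w\in W_n$, and a pair $p,q \in V_w$ with $|p-q|=2^{-1}\cdot3^{-n}$. Averaging the elementary inequality $(u(p)-u(q))^2 \le 2(u(p)-u(x))^2 + 2(u(q)-u(x))^2$ over $x\in K_w$ against $\nu/\nu(K_w)$, one reduces to estimating $\tfrac{1}{\nu(K_w)}\int_{K_w}(u(p)-u(x))^2\nu(\md x)$. For each boundary point $p\in V_w$ one picks a shrinking chain of sub-cells $K_{w^{(0)}}\supseteq K_{w^{(1)}}\supseteq\cdots\supseteq K_{w^{(l)}}$, with $K_{w^{(0)}}=K_w$ and $\mathrm{diam}(K_{w^{(i)}})\asymp 3^{-n-ki}$, all containing $p$; such chains exist since $p\in V_w$ is reached by a constant-letter extension of $w$. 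Telescoping $u(p)-u(x^{(0)})$ along averages over these sub-cells and using Cauchy--Schwarz with geometric weights $2^i$ gives
\begin{align*}
\frac{1}{\nu(K_w)}\int_{K_w}(u(p)-u(x))^2\nu(\md x)
&\lesssim \frac{1}{\nu(K_{w^{(l)}})}\int_{K_{w^{(l)}}}(u(p)-u(x^{(l)}))^2\nu(\md x^{(l)}) \\
&\quad + \sum_{i=0}^{l-1} 2^{i}\,\Xint-_{K_{w^{(i)}}}\Xint-_{K_{w^{(i+1)}}}(u(x^{(i)})-u(x^{(i+1)}))^2.
\end{align*}
The final term is bounded by Lemma \ref{lem_SC_holder}, yielding $c\calE_\beta(u,u)\,3^{-(\beta-\alpha)(n+kl)}$; the intermediate terms are dominated, after using $|x^{(i)}-x^{(i+1)}|\le 3^{-n-ki}$ and $\nu(K_{w^{(i)}})\asymp 3^{-\alpha(n+ki)}$, by truncated Besov densities $E_{n+ki}(u):=3^{(\alpha+\beta)(n+ki)}\int_K\int_{B(x,3^{-(n+ki)})}(u(x)-u(y))^2\nu(\md y)\nu(\md x)$. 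Summing over pairs $(p,q)$ with $|p-q|=2^{-1}\cdot 3^{-n}$, multiplying by $3^{(\beta-\alpha)n}$, and summing over $n$, one gets a geometric series in $n$ plus a double series in $n,i$; choosing $l=n$ and $k\ge 1$ large enough that both $\beta-(\beta-\alpha)(k+1)<0$ and $1-(\beta-\alpha)k<0$, both series converge and are controlled by $\calE_\beta(u,u)$.

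\emph{Step 2 (the bound $\calE_\beta \lesssim E_\beta$).} By the equivalence recalled above, it suffices to bound $\sum_{n\ge 1} 3^{(\alpha+\beta)n}\int_K\int_{B(x,c\,3^{-n})}(u(x)-u(y))^2\nu(\md y)\nu(\md x)$ for a suitable $c>0$. Fix $n$; for $x\in K_w$ with $w\in W_n$ and $y\in B(x,c\,3^{-n})$, the point $y$ lies in some $K_{\tilde w}$ with $\tilde w\in W_n$ touching $K_w$ (finitely many such $\tilde w$, uniformly in $n$). For each such pair $(w,\tilde w)$ one may select, using the geometric structure of the SC (adjacent cells share either an edge or a corner), a short chain $p=z_0,z_1,\ldots,z_N=q$ inside $V_w\cup V_{\tilde w}$ of uniformly bounded length $N$, each consecutive pair being either equal or at distance exactly $2^{-1}\cdot 3^{-n}$ in the same cell $V_w$ or $V_{\tilde w}$; hence $(u(p)-u(q))^2 \le N\sum_i(u(z_i)-u(z_{i+1}))^2$ only involves nearest-neighbor pairs of the form appearing in $E_\beta$ at level $n$. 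To pass from $(u(x)-u(y))^2$ with $x,y$ continuous to $(u(p)-u(q))^2$ at level $n$, one telescopes $x \to p$ and $y\to q$ by nested sub-cells as in Step 1, expending a geometric series in $2^i$ that is absorbed by the corresponding scale factors $3^{(\alpha+\beta)(n+i)}$ after swapping the order of summation (the key change-of-order identity is $\sum_{n\ge 1}\sum_{i\ge n}a_i = \sum_{i\ge 1}\sum_{n=1}^{i}a_i$, producing a convergent geometric factor in $i$ since $\beta>\alpha$). Carrying out this bookkeeping yields $\calE_\beta(u,u) \lesssim E_\beta(u,u)$.

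\emph{Main obstacle.} Steps 1 and 2 are conceptually parallel to the SG proof, but the hardest part is the combinatorial construction in Step 2 of the uniformly bounded chain of nearest-neighbor $V_w\cup V_{\tilde w}$ points connecting an arbitrary pair $(x,y)$ in adjacent cells: on the SG two cells meet at a single vertex, so the chain is trivial, whereas on the SC one must cover the various adjacency types (edge-shared cells, corner-shared cells, and cells meeting along the removed-square boundary of a parent cell) and verify that in each case there is a short path of steps of length exactly $2^{-1}\cdot 3^{-n}$ within $V_w\cup V_{\tilde w}$. Once this chain lemma is in hand, the remaining calculations are routine geometric sums whose convergence is ensured by $\alpha<\beta$.
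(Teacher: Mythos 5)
Your proposal is correct and follows essentially the same route as the paper, which splits the lemma into the two one-sided bounds (Theorems \ref{SC_con_thm_equiv1} and \ref{SC_con_thm_equiv2}) proved exactly by your Step 1 (averaging over the cell, telescoping along the nested sub-cells $K_{w^{(i)}}$ with $ki$ repeated letters, invoking Lemma \ref{lem_SC_holder} for the tail, and choosing $l=n$ and $k$ large) and your Step 2 (discretization via the measures $\nu_m$ on $V_m$, a bounded chain linking adjacent cells, a multi-level telescoping from a vertex to an arbitrary point, and the change of order of summation). The only cosmetic difference is that the paper routes the inter-cell chain through the points $P_w,P_{\tilde w}\in V_{n-1}$ (plus at most one intermediate vertex), producing level-$(n-1)$ edge terms, rather than through $V_w\cup V_{\tilde w}$ at level $n$ as you do; both versions of the chain lemma you correctly flag as the key geometric input are easily verified and lead to the same estimate.
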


We have established similar equivalence on the SG, see Theorem \ref{SG_app_thm_main}.

We use Lemma \ref{SC_con_lem_equiv} to give bound of the walk dimension of the SC as follows.

\begin{mythm}\label{SC_con_thm_bound}
\begin{equation}\label{SC_con_eqn_bound_beta}
\beta_*\in\left[\frac{\log\left(8\cdot\frac{7}{6}\right)}{\log3},\frac{\log\left(8\cdot\frac{3}{2}\right)}{\log3}\right].
\end{equation}
\end{mythm}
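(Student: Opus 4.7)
The plan is to convert the question about $\beta_{*}$ into a question about the effective resistance scaling of the graph approximations of the SC, and then to combine the classical shorting/cutting bounds of Barlow--Bass--Sherwood with the equivalence of Lemma \ref{SC_con_lem_equiv}. Since
$$\calE_\beta(u,u)\asymp E_\beta(u,u)=\sum_{n=1}^{\infty}3^{(\beta-\alpha)n}\,\calE^{(n)}(u),\qquad \calE^{(n)}(u):=\sum_{w\in W_n}\sum_{\mbox{\tiny$\begin{subarray}{c}p,q\in V_w\\ |p-q|=2^{-1}\cdot 3^{-n}\end{subarray}$}}(u(p)-u(q))^2,$$
whether $(\calE_\beta,\calF_\beta)$ is a non-trivial regular Dirichlet form is entirely controlled by the rate at which the discrete energies $\calE^{(n)}(u)$ decay (resp.\ fail to decay) for suitable $u$.

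The first step is to introduce the level-$n$ effective resistance $R_n$ between two opposite faces of the unit square, restricted to $X_n$, together with the variational formula $R_n^{-1}=\inf\{\calE^{(n)}(u):u\equiv 0\text{ on one face},\ u\equiv 1\text{ on the opposite}\}$. The key classical input, which I would reprove directly on the level-$n$ pre-carpet, is the two-sided scaling estimate $R_{n+1}/R_n\in[7/6,3/2]$: shorting the interior boundaries of the eight level-$(n+1)$ subcells onto their outlines gives the lower bound $7/6$, and cutting along those same boundaries gives the upper bound $3/2$. Iterating yields $R_n\asymp\rho^{n}$ for some (unknown) $\rho\in[7/6,3/2]$; only these two-sided bounds on $\rho$, not the value of $\rho$ itself, will be used below.

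For the lower bound $\beta_{*}\ge\log(8\cdot 7/6)/\log 3$, I would construct via a telescoping harmonic-extension procedure on the graphs $X_n$ a rich family of functions $u\in C(K)$ with $\calE^{(n)}(u)\lesssim R_n^{-1}\lesssim (7/6)^{-n}$, that separates points of $K$ and forms a sub-algebra of $C(K)$. For any $\beta$ with $3^{\beta}<8\cdot 7/6$, equivalently $3^{\beta-\alpha}/(7/6)<1$, Lemma \ref{SC_con_lem_equiv} then gives
$$\calE_\beta(u,u)\asymp\sum_{n=1}^{\infty}3^{(\beta-\alpha)n}\calE^{(n)}(u)\lesssim\sum_{n=1}^{\infty}\bigl(3^{\beta-\alpha}/(7/6)\bigr)^{n}<+\infty,$$
so $u\in\calF_\beta$; Stone--Weierstrass yields $\calF_\beta\cap C(K)$ uniformly dense in $C(K)$, Markovianity is immediate from the integral form of $\calE_\beta$, and Fatou's lemma gives closedness, making $(\calE_\beta,\calF_\beta)$ a regular Dirichlet form on $L^{2}(K;\nu)$. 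For the upper bound $\beta_{*}\le\log(8\cdot 3/2)/\log 3$, I would establish a weak-monotonicity / discrete Poincar\'e-type estimate of the form: for every non-constant $u\in C(K)$ there exist $N\ge 1$ and $c>0$ with $\calE^{(n)}(u)\ge c\,R_n^{-1}\gtrsim (3/2)^{-n}$ for all $n\ge N$. Substituting into $E_\beta$ makes the series diverge once $3^{\beta-\alpha}\cdot(3/2)^{-1}>1$, i.e.\ $\beta>\log(8\cdot 3/2)/\log 3$, so $\calF_\beta$ then contains only constants.

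The main obstacle is the Poincar\'e-type lower bound $\calE^{(n)}(u)\gtrsim R_n^{-1}$ for general non-constant continuous $u$. The cutting estimate controls $R_n$ from above for harmonic test functions, but lifting it to \emph{every} $u\in C(K)$ at \emph{every} scale demands a weak monotonicity across scales, in the spirit of Theorem \ref{SG_con_thm_monotone} on the SG, adapted to the non-p.c.f.\ geometry of the carpet via trace/extension across interfaces of adjacent $3^{-n}$-cells. Once this monotonicity is in hand, the arithmetic $3^{\alpha}=8$ matches the two endpoints and Theorem \ref{SC_con_thm_bound} follows.
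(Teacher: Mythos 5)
Your proposal lands on the right two numbers ($7/6$ from shorting, $3/2$ from cutting), but both halves of your argument rest on steps that are either unjustified or much heavier than what the theorem needs, and the paper's actual proof goes a different, more elementary way. For the lower bound, your ``telescoping harmonic-extension procedure'' is exactly the step that fails on the SC: the level-$n$ energy minimizers are not compatible across levels (this is the known obstruction to Kigami's construction on non-p.c.f.\ sets), so the bound $R_n\gtrsim(7/6)^n$ only tells you that the level-$n$ minimizer has energy $\le(6/7)^n$ \emph{at level $n$}; it gives no control of that function's energy at other scales, and hence no single continuous $u$ with $\calE^{(n)}(u)\lesssim(6/7)^n$ for all $n$. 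Recovering such control from resistance estimates requires the Harnack inequality and weak monotonicity machinery of Sections \ref{SC_con_sec_resistance}--\ref{SC_con_sec_good}, which is reserved for the sharper Theorem \ref{SC_con_thm_walk}. The paper instead writes down one explicit function $U(x,y)=f(x)$, with $f$ defined on triadic points by the self-similar recursion with weights $2/7,5/7$ (coming from minimizing $3x^2+2(x-y)^2+3(1-y)^2$, whose minimum is $6/7$); a one-line induction gives $\calE^{(n)}(U)=(6/7)^n$ \emph{exactly for every} $n$, and Proposition \ref{SC_con_prop_lower} shows that this single strictly monotone-in-$x$ function, together with its coordinate swap, already separates points, so Stone--Weierstrass applies. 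You do not need a ``rich family''; you need one function, and you need it explicitly.

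For the upper bound, the Poincar\'e-type estimate $\calE^{(n)}(u)\gtrsim R_n^{-1}$ for \emph{every} non-constant continuous $u$ is precisely the weak monotonicity result (Theorem \ref{SC_con_thm_monotone1}), which you correctly identify as the main obstacle but do not prove; it depends on the full resistance estimates (Theorem \ref{SC_con_thm_resist}, Corollary \ref{SC_con_cor_resist_upper}) and would in fact yield the stronger conclusion $\beta_*\le\beta^*$ of Theorem \ref{SC_con_thm_walk}, so invoking it here is circular with respect to the stated goal of an elementary bound. The paper sidesteps this entirely: regularity of $(\calE_\beta,\calF_\beta)$ supplies some $u\in\calF_\beta$ with $u=0$ on $\myset{0}\times[0,1]$ and $u=1$ on $\myset{1}\times[0,1]$, and one then \emph{cuts} the SC down to the sub-fractal $[0,1]\times\mathcal{C}$ ($\mathcal{C}$ the Cantor set), where dropping edge terms only decreases $\calE^{(n)}(u)$ and where, by symmetry, the constrained minimizer is $\tilde{u}(x,y)=x$ with level-$n$ energy exactly $(2/3)^n$. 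Thus $\calE^{(n)}(u)\ge(2/3)^n$ for this particular $u$ only, and convergence of $\sum_n 3^{(\beta-\alpha)n}(2/3)^n$ forces $\beta\le\log(8\cdot\tfrac32)/\log3$. In short: both endpoints of (\ref{SC_con_eqn_bound_beta}) are obtained in the paper from two explicitly solvable comparison problems, whereas your route requires the whole resistance-and-monotonicity apparatus and still leaves the lower-bound construction without a proof.
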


This estimate follows also from the results of \cite{BB90} and \cite{BBS90} where the same bound 
for $\beta^*$ was obtained by means of shorting and cutting techniques, while the identity $\beta_{*}=\beta^{*}$ 
follows from the sub-Gaussian heat kernel estimates by means of subordination technique. 
Here we prove the estimate (\ref{SC_con_eqn_bound_beta}) of $\beta_*$ directly, without using heat kernel or subordination technique.

We give a direct proof of the following result.

\begin{mythm}\label{SC_con_thm_walk}
$$\beta_*=\beta^*:=\frac{\log(8\rho)}{\log3},$$
where $\rho$ is some parameter in resistance estimates.
\end{mythm}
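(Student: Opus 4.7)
The plan is to mirror the strategy used on the SG in Theorem \ref{SG_app_thm_det} and Theorem \ref{SG_con_thm_nonlocal}, using Lemma \ref{SC_con_lem_equiv} as the bridge between the continuous semi-norm $\calE_\beta$ and the discrete semi-norm $E_\beta$, so that the whole problem is reduced to analysing the scaled level-$n$ energies
$$A_n(u):=\sum_{w\in W_n}{\sum_{\mbox{\tiny$\begin{subarray}{c}p,q\in V_w\\|p-q|=2^{-1}\cdot3^{-n}\end{subarray}$}}}(u(p)-u(q))^2,\qquad D_n(u):=3^{(\beta^*-\alpha)n}A_n(u).$$

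For $\beta\in(\alpha,\beta^*)$, I would first establish the inclusion $\calF_\loc\subseteq\calF_\beta$. The representation of $\calE_\loc$ in Theorem \ref{thm_main_SC_con} yields $D_n(u)\le C\calE_\loc(u,u)$ uniformly in $n$, hence
$$E_\beta(u,u)=\sum_{n\ge1}3^{(\beta-\beta^*)n}D_n(u)\le C\calE_\loc(u,u)\sum_{n\ge1}3^{(\beta-\beta^*)n}<+\infty,$$
and Lemma \ref{SC_con_lem_equiv} translates this into finiteness of $\calE_\beta(u,u)$. Since $(\calE_\loc,\calF_\loc)$ is a regular Dirichlet form on $L^2(K;\nu)$, its domain is uniformly dense in $C(K)$, so $\calF_\beta$ is dense in $L^2(K;\nu)$; combined with the Hölder embedding $\calF_\beta\subseteq C(K)$ from Lemma \ref{lem_SC_holder}, closedness via Fatou's lemma on the double integral, and the manifest Markovian property of the integrand $(u(x)-u(y))^2$, this produces the non-local regular Dirichlet form $(\calE_\beta,\calF_\beta)$ on $L^2(K;\nu)$.

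For $\beta\in[\beta^*,+\infty)$ I would show that $\calF_\beta$ is trivial by reversing the geometric estimate. The essential input is a weak monotonicity of the form $D_n(u)\le CD_{n+m}(u)$ for all $n,m\ge1$, which is the SC analogue of Theorem \ref{SG_con_thm_monotone} and which is precisely where the scaling factor $\rho$ enters: the exponent $\beta^*=\log(8\rho)/\log3$ is chosen so that, after renormalising $A_n$ by $3^{(\beta^*-\alpha)n}$, the mean-value/resistance argument based on the SC resistance estimate (the source of $\rho$) yields a scale-invariant bound. Granted this monotonicity, if $u$ is non-constant then $D_N(u)>0$ for some $N$, hence $A_n(u)\ge C^{-1}3^{-(\beta^*-\alpha)n}D_N(u)$ for every $n\ge N$, and
$$E_\beta(u,u)\ge C^{-1}D_N(u)\sum_{n\ge N}3^{(\beta-\beta^*)n}=+\infty,$$
so that $\calE_\beta(u,u)=+\infty$ by Lemma \ref{SC_con_lem_equiv}, forcing $u\notin\calF_\beta$. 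Combining the two cases gives $\beta_*=\beta^*$.

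The main obstacle is the weak monotonicity $D_n\le CD_{n+m}$ on the SC. On the SG one obtains it via explicit $\Delta$-Y reductions and closed-form resistance formulas (Theorem \ref{SG_con_thm_resist}, Theorem \ref{SG_con_thm_monotone_graph}), neither of which survives on the non-p.c.f. SC. The intended route is to transplant the graph-theoretic argument of Theorem \ref{SG_con_thm_monotone_graph}: apply a mean-value operator $M_{n,m}\colon l(W_{n+m})\to l(W_n)$, use the Cauchy--Schwarz inequality to pass from $(M_{n,m}u(w^{(1)})-M_{n,m}u(w^{(2)}))^2$ to averages of differences along level-$(n+m)$ edges, and then bound the latter by the effective resistance between two adjacent $n$-cells in $W_{n+m}$. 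It is precisely this resistance estimate that carries the factor $\rho^m$, and since $3^{(\beta^*-\alpha)m}=\rho^m$ by definition of $\beta^*$, the $\rho^m$ cancels and leaves the scale-invariant inequality needed for the argument above.
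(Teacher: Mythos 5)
Your upper-bound half is sound and is essentially the argument the paper itself uses (in Corollary \ref{SC_con_cor_chara} rather than in Section \ref{SC_con_sec_walk}, where the paper instead shows non-regularity for $\beta\ge\beta^*$ by testing against the discrete harmonic minimizers $u_n$ with $D_n(u_n,u_n)=(R_n^V)^{-1}\asymp\rho^{-n}$). You correctly isolate the weak monotonicity $a_n(u)\le Ca_{n+m}(u)$ as the key input and correctly trace it back to the resistance estimate $\asymp\rho^m$; note only that for the vertex-based energy the paper proves this directly (Theorem \ref{SC_con_thm_monotone1}, via cutting and $R_m(p,q)\lesssim\rho^m$ on $\calV_m$), and the mean-value-operator argument you describe is the variant for the cell-based energy $b_n$ (Theorem \ref{SC_con_thm_monotonicity2}).

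The lower-bound half has a genuine gap: it is circular within this development. You deduce density of $\calF_\beta$ from $\calF_\loc\subseteq\calF_\beta$ together with the regularity of $(\calE_\loc,\calF_\loc)$ and the characterization $\calE_\loc(u,u)\asymp\sup_nD_n(u)$. But in this paper the local form is constructed only in Section \ref{SC_con_sec_BM}, \emph{after} Theorem \ref{SC_con_thm_walk}, and its regularity is established by the very separation-of-points argument from the proof of Theorem \ref{SC_con_thm_walk} (the proof of Theorem \ref{SC_con_thm_gamma} says explicitly ``similar to the proof of Theorem \ref{SC_con_thm_walk}''). The genuinely hard content of the lower bound is to exhibit \emph{any} non-constant element of $\calF_\beta$, and your proposal never produces one: the paper does this by taking the discrete harmonic functions $u_n$ with boundary values $0$ and $1$ on opposite faces, proving $a_n(u)\le C$ for their limit $u$ via $R_n^V\asymp\rho^n$ and weak monotonicity (Proposition \ref{SC_con_prop_u}), using the uniform Harnack inequality of Theorem \ref{SC_con_thm_harnack} to get equicontinuity and the strict inequality $u<\tfrac12$ on $K\cap[0,\tfrac12)\times[0,1]$, and then gluing reflected copies of $u$ across cells to separate arbitrary pairs of points before invoking Stone--Weierstrass. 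If instead you intend to import the local form from Barlow--Bass or Kusuoka--Zhou, you would still need the Besov-domain identification $\calF_\loc=B^{2,\infty}_{\alpha,\beta^*}(K)$ to get $D_n(u)\le C\calE_\loc(u,u)$, which classically comes from sub-Gaussian heat kernel estimates plus subordination --- precisely the route this chapter is designed to avoid. So either the argument begs the question or it reintroduces the machinery the theorem is meant to circumvent; the missing piece is a direct construction of non-constant finite-energy functions.
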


Hino and Kumagai \cite{HK06} established other equivalent semi-norms as follows.

For all $n\ge1,u\in L^2(K;\nu)$, let
$$P_nu(w)=\frac{1}{\nu(K_w)}\int_{K_w}u(x)\nu(\md x),w\in W_n.$$
For all $w^{(1)},w^{(2)}\in W_n$, denote $w^{(1)}\sim_nw^{(2)}$ if $\mathrm{dim}_{\mathcal{H}}(K_{w^{(1)}}\cap K_{w^{(2)}})=1$. Let
$$\frakE_\beta(u,u):=\sum_{n=1}^\infty3^{(\beta-\alpha)n}
{\sum_{\mbox{\tiny
$
\begin{subarray}{c}
w^{(1)}\sim_nw^{(2)}
\end{subarray}
$
}}}
\left(P_nu(w^{(1)})-P_nu(w^{(2)})\right)^2.$$

\begin{mylem}\label{SC_con_lem_equivHK}(\cite[Lemma 3.1]{HK06})
For all $\beta\in(0,+\infty),u\in L^2(K;\nu)$, we have
$$\frakE_\beta(u,u)\asymp\calE_\beta(u,u).$$
\end{mylem}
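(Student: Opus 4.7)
The plan is to pass through a triadic intermediate semi-norm
$$ I_\beta(u) := \sum_{n=1}^\infty 3^{(\alpha+\beta)n} \int_K \int_{B(x,3^{-n})} (u(x)-u(y))^2 \nu(\md y)\nu(\md x), $$
and show $\calE_\beta(u,u) \asymp I_\beta(u) \asymp \frakE_\beta(u,u)$. The first equivalence is a standard layer-cake argument that partitions $\{|x-y| \le 1\}$ into annuli $\{3^{-(n+1)} < |x-y| \le 3^{-n}\}$ — this is identical to Lemma \ref{SG_app_lem_equiv1} with the base $2$ replaced by $3$, and uses only the doubling property of $\nu$ together with the chain condition, so it adapts verbatim to the SC.

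For the easy direction $\frakE_\beta(u,u) \lesssim I_\beta(u)$, I would fix $n$ and a neighboring pair $w^{(1)} \sim_n w^{(2)}$. By Jensen's inequality,
$$ (P_n u(w^{(1)}) - P_n u(w^{(2)}))^2 \le \frac{1}{\nu(K_{w^{(1)}})\nu(K_{w^{(2)}})} \int_{K_{w^{(1)}}} \int_{K_{w^{(2)}}} (u(x)-u(y))^2 \nu(\md x)\nu(\md y). $$
Since $\nu(K_{w^{(i)}}) \asymp 3^{-\alpha n}$ and $|x-y| \le c \cdot 3^{-n}$ on $K_{w^{(1)}} \times K_{w^{(2)}}$, multiplying by $3^{(\beta-\alpha)n}$ produces the triadic integrand up to a constant. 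Each $w^{(1)}$ has a uniformly bounded number of $\sim_n$-neighbors, so summing over pairs and scales gives $\frakE_\beta(u,u) \lesssim I_\beta(u)$.

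The harder direction $I_\beta(u) \lesssim \frakE_\beta(u,u)$ requires a telescoping argument analogous to Theorem \ref{SG_app_thm_equiv2_2}. For $x \in K$, let $w_n(x) \in W_n$ denote the (essentially unique) word with $x \in K_{w_n(x)}$, and decompose
$$ u(x) - u(y) = \bigl(u(x) - P_n u(w_n(x))\bigr) + \bigl(P_n u(w_n(x)) - P_n u(w_n(y))\bigr) + \bigl(P_n u(w_n(y)) - u(y)\bigr). $$
When $|x-y| \le 3^{-n}$, the cells $K_{w_n(x)}$ and $K_{w_n(y)}$ either coincide, are $\sim_n$-adjacent, or meet at a single point; in the last case one chains through one intermediate $\sim_n$-adjacency (this is where the explicit geometry of the SC enters — a bounded number of chaining steps suffices). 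The middle term is directly controlled by $\frakE_\beta$ after summation. For the oscillation terms, iterate $(u(x) - P_n u(w_n(x))) = \sum_{m \ge n} (P_{m+1} u(w_{m+1}(x)) - P_m u(w_m(x)))$; each increment is handled by the observation that within any level-$m$ cell the eight level-$(m+1)$ sub-cells are linked by finitely many $\sim_{m+1}$-adjacencies, so $(P_{m+1} u(w_{m+1}(x)) - P_m u(w_m(x)))^2$ is dominated by a sum of finitely many squared adjacency differences at level $m+1$. Cauchy-Schwarz with geometric weights $3^{-\eta(m-n)}$ (for a small $\eta \in (0, \beta)$) makes the telescoping sum summable, and interchanging the order of summation in $n$ and $m$ collapses the result into $\frakE_\beta(u,u)$.

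The main obstacle is precisely this chaining-plus-telescoping step: one must verify that the $\sim_n$-graph on $W_n$ has uniformly bounded diameter inside each parent cell $K_w$ (so that averages over the eight level-$(n+1)$ sub-cells can be compared using $O(1)$ adjacency differences), and that the weights produced by Cauchy-Schwarz across scales do not introduce any restriction on $\beta > 0$. Unlike the SG situation where H\"older regularity from Lemma \ref{lem_SG_holder} is used to control the tail, here one does not need any a priori continuity: both sides may be $+\infty$, and the equivalence is purely a Besov-space statement holding for every $\beta > 0$.
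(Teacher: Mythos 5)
Your argument is correct, but note that the paper does not prove this lemma at all: it is quoted verbatim from Hino--Kumagai \cite[Lemma 3.1]{HK06}, so there is no in-paper proof to compare against. What you have written is essentially a reconstruction of the standard argument, and it runs parallel to the paper's own proofs of the neighbouring vertex-based equivalences (Theorems \ref{SC_con_thm_equiv1} and \ref{SC_con_thm_equiv2}), with one genuine and important difference: those proofs evaluate $u$ at points and therefore need the H\"older embedding of Lemma \ref{lem_SC_holder}, which forces $\beta>\alpha$ and $u\in C(K)$, whereas your use of cell averages $P_nu$, the martingale telescoping $u(x)-P_nu(w_n(x))=\sum_{m\ge n}\bigl(P_{m+1}u(w_{m+1}(x))-P_mu(w_m(x))\bigr)$ (valid $\nu$-a.e.\ by martingale convergence), and Cauchy--Schwarz with weights $3^{-\eta(m-n)}$, $\eta\in(0,\beta)$, removes any restriction beyond $\beta>0$ and $u\in L^2(K;\nu)$ --- exactly the generality the lemma claims. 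The two geometric inputs you flag do hold and are worth recording explicitly: (i) no two removed open squares of the SC have intersecting closures, so around any level-$n$ grid vertex in $K$ at most one removed square meets the four surrounding cells, and the cells of $K$ containing that vertex are connected in the edge-adjacency graph with chains of length at most $3$; consequently for $|x-y|\le 3^{-n}$ the cells $K_{w_n(x)},K_{w_n(y)}$ lie in a common $3\times3$ block and are joined by $O(1)$ adjacencies; (ii) the eight children of any cell form a cycle in $\sim_{n+1}$, so $P_{m+1}u(w_{m+1}(x))-P_mu(w_m(x))$ is controlled by $O(1)$ level-$(m+1)$ adjacency differences inside the parent cell, with bounded multiplicity. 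With these two facts made explicit your proof is complete and yields the full strength of the cited lemma.
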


We combine $E_\beta$ and $\frakE_\beta$ to construct a local regular Dirichlet form on $K$ using $\Gamma$-convergence technique as follows.

\begin{mythm}\label{SC_con_thm_BM}
There exists a self-similar strongly local regular Dirichlet form $(\calE_{\loc},\calF_{\loc})$ on $L^2(K;\nu)$ satisfying
\begin{align}
&\calE_{\loc}(u,u)\asymp\sup_{n\ge1}3^{(\beta^*-\alpha)n}\sum_{w\in W_n}
{\sum_{\mbox{\tiny
$
\begin{subarray}{c}
p,q\in V_w\\
|p-q|=2^{-1}\cdot3^{-n}
\end{subarray}
$
}}}
(u(p)-u(q))^2,\label{SC_con_eqn_Kigami}\\
&\calF_{\loc}=\myset{u\in C(K):\calE_\loc(u,u)<+\infty}.\nonumber
\end{align}
\end{mythm}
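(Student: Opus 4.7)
The plan is to follow the same $\Gamma$-convergence strategy as in the SG case (Theorem \ref{SG_con_thm_BM}), using the equivalent non-local semi-norm $\frakE_\beta$ from Lemma \ref{SC_con_lem_equivHK} (and also $E_\beta$ from Lemma \ref{SC_con_lem_equiv}) as the approximating family. Concretely, fix a sequence $\beta_n \uparrow \beta^*$; by Proposition \ref{prop_gamma}, after passing to a subsequence, the rescaled closed forms $(\beta^* - \beta_n)\frakE_{\beta_n}$ are $\Gamma$-convergent on $L^2(K;\nu)$ to some closed form $(\calE,\calF)$ in the wide sense. The first job is to identify this limit. Writing $\frakE_{\beta_n}(u,u) = \sum_{k\ge 1} 3^{(\beta_n-\beta^*)k} D_k(u)$ with $D_k(u) := 3^{(\beta^* - \alpha)k} \sum_{w^{(1)}\sim_k w^{(2)}} (P_k u(w^{(1)}) - P_k u(w^{(2)}))^2$, we expect, exactly as in the SG case, an equivalence $\calE(u,u) \asymp \sup_{k\ge 1} D_k(u)$, and likewise $\calE(u,u) \asymp \sup_{k\ge 1} 3^{(\beta^* - \alpha)k} \sum_{w\in W_k}\sum_{p,q\in V_w,\,|p-q|=2^{-1}3^{-k}}(u(p)-u(q))^2$, via Lemma \ref{SC_con_lem_equiv}.

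The central analytic input needed to carry this through on the SC is the \emph{weak monotonicity} estimate $D_n(u)\le C\, D_{n+m}(u)$ for all $u\in L^2(K;\nu)$ and all $n,m\ge 1$, with $C$ independent of $n,m,u$. On the SG this was deduced from the explicit $\Delta$-$Y$ resistance computation (Theorem \ref{SG_con_thm_resist}, Proposition \ref{SG_con_prop_resist}, Theorem \ref{SG_con_thm_monotone_graph}). On the SC the same scheme works in principle: one shows that the effective resistance $R_n(w,v)$ between any pair of cells in the approximation graph $X_n$ is bounded by a constant multiple of $\rho^n$, where $\rho$ is the resistance scaling factor entering $\beta^* = \log(8\rho)/\log 3$, and then one compares the resistance between two $(n+m)$-cells inside adjacent $n$-cells with the in-cell resistance diameter by cutting. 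Combined with the Cauchy--Schwarz step $(P_{n+m}u(w^{(1)}v) - P_{n+m}u(w^{(2)}v))^2 \le R\cdot E_{w^{(1)}W_m \cup w^{(2)}W_m}(u,u)$, this yields weak monotonicity with constant proportional to the resistance bound. Once this is available, the proof that $\calE(u,u)\asymp \sup_k D_k(u)$ is a verbatim repeat of the argument in Theorem \ref{SG_con_thm_E}, using Proposition \ref{prop_ele2}.

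The main obstacle, and the step that makes the SC case strictly harder than the SG case, is \emph{regularity}: one has to show $\calF$ is uniformly dense in $C(K)$ so that $(\calE,\calF)$ is a regular closed form on $L^2(K;\nu)$. On the SG we produced explicit harmonic extensions $U^{(x_0,x_1,x_2)}$ with bounded $D_k$-energies and invoked Stone--Weierstrass. On the SC no such closed-form harmonic extensions exist; instead one must construct, for sufficiently many boundary data, functions on $K$ with $\sup_k D_k(u) < +\infty$ whose values on prescribed disjoint pieces can be controlled. The strategy is to lift the problem to the infinite graphical SC (Figure \ref{fig_graphSC}), establish uniform resistance estimates there, deduce a uniform Harnack inequality on approximation graphs (the approach sketched in the Introduction), and use this to produce families of separating functions with uniformly bounded discrete energies. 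Combined with Lemma \ref{lem_SC_holder}, such functions lie in $C(K)\cap\calF$, giving an algebra that separates points, whence Stone--Weierstrass supplies uniform density in $C(K)$ and hence $L^2$-density.

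With $(\calE,\calF)$ identified as a regular closed form whose energy is equivalent to $\sup_k D_k(u)$, one constructs the self-similar strongly local Dirichlet form $(\calE_\loc,\calF_\loc)$ by a Cesàro-averaging argument identical in structure to the last step of the proof of Theorem \ref{SG_con_thm_BM}. Setting $\bar\calE^{(n)}(u,u) := 3^{(\beta^*-\alpha)n}\sum_{w\in W_n} \bar\calE(u\circ f_w, u\circ f_w)$ with $\bar\calE(v,v) = \sum_{i}(v(p_i) - \int v\,\md\nu)^2$, one checks via Lemma \ref{lem_SC_holder} and the self-similarity relation $\bar\calE^{(n+1)}(u,u) = 3^{\beta^*-\alpha}\sum_{i=0}^{7}\bar\calE^{(n)}(u\circ f_i, u\circ f_i)$ that $\bar\calE^{(n)}(u,u) \asymp \sup_k D_k(u)$ and then extracts, via separability of $(\calF,\calE_1)$ and a diagonal argument, a subsequence along which the Cesàro averages $\frac{1}{n_k}\sum_{l=1}^{n_k}\bar\calE^{(l)}$ converge to a limit $\calE_\loc$ satisfying $\calE_\loc = 3^{\beta^*-\alpha}\sum_{i=0}^{7}\calE_\loc(\,\cdot\circ f_i,\,\cdot\circ f_i)$. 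Self-similarity gives strong locality (via the standard covering argument using $2\cdot 3^{-n} < \mathrm{dist}(\mathrm{supp}\,u, K\setminus U)$), Markovianity follows from the strong locality plus $\calE_\loc(u,u) = \calE_\loc(u^+,u^+) + \calE_\loc(u^-,u^-)$ applied to $(0\vee u)\wedge 1$, and regularity is inherited from that of $(\calE,\calF)$. The energy equivalence (\ref{SC_con_eqn_Kigami}) then follows from the equivalence $\calE_\loc\asymp \sup_k D_k\asymp$ the discrete-vertex expression via Lemma \ref{SC_con_lem_equiv}.
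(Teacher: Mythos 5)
Your proposal follows the paper's proof essentially step for step: $\Gamma$-convergence of the rescaled non-local forms $(\beta^*-\beta_n)\frakE_{\beta_n}$, identification of the limit as $\asymp\sup_k$ of the discrete energies via the two weak monotonicity estimates proved from resistance bounds, regularity via resistance estimates on the infinite graphical SC, the uniform Harnack inequality and the resulting separating functions, and finally the Ces\`aro-averaging/diagonal argument yielding self-similarity, strong locality and Markovianity. The only cosmetic divergence is in the averaging step, where the paper sets $\mybar{\calE}^{(n)}(u,u)=\rho^n\sum_{w\in W_n}\calE(u\circ f_w,u\circ f_w)$ using the $\Gamma$-limit itself rather than your SG-style boundary functional $\sum_{i}\left(v(p_i)-\int_K v\,\md\nu\right)^2$; both choices obey the same two-sided comparison with $\sup_{k\ge1}a_k(u)$ and the same exact self-similarity identity, so your variant works equally well.
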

By the uniqueness result in \cite{BBKT10}, we have the above local regular Dirichlet form coincides with that given by \cite{BB89} and \cite{KZ92}.

We have a direct corollary that non-local Dirichlet forms can approximate local Dirichlet form as follows.

\begin{mycor}\label{SC_con_cor_approx}
There exists some positive constant $C$ such that for all $u\in\calF_\loc$, we have
$$\frac{1}{C}\calE_\loc(u,u)\le\varliminf_{\beta\uparrow\beta^*}(\beta^*-\beta)\calE_\beta(u,u)\le\varlimsup_{\beta\uparrow\beta^*}(\beta^*-\beta)\calE_{\beta}(u,u)\le C\calE_\loc(u,u).$$
\end{mycor}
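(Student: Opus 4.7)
The plan is to reduce everything to the discrete energies $D_n(u) := \sum_{w\in W_n}\sum_{p,q\in V_w,\,|p-q|=2^{-1}\cdot3^{-n}}(u(p)-u(q))^2$ and then apply the elementary summation lemma from the preliminaries. Set $\lambda = 3^{\beta-\beta^*} \in (0,1)$ so that $\lambda \uparrow 1$ as $\beta \uparrow \beta^*$, and observe
\[
E_\beta(u,u) = \sum_{n=1}^\infty 3^{(\beta-\alpha)n} D_n(u) = \sum_{n=1}^\infty \lambda^n a_n, \qquad a_n := 3^{(\beta^*-\alpha)n} D_n(u).
\]
Because $\beta^*-\beta = -\log\lambda/\log 3$ and $-\log\lambda \sim 1-\lambda$ as $\lambda \uparrow 1$, the prefactor $(\beta^*-\beta)$ is equivalent to $(1-\lambda)/\log 3$ in the limit. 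By Lemma \ref{SC_con_lem_equiv}, applied uniformly on any compact $\beta$-interval $[\beta^*-\delta,\beta^*)$, we have $\calE_\beta(u,u) \asymp E_\beta(u,u)$ with constants independent of $\beta$ near $\beta^*$, so it suffices to prove the statement for $(1-\lambda)\sum_n \lambda^n a_n$ in place of $(\beta^*-\beta)\calE_\beta(u,u)$.

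Next I would invoke Proposition \ref{prop_ele2}, which gives the two-sided sandwich
\[
\varliminf_{n\to\infty} a_n \;\le\; \varliminf_{\lambda\uparrow 1}(1-\lambda)\sum_{n=1}^\infty \lambda^n a_n \;\le\; \varlimsup_{\lambda\uparrow 1}(1-\lambda)\sum_{n=1}^\infty \lambda^n a_n \;\le\; \sup_{n\ge 1} a_n,
\]
and, provided a weak monotonicity bound $a_n \le C\, a_{n+m}$ for all $n,m\ge 1$, also $\sup_n a_n \le C \varliminf_n a_n$. The characterization in Theorem \ref{SC_con_thm_BM} identifies $\sup_n a_n \asymp \calE_\loc(u,u)$, so combining everything yields
\[
\tfrac{1}{C'}\calE_\loc(u,u) \;\le\; \varliminf_{\beta\uparrow\beta^*}(\beta^*-\beta)\calE_\beta(u,u) \;\le\; \varlimsup_{\beta\uparrow\beta^*}(\beta^*-\beta)\calE_\beta(u,u) \;\le\; C'\calE_\loc(u,u)
\]
for all $u \in \calF_\loc$, which is the claim.

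The main obstacle is establishing the weak monotonicity $a_n \le C\, a_{n+m}$ for the discrete energies on the SC. This is the analogue of Theorem \ref{SG_con_thm_monotone} from the SG case, and its proof relies on resistance estimates on the SC approximation graphs and a mean-value / cutting argument. The estimate is surely available within the construction machinery underlying Theorem \ref{SC_con_thm_BM} (indeed it is precisely what makes the sup-formula (\ref{SC_con_eqn_Kigami}) equivalent to an $\ell^\infty$-norm of a sequence that does not collapse), but one must carefully check uniformity: both the monotonicity constant and the $\calE_\beta \asymp E_\beta$ constant must be independent of $\beta$ as $\beta \uparrow \beta^*$. The first is automatic since it involves no $\beta$ at all; the second requires revisiting the proof of Lemma \ref{SC_con_lem_equiv} and verifying that the implicit constants stay bounded on $[\beta^*-\delta,\beta^*]$, which is the case because the only $\beta$-dependence enters through geometric series whose ratios are bounded away from $1$ on this interval.

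Finally, once these uniform constants are in hand, the rest is book-keeping: replace $E_\beta$ by $\calE_\beta$, collect the various multiplicative constants into a single $C$, and conclude. No additional analytic input is needed beyond what is already quoted in the excerpt, which is why this statement appears as a corollary rather than a theorem.
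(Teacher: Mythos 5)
Your proposal is correct and follows essentially the route the paper intends: the statement is assembled from Lemma \ref{SC_con_lem_equiv} (with constants uniform for $\beta$ in a left neighbourhood of $\beta^*$), the Abel-summation sandwich of Proposition \ref{prop_ele2}, the weak monotonicity $a_n(u)\le Ca_{n+m}(u)$ (which is exactly Theorem \ref{SC_con_thm_monotone1}, already proved via resistance estimates), and the identification $\calE_\loc(u,u)\asymp\sup_{n\ge1}a_n(u)$ from Theorem \ref{SC_con_thm_BM}. The ``obstacle'' you flag is thus already available in the paper, and your uniformity check for the equivalence constants is the right (and only) point requiring care.
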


We characterize $(\calE_\loc,\calF_\loc)$ on $L^2(K;\nu)$ as follows.

\begin{mythm}\label{SC_con_thm_Besov}
$\calF_\loc=B_{\alpha,\beta^*}^{2,\infty}(K)$ and $\calE_\loc(u,u)\asymp[u]_{B^{2,\infty}_{\alpha,\beta^*}(K)}$ for all $u\in\calF_\loc$.
\end{mythm}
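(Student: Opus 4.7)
The plan is to deduce Theorem \ref{SC_con_thm_Besov} from Theorem \ref{SC_con_thm_BM} by establishing a sup-type analogue of Lemma \ref{SC_con_lem_equiv}. By Theorem \ref{SC_con_thm_BM}, setting
$$I_n(u):=\sum_{w\in W_n}\,
{\sum_{\mbox{\tiny $\begin{subarray}{c} p,q\in V_w \\ |p-q|=2^{-1}\cdot 3^{-n}\end{subarray}$}}}
(u(p)-u(q))^2,$$
we have $\calE_\loc(u,u)\asymp\sup_{n\ge 1}3^{(\beta^*-\alpha)n}I_n(u)$ and $\calF_\loc=\{u\in C(K):\calE_\loc(u,u)<+\infty\}$. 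Hence it suffices to prove the following two claims: (i) for every $\beta\in(\alpha,+\infty)$ and every $u\in C(K)$,
$$\sup_{n\ge 1}3^{(\beta-\alpha)n}I_n(u)\asymp [u]_{B^{2,\infty}_{\alpha,\beta}(K)};$$
and (ii) every $u\in B^{2,\infty}_{\alpha,\beta^*}(K)$ admits a continuous version. Given (i) at $\beta=\beta^*$ and (ii), the inclusion $\calF_\loc\subseteq B^{2,\infty}_{\alpha,\beta^*}(K)$ with control on the norm is immediate, while the reverse inclusion uses (ii) to put the function in $C(K)$ and then (i) to show $\calE_\loc(u,u)<+\infty$.

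Claim (ii) is a direct application of Lemma \ref{lem_SC_holder} at $\beta=\beta^*$: since $\rho>1$, we have $\beta^*=\log(8\rho)/\log 3>\log 8/\log 3=\alpha$, so any $u\in L^2(K;\nu)$ with $[u]_{B^{2,\infty}_{\alpha,\beta^*}(K)}<+\infty$ has a $C^{(\beta^*-\alpha)/2}$ representative. For claim (i) I would adapt the proof of Theorem \ref{SG_app_thm_main} on the SG (Theorems \ref{SG_app_thm_equiv2_1} and \ref{SG_app_thm_equiv2_2}) to the SC geometry and to the sup normalization rather than the sum normalization. The upward direction $\sup_n 3^{(\beta-\alpha)n}I_n(u)\lesssim [u]_{B^{2,\infty}_{\alpha,\beta}(K)}$ would proceed by fixing $n$, writing each vertex pair $(p,q)$ appearing in $I_n(u)$ as the limit of a dyadic-type chain of shrinking sub-cells, telescoping $u(p)-u(q)$ against cell averages, and using Lemma \ref{lem_SC_holder} to bound the tail; the bookkeeping is analogous to the SG case except that the geometric factor $3^{-(\beta-\alpha)}<1$ replaces $2^{-(\beta-\alpha)}$ and the number of cells/vertices per level changes accordingly, yielding a uniform-in-$n$ bound so that the sup passes through.

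The downward direction $[u]_{B^{2,\infty}_{\alpha,\beta}(K)}\lesssim \sup_n 3^{(\beta-\alpha)n}I_n(u)$ would fix $n$ and estimate $K_n(u):=\int_K\int_{B(x,3^{-n})}(u(x)-u(y))^2\nu(\md y)\nu(\md x)$ by splitting into local contributions from pairs of cells $K_w,K_{w'}$ at level $n$ with $K_w\cap K_{w'}\ne\emptyset$, then replacing $u(x)-u(y)$ by a chain of vertex differences across the common intersection using the chain condition of the SC. Each such vertex difference is controlled by the resistance estimates and by the weak monotonicity (the SC analogue of Theorem \ref{SG_con_thm_monotone} that underlies Theorem \ref{SC_con_thm_BM}), which is crucial for replacing the scale-$m$ contributions by $3^{-(\beta-\alpha)m}\sup_k 3^{(\beta-\alpha)k}I_k(u)$ uniformly over $m\ge n$. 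Summing the resulting geometric series in $m$ and multiplying by $3^{(\alpha+\beta)n}$ then gives a bound by $\sup_k 3^{(\beta-\alpha)k}I_k(u)$ with constants independent of $n$.

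The main obstacle will be the chain argument in the downward direction: the SC, being non-p.c.f., has a significantly more involved cell-adjacency structure than the SG, and routing a pair $(x,y)\in K_w\times K_{w'}$ through a bounded number of vertex differences requires the full strength of the resistance estimates from Chapter \ref{ch_SC_con}. A careful choice of intermediate vertices, together with weak monotonicity to absorb finer-scale fluctuations, is needed to ensure that only $\sup_n 3^{(\beta-\alpha)n}I_n(u)$ and not its $\ell^1$ version appears on the right. Once (i) is established, the identification $\calF_\loc=B^{2,\infty}_{\alpha,\beta^*}(K)$ and the norm equivalence $\calE_\loc(u,u)\asymp[u]_{B^{2,\infty}_{\alpha,\beta^*}(K)}$ follow directly.
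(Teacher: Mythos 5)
Your proposal follows essentially the same route as the paper: the paper derives Theorem \ref{SC_con_thm_Besov} from Theorem \ref{SC_con_thm_BM} together with Proposition \ref{SC_con_prop_equiv_local}, which is exactly your claim (i), and proves that proposition by rerunning the chaining arguments of Theorems \ref{SC_con_thm_equiv1} and \ref{SC_con_thm_equiv2} with the sup normalization in place of the sum; your claim (ii) via Lemma \ref{lem_SC_holder} is also how the paper handles continuity. One correction to your assessment of the downward direction: it requires neither the resistance estimates nor weak monotonicity. The per-scale estimate (\ref{SC_con_eqn_equiv2_3}) is obtained by the same purely geometric vertex-chaining as in the sum case, and the passage from $I_m(u)$ to $3^{-(\beta-\alpha)m}\sup_k 3^{(\beta-\alpha)k}I_k(u)$ is immediate from the definition of the supremum; the resulting series $\sum_{k\ge n}4^{k-n}3^{\beta(n-k)}$ converges uniformly in $n$ simply because $3^{\beta}>3^{\alpha}=8>4$. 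Note also that weak monotonicity (Theorem \ref{SC_con_thm_monotone1}) bounds coarse-scale energies by fine-scale ones, which is the wrong direction for ``absorbing finer-scale fluctuations''; fortunately the step you wanted it for is trivial, so your plan goes through unchanged.
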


We give a direct proof of this theorem using (\ref{SC_con_eqn_Kigami}) and thus avoiding heat kernel estimates, while using some geometric properties of the SC.

Finally, using (\ref{SC_con_eqn_Kigami}) of Theorem \ref{SC_con_thm_BM}, we give an alternative proof of sub-Gaussian heat kernel estimates as follows.

\begin{mythm}\label{SC_con_thm_hk}
$(\calE_\loc,\calF_\loc)$ on $L^2(K;\nu)$ has a heat kernel $p_t(x,y)$ satisfying
$$p_t(x,y)\asymp\frac{C}{t^{\alpha/\beta^*}}\exp\left(-c\left(\frac{|x-y|}{t^{1/\beta^*}}\right)^{\frac{\beta^*}{\beta^*-1}}\right),$$
for all $x,y\in K,t\in(0,1)$.
\end{mythm}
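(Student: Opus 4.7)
The plan is to deduce the stated two-sided sub-Gaussian estimate from the well-developed equivalence theory between heat kernel bounds and combinations of geometric/functional inequalities on Ahlfors regular metric measure spaces. Specifically, I aim to verify volume doubling (which is immediate from the Ahlfors $\alpha$-regularity of $\nu$), a Poincar\'e inequality PI$(\beta^*)$, and a cutoff Sobolev inequality CS$(\beta^*)$; together these imply the stated heat kernel estimate by the equivalence theorems of \cite{AB15,GH14a,GH14b,GHL10,GHL15,GT12}. The input that replaces the probabilistic arguments of \cite{BB89,BB92} is the discrete self-similar expression (\ref{SC_con_eqn_Kigami}) for $\calE_{\loc}$ together with the identification $\calF_{\loc}=B_{\alpha,\beta^*}^{2,\infty}(K)$ from Theorem \ref{SC_con_thm_Besov}.

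For PI$(\beta^*)$, on a ball $B=B(x_0,R)$ with $3^{-(n+1)}\le R\le 3^{-n}$, the argument reduces by self-similarity of $(\calE_{\loc},\calF_{\loc})$ (Theorem \ref{SC_con_thm_BM}) to the case of a fixed union of finitely many cells at a single reference scale. At the reference scale the inequality becomes a discrete Poincar\'e inequality on the finite graph underlying (\ref{SC_con_eqn_Kigami}), which holds with constants uniform across scales thanks to the uniform resistance bounds that underlie Theorem \ref{SC_con_thm_BM}. Passing from the discrete averages $P_n u$ back to $u$ is done via the pointwise H\"older bound $|u(x)-u(y)|^2\lesssim\calE_{\loc}(u,u)\,|x-y|^{\beta^*-\alpha}$, which follows from the Besov embedding $\calF_{\loc}=B_{\alpha,\beta^*}^{2,\infty}(K)\hookrightarrow C^{(\beta^*-\alpha)/2}(K)$ (valid since $\beta^{*}>\alpha$). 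For CS$(\beta^*)$ at the same scale, one constructs a cutoff $\eta$ supported in the $2R$-enlargement of $B$ and equal to $1$ on $B$, with $\eta$ constant on each level-$n$ cell and interpolated linearly along a short discrete path in the graph $X_n$. Then only the single level-$n$ term of the supremum in (\ref{SC_con_eqn_Kigami}) is nontrivial, and counting boundary edges yields $\calE_{\loc}(\eta,\eta)\lesssim R^{-\beta^*}\nu(B)$, which is the correct order.

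The main obstacle will be to upgrade this na\"ive cutoff energy bound into the \emph{generalized} CS$(\beta^*)$ inequality required in \cite{AB15,GHL15}, in which the $L^{\infty}$ factor in the Leibniz rule for $\eta u$ is replaced by an $L^2$ integral of $u$ against a weight proportional to $\eta^2$. This upgrade hinges on controlling the level-$m$ contributions to $\calE_{\loc}(\eta u,\eta u)$ for $m>n$: since $\eta$ is constant on each level-$n$ cell, these contributions involve only boundary pairs between adjacent sub-cells, and they can be bounded using the averages $P_m u$ and a Cauchy--Schwarz argument, after which the supremum in (\ref{SC_con_eqn_Kigami}) is absorbed via self-similarity. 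Once PI$(\beta^*)$ and CS$(\beta^*)$ are in place, the cited equivalence theorems supply the two-sided estimate for small $t$; conservativeness of $(\calE_{\loc},\calF_{\loc})$ (which holds since $1\in\calF_{\loc}$ with $\calE_{\loc}(1,1)=0$ on the compact space $K$) and the self-similar scaling identity then extend the estimate to all $t\in(0,1)$.
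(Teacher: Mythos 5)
Your overall strategy---verify volume doubling, PI$(\beta^*)$ and CS$(\beta^*)$ and then invoke the equivalence theorems of \cite{AB15,GH14a,GHL15,GT12}---is a legitimate characterization of sub-Gaussian estimates, but it is not what the paper does, and as sketched it has a genuine gap at exactly the step you flag as the main obstacle. The cutoff Sobolev inequality is the crux, and your cutoff construction does not produce an admissible function: a function that is constant on each level-$n$ cell with different constants on adjacent cells is discontinuous across the shared boundary segments, hence not in $\calF_\loc\subseteq C(K)$. Once you replace it by any genuinely continuous cutoff that interpolates \emph{inside} the cells of the transition band, the claim that ``only the single level-$n$ term of the supremum in (\ref{SC_con_eqn_Kigami}) is nontrivial'' fails, since the function then varies within cells at every finer scale. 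More seriously, the generalized CS inequality of \cite{AB15,GHL15} is formulated in terms of the energy measure $\md\Gamma(\eta,\eta)$ of the strongly local form, and requires the coefficient of $\int\eta^2\,\md\Gamma(u,u)$ to be a \emph{small} constant (so that it can be absorbed), not merely a bounded one. The expression (\ref{SC_con_eqn_Kigami}) determines $\calE_\loc$ only up to two-sided multiplicative constants and gives no access whatsoever to the energy measure, so the proposed ``Cauchy--Schwarz and absorb via self-similarity'' step has nothing concrete to act on and no mechanism to produce the required small constant.

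The paper avoids all of this by exploiting that $\beta^*>\alpha$, so that $(\calE_\loc,\calF_\loc)$ is effectively a resistance form: the embedding $\calF_\loc=B^{2,\infty}_{\alpha,\beta^*}(K)\hookrightarrow C^{(\beta^*-\alpha)/2}(K)$ from Lemma \ref{lem_SC_holder} gives $R(x,y)\lesssim|x-y|^{\beta^*-\alpha}$, a scaling argument gives $R(x,B(x,r)^c)\asymp r^{\beta^*-\alpha}$ and hence the matching lower bound $R(x,y)\gtrsim|x-y|^{\beta^*-\alpha}$, and then the two-sided resistance estimate together with the $\alpha$-regularity of $\nu$ yields Green function estimates as in \cite[Proposition 6.11]{GHL14} and the heat kernel estimates as in \cite[Theorem 3.14]{GH14a}. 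If you wanted to salvage your route you would in effect have to establish these resistance estimates anyway, since they are what makes PI and CS verifiable in the strongly recurrent regime; the detour through CS buys nothing here and only becomes necessary in higher-dimensional analogues where $\beta^*\le\alpha$ and points are polar. I would recommend rewriting the proof along the resistance-estimate lines above.
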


This chapter is organized as follows. In Section \ref{SC_con_sec_equiv}, we prove Lemma \ref{SC_con_lem_equiv}. In Section \ref{SC_con_sec_bound}, we prove Theorem \ref{SC_con_thm_bound}. In Section \ref{SC_con_sec_resistance}, we give resistance estimates. In Section \ref{SC_con_sec_harnack}, we give uniform Harnack inequality. In Section \ref{SC_con_sec_monotone}, we give two weak monotonicity results. In Section \ref{SC_con_sec_good}, we construct one good function. In Section \ref{SC_con_sec_walk}, we prove Theorem \ref{SC_con_thm_walk}. In Section \ref{SC_con_sec_BM}, we prove Theorem \ref{SC_con_thm_BM}. In Section \ref{SC_con_sec_Besov}, we prove Theorem \ref{SC_con_thm_Besov}. In Section \ref{SC_con_sec_hk}, we prove Theorem \ref{SC_con_thm_hk}.

\section{Proof of Lemma \ref{SC_con_lem_equiv}}\label{SC_con_sec_equiv}

We need some preparation as follows.

\begin{mylem}\label{SC_con_lem_equiv1}
For all $u\in L^2(K;\nu)$, we have
$$\int_K\int_K\frac{(u(x)-u(y))^2}{|x-y|^{\alpha+\beta}}\nu(\md x)\nu(\md y)\asymp\sum_{n=0}^\infty3^{(\alpha+\beta)n}\int_K\int_{B(x,3^{-n})}(u(x)-u(y))^2\nu(\md y)\nu(\md x).$$
\end{mylem}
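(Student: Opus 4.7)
The plan is to imitate the proof of Lemma \ref{SG_app_lem_equiv1} verbatim, replacing the dyadic radii $2^{-n}$ by the triadic radii $3^{-n}$. The underlying idea is to chop the cross-diagonal region of $K\times K$ into triadic annular shells
$$A_n(x):=B(x,3^{-n})\setminus B(x,3^{-(n+1)}),\qquad n\ge 0,$$
on which the kernel $|x-y|^{-(\alpha+\beta)}$ is comparable to $3^{(\alpha+\beta)n}$. Once this is done, both inequalities reduce to elementary geometric-series manipulations.

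For the $\lesssim$ direction, I would write
$$\int_K\int_K\frac{(u(x)-u(y))^2}{|x-y|^{\alpha+\beta}}\nu(\md x)\nu(\md y)=\sum_{n=0}^\infty \int_K\int_{A_n(x)}\frac{(u(x)-u(y))^2}{|x-y|^{\alpha+\beta}}\nu(\md y)\nu(\md x)+R,$$
where $R$ collects those pairs with $|x-y|\ge 1$. On $A_n(x)$ the kernel is bounded by $3^{(\alpha+\beta)(n+1)}$, which produces the sum on the right-hand side up to a multiplicative constant.

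For the $\gtrsim$ direction, I would expand
$$\int_K\int_{B(x,3^{-n})}(u(x)-u(y))^2\nu(\md y)\nu(\md x)=\sum_{k=n}^\infty\int_K\int_{A_k(x)}(u(x)-u(y))^2\nu(\md y)\nu(\md x),$$
swap the order of summation in $n$ and $k$, and evaluate the geometric sum $\sum_{n=0}^k 3^{(\alpha+\beta)n}\le \tfrac{3^{(\alpha+\beta)(k+1)}}{3^{\alpha+\beta}-1}$. Since $|x-y|^{-(\alpha+\beta)}\ge 3^{(\alpha+\beta)k}$ on $A_k(x)$, one recovers the double integral.

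The only point where the argument differs from the SG case is that $\mathrm{diam}(K)=\sqrt{2}>1$, so $B(x,1)$ does not exhaust $K$ and the remainder $R$ above is not automatically zero. This is the sole minor obstacle, and it is handled cheaply: on $\{|x-y|>1\}$ the kernel is bounded by $1$, so $R\le\int_K\int_K(u(x)-u(y))^2\nu(\md x)\nu(\md y)$, which in turn is absorbed into the $n=0$ summand of the right-hand side after invoking the chain condition on $K$ exactly as in Corollary \ref{SG_app_cor_arbi}. Thus no genuinely new idea beyond the SG proof is required.
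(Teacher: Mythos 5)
Your proposal is correct and follows essentially the same route as the paper, whose proof of this lemma consists of the remark that the argument for Lemma \ref{SG_app_lem_equiv1} carries over with the contraction ratio $1/2$ replaced by $1/3$, together with the chain condition of the SC. Your explicit treatment of the remainder over $\{|x-y|\ge 1\}$ (needed because $\mathrm{diam}(K)=\sqrt{2}>1$), absorbed into the $n=0$ summand via the chain-condition argument of Corollary \ref{SG_app_cor_arbi}, is precisely the detail the paper leaves implicit.
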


\begin{mycor}\label{SC_con_cor_arbi}
Fix arbitrary integer $N\ge0$ and real number $c>0$. For all $u\in L^2(K;\nu)$, we have
$$\int_K\int_K\frac{(u(x)-u(y))^2}{|x-y|^{\alpha+\beta}}\nu(\md x)\nu(\md y)\asymp\sum_{n=N}^\infty3^{(\alpha+\beta)n}\int_K\int_{B(x,c3^{-n})}(u(x)-u(y))^2\nu(\md y)\nu(\md x).$$
\end{mycor}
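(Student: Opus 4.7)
The plan is to reduce the corollary to Lemma \ref{SC_con_lem_equiv1} by handling the radius constant $c$ and the starting index $N$ independently.

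First, I would normalize the constant $c$. Picking an integer $k \ge 0$ with $3^{-(k+1)} < c \le 3^{k+1}$, the monotonicity of $B(x,r)$ in $r$ gives
$$B(x,3^{-(n+k+1)}) \subseteq B(x,c3^{-n}) \subseteq B(x,3^{-(n-k-1)})$$
for all sufficiently large $n$. Index shifts $m = n+(k+1)$ and $m = n-(k+1)$ then sandwich the $c$-sum between two multiples of the $c=1$ sum of Lemma \ref{SC_con_lem_equiv1}, modulo at most finitely many low-index boundary terms that can be absorbed into the leading constant once one has a global $L^2$ bound.

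Second, to replace the initial index $0$ by $N$, observe that
$$\sum_{n=0}^{N-1} 3^{(\alpha+\beta)n}\int_K\int_{B(x,c3^{-n})}(u(x)-u(y))^2\nu(\md y)\nu(\md x) \le C_{N,\beta}\int_K\int_K (u(x)-u(y))^2 \nu(\md x)\nu(\md y),$$
so the corollary reduces to the single key inequality
$$\int_K\int_K(u(x)-u(y))^2\nu(\md x)\nu(\md y) \lesssim \sum_{n=N}^\infty 3^{(\alpha+\beta)n}\int_K\int_{B(x,c3^{-n})}(u(x)-u(y))^2\nu(\md y)\nu(\md x),$$
with constant depending only on $N$, $c$, $\alpha$, $\beta$.

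The main obstacle is this key inequality. I would prove it in the same way as the analogous step in the SG proof (Corollary \ref{SG_app_cor_arbi}), using the chain condition for the SC: there is a constant $c_1$ such that for every $x,y\in K$ and every integer $M\ge 1$ one can find $z_0=x,z_1,\ldots,z_M=y$ in $K$ with $|z_i-z_{i+1}|\le c_1 |x-y|/M$. Choosing $M$ of order $3^N/c$ forces every pair $(z_i,z_{i+1})$ to lie in a common ball of radius $c\,3^{-N}/4$; then the Cauchy--Schwarz bound $(u(x)-u(y))^2\le M\sum_i(u(z_i)-u(z_{i+1}))^2$, followed by averaging each $u(z_i)$ over a small ball around $z_i$ and using $\nu(B(z,r))\asymp r^\alpha$, converts the pointwise inequality into the desired integrated bound. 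Verifying the chain condition for the SC, a known consequence of its connectedness and self-similarity, is the only nontrivial geometric input beyond the SG argument.
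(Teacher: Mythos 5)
Your proposal is correct and follows essentially the same route as the paper: the paper simply notes that the proof is the same as that of Corollary \ref{SG_app_cor_arbi} for the SG (with contraction ratio $1/3$ in place of $1/2$ and using the chain condition for the SC), and your reduction to the single key inequality $\int_K\int_K(u(x)-u(y))^2\,\nu(\md x)\nu(\md y)\lesssim\int_K\int_{B(x,c3^{-N})}(u(x)-u(y))^2\,\nu(\md y)\nu(\md x)$ via the chain condition, Cauchy--Schwarz along the chain, and averaging over small balls with $\nu(B(z,r))\asymp r^\alpha$ is exactly that argument. The preliminary normalizations of $c$ and of the starting index $N$ are routine and handled as you describe.
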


The proofs of the above results are essentially the same as those of Lemma \ref{SG_app_lem_equiv1} and Corollary \ref{SG_app_cor_arbi} except that contraction ratio $1/2$ is replaced by $1/3$. We also need the fact that the SC satisfies the chain condition, see \cite[Definition 3.4]{GHL03}.

We divide Lemma \ref{SC_con_lem_equiv} into the following Theorem \ref{SC_con_thm_equiv1} and Theorem \ref{SC_con_thm_equiv2}. The idea of the proofs of these theorems comes form \cite{Jon96}. We do need to pay special attention to the difficulty brought by non-p.c.f. property.

\begin{mythm}\label{SC_con_thm_equiv1}
For all $u\in C(K)$, we have
$$\sum_{n=1}^\infty3^{(\beta-\alpha)n}\sum_{w\in W_n}
{\sum_{\mbox{\tiny
$
\begin{subarray}{c}
p,q\in V_w\\
|p-q|=2^{-1}\cdot3^{-n}
\end{subarray}
$
}}}
(u(p)-u(q))^2\lesssim\int_K\int_K\frac{(u(x)-u(y))^2}{|x-y|^{\alpha+\beta}}\nu(\md x)\nu(\md y).$$
\end{mythm}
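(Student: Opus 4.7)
\medskip

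\noindent\textbf{Proof proposal for Theorem \ref{SC_con_thm_equiv1}.}

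The plan is to imitate the SG argument of Theorem \ref{SG_app_thm_equiv2_1}, with the contraction ratio $1/2$ replaced by $1/3$ and with the three-vertex sets $V_w$ replaced by the eight-point boundary sets $V_w=f_w(V_0)$. First, for each $n\ge 1$ and each $w\in W_n$ I would bound a single discrete increment across $K_w$ by a spatial average: if $p,q\in V_w$ with $|p-q|=2^{-1}\cdot 3^{-n}$, then for every $x\in K_w$ the elementary inequality $(u(p)-u(q))^2\le 2(u(p)-u(x))^2+2(u(x)-u(q))^2$ combined with integration against $\nu/\nu(K_w)$ on $K_w$ gives
\begin{equation*}
\sum_{\substack{p,q\in V_w\\ |p-q|=2^{-1}\cdot 3^{-n}}}(u(p)-u(q))^2\ \lesssim\ \sum_{p\in V_w}\frac{1}{\nu(K_w)}\int_{K_w}(u(p)-u(x))^2\,\nu(\md x),
\end{equation*}
since each vertex of $V_w$ participates in a bounded number of adjacent pairs.

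Next, I would estimate $(u(p)-u(x))^2$ by constructing a nested chain of cells shrinking to $p$. For $p\in V_w$ there exists $j\in\{0,\ldots,7\}$ with $p=f_{w_1}\circ\cdots\circ f_{w_n}(p_j)$; since $p_j$ is the unique fixed point of $f_j$ (as $f_j(x)=(x+2p_j)/3$), the cells
\begin{equation*}
K_{w^{(i)}}=K_{w_1\ldots w_n\underbrace{j\ldots j}_{ki\text{ times}}},\qquad i=0,\ldots,l,
\end{equation*}
form a decreasing sequence of closed sets with $\mathrm{diam}(K_{w^{(i)}})\asymp 3^{-(n+ki)}$ and $\bigcap_i K_{w^{(i)}}=\{p\}$, and $\nu(K_{w^{(i)}})=8^{-(n+ki)}=3^{-\alpha(n+ki)}$. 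Applying the standard telescoping inequality
\begin{equation*}
(u(p)-u(x^{(0)}))^2\le 2(u(p)-u(x^{(l)}))^2+4\sum_{i=0}^{l-1}2^{i}(u(x^{(i)})-u(x^{(i+1)}))^2,
\end{equation*}
averaging in each $x^{(i)}\in K_{w^{(i)}}$, and bounding the tail $(u(p)-u(x^{(l)}))^2$ by the Hölder estimate of Lemma \ref{lem_SC_holder} (valid because $\beta>\alpha$ and we may assume the right-hand side is finite), I would reach an inequality of the form
\begin{equation*}
\frac{1}{\nu(K_w)}\int_{K_w}(u(p)-u(x))^2\nu(\md x)\lesssim E(u)\,3^{-(\beta-\alpha)(n+kl)}+\sum_{i=0}^{l-1}3^{i'}\iint_{K_{w^{(i)}}\times K_{w^{(i+1)}}}\!\!\!(u(x^{(i)})-u(x^{(i+1)}))^2
\end{equation*}
with appropriate numerical weights, where each integral involves points at distance $\le 3^{-(n+ki)}$.

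I would then sum the resulting estimate over $w\in W_n$: for each fixed $i$, distinct choices of $(w,p)$ produce disjoint cells $K_{w^{(i)}}$, so the double sum is dominated by an integral over $K\times\{|x-y|\le 3^{-(n+ki)}\}$. Introducing the abbreviation
\begin{equation*}
E_m(u)=3^{(\alpha+\beta)m}\int_K\int_{|x-y|\le 3^{-m}}(u(x)-u(y))^2\,\nu(\md y)\nu(\md x),
\end{equation*}
multiplying by $3^{(\beta-\alpha)n}$ and summing over $n\ge 1$, and finally setting $l=n$, I would obtain
\begin{equation*}
\sum_{n=1}^\infty 3^{(\beta-\alpha)n}\!\!\!\sum_{\substack{w\in W_n\\p,q\in V_w,\,|p-q|=2^{-1}\cdot 3^{-n}}}\!\!\!(u(p)-u(q))^2\lesssim E(u)\sum_{n=1}^\infty 3^{[\beta-(\beta-\alpha)(k+1)]n}+\sum_{i=0}^\infty 3^{[1-(\beta-\alpha)k]i}\cdot\sup_m E_m(u)
\end{equation*}
(after swapping the $(n,i)$ order). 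Choosing $k$ so large that both exponents $\beta-(\beta-\alpha)(k+1)$ and $1-(\beta-\alpha)k$ are negative, the two geometric series converge, and Lemma \ref{SC_con_lem_equiv1} identifies the resulting $\sum_m E_m(u)$ with $\calE_\beta(u,u)$, which completes the proof.

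The main obstacle I anticipate is \emph{bookkeeping} rather than substance: verifying that the combinatorial multiplicity of the correspondence $(w,p)\mapsto K_{w^{(i)}}$ is truly $O(1)$ at every scale (so that summing over $w$ really produces an integral, not a multiply-counted one), and handling cleanly the fact that a single vertex $p\in V_w$ may lie in several cells $K_{w'}$ with $w'\in W_n$ because the SC is non-p.c.f.; in the SG proof each boundary point belonged to at most two $n$-cells, and here one has to verify the analogous bounded overlap property for $V_w$'s on the SC. Once this is settled, everything else is a direct transcription of the SG computation with $2\leadsto 3$ and $\alpha=\log 8/\log 3$.
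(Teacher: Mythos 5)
Your proposal follows essentially the same route as the paper's proof: the same averaging of $(u(p)-u(q))^2$ over $K_w$, the same nested chain of cells $K_{w^{(i)}}$ obtained by appending $ki$ copies of the letter fixing $p$, the same H\"older bound for the tail via Lemma \ref{lem_SC_holder}, the same observation that distinct pairs $(w,p)$ give distinct (essentially disjoint) cells $K_{w^{(i)}}$, and the same choice $l=n$ with $k$ large enough that both exponents are negative. The only slip is that in your final display the factor $\sup_m E_m(u)$ should be the sum $\sum_m E_m(u)\lesssim\calE_\beta(u,u)$ (as your own concluding sentence indicates), since after interchanging the $(n,i)$ sums the inner sum over $n$ has infinitely many terms and cannot be controlled by a supremum alone.
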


\begin{proof}
First, fix $n\ge1,w=w_1\ldots w_n\in W_n$, consider
$$
{\sum_{\mbox{\tiny
$
\begin{subarray}{c}
p,q\in V_w\\
|p-q|=2^{-1}\cdot3^{-n}
\end{subarray}
$
}}}
(u(p)-u(q))^2.$$
For all $x\in K_w$, we have
$$(u(p)-u(q))^2\le2(u(p)-u(x))^2+2(u(x)-u(q))^2.$$
Integrating with respect to $x\in K_w$ and dividing by $\nu(K_w)$, we have
$$(u(p)-u(q))^2\le\frac{2}{\nu(K_w)}\int_{K_w}(u(p)-u(x))^2\nu(\md x)+\frac{2}{\nu(K_w)}\int_{K_w}(u(x)-u(q))^2\nu(\md x),$$
hence
$$
{\sum_{\mbox{\tiny
$
\begin{subarray}{c}
p,q\in V_w\\
|p-q|=2^{-1}\cdot3^{-n}
\end{subarray}
$
}}}
(u(p)-u(q))^2\le2\cdot2\cdot2\sum_{p\in V_w}\frac{1}{\nu(K_w)}\int_{K_w}(u(p)-u(x))^2\nu(\md x).$$
Consider $(u(p)-u(x))^2$, $p\in V_w$, $x\in K_w$. There exists $w_{n+1}\in\myset{0,\ldots,7}$ such that $p=f_{w_1}\circ\ldots\circ f_{w_n}(p_{w_{n+1}})$. Let $k,l\ge1$ be integers to be determined, let
$$w^{(i)}=w_1\ldots w_nw_{n+1}\ldots w_{n+1}$$
with $ki$ terms of $w_{n+1}$, $i=0,\ldots,l$. For all $x^{(i)}\in K_{w^{(i)}}$, $i=0,\ldots,l$, we have
$$
\begin{aligned}
(u(p)-u(x^{(0)}))^2&\le2(u(p)-u(x^{(l)}))^2+2(u(x^{(0)})-u(x^{(l)}))^2\\
&\le2(u(p)-u(x^{(l)}))^2+2\left[2(u(x^{(0)})-u(x^{(1)}))^2+2(u(x^{(1)})-u(x^{(l)}))^2\right]\\
&=2(u(p)-u(x^{(l)}))^2+2^2(u(x^{(0)})-u(x^{(1)}))^2+2^2(u(x^{(1)})-u(x^{(l)}))^2\\
&\le\ldots\le2(u(p)-u(x^{(l)}))^2+2^2\sum_{i=0}^{l-1}2^i(u(x^{(i)})-u(x^{(i+1)}))^2.
\end{aligned}
$$
Integrating with respect to $x^{(0)}\in K_{w^{(0)}}$, \ldots, $x^{(l)}\in K_{w^{(l)}}$ and dividing by $\nu(K_{w^{(0)}})$, \ldots, $\nu(K_{w^{(l)}})$, we have
$$
\begin{aligned}
&\frac{1}{\nu(K_{w^{(0)}})}\int_{K_{w^{(0)}}}(u(p)-u(x^{(0)}))^2\nu(\md x^{(0)})\\
\le&\frac{2}{\nu(K_{w^{(l)}})}\int_{K_{w^{(l)}}}(u(p)-u(x^{(l)}))^2\nu(\md x^{(l)})\\
&+2^2\sum_{i=0}^{l-1}\frac{2^i}{\nu(K_{w^{(i)}})\nu(K_{w^{(i+1)}})}\int_{K_{w^{(i)}}}\int_{K_{w^{(i+1)}}}(u(x^{(i)})-u(x^{(i+1)}))^2\nu(\md x^{(i)})\nu(\md x^{(i+1)}).
\end{aligned}
$$
Now let us use $\nu(K_{w^{(i)}})=(1/8)^{n+ki}=3^{-\alpha(n+ki)}$. For the first term, by Lemma \ref{lem_SC_holder}, we have
$$
\begin{aligned}
\frac{1}{\nu(K_{w^{(l)}})}\int_{K_{w^{(l)}}}(u(p)-u(x^{(l)}))^2\nu(\md x^{(l)})&\le \frac{cE(u)}{\nu(K_{w^{(l)}})}\int_{K_{w^{(l)}}}|p-x^{(l)}|^{\beta-\alpha}\nu(\md x^{(l)})\\
&\le{2}^{(\beta-\alpha)/2}cE(u){3}^{-(\beta-\alpha)(n+kl)}.
\end{aligned}
$$
For the second term, for all $x^{(i)}\in K_{w^{(i)}},x^{(i+1)}\in K_{w^{(i+1)}}$, we have
$$|x^{(i)}-x^{(i+1)}|\le\sqrt{2}\cdot3^{-(n+ki)},$$
hence
$$
\begin{aligned}
&\sum_{i=0}^{l-1}\frac{2^i}{\nu(K_{w^{(i)}})\nu(K_{w^{(i+1)}})}\int_{K_{w^{(i)}}}\int_{K_{w^{(i+1)}}}(u(x^{(i)})-u(x^{(i+1)}))^2\nu(\md x^{(i)})\nu(\md x^{(i+1)})\\
\le&\sum_{i=0}^{l-1}{2^{i}\cdot3^{\alpha k+2\alpha(n+ki)}}\int\limits_{K_{w^{(i)}}}\int\limits_{|x^{(i+1)}-x^{(i)}|\le\sqrt{2}\cdot3^{-(n+ki)}}(u(x^{(i)})-u(x^{(i+1)}))^2\nu(\md x^{(i)})\nu(\md x^{(i+1)}),
\end{aligned}
$$
and
$$
\begin{aligned}
&\frac{1}{\nu(K_w)}\int_{K_w}(u(p)-u(x))^2\nu(\md x)=\frac{1}{\nu(K_{w^{(0)}})}\int_{K_{w^{(0)}}}(u(p)-u(x^{(0)}))^2\nu(\md x^{(0)})\\
\le& 2\cdot{2}^{(\beta-\alpha)/2}cE(u)3^{-(\beta-\alpha)(n+kl)}\\
&+4\sum_{i=0}^{l-1}{2^{i}\cdot3^{\alpha k+2\alpha(n+ki)}}\int\limits_{K_{w^{(i)}}}\int\limits_{|x-y|\le\sqrt{2}\cdot3^{-(n+ki)}}(u(x)-u(y))^2\nu(\md x)\nu(\md y).
\end{aligned}
$$
Hence
$$
\begin{aligned}
&\sum_{w\in W_n}
{\sum_{\mbox{\tiny
$
\begin{subarray}{c}
p,q\in V_w\\
|p-q|=2^{-1}\cdot3^{-n}
\end{subarray}
$
}}}
(u(p)-u(q))^2\\
\le&8\sum_{w\in W_n}\sum_{p\in V_w}\frac{1}{\nu(K_w)}\int_{K_w}(u(p)-u(x))^2\nu(\md x)\\
\le&8\sum_{w\in W_n}\sum_{p\in V_w}\left(2\cdot{2}^{(\beta-\alpha)/2}cE(u)3^{-(\beta-\alpha)(n+kl)}\right.\\
&\left.+4\sum_{i=0}^{l-1}{2^{i}\cdot3^{\alpha k+2\alpha(n+ki)}}\int\limits_{K_{w^{(i)}}}\int\limits_{|x-y|\le\sqrt{2}\cdot3^{-(n+ki)}}(u(x)-u(y))^2\nu(\md x)\nu(\md y)\right).
\end{aligned}
$$
For the first term, we have
$$\sum_{w\in W_n}\sum_{p\in V_w}3^{-(\beta-\alpha)(n+kl)}=8\cdot8^n\cdot3^{-(\beta-\alpha)(n+kl)}=8\cdot3^{\alpha n-(\beta-\alpha)(n+kl)}.$$
For the second term, fix $i=0,\ldots,l-1$, different $p\in V_w$, $w\in W_n$ correspond to different $K_{w^{(i)}}$, hence
$$
\begin{aligned}
&\sum_{i=0}^{l-1}\sum_{w\in W_n}\sum_{p\in V_w}2^{i}\cdot3^{\alpha k+2\alpha(n+ki)}\int\limits_{K_{w^{(i)}}}\int\limits_{|x-y|\le\sqrt{2}\cdot3^{-(n+ki)}}(u(x)-u(y))^2\nu(\md x)\nu(\md y)\\
\le&\sum_{i=0}^{l-1}2^{i}\cdot3^{\alpha k+2\alpha(n+ki)}\int\limits_{K}\int\limits_{|x-y|\le\sqrt{2}\cdot3^{-(n+ki)}}(u(x)-u(y))^2\nu(\md x)\nu(\md y)\\
=&3^{\alpha k}\sum_{i=0}^{l-1}2^{i}\cdot3^{-(\beta-\alpha)(n+ki)}\left(3^{(\alpha+\beta)(n+ki)}\int\limits_{K}\int\limits_{|x-y|\le\sqrt{2}\cdot3^{-(n+ki)}}(u(x)-u(y))^2\nu(\md x)\nu(\md y)\right).
\end{aligned}
$$
For simplicity, denote
$$E_{n}(u)=3^{(\alpha+\beta)n}\int_{K}\int_{|x-y|\le\sqrt{2}\cdot3^{-n}}(u(x)-u(y))^2\nu(\md x)\nu(\md y).$$
We have
\begin{equation}\label{SC_con_eqn_equiv1_1}
\begin{aligned}
&\sum_{w\in W_n}
{\sum_{\mbox{\tiny
$
\begin{subarray}{c}
p,q\in V_w\\
|p-q|=2^{-1}\cdot3^{-n}
\end{subarray}
$
}}}
(u(p)-u(q))^2\\
\le&128\cdot{2}^{(\beta-\alpha)/2}cE(u)3^{\alpha n-(\beta-\alpha)(n+kl)}+32\cdot3^{\alpha k}\sum_{i=0}^{l-1}2^{i}\cdot3^{-(\beta-\alpha)(n+ki)}E_{n+ki}(u).
\end{aligned}
\end{equation}
Hence
$$
\begin{aligned}
&\sum_{n=1}^\infty3^{(\beta-\alpha)n}\sum_{w\in W_n}
{\sum_{\mbox{\tiny
$
\begin{subarray}{c}
p,q\in V_w\\
|p-q|=2^{-1}\cdot3^{-n}
\end{subarray}
$
}}}
(u(p)-u(q))^2\\
\le&128\cdot{2}^{(\beta-\alpha)/2}cE(u)\sum_{n=1}^\infty3^{\beta n-(\beta-\alpha)(n+kl)}+32\cdot3^{\alpha k}\sum_{n=1}^\infty\sum_{i=0}^{l-1}2^{i}\cdot3^{-(\beta-\alpha)ki}E_{n+ki}(u).
\end{aligned}
$$
Take $l=n$, then
$$
\begin{aligned}
&\sum_{n=1}^\infty3^{(\beta-\alpha)n}\sum_{w\in W_n}
{\sum_{\mbox{\tiny
$
\begin{subarray}{c}
p,q\in V_w\\
|p-q|=2^{-1}\cdot3^{-n}
\end{subarray}
$
}}}
(u(p)-u(q))^2\\
\le&128\cdot{2}^{(\beta-\alpha)/2}cE(u)\sum_{n=1}^\infty3^{\left[\beta-(\beta-\alpha)(k+1)\right]n}+32\cdot3^{\alpha k}\sum_{n=1}^\infty\sum_{i=0}^{n-1}2^{i}\cdot3^{-(\beta-\alpha)ki}E_{n+ki}(u)\\
=&128\cdot{2}^{(\beta-\alpha)/2}cE(u)\sum_{n=1}^\infty3^{\left[\beta-(\beta-\alpha)(k+1)\right]n}+32\cdot3^{\alpha k}\sum_{i=0}^\infty2^{i}\cdot3^{-(\beta-\alpha)ki}\sum_{n=i+1}^{\infty}E_{n+ki}(u)\\
\le&128\cdot{2}^{(\beta-\alpha)/2}cE(u)\sum_{n=1}^\infty3^{\left[\beta-(\beta-\alpha)(k+1)\right]n}+32\cdot3^{\alpha k}\sum_{i=0}^\infty3^{\left[1-(\beta-\alpha)k\right]i}C_1E(u),
\end{aligned}
$$
where $C_1$ is some positive constant from Corollary \ref{SC_con_cor_arbi}. Take $k\ge1$ sufficiently large such that $\beta-(\beta-\alpha)(k+1)<0$ and $1-(\beta-\alpha)k<0$, then the above two series converge, hence
$$\sum_{n=1}^\infty3^{(\beta-\alpha)n}\sum_{w\in W_n}
{\sum_{\mbox{\tiny
$
\begin{subarray}{c}
p,q\in V_w\\
|p-q|=2^{-1}\cdot3^{-n}
\end{subarray}
$
}}}
(u(p)-u(q))^2\lesssim\int_K\int_K\frac{(u(x)-u(y))^2}{|x-y|^{\alpha+\beta}}\nu(\md x)\nu(\md y).$$
\end{proof}

\begin{mythm}\label{SC_con_thm_equiv2}
For all $u\in C(K)$, we have
\begin{equation}\label{SC_con_eqn_equiv2_1}
\int_K\int_K\frac{(u(x)-u(y))^2}{|x-y|^{\alpha+\beta}}\nu(\md x)\nu(\md y)\lesssim\sum_{n=1}^\infty3^{(\beta-\alpha)n}\sum_{w\in W_n}
{\sum_{\mbox{\tiny
$
\begin{subarray}{c}
p,q\in V_w\\
|p-q|=2^{-1}\cdot3^{-n}
\end{subarray}
$
}}}
(u(p)-u(q))^2,
\end{equation}
or equivalently, for all $c\in(0,1)$, we have
\begin{equation}\label{SC_con_eqn_equiv2_2}
\begin{aligned}
&\sum_{n=2}^\infty3^{(\alpha+\beta)n}\int\limits_K\int\limits_{B(x,c3^{-n})}(u(x)-u(y))^2\nu(\md y)\nu(\md x)\\
\lesssim&\sum_{n=1}^\infty3^{(\beta-\alpha)n}\sum_{w\in W_n}
{\sum_{\mbox{\tiny
$
\begin{subarray}{c}
p,q\in V_w\\
|p-q|=2^{-1}\cdot3^{-n}
\end{subarray}
$
}}}
(u(p)-u(q))^2.
\end{aligned}
\end{equation}
\end{mythm}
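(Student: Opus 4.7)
The equivalence of (\ref{SC_con_eqn_equiv2_1}) and (\ref{SC_con_eqn_equiv2_2}) follows from Lemma \ref{SC_con_lem_equiv1} and Corollary \ref{SC_con_cor_arbi}, so the plan is to prove (\ref{SC_con_eqn_equiv2_2}) for a conveniently small fixed $c\in(0,1)$. I would adapt the SG argument of Theorem \ref{SG_app_thm_equiv2_2}: approximate $\nu$ by the uniform probability measure $\nu_m$ on $V_m$ (which converges weakly to $\nu$, with $\#V_m\asymp 8^m$) and control close-range integrals by telescoping sums of short-edge differences through nested sub-cell vertices.

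First I would fix $n\ge 2$ and $m\ge n$ and split the double integral cell by cell. For $x\in K_w$ and $y\in B(x,c3^{-n})$ with $c$ small, $y$ lies in the union $K_w^{*}$ of $n$-cells meeting $K_w$, and the bounded geometry of the SC keeps the number of such neighbors $\tilde w$ uniformly bounded. For each adjacent pair $(w,\tilde w)$ I would pick a bridge vertex $z\in V_w\cap V_{\tilde w}$---which exists because adjacent SC $n$-cells share either a corner or an entire side, either yielding a common point in $V_n$---and split $(u(x)-u(y))^2\le 2(u(x)-u(z))^2+2(u(z)-u(y))^2$.

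Next I would construct a telescoping chain inside $K_w$: for $x\in V_m\cap K_w$, take the descending sequence of sub-cells $K_{w_1\ldots w_i}\ni x$ and choose vertices $p_n=z$, $p_{m+1}=x$, and for $n<i\le m$ let $p_i$ be a vertex of $V_{w_1\ldots w_i}$, passing through the ``fixed'' vertex $f_{w_1\ldots w_i}(p_{w_i})\in V_{w_1\ldots w_{i-1}}\cap V_{w_1\ldots w_i}$ to connect successive levels. Consecutive vertices in the chain then belong to a common $V_{w_1\ldots w_j}$, whose 8 vertices form a cycle under short edges of length $2^{-1}\cdot 3^{-j}$, so any two are joinable by at most 4 such short edges. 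Cauchy--Schwarz on the resulting telescoping gives
\[
(u(x)-u(z))^2\le C\sum_{i=n}^{m}2^{i-n}\sum_{(r,r')}(u(r)-u(r'))^2,
\]
the inner sum being over a uniformly bounded number of short edges of level $i$ inside $K_w$. Summing in $w,z,x$, counting that each short edge at level $i$ in some $V_{w'}$ with $K_{w'}\subset K_w$ appears in at most $O(8^{m-i})$ chains, dividing by $(\#V_m)^2\asymp 8^{2m}$, and letting $m\to\infty$ via weak convergence $\nu_m\to\nu$ and $u\in C(K)$ yields
\[
\int_K\int_{B(x,c3^{-n})}(u(x)-u(y))^2\,\nu(\md y)\,\nu(\md x)\lesssim \sum_{i=n}^{\infty}\frac{2^{i-n}}{8^{n+i}}\sum_{w'\in W_i}\sum_{\substack{p,q\in V_{w'}\\ |p-q|=2^{-1}\cdot 3^{-i}}}(u(p)-u(q))^2.
\]
Multiplying by $3^{(\alpha+\beta)n}$ and summing in $n\ge 2$, the factor $3^{(\alpha+\beta)n}\cdot 2^{i-n}\cdot 8^{-n-i}$ rearranges (using $8=3^{\alpha}$) into $3^{(\beta-\alpha)i}$ times a convergent geometric series in $i-n$ (since $3^{\beta}/2>1$ for $\beta>\alpha$), yielding (\ref{SC_con_eqn_equiv2_2}).

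The main obstacle will be the chain construction. On the SG every two adjacent $n$-cells meet in exactly one vertex of $V_n$ and each $V_w$ has only three vertices forming a triangle of short edges, so the chain and its multiplicity count are essentially canonical. The SC is non-p.c.f.---so adjacent cells can meet along whole line segments---and each $V_w$ has 8 vertices connected only by the 8-cycle of short edges; consequently a single ``descend one level'' step can span up to $\sqrt{2}\cdot 3^{-i}$ and requires up to four short-edge interpolations within a sub-cell. The delicate combinatorial point is to verify that these interpolations contribute only an absolute constant (uniform in $n,m,i$) to the final multiplicity count, and that any valid choice of bridge $z\in V_w\cap V_{\tilde w}$ is compatible with the nested descent toward arbitrary $x\in V_m\cap K_w$.
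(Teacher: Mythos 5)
Your proposal is correct and follows essentially the same route as the paper's proof: discretize via $\nu_m$ on $V_m$, localize to the $O(1)$ neighboring cells $K_w^*$, bridge adjacent cells through a shared vertex, telescope down the nested sub-cells with at most four short edges per level, and count each level-$i$ edge $O(8^{m-i})$ times before summing the geometric series in $i-n$. The only (harmless) deviations are that you bridge through a common vertex in $V_n$ where the paper routes through the anchor points $P_w\in V_{n-1}$ (producing its extra level-$(n-1)$ term), and you carry the sharper weight $2^{i-n}$ where the paper uses $4^{k-n}$; both choices converge since $3^{\beta}>8$.
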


\begin{proof}
Note $V_n=\cup_{w\in W_n}V_w$, it is obvious that its cardinal $\#V_n\asymp8^n=3^{\alpha n}$. Let $\nu_n$ be the measure on $V_n$ which assigns $1/\#V_n$ on each point of $V_n$, then $\nu_n$ converges weakly to $\nu$.

First, for $n\ge2,m>n$, we estimate
$$3^{(\alpha+\beta)n}\int_K\int_{B(x,c3^{-n})}(u(x)-u(y))^2\nu_m(\md y)\nu_m(\md x).$$
Note that
$$
\begin{aligned}
\int\limits_K\int\limits_{B(x,c3^{-n})}(u(x)-u(y))^2\nu_m(\md y)\nu_m(\md x)=\sum\limits_{w\in W_n}\int\limits_{K_w}\int\limits_{B(x,c3^{-n})}(u(x)-u(y))^2\nu_m(\md y)\nu_m(\md x).
\end{aligned}
$$
Fix $w\in W_n$, there exist at most nine $\tilde{w}\in W_n$ such that $K_{\tilde{w}}\cap K_w\ne\emptyset$, see Figure \ref{SC_con_fig_Kw}.

\begin{figure}[ht]
\centering
\begin{tikzpicture}[scale=0.5]

\draw (0,0)--(6,0)--(6,6)--(0,6)--cycle;
\draw (2,0)--(2,6);
\draw (4,0)--(4,6);
\draw (0,2)--(6,2);
\draw (0,4)--(6,4);
\draw[very thick] (2,2)--(4,2)--(4,4)--(2,4)--cycle;

\draw (3,3) node {$K_w$};

\end{tikzpicture}
\caption{A Neighborhood of $K_w$}\label{SC_con_fig_Kw}
\end{figure}
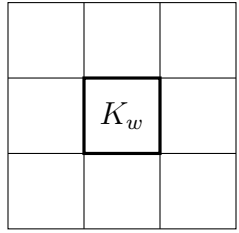

Let
$$K_w^*=
{\bigcup_{\mbox{\tiny
$
\begin{subarray}{c}
\tilde{w}\in W_n\\
K_{\tilde{w}}\cap K_w\ne\emptyset
\end{subarray}
$
}}}
K_{\tilde{w}}.$$
For all $x\in K_w$, $y\in B(x,c3^{-n})$, we have $y\in K_w^*$, hence
$$
\begin{aligned}
&\int_{K_w}\int_{B(x,c3^{-n})}(u(x)-u(y))^2\nu_m(\md y)\nu_m(\md x)\le\int_{K_w}\int_{K_w^*}(u(x)-u(y))^2\nu_m(\md y)\nu_m(\md x)\\
&=
{\sum_{\mbox{\tiny
$
\begin{subarray}{c}
\tilde{w}\in W_n\\
K_{\tilde{w}}\cap K_w\ne\emptyset
\end{subarray}
$
}}}
\int_{K_w}\int_{K_{\tilde{w}}}(u(x)-u(y))^2\nu_m(\md y)\nu_m(\md x).
\end{aligned}
$$
Note $\myset{P_w}=K_w\cap V_{n-1}$ for all $w\in W_n$. Fix $\tilde{w},w\in W_n$ with $K_{\tilde{w}}\cap K_w\ne\emptyset$. If $P_{\tilde{w}}\ne P_w$, then $|P_{\tilde{w}}-P_w|=2^{-1}\cdot3^{-(n-1)}$ or there exists a unique $z\in V_{n-1}$ such that

\begin{equation}\label{SC_con_eqn_med}
\lvert P_{\tilde{w}}-z\rvert=\lvert P_w-z\rvert=2^{-1}\cdot3^{-(n-1)}.
\end{equation}
Let $z_1=P_{\tilde{w}}$, $z_3=P_w$ and
$$z_2=
\begin{cases}
P_{\tilde{w}}=P_w,&\text{if }P_{\tilde{w}}=P_w,\\
P_{\tilde{w}},&\text{if }|P_{\tilde{w}}-P_w|=2^{-1}\cdot3^{-(n-1)},\\
z,&\text{if }P_{\tilde{w}}\ne P_w\text{ and }z \text{ is given by Equation (\ref{SC_con_eqn_med})}.
\end{cases}
$$
Then for all $x\in K_w$, $y\in K_{\tilde{w}}$, we have
$$
\begin{aligned}
&(u(x)-u(y))^2\\
\le&4\left[(u(y)-u(z_1))^2+(u(z_1)-u(z_2))^2+(u(z_2)-u(z_3))^2+(u(z_3)-u(x))^2\right].
\end{aligned}
$$
For $i=1,2$, we have
$$
\begin{aligned}
&\int_{K_w}\int_{K_{\tilde{w}}}(u(z_i)-u(z_{i+1}))^2\nu_m(\md y)\nu_m(\md x)=(u(z_i)-u(z_{i+1}))^2\left(\frac{\#(K_w\cap V_m)}{\#V_m}\right)^2\\
\asymp&(u(z_i)-u(z_{i+1}))^2\left(\frac{8^{m-n}}{8^m}\right)^2=3^{-2\alpha n}(u(z_i)-u(z_{i+1}))^2.
\end{aligned}
$$
Hence
$$
\begin{aligned}
&\sum_{w\in W_n}
{\sum_{\mbox{\tiny
$
\begin{subarray}{c}
\tilde{w}\in W_n\\
K_{\tilde{w}}\cap K_w\ne\emptyset
\end{subarray}
$
}}}
\int_{K_w}\int_{K_{\tilde{w}}}(u(x)-u(y))^2\nu_m(\md y)\nu_m(\md x)\\
\lesssim&3^{-\alpha n}\sum\limits_{w\in W_n}\int\limits_{K_w}(u(x)-u(P_w))^2\nu_m(\md x)+3^{-2\alpha n}\sum_{w\in W_{n-1}}
{\sum_{\mbox{\tiny
$
\begin{subarray}{c}
p,q\in V_w\\
|p-q|=2^{-1}\cdot3^{-(n-1)}
\end{subarray}
$
}}}
(u(p)-u(q))^2\\
\asymp&3^{-\alpha(m+n)}\sum\limits_{w\in W_n}\sum\limits_{x\in K_w\cap V_m}(u(x)-u(P_w))^2\\
&+3^{-2\alpha n}\sum_{w\in W_{n-1}}
{\sum_{\mbox{\tiny
$
\begin{subarray}{c}
p,q\in V_w\\
|p-q|=2^{-1}\cdot3^{-(n-1)}
\end{subarray}
$
}}}
(u(p)-u(q))^2.
\end{aligned}
$$
Let us estimate $(u(x)-u(P_w))^2$ for $x\in K_w\cap V_m$. We construct a finite sequence
$$p_1,\ldots,p_{4(m-n+1)},p_{4(m-n+1)+1}$$
such that
\begin{itemize}
\item $p_1=P_w$ and $p_{4(m-n+1)+1}=x$.
\item For all $k=0,\ldots,m-n$, we have
$$p_{4k+1},p_{4k+2},p_{4k+3},p_{4k+4},p_{4(k+1)+1}\in V_{n+k}.$$
\item For all $i=1,2,3,4$, we have
$$\lvert p_{4k+i}-p_{4k+i+1}\rvert=0\text{ or }2^{-1}\cdot3^{-(n+k)}.$$
\end{itemize}
Then
$$
\begin{aligned}
\left(u(x)-u(P_w)\right)^2\lesssim\sum_{k=0}^{m-n}4^{k}&\left[(u(p_{4k+1})-u(p_{4k+2}))^2+(u(p_{4k+2})-u(p_{4k+3}))^2\right.\\
&\left.+(u(p_{4k+3})-u(p_{4k+4}))^2+(u(p_{4k+4})-u(p_{4(k+1)+1}))^2\right].
\end{aligned}
$$
For all $k=n,\ldots,m$, for all $p,q\in V_k\cap K_w$ with $|p-q|=2^{-1}\cdot 3^{-k}$, the term $(u(p)-u(q))^2$ occurs in the sum with times of the order $8^{m-k}=3^{\alpha(m-k)}$, hence
$$
\begin{aligned}
&3^{-\alpha(m+n)}\sum\limits_{w\in W_n}\sum\limits_{x\in K_w\cap V_m}(u(x)-u(P_w))^2\\
\lesssim&3^{-\alpha(m+n)}\sum_{k=n}^{m}4^{k-n}\cdot3^{\alpha(m-k)}\sum_{w\in W_k}
{\sum_{\mbox{\tiny
$
\begin{subarray}{c}
p,q\in V_w\\
|p-q|=2^{-1}\cdot3^{-k}
\end{subarray}
$
}}}
(u(p)-u(q))^2\\
=&\sum_{k=n}^{m}4^{k-n}\cdot3^{-\alpha(n+k)}\sum_{w\in W_k}
{\sum_{\mbox{\tiny
$
\begin{subarray}{c}
p,q\in V_w\\
|p-q|=2^{-1}\cdot3^{-k}
\end{subarray}
$
}}}
(u(p)-u(q))^2.
\end{aligned}
$$
Hence
$$
\begin{aligned}
&\int_K\int_{B(x,c3^{-n})}(u(x)-u(y))^2\nu_m(\md y)\nu_m(\md x)\\
\lesssim&\sum_{k=n}^{m}4^{k-n}\cdot3^{-\alpha(n+k)}\sum_{w\in W_k}
{\sum_{\mbox{\tiny
$
\begin{subarray}{c}
p,q\in V_w\\
|p-q|=2^{-1}\cdot3^{-k}
\end{subarray}
$
}}}
(u(p)-u(q))^2\\
&+3^{-2\alpha n}\sum_{w\in W_{n-1}}
{\sum_{\mbox{\tiny
$
\begin{subarray}{c}
p,q\in V_w\\
|p-q|=2^{-1}\cdot3^{-(n-1)}
\end{subarray}
$
}}}
(u(p)-u(q))^2.
\end{aligned}
$$
Letting $m\to+\infty$, we have
\begin{equation}\label{SC_con_eqn_equiv2_3}
\begin{aligned}
&\int_K\int_{B(x,c3^{-n})}(u(x)-u(y))^2\nu(\md y)\nu(\md x)\\
\lesssim&\sum_{k=n}^\infty4^{k-n}\cdot3^{-\alpha(n+k)}\sum_{w\in W_k}
{\sum_{\mbox{\tiny
$
\begin{subarray}{c}
p,q\in V_w\\
|p-q|=2^{-1}\cdot3^{-k}
\end{subarray}
$
}}}
(u(p)-u(q))^2\\
&+3^{-2\alpha n}\sum_{w\in W_{n-1}}
{\sum_{\mbox{\tiny
$
\begin{subarray}{c}
p,q\in V_w\\
|p-q|=2^{-1}\cdot3^{-(n-1)}
\end{subarray}
$
}}}
(u(p)-u(q))^2.
\end{aligned}
\end{equation}
Hence
$$
\begin{aligned}
&\sum_{n=2}^\infty3^{(\alpha+\beta)n}\int_K\int_{B(x,c3^{-n})}(u(x)-u(y))^2\nu(\md y)\nu(\md x)\\
\lesssim&\sum_{n=2}^\infty\sum_{k=n}^\infty4^{k-n}\cdot3^{\beta n-\alpha k}\sum_{w\in W_k}
{\sum_{\mbox{\tiny
$
\begin{subarray}{c}
p,q\in V_w\\
|p-q|=2^{-1}\cdot3^{-k}
\end{subarray}
$
}}}
(u(p)-u(q))^2\\
&+\sum_{n=2}^\infty3^{(\beta-\alpha)n}\sum_{w\in W_{n-1}}
{\sum_{\mbox{\tiny
$
\begin{subarray}{c}
p,q\in V_w\\
|p-q|=2^{-1}\cdot3^{-(n-1)}
\end{subarray}
$
}}}
(u(p)-u(q))^2\\
\lesssim&\sum_{k=2}^\infty\sum_{n=2}^k4^{k-n}\cdot3^{\beta n-\alpha k}\sum_{w\in W_k}
{\sum_{\mbox{\tiny
$
\begin{subarray}{c}
p,q\in V_w\\
|p-q|=2^{-1}\cdot3^{-k}
\end{subarray}
$
}}}
(u(p)-u(q))^2\\
&+\sum_{n=1}^\infty3^{(\beta-\alpha)n}\sum_{w\in W_{n}}
{\sum_{\mbox{\tiny
$
\begin{subarray}{c}
p,q\in V_w\\
|p-q|=2^{-1}\cdot3^{-n}
\end{subarray}
$
}}}
(u(p)-u(q))^2\\
\lesssim&\sum_{n=1}^\infty3^{(\beta-\alpha)n}\sum_{w\in W_{n}}
{\sum_{\mbox{\tiny
$
\begin{subarray}{c}
p,q\in V_w\\
|p-q|=2^{-1}\cdot3^{-n}
\end{subarray}
$
}}}
(u(p)-u(q))^2.
\end{aligned}
$$
\end{proof}

\section{Proof of Theorem \ref{SC_con_thm_bound}}\label{SC_con_sec_bound}

Firstly, we consider lower bound. We need some preparation.

\begin{myprop}\label{SC_con_prop_lower}
Assume that $\beta\in(\alpha,+\infty)$. Let $f:[0,1]\to\R$ be a strictly increasing continuous function. Assume that the function $U(x,y)=f(x)$, $(x,y)\in K$ satisfies $\calE_{\beta}(U,U)<+\infty$. Then $(\calE_\beta,\calF_\beta)$ is a regular Dirichlet form on $L^2(K;\nu)$.
\end{myprop}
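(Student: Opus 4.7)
The plan is to verify, in order, closedness, Markovianity, and regularity of $(\calE_\beta, \calF_\beta)$, with the bulk of the work concentrated on the density properties needed for regularity. Closedness is the standard Fatou-type argument already used in the proofs of Theorem \ref{SG_con_thm_nonlocal} and Theorem \ref{SG_det_thm_E_K}: if $\{u_n\}\subseteq \calF_\beta$ is Cauchy in the $\calE_{\beta,1}$-norm with $L^2$-limit $u$, then passing to a pointwise a.e.\ convergent subsequence and applying Fatou's lemma to the double integral shows $u\in\calF_\beta$ and $\calE_\beta(u_n-u,u_n-u)\to 0$. Markovianity is immediate from the fact that unit contraction $v=(u\vee 0)\wedge 1$ satisfies $|v(x)-v(y)|\le|u(x)-u(y)|$ pointwise.

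For regularity, the key observation is that $\beta>\alpha$ together with Lemma \ref{lem_SC_holder} gives $\calF_\beta\subseteq C^{(\beta-\alpha)/2}(K)\subseteq C(K)$, so $\calF_\beta\cap C_c(K)=\calF_\beta$ (since $K$ is compact). Thus both regularity conditions reduce to showing that $\calF_\beta$ is uniformly dense in $C(K)$; density in $L^2(K;\nu)$ then follows automatically. I will establish this via the Stone--Weierstrass theorem. Three ingredients are needed: (i) $\calF_\beta$ is a vector space containing the constants, which is clear since $\calE_\beta(1,1)=0$; (ii) $\calF_\beta$ is closed under products, which follows from the pointwise estimate
\[
(uv(x)-uv(y))^2\le 2\|v\|_\infty^2 (u(x)-u(y))^2+2\|u\|_\infty^2 (v(x)-v(y))^2,
\]
together with the fact that $\calF_\beta\subseteq C(K)$ consists of bounded functions since $K$ is compact; (iii) $\calF_\beta$ separates points of $K$.

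The separation property is where the hypothesis on $U$ enters decisively, and this is the main step of the proof. The function $U(x,y)=f(x)$ alone distinguishes points with different first coordinates (since $f$ is strictly increasing), but it does not separate points that share the same first coordinate. To remedy this, I will exploit the reflection symmetry of the Sierpi\'nski carpet across the diagonal $\{x=y\}$: this reflection is an isometry of $(K,|\cdot|)$ preserving $\nu$, so pulling $U$ back by it produces $V(x,y):=f(y)\in\calF_\beta$ with $\calE_\beta(V,V)=\calE_\beta(U,U)<+\infty$. For any two distinct points $(x_1,y_1),(x_2,y_2)\in K$, either $x_1\ne x_2$ and $U$ separates them, or $y_1\ne y_2$ and $V$ does. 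Hence $\calF_\beta$ separates points, Stone--Weierstrass applies, and $\calF_\beta$ is uniformly dense in $C(K)$, which completes the proof.

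I do not expect substantial obstacles: the only non-routine point is the symmetry argument for producing $V$, and this is clean because the SC is symmetric under coordinate swap and Hausdorff measure is invariant under isometries. The remaining items are verbatim variants of arguments already carried out on the SG in Theorem \ref{SG_det_thm_E_K} and Theorem \ref{SG_con_thm_nonlocal}.
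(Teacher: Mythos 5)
Your proposal is correct and follows essentially the same route as the paper: reduce everything to uniform density of $\calF_\beta$ in $C(K)$ via Stone--Weierstrass, with the separation of points supplied by $U(x,y)=f(x)$ together with $V(x,y)=f(y)$ obtained from the diagonal symmetry of the SC. The only difference is that you spell out details (the product estimate for the subalgebra property, the isometry argument for $V\in\calF_\beta$) that the paper treats as obvious.
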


\begin{myrmk}
The above proposition means that only \emph{one} good enough function contained in the domain can ensure that the domain is large enough.
\end{myrmk}

\begin{proof}
We only need to show that $\calF_\beta$ is uniformly dense in $C(K)$. Then $\calF_\beta$ is dense in $L^2(K;\nu)$. Using Fatou's lemma, we have $\calF_\beta$ is complete under $(\calE_\beta)_1$-norm. It is obvious that $\calE_\beta$ has Markovian property. Hence $(\calE_\beta,\calF_\beta)$ is a Dirichlet form on $L^2(K;\nu)$. Moreover, $\calF_\beta\cap C(K)=\calF_\beta$ is trivially $(\calE_\beta)_1$-dense in $\calF_\beta$ and uniformly dense in $C(K)$. Hence $(\calE_\beta,\calF_\beta)$ on $L^2(K;\nu)$ is regular.

Indeed, by assumption, $U\in\calF_\beta$, $\calF_\beta\ne\emptyset$. It is obvious that $\calF_\beta$ is a sub-algebra of $C(K)$, that is, for all $u,v\in\calF_\beta$, $c\in\R$, we have $u+v,cu,uv\in\calF_\beta$. We show that $\calF_\beta$ separates points. For all distinct $(x^{(1)},y^{(1)}),(x^{(2)},y^{(2)})\in K$, we have $x^{(1)}\ne x^{(2)}$ or $y^{(1)}\ne y^{(2)}$.

If $x^{(1)}\ne x^{(2)}$, then since $f$ is strictly increasing, we have
$$U(x^{(1)},y^{(1)})=f(x^{(1)})\ne f(x^{(2)})=U(x^{(2)},y^{(2)}).$$
If $y^{(1)}\ne y^{(2)}$, then let $V(x,y)=f(y)$, $(x,y)\in K$, we have $V\in\calF_\beta$ and
$$V(x^{(1)},y^{(1)})=f(y^{(1)})\ne f(y^{(2)})=V(x^{(2)},y^{(2)}).$$
By Stone-Weierstrass theorem, we have $\calF_\beta$ is uniformly dense in $C(K)$.
\end{proof}

Now, we give lower bound.

\begin{proof}[Proof of Lower Bound]
The point is to construct an explicit function. We define $f:[0,1]\to\R$ as follows. Let $f(0)=0$ and $f(1)=1$. First, we determine the values of $f$ at $1/3$ and $2/3$. We consider the minimum of the following function
$$\vphi(x,y)=3x^2+2(x-y)^2+3(1-y)^2,x,y\in\R.$$
By elementary calculation, $\vphi$ attains minimum $6/7$ at $(x,y)=(2/7,5/7)$. Assume that we have defined $f$ on $i/3^n$, $i=0,1,\ldots,3^n$. Then, for $n+1$, for all $i=0,1,\ldots,3^{n}-1$, we define
$$
f(\frac{3i+1}{3^{n+1}})=\frac{5}{7}f(\frac{i}{3^n})+\frac{2}{7}f(\frac{i+1}{3^n}),f(\frac{3i+2}{3^{n+1}})=\frac{2}{7}f(\frac{i}{3^n})+\frac{5}{7}f(\frac{i+1}{3^n}).
$$
By induction principle, we have the definition of $f$ on all triadic points. It is obvious that $f$ is uniformly continuous on the set of all triadic points. We extend $f$ to be continuous on $[0,1]$. It is obvious that $f$ is increasing. For all $x,y\in[0,1]$ with $x<y$, there exist triadic points $i/3^n,(i+1)/3^n\in(x,y)$, then $f(x)\le f(i/3^n)<f((i+1)/3^n)\le f(y)$, hence $f$ is strictly increasing.

Let $U(x,y)=f(x)$, $(x,y)\in K$. By induction, we have
$$\sum_{w\in W_{n+1}}
{\sum_{\mbox{\tiny
$
\begin{subarray}{c}
p,q\in V_w\\
|p-q|=2^{-1}\cdot3^{-(n+1)}
\end{subarray}
$
}}}
(U(p)-U(q))^2=\frac{6}{7}\sum_{w\in W_{n}}
{\sum_{\mbox{\tiny
$
\begin{subarray}{c}
p,q\in V_w\\
|p-q|=2^{-1}\cdot3^{-n}
\end{subarray}
$
}}}
(U(p)-U(q))^2\text{ for all }n\ge1.$$
Hence
\begin{equation}\label{SC_con_eqn_lower}
\sum_{w\in W_{n}}
{\sum_{\mbox{\tiny
$
\begin{subarray}{c}
p,q\in V_w\\
|p-q|=2^{-1}\cdot3^{-n}
\end{subarray}
$
}}}
(U(p)-U(q))^2=\left(\frac{6}{7}\right)^n\text{ for all }n\ge1.
\end{equation}
For all $\beta\in(\log8/\log3,\log(8\cdot7/6)/\log3)$, we have $3^{\beta-\alpha}<7/6$. By Equation (\ref{SC_con_eqn_lower}), we have
$$\sum_{n=1}^\infty3^{(\beta-\alpha)n}\sum_{w\in W_n}
{\sum_{\mbox{\tiny
$
\begin{subarray}{c}
p,q\in V_w\\
|p-q|=2^{-1}\cdot3^{-n}
\end{subarray}
$
}}}
(U(p)-U(q))^2<+\infty.$$
By Lemma \ref{SC_con_lem_equiv}, $\calE_\beta(U,U)<+\infty$. By Proposition \ref{SC_con_prop_lower}, $(\calE_\beta,\calF_\beta)$ is a regular Dirichlet form on $L^2(K;\nu)$ for all $\beta\in(\log8/\log3,\log(8\cdot7/6)/\log3)$. Hence
$$\beta_*\ge\frac{\log(8\cdot\frac{7}{6})}{\log3}.$$
\end{proof}

\begin{myrmk}
\normalfont
The construction of the above function is similar to that given in the proof of \cite[Theorem 2.6]{Bar13}. Indeed, the above function is constructed in a self-similar way. Let $f_n:[0,1]\to\R$ be given by $f_0(x)=x$, $x\in[0,1]$ and for all $n\ge0$
$$
f_{n+1}(x)=
\begin{cases}
\frac{2}{7}f_n(3x),&\text{if }0\le x\le\frac{1}{3},\\
\frac{3}{7}f_n(3x-1)+\frac{2}{7},&\text{if }\frac{1}{3}<x\le\frac{2}{3},\\
\frac{2}{7}f_n(3x-2)+\frac{5}{7},&\text{if }\frac{2}{3}<x\le1.
\end{cases}
$$
See Figure \ref{SC_con_fig_f012} for the figures of $f_0,f_1,f_2$.

\begin{figure}[ht]
\centering
\subfigure[$f_0$]{
\begin{tikzpicture}[scale=2.5]
\draw[->] (0,0)--(1.5,0);
\draw[->] (0,0)--(0,1.5);

\draw[dashed] (1,0)--(1,1)--(0,1);

\draw[thick] (0,0)--(1,1);

\draw (-0.2,-0.2) node {\footnotesize{$O$}};
\draw (1,-0.2) node {\footnotesize{$1$}};
\draw (-0.2,1) node {\footnotesize{$1$}};
\end{tikzpicture}
}
\hspace{0.00in}
\subfigure[$f_1$]{
\begin{tikzpicture}[scale=2.5]
\draw[->] (0,0)--(1.5,0);
\draw[->] (0,0)--(0,1.5);

\draw[dashed] (1,0)--(1,1)--(0,1);

\draw[dashed] (1/3,0)--(1/3,2/7)--(0,2/7);
\draw[dashed] (2/3,0)--(2/3,5/7)--(0,5/7);

\draw[thick] (0,0)--(1/3,2/7)--(2/3,5/7)--(1,1);

\draw (-0.2,-0.2) node {\footnotesize{$O$}};
\draw (1,-0.2) node {\footnotesize{$1$}};
\draw (-0.2,1) node {\footnotesize{$1$}};

\draw (-0.2,2/7) node {\scriptsize{${2}/{7}$}};
\draw (-0.2,5/7) node {\scriptsize{${5}/{7}$}};
\draw (1/3,-0.2) node {\scriptsize{$\frac{1}{3}$}};
\draw (2/3,-0.2) node {\scriptsize{$\frac{2}{3}$}};
\end{tikzpicture}
}
\hspace{0.0in}
\subfigure[$f_2$]{
\begin{tikzpicture}[scale=2.5]
\draw[->] (0,0)--(1.5,0);
\draw[->] (0,0)--(0,1.5);

\draw[dashed] (1,0)--(1,1)--(0,1);

\draw[dashed] (1/3,0)--(1/3,2/7)--(0,2/7);
\draw[dashed] (2/3,0)--(2/3,5/7)--(0,5/7);

\draw[dashed] (1/9,0)--(1/9,4/49)--(0,4/49);
\draw[dashed] (2/9,0)--(2/9,10/49)--(0,10/49);

\draw[dashed] (4/9,0)--(4/9,20/49)--(0,20/49);
\draw[dashed] (5/9,0)--(5/9,29/49)--(0,29/49);

\draw[dashed] (7/9,0)--(7/9,39/49)--(0,39/49);
\draw[dashed] (8/9,0)--(8/9,45/49)--(0,45/49);

\draw[thick] (0,0)--(1/9,4/49)--(2/9,10/49)--(1/3,2/7)--(4/9,2/7+3/7*2/7)--(5/9,2/7+3/7*5/7)--(2/3,5/7)--(7/9,39/49)--(8/9,45/49)--(1,1);

\draw (-0.2,-0.2) node {\footnotesize{$O$}};
\draw (1,-0.2) node {\tiny{$1$}};
\draw (-0.2,1) node {\tiny{$1$}};

\draw (-0.2,2/7) node {\tiny{${2}/{7}$}};
\draw (-0.2,5/7) node {\tiny{${5}/{7}$}};
\draw (1/3,-0.2) node {\tiny{$\frac{1}{3}$}};
\draw (2/3,-0.2) node {\tiny{$\frac{2}{3}$}};

\draw (1/9,-0.2) node {\tiny{$\frac{1}{9}$}};
\draw (2/9,-0.2) node {\tiny{$\frac{2}{9}$}};

\draw (4/9,-0.2) node {\tiny{$\frac{4}{9}$}};
\draw (5/9,-0.2) node {\tiny{$\frac{5}{9}$}};

\draw (7/9,-0.2) node {\tiny{$\frac{7}{9}$}};
\draw (8/9,-0.2) node {\tiny{$\frac{8}{9}$}};

\draw (-0.2,4/49) node {\tiny{${4}/{49}$}};
\draw (-0.2,10/49) node {\tiny{${10}/{49}$}};

\draw (-0.2,20/49) node {\tiny{${20}/{49}$}};
\draw (-0.2,29/49) node {\tiny{${29}/{49}$}};

\draw (-0.2,39/49) node {\tiny{${39}/{49}$}};
\draw (-0.2,45/49) node {\tiny{${45}/{49}$}};
\end{tikzpicture}
}
\caption{The Figures of $f_0,f_1,f_2$}\label{SC_con_fig_f012}
\end{figure}
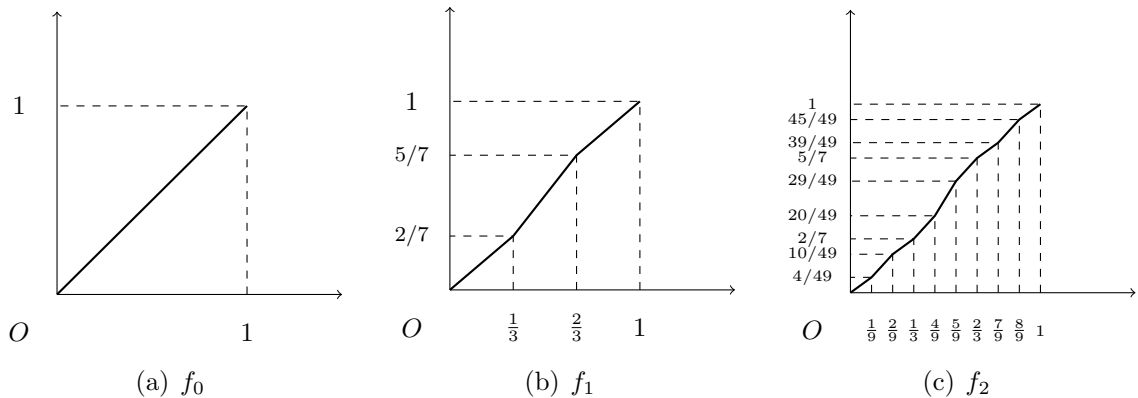

It is obvious that
$$f_n(\frac{i}{3^n})=f(\frac{i}{3^n})\text{ for all }i=0,\ldots,3^n,n\ge0,$$
and
$$\max_{x\in[0,1]}\lvert f_{n+1}(x)-f_n(x)\rvert\le\frac{3}{7}\max_{x\in[0,1]}\lvert f_{n}(x)-f_{n-1}(x)\rvert\text{ for all }n\ge1,$$
hence $f_n$ converges uniformly to $f$ on $[0,1]$. Let $g_1,g_2,g_3:\R^2\to\R^2$ be given by
$$g_1(x,y)=\left(\frac{1}{3}x,\frac{2}{7}y\right),g_2(x,y)=\left(\frac{1}{3}x+\frac{1}{3},\frac{3}{7}y+\frac{2}{7}\right),g_3(x,y)=\left(\frac{1}{3}x+\frac{2}{3},\frac{2}{7}y+\frac{5}{7}\right).$$
Then $\myset{(x,f(x)):x\in[0,1]}$ is the unique non-empty compact set $G$ in $\R^2$ satisfying 
$$G=g_1(G)\cup g_2(G)\cup g_3(G).$$
\end{myrmk}

Secondly, we consider upper bound. We shrink the SC to another fractal. Denote $\mathcal{C}$ as the Cantor ternary set in $[0,1]$. Then $[0,1]\times\mathcal{C}$ is the unique non-empty compact set $\tilde{K}$ in $\R^2$ satisfying
$$\tilde{K}=\cup_{i=0,1,2,4,5,6}f_i(\tilde{K}).$$
Let
$$\tilde{V}_0=\myset{p_0,p_1,p_2,p_4,p_5,p_6},\tilde{V}_{n+1}=\cup_{i=0,1,2,4,5,6}f_i(\tilde{V}_n)\text{ for all }n\ge0.$$
Then $\myset{\tilde{V}_n}$ is an increasing sequence of finite sets and $[0,1]\times\mathcal{C}$ is the closure of $\cup_{n=0}^\infty\tilde{V}_n$. Let $\tilde{W}_0=\myset{\emptyset}$ and
$$\tilde{W}_n=\myset{w=w_1\ldots w_n:w_i=0,1,2,4,5,6,i=1,\ldots,n}\text{ for all }n\ge1.$$
For all $w=w_1\ldots w_n\in\tilde{W}_n$, let
$$\tilde{V}_w=f_{w_1}\circ\ldots\circ f_{w_n}(\tilde{V}_0).$$

\begin{proof}[Proof of Upper Bound]
Assume that $(\calE_\beta,\calF_\beta)$ is a regular Dirichlet form on $L^2(K;\nu)$, then there exists $u\in\calF_\beta$ such that $u|_{\myset{0}\times[0,1]}=0$ and $u|_{\myset{1}\times[0,1]}=1$. By Lemma \ref{SC_con_lem_equiv}, we have
\begin{equation}\label{SC_con_eqn_upper}
\begin{aligned}
+\infty&>\sum_{n=1}^\infty3^{(\beta-\alpha)n}\sum_{w\in W_n}
{\sum_{\mbox{\tiny
$
\begin{subarray}{c}
p,q\in V_w\\
|p-q|=2^{-1}\cdot3^{-n}
\end{subarray}
$
}}}
(u(p)-u(q))^2\\
&\ge\sum_{n=1}^\infty3^{(\beta-\alpha)n}\sum_{w\in\tilde{W}_n}
{\sum_{\mbox{\tiny
$
\begin{subarray}{c}
p,q\in\tilde{V}_w\\
|p-q|=2^{-1}\cdot3^{-n}
\end{subarray}
$
}}}
(u(p)-u(q))^2\\
&=\sum_{n=1}^\infty3^{(\beta-\alpha)n}\sum_{w\in\tilde{W}_n}
{\sum_{\mbox{\tiny
$
\begin{subarray}{c}
p,q\in\tilde{V}_w\\
|p-q|=2^{-1}\cdot3^{-n}
\end{subarray}
$
}}}
((u|_{[0,1]\times\mathcal{C}})(p)-(u|_{[0,1]\times\mathcal{C}})(q))^2\\
&\ge\sum_{n=1}^\infty3^{(\beta-\alpha)n}\sum_{w\in\tilde{W}_n}
{\sum_{\mbox{\tiny
$
\begin{subarray}{c}
p,q\in\tilde{V}_w\\
|p-q|=2^{-1}\cdot3^{-n}
\end{subarray}
$
}}}
(\tilde{u}(p)-\tilde{u}(q))^2,
\end{aligned}
\end{equation}
where $\tilde{u}$ is the function on $[0,1]\times\mathcal{C}$ that is the minimizer of 
$$\sum_{n=1}^\infty3^{(\beta-\alpha)n}\sum_{w\in\tilde{W}_n}
{\sum_{\mbox{\tiny
$
\begin{subarray}{c}
p,q\in\tilde{V}_w\\
|p-q|=2^{-1}\cdot3^{-n}
\end{subarray}
$
}}}
(\tilde{u}(p)-\tilde{u}(q))^2:\tilde{u}|_{\myset{0}\times\mathcal{C}}=0,\tilde{u}|_{\myset{1}\times\mathcal{C}}=1,\tilde{u}\in C([0,1]\times\mathcal{C}).$$
By symmetry of $[0,1]\times\mathcal{C}$, $\tilde{u}(x,y)=x,(x,y)\in [0,1]\times\mathcal{C}$. By induction, we have
$$\sum_{w\in\tilde{W}_{n+1}}
{\sum_{\mbox{\tiny
$
\begin{subarray}{c}
p,q\in\tilde{V}_w\\
|p-q|=2^{-1}\cdot3^{-(n+1)}
\end{subarray}
$
}}}
(\tilde{u}(p)-\tilde{u}(q))^2=\frac{2}{3}\sum_{w\in\tilde{W}_n}
{\sum_{\mbox{\tiny
$
\begin{subarray}{c}
p,q\in\tilde{V}_w\\
|p-q|=2^{-1}\cdot3^{-n}
\end{subarray}
$
}}}
(\tilde{u}(p)-\tilde{u}(q))^2\text{ for all }n\ge1,$$
hence
$$\sum_{w\in\tilde{W}_{n}}
{\sum_{\mbox{\tiny
$
\begin{subarray}{c}
p,q\in\tilde{V}_w\\
|p-q|=2^{-1}\cdot3^{-n}
\end{subarray}
$
}}}
(\tilde{u}(p)-\tilde{u}(q))^2=\left(\frac{2}{3}\right)^n\text{ for all }n\ge1.$$
By Equation (\ref{SC_con_eqn_upper}), we have
$$\sum_{n=1}^\infty3^{(\beta-\alpha)n}\left(\frac{2}{3}\right)^n<+\infty,$$
hence, $\beta<\log(8\cdot3/2)/\log3$. Hence
$$\beta_*\le\frac{\log(8\cdot\frac{3}{2})}{\log3}.$$
\end{proof}

\section{Resistance Estimates}\label{SC_con_sec_resistance}

In this section, we give resistance estimates using electrical network techniques.

We consider two sequences of finite graphs related to $V_n$ and $W_n$, respectively.

For all $n\ge1$. Let $\calV_n$ be the graph with vertex set $V_n$ and edge set given by
$$\myset{(p,q):p,q\in V_n,|p-q|=2^{-1}\cdot3^{-n}}.$$
For example, we have the figure of $\calV_2$ in Figure \ref{SC_con_fig_V2}.

\begin{figure}[ht]
\centering
\begin{tikzpicture}

\foreach \x in {0,1,...,9}
\draw (\x,0)--(\x,9);

\foreach \y in {0,1,...,9}
\draw (0,\y)--(9,\y);

\draw[fill=white] (3,3)--(6,3)--(6,6)--(3,6)--cycle;

\foreach \x in {0,1,...,9}
\foreach \y in {0,0.5,1,...,9}
\draw[fill=black] (\x,\y) circle (0.06);

\foreach \y in {0,1,...,9}
\foreach \x in {0,0.5,1,...,9}
\draw[fill=black] (\x,\y) circle (0.06);

\draw[fill=white,draw=white] (3.25,3.25)--(5.75,3.25)--(5.75,5.75)--(3.25,5.75)--cycle;

\end{tikzpicture}
\caption{$\mathcal{V}_2$}\label{SC_con_fig_V2}
\end{figure}
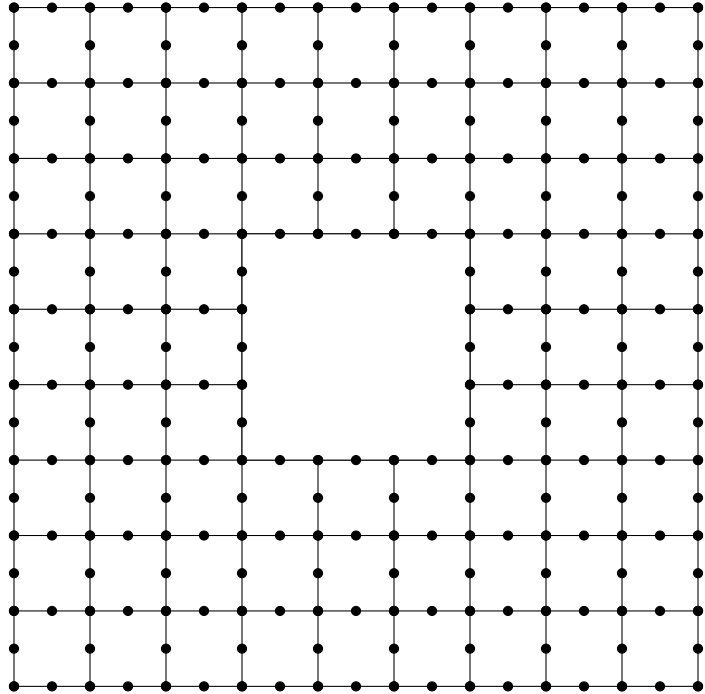

Let $\calW_n$ be the graph with vertex set $W_n$ and edge set given by
$$\myset{(w^{(1)},w^{(2)}):w^{(1)},w^{(2)}\in W_n,\mathrm{dim}_{\mathcal{H}}\left(K_{w^{(1)}}\cap K_{w^{(2)}}\right)=1}.$$
For example, we have the figure of $\calW_2$ in Figure \ref{SC_con_fig_W2}.




\begin{figure}[ht]
\centering
\begin{tikzpicture}
\draw (0,0)--(8,0)--(8,8)--(0,8)--cycle;
\draw (2,0)--(2,8);
\draw (6,0)--(6,8);
\draw (0,2)--(8,2);
\draw (0,6)--(8,6);
\draw (3,0)--(3,2);
\draw (5,0)--(5,2);
\draw (0,3)--(2,3);
\draw (0,5)--(2,5);
\draw (6,3)--(8,3);
\draw (6,5)--(8,5);
\draw (3,6)--(3,8);
\draw (5,6)--(5,8);
\draw (2,1)--(3,1);
\draw (5,1)--(6,1);
\draw (1,2)--(1,3);
\draw (7,2)--(7,3);
\draw (1,5)--(1,6);
\draw (7,5)--(7,6);
\draw (2,7)--(3,7);
\draw (5,7)--(6,7);

\draw[fill=black] (0,0) circle (0.06);
\draw[fill=black] (1,0) circle (0.06);
\draw[fill=black] (2,0) circle (0.06);
\draw[fill=black] (3,0) circle (0.06);
\draw[fill=black] (4,0) circle (0.06);
\draw[fill=black] (5,0) circle (0.06);
\draw[fill=black] (6,0) circle (0.06);
\draw[fill=black] (7,0) circle (0.06);
\draw[fill=black] (8,0) circle (0.06);

\draw[fill=black] (0,1) circle (0.06);
\draw[fill=black] (2,1) circle (0.06);
\draw[fill=black] (3,1) circle (0.06);
\draw[fill=black] (5,1) circle (0.06);
\draw[fill=black] (6,1) circle (0.06);
\draw[fill=black] (8,1) circle (0.06);

\draw[fill=black] (0,2) circle (0.06);
\draw[fill=black] (1,2) circle (0.06);
\draw[fill=black] (2,2) circle (0.06);
\draw[fill=black] (3,2) circle (0.06);
\draw[fill=black] (4,2) circle (0.06);
\draw[fill=black] (5,2) circle (0.06);
\draw[fill=black] (6,2) circle (0.06);
\draw[fill=black] (7,2) circle (0.06);
\draw[fill=black] (8,2) circle (0.06);

\draw[fill=black] (0,3) circle (0.06);
\draw[fill=black] (1,3) circle (0.06);
\draw[fill=black] (2,3) circle (0.06);
\draw[fill=black] (6,3) circle (0.06);
\draw[fill=black] (7,3) circle (0.06);
\draw[fill=black] (8,3) circle (0.06);

\draw[fill=black] (0,4) circle (0.06);
\draw[fill=black] (2,4) circle (0.06);
\draw[fill=black] (6,4) circle (0.06);
\draw[fill=black] (8,4) circle (0.06);

\draw[fill=black] (0,5) circle (0.06);
\draw[fill=black] (1,5) circle (0.06);
\draw[fill=black] (2,5) circle (0.06);
\draw[fill=black] (6,5) circle (0.06);
\draw[fill=black] (7,5) circle (0.06);
\draw[fill=black] (8,5) circle (0.06);

\draw[fill=black] (0,6) circle (0.06);
\draw[fill=black] (1,6) circle (0.06);
\draw[fill=black] (2,6) circle (0.06);
\draw[fill=black] (3,6) circle (0.06);
\draw[fill=black] (4,6) circle (0.06);
\draw[fill=black] (5,6) circle (0.06);
\draw[fill=black] (6,6) circle (0.06);
\draw[fill=black] (7,6) circle (0.06);
\draw[fill=black] (8,6) circle (0.06);

\draw[fill=black] (0,7) circle (0.06);
\draw[fill=black] (2,7) circle (0.06);
\draw[fill=black] (3,7) circle (0.06);
\draw[fill=black] (5,7) circle (0.06);
\draw[fill=black] (6,7) circle (0.06);
\draw[fill=black] (8,7) circle (0.06);

\draw[fill=black] (0,8) circle (0.06);
\draw[fill=black] (1,8) circle (0.06);
\draw[fill=black] (2,8) circle (0.06);
\draw[fill=black] (3,8) circle (0.06);
\draw[fill=black] (4,8) circle (0.06);
\draw[fill=black] (5,8) circle (0.06);
\draw[fill=black] (6,8) circle (0.06);
\draw[fill=black] (7,8) circle (0.06);
\draw[fill=black] (8,8) circle (0.06);
\end{tikzpicture}
\caption{$\mathcal{W}_2$}\label{SC_con_fig_W2}
\end{figure}
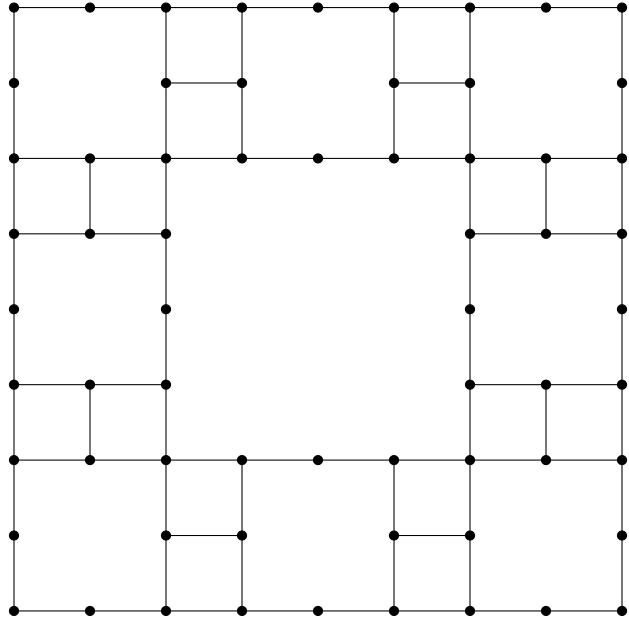


On $\calV_n$, the energy
$$
{\sum_{\mbox{\tiny
$
\begin{subarray}{c}
p,q\in V_n\\
|p-q|=2^{-1}\cdot3^{-n}
\end{subarray}
$
}}}
(u(p)-u(q))^2,u\in l(V_n),$$
is related to a weighted graph with the conductances of all edges equal to $1$. While the energy
$$\sum_{w\in W_n}
{\sum_{\mbox{\tiny
$
\begin{subarray}{c}
p,q\in V_w\\
|p-q|=2^{-1}\cdot3^{-n}
\end{subarray}
$
}}}
(u(p)-u(q))^2,u\in l(V_n),$$
is related to a weighted graph with the conductances of some edges equal to $1$ and the conductances of other edges equal to $2$, since the term $(u(p)-u(q))^2$ is added either once or twice.

Since
$$
\begin{aligned}
{\sum_{\mbox{\tiny
$
\begin{subarray}{c}
p,q\in V_n\\
|p-q|=2^{-1}\cdot3^{-n}
\end{subarray}
$
}}}
(u(p)-u(q))^2&\le\sum_{w\in W_n}
{\sum_{\mbox{\tiny
$
\begin{subarray}{c}
p,q\in V_w\\
|p-q|=2^{-1}\cdot3^{-n}
\end{subarray}
$
}}}
(u(p)-u(q))^2\\
&\le 2
{\sum_{\mbox{\tiny
$
\begin{subarray}{c}
p,q\in V_n\\
|p-q|=2^{-1}\cdot3^{-n}
\end{subarray}
$
}}}
(u(p)-u(q))^2,
\end{aligned}
$$
we use
$$D_n(u,u):=\sum_{w\in W_n}
{\sum_{\mbox{\tiny
$
\begin{subarray}{c}
p,q\in V_w\\
|p-q|=2^{-1}\cdot3^{-n}
\end{subarray}
$
}}}
(u(p)-u(q))^2,u\in l(V_n),$$
as the energy on $\calV_n$. Assume that $A,B$ are two disjoint subsets of $V_n$. Let
$$R_n(A,B)=\inf\myset{D_n(u,u):u|_A=0,u|_B=1,u\in l(V_n)}^{-1}.$$
Denote
$$R_n^V=R_n(V_n\cap\myset{0}\times[0,1],V_n\cap\myset{1}\times[0,1]),$$
$$R_n(x,y)=R_n(\myset{x},\myset{y}),x,y\in V_n.$$
It is obvious that $R_n$ is a metric on $V_n$, hence
$$R_n(x,y)\le R_n(x,z)+R_n(z,y)\text{ for all }x,y,z\in V_n.$$

On $\calW_n$, the energy
$$\frakD_n(u,u):=\sum_{w^{(1)}\sim_nw^{(2)}}(u(w^{(1)})-u(w^{(2)}))^2,u\in l(W_n),$$
is related to a weighted graph with the conductances of all edges equal to $1$. Assume that $A,B$ are two disjoint subsets of $W_n$. Let
$$\frakR_n(A,B)=\inf\myset{\frakD_n(u,u):u|_A=0,u|_B=1,u\in l(W_n)}^{-1}.$$
Denote
$$\frakR_n(w^{(1)},w^{(2)})=\frakR_n(\myset{w^{(1)}},\myset{w^{(2)}}),w^{(1)},w^{(2)}\in W_n.$$
It is obvious that $\frakR_n$ is a metric on $W_n$, hence
$$\frakR_n(w^{(1)},w^{(2)})\le\frakR_n(w^{(1)},w^{(3)})+\frakR_n(w^{(3)},w^{(2)})\text{ for all }w^{(1)},w^{(2)},w^{(3)}\in W_n.$$

The main result of this section is as follows.

\begin{mythm}\label{SC_con_thm_resist}
There exists some positive constant $\rho\in\left[7/6,3/2\right]$ such that for all $n\ge1$
$$R_n^V\asymp\rho^n,$$
$$R_n(p_0,p_1)=\ldots=R_n(p_6,p_7)=R_n(p_7,p_0)\asymp\rho^n,$$
$$\frakR_n(0^n,1^n)=\ldots=\frakR_n(6^n,7^n)=\frakR_n(7^n,0^n)\asymp\rho^n.$$
\end{mythm}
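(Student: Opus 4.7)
\medskip

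\noindent\textbf{Proof proposal.} The plan is to prove that the sequence $R_{n}^{V}$ is essentially multiplicative, deduce from this that it grows like $\rho^{n}$ for some $\rho\in(1,\infty)$, pin down the range of $\rho$ by shorting/cutting, and finally use the $8$-fold symmetry of the SC together with standard network-reduction arguments to show that the three resistance quantities in the statement are all comparable.

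First I would establish the key submultiplicativity/supermultiplicativity
\begin{equation*}
c\,R_{n}^{V}R_{m}^{V}\le R_{n+m}^{V}\le C\,R_{n}^{V}R_{m}^{V}\qquad\text{for all }n,m\ge 1,
\end{equation*}
with constants $c,C>0$ independent of $n,m$. The upper bound is obtained by the standard ``series'' reduction: any path in $\calV_{n+m}$ from the left edge to the right edge of $K$ can be decomposed as $m$ successive crossings of $3^{m}$ parallel copies of $\calV_{n}$, so gluing together optimal potentials on each level-$m$ subcell yields a test function whose energy gives the stated bound (the constant $C$ absorbs the bookkeeping for boundary identifications across adjacent subcells). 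For the lower bound one uses the dual formulation in terms of unit flows: combining $8^{m}$ copies of an optimal flow for $R_{n}^{V}$ inside each level-$m$ subcell and then a flow realising $R_{m}^{V}$ at the coarse scale produces a unit flow with controlled energy. Once this two-sided multiplicativity is in hand, a Fekete-type argument applied to $\log R_{n}^{V}$ yields a constant $\rho>0$ with $R_{n}^{V}\asymp\rho^{n}$.

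Next I would locate $\rho$ in $[7/6,3/2]$ via the shorting and cutting technique from electrical networks (already cited in Chapter~\ref{ch_intro}). Shorting all vertical cross-sections of each level-$1$ subcell collapses the network to a one-dimensional chain and produces an explicit upper bound $R_{n+1}^{V}\le\frac{3}{2}R_{n}^{V}$, forcing $\rho\le 3/2$. Conversely, cutting the network along suitable curves so as to disconnect flow paths through the central hole gives $R_{n+1}^{V}\ge\frac{7}{6}R_{n}^{V}$, hence $\rho\ge 7/6$. These two inequalities are the classical shorting/cutting estimates of Barlow--Bass--Sherwood; I would carry out both reductions on the one-step network $\calV_{1}$ and then iterate.

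Finally I would show that the three quantities $R_{n}^{V}$, $R_{n}(p_{i},p_{i+1})$ and $\frakR_{n}(i^{n},(i+1)^{n})$ are all mutually equivalent. The equalities among the eight corner-to-adjacent-corner resistances are immediate from the dihedral symmetry group of the unit square, which acts on $\calV_{n}$ and on $\calW_{n}$ and permutes the pairs $\{p_{i},p_{i+1}\}$ transitively. To compare $R_{n}(p_{0},p_{1})$ with $R_{n}^{V}$, note that shorting the left side $V_{n}\cap(\{0\}\times[0,1])$ to $p_{0}$ and the right side $V_{n}\cap(\{1\}\times[0,1])$ to $p_{1}$ gives $R_{n}(p_{0},p_{1})\le R_{n}^{V}$, while in the other direction a constant-multiple upper bound for $R_{n}^{V}$ in terms of $R_{n}(p_{0},p_{1})$ follows by connecting each boundary vertex to the nearest corner through a chain of length bounded by Proposition~\ref{SG_con_prop_resist}-type estimates for the SC (again an iterated application of the series law). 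Finally, the comparison between the vertex network $\calV_{n}$ and the cell network $\calW_{n}$ is obtained by identifying each $w\in W_{n}$ with its ``centre'' in $V_{n+1}$ and observing that each edge of $\calW_{n}$ is realised by $3$ parallel edges of $\calV_{n}$ (up to bounded multiplicity), so $\frakR_{n}\asymp R_{n}$ on corresponding vertex pairs. The main obstacle I anticipate is carefully implementing the supermultiplicative lower bound $R_{n+m}^{V}\gtrsim R_{n}^{V}R_{m}^{V}$, since the non-p.c.f.\ nature of the SC means the boundary $V_{n}\cap(\{0\}\times[0,1])$ is infinite-dimensional in the limit and the naive flow-gluing picks up boundary interaction terms that must be controlled by a uniform Harnack-type estimate on the approximation graphs, the very inequality that the subsequent sections of the chapter are devoted to proving.
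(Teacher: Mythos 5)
Your overall architecture---two-sided multiplicativity of $R_n^V$ plus a Fekete argument to produce $\rho$, shorting/cutting for $\rho\in[7/6,3/2]$, dihedral symmetry for the equalities among the eight adjacent-pair resistances, and a comparison between the vertex networks $\calV_n$ and the cell networks $\calW_n$---matches the paper's, which delegates the multiplicativity step to the flow/potential technique of Barlow--Bass and McGillivray exactly as you propose. Two remarks on that step: you have the variational principles backwards (gluing test \emph{potentials} bounds the Dirichlet energy from above and hence the resistance from \emph{below}, giving $R_{n+m}^V\gtrsim R_n^VR_m^V$, while gluing unit \emph{flows} gives the submultiplicative upper bound), and no Harnack inequality is needed there---in this chapter the uniform Harnack inequality is a \emph{consequence} of the resistance estimates, not an input to them.

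The genuine gap is the upper bound $R_n(p_0,p_1)\lesssim\rho^n$, equivalently $\frakR_n(0^n,1^n)\lesssim\rho^n$, which is the heart of the theorem. Your argument for it---``shorting the left side to $p_0$ and the right side to $p_1$ gives $R_n(p_0,p_1)\le R_n^V$''---fails for two reasons: $p_1=(\tfrac12,0)$ does not lie on the right side, and, more fundamentally, shorting only \emph{decreases} resistances, so it can only produce inequalities of the form $R_n^V\le R_n(p,q)$, i.e.\ \emph{lower} bounds on point-to-point resistances in terms of side-to-side ones; it can never bound a point-to-point resistance above by a set-to-set one. (Your other direction is fine and is essentially the paper's chain $R_n^V\le R_n(p_1,p_5)\le 4R_n(p_0,p_1)$, via Theorem \ref{SC_con_thm_resist1} and Proposition \ref{SC_con_prop_resist2}.) The paper's proof of the missing upper bound is the most substantial part of the argument: cut $\calW_{n+1}$ down to a ring of eight copies of $\calW_n$ joined around the central hole, compute by the series--parallel laws that $\frakR_{n+1}(0^{n+1},12^n)\le\frakR_n(0^n,4^n)+\frac{(5\frakR_n(0^n,4^n)+7)(\frakR_n(0^n,4^n)+1)}{6\frakR_n(0^n,4^n)+8}\lesssim\rho^{n+1}$, and then telescope from $0^{n+1}$ to $1^{n+1}$ along the words $1^{i-1}0^{n+2-i}$, $1^{i-1}2^{n+2-i}$ using the triangle inequality for $\frakR_{n+1}$ and cutting. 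Nothing in your proposal supplies a substitute for this. Relatedly, your comparison $\frakR_n\asymp R_n$ via ``each edge of $\calW_n$ is realised by $3$ parallel edges of $\calV_n$'' is only a heuristic: the paper proves $\frakR_n(0^n,1^n)\lesssim R_n(p_0,p_1)$ by explicit resistance-increasing network transformations reducing $\calW_n$ to $\calV_{n-1}$ (Proposition \ref{SC_con_prop_resist3}), and the reverse inequality by shorting; you would need to make at least one of these directions precise.
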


\begin{myrmk}
By triangle inequality, for all $i,j=0,\ldots,7,n\ge1$
$$R_{n}(p_i,p_j)\lesssim\rho^n,$$
$$\frakR_{n}(i^n,j^n)\lesssim\rho^n.$$
\end{myrmk}

We have a direct corollary as follows.

\begin{mycor}\label{SC_con_cor_resist_upper}
For all $n\ge1,p,q\in V_n,w^{(1)},w^{(2)}\in W_n$
$$R_n(p,q)\lesssim\rho^n,$$
$$\frakR_n(w^{(1)},w^{(2)})\lesssim\rho^n.$$
\end{mycor}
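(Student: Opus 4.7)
The plan is to derive both estimates from Theorem \ref{SC_con_thm_resist} by a telescoping-chain argument combined with Rayleigh's monotonicity principle and the self-similarity of the graphs $\calV_n$ and $\calW_n$. The key scaling observation is the following: for every $v \in W_k$ with $k \le n$, the subgraph of $\calV_n$ induced on $V_n \cap K_v = f_v(V_{n-k})$ is isomorphic via $f_v$ to $\calV_{n-k}$, with the edge multiplicities inherited from the $D_n$-summation preserved for edges interior to $K_v$. Cutting all edges outside this subgraph can only raise the effective resistance, so
$$R_n(f_v(x), f_v(y)) \le R_{n-k}(x, y) \qquad \text{for all } x, y \in V_{n-k},$$
and analogously
$$\frakR_n(v u^{(1)}, v u^{(2)}) \le \frakR_{n-k}(u^{(1)}, u^{(2)}) \qquad \text{for all } u^{(1)}, u^{(2)} \in W_{n-k}.$$

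Since $R_n$ is a metric, it suffices to bound $R_n(p, p_0) \lesssim \rho^n$ for every $p \in V_n$. I would write $p = f_w(p_j)$ with $w = w_1\ldots w_n \in W_n$ and $j \in \{0,\ldots,7\}$, set $w_{n+1} := j$, and define the chain $r_k := f_{w_1 \ldots w_k}(p_{w_{k+1}})$ for $k = 0, 1, \ldots, n$, so that $r_n = p$ and $r_0 = p_{w_1}$. The crucial point is the fixed-point identity $f_i(p_i) = p_i$, which gives $f_{w_1\ldots w_{k-1}}(p_{w_k}) = f_{w_1\ldots w_k}(p_{w_k})$, making both $r_k$ and $r_{k-1}$ images under $f_{w_1 \ldots w_k}$ of corner points of $K$. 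The scaling above together with Theorem \ref{SC_con_thm_resist} then yields
$$R_n(r_k, r_{k-1}) \le R_{n-k}(p_{w_{k+1}}, p_{w_k}) \lesssim \rho^{n-k}.$$
Since $\rho \ge 7/6 > 1$ by Theorem \ref{SC_con_thm_bound}, summing by the triangle inequality gives $R_n(p, p_{w_1}) \lesssim \sum_{k=1}^{n} \rho^{n-k} \lesssim \rho^n$, and Theorem \ref{SC_con_thm_resist} applied to $R_n(p_{w_1}, p_0)$ finishes the bound on $R_n$.

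The estimate for $\frakR_n$ follows the same pattern. For $w = w_1\ldots w_n \in W_n$ I would chain through the pure tails $s_k := w_1\ldots w_k (w_{k+1})^{n-k}$ (with $s_n := w$), so that consecutive $s_k$ and $s_{k-1}$ share the prefix $w_1\ldots w_{k-1}$; the scaling lemma applied to this common prefix together with Theorem \ref{SC_con_thm_resist} bounds $\frakR_n(s_k, s_{k-1}) \lesssim \rho^{n-k}$. Summing yields $\frakR_n(w, w_1^n) \lesssim \rho^n$, and one further application of Theorem \ref{SC_con_thm_resist} gives $\frakR_n(w_1^n, 0^n) \lesssim \rho^n$, which combined with the metric property settles the second bound.

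The only real obstacle is the bookkeeping of the chain: one must arrange the intermediate vertices so that each consecutive pair factors through a common sub-cell, enabling the self-similar scaling to reduce the pair to two distinguished corner-type vertices covered by Theorem \ref{SC_con_thm_resist}. The fixed-point identity $f_i(p_i) = p_i$ is what makes this factorization clean. Once it is in place, the bound is nothing more than the geometric sum $\sum_{k=1}^n \rho^{n-k} \asymp \rho^n$, which uses $\rho > 1$.
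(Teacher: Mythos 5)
Your proposal is correct and follows essentially the same route as the paper: reduce to the distance to a fixed base point via the metric property, chain through the ``pure tail'' words $w_1\ldots w_k(w_{k+1})^{n-k}$ (your $s_k$ is the paper's $w^{(n-k)}$), bound each step by cutting to the common sub-cell and invoking Theorem \ref{SC_con_thm_resist}, and sum the geometric series using $\rho>1$. The only quibble is a citation slip --- the bound $\rho\ge 7/6$ comes from Theorem \ref{SC_con_thm_resist}, not Theorem \ref{SC_con_thm_bound} --- and your explicit treatment of the $R_n$ case via the fixed points $f_i(p_i)=p_i$ fills in a step the paper dismisses with ``similarly.''
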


\begin{proof}
We only need to show that $\frakR_n(w,0^n)\lesssim\rho^n$ for all $w\in W_n,n\ge1$. Then for all $w^{(1)},w^{(2)}\in W_n$
$$\frakR_n(w^{(1)},w^{(2)})\le\frakR_n(w^{(1)},0^n)+\frakR_n(w^{(2)},0^n)\lesssim\rho^n.$$
Similarly, we have the proof of $R_n(p,q)\lesssim\rho^n$ for all $p,q\in V_n,n\ge1$.

Indeed, for all $n\ge1,w=w_1\ldots w_n\in W_n$, we construct a finite sequence as follows.
$$
\begin{aligned}
w^{(1)}&=w_1\ldots w_{n-2}w_{n-1}w_n=w,\\
w^{(2)}&=w_1\ldots w_{n-2}w_{n-1}w_{n-1},\\
w^{(3)}&=w_1\ldots w_{n-2}w_{n-2}w_{n-2},\\
&\ldots\\
w^{(n)}&=w_1\ldots w_1w_1w_1,\\
w^{(n+1)}&=0\ldots 000=0^n.
\end{aligned}
$$
For all $i=1,\ldots,n-1$, by cutting technique
$$
\begin{aligned}
&\frakR_n(w^{(i)},w^{(i+1)})=\frakR_n(w_1\ldots w_{n-i}w_{n-i+1}\ldots w_{n-i+1},w_1\ldots w_{n-i}w_{n-i}\ldots w_{n-i})\\
\le&\frakR_i(w_{n-i+1}\ldots w_{n-i+1},w_{n-i}\ldots w_{n-i})=\frakR_i(w_{n-i+1}^i,w_{n-i}^i)\lesssim\rho^i.
\end{aligned}
$$
Since $\frakR_n(w^{(n)},w^{(n+1)})=\frakR_n(w_1^n,0^n)\lesssim\rho^n$, we have
$$\frakR_n(w,0^n)=\frakR_n(w^{(1)},w^{(n+1)})\le\sum_{i=1}^n\frakR_n(w^{(i)},w^{(i+1)})\lesssim\sum_{i=1}^n\rho^i\lesssim\rho^n.$$
\end{proof}

We need the following results for preparation.

Firstly, we have resistance estimates for some symmetric cases.

\begin{mythm}\label{SC_con_thm_resist1}
There exists some positive constant $\rho\in[7/6,3/2]$ such that for all $n\ge1$
$$R_n^V\asymp\rho^n,$$
$$R_n(p_1,p_5)=R_n(p_3,p_7)\asymp\rho^n,$$
$$R_n(p_0,p_4)=R_n(p_2,p_6)\asymp\rho^n.$$
\end{mythm}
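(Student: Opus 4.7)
The plan is to reduce all three estimates to the single quantity $\rho_n:=R_n^V$, to prove $\rho_n\asymp\rho^n$ for some $\rho>0$, and then to verify $\rho\in[7/6,3/2]$ by explicit shorting and cutting at the level-one carpet, following the strategy of Barlow--Bass--Sherwood.

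First I would use the dihedral symmetry group of the Sierpi\'nski carpet (rotations by $\pi/2$ and the two reflections) to obtain the within-pair equalities $R_n(p_1,p_5)=R_n(p_3,p_7)$ and $R_n(p_0,p_4)=R_n(p_2,p_6)$, as well as the equality of the horizontal and vertical side-to-side resistances. The remaining content of the theorem is that the three quantities $R_n^V$, $R_n(p_1,p_5)$ and $R_n(p_0,p_4)$ are comparable up to $n$-independent multiplicative constants. The easy direction follows by shorting: collapsing the entire left side to the single vertex $p_0$ (respectively $p_1$) and the entire right side to $p_4$ (respectively $p_5$) can only decrease resistance, hence $R_n^V\le R_n(p_0,p_4)$ and $R_n^V\le R_n(p_1,p_5)$. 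The reverse direction is the subtle part: it requires showing that restricting the voltage to be fixed only at two corner (or midpoint) nodes rather than on the entire side costs no more than a bounded factor in energy. I would handle this by a triangle inequality of the form $R_n(p_0,p_4)\le R_n(p_0,p_1)+R_n(p_1,p_4)$ together with local shortings of adjacent half-sides, reducing every configuration to $R_n^V$ with a universal constant.

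Second I would establish the key multiplicative inequalities
\[
c\,\rho_n\rho_m \;\le\; \rho_{n+m} \;\le\; C\,\rho_n\rho_m,
\]
using the self-similar structure: the level-$(n+m)$ carpet decomposes as a level-$n$ arrangement of $8^n$ copies of the level-$m$ carpet. The upper bound follows by constructing a trial voltage: take an optimal voltage for the level-$n$ side-to-side problem, then in each level-$m$ subcell interpolate by an optimal level-$m$ voltage matching the boundary data; additivity of energy across cells gives the estimate. The lower bound is obtained by shorting each level-$m$ subcell to a single node, producing a weighted version of the network $\mathcal{W}_n$ whose effective resistance is bounded below using $\rho_n$, while the energy of any admissible voltage inside each cell is bounded below by $\rho_m^{-1}$ times the squared voltage drop across that cell; here the cross-configuration comparability from the first step is used to pass between side-to-side and node-to-node drops. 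A Fekete-type argument applied to $\log\rho_n$ then produces $\rho>0$ with $\rho_n\asymp\rho^n$, and the same asymptotic is inherited by the two other families by the comparability.

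Finally, the numerical bounds $\rho\in[7/6,3/2]$ come from explicit estimates on the level-one carpet, namely the $3\times3$ grid with the central cell removed. Cutting along a suitably chosen family of interior edges produces three vertical strips whose combined conductance yields $\rho\le3/2$; conversely, shorting along horizontal rows and applying the $\Delta$-Y transform together with series-parallel reductions yields $\rho\ge7/6$. The main obstacle I expect is the supermultiplicativity $\rho_{n+m}\ge c\,\rho_n\rho_m$ with constants independent of both $n$ and $m$: controlling how much resistance can drop when a level-$m$ subcarpet is shorted to a point requires a robust cross-configuration comparison, so that the reduction of every internal subcell to a single node does not accumulate a factor depending on the number of cells or on the scale.
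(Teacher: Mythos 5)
Your overall architecture --- within-pair equalities by symmetry, comparability of the three configurations, near-multiplicativity $c\,\rho_n\rho_m\le\rho_{n+m}\le C\rho_n\rho_m$, a Fekete-type argument, and level-one shorting/cutting for $\rho\in[7/6,3/2]$ --- is exactly the route the paper takes (the paper defers the multiplicativity to discrete versions of \cite[Theorem 5.1]{BB90} and \cite[Theorem 6.1]{McG02}, i.e.\ the ``flow technique and potential technique''). However, you have swapped the two variational principles, and the swap is fatal for precisely the step you single out as the main obstacle. Since $R(A,B)^{-1}=\inf\{D(u):u|_A=0,\ u|_B=1\}$, exhibiting one trial potential $u$ can only upper-bound the infimum, hence it \emph{lower}-bounds the resistance. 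So the construction you describe for the ``upper bound'' --- the optimal level-$n$ potential interpolated inside each subcell by level-$m$ solutions, with total energy $\lesssim\rho_n^{-1}\rho_m^{-1}$ --- is in fact the proof of the supermultiplicativity $\rho_{n+m}\ge c\,\rho_n\rho_m$. Conversely, the coarse-graining estimate ``the energy of any admissible voltage inside a cell is at least $\rho_m^{-1}$ times the squared drop across it'' bounds $\inf D_{n+m}$ from below by $c\,\rho_m^{-1}\rho_n^{-1}$ and therefore proves the \emph{upper} bound $\rho_{n+m}\le C\rho_n\rho_m$ (the test-flow/Thomson argument is the standard alternative for that direction).

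The concrete failure is your proposed proof of $\rho_{n+m}\ge c\,\rho_n\rho_m$ by shorting each level-$m$ subcell to a single node. Shorting does give $\rho_{n+m}\ge R_{\mathrm{shorted}}$, but two adjacent collapsed cells are then joined by a conductance equal to the number of level-$(n+m)$ edges crossing their common side, which is of order $3^m$ (and the situation is even worse since the boundary vertices are shared by both cells); hence $R_{\mathrm{shorted}}\lesssim 3^{-m}R_n^V$, and since $\rho_m\ge(7/6)^m\gg 3^{-m}$ this lower bound is uselessly weak. No ``robust cross-configuration comparison'' can repair this, because shorting a subcell to a point destroys exactly the internal-resistance information you need to retain. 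The fix is to reassign the arguments: prove the lower bound with your trial potential (the real difficulty there is matching the boundary data of the level-$m$ interpolants across adjacent subcells, which is where the comparability $R_m(p_i,p_j)\asymp R_m^V$ from your first step is actually consumed), and prove the upper bound by coarse-graining potentials or splicing unit flows. The symmetry step, the Fekete argument, and the level-one shorting/cutting computation giving $7/6\le\rho\le3/2$ are fine as stated.
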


\begin{proof}
The proof is similar to \cite[Theorem 5.1]{BB90} and \cite[Theorem 6.1]{McG02} where flow technique and potential technique are used. We need discrete version instead of continuous version.

Hence there exists some positive constant $C$ such that
$$\frac{1}{C}x_nx_m\le x_{n+m}\le Cx_nx_m\text{ for all }n,m\ge1,$$
where $x$ is any of the above resistances. Since the above resistances share the same complexity, there exists \emph{one} positive constant $\rho$ such that they are equivalent to $\rho^n$ for all $n\ge1$.

By shorting and cutting technique, we have $\rho\in[7/6,3/2]$, see \cite[Equation (2.6)]{Bar13} or \cite[Remarks 5.4]{BB99a}.
\end{proof}

Secondly, by symmetry and shorting technique, we have the following relations.

\begin{myprop}\label{SC_con_prop_resist2}
For all $n\ge1$
$$R_n(p_0,p_1)\le\frakR_n(0^n,1^n),$$
$$R_n^V\le R_n(p_1,p_5)=R_n(p_3,p_7)\le\frakR_n(1^n,5^n)=\frakR_n(3^n,7^n),$$
$$R_n^V\le R_n(p_0,p_4)=R_n(p_2,p_6)\le\frakR_n(0^n,4^n)=\frakR_n(2^n,6^n).$$
\end{myprop}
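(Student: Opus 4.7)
The four equalities in the proposition are immediate from the dihedral symmetries of the Sierpi\'nski carpet. The $90^\circ$ rotation $\sigma$ about the center $(1/2,1/2)$ permutes $V_0$ by $p_k \mapsto p_{(k+2) \bmod 8}$ and induces a corresponding permutation of the words in $W_n$ which carries cells to cells. Since $\sigma$ is an automorphism of both weighted graphs $(\calV_n, D_n)$ and $(\calW_n, \frakD_n)$ for every $n$, and since it sends $\{p_1, p_5\}$ to $\{p_3, p_7\}$, $\{p_0, p_4\}$ to $\{p_2, p_6\}$, $\{1^n, 5^n\}$ to $\{3^n, 7^n\}$, and $\{0^n, 4^n\}$ to $\{2^n, 6^n\}$, the four equalities follow at once.

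The two inequalities of the form $R_n^V \le R_n(p_1,p_5)$ and $R_n^V \le R_n(p_0,p_4)$ reduce to a single application of the shorting principle. Using the rotational symmetry $\sigma$, $R_n^V$ equals also the resistance between the bottom side $V_n \cap ([0,1]\times\{0\})$ and the top side $V_n \cap ([0,1]\times\{1\})$. Since $p_1 = (1/2,0)$ lies on the bottom side and $p_5 = (1/2,1)$ on the top side, shorting each whole side to the single vertex $p_1$ or $p_5$ respectively can only decrease the resistance, giving $R_n^V \le R_n(p_1, p_5)$. An identical argument using the left and right sides (which contain $p_0 = (0,0)$ and $p_4 = (1,1)$) yields $R_n^V \le R_n(p_0, p_4)$.

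For the remaining inequalities $R_n(p_i,p_j) \le \frakR_n(i^n,j^n)$ the plan is to use the variational characterization of resistance. Given any admissible $u \in l(V_n)$ with $u(p_i)=0$ and $u(p_j)=1$, the natural candidate test function on $W_n$ is $v(w) := u(P_w)$, which automatically satisfies $v(i^n) = u(p_i)=0$ and $v(j^n) = u(p_j)=1$ thanks to the fixed-point relation $P_{k^n} = p_k$ coming from $f_k(p_k) = p_k$. It then suffices to verify the energy bound $\frakD_n(v,v) \le D_n(u,u)$, which should follow from a shorting/cutting comparison: viewing $\{P_w : w \in W_n\} \subseteq V_n$ as marked vertices, one can progressively short within each cell and cut along shared sides to reduce $D_n$ to an expression dominating $\frakD_n(v,v)$. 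The main obstacle is closing this comparison with the sharp constant $1$: a brute-force application of the effective-resistance bound $(u(P_{w^{(1)}}) - u(P_{w^{(2)}}))^2 \le R_n(P_{w^{(1)}},P_{w^{(2)}})\, D_n(u,u)$, summed over adjacent pairs $w^{(1)} \sim_n w^{(2)}$, gives only $R_n \lesssim \frakR_n$. The sharp inequality must exploit the fact that each edge lying on an interior shared side is counted with weight $2$ in $D_n$, providing exactly the energy needed to route $u$ from the base points $P_w$ to the shared one-dimensional boundary; handling this cell-by-cell geometric accounting is the main technical hurdle.
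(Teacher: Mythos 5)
The symmetry equalities and the two inequalities involving $R_n^V$ are correct and are exactly the paper's ``symmetry and shorting'': the $90^\circ$ rotation about $(1/2,1/2)$ is a weighted-graph automorphism of both $(\calV_n,D_n)$ and $(\calW_n,\frakD_n)$, and since $R_n(A,B)$ decreases when $A$ and $B$ are enlarged, identifying $R_n^V$ (via the rotation) with the top--bottom resistance and noting that $p_1,p_5$ (resp.\ $p_0,p_4$) lie on the two opposite sides gives $R_n^V\le R_n(p_1,p_5)$ and $R_n^V\le R_n(p_0,p_4)$.

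The three inequalities $R_n(p_i,p_j)\le\frakR_n(i^n,j^n)$ are the real content of the proposition, and your proposal does not prove them; moreover the route you sketch cannot be closed as stated. (a) The pointwise comparison $\frakD_n(v,v)\le D_n(u,u)$ for $v=u\circ P$ is false: take $n=1$ and $u(x,y)=2x$, which is admissible for the pair $(p_0,p_1)$. Then $v(w)=2\cdot(\text{first coordinate of }p_w)$ jumps by $1$ across exactly four edges of the $8$-cycle $\calW_1$, so $\frakD_1(v,v)=4$, while each of the eight cells contributes four horizontal boundary edges with increment $2/6$ to $D_1$, so $D_1(u,u)=8\cdot4\cdot(1/3)^2=32/9<4$. (b) Your ``brute-force'' fallback, which keeps the \emph{global} energy $D_n(u,u)$ on the right of each increment bound, accumulates a factor equal to the number of adjacent pairs, which is $\asymp 8^n$, and therefore yields only $R_n\lesssim 8^n\,\frakR_n$, not $R_n\lesssim\frakR_n$; to get a constant independent of $n$ you must localize, bounding $(u(P_{w^{(1)}})-u(P_{w^{(2)}}))^2$ by a universal constant times the energy of $u$ restricted to $K_{w^{(1)}}\cup K_{w^{(2)}}$ and then using that each cell is adjacent to at most four others (the same device as in the proof of Theorem \ref{SC_con_thm_monotonicity2}). (c) The sharp constant $1$ you identify as the main hurdle is in fact unattainable, at least for $n=1$: the only $\calV_1$-neighbors of $p_0$ are $(1/6,0)$ and $(0,1/6)$ and the only ones of $p_1$ are $(1/3,0)$ and $(2/3,0)$, each such edge lying in a single cell, so $p_0$ and $p_1$ each carry total incident conductance $2$; shorting $V_1\setminus\myset{p_0,p_1}$ to one node gives $R_1(p_0,p_1)\ge\tfrac12+\tfrac12=1$, whereas $\frakR_1(0,1)=\tfrac{7}{8}$ on the $8$-cycle. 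So the statement you should aim for --- and the only form used later, in the proof of Theorem \ref{SC_con_thm_resist} --- is $R_n(p_i,p_j)\lesssim\frakR_n(i^n,j^n)$ with a universal constant, which the localized version of your test-function argument does deliver.
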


Thirdly, we have the following relations.

\begin{myprop}\label{SC_con_prop_resist3}
For all $n\ge1$
$$\frakR_n(0^n,1^n)\lesssim R_n(p_0,p_1),$$
$$\frakR_n(1^n,5^n)=\frakR_n(3^n,7^n)\lesssim R_n(p_1,p_5)=R_n(p_3,p_7),$$
$$\frakR_n(0^n,4^n)=\frakR_n(2^n,6^n)\lesssim R_n(p_0,p_4)=R_n(p_2,p_6).$$
\end{myprop}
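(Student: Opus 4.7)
My plan is to establish the three claimed inequalities by constructing, for any admissible test function on the coarse graph $\calW_n$, a test function on the fine graph $\calV_n$ of comparable energy. Given $v\in l(W_n)$ with (say) $v(0^n)=0$ and $v(1^n)=1$, I will define $u\in l(V_n)$ by the local averaging
\[
u(p) := \frac{1}{|N(p)|}\sum_{w\in N(p)} v(w),\qquad N(p):=\myset{w\in W_n : p\in V_w}.
\]
Each outer-boundary point $p_i$ ($i=0,\ldots,7$) is either a corner or a side-midpoint of $[0,1]^2$ and therefore lies in exactly one level-$n$ cell $K_{i^n}$, so $N(p_i)=\myset{i^n}$ and the boundary conditions $u(p_0)=0,\ u(p_1)=1$ transfer correctly; the same verification handles the pairs $(p_1,p_5)$ and $(p_0,p_4)$. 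Taking infima over $v$ in the bound $D_n(u,u)\lesssim \frakD_n(v,v)$ will then yield $\frakR_n(0^n,1^n)\lesssim R_n(p_0,p_1)$ and the analogous statements, while the equalities $\frakR_n(3^n,7^n)=\frakR_n(1^n,5^n)$ and $\frakR_n(2^n,6^n)=\frakR_n(0^n,4^n)$ are immediate from the rotational symmetry of $K$.

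To prove $D_n(u,u)\lesssim \frakD_n(v,v)$, I would fix $w\in W_n$ and a pair $p,q\in V_w$ with $|p-q|=2^{-1}\cdot 3^{-n}$. Since $w\in N(p)\cap N(q)$, rewriting the averages as $u(p)-v(w)=|N(p)|^{-1}\sum_{w'\in N(p)}(v(w')-v(w))$ and applying Cauchy-Schwarz gives
\[
(u(p)-u(q))^2 \lesssim \sum_{w'\in N(p)}(v(w')-v(w))^2 + \sum_{w''\in N(q)}(v(w'')-v(w))^2.
\]
Terms with $w'\sim_n w$ contribute directly to $\frakD_n(v,v)$. When $w$ and $w'$ share only the corner $p$, I will produce a third cell $w'''\in N(p)$ with $w'''\sim_n w$ and $w'''\sim_n w'$, and use $(v(w')-v(w))^2\le 2(v(w')-v(w'''))^2+2(v(w''')-v(w))^2$. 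Each resulting edge of $\calW_n$ is encountered only boundedly often as $(w,p,q)$ varies (because $|N(p)|\le 4$ and each cell $w$ has a bounded number of adjacent vertices and neighboring cells), so the total bound $D_n(u,u)\lesssim \frakD_n(v,v)$ holds with a constant independent of $n$.

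The main obstacle and the key geometric input is justifying the existence of the routing cell $w'''$ whenever two cells of $N(p)$ share only the corner $p$. The members of $N(p)$ sit in at most a $2\times 2$ arrangement of positions around $p$, and a missing position corresponds to a region removed during the SC construction. The potentially bad case is $|N(p)|=2$ with the two cells diagonally opposite, for then no intermediate cell is available in $N(p)$. I will rule this out by using the fact that removed squares at different levels of the SC construction are mutually disjoint and never share corners: each step removes only the center of a $3\times 3$ block, leaving all eight surrounding positions intact, and removed squares at different levels occur at incompatible scales. Consequently, at any $p\in V_n$ at most one of the four positions around $p$ can be missing, so $|N(p)|\in\myset{1,2,3,4}$ and the only $|N(p)|=2$ configuration is the two-cells-sharing-a-side case (which arises on the outer boundary of $[0,1]^2$). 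A short case analysis based on whether $p$ lies on the outer boundary, on the boundary of some removed square, or strictly inside a cell then shows that $N(p)$ is $\calW_n$-connected with diameter at most two through cells of $N(p)$ itself, supplying the routing cell $w'''$ in every case.
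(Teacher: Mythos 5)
Your proposal is correct, but it proves the inequality by a genuinely different route than the paper. The paper argues on the level of electrical networks: it performs explicit resistance-increasing replacements of small sub-networks (the transformations in its Figures), converting the weighted graph $\calW_n$ into a network that dominates $\calV_{n-1}$, and reads off the inequality from the monotonicity of effective resistance under such replacements. You instead argue on the level of test functions: given $v\in l(W_n)$ admissible for $\frakR_n$, you produce $u\in l(V_n)$ admissible for $R_n$ by cell-averaging and prove $D_n(u,u)\lesssim\frakD_n(v,v)$ directly, with the comparison of infima giving the claim at the \emph{same} level $n$. Your route is more self-contained and verifiable (the paper's proof leans heavily on figures), and the averaging-plus-routing device is exactly the mechanism the paper itself uses later in the proof of Theorem \ref{SC_con_thm_monotonicity2}, so it fits the paper's toolkit. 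What the paper's transformation argument buys is that it never needs to discuss the local cell configurations around a vertex at all.

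One caveat: your intermediate assertion that ``at any $p\in V_n$ at most one of the four positions around $p$ can be missing'' is false as stated. If $p$ lies in the interior of an edge of a removed square (e.g. $p=(4/9,1/3)$ at level $2$, on the bottom edge of the level-$1$ hole), or in the interior of an edge of $\partial([0,1]^2)$, then two of the four positions are missing. What saves the argument is that in every such case the two missing positions are \emph{edge-adjacent}, never diagonal, because (i) a single hole whose boundary contains $p$ removes either one position ($p$ a corner of the hole) or two side-by-side positions ($p$ interior to an edge of the hole), and (ii) $p$ cannot lie on the boundaries of two distinct holes, nor on a hole boundary and on $\partial([0,1]^2)$ simultaneously, since the closed hole $f_w([1/3,2/3]^2)$ is contained in the open square $f_w((0,1)^2)$ and distinct holes at any levels therefore have disjoint closures away from cell interiors. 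With that corrected statement the present positions always form an L, a domino, a full $2\times2$ block, or a single square, so the routing cell $w'''$ exists for every diagonal pair and your energy comparison goes through with a constant independent of $n$.
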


\begin{proof}
The idea is to use electrical network transformations to \emph{increase} resistances to transform weighted graph $\calW_n$ to weighted graph $\calV_{n-1}$.

Firstly, we do the transformation in Figure \ref{SC_con_fig_trans1} where the resistances of the resistors in the new network only depend on the shape of the networks in Figure \ref{SC_con_fig_trans1} such that we obtain the weighted graph in Figure \ref{SC_con_fig_trans2} where the resistances between any two points are larger than those in the weighted graph $\calW_n$. For $\frakR_n(i^n,j^n)$, we have the equivalent weighted graph in Figure \ref{SC_con_fig_trans3}.




\begin{figure}[ht]
\centering
\begin{tikzpicture}
\draw (0,0)--(2,0)--(2,1)--(0,1)--cycle;
\draw (1,0)--(1,1);

\draw[fill=black] (0,0) circle (0.06);
\draw[fill=black] (1,0) circle (0.06);
\draw[fill=black] (2,0) circle (0.06);
\draw[fill=black] (0,1) circle (0.06);
\draw[fill=black] (1,1) circle (0.06);
\draw[fill=black] (2,1) circle (0.06);

\draw (3,0)--(3,1);
\draw (4,0)--(4,1);
\draw (5,0)--(5,1);
\draw (3,0.5)--(5,0.5);

\draw[fill=black] (3,0) circle (0.06);
\draw[fill=black] (4,0) circle (0.06);
\draw[fill=black] (5,0) circle (0.06);
\draw[fill=black] (3,1) circle (0.06);
\draw[fill=black] (4,1) circle (0.06);
\draw[fill=black] (5,1) circle (0.06);

\draw (2.5,0.5) node {$\Rightarrow$};

\end{tikzpicture}
\caption{First Transformation}\label{SC_con_fig_trans1}
\end{figure}
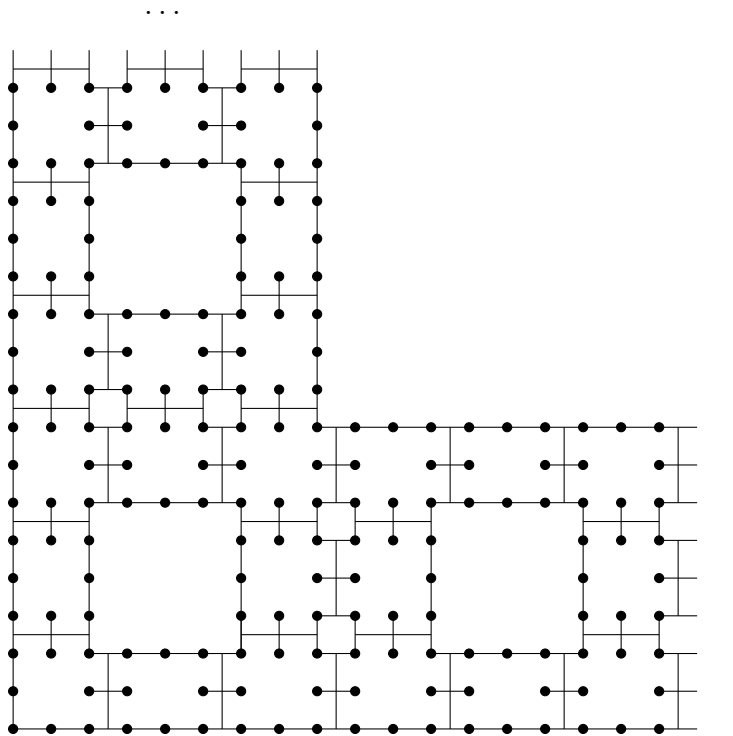




\begin{figure}[ht]
\centering
\begin{tikzpicture}

\draw[fill=black] (0,0) circle (0.06);
\draw[fill=black] (0.5,0) circle (0.06);
\draw[fill=black] (1,0) circle (0.06);
\draw[fill=black] (1.5,0) circle (0.06);
\draw[fill=black] (2,0) circle (0.06);
\draw[fill=black] (2.5,0) circle (0.06);
\draw[fill=black] (3,0) circle (0.06);
\draw[fill=black] (3.5,0) circle (0.06);
\draw[fill=black] (4,0) circle (0.06);
\draw[fill=black] (4.5,0) circle (0.06);
\draw[fill=black] (5,0) circle (0.06);
\draw[fill=black] (5.5,0) circle (0.06);
\draw[fill=black] (6,0) circle (0.06);
\draw[fill=black] (6.5,0) circle (0.06);
\draw[fill=black] (7,0) circle (0.06);
\draw[fill=black] (7.5,0) circle (0.06);
\draw[fill=black] (8,0) circle (0.06);
\draw[fill=black] (8.5,0) circle (0.06);

\draw[fill=black] (0,0.5) circle (0.06);
\draw[fill=black] (1,0.5) circle (0.06);
\draw[fill=black] (1.5,0.5) circle (0.06);
\draw[fill=black] (2.5,0.5) circle (0.06);
\draw[fill=black] (3,0.5) circle (0.06);
\draw[fill=black] (4,0.5) circle (0.06);
\draw[fill=black] (4.5,0.5) circle (0.06);
\draw[fill=black] (5.5,0.5) circle (0.06);
\draw[fill=black] (6,0.5) circle (0.06);
\draw[fill=black] (7,0.5) circle (0.06);
\draw[fill=black] (7.5,0.5) circle (0.06);
\draw[fill=black] (8.5,0.5) circle (0.06);

\draw[fill=black] (0,1) circle (0.06);
\draw[fill=black] (0.5,1) circle (0.06);
\draw[fill=black] (1,1) circle (0.06);
\draw[fill=black] (1.5,1) circle (0.06);
\draw[fill=black] (2,1) circle (0.06);
\draw[fill=black] (2.5,1) circle (0.06);
\draw[fill=black] (3,1) circle (0.06);
\draw[fill=black] (3.5,1) circle (0.06);
\draw[fill=black] (4,1) circle (0.06);
\draw[fill=black] (4.5,1) circle (0.06);
\draw[fill=black] (5,1) circle (0.06);
\draw[fill=black] (5.5,1) circle (0.06);
\draw[fill=black] (6,1) circle (0.06);
\draw[fill=black] (6.5,1) circle (0.06);
\draw[fill=black] (7,1) circle (0.06);
\draw[fill=black] (7.5,1) circle (0.06);
\draw[fill=black] (8,1) circle (0.06);
\draw[fill=black] (8.5,1) circle (0.06);

\draw[fill=black] (0,1.5) circle (0.06);
\draw[fill=black] (0.5,1.5) circle (0.06);
\draw[fill=black] (1,1.5) circle (0.06);
\draw[fill=black] (3,1.5) circle (0.06);
\draw[fill=black] (3.5,1.5) circle (0.06);
\draw[fill=black] (4,1.5) circle (0.06);
\draw[fill=black] (4.5,1.5) circle (0.06);
\draw[fill=black] (5,1.5) circle (0.06);
\draw[fill=black] (5.5,1.5) circle (0.06);
\draw[fill=black] (7.5,1.5) circle (0.06);
\draw[fill=black] (8,1.5) circle (0.06);
\draw[fill=black] (8.5,1.5) circle (0.06);

\draw[fill=black] (0,2) circle (0.06);
\draw[fill=black] (1,2) circle (0.06);
\draw[fill=black] (3,2) circle (0.06);
\draw[fill=black] (4,2) circle (0.06);
\draw[fill=black] (4.5,2) circle (0.06);
\draw[fill=black] (5.5,2) circle (0.06);
\draw[fill=black] (7.5,2) circle (0.06);
\draw[fill=black] (8.5,2) circle (0.06);

\draw[fill=black] (0,2.5) circle (0.06);
\draw[fill=black] (0.5,2.5) circle (0.06);
\draw[fill=black] (1,2.5) circle (0.06);
\draw[fill=black] (3,2.5) circle (0.06);
\draw[fill=black] (3.5,2.5) circle (0.06);
\draw[fill=black] (4,2.5) circle (0.06);
\draw[fill=black] (4.5,2.5) circle (0.06);
\draw[fill=black] (5,2.5) circle (0.06);
\draw[fill=black] (5.5,2.5) circle (0.06);
\draw[fill=black] (7.5,2.5) circle (0.06);
\draw[fill=black] (8,2.5) circle (0.06);
\draw[fill=black] (8.5,2.5) circle (0.06);

\draw[fill=black] (0,3) circle (0.06);
\draw[fill=black] (0.5,3) circle (0.06);
\draw[fill=black] (1,3) circle (0.06);
\draw[fill=black] (1.5,3) circle (0.06);
\draw[fill=black] (2,3) circle (0.06);
\draw[fill=black] (2.5,3) circle (0.06);
\draw[fill=black] (3,3) circle (0.06);
\draw[fill=black] (3.5,3) circle (0.06);
\draw[fill=black] (4,3) circle (0.06);
\draw[fill=black] (4.5,3) circle (0.06);
\draw[fill=black] (5,3) circle (0.06);
\draw[fill=black] (5.5,3) circle (0.06);
\draw[fill=black] (6,3) circle (0.06);
\draw[fill=black] (6.5,3) circle (0.06);
\draw[fill=black] (7,3) circle (0.06);
\draw[fill=black] (7.5,3) circle (0.06);
\draw[fill=black] (8,3) circle (0.06);
\draw[fill=black] (8.5,3) circle (0.06);

\draw[fill=black] (0,3.5) circle (0.06);
\draw[fill=black] (1,3.5) circle (0.06);
\draw[fill=black] (1.5,3.5) circle (0.06);
\draw[fill=black] (2.5,3.5) circle (0.06);
\draw[fill=black] (3,3.5) circle (0.06);
\draw[fill=black] (4,3.5) circle (0.06);
\draw[fill=black] (4.5,3.5) circle (0.06);
\draw[fill=black] (5.5,3.5) circle (0.06);
\draw[fill=black] (6,3.5) circle (0.06);
\draw[fill=black] (7,3.5) circle (0.06);
\draw[fill=black] (7.5,3.5) circle (0.06);
\draw[fill=black] (8.5,3.5) circle (0.06);

\draw[fill=black] (0,4) circle (0.06);
\draw[fill=black] (0.5,4) circle (0.06);
\draw[fill=black] (1,4) circle (0.06);
\draw[fill=black] (1.5,4) circle (0.06);
\draw[fill=black] (2,4) circle (0.06);
\draw[fill=black] (2.5,4) circle (0.06);
\draw[fill=black] (3,4) circle (0.06);
\draw[fill=black] (3.5,4) circle (0.06);
\draw[fill=black] (4,4) circle (0.06);
\draw[fill=black] (4.5,4) circle (0.06);
\draw[fill=black] (5,4) circle (0.06);
\draw[fill=black] (5.5,4) circle (0.06);
\draw[fill=black] (6,4) circle (0.06);
\draw[fill=black] (6.5,4) circle (0.06);
\draw[fill=black] (7,4) circle (0.06);
\draw[fill=black] (7.5,4) circle (0.06);
\draw[fill=black] (8,4) circle (0.06);
\draw[fill=black] (8.5,4) circle (0.06);

\draw[fill=black] (0,4.5) circle (0.06);
\draw[fill=black] (0.5,4.5) circle (0.06);
\draw[fill=black] (1,4.5) circle (0.06);
\draw[fill=black] (1.5,4.5) circle (0.06);
\draw[fill=black] (2,4.5) circle (0.06);
\draw[fill=black] (2.5,4.5) circle (0.06);
\draw[fill=black] (3,4.5) circle (0.06);
\draw[fill=black] (3.5,4.5) circle (0.06);
\draw[fill=black] (4,4.5) circle (0.06);

\draw[fill=black] (0,5) circle (0.06);
\draw[fill=black] (1,5) circle (0.06);
\draw[fill=black] (1.5,5) circle (0.06);
\draw[fill=black] (2.5,5) circle (0.06);
\draw[fill=black] (3,5) circle (0.06);
\draw[fill=black] (4,5) circle (0.06);

\draw[fill=black] (0,5.5) circle (0.06);
\draw[fill=black] (0.5,5.5) circle (0.06);
\draw[fill=black] (1,5.5) circle (0.06);
\draw[fill=black] (1.5,5.5) circle (0.06);
\draw[fill=black] (2,5.5) circle (0.06);
\draw[fill=black] (2.5,5.5) circle (0.06);
\draw[fill=black] (3,5.5) circle (0.06);
\draw[fill=black] (3.5,5.5) circle (0.06);
\draw[fill=black] (4,5.5) circle (0.06);

\draw[fill=black] (0,6) circle (0.06);
\draw[fill=black] (0.5,6) circle (0.06);
\draw[fill=black] (1,6) circle (0.06);
\draw[fill=black] (3,6) circle (0.06);
\draw[fill=black] (3.5,6) circle (0.06);
\draw[fill=black] (4,6) circle (0.06);

\draw[fill=black] (0,6.5) circle (0.06);
\draw[fill=black] (1,6.5) circle (0.06);
\draw[fill=black] (3,6.5) circle (0.06);
\draw[fill=black] (4,6.5) circle (0.06);

\draw[fill=black] (0,7) circle (0.06);
\draw[fill=black] (0.5,7) circle (0.06);
\draw[fill=black] (1,7) circle (0.06);
\draw[fill=black] (3,7) circle (0.06);
\draw[fill=black] (3.5,7) circle (0.06);
\draw[fill=black] (4,7) circle (0.06);

\draw[fill=black] (0,7.5) circle (0.06);
\draw[fill=black] (0.5,7.5) circle (0.06);
\draw[fill=black] (1,7.5) circle (0.06);
\draw[fill=black] (1.5,7.5) circle (0.06);
\draw[fill=black] (2,7.5) circle (0.06);
\draw[fill=black] (2.5,7.5) circle (0.06);
\draw[fill=black] (3,7.5) circle (0.06);
\draw[fill=black] (3.5,7.5) circle (0.06);
\draw[fill=black] (4,7.5) circle (0.06);

\draw[fill=black] (0,8) circle (0.06);
\draw[fill=black] (1,8) circle (0.06);
\draw[fill=black] (1.5,8) circle (0.06);
\draw[fill=black] (2.5,8) circle (0.06);
\draw[fill=black] (3,8) circle (0.06);
\draw[fill=black] (4,8) circle (0.06);

\draw[fill=black] (0,8.5) circle (0.06);
\draw[fill=black] (0.5,8.5) circle (0.06);
\draw[fill=black] (1,8.5) circle (0.06);
\draw[fill=black] (1.5,8.5) circle (0.06);
\draw[fill=black] (2,8.5) circle (0.06);
\draw[fill=black] (2.5,8.5) circle (0.06);
\draw[fill=black] (3,8.5) circle (0.06);
\draw[fill=black] (3.5,8.5) circle (0.06);
\draw[fill=black] (4,8.5) circle (0.06);

\draw (0,0)--(8.5,0);
\draw (0,0)--(0,8.5);

\draw (1,1)--(1.5,1);
\draw (1,1)--(1,1.5);
\draw (1.25,0)--(1.25,1);
\draw (0,1.25)--(1,1.25);
\draw (1,0.5)--(1.5,0.5);
\draw (0.5,1)--(0.5,1.5);

\draw (1.5,1)--(3,1);
\draw (2.75,1)--(2.75,0);
\draw (2.5,0.5)--(3,0.5);

\draw (3,1)--(3,1.5);
\draw (4,1)--(4,1.5);
\draw (3,1.25)--(4,1.25);
\draw (3.5,1)--(3.5,1.5);
\draw (4,1)--(4.5,1);
\draw (4,0.5)--(4.5,0.5);
\draw (4.25,0)--(4.25,1);

\draw (4.5,1)--(4.5,1.5);
\draw (5,1)--(5,1.5);
\draw (5.5,1)--(5.5,1.5);
\draw (4.5,1.25)--(5.5,1.25);
\draw (5.5,1)--(6,1);
\draw (5.5,0.5)--(6,0.5);
\draw (5.75,0)--(5.75,1);
\draw (6,1)--(7,1);
\draw (7,1)--(7.5,1);
\draw (7,0.5)--(7.5,0.5);
\draw (7.25,0)--(7.25,1);
\draw (7.5,1)--(7.5,1.5);
\draw (8,1)--(8,1.5);
\draw (8.5,1)--(8.5,1.5);
\draw (7.5,1.25)--(8.5,1.25);

\draw (8.5,0)--(9,0);
\draw (8.5,0.5)--(9,0.5);
\draw (8.5,1)--(9,1);
\draw (8.75,0)--(8.75,1);

\draw (1,1.5)--(1,3);
\draw (0.5,2.5)--(0.5,3);
\draw (0,2.75)--(1,2.75);

\draw (1,3)--(3,3);
\draw (3,3)--(3,1);

\draw (3.5,2.5)--(3.5,3);
\draw (4,2.5)--(4,3);
\draw (3,2.75)--(4,2.75);
\draw (4,1.5)--(4.5,1.5);
\draw (4,2)--(4.5,2);
\draw (4,2.5)--(4.5,2.5);
\draw (4.25,1.5)--(4.25,2.5);

\draw (4.5,2.5)--(4.5,3);
\draw (5,2.5)--(5,3);
\draw (5.5,2.5)--(5.5,3);
\draw (4.5,2.75)--(5.5,2.75);
\draw (5.5,1.5)--(5.5,2.5);
\draw (5.5,3)--(7.5,3);
\draw (7.5,3)--(7.5,1.5);

\draw (8,2.5)--(8,3);
\draw (8.5,2.5)--(8.5,3);
\draw (7.5,2.75)--(8.5,2.75);

\draw (8.5,1.5)--(9,1.5);
\draw (8.5,2)--(9,2);
\draw (8.5,2.5)--(9,2.5);
\draw (8.75,1.5)--(8.75,2.5);

\draw (8.5,3)--(9,3);
\draw (8.5,3.5)--(9,3.5);
\draw (8.5,4)--(9,4);
\draw (8.75,3)--(8.75,4);

\draw (7,3.5)--(7.5,3.5);
\draw (7,4)--(7.5,4);
\draw (7.25,3)--(7.25,4);

\draw (5.5,3.5)--(6,3.5);
\draw (5.5,4)--(6,4);
\draw (5.75,3)--(5.75,4);

\draw (6,4)--(7,4);
\draw (4.5,4)--(5.5,4);

\draw (4,3)--(4.5,3);
\draw (4,3.5)--(4.5,3.5);
\draw (4,4)--(4.5,4);
\draw (4.25,3)--(4.25,4);

\draw (4,4)--(4,4.5);
\draw (3.5,4)--(3.5,4.5);
\draw (3,4)--(3,4.5);
\draw (3,4.25)--(4,4.25);

\draw (2.5,4)--(3,4);
\draw (2.5,3.5)--(3,3.5);
\draw (2.75,3)--(2.75,4);

\draw (1,4)--(1.5,4);
\draw (1,3.5)--(1.5,3.5);
\draw (1.25,3)--(1.25,4);

\draw (7.5,4)--(8.5,4);

\draw (0.5,4)--(0.5,4.5);
\draw (1,4)--(1,4.5);
\draw (0,4.25)--(1,4.25);

\draw (1.5,4)--(1.5,4.5);
\draw (2,4)--(2,4.5);
\draw (2.5,4)--(2.5,4.5);
\draw (1.5,4.25)--(2.5,4.25);

\draw (1,4.5)--(1.5,4.5);
\draw (1,5)--(1.5,5);
\draw (1,5.5)--(1.5,5.5);
\draw (1.25,4.5)--(1.25,5.5);

\draw (2.5,4.5)--(3,4.5);
\draw (2.5,5)--(3,5);
\draw (2.5,5.5)--(3,5.5);
\draw (2.75,4.5)--(2.75,5.5);

\draw (4,4.5)--(4,5.5);
\draw (1.5,5.5)--(2.5,5.5);

\draw (0.5,5.5)--(0.5,6);
\draw (1,5.5)--(1,6);
\draw (0,5.75)--(1,5.75);

\draw (3,5.5)--(3,6);
\draw (3.5,5.5)--(3.5,6);
\draw (4,5.5)--(4,6);
\draw (3,5.75)--(4,5.75);

\draw (1,6)--(1,7);
\draw (3,6)--(3,7);
\draw (4,6)--(4,7);

\draw (0.5,7)--(0.5,7.5);
\draw (1,7)--(1,7.5);
\draw (0,7.25)--(1,7.25);

\draw (3,7)--(3,7.5);
\draw (3.5,7)--(3.5,7.5);
\draw (4,7)--(4,7.5);
\draw (3,7.25)--(4,7.25);

\draw (1,7.5)--(3,7.5);

\draw (1,8)--(1.5,8);
\draw (1,8.5)--(1.5,8.5);
\draw (1.25,7.5)--(1.25,8.5);

\draw (2.5,8)--(3,8);
\draw (2.5,8.5)--(3,8.5);
\draw (2.75,7.5)--(2.75,8.5);

\draw (0,8.5)--(0,9);
\draw (0.5,8.5)--(0.5,9);
\draw (1,8.5)--(1,9);
\draw (1.5,8.5)--(1.5,9);
\draw (2,8.5)--(2,9);
\draw (2.5,8.5)--(2.5,9);
\draw (3,8.5)--(3,9);
\draw (3.5,8.5)--(3.5,9);
\draw (4,8.5)--(4,9);

\draw (0,8.75)--(1,8.75);
\draw (1.5,8.75)--(2.5,8.75);
\draw (3,8.75)--(4,8.75);

\draw (4,8.5)--(4,7.5);

\draw (2,9.5) node {\ldots};
\draw (9.5,2) node {\vdots};
\end{tikzpicture}
\caption{First Transformation}\label{SC_con_fig_trans2}
\end{figure}





\begin{figure}[ht]
\centering
\begin{tikzpicture}

\draw[fill=black] (0,0) circle (0.06);
\draw[fill=black] (0.5,0) circle (0.06);
\draw[fill=black] (1,0) circle (0.06);
\draw[fill=black] (1.5,0) circle (0.06);
\draw[fill=black] (2,0) circle (0.06);
\draw[fill=black] (2.5,0) circle (0.06);
\draw[fill=black] (3,0) circle (0.06);
\draw[fill=black] (3.5,0) circle (0.06);
\draw[fill=black] (4,0) circle (0.06);
\draw[fill=black] (4.5,0) circle (0.06);
\draw[fill=black] (5,0) circle (0.06);
\draw[fill=black] (5.5,0) circle (0.06);
\draw[fill=black] (6,0) circle (0.06);
\draw[fill=black] (6.5,0) circle (0.06);
\draw[fill=black] (7,0) circle (0.06);
\draw[fill=black] (7.5,0) circle (0.06);
\draw[fill=black] (8,0) circle (0.06);
\draw[fill=black] (8.5,0) circle (0.06);

\draw[fill=black] (0,0.5) circle (0.06);
\draw[fill=black] (1,0.5) circle (0.06);
\draw[fill=black] (1.5,0.5) circle (0.06);
\draw[fill=black] (2.5,0.5) circle (0.06);
\draw[fill=black] (3,0.5) circle (0.06);
\draw[fill=black] (4,0.5) circle (0.06);
\draw[fill=black] (4.5,0.5) circle (0.06);
\draw[fill=black] (5.5,0.5) circle (0.06);
\draw[fill=black] (6,0.5) circle (0.06);
\draw[fill=black] (7,0.5) circle (0.06);
\draw[fill=black] (7.5,0.5) circle (0.06);
\draw[fill=black] (8.5,0.5) circle (0.06);

\draw[fill=black] (0,1) circle (0.06);
\draw[fill=black] (0.5,1) circle (0.06);
\draw[fill=black] (1,1) circle (0.06);
\draw[fill=black] (1.5,1) circle (0.06);
\draw[fill=black] (2,1) circle (0.06);
\draw[fill=black] (2.5,1) circle (0.06);
\draw[fill=black] (3,1) circle (0.06);
\draw[fill=black] (3.5,1) circle (0.06);
\draw[fill=black] (4,1) circle (0.06);
\draw[fill=black] (4.5,1) circle (0.06);
\draw[fill=black] (5,1) circle (0.06);
\draw[fill=black] (5.5,1) circle (0.06);
\draw[fill=black] (6,1) circle (0.06);
\draw[fill=black] (6.5,1) circle (0.06);
\draw[fill=black] (7,1) circle (0.06);
\draw[fill=black] (7.5,1) circle (0.06);
\draw[fill=black] (8,1) circle (0.06);
\draw[fill=black] (8.5,1) circle (0.06);

\draw[fill=black] (0,1.5) circle (0.06);
\draw[fill=black] (0.5,1.5) circle (0.06);
\draw[fill=black] (1,1.5) circle (0.06);
\draw[fill=black] (3,1.5) circle (0.06);
\draw[fill=black] (3.5,1.5) circle (0.06);
\draw[fill=black] (4,1.5) circle (0.06);
\draw[fill=black] (4.5,1.5) circle (0.06);
\draw[fill=black] (5,1.5) circle (0.06);
\draw[fill=black] (5.5,1.5) circle (0.06);
\draw[fill=black] (7.5,1.5) circle (0.06);
\draw[fill=black] (8,1.5) circle (0.06);
\draw[fill=black] (8.5,1.5) circle (0.06);

\draw[fill=black] (0,2) circle (0.06);
\draw[fill=black] (1,2) circle (0.06);
\draw[fill=black] (3,2) circle (0.06);
\draw[fill=black] (4,2) circle (0.06);
\draw[fill=black] (4.5,2) circle (0.06);
\draw[fill=black] (5.5,2) circle (0.06);
\draw[fill=black] (7.5,2) circle (0.06);
\draw[fill=black] (8.5,2) circle (0.06);

\draw[fill=black] (0,2.5) circle (0.06);
\draw[fill=black] (0.5,2.5) circle (0.06);
\draw[fill=black] (1,2.5) circle (0.06);
\draw[fill=black] (3,2.5) circle (0.06);
\draw[fill=black] (3.5,2.5) circle (0.06);
\draw[fill=black] (4,2.5) circle (0.06);
\draw[fill=black] (4.5,2.5) circle (0.06);
\draw[fill=black] (5,2.5) circle (0.06);
\draw[fill=black] (5.5,2.5) circle (0.06);
\draw[fill=black] (7.5,2.5) circle (0.06);
\draw[fill=black] (8,2.5) circle (0.06);
\draw[fill=black] (8.5,2.5) circle (0.06);

\draw[fill=black] (0,3) circle (0.06);
\draw[fill=black] (0.5,3) circle (0.06);
\draw[fill=black] (1,3) circle (0.06);
\draw[fill=black] (1.5,3) circle (0.06);
\draw[fill=black] (2,3) circle (0.06);
\draw[fill=black] (2.5,3) circle (0.06);
\draw[fill=black] (3,3) circle (0.06);
\draw[fill=black] (3.5,3) circle (0.06);
\draw[fill=black] (4,3) circle (0.06);
\draw[fill=black] (4.5,3) circle (0.06);
\draw[fill=black] (5,3) circle (0.06);
\draw[fill=black] (5.5,3) circle (0.06);
\draw[fill=black] (6,3) circle (0.06);
\draw[fill=black] (6.5,3) circle (0.06);
\draw[fill=black] (7,3) circle (0.06);
\draw[fill=black] (7.5,3) circle (0.06);
\draw[fill=black] (8,3) circle (0.06);
\draw[fill=black] (8.5,3) circle (0.06);

\draw[fill=black] (0,3.5) circle (0.06);
\draw[fill=black] (1,3.5) circle (0.06);
\draw[fill=black] (1.5,3.5) circle (0.06);
\draw[fill=black] (2.5,3.5) circle (0.06);
\draw[fill=black] (3,3.5) circle (0.06);
\draw[fill=black] (4,3.5) circle (0.06);
\draw[fill=black] (4.5,3.5) circle (0.06);
\draw[fill=black] (5.5,3.5) circle (0.06);
\draw[fill=black] (6,3.5) circle (0.06);
\draw[fill=black] (7,3.5) circle (0.06);
\draw[fill=black] (7.5,3.5) circle (0.06);
\draw[fill=black] (8.5,3.5) circle (0.06);

\draw[fill=black] (0,4) circle (0.06);
\draw[fill=black] (0.5,4) circle (0.06);
\draw[fill=black] (1,4) circle (0.06);
\draw[fill=black] (1.5,4) circle (0.06);
\draw[fill=black] (2,4) circle (0.06);
\draw[fill=black] (2.5,4) circle (0.06);
\draw[fill=black] (3,4) circle (0.06);
\draw[fill=black] (3.5,4) circle (0.06);
\draw[fill=black] (4,4) circle (0.06);
\draw[fill=black] (4.5,4) circle (0.06);
\draw[fill=black] (5,4) circle (0.06);
\draw[fill=black] (5.5,4) circle (0.06);
\draw[fill=black] (6,4) circle (0.06);
\draw[fill=black] (6.5,4) circle (0.06);
\draw[fill=black] (7,4) circle (0.06);
\draw[fill=black] (7.5,4) circle (0.06);
\draw[fill=black] (8,4) circle (0.06);
\draw[fill=black] (8.5,4) circle (0.06);

\draw[fill=black] (0,4.5) circle (0.06);
\draw[fill=black] (0.5,4.5) circle (0.06);
\draw[fill=black] (1,4.5) circle (0.06);
\draw[fill=black] (1.5,4.5) circle (0.06);
\draw[fill=black] (2,4.5) circle (0.06);
\draw[fill=black] (2.5,4.5) circle (0.06);
\draw[fill=black] (3,4.5) circle (0.06);
\draw[fill=black] (3.5,4.5) circle (0.06);
\draw[fill=black] (4,4.5) circle (0.06);

\draw[fill=black] (0,5) circle (0.06);
\draw[fill=black] (1,5) circle (0.06);
\draw[fill=black] (1.5,5) circle (0.06);
\draw[fill=black] (2.5,5) circle (0.06);
\draw[fill=black] (3,5) circle (0.06);
\draw[fill=black] (4,5) circle (0.06);

\draw[fill=black] (0,5.5) circle (0.06);
\draw[fill=black] (0.5,5.5) circle (0.06);
\draw[fill=black] (1,5.5) circle (0.06);
\draw[fill=black] (1.5,5.5) circle (0.06);
\draw[fill=black] (2,5.5) circle (0.06);
\draw[fill=black] (2.5,5.5) circle (0.06);
\draw[fill=black] (3,5.5) circle (0.06);
\draw[fill=black] (3.5,5.5) circle (0.06);
\draw[fill=black] (4,5.5) circle (0.06);

\draw[fill=black] (0,6) circle (0.06);
\draw[fill=black] (0.5,6) circle (0.06);
\draw[fill=black] (1,6) circle (0.06);
\draw[fill=black] (3,6) circle (0.06);
\draw[fill=black] (3.5,6) circle (0.06);
\draw[fill=black] (4,6) circle (0.06);

\draw[fill=black] (0,6.5) circle (0.06);
\draw[fill=black] (1,6.5) circle (0.06);
\draw[fill=black] (3,6.5) circle (0.06);
\draw[fill=black] (4,6.5) circle (0.06);

\draw[fill=black] (0,7) circle (0.06);
\draw[fill=black] (0.5,7) circle (0.06);
\draw[fill=black] (1,7) circle (0.06);
\draw[fill=black] (3,7) circle (0.06);
\draw[fill=black] (3.5,7) circle (0.06);
\draw[fill=black] (4,7) circle (0.06);

\draw[fill=black] (0,7.5) circle (0.06);
\draw[fill=black] (0.5,7.5) circle (0.06);
\draw[fill=black] (1,7.5) circle (0.06);
\draw[fill=black] (1.5,7.5) circle (0.06);
\draw[fill=black] (2,7.5) circle (0.06);
\draw[fill=black] (2.5,7.5) circle (0.06);
\draw[fill=black] (3,7.5) circle (0.06);
\draw[fill=black] (3.5,7.5) circle (0.06);
\draw[fill=black] (4,7.5) circle (0.06);

\draw[fill=black] (0,8) circle (0.06);
\draw[fill=black] (1,8) circle (0.06);
\draw[fill=black] (1.5,8) circle (0.06);
\draw[fill=black] (2.5,8) circle (0.06);
\draw[fill=black] (3,8) circle (0.06);
\draw[fill=black] (4,8) circle (0.06);

\draw[fill=black] (0,8.5) circle (0.06);
\draw[fill=black] (0.5,8.5) circle (0.06);
\draw[fill=black] (1,8.5) circle (0.06);
\draw[fill=black] (1.5,8.5) circle (0.06);
\draw[fill=black] (2,8.5) circle (0.06);
\draw[fill=black] (2.5,8.5) circle (0.06);
\draw[fill=black] (3,8.5) circle (0.06);
\draw[fill=black] (3.5,8.5) circle (0.06);
\draw[fill=black] (4,8.5) circle (0.06);

\draw (0,0)--(8.5,0);
\draw (0,0)--(0,8.5);

\draw (1,1)--(1.5,1);
\draw (1,1)--(1,1.5);
\draw (1.25,0)--(1.25,1);
\draw (0,1.25)--(1,1.25);

\draw (1.5,1)--(3,1);
\draw (2.75,1)--(2.75,0);

\draw (3,1)--(3,1.5);
\draw (4,1)--(4,1.5);
\draw (3,1.25)--(4,1.25);
\draw (4,1)--(4.5,1);
\draw (4.25,0)--(4.25,1);

\draw (4.5,1)--(4.5,1.5);
\draw (5.5,1)--(5.5,1.5);
\draw (4.5,1.25)--(5.5,1.25);
\draw (5.5,1)--(6,1);
\draw (5.75,0)--(5.75,1);
\draw (6,1)--(7,1);
\draw (7,1)--(7.5,1);
\draw (7.25,0)--(7.25,1);
\draw (7.5,1)--(7.5,1.5);
\draw (8.5,1)--(8.5,1.5);
\draw (7.5,1.25)--(8.5,1.25);

\draw (8.5,0)--(9,0);
\draw (8.5,1)--(9,1);
\draw (8.75,0)--(8.75,1);

\draw (1,1.5)--(1,3);
\draw (0,2.75)--(1,2.75);

\draw (1,3)--(3,3);
\draw (3,3)--(3,1);

\draw (4,2.5)--(4,3);
\draw (3,2.75)--(4,2.75);
\draw (4,1.5)--(4.5,1.5);
\draw (4,2.5)--(4.5,2.5);
\draw (4.25,1.5)--(4.25,2.5);

\draw (4.5,2.5)--(4.5,3);
\draw (5.5,2.5)--(5.5,3);
\draw (4.5,2.75)--(5.5,2.75);
\draw (5.5,1.5)--(5.5,2.5);
\draw (5.5,3)--(7.5,3);
\draw (7.5,3)--(7.5,1.5);

\draw (8.5,2.5)--(8.5,3);
\draw (7.5,2.75)--(8.5,2.75);

\draw (8.5,1.5)--(9,1.5);
\draw (8.5,2.5)--(9,2.5);
\draw (8.75,1.5)--(8.75,2.5);

\draw (8.5,3)--(9,3);
\draw (8.5,4)--(9,4);
\draw (8.75,3)--(8.75,4);

\draw (7,4)--(7.5,4);
\draw (7.25,3)--(7.25,4);

\draw (5.5,4)--(6,4);
\draw (5.75,3)--(5.75,4);

\draw (6,4)--(7,4);
\draw (4.5,4)--(5.5,4);

\draw (4,3)--(4.5,3);
\draw (4,4)--(4.5,4);
\draw (4.25,3)--(4.25,4);

\draw (4,4)--(4,4.5);
\draw (3,4)--(3,4.5);
\draw (3,4.25)--(4,4.25);

\draw (2.5,4)--(3,4);
\draw (2.75,3)--(2.75,4);

\draw (1,4)--(1.5,4);
\draw (1.25,3)--(1.25,4);

\draw (7.5,4)--(8.5,4);

\draw (1,4)--(1,4.5);
\draw (0,4.25)--(1,4.25);

\draw (1.5,4)--(1.5,4.5);
\draw (2.5,4)--(2.5,4.5);
\draw (1.5,4.25)--(2.5,4.25);

\draw (1,4.5)--(1.5,4.5);
\draw (1,5.5)--(1.5,5.5);
\draw (1.25,4.5)--(1.25,5.5);

\draw (2.5,4.5)--(3,4.5);
\draw (2.5,5.5)--(3,5.5);
\draw (2.75,4.5)--(2.75,5.5);

\draw (4,4.5)--(4,5.5);
\draw (1.5,5.5)--(2.5,5.5);

\draw (1,5.5)--(1,6);
\draw (0,5.75)--(1,5.75);

\draw (3,5.5)--(3,6);
\draw (4,5.5)--(4,6);
\draw (3,5.75)--(4,5.75);

\draw (1,6)--(1,7);
\draw (3,6)--(3,7);
\draw (4,6)--(4,7);

\draw (1,7)--(1,7.5);
\draw (0,7.25)--(1,7.25);

\draw (3,7)--(3,7.5);
\draw (4,7)--(4,7.5);
\draw (3,7.25)--(4,7.25);

\draw (1,7.5)--(3,7.5);

\draw (1,8.5)--(1.5,8.5);
\draw (1.25,7.5)--(1.25,8.5);

\draw (2.5,8.5)--(3,8.5);
\draw (2.75,7.5)--(2.75,8.5);

\draw (0,8.5)--(0,9);
\draw (1,8.5)--(1,9);
\draw (1.5,8.5)--(1.5,9);
\draw (2.5,8.5)--(2.5,9);
\draw (3,8.5)--(3,9);
\draw (4,8.5)--(4,9);

\draw (0,8.75)--(1,8.75);
\draw (1.5,8.75)--(2.5,8.75);
\draw (3,8.75)--(4,8.75);

\draw (4,8.5)--(4,7.5);

\draw (2,9.5) node {\ldots};
\draw (9.5,2) node {\vdots};
\end{tikzpicture}
\caption{First Transformation}\label{SC_con_fig_trans3}
\end{figure}





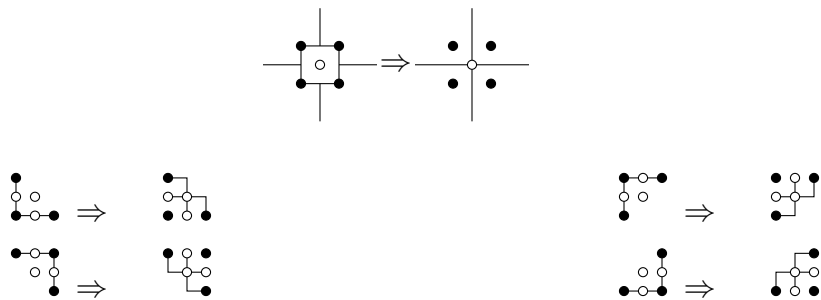
\begin{figure}[ht]
\centering
\begin{tikzpicture}

\draw (0.25-1,0.25)--(0.25-1,-0.25)--(-0.25-1,-0.25)--(-0.25-1,0.25)--cycle;
\draw (0-1,0.25)--(0-1,0.75);
\draw (0-1,-0.25)--(0-1,-0.75);
\draw (0.25-1,0)--(0.75-1,0);
\draw (-0.25-1,0)--(-0.75-1,0);

\draw[fill=black] (0.25-1,0.25) circle (0.06);
\draw[fill=black] (0.25-1,-0.25) circle (0.06);
\draw[fill=black] (-0.25-1,-0.25) circle (0.06);
\draw[fill=black] (-0.25-1,0.25) circle (0.06);
\draw[fill=white] (-1,0) circle (0.06);

\draw (0,0) node {$\Rightarrow$};

\draw (0.25,0)--(1.75,0);
\draw (1,-0.75)--(1,0.75);

\draw[fill=black] (0.25+1,0.25) circle (0.06);
\draw[fill=black] (0.25+1,-0.25) circle (0.06);
\draw[fill=black] (-0.25+1,-0.25) circle (0.06);
\draw[fill=black] (-0.25+1,0.25) circle (0.06);
\draw[fill=white] (1,0) circle (0.06);

\draw (-5,-1.5)--(-5,-2)--(-4.5,-2);
\draw[fill=black] (-5,-1.5) circle (0.06);
\draw[fill=black] (-5,-2) circle (0.06);
\draw[fill=black] (-4.5,-2) circle (0.06);
\draw[fill=white] (-5,-1.75) circle (0.06);
\draw[fill=white] (-4.75,-2) circle (0.06);
\draw[fill=white] (-4.75,-1.75) circle (0.06);

\draw (-4,-2) node {$\Rightarrow$};

\draw (-3,-1.75)--(-2.5,-1.75)--(-2.5,-2);
\draw (-2.75,-2)--(-2.75,-1.5)--(-3,-1.5);
\draw[fill=black] (-5+2,-1.5) circle (0.06);
\draw[fill=black] (-5+2,-2) circle (0.06);
\draw[fill=black] (-4.5+2,-2) circle (0.06);
\draw[fill=white] (-5+2,-1.75) circle (0.06);
\draw[fill=white] (-4.75+2,-2) circle (0.06);
\draw[fill=white] (-4.75+2,-1.75) circle (0.06);

\draw (3,-2)--(3,-1.5)--(3.5,-1.5);
\draw[fill=black] (3,-2) circle (0.06);
\draw[fill=black] (3,-1.5) circle (0.06);
\draw[fill=black] (3.5,-1.5) circle (0.06);
\draw[fill=white] (3,-1.75) circle (0.06);
\draw[fill=white] (3.25,-1.5) circle (0.06);
\draw[fill=white] (3.25,-1.75) circle (0.06);

\draw (4,-2) node {$\Rightarrow$};

\draw (5,-1.75)--(5.5,-1.75)--(5.5,-1.5);
\draw (5.25,-1.5)--(5.25,-2)--(5,-2);
\draw[fill=black] (5,-2) circle (0.06);
\draw[fill=black] (5,-1.5) circle (0.06);
\draw[fill=black] (5.5,-1.5) circle (0.06);
\draw[fill=white] (5,-1.75) circle (0.06);
\draw[fill=white] (5.25,-1.5) circle (0.06);
\draw[fill=white] (5.25,-1.75) circle (0.06);

\draw (-5,-2.5)--(-4.5,-2.5)--(-4.5,-3);
\draw[fill=black] (-5,-2.5) circle (0.06);
\draw[fill=black] (-4.5,-2.5) circle (0.06);
\draw[fill=black] (-4.5,-3) circle (0.06);
\draw[fill=white] (-4.75,-2.5) circle (0.06);
\draw[fill=white] (-4.5,-2.75) circle (0.06);
\draw[fill=white] (-4.75,-2.75) circle (0.06);

\draw (-4,-3) node {$\Rightarrow$};

\draw (-2.75,-2.5)--(-2.75,-3)--(-2.5,-3);
\draw (-3,-2.5)--(-3,-2.75)--(-2.5,-2.75);
\draw[fill=black] (-5+2,-2.5) circle (0.06);
\draw[fill=black] (-4.5+2,-2.5) circle (0.06);
\draw[fill=black] (-4.5+2,-3) circle (0.06);
\draw[fill=white] (-4.75+2,-2.5) circle (0.06);
\draw[fill=white] (-4.5+2,-2.75) circle (0.06);
\draw[fill=white] (-4.75+2,-2.75) circle (0.06);

\draw (3,-3)--(3.5,-3)--(3.5,-2.5);
\draw[fill=black] (3,-3) circle (0.06);
\draw[fill=black] (3.5,-3) circle (0.06);
\draw[fill=black] (3.5,-2.5) circle (0.06);
\draw[fill=white] (3.25,-3) circle (0.06);
\draw[fill=white] (3.5,-2.75) circle (0.06);
\draw[fill=white] (3.25,-2.75) circle (0.06);

\draw (4,-3) node {$\Rightarrow$};

\draw (5,-3)--(5,-2.75)--(5.5,-2.75);
\draw (5.25,-3)--(5.25,-2.5)--(5.5,-2.5);
\draw[fill=black] (3+2,-3) circle (0.06);
\draw[fill=black] (3.5+2,-3) circle (0.06);
\draw[fill=black] (3.5+2,-2.5) circle (0.06);
\draw[fill=white] (3.25+2,-3) circle (0.06);
\draw[fill=white] (3.5+2,-2.75) circle (0.06);
\draw[fill=white] (3.25+2,-2.75) circle (0.06);
\end{tikzpicture}
\caption{Second Transformation}\label{SC_con_fig_trans4}
\end{figure}


Secondly, we do the transformations in Figure \ref{SC_con_fig_trans4} where the resistances of the resistors in the new networks only depend on the shape of the networks in Figure \ref{SC_con_fig_trans4} such that we obtain a weighted graph with vertex set $V_{n-1}$ and all conductances equivalent to $1$. Moreover, the resistances between any two points are larger than those in the weighted graph $\calW_n$, hence we obtain the desired result.
\end{proof}

Now, we estimate $R_n(p_0,p_1)$ and $\frakR_n(0^n,1^n)$ as follows.

\begin{proof}[Proof of Theorem \ref{SC_con_thm_resist}]
The idea is that replacing one point by one network should increase resistances by multiplying the resistance of an individual network.

By Proposition \ref{SC_con_prop_resist2} and Proposition \ref{SC_con_prop_resist3}, we have for all $n\ge1$
$$R_n(p_0,p_1)\asymp\frakR_n(0^n,1^n).$$
By Theorem \ref{SC_con_thm_resist1} and Proposition \ref{SC_con_prop_resist2}, we have for all $n\ge1$
$$\frakR_n(0^n,1^n)\ge R_n(p_0,p_1)\ge\frac{1}{4}R_n(p_1,p_5)\asymp\rho^n.$$
We only need to show that for all $n\ge1$
$$\frakR_n(0^n,1^n)\lesssim\rho^n.$$

Firstly, we estimate $\frakR_{n+1}(0^{n+1},12^n)$. Cutting certain edges in $\calW_{n+1}$, we obtain the electrical network in Figure \ref{SC_con_fig_resist1} which is equivalent to the electrical networks in Figure \ref{SC_con_fig_resist2}.




\begin{figure}[ht]
\centering
\begin{tikzpicture}
\draw (0,0)--(2,0)--(2,2)--(0,2)--cycle;
\draw (3,0)--(5,0)--(5,2)--(3,2)--cycle;
\draw (6,0)--(8,0)--(8,2)--(6,2)--cycle;
\draw (0,3)--(2,3)--(2,5)--(0,5)--cycle;
\draw (6,3)--(8,3)--(8,5)--(6,5)--cycle;
\draw (0,6)--(2,6)--(2,8)--(0,8)--cycle;
\draw (3,6)--(5,6)--(5,8)--(3,8)--cycle;
\draw (6,6)--(8,6)--(8,8)--(6,8)--cycle;

\draw (2,2)--(2,3);
\draw (2,2)--(3,2);
\draw (0,5)--(0,6);
\draw (2,8)--(3,8);
\draw (5,6)--(6,6);
\draw (6,6)--(6,5);
\draw (8,3)--(8,2);
\draw (5,0)--(6,0);

\draw[fill=black] (0,0) circle (0.06);
\draw[fill=black] (5,0) circle (0.06);

\draw (0,-0.5) node {$0^{n+1}$};
\draw (5,-0.5) node {$12^n$};

\draw (1,1) node {$0W_n$};
\draw (4,1) node {$1W_n$};
\draw (7,1) node {$2W_n$};
\draw (7,4) node {$3W_n$};
\draw (7,7) node {$4W_n$};
\draw (4,7) node {$5W_n$};
\draw (1,7) node {$6W_n$};
\draw (1,4) node {$7W_n$};

\end{tikzpicture}
\caption{An Equivalent Electrical Network}\label{SC_con_fig_resist1}
\end{figure}





\begin{figure}[ht]
\centering
\subfigure{
\begin{tikzpicture}[scale=0.5]
\draw (0,0)--(2,2);
\draw (5,0)--(3,2);
\draw (6,0)--(8,2);
\draw (2,3)--(0,5);
\draw (8,3)--(6,5);
\draw (0,6)--(2,8);
\draw (5,6)--(3,8);

\draw (2,2)--(2,3);
\draw (2,2)--(3,2);
\draw (0,5)--(0,6);
\draw (2,8)--(3,8);
\draw (5,6)--(6,6);
\draw (6,6)--(6,5);
\draw (8,3)--(8,2);
\draw (5,0)--(6,0);

\draw[fill=black] (0,0) circle (0.06);
\draw[fill=black] (5,0) circle (0.06);

\draw (0,-0.5) node {$0^{n+1}$};
\draw (5,-0.5) node {$12^n$};

\draw (0,1.4) node {\tiny{$\mathfrak{R}_n(0^n,4^n)$}};
\draw (2,4.4) node {\tiny{$\mathfrak{R}_n(2^n,6^n)$}};
\draw (0,7.4) node {\tiny{$\mathfrak{R}_n(0^n,4^n)$}};
\draw (5,7.4) node {\tiny{$\mathfrak{R}_n(2^n,6^n)$}};
\draw (8,4.4) node {\tiny{$\mathfrak{R}_n(2^n,6^n)$}};
\draw (5,1.4) node {\tiny{$\mathfrak{R}_n(2^n,6^n)$}};
\draw (8,0.6) node {\tiny{$\mathfrak{R}_n(0^n,4^n)$}};

\draw (2.5,1.7) node {\tiny{$1$}};
\draw (1.7,2.5) node {\tiny{$1$}};
\draw (-0.3,5.5) node {\tiny{$1$}};
\draw (2.5,8.3) node {\tiny{$1$}};
\draw (5.5,6.3) node {\tiny{$1$}};
\draw (6.3,5.5) node {\tiny{$1$}};
\draw (8.3,2.5) node {\tiny{$1$}};
\draw (5.5,0.3) node {\tiny{$1$}};

\end{tikzpicture}}
\hspace{0.01pt}
\subfigure{
\begin{tikzpicture}
\draw (0,0)--(2,0);
\draw (2,0)..controls (3,1) and (4,1) ..(5,0);
\draw (2,0)..controls (3,-1) and (4,-1) ..(5,0);

\draw[fill=black] (0,0) circle (0.06);
\draw[fill=black] (5,0) circle (0.06);

\draw (0,-0.5) node {$0^{n+1}$};
\draw (5,-0.5) node {$12^n$};

\draw (1,0.5) node {$\mathfrak{R}_n(0^n,4^n)$};
\draw (3.5,1) node {$5\mathfrak{R}_n(0^n,4^n)+7$};
\draw (3.5,-1) node {$\mathfrak{R}_n(0^n,4^n)+1$};

\end{tikzpicture}
}
\caption{Equivalent Electrical Networks}\label{SC_con_fig_resist2}
\end{figure}


Hence
$$
\begin{aligned}
\frakR_{n+1}(0^{n+1},12^n)&\le\frakR_n(0^n,4^n)+\frac{\left(5\frakR_n(0^n,4^n)+7\right)\left(\frakR_n(0^n,4^n)+1\right)}{\left(5\frakR_n(0^n,4^n)+7\right)+\left(\frakR_n(0^n,4^n)+1\right)}\\
&\lesssim\frakR_n(0^n,4^n)+\frac{5}{6}\frakR_n(0^n,4^n)=\frac{11}{6}\frakR_n(0^n,4^n)\lesssim\rho^{n+1}.
\end{aligned}
$$
Secondly, from $0^{n+1}$ to $1^{n+1}$, we construct a finite sequence as follows. For $i=1,\ldots,n+2$,
$$
w^{(i)}=
\begin{cases}
1^{i-1}0^{n+2-i},\text{ if }i\text{ is an odd number},\\
1^{i-1}2^{n+2-i},\text{ if }i\text{ is an even number}.\\
\end{cases}
$$
By cutting technique, if $i$ is an odd number, then
$$
\begin{aligned}
&\frakR_{n+1}(w^{(i)},w^{(i+1)})=\frakR_{n+1}(1^{i-1}0^{n+2-i},1^{i}2^{n+1-i})\\
\le&\frakR_{n+2-i}(0^{n+2-i},12^{n+1-i})\lesssim\rho^{n+2-i}.
\end{aligned}
$$
If $i$ is an even number, then
$$
\begin{aligned}
&\frakR_{n+1}(w^{(i)},w^{(i+1)})=\frakR_{n+1}(1^{i-1}2^{n+2-i},1^{i}0^{n+1-i})\\
\le&\frakR_{n+2-i}(2^{n+2-i},10^{n+1-i})=\frakR_{n+2-i}(0^{n+2-i},12^{n+1-i})\lesssim\rho^{n+2-i}.
\end{aligned}
$$
Hence
$$
\begin{aligned}
&\frakR_{n+1}(0^{n+1},1^{n+1})=\frakR_{n+1}(w^{(1)},w^{(n+2)})\\
\le&\sum_{i=1}^{n+1}\frakR_{n+1}(w^{(i)},w^{(i+1)})\lesssim\sum_{i=1}^{n+1}\rho^{n+2-i}=\sum_{i=1}^{n+1}\rho^{i}\lesssim\rho^{n+1}.
\end{aligned}
$$
\end{proof}

\section{Uniform Harnack Inequality}\label{SC_con_sec_harnack}

In this section, we give uniform Harnack inequality as follows.

\begin{mythm}\label{SC_con_thm_harnack}
There exist some constants $C\in(0,+\infty),\delta\in(0,1)$ such that for all $n\ge1,x\in K,r>0$, for all nonnegative harmonic function $u$ on $V_n\cap B(x,r)$, we have
$$\max_{V_n\cap B(x,\delta r)}u\le C\min_{V_n\cap B(x,\delta r)}u.$$
\end{mythm}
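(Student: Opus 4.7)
The plan is to pass from the finite graphs $\mathcal{V}_n$ to the infinite graphical Sierpi\'nski carpet $X_\infty$ depicted in Figure \ref{fig_graphSC}, establish a scale-invariant elliptic Harnack inequality once and for all on $X_\infty$, and then transfer it back to every $\mathcal{V}_n$ by the $3^n$-rescaling that identifies a Euclidean ball of radius $r$ in $V_n$ with a graph ball of radius $\asymp r\cdot 3^n$ in $X_\infty$.

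First I would extend the resistance estimate of Theorem \ref{SC_con_thm_resist} to $X_\infty$, proving a two-sided bound
$$R_{X_\infty}(x,y) \asymp d_{X_\infty}(x,y)^{\beta^*-\alpha}, \qquad \beta^*-\alpha = \frac{\log\rho}{\log 3},$$
for all $x,y \in X_\infty$. The lower bound comes from shorting all vertices outside a suitably chosen finite cell to a single point and invoking the finite estimate. The upper bound is obtained by chaining Corollary \ref{SC_con_cor_resist_upper} across a dyadic sequence of cells of increasing scale joining $x$ to $y$, using the uniform bounded overlap of adjacent cells. Combined with the obvious volume doubling $|B(x,r)|_{X_\infty} \asymp r^\alpha$, this gives the data needed for the elliptic theory.

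Second, I would combine this resistance/volume pair to derive an elliptic Harnack inequality on balls of $X_\infty$, following the program suggested in \cite{BCK05}. The steps are: from the resistance estimate one obtains two-sided bounds on the Green function of a ball; these give a weak Poincar\'e inequality by a chaining argument along cells at the appropriate scale; Moser iteration on the weighted graph then yields a mean value inequality; and the elliptic Harnack inequality follows from the usual oscillation lemma. Every step is discrete and purely elliptic --- no heat kernel estimates are invoked, which is precisely the point of this approach.

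Finally, the transfer to $\mathcal{V}_n$ goes as follows. Given $n, x, r$, let $k$ be the integer with $3^{-k-1} < r \le 3^{-k}$. If $k \ge n$, then $V_n \cap B(x,r)$ has uniformly bounded cardinality and the inequality is trivial. Otherwise the $3^n$-scaling identifies $V_n \cap B(x,r)$ isometrically with a ball of graph radius $\asymp 3^{n-k}$ in $X_\infty$ (with comparable edge weights), harmonicity being preserved; applying the Harnack inequality on $X_\infty$ with the smaller radius $\delta r$, the buffer $1-\delta$ ensuring the enlarged ball still lies in the harmonicity domain, yields the claim with $C,\delta$ independent of $n,x,r$. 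The main obstacle will be the second step, specifically the verification of a scale-invariant Poincar\'e inequality on $X_\infty$: this is where the non--p.c.f.\ structure of the SC forces a genuinely combinatorial chaining argument that does not reduce to a single-cell estimate, and it is the delicate input that unlocks the standard BCK package.
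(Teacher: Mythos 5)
Your overall architecture is exactly the paper's: pass to the infinite graphical Sierpi\'nski carpet $V_\infty=\cup_{n\ge0}3^nV_n$, extend the resistance estimates of Theorem \ref{SC_con_thm_resist} by shorting and cutting to get $R(x,y)\asymp d(x,y)^{\gamma}$ with $\gamma=\log\rho/\log3=\beta^*-\alpha$, deduce two-sided Green function bounds on balls, obtain an elliptic Harnack inequality on the infinite graph, and transfer it back to $\calV_n$ by the $3^n$-rescaling (your treatment of the small-$r$ case and of the rescaling is fine and matches what the paper does implicitly).

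The genuine gap is in your second step, where you propose to pass from the Green function bounds to the Harnack inequality via a weak Poincar\'e inequality and Moser iteration. Two problems. First, Moser iteration in its classical form needs a Sobolev inequality $\mynorm{u}_{2\kappa}^2\lesssim\calE(u,u)+\mynorm{u}_2^2$ with $\kappa>1$, i.e.\ spectral dimension exceeding $2$; here the spectral dimension is $2\alpha/\beta^*=2\log8/\log(8\rho)<2$, so this inequality fails and the iteration does not close. In the anomalous regime the known substitutes (Barlow--Bass, Andres--Barlow \cite{AB15}) require, besides volume doubling and a Poincar\'e inequality with exponent $\beta^*$, a cutoff Sobolev inequality $CS(\beta^*)$ to control the cross terms produced by cutoff functions; your plan omits this entirely, and it --- not the Poincar\'e inequality --- is the genuinely hard condition. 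Second, the detour is unnecessary: since $\gamma>0$ the graph is strongly recurrent, and in that regime the elliptic Harnack inequality follows \emph{directly} from the two-sided Green function estimates you already have, by comparing a nonnegative harmonic function on a ball with $g_{B(z,r)}(z,\cdot)$ via the maximum principle. This is precisely the content of \cite[Lemma 10.2]{GT01} and \cite[Theorem 3.12]{GH14a}, which is what the paper invokes, and it is also the point of the strongly recurrent case of \cite{BCK05}, which needs only resistance and volume estimates and no Poincar\'e inequality. Replacing your Poincar\'e--Moser step by this Green function argument repairs the proof and brings it in line with the paper.
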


\begin{myrmk}
The point of the above theorem is that the constant $C$ is \emph{uniform} in $n$.
\end{myrmk}

The idea is as follows. Firstly, we use resistance estimates in finite graphs $V_n$ to obtain resistance estimates in an infinite graph $V_\infty$. Secondly, we obtain Green function estimates in $V_\infty$. Thirdly, we obtain elliptic Harnack inequality in $V_\infty$. Finally, we transfer elliptic Harnack inequality in $V_\infty$ to uniform Harnack inequality in $V_n$.

Let $\calV_\infty$ be the graph with vertex set $V_\infty=\cup_{n=0}^\infty3^nV_n$ and edge set given by
$$\myset{(p,q):p,q\in V_\infty,|p-q|=2^{-1}}.$$
We have the figure of $\calV_\infty$ in Figure \ref{fig_graphSC}.

Locally, $\calV_\infty$ is like $\calV_n$. Let the conductances of all edges be $1$. Let $d$ be an integer-valued metric, that is, $d(p,q)$ is the minimum of the lengths of all paths connecting $p$ and $q$. It is obvious that
$$d(p,q)\asymp|p-q|\text{ for all }p,q\in V_\infty.$$

By shorting and cutting technique, we reduce $\calV_\infty$ to $\calV_n$ to obtain resistance estimates as follows.
$$R(x,y)\asymp\rho^{\frac{\log d(x,y)}{\log 3}}=d(x,y)^{\frac{\log\rho}{\log3}}=d(x,y)^\gamma\text{ for all }x,y\in V_\infty,$$
where $\gamma=\log\rho/\log3$.

Let $g_B$ be the Green function in a ball $B$. We have Green function estimates as follows.

\begin{mythm}(\cite[Proposition 6.11]{GHL14})\label{SC_con_thm_green}
There exist some constants $C\in(0,+\infty),\eta\in(0,1)$ such that for all $z\in V_\infty,r>0$, we have
$$g_{B(z,r)}(x,y)\le Cr^\gamma\text{ for all }x,y\in B(z,r),$$
$$g_{B(z,r)}(z,y)\ge\frac{1}{C}r^\gamma\text{ for all }y\in B(z,\eta r).$$
\end{mythm}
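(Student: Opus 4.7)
The plan is to exploit the identity between Green function values and effective resistances on graphs, combined with the just-established two-sided estimate $R(x,y) \asymp d(x,y)^\gamma$ on $\calV_\infty$. The upper bound and lower bound require different specializations of these tools.

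For the upper bound, I would use the maximum principle for $g_{B(z,r)}(x,\cdot)$, which is harmonic off $x$ and vanishes outside $B(z,r)$: this gives $g_{B(z,r)}(x,y) \le g_{B(z,r)}(x,x)$ for all $y \in B(z,r)$. Now $g_{B(z,r)}(x,x)$ equals the effective resistance $R(x, B(z,r)^c)$ in $\calV_\infty$ with $B(z,r)^c$ shorted to a single node. Shorting only decreases resistance, so $R(x, B(z,r)^c) \le R(x, y_0)$ for any $y_0 \in B(z,r)^c$. Picking $y_0$ to be a neighbor of the boundary of $B(z,r)$ gives $d(x, y_0) \le 2r + 1$, and the pairwise resistance estimate yields $R(x,y_0) \le C r^\gamma$.

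For the lower bound, I would use the decomposition
\begin{equation*}
g_{B(z,r)}(z,y) \;=\; \tfrac{1}{2}\bigl( R(z, B(z,r)^c) + R(y, B(z,r)^c) - R_{B(z,r)}(z,y)\bigr),
\end{equation*}
valid for $z,y \in B(z,r)$, where $R_{B(z,r)}(z,y)$ denotes the effective resistance inside the subnetwork induced by $B(z,r)$. The two set-to-point resistances each require a lower bound of order $r^\gamma$, which I would obtain from a cutting argument on concentric annuli. The last term is controlled directly by $R_{B(z,r)}(z,y) \le C d(z,y)^\gamma \le C(\eta r)^\gamma$. Choosing $\eta$ small enough so that $C\eta^\gamma$ is strictly smaller than the lower constants on the first two terms makes them dominate, yielding $g_{B(z,r)}(z,y) \ge \frac{1}{C'} r^\gamma$ as required.

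The main obstacle will be the uniform lower bound $R(x, B(z,r)^c) \ge \frac{1}{C} r^\gamma$: unlike the pairwise estimate $R(x,y) \asymp d(x,y)^\gamma$, this requires controlling the resistance when the entire outer boundary has been shorted into one node, which in principle could collapse it drastically. I expect to handle this by iterating the scale-invariant resistance estimate on a telescoping sequence of concentric annuli of geometrically comparable sizes, using the near self-similar structure of $\calV_\infty$, so that crossing each annulus contributes a definite fraction of resistance and the total grows like $r^\gamma$. Keeping constants uniform in the base point $z$ and across all scales $r$ is the subtle ingredient that upgrades the pointwise estimate to the needed set-to-point estimate, and this is precisely the step that relies on the techniques developed in \cite{GHL14}.
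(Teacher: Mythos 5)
The paper offers no argument for this statement at all --- it is imported verbatim from \cite[Proposition 6.11]{GHL14} --- so your plan to derive it by hand from the pairwise estimate $R(x,y)\asymp d(x,y)^\gamma$ on $\calV_\infty$ is a genuinely different and more self-contained route. Your upper bound is complete and correct: the maximum principle, the identity $g_{B}(x,x)=R(x,B^{c})$, and the shorting inequality $R(x,B^{c})\le R(x,y_{0})\le Cd(x,y_0)^\gamma$ with $d(x,y_0)\le 2r+1$ do exactly what you claim. The lower bound has two problems worth naming.

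First, your decomposition is stated for the wrong network. The correct identity is
$$g_{B}(x,y)=\tfrac{1}{2}\bigl(R(x,B^{c})+R(y,B^{c})-R_{N}(x,y)\bigr),$$
where $R_{N}(x,y)$ is the effective resistance between $x$ and $y$ in the network obtained by \emph{collapsing} $B^{c}$ to a single node $\delta$: tracing that network onto $\{x,y,\delta\}$ gives a Y-circuit with legs $R_x,R_y,R_\delta$, and one reads off $g_B(x,x)=R_x+R_\delta$, $g_B(x,y)=R_\delta$, $R_N(x,y)=R_x+R_y$. With the resistance of the subnetwork \emph{induced} by $B(z,r)$ (i.e.\ with $B^{c}$ deleted), as you define it, the identity is false --- test it on a triangle $x,y,\delta$ with unit resistors, where $g_{\{x,y\}}(x,y)=1/3$ but your formula returns $1/6$ --- and moreover that quantity is obtained from the full graph by cutting, so it is \emph{larger} than $R(z,y)$ and is not controlled by the pairwise estimate. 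The repair is harmless: since collapsing $B^c$ is a shorting operation, $R_{N}(z,y)\le R(z,y)\le Cd(z,y)^{\gamma}\le C(\eta r)^\gamma$, which is the bound you need.

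Second, and more seriously, the step you defer --- $R(x,B(x,s)^{c})\ge cs^{\gamma}$ --- is not obtained by the argument you sketch. Shorting the combinatorial spheres $\partial B(x,3^{k})$ does put the annuli in series, but the assertion that ``crossing each annulus contributes a definite fraction of resistance,'' i.e.\ that the annulus-crossing resistance at scale $3^{k}$ is $\gtrsim\rho^{k}$, is itself equivalent in difficulty to the claim being proved and does \emph{not} follow from $R(x,y)\asymp d(x,y)^{\gamma}$ by generic shorting and cutting (the generic test-function bound only yields $s^{2-\alpha}$, which is strictly weaker here since $2-\alpha<\gamma$ for the SC). To close this you must bring in the SC-specific crossing estimates of Section \ref{SC_con_sec_resistance}: cut each annulus into four rectangular strips, each crossed the short way, so that the strips sit in parallel and each has resistance $\gtrsim\rho^{k}$ by Theorem \ref{SC_con_thm_resist}; summing the resulting geometric series over $k$ gives $\gtrsim\rho^{n}\asymp s^{\gamma}$. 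Alternatively one invokes the general implication from two-sided pairwise resistance bounds to set-to-point bounds, but that is precisely the nontrivial content of the result cited in \cite{GHL14}, so it cannot be waved at as ``the techniques developed there'' without circularity. Once these two points are fixed, your choice of $\eta$ with $C\eta^{\gamma}$ small against the lower constants finishes the proof as you describe.
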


We obtain elliptic Harnack inequality in $V_\infty$ as follows.

\begin{mythm}(\cite[Lemma 10.2]{GT01},\cite[Theorem 3.12]{GH14a})\label{SC_con_thm_harnack_infinite}
There exist some constants $C\in(0,+\infty)$, $\delta\in(0,1)$ such that for all $z\in V_\infty,r>0$, for all nonnegative harmonic function $u$ on $V_\infty\cap B(z,r)$, we have
$$\max_{B(z,\delta r)}u\le C\min_{B(z,\delta r)}u.$$
\end{mythm}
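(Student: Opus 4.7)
The plan is to derive the elliptic Harnack inequality on $V_\infty$ from the Green function estimates of Theorem \ref{SC_con_thm_green} by the classical probabilistic route. Writing any nonnegative function $u$ harmonic on $V_\infty\cap B(z,r)$ as $u(x)=\bbE_x[u(X_{\tau_{B(z,r)}})]$ for the simple random walk on $V_\infty$ (all conductances equal to $1$), it suffices to show that the harmonic measures $\omega^{B(z,r)}(x,\cdot)=\bbP_x[X_{\tau_{B(z,r)}}\in\cdot]$ for $x\in B(z,\delta r)$ are comparable up to a multiplicative constant. Harnack then follows immediately from this representation.

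First I would establish a mean exit time bound $\bbE_x[\tau_{B(z,r)}]\asymp r^\gamma V(B(z,r))$ via the Green function identity $\bbE_x[\tau_{B(z,r)}]=\sum_y g_{B(z,r)}(x,y)$, using the two-sided bound of Theorem \ref{SC_con_thm_green} together with the volume doubling estimate $V(B(x,r))\asymp r^\alpha$ on $V_\infty$. Next, for $x\in B(z,\delta r)$ with $\delta<\eta$, I would prove a lower bound on the probability of hitting $B(z,\eta r)$ before leaving $B(z,r)$; this follows from decomposing $g_{B(z,r)}$ as $g_{B(z,\eta r)}$ plus a harmonic remainder, combined with the lower bound $g_{B(z,r)}(z,y)\ge c r^\gamma$ valid on $B(z,\eta r)$. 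These two ingredients, together with the strong Markov property applied at the successive entry times into a sequence of shrinking concentric balls, let one propagate boundary information uniformly in the starting point and yield the comparability $\omega^{B(z,r)}(x_1,A)\le C\,\omega^{B(z,r)}(x_2,A)$ for every Borel $A$ on the exit boundary and every $x_1,x_2\in B(z,\delta r)$.

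The main obstacle is that $V_\infty$ is \emph{not} translation invariant: its local combinatorial structure near a vertex depends on which SC-cells contain that vertex at each scale, so no homogeneous theory applies directly and one cannot simply translate bounds from one basepoint to another. The rescue is that the resistance estimate $R(x,y)\asymp d(x,y)^\gamma$ obtained in the previous section holds with \emph{uniform} constants over $V_\infty$, encoding the self-similarity of the SC and thereby making the Green function bounds of Theorem \ref{SC_con_thm_green} uniform in the center $z$. This uniformity is precisely the input that the abstract framework of \cite{GT01,GH14a} requires to convert Green function plus volume estimates into elliptic Harnack inequality, bypassing any point-by-point analysis of the local geometry of $V_\infty$.
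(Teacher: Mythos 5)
Your proposal is correct and follows essentially the same route as the paper: the paper does not prove this theorem itself but quotes it from \cite[Lemma 10.2]{GT01} and \cite[Theorem 3.12]{GH14a}, having verified their hypotheses through the Green function estimates of Theorem \ref{SC_con_thm_green} and the volume regularity of $V_\infty$, and your sketch reconstructs precisely the argument of those cited proofs (exit-time representation of harmonic functions, hitting-probability comparability from two-sided Green bounds, strong Markov property), with the paper's remark offering the alternative chain via \cite{BCK05} and \cite{GT02}. One small slip: ``hitting $B(z,\eta r)$ before leaving $B(z,r)$'' is vacuous for a starting point already lying in $B(z,\delta r)\subseteq B(z,\eta r)$; the bound you actually need is $\bbP_x[\sigma_y<\tau_{B(z,r)}]=g_{B(z,r)}(x,y)/g_{B(z,r)}(y,y)\ge c$ for arbitrary $x,y\in B(z,\delta r)$ (obtained from the lower bound on the sub-ball $B(x,(1-\delta)r)$ and the upper bound on $B(z,r)$), which already gives $u(x)\ge c\,u(y)$ by optional stopping and makes the iteration over shrinking concentric balls unnecessary in this strongly recurrent setting.
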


\begin{myrmk}
We give an alternative approach as follows. It was proved in \cite{BCK05} that sub-Gaussian heat kernel estimates are equivalent to resistance estimates for random walks on fractal graph under strongly recurrent condition. Hence we obtain sub-Gaussian heat kernel estimates, see \cite[Example 4]{BCK05}. It was proved in \cite[Theorem 3.1]{GT02} that sub-Gaussian heat kernel estimates imply elliptic Harnack inequality. Hence we obtain elliptic Harnack inequality in $V_\infty$.
\end{myrmk}

Now we obtain Theorem \ref{SC_con_thm_harnack} directly.

\section{Weak Monotonicity Results}\label{SC_con_sec_monotone}

In this section, we give two weak monotonicity results.

For all $n\ge1$, let
$$a_n(u)=\rho^n\sum_{w\in W_n}
{\sum_{\mbox{\tiny
$
\begin{subarray}{c}
p,q\in V_w\\
|p-q|=2^{-1}\cdot3^{-n}
\end{subarray}
$
}}}
(u(p)-u(q))^2,u\in l(V_n).$$

We have one weak monotonicity result as follows.

\begin{mythm}\label{SC_con_thm_monotone1}
There exists some positive constant $C$ such that for all $n,m\ge1,u\in l(V_{n+m})$, we have
$$a_n(u)\le Ca_{n+m}(u).$$
\end{mythm}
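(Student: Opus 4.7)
The plan is to mimic the structure of the SG proof (Theorem \ref{SG_con_thm_monotone}), exploiting the self-similarity of the cell structure together with the resistance upper bound from Corollary \ref{SC_con_cor_resist_upper}. The key observation is that $V_n\subseteq V_{n+m}$, so any $u\in l(V_{n+m})$ restricts sensibly to each $V_w$ for $w\in W_n$, and for every $w\in W_n$ the sub-network $f_w(\calV_m)$ sitting inside $\calV_{n+m}$ is graph-isomorphic (with identical conductances) to $\calV_m$ itself.

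First I would fix $w\in W_n$ and a pair $p,q\in V_w$ with $|p-q|=2^{-1}\cdot 3^{-n}$. Writing $p=f_w(\tilde p)$, $q=f_w(\tilde q)$ with $\tilde p,\tilde q\in V_0$, the function $v:=u\circ f_w$ lies in $l(V_m)$, and by the very definition of the resistance metric on $\calV_m$ together with Corollary \ref{SC_con_cor_resist_upper},
\begin{equation*}
(u(p)-u(q))^2=(v(\tilde p)-v(\tilde q))^2\le R_m(\tilde p,\tilde q)\,D_m(v,v)\lesssim \rho^m\,D_m(u\circ f_w,u\circ f_w).
\end{equation*}
Since for each $w\in W_n$ there are only boundedly many (at most $8$) pairs $p,q\in V_w$ at distance $2^{-1}\cdot 3^{-n}$, summing the inequality over these pairs gives
\begin{equation*}
\sum_{\substack{p,q\in V_w\\|p-q|=2^{-1}\cdot3^{-n}}}(u(p)-u(q))^2\lesssim \rho^m\,D_m(u\circ f_w,u\circ f_w).
\end{equation*}

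Next I would sum over $w\in W_n$. Unwinding the definition of $D_m(u\circ f_w,u\circ f_w)$ via the bijection $W_n\times W_m\to W_{n+m}$, $(w,w')\mapsto ww'$, one gets exactly
\begin{equation*}
\sum_{w\in W_n}D_m(u\circ f_w,u\circ f_w)=\sum_{\tilde w\in W_{n+m}}\sum_{\substack{p,q\in V_{\tilde w}\\|p-q|=2^{-1}\cdot3^{-(n+m)}}}(u(p)-u(q))^2=\rho^{-(n+m)}a_{n+m}(u),
\end{equation*}
and multiplying the previous display by $\rho^n$ yields $a_n(u)\lesssim \rho^n\cdot\rho^m\cdot\rho^{-(n+m)}a_{n+m}(u)=a_{n+m}(u)$, as required.

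The only real substance here is the local step $(u(p)-u(q))^2\lesssim \rho^m D_m(u\circ f_w,u\circ f_w)$, and its correctness rests on two facts I would want to verify carefully: (i) the subnetwork $f_w(\calV_m)\subseteq \calV_{n+m}$ has conductances that exactly match those of $\calV_m$ under the identification via $f_w$ (so that restricting to this subnetwork does \emph{not} introduce extra contributions from edges shared with neighbouring cells $f_{w'}(\calV_m)$, $w'\ne w$), and (ii) the uniform upper bound $R_m(\tilde p,\tilde q)\lesssim \rho^m$ from Corollary \ref{SC_con_cor_resist_upper} applies with a constant independent of $\tilde p,\tilde q\in V_0$. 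Point (i) is the main subtlety, because unlike the SG the SC is not p.c.f.\ and cells share $1$-dimensional boundaries; however, the energy $D_{n+m}$ as written already decomposes as a sum over cells $\tilde w\in W_{n+m}$, so restricting to $\tilde w=ww'$ with $w'\in W_m$ picks out exactly the internal energy of the $w$-subcomplex with no double counting. Once these two points are in hand, the argument is formally parallel to the SG case, and the constant $C$ in the statement is explicit (a small multiple of the constant hidden in Corollary \ref{SC_con_cor_resist_upper}).
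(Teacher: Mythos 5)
Your argument is correct and is essentially the paper's own proof: both reduce each pair $p,q\in V_w$ to the sub-network on the cell $K_w$ (the cutting step), apply the uniform resistance bound $R_m\lesssim\rho^m$ from Corollary \ref{SC_con_cor_resist_upper} to get $(u(p)-u(q))^2\lesssim\rho^m D_m(u\circ f_w,u\circ f_w)$, and then sum over the boundedly many pairs per cell and over $w\in W_n$ using the exact cell decomposition $\sum_{w\in W_n}D_m(u\circ f_w,u\circ f_w)=\rho^{-(n+m)}a_{n+m}(u)$. The two points you flag for verification are indeed the only content, and both hold exactly as you describe.
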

\begin{proof}
For all $w\in W_n,p,q\in V_w$ with $|p-q|=2^{-1}\cdot3^{-n}$, by cutting technique and Corollary \ref{SC_con_cor_resist_upper}
$$
\begin{aligned}
\left(u(p)-u(q)\right)^2&\le R_m(f_w^{-1}(p),f_w^{-1}(q))
\sum_{v\in W_m}
{\sum_{\mbox{\tiny
$
\begin{subarray}{c}
x,y\in V_{wv}\\
|x-y|=2^{-1}\cdot3^{-(n+m)}
\end{subarray}
$
}}}
(u(x)-u(y))^2\\
&\le C\rho^m\sum_{v\in W_m}
{\sum_{\mbox{\tiny
$
\begin{subarray}{c}
x,y\in V_{wv}\\
|x-y|=2^{-1}\cdot3^{-(n+m)}
\end{subarray}
$
}}}
(u(x)-u(y))^2.
\end{aligned}
$$
Hence
$$
\begin{aligned}
a_n(u)&=\rho^n\sum_{w\in W_n}
{\sum_{\mbox{\tiny
$
\begin{subarray}{c}
p,q\in V_w\\
|p-q|=2^{-1}\cdot3^{-n}
\end{subarray}
$
}}}
(u(p)-u(q))^2\\
&\le\rho^n\sum_{w\in W_n}
{\sum_{\mbox{\tiny
$
\begin{subarray}{c}
p,q\in V_w\\
|p-q|=2^{-1}\cdot3^{-n}
\end{subarray}
$
}}}
\left(C\rho^m\sum_{v\in W_m}
{\sum_{\mbox{\tiny
$
\begin{subarray}{c}
x,y\in V_{wv}\\
|x-y|=2^{-1}\cdot3^{-(n+m)}
\end{subarray}
$
}}}
(u(x)-u(y))^2\right)\\
&=C\rho^{n+m}\sum_{w\in W_{n+m}}
{\sum_{\mbox{\tiny
$
\begin{subarray}{c}
p,q\in V_w\\
|p-q|=2^{-1}\cdot3^{-(n+m)}
\end{subarray}
$
}}}
(u(p)-u(q))^2=Ca_{n+m}(u).
\end{aligned}
$$
\end{proof}

For all $n\ge1$, let
$$b_n(u)=\rho^n
{\sum_{\mbox{\tiny
$
\begin{subarray}{c}
w^{(1)}\sim_nw^{(2)}
\end{subarray}
$
}}}
(P_nu(w^{(1)})-P_nu(w^{(2)}))^2,u\in L^2(K;\nu).$$

We have another weak monotonicity result as follows.
\begin{mythm}\label{SC_con_thm_monotone2}
There exists some positive constant $C$ such that for all $n,m\ge1,u\in L^2(K;\nu)$, we have
$$b_n(u)\le Cb_{n+m}(u).$$
\end{mythm}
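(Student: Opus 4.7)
The plan is to follow the same reduction strategy as in the proof of Theorem~\ref{SC_con_thm_monotone1}, adapted to the averaged edge energy $b_n$. Introduce the mean value operator $M_{n,m}: l(W_{n+m}) \to l(W_n)$ defined by $(M_{n,m}v)(w) = 8^{-m}\sum_{u \in W_m} v(wu)$. Since $P_n u = M_{n,m}(P_{n+m}u)$, it suffices to prove the purely graph-theoretic inequality
\begin{equation*}
\rho^n \sum_{w^{(1)} \sim_n w^{(2)}} \bigl(M_{n,m}v(w^{(1)}) - M_{n,m}v(w^{(2)})\bigr)^2 \le C\rho^{n+m} \sum_{z^{(1)} \sim_{n+m} z^{(2)}} \bigl(v(z^{(1)}) - v(z^{(2)})\bigr)^2
\end{equation*}
for every $v \in l(W_{n+m})$.

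Fix a pair $w^{(1)} \sim_n w^{(2)}$. Using the double averaging identity together with Cauchy--Schwarz, I obtain
\begin{equation*}
\bigl(M_{n,m}v(w^{(1)}) - M_{n,m}v(w^{(2)})\bigr)^2 = \Bigl(\frac{1}{8^{2m}}\sum_{u,u' \in W_m}(v(w^{(1)}u) - v(w^{(2)}u'))\Bigr)^2 \le \frac{1}{8^{2m}}\sum_{u,u' \in W_m} \bigl(v(w^{(1)}u) - v(w^{(2)}u')\bigr)^2.
\end{equation*}
Then I bound each pairwise difference by means of the effective resistance in the subgraph of $\mathcal{W}_{n+m}$ restricted to $w^{(1)}W_m \cup w^{(2)}W_m$. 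Because $w^{(1)} \sim_n w^{(2)}$, there exist $i, j \in \{0,\ldots,7\}$ with $w^{(1)}i^m \sim_{n+m} w^{(2)}j^m$. The resistance triangle inequality combined with cutting technique gives
\begin{equation*}
R(w^{(1)}u, w^{(2)}u') \le \mathfrak{R}_m(u, i^m) + 1 + \mathfrak{R}_m(j^m, u') \le C\rho^m,
\end{equation*}
where the last inequality invokes Corollary~\ref{SC_con_cor_resist_upper}. Hence $(v(w^{(1)}u) - v(w^{(2)}u'))^2$ is bounded by $C\rho^m$ times the sum of $(v(z^{(1)})-v(z^{(2)}))^2$ over all edges $z^{(1)} \sim_{n+m} z^{(2)}$ with both endpoints in $w^{(1)}W_m \cup w^{(2)}W_m$.

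Combining these estimates and summing over $w^{(1)} \sim_n w^{(2)}$ concludes the proof, provided each edge of $\mathcal{W}_{n+m}$ is counted boundedly many times in the restricted energies. This holds because the degree of every vertex of $\mathcal{W}_n$ under $\sim_n$ is uniformly bounded (it is at most four). The main obstacle is the uniform resistance bound across the glued network $w^{(1)}W_m \cup w^{(2)}W_m$: although each copy of $\mathcal{W}_m$ individually enjoys the estimates of Section~\ref{SC_con_sec_resistance}, one must verify that the single connecting edge does not inflate the cross-copy resistance beyond $C\rho^m$. This is precisely where Corollary~\ref{SC_con_cor_resist_upper}, which furnishes the uniform bound $\mathfrak{R}_m(\cdot,\cdot)\lesssim\rho^m$ between any two vertices of $\mathcal{W}_m$, plays the decisive role and makes the triangle-inequality argument close.
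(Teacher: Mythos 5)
Your proposal is correct and follows essentially the same route as the paper: reduction via $P_nu=M_{n,m}(P_{n+m}u)$ to a graph inequality for the mean value operator, then a resistance bound $\lesssim\rho^m$ on the glued network $w^{(1)}W_m\cup w^{(2)}W_m$ obtained from the triangle inequality, the cutting technique and Corollary \ref{SC_con_cor_resist_upper}, and finally a bounded-multiplicity summation over $w^{(1)}\sim_nw^{(2)}$. The only cosmetic difference is that you average over all pairs $(u,u')\in W_m\times W_m$ while the paper pairs $w^{(1)}v$ with $w^{(2)}v$ diagonally; both yield the same uniform bound.
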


\begin{myrmk}
This result was also obtained in \cite[Proposition 5.2]{KZ92}. Here we give a direct proof using resistance estimates.
\end{myrmk}

This result can be reduced as follows.

For all $n\ge1$, let
$$B_n(u)=\rho^n\sum_{w^{(1)}\sim_nw^{(2)}}(u(w^{(1)})-u(w^{(2)}))^2,u\in l(W_n).$$

For all $n,m\ge1$, let $M_{n,m}:l(W_{n+m})\to l(W_n)$ be a mean value operator given by
$$(M_{n,m}u)(w)=\frac{1}{8^m}\sum_{v\in W_m}u(wv),w\in W_n,u\in l(W_{n+m}).$$

\begin{mythm}\label{SC_con_thm_monotonicity2}
There exists some positive constant $C$ such that for all $n,m\ge1,u\in l(W_{n+m})$, we have
$$B_n(M_{n,m}u)\le CB_{n+m}(u).$$
\end{mythm}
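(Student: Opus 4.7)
The plan is to mirror the proof of Theorem \ref{SG_con_thm_monotone_graph} from the SG chapter, with the resistance estimate $\rho^m$ on the SC (Corollary \ref{SC_con_cor_resist_upper}) taking the place of the $(5/3)^m$ bound used on the SG. The first step will be to apply Jensen's inequality to the mean value operator: for each pair $w^{(1)}\sim_n w^{(2)}$,
$$
(M_{n,m}u(w^{(1)}) - M_{n,m}u(w^{(2)}))^2 \le \frac{1}{8^m}\sum_{v\in W_m}(u(w^{(1)}v)-u(w^{(2)}v))^2.
$$
This reduces the problem to bounding $(u(w^{(1)}v)-u(w^{(2)}v))^2$ uniformly in $v\in W_m$ by a multiple of a local Dirichlet energy on the subgraph $w^{(1)}W_m\cup w^{(2)}W_m$ of $\calW_{n+m}$ induced on these vertices.

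Next, I would obtain this bound via a resistance argument. Since $w^{(1)}\sim_n w^{(2)}$, the cells $K_{w^{(1)}}$ and $K_{w^{(2)}}$ share a side of Hausdorff dimension one, so there exist $\alpha,\beta\in W_m$ with $w^{(1)}\alpha\sim_{n+m}w^{(2)}\beta$; this gives at least one crossover edge of conductance $1$ between the two halves of the induced subgraph. The triangle inequality for the resistance metric then yields
$$
R_{w^{(1)}W_m\cup w^{(2)}W_m}(w^{(1)}v,w^{(2)}v) \le \frakR_m(v,\alpha) + 1 + \frakR_m(\beta,v),
$$
where I used that the subgraph on $w^{(i)}W_m$ is isomorphic to $\calW_m$ because $f_{w^{(i)}}$ is a similarity. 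By Corollary \ref{SC_con_cor_resist_upper}, the first and third terms are each at most $C\rho^m$, and hence
$$
(u(w^{(1)}v)-u(w^{(2)}v))^2 \lesssim \rho^m \, E_{w^{(1)}W_m\cup w^{(2)}W_m}(u,u),
$$
where $E_{\cdot}(u,u)$ denotes the sum of $(u(x)-u(y))^2$ over edges of $\calW_{n+m}$ lying in the specified subgraph.

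The final step is to average the inequality over $v\in W_m$, which introduces no extra factor since the right-hand side is independent of $v$, and then sum over all pairs $w^{(1)}\sim_n w^{(2)}$. The key combinatorial observation is that each edge of $\calW_{n+m}$ lies in at most a bounded number (depending only on the local geometry of the SC) of the induced subgraphs $w^{(1)}W_m\cup w^{(2)}W_m$, since each $w\in W_n$ has only boundedly many $\sim_n$-neighbors. Consequently,
$$
\sum_{w^{(1)}\sim_n w^{(2)}} E_{w^{(1)}W_m\cup w^{(2)}W_m}(u,u) \lesssim \sum_{w^{(1)}\sim_{n+m}w^{(2)}} (u(w^{(1)})-u(w^{(2)}))^2,
$$
and multiplying by $\rho^n$ gives $B_n(M_{n,m}u)\le CB_{n+m}(u)$ as required.

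The main obstacle will be securing the uniform resistance bound $R_{w^{(1)}W_m\cup w^{(2)}W_m}(w^{(1)}v,w^{(2)}v)\lesssim \rho^m$ independently of $n$, $m$, the pair $(w^{(1)},w^{(2)})$, and the point $v$. This rests on two independent ingredients: the existence of the crossover edge $w^{(1)}\alpha\sim_{n+m}w^{(2)}\beta$ coming from the hypothesis $\mathrm{dim}_{\mathcal{H}}(K_{w^{(1)}}\cap K_{w^{(2)}})=1$, and the uniform in-cell resistance bound $\frakR_m(v,\alpha)\lesssim\rho^m$ from Corollary \ref{SC_con_cor_resist_upper}, which itself traces back to the symmetric-case resistance estimates of Theorem \ref{SC_con_thm_resist}.
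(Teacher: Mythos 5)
Your proposal is correct and follows essentially the same route as the paper's proof: Jensen's inequality for the mean value operator, the resistance--energy bound $(u(w^{(1)}v)-u(w^{(2)}v))^2\le\frakR_{w^{(1)}W_m\cup w^{(2)}W_m}(w^{(1)}v,w^{(2)}v)\,\frakD_{w^{(1)}W_m\cup w^{(2)}W_m}(u,u)$, the triangle inequality through a crossover edge combined with the cutting technique and Corollary \ref{SC_con_cor_resist_upper} to get the uniform $\rho^m$ bound, and the bounded-overlap count (at most $8$ neighbors per cell) in the final summation. The only cosmetic difference is that the paper takes the concrete crossover pair $w^{(1)}i^m\sim_{n+m}w^{(2)}j^m$ at corner words, whereas you invoke an arbitrary adjacent pair $\alpha,\beta\in W_m$; both are covered by the corollary.
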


\begin{proof}[Proof of Theorem \ref{SC_con_thm_monotone2} using Theorem \ref{SC_con_thm_monotonicity2}]
For all $u\in L^2(K;\nu)$, note that
$$P_nu=M_{n,m}(P_{n+m}u),$$
hence
$$
\begin{aligned}
b_n(u)&=\rho^n\sum_{w^{(1)}\sim_nw^{(2)}}(P_nu(w^{(1)})-P_nu(w^{(2)}))^2=B_n(P_nu)\\
&=B_n(M_{n,m}(P_{n+m}u))\le CB_{n+m}(P_{n+m}u)\\
&=C\rho^{n+m}\sum_{w^{(1)}\sim_{n+m}w^{(2)}}(P_{n+m}u(w^{(1)})-P_{n+m}u(w^{(2)}))^2=Cb_{n+m}(u).
\end{aligned}
$$
\end{proof}

\begin{proof}[Proof of Theorem \ref{SC_con_thm_monotonicity2}]
Fix $n\ge1$. Assume that $W\subseteq W_n$ is connected, that is, for all $w^{(1)},w^{(2)}\in W$, there exists a finite sequence $\myset{v^{(1)},\ldots,v^{(k)}}\subseteq W$ such that $v^{(1)}=w^{(1)},v^{(k)}=w^{(2)}$ and $v^{(i)}\sim_nv^{(i+1)}$ for all $i=1,\ldots,k-1$. Let
$$\frakD_W(u,u):=
{\sum_{\mbox{\tiny
$
\begin{subarray}{c}
w^{(1)},w^{(2)}\in W\\
w^{(1)}\sim_nw^{(2)}
\end{subarray}
$
}}}
(u(w^{(1)})-u(w^{(2)}))^2,u\in l(W).$$
For all $w^{(1)},w^{(2)}\in W$, let
$$
\begin{aligned}
\frakR_W(w^{(1)},w^{(2)})&=\inf\myset{\frakD_W(u,u):u(w^{(1)})=0,u(w^{(2)})=1,u\in l(W)}^{-1}\\
&=\sup\myset{\frac{(u(w^{(1)})-u(w^{(2)}))^2}{\frakD_W(u,u)}:\frakD_W(u,u)\ne0,u\in l(W)}.
\end{aligned}
$$
It is obvious that
$$(u(w^{(1)})-u(w^{(2)}))^2\le\frakR_W(w^{(1)},w^{(2)})\frakD_W(u,u)\text{ for all }w^{(1)},w^{(2)}\in W,u\in l(W),$$
and $\frakR_W$ is a metric on $W$, hence
$$\frakR_W(w^{(1)},w^{(2)})\le\frakR_W(w^{(1)},w^{(3)})+\frakR_W(w^{(3)},w^{(2)})\text{ for all }w^{(1)},w^{(2)},w^{(3)}\in W.$$

Fix $w^{(1)}\sim_nw^{(2)}$, there exist $i,j=0,\ldots,7$ such that $w^{(1)}i^m\sim_{n+m}w^{(2)}j^m$, see Figure \ref{SC_con_fig_monotonicity}.




\begin{figure}[ht]
\centering
\begin{tikzpicture}

\draw (0,0)--(3,0)--(3,3)--(0,3)--cycle;
\draw (4,0)--(7,0)--(7,3)--(4,3)--cycle;
\draw (3,0)--(4,0);
\draw (3,3)--(4,3);

\draw (3.5,1.5) node {$\vdots$};

\draw[fill=black] (3,0) circle (0.06);
\draw[fill=black] (4,0) circle (0.06);

\draw (1.5,1.5) node {$w^{(1)}W_m$};
\draw (5.5,1.5) node {$w^{(2)}W_m$};
\draw (2.5,-0.5) node {$w^{(1)}i^m$};
\draw (4.5,-0.5) node {$w^{(2)}j^m$};

\draw (1,2.5) node {$w^{(1)}v$};
\draw[fill=black] (1,2.2) circle (0.06);
\draw (5,2.5) node {$w^{(2)}v$};
\draw[fill=black] (5,2.2) circle (0.06);

\end{tikzpicture}
\caption{$w^{(1)}W_m$ and $w^{(2)}W_m$}\label{SC_con_fig_monotonicity}
\end{figure}


Fix $v\in W_m$
$$(u(w^{(1)}v)-u(w^{(2)}v))^2\le\frakR_{w^{(1)}W_m\cup w^{(2)}W_m}(w^{(1)}v,w^{(2)}v)\frakD_{w^{(1)}W_m\cup w^{(2)}W_m}(u,u).$$
By cutting technique and Corollary \ref{SC_con_cor_resist_upper}
$$
\begin{aligned}
&\frakR_{w^{(1)}W_m\cup w^{(2)}W_m}(w^{(1)}v,w^{(2)}v)\\
\le&\frakR_{w^{(1)}W_m\cup w^{(2)}W_m}(w^{(1)}v,w^{(1)}i^m)+\frakR_{w^{(1)}W_m\cup w^{(2)}W_m}(w^{(1)}i^m,w^{(2)}j^m)\\
&+\frakR_{w^{(1)}W_m\cup w^{(2)}W_m}(w^{(2)}j^m,w^{(2)}v)\\
\le&\frakR_m(v,i^m)+1+\frakR_m(v,j^m)\lesssim\rho^m.
\end{aligned}
$$
Hence
$$
\begin{aligned}
&(u(w^{(1)}v)-u(w^{(2)}v))^2\lesssim\rho^m\frakD_{w^{(1)}W_m\cup w^{(2)}W_m}(u,u)\\
=&\rho^m\left(\frakD_{w^{(1)}W_m}(u,u)+\frakD_{w^{(2)}W_m}(u,u)\right.\\
&\left.+
{\sum_{\mbox{\tiny
$
\begin{subarray}{c}
v^{(1)},v^{(2)}\in W_m\\
w^{(1)}v^{(1)}\sim_{n+m}w^{(2)}v^{(2)}
\end{subarray}
$
}}}
(u(w^{(1)}v^{(1)})-u(w^{(2)}v^{(2)}))^2\right).
\end{aligned}
$$
Hence
$$
\begin{aligned}
&\left(M_{n,m}u(w^{(1)})-M_{n,m}u(w^{(2)})\right)^2=\left(\frac{1}{8^m}\sum_{v\in W_m}\left(u(w^{(1)}v)-u(w^{(2)}v)\right)\right)^2\\
\le&\frac{1}{8^m}\sum_{v\in W_m}\left(u(w^{(1)}v)-u(w^{(2)}v)\right)^2\\
\lesssim&\rho^m\left(\frakD_{w^{(1)}W_m}(u,u)+\frakD_{w^{(2)}W_m}(u,u)\right.\\
&\left.+
{\sum_{\mbox{\tiny
$
\begin{subarray}{c}
v^{(1)},v^{(2)}\in W_m\\
w^{(1)}v^{(1)}\sim_{n+m}w^{(2)}v^{(2)}
\end{subarray}
$
}}}
(u(w^{(1)}v^{(1)})-u(w^{(2)}v^{(2)}))^2\right).
\end{aligned}
$$
In the summation with respect to $w^{(1)}\sim_nw^{(2)}$, the terms $\frakD_{w^{(1)}W_m}(u,u),\frakD_{w^{(2)}W_m}(u,u)$ are summed at most $8$ times, hence
$$
\begin{aligned}
B_n(M_{n,m}u)&=\rho^n\sum_{w^{(1)}\sim_nw^{(2)}}\left(M_{n,m}u(w^{(1)})-M_{n,m}u(w^{(2)})\right)^2\\
&\lesssim\rho^n\sum_{w^{(1)}\sim_nw^{(2)}}\rho^m\left(\frakD_{w^{(1)}W_m}(u,u)+\frakD_{w^{(2)}W_m}(u,u)\right.\\
&\left.+
{\sum_{\mbox{\tiny
$
\begin{subarray}{c}
v^{(1)},v^{(2)}\in W_m\\
w^{(1)}v^{(1)}\sim_{n+m}w^{(2)}v^{(2)}
\end{subarray}
$
}}}
(u(w^{(1)}v^{(1)})-u(w^{(2)}v^{(2)}))^2\right)\\
&\le8\rho^{n+m}\sum_{w^{(1)}\sim_{n+m}w^{(2)}}\left(u(w^{(1)})-u(w^{(2)})\right)^2=8B_{n+m}(u).
\end{aligned}
$$
\end{proof}

\section{One Good Function}\label{SC_con_sec_good}

In this section, we construct \emph{one} good function with energy property and separation property.

By standard argument, we have H\"older continuity from Harnack inequality as follows.

\begin{mythm}\label{SC_con_thm_holder}
For all $0\le\delta_1<\veps_1<\veps_2<\delta_2\le1$, there exist some positive constants $\theta=\theta(\delta_1,\delta_2,\veps_1,\veps_2)$, $C=C(\delta_1,\delta_2,\veps_1,\veps_2)$ such that for all $n\ge1$, for all bounded harmonic function $u$ on $V_n\cap(\delta_1,\delta_2)\times[0,1]$, we have
$$|u(x)-u(y)|\le C|x-y|^\theta\left(\max_{V_n\cap[\delta_1,\delta_2]\times[0,1]}|u|\right)\text{ for all }x,y\in V_n\cap[\veps_1,\veps_2]\times[0,1].$$
\end{mythm}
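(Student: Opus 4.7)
The plan is to derive this Hölder estimate from the uniform Harnack inequality (Theorem \ref{SC_con_thm_harnack}) by the classical Moser/De Giorgi oscillation-decay iteration, adapted to the graphs $V_n$. Set $r_0 = \min(\veps_1-\delta_1,\delta_2-\veps_2) > 0$. For any $x \in [\veps_1,\veps_2]\times[0,1]$ and any $r \in (0, r_0]$, the ball $B(x,r)$ is contained in $(\delta_1,\delta_2)\times\R$, so $u$ is harmonic on $V_n \cap B(x,r)$, and Theorem \ref{SC_con_thm_harnack} applies with its uniform constants $C_0, \delta_0$.

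The core step is an oscillation-decay lemma: for $x$ and $r$ as above, write $M = \max_{V_n \cap B(x,r)} u$ and $m = \min_{V_n \cap B(x,r)} u$. Apply the Harnack inequality separately to the two nonnegative harmonic functions $u - m$ and $M - u$ on $V_n \cap B(x,r)$. At a point of $V_n \cap B(x,\delta_0 r)$ where $u - m$ attains its minimum and similarly where $M - u$ attains its minimum, one obtains
\begin{equation*}
\max_{V_n\cap B(x,\delta_0 r)}(u-m) \le C_0 \min_{V_n\cap B(x,\delta_0 r)}(u-m), \qquad \max_{V_n\cap B(x,\delta_0 r)}(M-u) \le C_0 \min_{V_n\cap B(x,\delta_0 r)}(M-u).
\end{equation*}
Adding these two inequalities and rearranging yields
\begin{equation*}
\mathrm{osc}_{V_n\cap B(x,\delta_0 r)} u \le \lambda\,\mathrm{osc}_{V_n\cap B(x,r)} u, \qquad \lambda := \frac{C_0-1}{C_0+1} \in (0,1),
\end{equation*}
with $\lambda$ independent of $n$, $x$, $r$, and $u$.

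Iterating this $k$ times starting from scale $r_0$ gives $\mathrm{osc}_{V_n\cap B(x,\delta_0^k r_0)} u \le \lambda^k\,\mathrm{osc}_{V_n\cap B(x,r_0)} u \le 2\lambda^k \max_{V_n\cap[\delta_1,\delta_2]\times[0,1]}|u|$. For $x,y \in V_n \cap [\veps_1,\veps_2]\times[0,1]$ with $|x-y|\le r_0$, choose $k$ to be the largest integer with $\delta_0^k r_0 \ge |x-y|$; then $y \in B(x,\delta_0^k r_0)$, and setting $\theta = \log(1/\lambda)/\log(1/\delta_0) > 0$ converts $\lambda^k$ into $C_1(|x-y|/r_0)^\theta$, producing the desired estimate. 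For $|x-y|>r_0$ the bound is trivial from boundedness after adjusting $C$.

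The argument is essentially routine; the only points to check carefully are the uniformity in $n$ and the degenerate regime where $\delta_0^k r_0$ drops below the lattice spacing $3^{-n}$. But uniformity follows directly from the uniformity of $C_0, \delta_0$ in Theorem \ref{SC_con_thm_harnack}, and the lattice degeneracy causes no trouble because once $\delta_0^k r_0 < c \cdot 3^{-n}$, distinct points of $V_n$ in the same ball are separated by at most a bounded number of lattice steps, so one can stop the iteration at that scale and absorb the remaining finitely-many steps into the constant $C$. Thus the estimate holds with $\theta = \theta(\delta_1,\delta_2,\veps_1,\veps_2)$ and $C = C(\delta_1,\delta_2,\veps_1,\veps_2)$ as claimed.
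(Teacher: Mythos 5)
Your proof is correct and follows essentially the same route as the paper, which simply defers to \cite[Theorem 3.9]{BB89} --- and that reference is precisely the argument you give: the uniform Harnack inequality of Theorem \ref{SC_con_thm_harnack} applied to $u-m$ and $M-u$ yields oscillation decay with factor $\lambda=(C_0-1)/(C_0+1)$, uniform in $n$, and iteration gives the H\"older estimate. The details you flag (uniformity in $n$, the sub-lattice scale) are handled correctly.
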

\begin{proof}
The proof is similar to \cite[Theorem 3.9]{BB89}.
\end{proof}

For all $n\ge1$. Let $u_n\in l(V_n)$ satisfy $u_n|_{V_n\cap\myset{0}\times[0,1]}=0,u_n|_{V_n\cap\myset{1}\times[0,1]}=1$ and
$$D_n(u_n,u_n)=\sum_{w\in W_n}
{\sum_{\mbox{\tiny
$
\begin{subarray}{c}
p,q\in V_w\\
|p-q|=2^{-1}\cdot3^{-n}
\end{subarray}
$
}}}
(u_n(p)-u_n(q))^2=(R_n^V)^{-1}.$$
Then $u_n$ is harmonic on $V_n\cap(0,1)\times[0,1]$, $u_n(x,y)=1-u_n(1-x,y)=u_n(x,1-y)$ for all $(x,y)\in V_n$ and
$$u_n|_{V_n\cap\myset{\frac{1}{2}}\times[0,1]}=\frac{1}{2},u_n|_{V_n\cap[0,\frac{1}{2})\times[0,1]}<\frac{1}{2},u_n|_{V_n\cap(\frac{1}{2},1]\times[0,1]}>\frac{1}{2}.$$
By Arzel\`a-Ascoli theorem, Theorem \ref{SC_con_thm_holder} and diagonal argument, there exist some subsequence still denoted by $\myset{u_n}$ and some function $u$ on $K$ with $u|_{\myset{0}\times[0,1]}=0$ and $u|_{\myset{1}\times[0,1]}=1$ such that $u_n$ converges uniformly to $u$ on $K\cap[\veps_1,\veps_2]\times[0,1]$ for all $0<\veps_1<\veps_2<1$. Hence $u$ is continuous on $K\cap(0,1)\times[0,1]$, $u_n(x)\to u(x)$ for all $x\in K$ and $u(x,y)=1-u(1-x,y)=u(x,1-y)$ for all $(x,y)\in K$.

\begin{myprop}\label{SC_con_prop_u}
The function $u$ given above has the following properties.
\begin{enumerate}[(1)]
\item There exists some positive constant $C$ such that
$$a_n(u)\le C\text{ for all }n\ge1.$$
\item For all $\beta\in(\alpha,\log(8\rho)/\log3)$, we have
$$E_{\beta}(u,u)<+\infty.$$
Hence $u\in C^{\frac{\beta-\alpha}{2}}(K)$.
\item
$$u|_{K\cap\myset{\frac{1}{2}}\times[0,1]}=\frac{1}{2},u|_{K\cap[0,\frac{1}{2})\times[0,1]}<\frac{1}{2},u|_{K\cap(\frac{1}{2},1]\times[0,1]}>\frac{1}{2}.$$
\end{enumerate}
\end{myprop}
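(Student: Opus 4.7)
My strategy is to exploit the three defining features of the approximants $u_n$: they have prescribed energy $D_n(u_n,u_n)=(R_n^V)^{-1}$, they are discrete harmonic on $V_n\cap(0,1)\times[0,1]$, and they inherit the symmetries $u_n(x,y)=1-u_n(1-x,y)=u_n(x,1-y)$.

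\textbf{Step 1 (part (1)).} The identity $a_n(u_n)=\rho^n D_n(u_n,u_n)=\rho^n/R_n^V$ combined with the sharp resistance estimate $R_n^V\asymp\rho^n$ from Theorem \ref{SC_con_thm_resist} shows that $a_n(u_n)$ is uniformly bounded. To transfer this to $u$, I will use the weak monotonicity of Theorem \ref{SC_con_thm_monotone1}: for $m\ge n$, since $u_m$ is defined on $V_m\supseteq V_n$, one has $a_n(u_m)\le C\,a_m(u_m)\le C'$. The subsequence construction yields pointwise convergence $u_m(p)\to u(p)$ for every $p\in V_n$ (values on $V_n\cap\{0,1\}\times[0,1]$ are exact by boundary conditions; interior points of $V_n$ sit in $K\cap[\veps,1-\veps]\times[0,1]$ for suitable $\veps$, where the convergence is uniform). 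Since $a_n$ depends only on finitely many function values, $a_n(u)=\lim_{m\to\infty}a_n(u_m)\le C'$.

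\textbf{Step 2 (part (2)).} A routine re-summation gives the identity
\[
E_\beta(u,u)=\sum_{n=1}^\infty 3^{(\beta-\alpha)n}\rho^{-n}a_n(u),
\]
so the bound in (1) yields $E_\beta(u,u)\le C'\sum_{n=1}^\infty(3^{\beta-\alpha}/\rho)^n$, which converges precisely when $3^{\beta-\alpha}<\rho$, i.e.\ $\beta<\log(8\rho)/\log3=\beta^*$. Since $u$ is bounded (hence in $L^2(K;\nu)$), Lemma \ref{SC_con_lem_equiv} converts this into $\calE_\beta(u,u)<+\infty$, and Lemma \ref{lem_SC_holder} then delivers $u\in C^{(\beta-\alpha)/2}(K)$.

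\textbf{Step 3 (part (3)).} The identity $u|_{K\cap\{1/2\}\times[0,1]}=1/2$ is immediate from the symmetry $u(x,y)=1-u(1-x,y)$, which passes to the limit from $u_n$. For the strict inequality, the discrete maximum principle on $\calV_n$ (exploiting $u_n=0$ on $\{0\}\times[0,1]$ and $u_n=1/2$ on $\{1/2\}\times[0,1]$) gives $u_n<1/2$ on $V_n\cap(0,1/2)\times[0,1]$, so in the limit $u\le 1/2$ there. Suppose for contradiction $u(x_0)=1/2$ for some $x_0\in K\cap(0,1/2)\times[0,1]$. Fix $r>0$ with $\bar B(x_0,2r)\subset(0,1/2)\times[0,1]$; then $v_n:=1/2-u_n\ge 0$ is harmonic on $V_n\cap B(x_0,r)$. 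Picking $x_n\in V_n$ with $x_n\to x_0$, uniform convergence in the interior gives $v_n(x_n)\to 0$, and the uniform Harnack inequality (Theorem \ref{SC_con_thm_harnack}) forces $\max_{V_n\cap B(x_0,\delta r)}v_n\le C\,v_n(x_n)\to 0$, hence $u\equiv 1/2$ on $K\cap B(x_0,\delta r)$. Chaining Harnack balls across the connected interior region $K\cap(0,1/2)\times[0,1]$ propagates the value $1/2$ arbitrarily close to $\{0\}\times[0,1]$; the continuity of $u$ on $K$ established in part (2) then forces $u=1/2$ on $\{0\}\times[0,1]$, contradicting $u\equiv 0$ there. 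The same argument applied to $u(x,y)=1-u(1-x,y)$ handles the half $(1/2,1]\times[0,1]$.

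\textbf{Main obstacle.} The delicate point is the strict inequality in (3). The discrete maximum principle gives it only at each finite level, and naive limits collapse to weak inequalities; the essential input is the \emph{uniform-in-$n$} Harnack inequality (Theorem \ref{SC_con_thm_harnack}), which is exactly what allows a single Harnack ball to be used robustly along the approximating sequence. The chaining argument also implicitly uses that $K\cap(0,1/2)\times[0,1]$ is connected enough that any two points can be joined by a finite chain of Euclidean balls staying in the open region, which follows from the connectedness of $K$ together with the geometry of the cells $K_w$ avoided by the vertical line $\{1/2\}\times[0,1]$.
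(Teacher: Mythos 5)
Your proof is correct and follows essentially the same route as the paper's: part (1) via the weak monotonicity $a_n(u_m)\le Ca_m(u_m)$ combined with $a_m(u_m)=\rho^m(R_m^V)^{-1}\lesssim1$ and pointwise convergence on $V_n$; part (2) via the geometric series $\sum_n(3^{\beta-\alpha}\rho^{-1})^n$; part (3) via the uniform Harnack inequality applied to the nonnegative harmonic functions $u_n\mp\tfrac12$ to propagate the value $\tfrac12$ across one open half and contradict the boundary condition. The only differences are cosmetic (you argue on the left half rather than the right, and you make the ball-chaining behind the strip Harnack inequality explicit, which the paper leaves implicit).
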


\begin{proof}
(1) By Theorem \ref{SC_con_thm_resist1} and Theorem \ref{SC_con_thm_monotone1}, for all $n\ge1$, we have
$$
\begin{aligned}
&a_n(u)=\lim_{m\to+\infty}a_{n}(u_{n+m})\le C\varliminf_{m\to+\infty}a_{n+m}(u_{n+m})\\
=&C\varliminf_{m\to+\infty}\rho^{n+m}D_{n+m}(u_{n+m},u_{n+m})=C\varliminf_{m\to+\infty}\rho^{n+m}\left(R_{n+m}^V\right)^{-1}\le C.
\end{aligned}
$$

(2) By (1), for all $\beta\in(\alpha,\log(8\rho)/\log3)$, we have
$$E_{\beta}(u,u)=\sum_{n=1}^\infty\left(3^{\beta-\alpha}\rho^{-1}\right)^na_n(u)\le C\sum_{n=1}^\infty\left(3^{\beta-\alpha}\rho^{-1}\right)^n<+\infty.$$
By Lemma \ref{SC_con_lem_equiv} and Lemma \ref{lem_SC_holder}, we have $u\in C^{\frac{\beta-\alpha}{2}}(K)$.

(3) It is obvious that
$$u|_{K\cap\myset{\frac{1}{2}}\times[0,1]}=\frac{1}{2},u|_{K\cap[0,\frac{1}{2})\times[0,1]}\le\frac{1}{2},u|_{K\cap(\frac{1}{2},1]\times[0,1]}\ge\frac{1}{2}.$$
By symmetry, we only need to show that
$$u|_{K\cap(\frac{1}{2},1]\times[0,1]}>\frac{1}{2}.$$
Suppose there exists $(x,y)\in K\cap(1/2,1)\times[0,1]$ such that $u(x,y)=1/2$. Since $u_n-\frac{1}{2}$ is a nonnegative harmonic function on $V_n\cap(\frac{1}{2},1)\times[0,1]$, by Theorem \ref{SC_con_thm_harnack}, for all $1/2<\veps_1<x<\veps_2<1$, there exists some positive constant $C=C(\veps_1,\veps_2)$ such that for all $n\ge1$
$$\max_{V_n\cap[\veps_1,\veps_2]\times[0,1]}\left(u_n-\frac{1}{2}\right)\le C\min_{V_n\cap[\veps_1,\veps_2]\times[0,1]}\left(u_n-\frac{1}{2}\right).$$
Since $u_n$ converges uniformly to $u$ on $K\cap[\veps_1,\veps_2]\times[0,1]$, we have
$$\sup_{K\cap[\veps_1,\veps_2]\times[0,1]}\left(u-\frac{1}{2}\right)\le C\inf_{K\cap[\veps_1,\veps_2]\times[0,1]}\left(u-\frac{1}{2}\right)=0.$$
Hence
$$u-\frac{1}{2}=0\text{ on }K\cap[\veps_1,\veps_2]\times[0,1]\text{ for all }\frac{1}{2}<\veps_1<x<\veps_2<1.$$
Hence
$$u=\frac{1}{2}\text{ on }K\cap(\frac{1}{2},1)\times[0,1].$$
By continuity, we have
$$u=\frac{1}{2}\text{ on }K\cap[\frac{1}{2},1]\times[0,1],$$
contradiction!
\end{proof}

\section{Proof of Theorem \ref{SC_con_thm_walk}}\label{SC_con_sec_walk}
Firstly, we consider upper bound. Assume that $(\calE_\beta,\calF_\beta)$ is a regular Dirichlet form on $L^2(K;\nu)$, then there exists $u\in\calF_\beta$ such that $u|_{\myset{0}\times[0,1]}=0$ and $u|_{\myset{1}\times[0,1]}=1$. Hence
$$
\begin{aligned}
+\infty&>E_\beta(u,u)=\sum_{n=1}^\infty3^{(\beta-\alpha)n}D_n(u,u)\ge\sum_{n=1}^\infty3^{(\beta-\alpha)n}D_n(u_n,u_n)\\
&=\sum_{n=1}^\infty3^{(\beta-\alpha)n}\left(R_n^V\right)^{-1}\ge C\sum_{n=1}^\infty\left(3^{\beta-\alpha}\rho^{-1}\right)^n.
\end{aligned}
$$
Hence $3^{\beta-\alpha}\rho^{-1}<1$, that is, $\beta<{\log\left(8\rho\right)}/{\log3}=\beta^*$. Hence $\beta_*\le\beta^*$.

Secondly, we consider lower bound. Similar to the proof of Proposition \ref{SC_con_prop_lower}, to show that $(\calE_\beta,\calF_\beta)$ is a regular Dirichlet form on $L^2(K;\nu)$ for all $\beta\in(\alpha,\beta^*)$, we only need to show that $\calF_\beta$ separates points.

Let $u\in C(K)$ be the function in Proposition \ref{SC_con_prop_u}. By Proposition \ref{SC_con_prop_u} (2), we have $E_{\beta}(u,u)<+\infty$, hence $u\in\calF_\beta$. 

For all distinct $z_1=(x_1,y_1),z_2=(x_2,y_2)\in K$, without lose of generality, we may assume that $x_1<x_2$. Replacing $z_i$ by $f_w^{-1}(z_i)$ with some $w\in W_n$ and some $n\ge1$, we only have the following cases.
\begin{enumerate}[(1)]
\item $x_1\in[0,\frac{1}{2}),x_2\in[\frac{1}{2},1]$.
\item $x_1\in[0,\frac{1}{2}],x_2\in(\frac{1}{2},1]$.
\item $x_1,x_2\in[0,\frac{1}{2})$, there exist distinct $w_1,w_2\in\myset{0,1,5,6,7}$ such that
$$z_1\in K_{w_1}\backslash K_{w_2}\text{ and }z_2\in K_{w_2}\backslash K_{w_1}.$$
\item $x_1,x_2\in(\frac{1}{2},1]$, there exist distinct $w_1,w_2\in\myset{1,2,3,4,5}$ such that
$$z_1\in K_{w_1}\backslash K_{w_2}\text{ and }z_2\in K_{w_2}\backslash K_{w_1}.$$
\end{enumerate}
For the first case, $u(z_1)<{1}/{2}\le u(z_2)$. For the second case, $u(z_1)\le{1}/{2}<u(z_2)$.




\begin{figure}[ht]
\centering
\begin{tikzpicture}[scale=0.5]

\draw (0,0)--(6,0)--(6,6)--(0,6)--cycle;
\draw (2,0)--(2,6);
\draw (4,0)--(4,6);
\draw (0,2)--(6,2);
\draw (0,4)--(6,4);

\draw (1,1) node {$K_0$};
\draw (3,1) node {$K_1$};
\draw (5,1) node {$K_2$};
\draw (5,3) node {$K_3$};
\draw (5,5) node {$K_4$};
\draw (3,5) node {$K_5$};
\draw (1,5) node {$K_6$};
\draw (1,3) node {$K_7$};

\end{tikzpicture}
\caption{The Location of $z_1,z_2$}\label{SC_con_fig_characterization}
\end{figure}


For the third case. If $w_1,w_2$ do \emph{not} belong to the same one of the following sets
$$\myset{0,1},\myset{7},\myset{5,6},$$
then we construct a function $w$ as follows. Let $v(x,y)=u(y,x)$ for all $(x,y)\in K$, then
$$v|_{[0,1]\times\myset{0}}=0,v|_{[0,1]\times\myset{1}}=1,$$
$$v(x,y)=v(1-x,y)=1-v(x,1-y)\text{ for all }(x,y)\in K,$$
$$E_\beta(v,v)=E_\beta(u,u)<+\infty.$$
Let
$$w=
\begin{cases}
v\circ f_i^{-1}-1,&\text{on }K_i,i=0,1,2,\\
v\circ f_i^{-1},&\text{on }K_i,i=3,7,\\
v\circ f_i^{-1}+1,&\text{on }K_i,i=4,5,6,\\
\end{cases}
$$
then $w\in C(K)$ is well-defined and $E_\beta(w,w)<+\infty$, hence $w\in\calF_\beta$. Moreover, $w(z_1)\ne w(z_2)$, $w|_{[0,1]\times\myset{0}}=-1,w|_{[0,1]\times\myset{1}}=2,w(x,y)=w(1-x,y)=1-w(x,1-y)$ for all $(x,y)\in K$.

If $w_1,w_2$ \emph{do} belong to the same one of the following sets
$$\myset{0,1},\myset{7},\myset{5,6},$$
then it can only happen that $w_1,w_2\in\myset{0,1}$ or $w_1,w_2\in\myset{5,6}$, without lose of generality, we may assume that $w_1=0$ and $w_2=1$, then $z_1\in K_0\backslash K_1$ and $z_2\in K_1\backslash K_0$.

Let
$$
w=
\begin{cases}
u\circ f_i^{-1}-1,&\text{on }K_i,i=0,6,7,\\
u\circ f_i^{-1},&\text{on }K_i,i=1,5,\\
u\circ f_i^{-1}+1,&\text{on }K_i,i=2,3,4,\\
\end{cases}
$$
then $w\in C(K)$ is well-defined and $E_{\beta}(w,w)<+\infty$, hence $w\in\calF_\beta$. Moreover $w(z_1)\ne w(z_2)$, $w|_{\myset{0}\times[0,1]}=-1,w|_{\myset{1}\times[0,1]}=2,w(x,y)=w(x,1-y)=1-w(1-x,y)$ for all $(x,y)\in K$.

For the forth case, by reflection about $\myset{\frac{1}{2}}\times[0,1]$, we reduce to the third case.

Hence $\calF_\beta$ separates points, hence $(\calE_\beta,\calF_\beta)$ is a regular Dirichlet form on $L^2(K;\nu)$ for all $\beta\in(\alpha,\beta^*)$, hence $\beta_*\ge\beta^*$.

In conclusion, $\beta_*=\beta^*$.

\section{Proof of Theorem \ref{SC_con_thm_BM}}\label{SC_con_sec_BM}

In this section, we use $\Gamma$-convergence technique to construct a local regular Dirichlet form on $L^2(K;\nu)$ which corresponds to the BM. The idea of this construction is from \cite{KS05}.

The construction of local Dirichlet forms on p.c.f. self-similar sets relies heavily on some monotonicity result which is ensured by some compatibility condition, see \cite{Kig93,Kig01}. Our key observation is that even with some weak monotonicity results, we still apply $\Gamma$-convergence technique to obtain some limit.

Take $\myset{\beta_n}\subseteq(\alpha,\beta^*)$ with $\beta_n\uparrow\beta^*$. By Proposition \ref{prop_gamma}, there exist some subsequence still denoted by $\myset{\beta_n}$ and some closed form $(\calE,\calF)$ on $L^2(K;\nu)$ in the wide sense such that $(\beta^*-\beta_n)\frakE_{\beta_n}$ is $\Gamma$-convergent to $\calE$. Without lose of generality, we may assume that
$$0<\beta^*-\beta_n<\frac{1}{n+1}\text{ for all }n\ge1.$$

We have the characterization of $(\calE,\calF)$ on $L^2(K;\nu)$ as follows.

\begin{mythm}\label{SC_con_thm_gamma}
$$
\begin{aligned}
&\calE(u,u)\asymp\sup_{n\ge1}3^{(\beta^*-\alpha)n}\sum_{w\in W_n}
{\sum_{\mbox{\tiny
$
\begin{subarray}{c}
p,q\in V_w\\
|p-q|=2^{-1}\cdot3^{-n}
\end{subarray}
$
}}}
(u(p)-u(q))^2,\\
&\calF=\myset{u\in C(K):\sup_{n\ge1}3^{(\beta^*-\alpha)n}\sum_{w\in W_n}
{\sum_{\mbox{\tiny
$
\begin{subarray}{c}
p,q\in V_w\\
|p-q|=2^{-1}\cdot3^{-n}
\end{subarray}
$
}}}
(u(p)-u(q))^2<+\infty}.
\end{aligned}
$$
Moreover, $(\calE,\calF)$ is a regular closed form on $L^2(K;\nu)$.
\end{mythm}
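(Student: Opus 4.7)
The strategy is to imitate the proof of Theorem \ref{SG_con_thm_E} in the SG chapter, with an extra step to reconcile the cell-average formulation in which the $\Gamma$-convergence is naturally set up with the point-value formulation stated in the theorem. Abbreviate $a_k(u)=\rho^kD_k(u,u)$ and $b_k(u)=\rho^k\sum_{w^{(1)}\sim_kw^{(2)}}(P_ku(w^{(1)})-P_ku(w^{(2)}))^2$, so that $\frakE_\beta(u,u)=\sum_k 3^{(\beta-\beta^*)k}b_k(u)$; both $\{a_k(u)\}$ and $\{b_k(u)\}$ are weakly monotone by Theorems \ref{SC_con_thm_monotone1} and \ref{SC_con_thm_monotone2} respectively.

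The two-sided estimate $\calE(u,u)\asymp\sup_kb_k(u)$ will be obtained exactly as in the SG argument. For the upper bound, I plug the constant sequence $u_n\equiv u$ into the $\Gamma$-liminf inequality and use the identity $(\beta^*-\beta_n)=(1+o(1))(\log 3)^{-1}(1-3^{\beta_n-\beta^*})$ together with the elementary bound $(1-\lambda)\sum_k\lambda^k x_k\le\sup_k x_k$. For the lower bound, I take a recovery sequence $u_n\to u$ in $L^2(K;\nu)$, restrict $\frakE_{\beta_n}(u_n,u_n)$ to the tail $k>n$, replace $b_k(u_n)$ by $C^{-1}b_n(u_n)$ using the weak monotonicity Theorem \ref{SC_con_thm_monotone2}, and use $\beta^*-\beta_n<1/(n+1)$ to keep $3^{(\beta_n-\beta^*)(n+1)}\ge 1/3$; the $L^2$-convergence $u_n\to u$ gives $b_k(u)=\lim_n b_k(u_n)$ for each fixed $k$, which combined with weak monotonicity extracts $\sup_kb_k(u)\lesssim\calE(u,u)$. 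To bridge $\sup_kb_k$ to $\sup_ka_k$, I combine Lemmas \ref{SC_con_lem_equiv} and \ref{SC_con_lem_equivHK}, which for $u\in C(K)$ and $\beta\in(\alpha,\beta^*)$ give
$$\sum_k 3^{(\beta-\beta^*)k}a_k(u)\asymp\sum_k 3^{(\beta-\beta^*)k}b_k(u),$$
with constants uniform as $\beta\uparrow\beta^*$ (traceable in the proofs of Theorems \ref{SC_con_thm_equiv1}, \ref{SC_con_thm_equiv2} once $\beta$ is confined to a compact subset of $(\alpha,+\infty)$), and apply Proposition \ref{prop_ele2} to both weakly monotone sequences: each Abel sum is comparable to its own supremum as $3^{\beta-\beta^*}\uparrow 1$, so $\sup_ka_k(u)\asymp\sup_kb_k(u)$.

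The inclusion $\calF\subseteq C(K)$ then follows because finite $\sup_kb_k(u)$ makes $\frakE_\beta(u,u)<+\infty$ for every $\beta\in(\alpha,\beta^*)$, so Lemma \ref{SC_con_lem_equivHK} gives $\calE_\beta(u,u)<+\infty$ and Lemma \ref{lem_SC_holder} produces a H\"older continuous representative. Density in $L^2(K;\nu)$ is verified by Stone--Weierstrass: the function $u_0$ of Proposition \ref{SC_con_prop_u} lies in $\calF$, and composing it with the inverse IFS maps and using the reflection arguments from the proof of Theorem \ref{SC_con_thm_walk} in Section \ref{SC_con_sec_walk} yields a subalgebra of $\calF$ separating points of $K$. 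The main obstacle is the bridging step: the $\Gamma$-convergence naturally produces the cell-average form $\sup_kb_k$, but the theorem demands the point-value form $\sup_ka_k$, whereas on the SG these two formulations coincide. Ensuring uniform-in-$\beta$ constants in Lemmas \ref{SC_con_lem_equiv} and \ref{SC_con_lem_equivHK} as $\beta\uparrow\beta^*$ is the delicate technical point; should this uniformity fail, a direct comparison of $a_k$ and $b_k$ using the H\"older regularity already implied by finite $\sup_n a_n(u)$ would be the natural fallback.
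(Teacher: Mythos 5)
Your proposal follows essentially the same route as the paper: the $\Gamma$-liminf inequality with the constant sequence for the upper bound, a recovery sequence combined with the weak monotonicity of $b_k$ (Theorem \ref{SC_con_thm_monotone2}) for the lower bound, the equivalences $E_\beta\asymp\calE_\beta\asymp\frakE_\beta$ from Lemmas \ref{SC_con_lem_equiv} and \ref{SC_con_lem_equivHK} together with Proposition \ref{prop_ele2} to pass between $\sup_k a_k$ and $\sup_k b_k$, and Proposition \ref{SC_con_prop_u} with the separation argument of Theorem \ref{SC_con_thm_walk} for regularity. Your explicit attention to the uniformity of the comparison constants as $\beta\uparrow\beta^*$ is a point the paper leaves implicit (it is harmless since $\beta^*$ is interior to $(\alpha,+\infty)$), but the argument is the same.
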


\begin{proof}
Recall that $\rho=3^{\beta^*-\alpha}$, then
$$
\begin{aligned}
E_{\beta}(u,u)&=\sum_{n=1}^\infty3^{(\beta-\alpha)n}\sum_{w\in W_n}
{\sum_{\mbox{\tiny
$
\begin{subarray}{c}
p,q\in V_w\\
|p-q|=2^{-1}\cdot3^{-n}
\end{subarray}
$
}}}
(u(p)-u(q))^2=\sum_{n=1}^\infty3^{(\beta-\beta^*)n}a_n(u),\\
\frakE_\beta(u,u)&=\sum_{n=1}^\infty3^{(\beta-\alpha)n}
{\sum_{\mbox{\tiny
$
\begin{subarray}{c}
w^{(1)}\sim_nw^{(2)}
\end{subarray}
$
}}}
\left(P_nu(w^{(1)})-P_nu(w^{(2)})\right)^2=\sum_{n=1}^\infty3^{(\beta-\beta^*)n}b_n(u).
\end{aligned}
$$

We use weak monotonicity results Theorem \ref{SC_con_thm_monotone1}, Theorem \ref{SC_con_thm_monotone2} and elementary result Proposition \ref{prop_ele2}.

For any $u\in L^2(K;\nu)$, there exists $\myset{u_n}\subseteq L^2(K;\nu)$ converging strongly to $u$ in $L^2(K;\nu)$ such that
\begin{align*}
\calE(u,u)&\ge\varlimsup_{n\to+\infty}(\beta^*-\beta_n)\frakE_{\beta_n}(u_n,u_n)\\
&=\varlimsup_{n\to+\infty}(\beta^*-\beta_n)\sum_{k=1}^\infty3^{(\beta_n-\beta^*)k}b_k(u_n)\\
&\ge\varlimsup_{n\to+\infty}(\beta^*-\beta_n)\sum_{k=n+1}^\infty3^{(\beta_n-\beta^*)k}b_k(u_n)\\
&\ge C\varlimsup_{n\to+\infty}(\beta^*-\beta_n)\sum_{k=n+1}^\infty3^{(\beta_n-\beta^*)k}b_n(u_n)\\
&=C\varlimsup_{n\to+\infty}\left\{b_n(u_n)\left[(\beta^*-\beta_n)\frac{3^{(\beta_n-\beta^*)(n+1)}}{1-3^{\beta_n-\beta^*}}\right]\right\}.
\end{align*}
Since $0<\beta^*-\beta_n<1/(n+1)$, we have $3^{(\beta_n-\beta^*)(n+1)}>1/3$. Since
$$\lim_{n\to+\infty}\frac{\beta^*-\beta_n}{1-3^{\beta_n-\beta^*}}=\frac{1}{\log3},$$
there exists some positive constant $C$ such that 
$$(\beta^*-\beta_n)\frac{3^{(\beta_n-\beta^*)(n+1)}}{1-3^{\beta_n-\beta^*}}\ge C\text{ for all }n\ge1.$$
Hence
$$\calE(u,u)\ge C\varlimsup_{n\to+\infty}b_n(u_n).$$
Since $u_n\to u$ in $L^2(K;\nu)$, for all $k\ge1$, we have
$$b_k(u)=\lim_{n\to+\infty}b_k(u_n)=\lim_{k\le n\to+\infty}b_k(u_n)\le C\varliminf_{n\to+\infty}b_n(u_n).$$
For all $m\ge1$, we have
$$
\begin{aligned}
(\beta^*-\beta_m)\sum_{k=1}^\infty3^{(\beta_m-\beta^*)k}b_k(u)&\le C(\beta^*-\beta_m)\sum_{k=1}^\infty3^{(\beta_m-\beta^*)k}\varliminf_{n\to+\infty}b_n(u_n)\\
&=C(\beta^*-\beta_m)\frac{3^{\beta_m-\beta^*}}{1-3^{\beta_m-\beta^*}}\varliminf_{n\to+\infty}b_n(u_n).
\end{aligned}
$$
Hence $\calE(u,u)<+\infty$ implies $\frakE_{\beta_m}(u,u)<+\infty$, by Lemma \ref{lem_SC_holder}, we have $\calF\subseteq C(K)$. Hence
$$\varliminf_{m\to+\infty}(\beta^*-\beta_m)\sum_{k=1}^\infty3^{(\beta_m-\beta^*)k}b_k(u)\le C\varliminf_{n\to+\infty}b_n(u_n).$$
Hence for all $u\in\calF\subseteq C(K)$, we have
$$
\begin{aligned}
\calE(u,u)&\ge C\varlimsup_{n\to+\infty}b_n(u_n)\ge C\varliminf_{n\to+\infty}b_n(u_n)\\
&\ge C\varliminf_{m\to+\infty}(\beta^*-\beta_m)\sum_{k=1}^\infty3^{(\beta_m-\beta^*)k}b_k(u)\\
&\ge C\varliminf_{m\to+\infty}(\beta^*-\beta_m)\sum_{k=1}^\infty3^{(\beta_m-\beta^*)k}a_k(u)\\
&\ge C\sup_{n\ge1}a_n(u).
\end{aligned}
$$

On the other hand, for all $u\in\calF\subseteq C(K)$, we have
$$
\begin{aligned}
\calE(u,u)&\le\varliminf_{n\to+\infty}(\beta^*-\beta_n)\frakE_{\beta_n}(u,u)\\
&\le C\varliminf_{n\to+\infty}(\beta^*-\beta_n)E_{\beta_n}(u,u)\\
&=C\varliminf_{n\to+\infty}(\beta^*-\beta_n)\sum_{k=1}^\infty3^{(\beta_n-\beta^*)k}a_k(u)\\
&=C\varliminf_{n\to+\infty}\frac{\beta^*-\beta_n}{1-3^{\beta_n-\beta^*}}(1-3^{\beta_n-\beta^*})\sum_{k=1}^\infty3^{(\beta_n-\beta^*)k}a_k(u)\\
&\le C\sup_{n\ge1}a_n(u).
\end{aligned}
$$
Therefore, for all $u\in\calF\subseteq C(K)$, we have
$$\calE(u,u)\asymp\sup_{n\ge1}a_n(u)=\sup_{n\ge1}3^{(\beta^*-\alpha)n}\sum_{w\in W_n}
{\sum_{\mbox{\tiny
$
\begin{subarray}{c}
p,q\in V_w\\
|p-q|=2^{-1}\cdot3^{-n}
\end{subarray}
$
}}}
(u(p)-u(q))^2,$$
and
$$\calF=\myset{u\in C(K):\sup_{n\ge1}3^{(\beta^*-\alpha)n}\sum_{w\in W_n}
{\sum_{\mbox{\tiny
$
\begin{subarray}{c}
p,q\in V_w\\
|p-q|=2^{-1}\cdot3^{-n}
\end{subarray}
$
}}}
(u(p)-u(q))^2<+\infty}.$$

It is obvious that the function $u\in C(K)$ in Proposition \ref{SC_con_prop_u} is in $\calF$. Similar to the proof of Theorem \ref{SC_con_thm_walk}, we have $\calF$ is uniformly dense in $C(K)$. Hence $(\calE,\calF)$ is a regular closed form on $L^2(K;\nu)$.
\end{proof}

Now we prove Theorem \ref{SC_con_thm_BM} as follows.

\begin{proof}[Proof of Theorem \ref{SC_con_thm_BM}]
For all $n\ge1,u\in l(V_{n+1})$, we have
$$
\begin{aligned}
&\rho\sum_{i=0}^7a_n(u\circ f_i)=\rho\sum_{i=0}^7\rho^n\sum_{w\in W_n}
{\sum_{\mbox{\tiny
$
\begin{subarray}{c}
p,q\in V_w\\
|p-q|=2^{-1}\cdot3^{-n}
\end{subarray}
$
}}}
(u\circ f_i(p)-u\circ f_i(q))^2\\
=&\rho^{n+1}\sum_{w\in W_{n+1}}
{\sum_{\mbox{\tiny
$
\begin{subarray}{c}
p,q\in V_w\\
|p-q|=2^{-1}\cdot3^{-(n+1)}
\end{subarray}
$
}}}
(u(p)-u(q))^2=a_{n+1}(u).
\end{aligned}
$$
Hence for all $n,m\ge1,u\in l(V_{n+m})$, we have
$$\rho^m\sum_{w\in W_m}a_n(u\circ f_w)=a_{n+m}(u).$$
For all $u\in\calF,n\ge1,w\in W_n$, we have
$$\sup_{k\ge1}a_k(u\circ f_w)\le\sup_{k\ge1}\sum_{w\in W_n}a_k(u\circ f_w)=\rho^{-n}\sup_{k\ge1}a_{n+k}(u)\le\rho^{-n}\sup_{k\ge1}a_{k}(u)<+\infty,$$
hence $u\circ f_w\in\calF$.

Let
$$\mybar{\calE}^{(n)}(u,u)=\rho^n\sum_{w\in W_n}\calE(u\circ f_w,u\circ f_w),u\in\calF,n\ge1.$$
Then
$$
\begin{aligned}
\mybar{\calE}^{(n)}(u,u)&\ge C\rho^n\sum_{w\in W_n}\varlimsup_{k\to+\infty}a_k(u\circ f_w)\ge C\rho^n\varlimsup_{k\to+\infty}\sum_{w\in W_n}a_k(u\circ f_w)\\
&=C\varlimsup_{k\to+\infty}a_{n+k}(u)\ge C\sup_{k\ge1}a_k(u).
\end{aligned}
$$
Similarly
$$
\begin{aligned}
\mybar{\calE}^{(n)}(u,u)&\le C\rho^n\sum_{w\in W_n}\varliminf_{k\to+\infty}a_k(u\circ f_w)\le C\rho^n\varliminf_{k\to+\infty}\sum_{w\in W_n}a_k(u\circ f_w)\\
&=C\varliminf_{k\to+\infty}a_{n+k}(u)\le C\sup_{k\ge1}a_k(u).
\end{aligned}
$$
Hence
$$\mybar{\calE}^{(n)}(u,u)\asymp\sup_{k\ge1}a_k(u)\text{ for all }u\in\calF,n\ge1.$$
Moreover, for all $u\in\calF$, $n\ge1$, we have
$$
\begin{aligned}
\mybar{\calE}^{(n+1)}(u,u)&=\rho^{n+1}\sum_{w\in W_{n+1}}\calE(u\circ f_w,u\circ f_w)\\
&=\rho^{n+1}\sum_{i=0}^7\sum_{w\in W_n}\calE(u\circ f_i\circ f_w,u\circ f_i\circ f_w)\\
&=\rho\sum_{i=0}^7\left(\rho^n\sum_{w\in W_n}\calE((u\circ f_i)\circ f_w,(u\circ f_i)\circ f_w)\right)\\
&=\rho\sum_{i=0}^7\mybar{\calE}^{(n)}(u\circ f_i,u\circ f_i).
\end{aligned}
$$
Let
$$\tilde{\calE}^{(n)}(u,u)=\frac{1}{n}\sum_{l=1}^n\mybar{\calE}^{(l)}(u,u),u\in\calF,n\ge1.$$
It is obvious that
$$\tilde{\calE}^{(n)}(u,u)\asymp\sup_{k\ge1}a_k(u)\text{ for all }u\in\calF,n\ge1.$$
Since $(\calE,\calF)$ is a regular closed form on $L^2(K;\nu)$, by \cite[Definition 1.3.8, Remark 1.3.9, Definition 1.3.10, Remark 1.3.11]{CF12}, we have $(\calF,\calE_1)$ is a separable Hilbert space. Let $\{u_i\}_{i\ge1}$ be a dense subset of $(\calF,\calE_1)$. For all $i\ge1$, $\{\tilde{\calE}^{(n)}(u_i,u_i)\}_{n\ge1}$ is a bounded sequence. By diagonal argument, there exists a subsequence $\{n_k\}_{k\ge1}$ such that $\{\tilde{\calE}^{(n_k)}(u_i,u_i)\}_{k\ge1}$ converges for all $i\ge1$. Since
$$\tilde{\calE}^{(n)}(u,u)\asymp\sup_{k\ge1}a_k(u)\asymp\calE(u,u)\text{ for all }u\in\calF,n\ge1,$$
we have $\{\tilde{\calE}^{(n_k)}(u,u)\}_{k\ge1}$ converges for all $u\in\calF$. Let
$$\calE_{\loc}(u,u)=\lim_{k\to+\infty}\tilde{\calE}^{(n_k)}(u,u)\text{ for all }u\in\calF_{\loc}:=\calF.$$
Then
$$\calE_{\loc}(u,u)\asymp\sup_{k\ge1}a_k(u)\asymp\calE(u,u)\text{ for all }u\in\calF_\loc=\calF.$$
Hence $(\calE_\loc,\calF_\loc)$ is a regular closed form on $L^2(K;\nu)$. It is obvious that $1\in\calF_\loc$ and $\calE_\loc(1,1)=0$, by \cite[Lemma 1.6.5, Theorem 1.6.3]{FOT11}, we have $(\calE_\loc,\calF_\loc)$ on $L^2(K;\nu)$ is conservative.

For all $u\in\calF_\loc=\calF$, we have $u\circ f_i\in\calF=\calF_\loc$ for all $i=0,\ldots,7$ and
$$
\begin{aligned}
\rho\sum_{i=0}^7\calE_\loc(u\circ f_i,u\circ f_i)&=\rho\sum_{i=0}^7\lim_{k\to+\infty}\tilde{\calE}^{(n_k)}(u\circ f_i,u\circ f_i)\\
&=\rho\sum_{i=0}^7\lim_{k\to+\infty}\frac{1}{n_k}\sum_{l=1}^{n_k}\mybar{\calE}^{(l)}(u\circ f_i,u\circ f_i)\\
&=\lim_{k\to+\infty}\frac{1}{n_k}\sum_{l=1}^{n_k}\left[\rho\sum_{i=0}^7\mybar{\calE}^{(l)}(u\circ f_i,u\circ f_i)\right]\\
&=\lim_{k\to+\infty}\frac{1}{n_k}\sum_{l=1}^{n_k}\mybar{\calE}^{(l+1)}(u,u)\\
&=\lim_{k\to+\infty}\frac{1}{n_k}\sum_{l=2}^{n_k+1}\mybar{\calE}^{(l)}(u,u)\\
&=\lim_{k\to+\infty}\left[\frac{1}{n_k}\sum_{l=1}^{n_k}\mybar{\calE}^{(l)}(u,u)+\frac{1}{n_k}\mybar{\calE}^{(n_k+1)}(u,u)-\frac{1}{n_k}\mybar{\calE}^{(1)}(u,u)\right]\\
&=\lim_{k\to+\infty}\tilde{\calE}^{(n_k)}(u,u)=\calE_\loc(u,u).
\end{aligned}
$$
Hence $(\calE_\loc,\calF_\loc)$ on $L^2(K;\nu)$ is self-similar.

For all $u,v\in\calF_\loc$ satisfying $\mathrm{supp}(u),\mathrm{supp}(v)$ are compact and $v$ is constant in an open neighborhood $U$ of $\mathrm{supp}(u)$, we have $K\backslash U$ is compact and $\mathrm{supp}(u)\cap(K\backslash U)=\emptyset$, hence
$$\delta=\mathrm{dist}(\mathrm{supp}(u),K\backslash U)>0.$$
Taking sufficiently large $n\ge1$ such that $3^{1-n}<\delta$, by self-similarity, we have
$$\calE_\loc(u,v)=\rho^n\sum_{w\in W_n}\calE_\loc(u\circ f_w,v\circ f_w).$$
For all $w\in W_n$, we have $u\circ f_w=0$ or $v\circ f_w$ is constant, hence $\calE_\loc(u\circ f_w,v\circ f_w)=0$, hence $\calE_\loc(u,v)=0$, that is, $(\calE_\loc,\calF_\loc)$ on $L^2(K;\nu)$ is strongly local.

For all $u\in\calF_\loc$, it is obvious that $u^+,u^-,1-u,\mybar{u}=(0\vee u)\wedge1\in\calF_\loc$ and
$$\calE_\loc(u,u)=\calE_\loc(1-u,1-u).$$
Since $u^+u^-=0$ and $(\calE_\loc,\calF_\loc)$ on $L^2(K;\nu)$ is strongly local, we have $\calE_\loc(u^+,u^-)=0$. Hence
$$
\begin{aligned}
\calE_\loc(u,u)&=\calE_\loc(u^+-u^-,u^+-u^-)\\
&=\calE_\loc(u^+,u^+)+\calE_\loc(u^-,u^-)-2\calE_\loc(u^+,u^-)\\
&=\calE_\loc(u^+,u^+)+\calE_\loc(u^-,u^-)\\
&\ge\calE_\loc(u^+,u^+)=\calE_\loc(1-u^+,1-u^+)\\
&\ge\calE_\loc((1-u^+)^+,(1-u^+)^+)=\calE_\loc(1-(1-u^+)^+,1-(1-u^+)^+)\\
&=\calE_\loc(\mybar{u},\mybar{u}),
\end{aligned}
$$
that is, $(\calE_\loc,\calF_\loc)$ on $L^2(K;\nu)$ is Markovian. Hence $(\calE_\loc,\calF_\loc)$ is a self-similar strongly local regular Dirichlet form on $L^2(K;\nu)$.
\end{proof}

\begin{myrmk}
The idea of the construction of $\mybar{\calE}^{(n)},\tilde{\calE}^{(n)}$ is from \cite[Section 6]{KZ92}. The proof of Markovain property is from the proof of \cite[Theorem 2.1]{BBKT10}.
\end{myrmk}

\section{Proof of Theorem \ref{SC_con_thm_Besov}}\label{SC_con_sec_Besov}

Theorem \ref{SC_con_thm_Besov} is a special case of the following result.

\begin{myprop}\label{SC_con_prop_equiv_local}
For all $\beta\in(\alpha,+\infty)$, for all $u\in C(K)$, we have
$$\sup_{n\ge1}3^{(\beta-\alpha)n}\sum_{w\in W_n}
{\sum_{\mbox{\tiny
$
\begin{subarray}{c}
p,q\in V_w\\
|p-q|=2^{-1}\cdot3^{-n}
\end{subarray}
$
}}}
(u(p)-u(q))^2\asymp[u]_{B^{2,\infty}_{\alpha,\beta}(K)}.$$
\end{myprop}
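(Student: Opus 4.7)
The plan is to adapt the proofs of Theorems \ref{SC_con_thm_equiv1} and \ref{SC_con_thm_equiv2} from the sum-type Besov norm to the sup-type norm, working throughout with $F(u) := [u]_{B^{2,\infty}_{\alpha,\beta}(K)}$ in place of the role previously played by the sum-type energy $E(u)$. Write $a_n(u) := \sum_{w\in W_n}\sum_{p,q\in V_w,\,|p-q|=2^{-1}\cdot 3^{-n}}(u(p)-u(q))^2$, so the claim is $\sup_{n\ge 1}3^{(\beta-\alpha)n}a_n(u) \asymp F(u)$. I will also need a sup-version of Corollary \ref{SC_con_cor_arbi}, which is proved by the same chain-condition argument as in the sum case and will let me freely replace the ball radius $3^{-n}$ by $c\cdot 3^{-n}$ with $c>0$ arbitrary.

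For the direction $3^{(\beta-\alpha)n}a_n(u) \lesssim F(u)$ uniformly in $n$, I will rerun the chaining argument of Theorem \ref{SC_con_thm_equiv1} verbatim up through estimate \eqref{SC_con_eqn_equiv1_1}, taking $l=n$ there. Two modifications are needed. First, the H\"older estimate applied to the tail term $(u(p)-u(x^{(l)}))^2$ uses the $F(u)$-version of Lemma \ref{lem_SC_holder} instead of the $E(u)$-version (both are available in the statement of that lemma). Second, when bounding the local averages $E_{n+ki}(u)$ I just use $E_{n+ki}(u) \le F(u)$. Multiplying by $3^{(\beta-\alpha)n}$ and choosing $k$ large enough that both $\beta-(\beta-\alpha)(k+1)<0$ and $2\cdot 3^{-(\beta-\alpha)k}<1$ (both achievable for any $\beta>\alpha$), the H\"older contribution becomes $\lesssim F(u)$ uniformly in $n$, and the geometric series in $i$ converges to something $\lesssim F(u)$.

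For the reverse direction $3^{(\alpha+\beta)n}\int_K\int_{B(x,c3^{-n})}(u(x)-u(y))^2\nu(\md y)\nu(\md x) \lesssim \sup_{k\ge 1}3^{(\beta-\alpha)k}a_k(u) =: M$, I will use the pointwise inequality \eqref{SC_con_eqn_equiv2_3} already established in the proof of Theorem \ref{SC_con_thm_equiv2}, namely
\[
\int_K\int_{B(x,c3^{-n})}(u(x)-u(y))^2\nu(\md y)\nu(\md x) \lesssim \sum_{k=n}^\infty 4^{k-n}\cdot 3^{-\alpha(n+k)}a_k(u) + 3^{-2\alpha n}a_{n-1}(u).
\]
Multiplying by $3^{(\alpha+\beta)n}$ and bounding $a_k(u) \le M\cdot 3^{-(\beta-\alpha)k}$ gives for the tail sum $M\sum_{j=0}^\infty (4\cdot 3^{-\beta})^j$, which converges precisely because $\beta>\alpha=\log 8/\log 3 > \log 4/\log 3$, and for the boundary term $\lesssim 3^{\beta-\alpha}M$. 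Taking the supremum over $n\ge 2$ and handling $n=1$ trivially yields $F(u)\lesssim M$, up to the radius adjustment from the sup-version of Corollary \ref{SC_con_cor_arbi}.

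The only substantive check, and the one I expect to be the main (minor) obstacle, is confirming that the convergence condition $4<3^\beta$ required by the tail geometric series is automatic from the hypothesis $\beta>\alpha$; this is indeed automatic since $\alpha=\log 8/\log 3>\log 4/\log 3$. Beyond this, everything reduces to the careful bookkeeping already present in the earlier sum-type proofs.
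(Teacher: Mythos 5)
Your proposal is correct and follows essentially the same route as the paper: both directions reuse the intermediate estimates \eqref{SC_con_eqn_equiv1_1} and \eqref{SC_con_eqn_equiv2_3} from the proofs of Theorems \ref{SC_con_thm_equiv1} and \ref{SC_con_thm_equiv2}, replacing $E(u)$ by $F(u)$ via the second inequality of Lemma \ref{lem_SC_holder} in the first direction and bounding $a_k(u)$ by the sup-norm times $3^{-(\beta-\alpha)k}$ in the second, with the tail geometric series converging because $3^\beta>3^\alpha=8>4$. Your explicit attention to the radius adjustment (the sup-version of Corollary \ref{SC_con_cor_arbi}) is a detail the paper leaves implicit, but otherwise the two arguments coincide.
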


\begin{proof}[Proof of Proposition \ref{SC_con_prop_equiv_local}]
The proof is very similar to that of Lemma \ref{SC_con_lem_equiv}. We only point out the differences. To show that LHS$\lesssim$RHS, by the proof of Theorem \ref{SC_con_thm_equiv1}, we still have Equation (\ref{SC_con_eqn_equiv1_1}) where $E(u)$ is replaced by $F(u)$. Then
$$
\begin{aligned}
&3^{(\beta-\alpha)n}\sum_{w\in W_n}
{\sum_{\mbox{\tiny
$
\begin{subarray}{c}
p,q\in V_w\\
|p-q|=2^{-1}\cdot3^{-n}
\end{subarray}
$
}}}
(u(p)-u(q))^2\\
\le&128\cdot2^{(\beta-\alpha)/2}cF(u)3^{\beta n-(\beta-\alpha)(n+kl)}+32\cdot3^{\alpha k}\sum_{i=0}^{l-1}2^i\cdot3^{-(\beta-\alpha)ki}E_{n+ki}(u).
\end{aligned}
$$
Take $l=n$, then
$$
\begin{aligned}
&3^{(\beta-\alpha)n}\sum_{w\in W_n}
{\sum_{\mbox{\tiny
$
\begin{subarray}{c}
p,q\in V_w\\
|p-q|=2^{-1}\cdot3^{-n}
\end{subarray}
$
}}}
(u(p)-u(q))^2\\
\le&128\cdot2^{(\beta-\alpha)/2}cF(u)3^{[\beta-(\beta-\alpha)(k+1)]n}+32\cdot3^{\alpha k}\sum_{i=0}^{n-1}2^i\cdot3^{-(\beta-\alpha)ki}E_{n+ki}(u)\\
\le&128\cdot2^{(\beta-\alpha)/2}cF(u)3^{[\beta-(\beta-\alpha)(k+1)]n}+32\cdot3^{\alpha k}\sum_{i=0}^{\infty}3^{[1-(\beta-\alpha)k]i}\left(\sup_{n\ge1}E_{n}(u)\right).
\end{aligned}
$$
Take $k\ge1$ sufficiently large such that $\beta-(\beta-\alpha)(k+1)<0$ and $1-(\beta-\alpha)k<0$, then
$$
\begin{aligned}
&\sup_{n\ge1}3^{(\beta-\alpha)n}\sum_{w\in W_n}
{\sum_{\mbox{\tiny
$
\begin{subarray}{c}
p,q\in V_w\\
|p-q|=2^{-1}\cdot3^{-n}
\end{subarray}
$
}}}
(u(p)-u(q))^2\\
\lesssim&\sup_{n\ge1}3^{(\alpha+\beta)n}\int_K\int_{B(x,3^{-n})}(u(x)-u(y))^2\nu(\md y)\nu(\md x).
\end{aligned}
$$

To show that LHS$\gtrsim$RHS, by the proof of Theorem \ref{SC_con_thm_equiv2}, we still have Equation (\ref{SC_con_eqn_equiv2_3}). Then
$$
\begin{aligned}
&\sup_{n\ge2}3^{(\alpha+\beta)n}\int_K\int_{B(x,c3^{-n})}(u(x)-u(y))^2\nu(\md y)\nu(\md x)\\
\lesssim&\sup_{n\ge2}\sum_{k=n}^\infty4^{k-n}\cdot3^{\beta n-\alpha k}\sum_{w\in W_k}
{\sum_{\mbox{\tiny
$
\begin{subarray}{c}
p,q\in V_w\\
|p-q|=2^{-1}\cdot3^{-k}
\end{subarray}
$
}}}
(u(p)-u(q))^2\\
&+\sup_{n\ge2}3^{(\beta-\alpha)n}\sum_{w\in W_{n-1}}
{\sum_{\mbox{\tiny
$
\begin{subarray}{c}
p,q\in V_w\\
|p-q|=2^{-1}\cdot3^{-(n-1)}
\end{subarray}
$
}}}
(u(p)-u(q))^2\\
\lesssim&\sup_{n\ge2}\sum_{k=n}^\infty4^{k-n}\cdot3^{\beta(n-k)}\left(\sup_{k\ge1}3^{(\beta-\alpha)k}\sum_{w\in W_k}
{\sum_{\mbox{\tiny
$
\begin{subarray}{c}
p,q\in V_w\\
|p-q|=2^{-1}\cdot3^{-k}
\end{subarray}
$
}}}
(u(p)-u(q))^2\right)\\
&+\sup_{n\ge1}3^{(\beta-\alpha)n}\sum_{w\in W_n}
{\sum_{\mbox{\tiny
$
\begin{subarray}{c}
p,q\in V_w\\
|p-q|=2^{-1}\cdot3^{-n}
\end{subarray}
$
}}}
(u(p)-u(q))^2\\
\lesssim&\sup_{n\ge1}3^{(\beta-\alpha)n}\sum_{w\in W_n}
{\sum_{\mbox{\tiny
$
\begin{subarray}{c}
p,q\in V_w\\
|p-q|=2^{-1}\cdot3^{-n}
\end{subarray}
$
}}}
(u(p)-u(q))^2.
\end{aligned}
$$

\end{proof}

We have the following properties of Besov spaces for large exponent.

\begin{mycor}\label{SC_con_cor_chara}
$B^{2,2}_{\alpha,\beta^*}(K)=\myset{\text{constant functions}}$ but $B^{2,\infty}_{\alpha,\beta^*}(K)$ is uniformly dense in $C(K)$. $B^{2,2}_{\alpha,\beta}(K)=B^{2,\infty}_{\alpha,\beta}(K)=\myset{\text{constant functions}}$ for all $\beta\in(\beta^*,+\infty)$.
\end{mycor}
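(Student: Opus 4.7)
The plan is to treat the three assertions in turn, using the equivalences in Lemma \ref{SC_con_lem_equivHK} and Proposition \ref{SC_con_prop_equiv_local}, the weak monotonicity of the discrete energies $a_n$ and $b_n$ from Theorems \ref{SC_con_thm_monotone1} and \ref{SC_con_thm_monotone2}, and the characterization of $\calF_\loc$ furnished by Theorem \ref{SC_con_thm_BM}. In each case the Besov semi-norm gets transferred to a discrete quantity that the weak monotonicity forces to vanish identically.

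For $B^{2,2}_{\alpha,\beta^*}(K)=\myset{\text{constant functions}}$: Lemma \ref{SC_con_lem_equivHK} gives $[u]_{B^{2,2}_{\alpha,\beta^*}(K)}\asymp\frakE_{\beta^*}(u,u)$, and since $\rho=3^{\beta^*-\alpha}$ the series reorganizes as $\frakE_\beta(u,u)=\sum_{n\ge1}3^{(\beta-\beta^*)n}b_n(u)$, so at the critical exponent $\frakE_{\beta^*}(u,u)=\sum_{n\ge1}b_n(u)$. Finiteness forces $b_n(u)\to0$; Theorem \ref{SC_con_thm_monotone2} then gives, with $n$ fixed and $m\to\infty$, $b_n(u)\le C\lim_m b_{n+m}(u)=0$, so $b_n(u)=0$ for every $n$. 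Since $\calW_n$ is connected, $P_nu$ is a constant $c_n$ on $W_n$, and $c_n=\int_K u\,d\nu$ is independent of $n$; as $P_nu\to u$ in $L^2(K;\nu)$, I conclude $u$ is constant $\nu$-a.e.

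For the density statement, Theorem \ref{SC_con_thm_BM} combined with Proposition \ref{SC_con_prop_equiv_local} identifies $\calF_\loc$ with $B^{2,\infty}_{\alpha,\beta^*}(K)$, Lemma \ref{lem_SC_holder} at $\beta=\beta^*$ supplying continuous representatives so that no elements are lost. Since $(\calE_\loc,\calF_\loc)$ is a regular Dirichlet form on $L^2(K;\nu)$ and $K$ is compact, regularity is exactly the statement that $\calF_\loc=\calF_\loc\cap C(K)$ is uniformly dense in $C_c(K)=C(K)$.

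For the triviality results at $\beta\in(\beta^*,+\infty)$, the pointwise monotonicity of the integrand in $\beta$ immediately gives $[u]_{B^{2,2}_{\alpha,\beta^*}(K)}\le[u]_{B^{2,2}_{\alpha,\beta}(K)}$, hence $B^{2,2}_{\alpha,\beta}(K)\subseteq B^{2,2}_{\alpha,\beta^*}(K)=\myset{\text{constant functions}}$. For $B^{2,\infty}_{\alpha,\beta}(K)$, Proposition \ref{SC_con_prop_equiv_local} yields $[u]_{B^{2,\infty}_{\alpha,\beta}(K)}\asymp\sup_{n\ge1}3^{(\beta-\beta^*)n}a_n(u)$; with $\beta>\beta^*$ this forces $a_n(u)\lesssim 3^{-(\beta-\beta^*)n}\to0$, and Theorem \ref{SC_con_thm_monotone1} (applied exactly as for $b_n$ above) then forces $a_n(u)=0$ for every $n$. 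This means $u(p)=u(q)$ for every cell-adjacent pair $(p,q)$ within each $V_w$, so connectedness of $\calV_n$ and the density of $V^*=\bigcup_n V_n$ in $K$, combined with the continuity provided by Lemma \ref{lem_SC_holder} (since $\beta>\beta^*>\alpha$), force $u$ to be constant. The only subtle ingredient is the combinatorial observation that $\calV_n$ and $\calW_n$ are connected, so that cell-local constancy propagates to global constancy; this is geometrically transparent from the constructions of Section \ref{SC_con_sec_resistance} but is the one point that should be recorded explicitly in the write-up.
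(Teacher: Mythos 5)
Your proposal is correct, and all three assertions are established, but the route for the triviality claims is the contrapositive of the one the paper takes, with two genuine shortcuts. The paper argues: if $u\in C(K)$ is non-constant then $a_N(u)>0$ for some $N$, whence Theorem \ref{SC_con_thm_monotone1} bounds $a_n(u)$ from below for $n\ge N$, so $\sum_n3^{(\beta-\beta^*)n}a_n(u)=+\infty$ for $\beta\in[\beta^*,+\infty)$ and $\sup_n3^{(\beta-\beta^*)n}a_n(u)=+\infty$ for $\beta\in(\beta^*,+\infty)$; Lemma \ref{SC_con_lem_equiv} and Proposition \ref{SC_con_prop_equiv_local} then transfer this to the Besov semi-norms. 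You instead run the implication forwards (finiteness forces the discrete energies to tend to $0$, weak monotonicity forces them to vanish identically, and connectedness of $\calV_n$ or $\calW_n$ forces constancy); note that the paper's step ``non-constant $\Rightarrow$ $a_N(u)>0$'' secretly requires exactly the same connectedness-plus-density input you flag, so neither version escapes it. Your two deviations are worth noting: (i) treating $B^{2,2}_{\alpha,\beta^*}(K)$ via $\frakE_{\beta^*}(u,u)=\sum_nb_n(u)$ and Lemma \ref{SC_con_lem_equivHK} works directly for $u\in L^2(K;\nu)$ and so avoids the (harmless but implicit) passage to a continuous version that the paper needs in order to invoke Lemma \ref{SC_con_lem_equiv}, which is only stated for $u\in C(K)$; (ii) deducing $B^{2,2}_{\alpha,\beta}(K)\subseteq B^{2,2}_{\alpha,\beta^*}(K)$ for $\beta>\beta^*$ from monotonicity of $3^{(\alpha+\beta)n}$ in $\beta$ is a clean shortcut the paper does not use (it handles all $\beta\in[\beta^*,+\infty)$ in one stroke via the divergent series). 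The density assertion is proved identically in both (Theorem \ref{SC_con_thm_BM} plus Theorem \ref{SC_con_thm_Besov}/Proposition \ref{SC_con_prop_equiv_local} and regularity of $(\calE_\loc,\calF_\loc)$). Do record explicitly, as you propose, that every edge of $\calV_n$ lies in some $V_w$ and that $\calV_n$, $\calW_n$ are connected; that is the only unstated geometric fact either argument leans on.
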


\begin{proof}
By Theorem \ref{SC_con_thm_BM} and Theorem \ref{SC_con_thm_Besov}, we have $B^{2,\infty}_{\alpha,\beta^*}(K)$ is uniformly dense in $C(K)$. Assume that $u\in C(K)$ is non-constant, then there exists $N\ge1$ such that $a_N(u)>0$. By Theorem \ref{SC_con_thm_monotone1}, for all $\beta\in[\beta^*,+\infty)$, we have
\begin{align*}
&\sum_{n=1}^\infty3^{(\beta-\beta^*)n}a_n(u)\ge\sum_{n=N+1}^\infty3^{(\beta-\beta^*)n}a_n(u)\\
\ge&C\sum_{n=N+1}^\infty3^{(\beta-\beta^*)n}a_N(u)=+\infty,
\end{align*}
for all $\beta\in(\beta^*,+\infty)$, we have
\begin{align*}
&\sup_{n\ge1}3^{(\beta-\beta^*)n}a_n(u)\ge\sup_{n\ge N+1}3^{(\beta-\beta^*)n}a_n(u)\\
\ge&C\sup_{n\ge N+1}3^{(\beta-\beta^*)n}a_N(u)=+\infty.
\end{align*}

By Lemma \ref{SC_con_lem_equiv} and Proposition \ref{SC_con_prop_equiv_local}, we have $B^{2,2}_{\alpha,\beta}(K)=\myset{\text{constant functions}}$ for all $\beta\in[\beta^*,+\infty)$ and $B^{2,\infty}_{\alpha,\beta}(K)=\myset{\text{constant functions}}$ for all $\beta\in(\beta^*,+\infty)$.
\end{proof}

\section{Proof of Theorem \ref{SC_con_thm_hk}}\label{SC_con_sec_hk}

We use effective resistance as follows.

Let $(M,d,\mu)$ be a metric measure space and $(\calE,\calF)$ a regular Dirichlet form on $L^2(M;\mu)$. Assume that $A,B$ are two disjoint subsets of $M$. Define \emph{effective resistance} as
$$R(A,B)=\inf\myset{\calE(u,u):u|_A=0,u|_B=1,u\in\calF\cap C_0(M)}^{-1}.$$
Denote
$$R(x,B)=R(\myset{x},B),R(x,y)=R(\myset{x},\myset{y}),x,y\in M.$$
It is obvious that if $A_1\subseteq A_2$, $B_1\subseteq B_2$, then
$$R(A_1,B_1)\ge R(A_2,B_2).$$

\begin{proof}[Proof of Theorem \ref{SC_con_thm_hk}]
First, we show that
$$R(x,y)\asymp|x-y|^{\beta^*-\alpha}\text{ for all }x,y\in K.$$
By Lemma \ref{lem_SC_holder}, we have
$$(u(x)-u(y))^2\le c\calE_\loc(u,u)|x-y|^{\beta^*-\alpha}\text{ for all }x,y\in K,u\in\calF_\loc,$$
hence
$$R(x,y)\lesssim|x-y|^{\beta^*-\alpha}\text{ for all }x,y\in K.$$
On the other hand, we claim
$$R(x,B(x,r)^c)\asymp r^{\beta^*-\alpha}\text{ for all }x\in K,r>0\text{ with }B(x,r)^c\ne\emptyset.$$
Indeed, fix $C>0$. If $u\in\calF_\loc$ satisfies $u(x)=1$, $u|_{B(x,r)^c}=0$, then $\tilde{u}:y\mapsto u(x+C(y-x))$ satisfies $\tilde{u}\in\calF_\loc$, $\tilde{u}(x)=1$, $\tilde{u}|_{B(x,Cr)^c}=0$. By Theorem \ref{SC_con_thm_BM}, it is obvious that
$$\calE_\loc(\tilde{u},\tilde{u})\asymp C^{-(\beta^*-\alpha)}\calE_\loc(u,u),$$
hence
$$R(x,B(x,Cr)^c)\asymp C^{\beta^*-\alpha}R(x,B(x,r)^c).$$
Hence
$$R(x,B(x,r)^c)\asymp r^{\beta^*-\alpha}.$$
For all $x,y\in K$, we have
$$R(x,y)\ge R(x,B(x,|x-y|)^c)\asymp|x-y|^{\beta^*-\alpha}.$$

Then, we follow a standard analytic approach as follows. First, we obtain Green function estimates as in \cite[Proposition 6.11]{GHL14}. Then, we obtain heat kernel estimates as in \cite[Theorem 3.14]{GH14a}. Note that we are dealing with compact set, the final estimates only hold for some finite time $t\in(0,1)$.
\end{proof}

\newpage

\fancyhead[RE,LO]{\textit{BIBLIOGRAPHY}}

\bibliographystyle{plain}

\def\cprime{$'$}

\addcontentsline{toc}{chapter}{Bibliography}

\end{document}